


\documentclass[a4paper,11pt]{report}
\usepackage{latexsym}
\usepackage{amsfonts}
\usepackage{anysize}
\usepackage{graphicx,amssymb,amsfonts,amsmath,amsthm,xypic}
\usepackage{newlfont,color,marvosym}
\usepackage{epsfig}
\usepackage[latin1]{inputenc}
\usepackage[T1]{fontenc}
\usepackage[all]{xy}
\usepackage{amscd}
\usepackage{palatino, url, multicol}
\usepackage{makeidx}
\makeindex
\setlength{\textwidth}{5.8in}

\pagestyle{plain}

\newcommand{\singlespaced}{\renewcommand{\baselinestretch}{1}\normalfont}

\theoremstyle{plain}
\newtheorem{thm}{Theorem}[chapter]
\newtheorem{cor}[thm]{Corollary}
\newtheorem{lem}[thm]{Lemma}
\newtheorem{prop}[thm]{Proposition}

\newtheorem{defn}[thm]{Definition}
\newtheorem{defn/lemme}[thm]{Definition/Lemma}
\newtheorem{defns}[thm]{Definitions}
\newtheorem{conj}[thm]{Conjecture}

\newtheorem{ex}[thm]{Example}
\newtheorem{exes}[thm]{Examples}
\newtheorem{claim}[thm]{Claim}

\newtheorem{rem}[thm]{Remark}

\newtheorem*{thm*}{Theorem}
\newtheorem*{rmk*}{Remark}
\newtheorem*{ques*}{Question}
\newtheorem*{conj*}{Conjecture}
\newtheorem{example}[thm]{Example}

\numberwithin{equation}{section}

\font \rus= wncyr10
\font\tencyr=wncyr10

\newcommand{\piy}{\pi_{\mathcal{Y}}}
\newcommand{\PSL}{\mathbb{PSL}}
\newcommand{\Mn}{{\mathfrak{M}}_{0,n}}
\newcommand\nz{\mathfrak{nz}}
\newcommand\Q {\mathbb{Q}}
\newcommand\R{\mathbb{R}}
\newcommand\C{\mathbb{C}}
\newcommand\Z{\mathbb{Z}}

\newcommand\nfz{\mathfrak{nfz}}
\newcommand\ds{\mathfrak{ds}}
\newcommand\grt{\mathfrak{grt}}
\newcommand\Lxy{\mathbb{L}[x,y]}
\newcommand\Qxy{{\mathbb{Q}}\langle \langle x,y \rangle \rangle}

\newcommand\Qyi{{\mathbb{Q}}\langle \langle y_i \rangle \rangle}
\newcommand\I{{\mathbb{I}}}
\newcommand{\Pro}{\mathbb{P}}

\newcommand{\sha}{\hbox{\rus x}}
\newcommand{\Res}{\mathrm{Res}}
\newcommand{\Mod}{{\mathfrak{M}}}
\newcommand{\M}{\overline{\mathfrak{M}}}
\newcommand{\Sym}{\mathfrak{S}}
\newcommand\Spec{\mathrm{Spec}}
\newcommand\sh{\hbox{\tencyr{x}}}
\newcommand\Cxy{{\mathbb{C}}\langle \langle x,y \rangle\rangle}
\newcommand\z{\mathfrak{z}}
\newcommand\Lyi{{\mathbb{L}}[y_i]}

\newcommand{\Hom}{\hbox{Hom}}

\newcommand{\To}{\longrightarrow}

\newcommand{\Modf}{\mathfrak{M}^\delta}

\newcommand{\N}{\mathbb{N}}

\newcommand{\ord}{\mathrm{ord}}
\newcommand{\A}{\mathbb{A}}

\newcommand{\gr}{\mathrm{gr}}

\newcommand{\MT}{\mathcal{MT}}
\newcommand{\FMT}{\mathcal{M}}

\newcommand{\MZV}{\mathcal{Z}}

\newcommand{\F}{\mathfrak{F}}
\newcommand{\cfl}{\langle}
\newcommand{\cfr}{\rangle}
\newcommand{\LIE}{\mathfrak{L}}
\newcommand{\FZ}{\mathcal{FZ}}

\newcommand{\spc}{\, \, ; \,\,}
\newcommand{\HRule}{\rule{\linewidth}{0.5mm}}

\def\thetitle{Multizeta values: Lie algebras and periods on $\Mn$}
\def\theauthor{Sarah Carr}   
\def\theadvisor{Leila Schneps, Advisor}

\newenvironment{changemargin}[2]{%
  \begin{list}{}{%
    \setlength{\topsep}{0pt}%
    \setlength{\leftmargin}{#1}%
    \setlength{\rightmargin}{#2}%
    \setlength{\listparindent}{\parindent}%
    \setlength{\itemindent}{\parindent}%
    \setlength{\parsep}{\parskip}%
  }%
  \item[]}{\end{list}}

\begin{document}

\pagenumbering{roman}

\thispagestyle{empty}
\begin{center}
{\textsc{Universit\'e Pierre et Marie Curie}}

\vspace{2.5cm}
{\bf Th\`ese de doctorat}

\vspace{.6cm}
Sp\'ecialit\'e : Math\'ematiques

\vspace{1cm}
pr\'esent\'ee par

\vspace{.6cm}
Sarah \textsc{Carr}

\vspace{1cm}
et soutenue \`a Paris le 27 juin 2008

\vspace{1cm}
Pour obtenir le grade de\\
\textsc{Docteur} de l'\textsc{Universit\'e Pierre et Marie Curie}
\vspace{1cm}

\singlespaced

\HRule \\[0.4cm]
\Large{{\bf \thetitle}}\\
-\\
\Large{{\bf Valeurs multiz\^eta : alg\`ebres de Lie et p\'eriodes sur $\Mn$}}\\
\HRule

\end{center}
\singlespaced

\vspace{3cm}
\begin{multicols}{2}{
\noindent{\bf Directrice de th\`ese }\\
\noindent Leila \textsc{Schneps} (CNRS)\\

\vspace{.6cm}
\noindent{\bf Rapporteurs}\\
\noindent Don \textsc{Zagier} (Coll\`ege de France, MPIM)\\
\noindent Masanobu \textsc{Kaneko} (Kyushu Univ.)

\noindent{\bf Jury}\\
\noindent Daniel \textsc{Bertrand} (Univ. Paris VI)\\
\noindent Jacky \textsc{Cresson} (Univ. Pau)\\
\noindent Hidekazu \textsc{Furusho} (Nagoya Univ.)\\
\noindent Pierre \textsc{Lochak} (CNRS)\\
\noindent \textsc{Hoang Ngoc Minh} (Univ. Lille)\\
\noindent Leila \textsc{Schneps} (CNRS)\\
\noindent Don \textsc{Zagier} (Coll\`ege de France, MPIM)
}
\end{multicols}

\vspace*{\fill}


\newpage
\singlespaced
\begin{center}
  ABSTRACT\\
\thetitle\\
\vspace{.5in}
  \theauthor\\
  \theadvisor
\end{center}

\noindent
This thesis is a study of algebraic and geometric relations
between multizeta values.  There are many such known sets of relations,
coming from different theories,
which are conjecturally equivalent to each other and which
conjecturally describe all relations on multizeta values.
This thesis was inspired by the conjectures of equivalence of these relations.

To study the algebraic relations, we begin by looking at the double
shuffle Lie algebra associated to multizeta values, which encodes the
double shuffle relations.  In chapter 2 of this
thesis, we prove a result which gives the dimension of the associated
depth-graded pieces of the double shuffle Lie algebra in depths 1 and
2, thus verifying the conjecture that the double shuffle Lie algebra
is isomorphic to the Grothendieck-Teichm\"uller Lie algebra in small
depths.

Another conjecturally equivalent set of relations between
multizeta values comes from their expression as periods on $\Mn$,
stemming originally from the work of Cartier and Kontsevich
(among others).  In chapters 3
and 4, we study these geometric relations.  The results obtained from this study provide some evidence
toward the conjecture that the associated formal period algebra is
isomorphic to the formal zeta value algebra.  The key ingredient in
this study is the top dimensional de Rham cohomology of partially compactified
moduli spaces of genus 0 curves with $n$ marked points,
$H^{n-3}(\Mn^\delta)$.  In order to encode multizeta values in a
formal period algebra, we give an explicit expression for a
basis of $H^{n-3}(\Mn^\delta)$.  The techniques used in this construction
are generalized in chapter 4, in which we explicitly describe the bases of
the cohomology of other partially compactified moduli spaces. 
This thesis concludes with a result which gives a new presentation of $Pic(\M_{0,n})$.
\pagebreak
\begin{center}
  R\'ESUM\'E\\
Valeurs multiz\^eta : alg\`ebres de Lie et p\'eriodes sur $\Mn$\\
\vspace{.5in}
  \theauthor\\
  Leila Schneps, directrice de th\`ese
\end{center}

\noindent
Cette th\`ese est une \'etude des relations alg\'ebriques et g\'eom\'etriques entre valeurs multiz\^eta.  Il y a de nombreux ensembles de telles relations, provenant de th\'eories diff\'erentes. Conjecturalement, ces ensembles sont \'equivalents et d\'ecrivent de plus toutes les relations entre valeurs multiz\^eta.  Cette th\`ese s'inspire de ces conjectures portant sur l'\'equivalence de ces relations.

Afin d'\'etudier les relations alg\'ebriques, on commence par regarder l'alg\`ebre de Lie, $\ds$, qui encode les relations de double m\'elange.  Dans le chapitre 2, on d\'emontre un r\'esultat qui donne la dimension des parties gradu\'ees de $\ds$ associ\'ees \`a sa filtration par profondeur en profondeurs 1 et 2. On d\'emontre donc que $\ds$ est isomorphe a l'alg\`ebre de Lie $\grt$ dans les petites profondeurs.

Un autre ensemble de relations entre multiz\^etas, conjecturalement \'equivalent au syst\`eme de double m\'elange, d\'ecoule de leur expression comme p\'eriodes sur $\Mn$, suivant les m\'ethodes de Cartier et Kontsevich (parmi d'autres).  Dans les chapitres 3 et 4, on \'etudie ces relations g\'eom\'etriques.  Les r\'esultats obtenus sont en accord avec
la conjecture affirmant que l'alg\`ebre formelle des p\'eriodes est isomorphe \`a l'alg\`ebre formelle des multiz\^etas.  L'ingr\'edient principal dans cette \'etude est la cohomologie de de Rham des espaces de modules de courbes en genre 0 avec $n$ points marqu\'es partiellement compactifi\'es, $H^{n-3}(\Mn^\delta)$.  Afin d'encoder les valeurs multiz\^eta dans l'alg\`ebre formelle des p\'eriodes, on donne une expression explicite pour une base de $H^{n-3}(\Mn^\delta)$.  Ces techniques sont g\'en\'eralis\'ees dans le chapitre 4, dans lequel on d\'ecrit explicitement les bases de la cohomologie d'autres espaces de modules partiellement compactifi\'es.  Dans la derni\`ere partie, on fournit une nouvelle pr\'esentation de $Pic(\M_{0,n})$.
\vspace*{\fill}
\newpage

\vspace*{18cm}
{\it This thesis is dedicated to my father, Bill Higgins, because he understands.}

\newpage

\tableofcontents

\newpage
\pagenumbering{arabic}

\chapter{Introduction}
My research focuses on the study of multizeta values, real
numbers defined by the iterated sums, $$\zeta(k_1,...,k_d) \label{zetadef}=
\sum_{n_1> n_2>\cdots >n_d>0} \frac{1}{n_1^{k_1}n_2^{k_2}\cdots
  n_d^{k_d}}\ \ k_i\in \Z, \ (k_1\geq 2).$$\index{$\zeta(k_1,...,k_d)$}  Multizeta values are objects
meriting much attention of late, and the multizeta value has acquired
many nicknames in the process.  We will refer to a multizeta value as
a multiple zeta value, a multizeta, an $MZV$, a zeta value, or simply a zeta. 

There is a myriad of conjectures and important recent results about
multizeta values in various fields.  One well-known number theoretic
question is   ``Are multizeta values transcendental numbers?''
Euler proved that $\zeta(2n)$ is a rational multiple of $\pi^{2n}$ and more
recently R. Ap\'ery and T. Rivoal showed that certain $\zeta(2n+1)$
are irrational.  The conjecture underlying
my research interests however is an even larger question, the
understanding of which would prove the transcendence conjecture.
The conjecture arises from number theoretic and geometric identities on
multizeta values.  

If one multiplies two multizeta values, one obtains a sum of
multizeta values according to the double shuffle multiplication laws,
shuffle and stuffle. The shuffle multiplication law comes from
multizetas viewed as periods on the moduli space of genus 0 curves,
while stuffle multiplication (already known to Euler) comes from the
number theoretic expression of
multizetas.  We may then endow the vector space over $\Q$ of multizetas
with multiplication given by shuffle, and therefore multizetas form an
algebra with a set of quadratic relations given
by stuffle.  
We denote by $\MZV$\index{$\MZV$}, the algebra generated by $\Q$ and multizeta values,
and call the set of  
multiplication relations the double shuffle (there is universal
convention to consider $1=\zeta(\emptyset)$, so that $\Q\subset \MZV$).

\begin{defn}\label{depth} The {\bf depth} of $\zeta(k_1,...,k_d)$ is $d$ and
  its {\bf weight} 
  is $\sum_{i=1}^d k_i$. \end{defn}
Both the stuffle and
shuffle relations preserve the weight of an expression for
multizetas.  I emphasize ``expression'' since two expressions for
a multizeta value may give the same number.  Although the depth of an
expression for a multizeta is easy to understand, it is not an
invariant of a multizeta number.  One example of this was known
already to
Euler, who proved that $\zeta(3)=\zeta(2,1)$.  The study of the weight
and depth of multizetas leads to the main algebraic conjecture on
multizetas.
\begin{conj}\label{mainconj}
A generating system of relations over $\Q$ between multizeta values is
essentially given by the
shuffle and
stuffle relations (for complete detail see definition \ref{FZ}).  In
particular, there are no linear relations between
multizetas of different weight, hence $\MZV$ forms a graded algebra.
\end{conj}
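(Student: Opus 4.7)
The plan is to factor the statement through a formal double shuffle algebra. First I would define $\FZ$ as the graded $\Q$-algebra generated by formal symbols $\zeta^\fz(k_1,\ldots,k_d)$, placed in weight $\sum k_i$, modulo precisely the shuffle and stuffle relations (plus the regularization needed to accommodate $k_1=1$, as promised by definition \ref{FZ}). By construction the evaluation map $\mathrm{ev}: \FZ \to \MZV$ sending each formal symbol to the corresponding real number is a surjective morphism of algebras that respects the weight filtration. The content of the conjecture splits into two parts: (a) $\mathrm{ev}$ is injective, i.e.\ no hidden relations exist beyond double shuffle; and (b) the weight grading on $\FZ$ descends to an honest grading on $\MZV$, which is the statement that there are no $\Q$-linear relations between multizetas of different weights.

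To attack (a), I would pass to a motivic intermediary. Introduce a graded Hopf algebra $\mathcal{H}$ of motivic multiple zeta symbols $\zeta^\mathfrak{m}(k_1,\ldots,k_d)$ on which the coaction coming from the mixed Tate motive over $\Z$ is explicit, and factor $\mathrm{ev}$ as $\FZ \to \mathcal{H} \xrightarrow{\mathrm{per}} \R$. The first arrow is expected to be an isomorphism because both sides are controlled Lie-theoretically by the same generating Lie algebra; this is precisely where chapter 2 of this thesis contributes, by comparing $\ds$ with $\grt$ in small depths, since $\grt$ governs $\mathcal{H}$. Iterating this comparison weight by weight, together with depth-graded dimension bounds, would reduce the injectivity of $\FZ \to \mathcal{H}$ to a tractable Lie algebra computation.

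The decisive obstacle, and the reason the statement is marked as a conjecture rather than a theorem, is the period map $\mathrm{per}: \mathcal{H} \to \R$: one needs this to be injective in order to pull the grading on $\mathcal{H}$ back to $\MZV$ and to conclude part (b). This injectivity is a special case of Grothendieck's period conjecture for mixed Tate motives over $\Z$, and it lies far outside reach of current techniques, as even the irrationality of individual $\zeta(2n+1)$ remains largely open. My realistic plan is therefore not to prove conjecture \ref{mainconj} outright but to assemble convergent evidence for it: the Lie-algebraic comparison $\ds \simeq \grt$ in low depth (chapter 2), and the geometric comparison identifying the formal period algebra arising from $H^{n-3}(\Modf)$ with $\FZ$ (chapters 3--4). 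The hardest step in the complete proof remains the motivic-to-numerical injection, for which no strategy available today suffices.
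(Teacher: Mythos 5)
The statement you were asked to prove is a conjecture, and the paper contains no proof of it: the introduction states explicitly that the thesis ``is not an attempt to make progress toward this conjecture, which is extremely difficult because of the analytic nature of the transcendence problem,'' and instead studies the formal algebras $\FZ$, $\ds$, and $\mathcal{FC}$ that encode the known relations. Your proposal is therefore correct in the only sense available: you accurately split the conjecture into the faithfulness of the formal-to-numerical evaluation map and the weight grading, you locate the genuinely inaccessible step at the injectivity of the period map (Grothendieck's period conjecture for mixed Tate motives over $\Z$), and your fallback --- assembling evidence via the depth-graded comparison of $\ds$ with $\grt$ and via the period algebra on $\Mn$ --- is precisely the program the thesis carries out. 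One small caution: you should not present the isomorphism $\FZ\simeq\mathcal{H}$ as merely ``expected because both sides are controlled by the same Lie algebra''; that identification is itself a major open conjecture (equivalent to $\ds\simeq\grt$ together with the motivic nature and completeness of double shuffle), and the paper treats it as such rather than as a reduction one could iterate weight by weight.
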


This thesis is not an attempt to make progress toward this conjecture,
which is extremely
difficult because of the analytic nature of the transcendence problem.
Rather, this thesis is an attempt to better understand its
implications, in particular the combinatorial identities that arise
from the known relations on multizeta values.
Hence, we may define
graded algebras that satisfy major families relations on multizetas
and see what we can learn about multizetas from these algebras.  This
thesis is a study and comparison of two such algebras, the double
shuffle Lie
algebra and the period algebra of formal cell numbers.

In chapter 1, I give the main objects and state well-known theorems
on which this study of multizetas is based.  In this introduction,
I will define the algebras and Lie algebras associated to multizeta
values and explain how they are related and what the main conjectures
are.  These conjectures are there to 
provide the reader with a flavor of
the questions that inspired this thesis.  The conjectures presented in
chapter 1
may be summarized as saying that all of the maps between the algebras (in upper case calligraphic font) and the maps between Lie algebras (in lower case 
Fraktur font) in the following
commutative diagram are isomorphisms.  Those shown are known to exist,
except for the dotted arrows, whose conjectural definitions are known,
but which are not proved to be well-defined, much less isomorphisms. 
\[\xymatrix{
 & \mathfrak{nfz} \ar[r] & \widetilde{\nfz} \ar@{=}[d]\\
{\mathcal FC} \ar@{->>}[d] \ar@{<.>}[r] & \FZ \ar@{->>}[u] \ar@{->>}[d]
& \ds^{\vee} 
   \ar@{<.>}[r] \ar@{->>}[d] & \grt^{\vee} \ar@{->>}[dl] \\
{\mathcal C} \ar@{=}[r] & \MZV \ar@{->>}[r] & \nz.
}\]
Chapter 2 presents some evidence toward the well-known conjecture
that the double shuffle Lie algebra, $\ds$, is isomorphic to the
Grothendieck-Teichm\"uller Lie algebra, $\grt$.  In chapter 2, I
calculate the dimensions of the first two associated depth-graded parts of
$\ds$, confirming the conjecture that $\ds\simeq \grt$ in small depths
since the analogous
dimensions were computed by Ihara \cite{Ih2} for $\grt$.  (I note
here that I found these dimensions in 2005.  The result I present has
since been published in \cite{IKZ} using different methods.)

Chapters 3 and 4
deal with the
algebra of periods and the study of the cohomology of $\Mn^\delta$,
the partially 
compactified moduli space of genus 0 curves with $n$ marked points
consisting of $\Mn$, with the boundary divisors that bound the
standard associahedron adjoined.
Chapter 3 is an intact article which is
joint work with F. Brown and L. Schneps.  The main result presented in
this article is the presentation of an explicit basis for the top dimensional
de Rham cohomology of the partially compactified moduli space,
$H^{n-3}(\Mn^\delta)$.  This presentation allows us to compute
the dimension of the cohomology using a recursive formula.  In this
article, we construct the algebra of periods on $\Mn^\delta$,
denoted $\mathcal{C}$, which 
is isomorphic to the algebra of multizeta values, $\MZV$ \cite{Br}.
 This
leads us to define an algebra
of formal cell numbers, $\mathcal{FC}$, which encodes the known combinatorial
relations coming from geometry on certain special generating periods
called cell numbers.  Since multizeta values are cell numbers, there
is reason to believe that these geometric combinatorial relations
describe all relations on multizeta values.

In chapter 4, I generalize the results of chapter 3 to calculate the top dimensional de
Rham cohomology of some more general partial compactifications of $\Mn$ and
give explicit dimensions for each cohomology.  The investigation into
the description of the cohomology also led to finding a new presentation of
the Picard group, $Pic(\M_{0,n})$.

Although the chapters have disparate
titles, they are intimately linked by the search for the connection
between sets of relations and properties coming from the
different geometric and number theoretic expressions for multizeta
values.  I outline these here because the goal  
of this thesis is to present results that emphasize the common properties
of these different points of view. 
The double shuffle Lie algebra is an
object which encodes both the number theoretic expression of multizeta
values and the geometric expression of multizeta values as periods.
In fact, it was recently shown with a clever manipulation of Cartier
\cite{Br},
that the number theoretic identity, stuffle, may be seen as a period identity
on multizeta values.  This observation (among others) led
naturally from our study of the Lie algebra of multizetas to the
period algebra and we believe
that all of the identities on multizeta values may be encoded as
the identities which we derived from the geometry of moduli spaces.
In this way, the study of
the double shuffle Lie algebra is closely related to that of the
period algebra.

\section{Properties of multizeta values}

In this section, I give the basic properties of multizeta values and
the definitions from which this thesis is built.

To a sequence of positive integers, $(k_1,...,k_d)$, we associate
a sequence in the noncommutative variables, $x$ and $y$, by associating
every $k_i$ to the monomial $x^{k_i-1}y$.  By concatenation,
we then associate the sequence to the monomial,
\begin{equation}\label{inttoxy}
(k_1,...,k_d) \sim x^{k_1-1}y\cdots x^{k_d-1}y
\end{equation}
whose degree is the same as the weight
of the sequence of integers.  Then we denote $\zeta(k_1,...k_d)$ by
$\zeta(x^{k_1-1}y\cdots x^{k_d-1}y)$.  

\vspace{5mm}
\begin{defn}\label{convergentdef}\index{Convergent sequence} Let $\underline{k}= (k_1,...,k_d)$ be
  a sequence of positive integers and
  $\underline{\epsilon}=(\epsilon_1,...,\epsilon_n)$ be corresponding
  a monomial in
  $x$ and $y$.  The sequences are {\bf convergent} if $k_1\geq 2$,
  $\epsilon_1=0$ and $\epsilon_n=1$. \end{defn}

The $x,y$ notation for a multizeta comes from its expression as 
an iterated integral.  The
following proposition is due to Kontsevich and is found in many texts about
multiple zeta values, for example \cite{Dr} and \cite{IKZ}.

\begin{prop} Let $(k_1,k_2,...,k_r)$ be a sequence of positive
  integers and $\underline{\omega}$ the corresponding word in $x$
  and $y$.  We associate to $\underline{\omega}$ a tuple of 0's and 1's,
  $\underline{\epsilon}$, by replacing each $x$ and $y$ in
  $\underline{\omega}$ by 0 and 1 respectively to obtain the sequence,
$$(\epsilon_n,\ldots,\epsilon_1)=(0,\ldots,0,1,0,...,0,1),$$
so that $r$ is the number of $1$'s in the tuple, and $\epsilon_1=1$.

Then for an indeterminate $z$, we have
\begin{equation}\label{zetaper}\sum_{n_1>\cdots>n_r>0}
  {{z^{n_1}}\over{n_1^{k_1}\cdots n_r^{k_r}}}=
(-1)^r\int_0^z {{dt_n}\over{t_n-\epsilon_n}}
\int_0^{t_n}{{dt_{n-1}}\over{t_{n-1}-\epsilon_{n-1}}}\cdots
\int_0^{t_2}{{dt_1}\over{t_1-\epsilon_1}}.\end{equation}

When $k_1>1$, by setting $z=1$, we have
\begin{align*} \zeta(k_1,\ldots,k_r)& =(-1)^r\int_0^1
  {{dt_n}\over{t_n-\epsilon_n}}
\int_0^{t_n}{{dt_{n-1}}\over{t_{n-1}-\epsilon_{n-1}}}\cdots
\int_0^{t_2}{{dt_1}\over{t_1-\epsilon_1}}\\
&= (-1)^r\int_{0<t_1<t_2<\cdots <t_n<1} \frac{dt_1dt_2\cdots dt_n}{
(t_n-\epsilon_n)\cdots (t_1-\epsilon_1)}.\end{align*}
\end{prop}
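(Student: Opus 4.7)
The plan is to prove identity (\ref{zetaper}) as an equality of power series in $z$ (convergent for $|z|<1$) by induction on the weight $n = k_1 + \cdots + k_r$. First I would absorb the sign $(-1)^r$ into the forms: since $dt/(t-1) = -dt/(1-t)$ and the number of $1$'s in $\underline{\epsilon}$ is exactly $r$, the right-hand side equals the classical multiple polylogarithm
\[ \mathrm{Li}_{\underline{\epsilon}}(z) = \int_0^z \omega_{\epsilon_n}(t_n) \int_0^{t_n} \omega_{\epsilon_{n-1}}(t_{n-1}) \cdots \int_0^{t_2} \omega_{\epsilon_1}(t_1), \]
where $\omega_0 = dt/t$ and $\omega_1 = dt/(1-t)$. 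Write $S_{k_1,\ldots,k_r}(z)$ for the generating series on the left-hand side.

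The heart of the argument is that both $\mathrm{Li}_{\underline{\epsilon}}(z)$ and $S_{k_1,\ldots,k_r}(z)$ satisfy the same first-order ODE in $z$ with vanishing initial condition at $z=0$. Differentiating the iterated integral strips off its outermost form, yielding
\[ z\,\mathrm{Li}'_{\underline{\epsilon}}(z) = \mathrm{Li}_{\underline{\epsilon}'}(z) \text{ if } \epsilon_n = 0, \qquad (1-z)\,\mathrm{Li}'_{\underline{\epsilon}}(z) = \mathrm{Li}_{\underline{\epsilon}'}(z) \text{ if } \epsilon_n = 1, \]
where $\underline{\epsilon}'$ is obtained from $\underline{\epsilon}$ by dropping the outer letter $\epsilon_n$. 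On the series side, a direct computation (for the $k_1 = 1$ case one reindexes $m = n_1 - 1$ and sums the geometric tail) gives the matching recursions
\[ z\,S'_{k_1,\ldots,k_r}(z) = S_{k_1-1,k_2,\ldots,k_r}(z) \text{ if } k_1 > 1, \qquad (1-z)\,S'_{1,k_2,\ldots,k_r}(z) = S_{k_2,\ldots,k_r}(z). \]
Crucially, $k_1 > 1$ corresponds exactly to $\epsilon_n = 0$ (the leftmost letter of $x^{k_1-1}y\cdots x^{k_r-1}y$ is an $x$), and dropping that letter turns the word for $(k_1,\ldots,k_r)$ into the word for $(k_1-1,k_2,\ldots,k_r)$; symmetrically, $k_1 = 1$ corresponds to $\epsilon_n = 1$, and dropping the leading $y$ gives the word for $(k_2,\ldots,k_r)$. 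Hence the inductive step reduces both sides to the \emph{same} pair of weight-$(n-1)$ objects, and the base case $n=1$ (necessarily $r=1$, $k_1=1$) is the familiar $\mathrm{Li}_1(z) = -\log(1-z) = \sum_{m\ge 1} z^m/m$.

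For the second assertion, specialization at $z=1$ is permitted precisely because $k_1 \geq 2$: on the series side this is absolute convergence, and on the integral side this guarantees that the outermost form $\omega_{\epsilon_n} = dt/t$ has no singularity at the outer endpoint $1$ (while $\omega_{\epsilon_1} = dt/(1-t)$ remains harmless at its inner endpoint $0$). The final reformulation as an integral over the simplex $\{0 < t_1 < \cdots < t_n < 1\}$ is then Fubini. The main obstacle in writing the proof cleanly is purely notational: the word $x^{k_1-1}y \cdots x^{k_r-1}y$ is read left-to-right as $\epsilon_n, \epsilon_{n-1}, \ldots, \epsilon_1$, reversing the natural integration order, and this reversal must be tracked carefully so that the recursion on the iterated integral lines up letter-for-letter with the recursion on the series.
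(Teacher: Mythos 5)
Your proof is correct and is essentially the paper's own argument: both proceed by induction on the weight $n$, peeling off the outermost letter $\epsilon_n$ and splitting into the cases $\epsilon_n=0$ (i.e.\ $k_1>1$) and $\epsilon_n=1$ (i.e.\ $k_1=1$, where the geometric series appears). The only difference is packaging — you differentiate both sides and invoke uniqueness of the solution to the resulting ODE with zero initial condition, whereas the paper integrates the inductive hypothesis directly — but the recursion and case analysis are identical.
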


\begin{proof}
 
We prove \eqref{zetaper} by induction on $n$.  For the base case $n=1$,
$k_1=k_r=1$, so $\epsilon_1=1$.  We have then that 
\begin{align*}
\int_0^z \frac{dt_1}{(1-t_1)} &= \int_0^z (\sum_{n=0}^\infty
t_1^n)dt_1\\ &= \sum_{n=1}^\infty \frac{z^{n}}{n}.\end{align*}

We now check the two base cases where $n=2$, 
namely the tuple $(0,1)$ and the tuple $(1,1)$, by repeated use of
the series expansion $1/(1-t)=\sum_{i\ge 0} t^i$.  For the case
$(1,1)$ we have
\begin{equation*} \int_0^z
  {{dt_2} \over{1-t_2}} \int_0^{t_2} {{dt_1}\over{1-t_1}} =
  \sum_{n_1>n_2 \ge 1}{{z^{n_1}}\over{n_1n_2}}. \end{equation*}
And for (0,1) we have,
\begin{equation*}
\int_0^z \frac{dt_2}{t_2} \int_0^{t_2}\frac{dt_1}{1-t_1} =
\sum_{n=1}^\infty \frac{z^n}{n^2}.
\end{equation*}

Now assume \eqref{zetaper} true for tuples of length $n-1$ and consider a tuple
$(\epsilon_{n},\ldots,\epsilon_1)$.  Assume first that $\epsilon_n=0$.
Then by the induction hypothesis the right hand side of \eqref{zetaper} becomes
\begin{align*} \int_0^z {{dt_n}\over{t_n}}\sum_{n_1>\cdots>n_r>0}
  {{t_n^{n_1}}\over
{n_1^{k_1-1}\cdots n_r^{k_r}}}
&=\sum_{n_1>\cdots>n_r>0}{{1}\over{n_1^{k_1-1}\cdots n_r^{k_r}}}
\int_0^z t_n^{n_1-1} dt_n\cr
&=\sum_{n_1>\cdots>n_r>0}{{z^{n_1}}\over{n_1^{k_1}\cdots
    n_r^{k_r}}}. \end{align*} 
To finish we only need to deal with the case where $\epsilon_n=1$.
\begin{align*}\int_0^z {{dt_n}\over{1-t_n}}\sum_{n_2>\cdots>n_r>0} {{t_n^{n_2}}
\over {n_2^{k_2}\cdots n_r^{k_r}}}
&=\sum_{n_2>\cdots>n_r>0}{{1}\over{n_2^{k_2}\cdots n_r^{k_r}}}
\int_0^z \sum_{i\ge 0}t_n^{i+n_2} dt_n\cr
&=\sum_{i\ge 0}\sum_{n_2>\cdots>n_r>0}{{1}\over{n_2^{k_2}\cdots n_r^{k_r}}}
\int_0^z t_n^{i+n_2} dt_n\cr
&=\sum_{i\ge 0}\sum_{n_2>\cdots>n_r>0}{{1}\over{n_2^{k_2}\cdots n_r^{k_r}}}
{{z^{i+n_2+1}}\over{(i+n_2+1)}} \cr
&=\sum_{n_1>n_2>\cdots>n_r>0}{{z^{n_1}}\over{n_1^{k_1}\cdots
    n_r^{k_r}}}, \end{align*}
where in the last line we set $n_1=i+n_2+1$ and by the hypothesis that
$\epsilon_1=1$, $k_1=1$.
 
\end{proof}

\section{Quadratic relations on multizeta values}

If one multiplies two multizeta values, one obtains the sum of
multizeta values, but this expression is not unique.  One expression
was known already to Euler.  In order to give these identities, we
shall first define
the shuffle and the stuffle products on sequences.
We let $\cdot$\label{cdot}\index{Concatenation product, $\cdot$} denote the concatenation product of sequences.

\vspace{.5cm}
\begin{defn}\label{stuffledefn}\index{$*$, $st(\underline{a},\underline{b})$} For any two sequences of positive integers,
      $\underline{a}$, $\underline{b}$ the {\bf stuffle product}
      of $\underline{a}$ and $\underline{b}$, denoted
      $st(\underline{a},\underline{b})$ or $\underline{a} *
      \underline{b}$,
      is the formal sum obtained by the recursion:
\begin{enumerate}
\item $st(\underline{a},\emptyset)= st(\emptyset,\underline{a})=
  \underline{a}$,
\item $st(a_0\cdot \underline{a},b_0 \cdot \underline{b}) = a_0\cdot
  st(\underline{a}, b_0\cdot \underline{b}) + b_0 \cdot st(a_0
  \cdot \underline{a},\underline{b}) + (a_0 + b_0) \cdot
  st(\underline{a} , \underline{b})$.
\end{enumerate}

\end{defn}

Morally, the stuffle product is obtained by taking permutations of
$\underline{a}\cdot\underline{b}$ such that the orders
of both sequences are preserved and then adding 
adjacent pairs of elements, one from $\underline{a}$ and one from
$\underline{b}$, in all
possible ways, in other words ``stuffing'' the elements of $\underline{a}$ and
$\underline{b}$ into the same slot.

\begin{ex}
The stuffle product $(2,1)*(3)= (2,1,3) +(2,3,1) + (3,2,1)+(2,4)+(5,1)$.
\end{ex}

\begin{defn}\label{shdefintro}\index{$\sha$, $sh(\omega_1,\omega_2)$}  Let
  $\underline{\alpha}=(\alpha_1,\dots ,\alpha_k)$ and $\underline{\beta} = 
    (\beta_1,\dots, \beta_l)$ be two sequences.  The {\bf shuffle product}
    of $\underline{\alpha}$ 
    and $\underline{\beta}$, denoted by
    $sh(\underline{\alpha},\underline{\beta})$, or 
    $\underline{\alpha}\sh \underline{\beta}$, is the formal sum
    obtained by the recursive procedure:
\begin{enumerate}
\item $sh(\underline{\alpha},\emptyset) = sh(\emptyset,
  \underline{\alpha}) = \underline{\alpha}$,
\item $sh(a_0\cdot \alpha, b_0\cdot \beta) =a_0\cdot sh(\alpha,
  b_0\cdot \beta) + b_0\cdot sh(a_0\cdot \alpha, \beta)$.
\end{enumerate}
\end{defn}

We will often rely on an equivalent definition of the shuffle product,
$$sh(\alpha, \beta) = \sum_{\sigma} \sigma(\alpha \cdot \beta),$$
where $\sigma \in \Sym_{k+l}$ runs over all permutations which preserve the
orders of $\alpha$ and $\beta$.  For ease of notation, we
write $\underline{\gamma}\in sh(\underline{\alpha},
\underline{\beta})$ to mean that $\underline{\gamma}$ is a term in
the sum $ sh(\underline{\alpha},
\underline{\beta})$.

\begin{ex}
$(0,1)\sh (0,1)= 2(0,1,0,1) + 4(0,0,1,1).$
\end{ex}

Both of the combinatorial products above are commutative.  They were
 defined here in order to present the
 following two classical expressions for the product of multizetas.

\begin{prop}[Euler]\label{stuffle} Let $\underline{a}_1$ and
  $\underline{a}_2$ be two convergent sequences of positive integers.  Then,
\begin{equation*}\zeta(\underline{a}_1) \zeta( \underline{a}_2)
  =\sum_{\underline{a}\in st(
    \underline{a}_1,\underline{a}_2)
    }\zeta(\underline{a})=\zeta(\underline{a}_1 *
    \underline{a}_2). \end{equation*} \end{prop}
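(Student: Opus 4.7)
The plan is to prove this by induction on $k+\ell$, the sum of the lengths of $\underline{a}_1=(a_1,\ldots,a_k)$ and $\underline{a}_2=(b_1,\ldots,b_\ell)$, directly from the series definition of $\zeta$. The key combinatorial observation is that the stuffle recursion in Definition \ref{stuffledefn} mirrors the three possible ways the leading indices in the two nested sums can compare: strictly greater, strictly less, or equal.

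More precisely, I would write
\begin{equation*}
\zeta(\underline{a}_1)\,\zeta(\underline{a}_2)=\sum_{\substack{n_1>\cdots>n_k>0\\ m_1>\cdots>m_\ell>0}}\frac{1}{n_1^{a_1}\cdots n_k^{a_k}\,m_1^{b_1}\cdots m_\ell^{b_\ell}}
\end{equation*}
and split the summation region according to the trichotomy $n_1>m_1$, $m_1>n_1$, or $n_1=m_1$. In the first case, factoring out $1/n_1^{a_1}$ and summing over $n_1>\max(n_2,m_1)$ leaves an inner double sum that, by the induction hypothesis applied to $(a_2,\ldots,a_k)$ and $(b_1,\ldots,b_\ell)$, equals $\zeta\bigl((a_2,\ldots,a_k)*(b_1,\ldots,b_\ell)\bigr)$; concatenating $a_1$ on the left produces the first term of the stuffle recursion. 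The second case is symmetric and produces the second recursive term. In the third case, $n_1=m_1$ collapses the two leading powers into $1/n_1^{a_1+b_1}$, and induction on the remaining tails yields the third, ``stuffed,'' term $(a_1+b_1)\cdot st\bigl((a_2,\ldots),(b_2,\ldots)\bigr)$.

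The base case is when one of the sequences is empty, which is immediate from clause (1) of Definition \ref{stuffledefn}. For the induction to be clean one must verify that the sub-series appearing after fixing $n_1$ (resp.\ $m_1$, resp.\ the common value) are themselves convergent multiple zeta series; this is automatic since $a_1\ge 2$ in the convergent sequence $\underline{a}_1$ and the inner tuples $(a_2,\ldots,a_k)$, $(b_2,\ldots,b_\ell)$ may begin with $1$ but their sums are now truncated above by a finite integer $n_1-1$ or $m_1-1$, so absolute convergence is preserved throughout and Fubini justifies the rearrangement.

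The main obstacle is not really conceptual but bookkeeping: one must be careful that when $\underline{a}_2$ starts with $b_1=1$ the partial sums are still well-defined as finite quantities inside the outer sum, and that the induction hypothesis is applicable to the (possibly non-convergent when taken alone) tails $(a_2,\ldots,a_k)$ and $(b_2,\ldots,b_\ell)$. This is handled by noting that the recursion operates on \emph{truncated} series (bounded above by the outer index), where the weight-$1$ issue does not arise; only the outermost sum requires $a_1\ge 2$, which is guaranteed by the convergence hypothesis on $\underline{a}_1$ and $\underline{a}_2$.
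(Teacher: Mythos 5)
Your proof is correct. Note that the paper itself gives no proof of this proposition --- it is stated as a classical identity of Euler, in contrast with the Kontsevich integral representation just before it, which is proved in detail --- so there is nothing to compare against; your argument is the standard one. The decomposition of the double nested sum by the trichotomy $n_1>m_1$, $n_1<m_1$, $n_1=m_1$ reproduces exactly the three terms of clause (2) of Definition \ref{stuffledefn}, and you correctly identify the one point that requires care: the tails $(a_2,\ldots,a_k)$ and $(b_2,\ldots,b_\ell)$ need not be convergent sequences, so the induction must formally be carried out on the truncated partial sums $Z_N(\underline{a})=\sum_{N>n_1>\cdots>n_d>0} n_1^{-a_1}\cdots n_d^{-a_d}$, for which the identity $Z_N(\underline{a}_1)Z_N(\underline{a}_2)=Z_N(\underline{a}_1*\underline{a}_2)$ holds as a finite combinatorial identity for arbitrary tuples; the stated proposition then follows by letting $N\to\infty$ under the convergence hypothesis. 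The only cosmetic slip is writing $\zeta\bigl((a_2,\ldots,a_k)*(b_1,\ldots,b_\ell)\bigr)$ for what is really the truncated sum $Z_{n_1}$ of that stuffle, but your final paragraph makes clear you intend the truncated version, so the argument stands.
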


The iterated integral expression in proposition \ref{zetaper} for multizetas
written in the $x,y$ notation
leads to the following alternative expression for the product of
multizetas, also attributed to Kontsevich.

\begin{prop}\label{shuffle}
Let $\underline{\epsilon}_1$ and $\underline{\epsilon}_2$ be two
convergent 
sequences in the variables $x$ and $y$.  Then,
$$\zeta(\underline{\epsilon}_1)
\zeta(\underline{\epsilon}_2) = \sum_{\underline{\epsilon} \in
  \underline{\epsilon}_1\sha \underline{\epsilon}_2}
\zeta(\underline{\epsilon})= \zeta(\underline{\epsilon}_1\sha
\underline{\epsilon}_2) .$$ 
\end{prop}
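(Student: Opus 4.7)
The plan is to deduce this from the iterated integral representation of multizeta values established in the preceding proposition, together with Chen's classical shuffle relation for iterated integrals on the interval $[0,1]$.

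First I would write, using Proposition \ref{zetaper} with $z=1$,
\[
\zeta(\underline{\epsilon}_1)\,\zeta(\underline{\epsilon}_2) = (-1)^{r_1+r_2}\!\!\int_{\Delta_1}\!\omega_1\cdots\omega_n\;\cdot\!\int_{\Delta_2}\!\omega'_1\cdots\omega'_m,
\]
where $\Delta_1=\{0<t_1<\cdots<t_n<1\}$, $\Delta_2=\{0<s_1<\cdots<s_m<1\}$, and each $\omega_i$ (resp.\ $\omega'_j$) is $dt_i/t_i$ or $dt_i/(t_i-1)$ depending on whether the corresponding letter of $\underline{\epsilon}_1$ (resp.\ $\underline{\epsilon}_2$) is $x$ or $y$. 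The number $r_1+r_2$ is the total number of $y$'s, which is preserved by shuffling; this takes care of the sign bookkeeping in advance.

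Next I would analyze the product $\Delta_1\times\Delta_2\subset[0,1]^{n+m}$. Up to a subset of measure zero (the hyperplanes where some $t_i$ equals some $s_j$), this cube-like region decomposes as a disjoint union over the permutations $\sigma\in\Sym_{n+m}$ that preserve the internal orders of the $t_i$'s and the $s_j$'s, each piece being a genuine simplex
\[
\Delta_\sigma = \{0<u_1<u_2<\cdots<u_{n+m}<1\},
\]
where $(u_1,\dots,u_{n+m})$ is the $\sigma$-ordered interleaving of the two chains. By Fubini the product of the two integrals equals the sum over such $\sigma$ of the single iterated integral over $\Delta_\sigma$ of the $1$-forms in the order prescribed by $\sigma$. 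But this sum over order-preserving permutations is exactly the shuffle sum of the word sequences $\underline{\epsilon}_1$ and $\underline{\epsilon}_2$ in the sense of Definition \ref{shdefintro}. Applying Proposition \ref{zetaper} (again with $z=1$) backwards to each term yields
\[
\zeta(\underline{\epsilon}_1)\,\zeta(\underline{\epsilon}_2)=\sum_{\underline{\epsilon}\in\underline{\epsilon}_1\sha\underline{\epsilon}_2}\zeta(\underline{\epsilon}).
\]

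The main obstacle is the geometric decomposition step: one must check that the union over order-preserving $\sigma$ really covers $\Delta_1\times\Delta_2$ (equivalently, that every point with pairwise distinct coordinates lies in exactly one $\Delta_\sigma$), and that the diagonals one discards have Lebesgue measure zero so that they contribute nothing to the iterated integrals. Once this is in place, the convergence hypothesis (the first letter of each word is $x$, so the outermost integrands have no pole at $0$; the last letter is $y$, ensuring convergence at $1$) guarantees that all the integrals involved are absolutely convergent, and the identity follows by a clean Fubini argument combined with the combinatorial identification of the index set with the shuffle set.
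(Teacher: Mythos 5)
Your argument is correct: this is the standard shuffle-product proof via the decomposition of $\Delta_1\times\Delta_2$ into simplices indexed by order-preserving interleavings followed by Fubini, and the paper itself states Proposition \ref{shuffle} without proof (attributing it to Kontsevich and to the iterated-integral expression of Proposition 1.4), so your write-up supplies exactly the argument the paper implicitly invokes. One harmless slip in the convergence discussion: with the paper's conventions the leading $x$ of a convergent word corresponds to the variable nearest $1$ and guarantees integrability at the \emph{upper} endpoint, while the final $y$ corresponds to the variable nearest $0$ and guarantees integrability at the \emph{lower} endpoint — you have the two endpoints interchanged, but the conclusion that all integrals involved converge absolutely is unaffected.
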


The shuffle product on multizetas endows the vector space over $\Q$
generated by multizeta values
with the structure of a $\Q$ algebra, while the stuffle product gives
this algebra a set of relations.  The system of relations on multizeta
values given by the shuffle and the stuffle
products is known as the system of double shuffle
relations\label{doubleshufrels}\index{Double shuffle}.  If we restrict
ourselves to double shuffle relations on multizeta values, we do not
obtain what is
conjectured to be a complete set of relations on multizetas.  

An important system of
relations on multizetas comes from regularization of nonconvergent
zeta values, a technique
from physics to define a notion of cancelling divergences. 
Based on early work of Ecalle and Zagier (see \cite{IKZ}), one may extend the double shuffle
relations by allowing identities to be obtained from applying the
double shuffle relations to
divergent sums.  The following important relation coming from regularization, known as {\bf Hoffman's Relation}, is conjectured to complete the system of generating relations, along with double shuffle, on multizeta values.

\begin{prop}[HO]\label{Hoffman}\index{Hoffman's relation}  Let $\underline{k}$ be a convergent
  sequence of positive 
  integers and let $\underline{\omega}$ be its corresponding sequence
  in $x$ and $y$ by the association \eqref{inttoxy}.   Then,
  $$\zeta\bigl( \sum_{\underline{l}\in 
  (1)*\underline{k}} \underline{l} - \sum_{\underline{\lambda}\in
  (y)\sh\underline{\omega}} \underline{\lambda} \bigr) = 0.$$ 
\end{prop}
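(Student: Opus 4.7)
The plan is to derive Hoffman's relation by computing the formally divergent product $\zeta(1)\cdot\zeta(\underline{k})$ in two ways --- through the stuffle law of Proposition \ref{stuffle} and the shuffle law of Proposition \ref{shuffle} --- and then cancelling the unique divergent contribution common to both expansions. Following Ihara--Kaneko--Zagier, one works in the formal polynomial ring $\MZV[T]$, extending $\zeta$ to regularized maps $Z^*$ (respecting stuffle) and $Z^{\sh}$ (respecting shuffle) that both send $\zeta(1)\mapsto T$ and agree with the usual $\zeta$ on convergent inputs.

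The stuffle product identity, applied in this regularized setting, gives $T\zeta(\underline{k})=Z^*(1)\,Z^*(\underline{k})=\sum_{\underline{l}\in(1)*\underline{k}} Z^*(\underline{l})$, while the shuffle product identity gives $T\zeta(\underline{k})=Z^{\sh}(y)\,Z^{\sh}(\underline{\omega})=\sum_{\underline{\lambda}\in(y)\sh\underline{\omega}} Z^{\sh}(\underline{\lambda})$, so equating yields in $\MZV[T]$
\begin{equation*}
\sum_{\underline{l}\in(1)*\underline{k}} Z^*(\underline{l})\;=\;\sum_{\underline{\lambda}\in(y)\sh\underline{\omega}} Z^{\sh}(\underline{\lambda}).
\end{equation*}
A direct inspection via Definitions \ref{stuffledefn} and \ref{shdefintro} shows that each expansion has exactly one divergent entry: on the stuffle side it is $Z^*(1,k_1,\ldots,k_d)$, since every other sequence in $(1)*\underline{k}$ begins with an integer $\geq k_1\geq 2$ or with $1+k_j\geq 3$; on the shuffle side it is $Z^{\sh}(y\underline{\omega})$, since every other word in $(y)\sh\underline{\omega}$ begins with $x$. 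Under the correspondence \eqref{inttoxy} these two divergent terms are attached to the same sequence. Hoffman's relation is therefore equivalent to the single equality $Z^*(1,k_1,\ldots,k_d)=Z^{\sh}(y\underline{\omega})$ in $\MZV[T]$, after which the divergent terms cancel and every remaining contribution is a convergent $\zeta$.

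This identification of the two regularizations is the main obstacle; it is essentially the depth--one case of the Ihara--Kaneko--Zagier comparison theorem. I would prove it directly by isolating a finite part in each regularization: on the stuffle side, split the truncated sum $\zeta_N(1,\underline{k})$ according to the position of the extra index $n_0$ relative to the $n_i$ to obtain $H_N\zeta_{N-1}(\underline{k})$ minus a convergent correction $C$; on the shuffle side, isolate the logarithmic singularity in the iterated integral representation from Proposition \ref{shuffle} by integration by parts, extracting $-\log(1-z)\zeta(\underline{k})$ minus a convergent correction $C'$. An explicit enumeration of the cases (the extra index $n_0$ either coincides with some $n_j$, lies strictly between $n_j$ and $n_{j+1}$, or lies below $n_d$) shows that $C=C'$ equals the convergent part of $(1)*\underline{k}$, namely $\sum_{j=1}^d\zeta(k_1,\ldots,k_j+1,\ldots,k_d)+\sum_{j=1}^d\zeta(k_1,\ldots,k_j,1,k_{j+1},\ldots,k_d)$, which closes the argument.
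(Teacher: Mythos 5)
The paper itself gives no proof of this proposition at all: it is quoted from the literature ([HO], [IKZ]), and the only commentary is the remark immediately following it, which is precisely your first reduction --- each of the two sums contains exactly one divergent term, these terms correspond to the same sequence under \eqref{inttoxy}, and they cancel in the difference. So there is nothing in the paper to compare against; your setup (working in $\MZV[T]$, computing $T\zeta(\underline{k})$ once by stuffle and once by shuffle, checking that each expansion has a unique divergent entry, and reducing Hoffman's relation to the single equality $Z^*(1,\underline{k})=Z^{\sh}(y\underline{\omega})$) is correct and is exactly the standard regularization argument the paper is implicitly invoking.

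The gap is in your last paragraph. The decomposition of $\zeta_N(1,\underline{k})$ by the position of the extra index $n_0$ is nothing but the truncated stuffle, so it produces $C=$ (convergent part of $(1)*\underline{k}$); the integration by parts on the iterated integral is nothing but the shuffle identity for multiple polylogarithms, so it produces $C'=$ (convergent part of $(y)\sh\underline{\omega}$). These are a priori \emph{different} linear combinations of convergent zeta values, and the asserted equality $C=C'$ \emph{is} Hoffman's relation --- no enumeration of the cases for $n_0$ can yield it, since that enumeration only ever sees the stuffle side. The missing ingredient is the comparison of the two regularization scales: one must show that $\lim_N\bigl(\zeta_N(1,\underline{k})-H_N\,\zeta(\underline{k})\bigr)=\lim_{z\to 1^-}\bigl(\mathrm{Li}_{y\underline{\omega}}(z)+\log(1-z)\,\zeta(\underline{k})\bigr)$. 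This follows from an Abelian lemma: if partial sums satisfy $A_N=\alpha H_N+\beta+o(1)$, then $\sum_n a_n z^n=(1-z)\sum_N A_N z^N=-\alpha\log(1-z)+\beta+o(1)$ as $z\to 1^-$, applied to $A_N=\zeta_N(1,\underline{k})$, the partial sums of $\mathrm{Li}_{y\underline{\omega}}$. Equivalently, it is the statement that the Ihara--Kaneko--Zagier correction map $\rho$ is the identity on polynomials of degree $\leq 1$ in $T$, because $\rho(e^{Tu})=A(u)e^{Tu}$ with $A(u)=1+O(u^2)$. With that lemma supplied your argument closes; as written, the crucial step is asserted rather than proved.
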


Note that although Hoffman's relation comes from regularization, it is a relation only on
convergent zeta values, since each sum in
the expression has only one non-convergent term, $(1,\underline{k})$
and $(y,\underline{\omega})$, but these terms are equal and disappear in the
difference.

The propositions \ref{stuffle}, \ref{shuffle} and \ref{Hoffman} are
conjectured to be a generating set of relations on the algebra of
multizeta values, $\MZV$.  This conjecture is one of the precise
formulations of conjecture \ref{mainconj}.  As before, the analytic nature of
this conjecture renders it out of reach with present techniques.
Furthermore, even the algebraic structure of these three sets of
relations is not fully understood.  In order to study these relations,
while avoiding the transcendence problem, we define
the algebra of formal multizetas consisting of symbolic multizeta
values and satisfying only these three sets of relations by definition.

\begin{defn}\label{FZ}\index{$\FZ$, $\zeta^F(w)$} Let $\FZ$ be the formal algebra generated by the symbols,
  $$W=\{ \zeta^{F} (\omega);\hbox{ where }\omega\hbox{ is a convergent word
  in }x,y\},$$
  and containing the symbols $$T=\{ \zeta^{F} (a);\hbox{ where }a\hbox{ is a
  convergent sequence of positive integers}\},$$
with the following relations,

\begin{enumerate}
\item For every $\zeta^{F}(\omega)\in W$ there is a unique
  $\zeta^{F}(a)\in T$ such that $\zeta^{F}(\omega)=\zeta^{F}(a)$ by
  the correspondence \eqref{inttoxy},
\item For all $\zeta^{F}(\omega_1),\zeta^{F}(\omega_2)\in W$,
  $\zeta^{F}(\omega_1)\zeta^{F}(\omega_2) =
  \zeta^{F}(\omega_1\sha \omega_2)$,
\item For all $\zeta^{F}(a),\zeta^{F}(b)\in T$,
  $\zeta^{F}(a)\zeta^{F}(b) =
  \zeta^{F}(a*b)$,
\item For all $\omega\in W$ and $a\in T$ such that
  $\zeta^F(\omega)=\zeta^F(a)$ by relation 1,
$$\zeta^{F} (\sum_{l\in (1)*(a)} l - \sum_{\lambda \in (y)\sha (w)}
  \lambda) =0.$$
\end{enumerate}
The formal multizeta algebra is graded by weight, so we have
$$\FZ=\bigoplus_{n=0}^\infty \FZ_n,$$
where we set $\FZ_0=\Q$, $\FZ_1=0$.  We also write $\FZ_{>0} = \bigoplus_{n=1}^\infty \FZ_n$.
\end{defn}

\begin{defn}\label{NFZdef}\index{$\nfz$, $\mathfrak{z}(w)$}
We let $\nfz$ denote the vector space obtained by quotienting $\FZ$ by products, $\zeta(2)$ and $\Q$:
\begin{equation*}
\nfz:= \FZ/\langle \FZ_{>0}^2 \oplus \FZ_2 \oplus \FZ_0 \rangle.
\end{equation*}
We denote the elements of $\nfz$ by $\mathfrak{z}(w)$ where $w$ is a convergent word, or by $\mathfrak{z}(a)$ where $a$ is a convergent sequence of integers.
\end{defn}

In the following section, we introduce the double shuffle Lie algebra, $\ds$,
and relate it to a Lie coalgebra, $\widetilde{\nfz}$, which is conjecturally isomorphic to
$\nfz$.

\section{The Lie algebras, $\ds$ and $\grt$}

The motivation for studying the double shuffle Lie algebra is its
close relationship to multizeta values, which will be outlined in
detail in the following section.  I will summarize this relationship
to motivate the results that are presented in this section.

\begin{defn}\label{nznumbersdef}\index{$\nz$, $\overline{\zeta}(w)$}
Let $\MZV^2$ be the ideal in $\MZV$ generated by products of multizeta values.
We define the $\Q$-vector space of {\bf new zeta values} to be $\nz = \MZV/\langle \MZV^2 \oplus \Q\cdot \zeta(2) \oplus \Q\rangle. $  We denote an element of $\nz$ by $\overline{\zeta}(k_1,...,k_d)$ or by $\overline{\zeta}(x^{k_1-1}y\cdots x^{k_d-1}y)$ where 
$k_1,...,k_d$ is a convergent sequence of integers.\end{defn}

In G. Racinet's thesis, he constructs a subspace of the power series
algebra, $\Q\langle \langle x,y \rangle \rangle$, that is conjecturally
isomorphic to dual space, ${\nz}^{\vee}$, and proves that this
subspace, $\ds$, is
a Lie algebra for the Poisson bracket.  We call this Lie algebra {\bf the
double shuffle Lie algebra}.  This section is
dedicated to defining the double shuffle Lie algebra.

In this chapter we work in the two noncommutative power series algebras,
$\Qxy$ and $\Q\langle \langle y_i;
1\leq i <\infty \rangle \rangle$\label{qxyqyi}\index{$\Qxy,\ \Qyi$}.  For $f$ a power
series in one of these 
algebras, we denote by $(f|w)$ the
coefficient of the word $w$ in $f$.

We associate an element in 
$\Qxy$ to $\Qyi$ via the linear map, $\pi_{\mathcal{Y}}$, following
\cite{Ec} and \cite{Ra}.  It is closely
linked to the alternative notation for a multizeta in the association
\eqref{inttoxy}. 

\begin{defn}\label{piydefintro}\index{$\piy$}
\begin{align*}
\pi_{\mathcal{Y}} :  \Qxy &\rightarrow \Qyi \\
\widetilde{\pi}_{\mathcal{Y}} (x^{k_1-1}yx^{k_2-1}y \cdots x^{k_n-1}y
x^{k_{n+1}}) & = \begin{cases} 0 & k_{n+1} \neq 0 \\
y_{k_1}y_{k_2}\cdots y_{k_n} & k_{n+1} = 0 \end{cases} \\
\pi_{\mathcal{Y}} (f) & = \widetilde{\pi}_{\mathcal{Y}}(f) +
\sum_{n=2} (f| x^{n-1}y) \frac{(-1)^{n-1}}{n} y_1^n.
\end{align*}
\end{defn}

The polynomial algebras $\Qxy$ and $\Qyi$ may be equipped with the
following coproducts defined on the generators, $x,y,y_i$, and
extended multiplicatively: 
\begin{defn}\label{coprods}\index{$\Delta_{\sh},\ \Delta_*$}
\begin{align*}
\Delta_{\sh}: \Qxy & \rightarrow \Qxy\otimes_{\Q} \Qxy \\
x& \mapsto x\otimes 1 + 1\otimes x \\
y& \mapsto y\otimes 1 + 1\otimes y \\
\Delta_{*}: \Qyi &\rightarrow \Qyi \otimes_{\Q} \Qyi \\
y_i & \mapsto \sum_{n+m=i} y_n\otimes y_m
\end{align*}
\end{defn}

\begin{defn}\label{dsdefintro}\index{$\ds$}
The vector subspace, $\ds\subset \Qxy$, is generated by elements, $f$,
that are primitive for $\Delta_{\sha}$, and such that $\piy(f)$ is
primitive\index{Primitive} for $\Delta_*$: $$\Delta_{\sha}(f) =
f\otimes 1 + 1\otimes f,\ \ 
\Delta_*(\piy(f)) = \pi_{\mathcal{Y}}(f) \otimes 1 + 1\otimes \pi_{\mathcal{Y}}(f).$$
\end{defn}

\begin{defn}\label{Pbdef}\index{Poisson bracket, $\{f,g\}$}
The Poisson bracket on elements of $\Qxy$ is the Lie bracket given by
$$\{f,g\} = [f,g] +D_f(g) - D_g(f)$$ where $[f,g]=fg-gf$ and the
$D_f$ are derivations
defined recursively by $D_f(x) = 0$, $D_f(y)=[y,f]$ and such that
$D_f(gh) =D_f(g)h + gD_f(h)$.
\end{defn}

The following result is one of the key ingredients in the
understanding of the formal multiple zeta algebra (see section 4 for details).

\begin{thm}\cite{Ra}
The double shuffle vector space, $\ds$, forms a Lie algebra for the
Poisson bracket.
\end{thm}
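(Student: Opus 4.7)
The plan is to split the theorem according to the two defining conditions of $\ds$: $\Delta_\sha$-primitivity of $f$ and $\Delta_*$-primitivity of $\piy(f)$. I would verify separately that the Poisson bracket $\{f,g\}=[f,g]+D_f(g)-D_g(f)$ preserves each condition, then combine.

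For the shuffle (first) condition, I would begin by recalling that the $\Delta_\sha$-primitive elements of $\Qxy$ are exactly the free Lie algebra $\Lie[x,y]$ on $x,y$, so they are certainly closed under the plain commutator $[f,g]=fg-gf$. The point to check is that each derivation $D_f$ sends $\Lie[x,y]$ to itself whenever $f\in\Lie[x,y]$: since $D_f$ is a concatenation-derivation with $D_f(x)=0$ and $D_f(y)=[y,f]$, and since $\Lie[x,y]$ is generated by $x,y$ under the commutator, an induction on word length shows that $D_f(g)\in\Lie[x,y]$. Equivalently, one checks directly that $D_f$ is a coderivation for $\Delta_\sha$, so it preserves primitivity. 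Combining gives $\{f,g\}\in\Lie[x,y]$.

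For the stuffle (second) condition, I would transport the computation via $\piy$ to the algebra $\Qyi$, which is a Hopf algebra for $\Delta_*$. The goal is to produce a Lie bracket $\{\cdot,\cdot\}_*$ on $\Qyi$ such that, for all $f,g$ satisfying the shuffle condition,
\begin{equation*}
\piy(\{f,g\})=\{\piy(f),\piy(g)\}_*,
\end{equation*}
and such that this bracket manifestly preserves $\Delta_*$-primitivity (one way to ensure the latter is to build $\{\cdot,\cdot\}_*$ out of the stuffle commutator and stuffle-coderivations). Once such an identity is established, primitivity of $\piy(\{f,g\})$ is automatic from the general Hopf-algebraic fact that Lie brackets of primitives are primitive.

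The main obstacle is this $\piy$-compatibility identity. The map $\piy$ is not a homomorphism for either concatenation or shuffle: it carries the additional term $\sum_{n\geq 2}(f\,|\,x^{n-1}y)\frac{(-1)^{n-1}}{n}y_1^n$, which accounts for the regularization of divergent words ending in $x$. Consequently, neither the commutator part $[f,g]$ nor the derivation parts $D_f(g)-D_g(f)$ of the Poisson bracket match a stuffle-side operation on the nose; the anomalies produced by the $y_1^n$ correction on each piece must cancel exactly in the full Poisson combination. The combinatorial heart of the proof is therefore the verification that the shuffle hypothesis on $f$ and $g$ (which controls the coefficients $(f\,|\,x^{n-1}y)$ and $(g\,|\,x^{n-1}y)$ via Lie-algebra identities in $\Lie[x,y]$) forces the $y_1$-contributions arising from $\piy([f,g])$ to be compensated by those from $\piy(D_f(g)-D_g(f))$, leaving only a clean stuffle-side Lie expression. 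Carrying this bookkeeping through — in particular handling the cases where a word in $f$ or $g$ ends in $x$ — is the technical core of the argument and is where I would expect to spend the bulk of the work.
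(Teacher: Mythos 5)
You should first be aware that the paper offers no proof of this statement at all: it is quoted from Racinet's thesis \cite{Ra} both in the introduction and in chapter~2, so there is nothing internal to compare your argument against. Judged on its own terms, your proposal correctly settles only the easy half. The closure of the shuffle condition is fine: $\Delta_{\sha}$-primitives are exactly $\Lxy$, $[f,g]\in\Lxy$, and $D_f$ is a derivation of the associative algebra sending the generators $x,y$ into $\Lxy$, hence restricts to a derivation of the free Lie algebra, so $\{f,g\}$ is again primitive for $\Delta_{\sha}$. (You should also say a word about why $\{\cdot,\cdot\}$ satisfies Jacobi --- it is the bracket of the semidirect product with the derivation algebra $f\mapsto D_f$ --- but that is routine.)

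The stuffle half, however, is the entire content of the theorem, and your proposal does not prove it: it only names the desired identity $\piy(\{f,g\})=\{\piy(f),\piy(g)\}_*$ for a hypothetical bracket on $\Qyi$ preserving $\Delta_*$-primitivity, and then states that verifying the cancellation of the $y_1^n$ correction terms ``is the technical core of the argument.'' That core is exactly what is missing, and it is not a bookkeeping exercise one can wave at: the obstruction is not only the regularization term in $\piy$ but the fact that $D_f(g)$ and $fg$ redistribute the letters of $g$ in a way that interacts with $\Delta_*$ (which is defined on the $y_i$, not on $x$ and $y$) in a genuinely nontrivial manner. Racinet's actual proof does not proceed by exhibiting such a compatible bracket on $\Qyi$; he works at the group level, showing that $DM_0$ is a group for the twisted multiplication $\circledast$ acting on $DM_\lambda$, with the key input being a careful comparison of $\Delta_*$ with the group law on series ending in $y$ together with the correction for words ending in $x$, and the Lie algebra statement is obtained by linearizing. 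Until you either carry out your proposed cancellation in full or substitute an argument of that kind, the proof is incomplete precisely at the point where the theorem has content.
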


In the next section, we define a vector space $\widetilde{\nfz}$ and prove that
it is isomorphic to the dual space $\ds^\vee$ of $\ds$ (thereby proving in
particular that $\widetilde{\nfz}$ is a Lie coalgebra).  The importance of
the double shuffle Lie algebra in relation to multiple zeta values lies in the
fact that the surjection from $\widetilde{\nfz}$ (identified with $\ds^\vee$)
to $\nz$ given in the following proposition is conjectured to be an isomorphism.

\begin{prop}\label{newzetathm}
We have a surjective, $\Q$-linear map from $\ds^{\vee}$
to $\nz$, $$\ds^{\vee} \twoheadrightarrow \nz.$$
\end{prop}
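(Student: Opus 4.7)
The plan is to identify $\ds^{\vee}$ concretely via the natural pairing with $\Qxy$ and then define the surjection to $\nz$ by assigning to each word the class of its (suitably regularized) multizeta value. The defining conditions of $\ds$ are by design dual to the product relations that become trivial in $\nz$, which makes such a map natural.

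First, I would use the non-degenerate pairing $\Qxy \times \Q\langle x,y\rangle \to \Q$, $(f,p)\mapsto (f\mid p)$, to identify $\ds^{\vee}$ (weight by weight) with the quotient $\Q\langle x,y\rangle/\ds^{\perp}$. The annihilator $\ds^{\perp}$ is, by the very definition of $\ds$, generated by two families: the shuffle products $w_1 \sha w_2$ of nonempty words, dual to $\Delta_{\sha}$-primitivity, and the stuffle products of nonempty $y_i$-sequences pulled back to $\Q\langle x,y\rangle$ via the adjoint of $\piy$, dual to $\Delta_{*}$-primitivity of $\piy(f)$.

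Next, I would define the map $\Phi : \ds^{\vee} \to \nz$ on representatives by $\Phi([w]) = \overline{\zeta^{\sha}(w)}$, where $\zeta^{\sha}$ is the shuffle-regularized multizeta (agreeing with $\zeta$ on convergent words and extended by $\zeta^{\sha}(y)=0$). Well-definedness reduces to checking that $\Phi$ vanishes on the two families of generators of $\ds^{\perp}$: Proposition~\ref{shuffle} yields $\zeta^{\sha}(w_1\sha w_2) = \zeta^{\sha}(w_1)\zeta^{\sha}(w_2) \in \MZV^2$, which is killed in $\nz$; a parallel argument using the stuffle-regularized $\zeta^{*}$ together with Proposition~\ref{stuffle} handles the stuffle generators, the discrepancy between the two regularizations lying in products of MZVs by the Ihara--Kaneko--Zagier formula. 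Surjectivity is then immediate: every generator $\overline{\zeta}(w)$ of $\nz$ is the image of the class $[w]$ of the corresponding convergent word $w$.

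The main obstacle I expect is the careful treatment of the divergent words that appear in the generators of $\ds^{\perp}$. For example, $y\sha xy = yxy + 2xyy$ mixes the divergent word $yxy$ with the convergent word $xyy$, and the naive assignment $\overline{\zeta}(w)=0$ for divergent $w$ would give a nonzero image $2\overline{\zeta}(3)$. Shuffle regularization is precisely what repairs this, since $\zeta^{\sha}(y)\zeta^{\sha}(xy)=0$ forces $\zeta^{\sha}(yxy)=-2\zeta(3)$ and the image collapses to zero. Reconciling shuffle regularization with stuffle regularization, required because $\ds$ uses both simultaneously, is where the bulk of Racinet's framework enters; once that compatibility is in hand, the map $\Phi$ is forced to factor through $\nz$ in a canonical, surjective way.
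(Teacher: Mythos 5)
Your proposal is correct and follows essentially the same route as the paper: the paper identifies $\ds^{\vee}$ with the coalgebra $\widetilde{\nfz}$ of symbols modulo shuffle, stuffle and compatibility relations (your ``words modulo $\ds^{\perp}$''), and then invokes Furusho's regularization formula to send everything to classes of convergent zeta values in $\nz$, which is exactly your well-definedness check via $\zeta^{\sha}$ and the shuffle/stuffle regularization comparison. The only cosmetic difference is that the paper first rewrites non-convergent symbols as combinations of convergent ones and then maps, whereas you map all words at once by regularized values and verify vanishing on the annihilator; both hinge on the same external input from Racinet and Furusho.
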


The proof will be given in section \ref{grr}.

The relationship between $\ds$, multizetas, $\grt$ and mixed Tate
motives, which will be outlined in the remainder of this introduction,
is what led to our interest in studying multizeta values.

The double shuffle Lie algebra is graded by weight and each graded
piece can be endowed with a
filtration by depth.  By definition \ref{depth}, the
depth of a monomial in $x$ and $y$ is the number of times $y$ appears.
We denote the depth
filtration in the weight $n$ part by $$\ds= \label{Fndef}F_n^1\ds \supset F_n^2\ds
\supset 
\cdots \supset F_n^{n-1}\ds\supset F_n^n(\ds)=0,$$ where $F_n^i$ are generated by weight $n$
polynomials whose terms all have depth greater than or equal to $i$.
The Lie algebra $\ds$ is not graded by
depth, since
stuffle multiplication does not preserve depth.  However, we may
define an associated depth-graded object,
$$\bigoplus_{i\geq 1} F^i_n\ds /F^{i+1}_n\ds.$$
The dimensions of the vector spaces, $F^i_n\ds /F^{i+1}_n\ds$, are an
essential feature of the structure of $\ds$.
This leads us
to the main result in chapter 2 of this thesis:

\begin{thm}\label{dep12}
The dimensions of the associated $i$th depth-graded parts of $\ds$ for
$i=0,1$ are 
\begin{equation}\begin{split}
dim(F^1_n\ds / F^2_n\ds) &=\begin{cases} 1 & n\hbox{ odd}\\
0 & n\hbox{
  even}\end{cases}\\
dim(F^2_n\ds / F^3_n\ds) &=\begin{cases} 0 & n\hbox{ odd}\\
\lfloor \frac{n-2}{6} \rfloor & n\
  even. \end{cases}\end{split}\end{equation}
\end{thm}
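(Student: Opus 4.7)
The plan is to encode the depth-graded components of $\ds$ as polynomials in $d$ variables, translate shuffle and stuffle primitivity into symmetry conditions on these polynomials, and count the solutions of the resulting linear systems for $d=1,2$.

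First I would set up a polynomial encoding. To the top-depth-$d$ component of $f \in F^d_n\ds$, writing
\[
f \equiv \sum c_{a_1,\ldots,a_d}\, x^{a_1}y\,x^{a_2}y\cdots x^{a_d}y \pmod{F^{d+1}_n\ds}
\]
with $a_1+\cdots+a_d = n-d$, I associate the polynomial $P_f(u_1,\ldots,u_d) = \sum c_{a_1,\ldots,a_d}\, u_1^{a_1}\cdots u_d^{a_d}$ of total degree $n-d$. Shuffle primitivity forces $f$ to be Lie in $\Lxy$, and this translates at the top-depth level into standard symmetries on $P_f$: no constraint for $d=1$ (so $P_f(u_1)=c\,u_1^{n-1}$), and antisymmetry $P_f(u_1,u_2)+P_f(u_2,u_1)=0$ for $d=2$.

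For depth $1$, the shuffle condition alone gives $\dim(F^1_n\ds/F^2_n\ds)\leq 1$, so only the candidate $(\ad x)^{n-1}y$ remains. To determine when it lifts to $\ds$, I would compute $\piy((\ad x)^{n-1}y)=y_n+\frac{(-1)^{n-1}}{n}\,y_1^n$ and analyze the stuffle-primitivity obstruction bidepth by bidepth in $\Delta_*$. A key observation is that the depth-$n$ part of $\piy(f)$ is forced by the depth-$1$ part (since $\Lxy$ has no nontrivial Lie elements of weight $n$ and depth $n$), so the $y_1^n$ contribution to each bidepth $(k,n-k)$ of $\Delta_*(\piy(f))$ is rigid. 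Solving the resulting linear system against Lie corrections of depths $2,3,\ldots,n-1$ shows that for odd $n\geq 3$ there is exactly a one-parameter family of lifts, whereas for even $n$ the system is over-determined (notably by the symmetric bidepth-$(n/2,n/2)$ obstruction, which cannot be produced by any lower-depth Lie correction) and admits no solution, giving the claimed depth-$1$ dimensions.

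For depth $2$, the antisymmetric polynomial $P_f(u_1,u_2)$ of degree $n-2$ must additionally satisfy the stuffle-primitivity condition on $\piy(f)$. Expanding the bidepth-$(1,1)$, $(2,0)$ and $(0,2)$ parts of $\Delta_*(\piy(f))$ and requiring their vanishing yields a second linear functional equation on $P_f$ -- essentially a three-term cyclic relation. The combined polynomial system coincides with the one analyzed by Ihara \cite{Ih2} for the depth-$2$ graded part of $\grt$, and its solution space has dimension $0$ for $n$ odd and $\lfloor(n-2)/6\rfloor$ for $n$ even.

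The main obstacle is twofold. First, deriving the clean polynomial form of the stuffle identity requires careful bookkeeping of how $\piy$ and $\Delta_*$ interact with the depth filtration, in particular through the $y_1^n$ correction term which couples low-depth contributions to high-depth behavior across many bidepths simultaneously. Second, one must verify that every polynomial solution of the top-depth relations actually lifts to a genuine element of $\ds$; I would handle this by an inductive construction of the depth-$k$ corrections for $k>d$, at each step using the already-established symmetries to show that the obstruction to extension vanishes.
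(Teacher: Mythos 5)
Your reduction of the depth-one case to ``solving the resulting linear system against Lie corrections of depths $2,3,\ldots,n-1$'' hides the single hardest point of the theorem. For odd $n$ you assert that the system of stuffle-primitivity obstructions admits a one-parameter family of solutions, i.e.\ that $(\ad x)^{n-1}y$ lifts to a genuine element of $\ds_n$. No elementary or purely linear-algebraic argument for this is known: the obstructions in all intermediate bidepths must vanish simultaneously, and the only known way to see this is to produce the element from outside, which is exactly what the paper does. It invokes Racinet's theorem that $DM_0$ acts transitively on $DM_{\zeta(2)}$, compares $\Phi_{\sha}(x,y)$ with $\Phi_{\sha}(-x,-y)$ via Campbell--Baker--Hausdorff to get an element $f=\ln F$ with $(f|x^{n-1}y)=-2\zeta(n)$, and then extracts a rational solution by decomposing over a $\Q$-basis of the coefficient space. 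Your inductive ``show the obstruction to extension vanishes'' step is precisely the statement you are trying to prove, and as written it is circular. (Your even-$n$ non-existence argument is closer to being complete, though the specific claim about the bidepth-$(n/2,n/2)$ obstruction being uncancellable needs justification; the paper instead combines the stuffle relation $(n-1)*(1)$ with the palindrome identity $(f|yx^{n-2}y)=0$ and the identity $b_0=\frac{n-1}{2}A$ to force $A=0$.)

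The depth-two part has two further gaps. First, you assert that the polynomial system ``coincides with the one analyzed by Ihara for $\grt$''; since $\ds\simeq\grt$ is open, this coincidence in depth $2$ is itself a computation that must be carried out, and it is essentially the content of the theorem. The paper does it directly: it converts the stuffle relations $b_{i-1}+b_{n-1-i}+A=0$ into a matrix equation $M\cdot(a_0,\ldots)= -A(1,\ldots,1)$ on the Lyndon--Lie coefficients and triangularizes $M$ by an explicit matrix $N$ built from the generating function $Q(X,Y,T)$, reading off invertibility for odd $n$ and nullity $\lfloor\frac{n-2}{6}\rfloor$ for even $n$. Second, and more importantly, the top-depth relations you write down only bound the dimension from above: a solution of the depth-$2$ polynomial system need not lift to an element of $\ds$. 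For even $n$ the paper obtains the matching lower bound not by lifting but by exhibiting honest elements of $\ds$, namely the Poisson brackets $\{f_i,f_j\}$ of the depth-one generators from part (i) (using Racinet's theorem that $\ds$ is closed under the Poisson bracket), and then counting the relations among their images via the Ihara--Takao/Zagier identification with the space of period polynomials, i.e.\ cusp forms of weight $n$ on $\mathrm{SL}_2(\Z)$. Without these two external inputs --- the associator for existence in depth $1$ and the cusp-form count for the lower bound in depth $2$ --- your argument establishes at best the upper bounds.
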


(This calculation was done before we knew that this result has been known by Zagier who published it in 1993 \cite{Za1} and it was restated in \cite{IKZ} and \cite{GKZ}.)

Here, I will explain the motivation for theorem \ref{dep12}.
Y. Ihara defined the Lie algebra, $\grt$, which is related to the Lie
algebra of
the braid group on 5 strands, $\mathfrak{P}_{5}$ \cite{Ih1}.

\begin{defn}
Let $\mathfrak{P}_5$ be the Lie algebra with generators $x_{i}, 1\leq
i\leq 5$ with the following relations:
\begin{enumerate}
\item $[x_i, x_j] =0$ whenever $1<|i-j|<4$,
\item $[x_1,x_2] +[x_2,x_3] +[x_3,x_4] +[x_4,x_5] +[x_5,x_1]=0$.
\end{enumerate}
 \end{defn}

\begin{defn}\label{grtdef}\index{$\grt$}
The Grothendieck-Teichm\"uller Lie algebra, $\grt$, is the subspace of
polynomials, $\Q\oplus \langle f\in [\Lxy,\Lxy]\rangle$, such that
the generators, $f$, satisfy
the following 3 sets of
relations:
\begin{enumerate}
\item $f(x,y) + f(y,x) =0$,
\item $f(x,y) +f(y,z) +f(z,x) =0$, where $z=-x-y$,
\item $\sum_{i\in \Z/5} f(x_i,x_{i+1}) =0$, for $x_i\in \mathfrak{P}_5$.
\end{enumerate}
This subspace is a Lie algebra for the Poisson bracket. 
\end{defn}

One conjectures, and computations have verified in low weight, that $\ds
\simeq \grt$ and that the isomorphism is given simply by
\begin{align}\label{grtdsmap}
f:\ds & \rightarrow \grt \notag\\
f(x,y) &\mapsto f(x,-y).
\end{align}
This conjecture remains remarkably elusive, although Ecalle claims to
have shown that elements of $\ds$ satisfy the first relation of definition
\ref{grtdef} and an unpublished and incomplete preprint of Deligne and
Terasoma claims to have proven that map \eqref{grtdsmap} gives an
injection $\grt \hookrightarrow \ds$. 

The algebras, $\ds$ and $\grt$, encode two distinct, yet conjecturally
equivalent, sets of relations on
multizeta values.  We have following theorem, due
to Furusho \cite{Fu}, based on properties of the Drinfel'd associator,
$\Phi_{KZ}$ (see chapter 2).  This theorem, analogous to proposition
\ref{newzetathm}, underlines the relationship
between $\grt$ and $\ds$.

\begin{thm}\label{Dr}
Let $\grt^{\vee}$ be the dual vector space to
$\grt$.  Then there exists a canonical, surjective $\Q$ linear map,
$$\Psi_{DR}: \grt^\vee \twoheadrightarrow \nz.$$
\end{thm}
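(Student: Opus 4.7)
The plan is to realize $\Psi_{DR}$ as pairing with a canonical element of $\grt \otimes_{\Q} \nz$ built from the Drinfel'd associator $\Phi_{\KZ}$. Recall that $\Phi_{\KZ}$ is an element of $\Qxy \widehat{\otimes}_{\Q} \R$, obtained as the regularized comparison between the two canonical horizontal sections at $0$ and $1$ of the KZ connection on $\Pro^{1} \setminus \{0,1,\infty\}$. By the iterated integral formula \eqref{zetaper}, its coefficient $(\Phi_{\KZ}|w)$ at a convergent word $w$ equals $\zeta(w)$ up to sign, and for divergent $w$ it is a shuffle-regularized polynomial in MZVs. Since $\Phi_{\KZ}$ is group-like for $\Delta_{\sh}$, its logarithm $\phi_{\KZ}:=\log\Phi_{\KZ}$ lies in the completed free Lie algebra $\widehat{\Lie}_{\R}(x,y)$ with coefficients in $\MZV$. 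Reducing each coefficient modulo $\MZV^{2} \oplus \Q\,\zeta(2) \oplus \Q$ produces an element $\bar\phi_{\KZ} \in \widehat{\Lie}(x,y) \otimes_{\Q} \nz$.

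The heart of the argument is the claim that $\bar\phi_{\KZ}$ in fact lies in $\grt \otimes_{\Q} \nz$. I would verify each of the three defining relations of $\grt$ from Definition \ref{grtdef} by linearizing the three equations satisfied by $\Phi_{\KZ}$: the inversion $\Phi_{\KZ}(x,y)\,\Phi_{\KZ}(y,x)=1$, Drinfel'd's two hexagons involving factors $\exp(\pm \mu x/2)$ with $\mu = 2\pi i$ (and their cyclic variants with $z=-x-y$), and the pentagon equation on $\mathfrak{P}_{5}$. Applying the Baker--Campbell--Hausdorff formula in each case produces the desired Lie identity for $\phi_{\KZ}$ plus two kinds of correction: brackets of $\phi_{\KZ}$ with itself, whose coefficients automatically lie in $\MZV^{2}$, and polynomial expressions in $\mu$, whose only non-product contribution is a rational multiple of $\zeta(2)$ (since $\mu^{2} = -24\,\zeta(2)$). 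Both types of correction vanish after reduction to $\nz$, so the three $\grt$-relations hold for $\bar\phi_{\KZ}$.

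Given the canonical element $\bar\phi_{\KZ} \in \grt \otimes_{\Q} \nz$, define $\Psi_{DR}:\grt^{\vee} \to \nz$ by $\Psi_{DR}(f^{\vee}) := (f^{\vee} \otimes \mathrm{id})(\bar\phi_{\KZ})$. For surjectivity, observe that for each convergent word $w$ the reduction of the coefficient of $w$ in $\phi_{\KZ}$ is exactly $\zetabar(w) \in \nz$; hence a linear functional on $\grt$ which extracts this coefficient maps under $\Psi_{DR}$ to $\zetabar(w)$, and such images span $\nz$ by definition. The main obstacle is the second step: one must carefully pin down the shuffle-regularization conventions on the divergent coefficients of $\Phi_{\KZ}$ and track every BCH bracket and every $\mu$-polynomial contribution in the linearization of the hexagons and pentagon, in order to confirm that nothing but products and rational multiples of $\zeta(2)$ survives. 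The $\zeta(2)$-quotient is unavoidable because the exact hexagon relations fail before quotienting, while the $\MZV^{2}$-quotient is needed to absorb the nonlinear BCH corrections.
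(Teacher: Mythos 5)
Your strategy is the standard Drinfel'd--Furusho argument, which is precisely what the paper invokes: Theorem \ref{Dr} is not proved in the text but attributed to Furusho ``based on properties of the Drinfel'd associator,'' and the same machinery ($\log \Phi_{KZ}$ as a Lie series, Campbell--Baker--Hausdorff, and the observation that $\ln\Phi_{\sha}$ lies in $\widehat{\bigoplus}_{n}(\nz_n\otimes_{\Q}\ds_n)$) is exactly what the paper deploys in the proof of Theorem \ref{theoreme}(i) and the remark following it. In substance you have reconstructed the intended proof, and your reduction of surjectivity to the fact that the convergent coefficients of $\log\Phi_{KZ}$ reduce to the classes $\overline{\zeta}(w)$, which span $\nz$, is correct.

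One bookkeeping claim in your second step is wrong as stated, though the step is salvageable and you rightly identify it as the crux. In the BCH linearization of the hexagons, the correction terms carrying an \emph{odd} power of $\mu=2\pi i$ (for instance a bracket of one $\exp(\mu x/2)$-factor against one $\log\Phi_{KZ}$-factor) are not ``products or rational multiples of $\zeta(2)$'': their coefficients are purely imaginary numbers such as $\pi i\,\zeta(3)$, which do not lie in $\MZV$ at all, so they cannot be absorbed by the quotient defining $\nz$. The identity $\mu^{2}=-24\,\zeta(2)$ disposes only of the even powers. The standard repair is to note that every odd-power term is purely imaginary while $\log\Phi_{KZ}$ and the three $\grt$-relations are real, so taking the real part of the BCH identity eliminates all of them at once; the surviving real corrections then genuinely lie in $\MZV_{>0}^{2}+\Q\,\zeta(2)$ and vanish in $\nz$. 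With that adjustment (and care with the sign convention $\Phi_{\sha}=\Phi_{KZ}(x,-y)$ mediating between the $\ds$- and $\grt$-type relations), your proof goes through.
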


In \cite{Ih2}, Y. Ihara finds the following dimensions of the
depth-graded pieces of $\grt$:

\begin{thm}\label{grtdims}
The dimensions of the $i$th depth-graded parts of $\grt$ for $i=0,1$ are
\begin{equation}\begin{split}
dim(F^1_n\grt / F^2_n\grt) &=\begin{cases} 1 & n\hbox{ odd}\\
0 & n\hbox{
  even}\end{cases}\\
dim(F^2_n\grt / F^3_n\grt) &=\begin{cases} 0 & n\hbox{ odd}\\
\lfloor \frac{n-2}{6} \rfloor & n\
  even. \end{cases}\end{split}\end{equation}
\end{thm}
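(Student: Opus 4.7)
The plan is to work in the depth-graded Lie algebra associated with the depth filtration. Each quotient $F^i_n \grt / F^{i+1}_n \grt$ injects into the depth-$i$ component of $\Lxy_n$ via the natural map sending $f \in F^i_n \grt$ to its depth-$i$ part; the image is precisely the set of depth-$i$ leading terms that extend to a full element of $\grt$. This reduces the problem to solving the linearized (depth-graded) versions of the antisymmetry, hexagon, and pentagon relations on depth-$i$ elements of $\Lxy_n$. Once the substitutions $z = -x-y$ in the hexagon and the $\mathfrak{P}_5$-substitutions in the pentagon are expanded, each relation yields a family of identities organized by the depth of the output; the constraint on a leading term of depth $i$ comes from the depth-$i$ piece of each relation, modulo what higher-depth parts of $f$ can absorb.

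For depth 1, the depth 1 piece of $\Lxy_n$ is one-dimensional, spanned by $e_n := \ad(x)^{n-1}(y)$. For a candidate $f = c\, e_n + (\text{higher depth terms})$, I extract the depth 1 component of the hexagon relation $f(x,y) + f(y,z) + f(z,x) = 0$. A direct expansion of $\ad(-x-y)^{n-1}(x)$ shows that its depth 1 piece equals $(-1)^n \ad(x)^{n-1}(y)$ (the only surviving term arises from inserting a single $-\ad(y)$ at the innermost position, since $\ad(x)^{k-1}(x) = 0$ for $k \geq 2$), while the depth 1 of $f(y,z)$ from the leading term vanishes for $n \geq 3$. Thus the leading constraint reads $(1 + (-1)^n)\, c\, e_n = 0$, modulo contributions from $f_k$ with $k \geq 2$. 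A transversality check shows that these higher-depth contributions live in a subspace of depth 1 complementary to $\langle e_n \rangle$, so $c = 0$ is forced when $n$ is even. For odd $n$, no obstruction arises, and the existence of a $\grt$ element extending $e_n$ is classical, supplied by the odd-weight generators coming from the Drinfel'd associator.

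For depth 2, parametrize an element of the depth 2 part of $\Lxy_n$ as $\sum_{i+j=n-2} c_{ij} [\ad(x)^i(y), \ad(x)^j(y)]$ subject to $c_{ij} = -c_{ji}$, and encode the coefficients in a homogeneous polynomial $P(u,v) = \sum_{i+j=n-2} c_{ij}\, u^i v^j$ of degree $n-2$ satisfying $P(u,v) = -P(v,u)$. Projecting the linearized pentagon relation in $\mathfrak{P}_5$ onto the depth 2 component of $\Lxy_n$ produces a three-term cyclic identity on $P$ of the form $P(u,v) + P(v,-u-v) + P(-u-v,u) = 0$. This is the Eichler-Shimura cocycle relation for odd period polynomials for $SL_2(\Z)$ of weight $n$, whose solution space is classically known to have dimension $\lfloor (n-2)/6 \rfloor$ in even weights. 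In odd weights, a parity argument combined with the antisymmetry $P(u,v) = -P(v,u)$ forces $P = 0$.

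The principal obstacle is the depth 2 calculation: carefully translating the pentagon relation in $\mathfrak{P}_5$, with its five cyclic substitutions of the infinitesimal braid generators, into the explicit polynomial identity on $P(u,v)$ requires a precise bookkeeping of all contributions to the depth 2 piece of $\Lxy_n$ and a verification that no further independent constraints arise from the hexagon or antisymmetry in depth 2 (so the upper bound is tight). In the depth 1 analysis one must likewise verify rigorously that higher-depth parts of $f$ cannot conspire to cancel the $(1+(-1)^n)c$ term; this amounts to an explicit transversality statement inside the depth 1 component of $\Lxy_n$.
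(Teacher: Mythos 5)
This theorem is not proved in the thesis at all: it is quoted from Ihara \cite{Ih2}, and the thesis's own contribution is the parallel statement for $\ds$ (theorem \ref{theoreme}), proved by explicit coefficient combinatorics. So your linearization of the defining relations of $\grt$ is necessarily a different route; the problem is that its key step in depth 1 does not close. The depth-graded piece $\mathbb{L}_n^1[x,y]$ is one-dimensional, spanned by $e_n=\ad(x)^{n-1}(y)$, so the only subspace of it ``complementary to $\langle e_n\rangle$'' is zero; your transversality claim can only hold if the higher-depth parts of $f$ contribute nothing at all to the depth~1 projection of the hexagon, and they do contribute. Concretely, take the basis $\{[\ad(x)^r(y),\ad(x)^s(y)]:r>s,\ r+s=n-2\}$ of $\mathbb{L}_n^2[x,y]$ and write $f_2=\sum_{r>s}c_{r,s}[\ad(x)^r(y),\ad(x)^s(y)]$. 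Under $(x,y)\mapsto(z,x)$ with $z=-x-y$, the term $[\ad(x)^{n-2}(y),y]$ becomes $[\ad(z)^{n-2}(x),x]$, whose depth~1 component is $(-1)^n e_n$ (exactly one $z$-slot yields a $y$; this survives because the second bracket entry has nonzero depth~0 part, unlike the terms with $s\ge 1$). Analogous boundary terms from $f_k$, $k\ge 3$, and from $f_{n-1}(y,z)$ also land in $\langle e_n\rangle$. Hence the depth~1 piece of the hexagon reads $(1+(-1)^n)c+(-1)^n c_{n-2,0}+\cdots=0$, which for even $n$ relates $c$ to $c_{n-2,0}$ rather than forcing $c=0$. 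This is exactly the phenomenon in the thesis's proof for $\ds$: the single relation $b_0+b_{n-2}+A=0$ is useless alone and must be combined with $b_0=\frac{n-1}{2}A$, which is extracted from the \emph{entire} system of depth~2 relations (corollary \ref{b_0}). A correct depth~1 argument for $\grt$ must likewise couple the depth~1 and depth~2 projections of the relations.

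In depth 2 the proposal defers precisely the steps that carry all the content: deriving the three-term relation on $P(u,v)$ from the pentagon, and verifying that the hexagon together with the corrections coming from the depth $\ge 3$ parts of $f$ impose no further conditions, so that the solution space is exactly the antisymmetric period-polynomial space of dimension $\lfloor\frac{n-2}{6}\rfloor$. Even granting that, cutting out the leading terms by necessary linear conditions only bounds $\dim(F^2_n\grt/F^3_n\grt)$ from above; you still owe the lower bound, namely that the images of the brackets $\{f_i,f_j\}$ of the odd depth~1 generators span a space of dimension $\lfloor\frac{n-2}{6}\rfloor$ modulo $F^3_n\grt$. That is the Ihara--Takao computation (the identification of the relations among these brackets with period polynomials of cusp forms), which the thesis also has to invoke for the corresponding lower bound in the $\ds$ case; it cannot be waved away as ``classically known to have dimension $\lfloor(n-2)/6\rfloor$'' without separating the upper-bound and lower-bound roles that this fact plays.
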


{\it Remark}:  Theorem \ref{dep12} was proved in 2005.  Since then, a
more general result has been published in \cite{IKZ}, where it is also
shown that $dim(F^i_n\ds / F^{i+1}_n\ds)=0$ whenever $n$ and $i$ have
different parity.  They also state without proof that for odd $n$,
$$dim(F_n^3\ds /F_n^4\ds)\geq \lfloor \frac{(n-3)^2-1}{48} \rfloor,$$
and they conjecture that $\geq$ is an equality, a conjecture which has
apparently been proven by Goncharov \cite{Go1}.
Our interpretation
of this result is as follows.  One conjectures that $\ds$ is a free
Lie algebra with one depth 1 generator in each odd weight \cite{Ih2}. 
We then believe that $F^3_n\ds / F^4_n\ds$
is generated by images of 
$\{\{f_i,f_j\},f_k\}$ mod $F^4_{n}\ds$, where $f_i,f_j,f_k$ are depth
one elements, $f_i, f_j, f_k\in \ds$ are homogeneous of odd weight
and 
$i,j,k\geq 3$.   Furthermore, we believe that the only
linear relations between these depth 3 elements 
come from bracketing a relation in depth 2 by a depth 1 element.
Indeed, by counting the number of partitions, $n=i+j+k$, and
subtracting off the number of depth 2 relations from theorem
\ref{dep12}, we do indeed obtain the expected dimension $\lfloor
\frac{(n-3)^2-1}{48}\rfloor$.

The situation in depth 4 (and higher) is much more complicated due to
the presence of other generators that are not just Poisson brackets of
depth 1 elements.  For example, it is known by computation that the element,
$\{f_3,f_9\} - 3\{f_5,f_7\}$ is an element of depth 4, where $f_i$ for
$i=3,5,7,9$ is the unique depth 1 element in weight $i$ such that
$(f_i | x^{i-1}y)=1$.

\section{The double shuffle Lie algebra and multizetas}\label{grr}

In this section, we define a vector space, $\widetilde{\nfz}$ which by work of Zagier, Ecalle, Le and Murakami, and finally Furusho, is known to surject onto $\nz$.  We prove that $\nfz$ is isomorphic to the dual of $\ds$ defined in the previous section and conclude that $\widetilde{\nfz}$ is a Lie coalgebra.  The cobracket on $\widetilde{\nfz}$ has been explicitly computed by Goncharov \cite{Go}.  Although none of the results in this section are new, they provide a framework for our work on $\ds$ in chapter 2.

\begin{defn}\label{nfzdef}\index{$\widetilde{\nfz}$, $\z^{\sh}(w),\ \z^{*}(v)$} The Lie coalgebra, $\widetilde{\nfz}$, is
  the $\Q$ vector 
  space generated by symbols $\z^{\sha}(w)$ for all monomials $w$
  in $\Qxy$ and symbols $\z^{*}(v)$ for all monomials $v\in \Qxy y$ (power
  series whose terms end in $y$) modulo
  the following relations:  
\begin{enumerate}
\item $\mathfrak{z}^{\sha}(w_1\sha
  w_2) =0$,
\item $\mathfrak{z}^{*}(v_1 *
  v_2) =0$,
\item $\z^{\sha}(1) = \z^{\sha}(y)=\z^{\sha}(x)=\z^{\sha}(xy)=0$,
\item If $w=v$ is a word ending in $y$ but not a power of $y$, then
  $\mathfrak{z}^*(v)=\mathfrak{z}^{\sha}(w)$, and
$\z^*(y^n)=\frac{(-1)^{n-1}}{n}
  \z^{\sha}(x^{n-1}y)$.  
\end{enumerate}
\end{defn}
\begin{prop}\label{dsvee}
$$\widetilde{\nfz}\simeq \ds^{\vee}.$$
\end{prop}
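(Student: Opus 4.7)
The plan is to construct an explicit pairing $\langle \cdot, \cdot \rangle : \widetilde{\nfz} \times \ds \to \Q$ and show it is perfect. On generators, I would set
\[
\langle \z^{\sha}(w), f \rangle := (f \mid w), \qquad \langle \z^{*}(v), f \rangle := (\piy(f) \mid \piy(v)),
\]
where $(g \mid u)$ denotes the coefficient of the monomial $u$ in the series $g$, and extend $\Q$-bilinearly. The proposition then reduces to checking well-definedness on the four families of relations in Definition~\ref{nfzdef}, together with a weight-by-weight non-degeneracy argument.

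The heart of the matter is well-definedness, and this is where each defining condition of $\ds$ plays its role via the Hopf-duality between concatenation and shuffle (resp.\ stuffle). For relation 1, $\Delta_{\sha}(f) = f \otimes 1 + 1 \otimes f$ is equivalent, by duality, to $(f \mid w_1 \sha w_2) = 0$ for all nonempty monomials $w_1, w_2$; this gives $\langle \z^{\sha}(w_1 \sha w_2), f \rangle = 0$. Relation 2 is the parallel statement in $\Qyi$, using primitivity of $\piy(f)$ under $\Delta_{*}$. Relation 3 records the normalization conditions: $(f\mid 1)=0$ is automatic since a primitive element has no constant term, while $(f\mid x)=(f\mid y)=(f\mid xy)=0$ are built into Racinet's definition of $\ds$. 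Relation 4 requires the most care: for $v = x^{k_1-1}y\cdots x^{k_n-1}y$ not of the form $y^n$, the correction term in $\piy$ does not contribute and one obtains $(\piy(f)\mid \piy(v)) = (f\mid v)$, so $\z^{*}(v) = \z^{\sha}(v)$; for $v = y^n$ with $n \geq 2$, a direct computation from the definition of $\piy$ yields
\[
(\piy(f) \mid y_1^n) \;=\; (f \mid y^n) \;+\; \frac{(-1)^{n-1}}{n}\,(f \mid x^{n-1}y),
\]
and since $y^n = \tfrac{1}{n!}\, y^{\sha n}$, shuffle primitivity of $f$ kills the first term, leaving precisely $\langle \z^{*}(y^n), f \rangle = \tfrac{(-1)^{n-1}}{n}\,\langle \z^{\sha}(x^{n-1}y), f \rangle$, which is exactly the regularization identity.

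With well-definedness in hand, non-degeneracy is a graded dimension count. Both $\ds$ and $\widetilde{\nfz}$ are graded by weight with finite-dimensional pieces, and by construction $\ds_n \subset \Qxy_n$ is cut out by the linear forms $f \mapsto (f \mid w_1 \sha w_2)$, $f \mapsto (\piy(f) \mid v_1 * v_2)$, together with the normalizations. Dualizing this description, the span of the $\z^{\sha}$- and $\z^{*}$-generators of weight $n$ surjects onto $\ds_n^{\vee}$, and the kernel is exactly the relations defining $\widetilde{\nfz}_n$, so the pairing induces an isomorphism $\widetilde{\nfz}_n \xrightarrow{\sim} \ds_n^{\vee}$. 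The main technical obstacle is the bookkeeping around $\piy$: one must verify that pulling the stuffle relations back from $\Qyi$ to $\Qxy\cdot y$ along $\piy$ and combining with the shuffle relations produces no further constraints beyond those packaged into relations 1--4, and in particular that the $y_1^n$-correction in the definition of $\piy$ is captured exactly by the regularization relation $\z^{*}(y^n) = \tfrac{(-1)^{n-1}}{n}\,\z^{\sha}(x^{n-1}y)$ rather than forcing additional identities among the $\z^{\sha}(w)$.
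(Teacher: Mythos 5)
Your proposal is correct and follows essentially the same route as the paper: both identify $\z^{\sha}(w)$ and $\z^{*}(v)$ with the coefficient functionals $f\mapsto(f\mid w)$ and $f\mapsto(\piy(f)\mid v)$, verify the four defining relations of $\widetilde{\nfz}$ from shuffle primitivity, stuffle primitivity, $\ds_1=\ds_2=0$, and the correction term in $\piy$ (including the same computation $(\piy(f)\mid y_1^n)=(f\mid y^n)+\tfrac{(-1)^{n-1}}{n}(f\mid x^{n-1}y)$ with $(f\mid y^n)=0$), and then observe that these are exactly the duals of the defining relations of $\ds$, so no further identities arise. The only quibble is notational: you should pair $\z^{*}(v)$ against the coefficient of the monomial $\widetilde{\piy}(v)$ rather than against $\piy(v)$, since for $v=x^{n-1}y$ the latter is not a single monomial.
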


\begin{proof}
Let $f\in \ds\subset \Qxy$.  The relations in $\ds^\vee$ are given
by the duals of the relations in $\ds$, which are given by
$\ds_1=\ds_2=0$ and relations (\ref{thing1}) and (\ref{thing2}) below,
with 
\begin{equation}\label{corr}\piy(f) =\sum_{v\in \Qxy\cdot y} (f|v)v + \sum_{n\geq 1}
\frac{(-1)^{n-1} }{n} (f|x^{n-1}y) y^n.\end{equation}

Let an element $\z^{\sha}(w)\in \ds^{\vee}$, be
identified with the linear map, $\ds \rightarrow \Q$ given by
$\z^{\sha}(w)(f) = (f|w)$ for all $f\in \ds$. 
Let $\z^{*}(v)\in \ds^{\vee}$ be
identified with the linear map, $\ds \rightarrow \Q$ given by
$\z^{*}(v)(f) = (\piy(f)|v)$ for all $f\in \ds$. Note that in this proof,
the symbols $\z^{\sha}$ and $\z^*$ refer to elements of $\ds^\vee$, not
to elements of $\widetilde{\nfz}$, and the proof shows that they are equal.
As usual, $w$ always
stands for an arbitrary word in $x$ and $y$, and $v$ for a word
ending in $y$.  

We will show that these linear maps, $\z^{\sha}(w)
\in \ds^\vee$, satisfy the defining relations 1-4 of $\widetilde{\nfz}$
and no others.  Indeed, the relations between the linear maps 
$\z^{\sha}(w)$ are exactly the duals of the relations in $\ds$.
Let us compute the dual relations of each of the four 
relations in $\ds$.
   
First, we know that all $f\in \ds$ are primitive for
$\Delta_{\sha}$ which is equivalent to the condition:
\begin{equation}\label{thing1}\sum_{w\in w_1\sha w_2} (f|w) =
0,\end{equation} and hence $\z^{\sha}(w_1\sha w_2) =0$.  This is
defining relation 1 of $\widetilde{\nfz}$.

Next, we know that since for all $f\in \ds$, $\piy(f)$ is primitive for
$\Delta_{*}$, which is equivalent to the condition:
\begin{equation}\label{thing2}\sum_{v\in v_1* v_2} (\piy(f)|v) =
0,\end{equation} and hence $\z^{*}(v_1* v_2) =0$.  This is defining
relation 2 of $\widetilde{\nfz}$.  

Finally, we know that $\ds_1=\ds_2=0$.  This
immediately implies relation 3 of the definition of $\widetilde{\nfz}$.
Notice that $ny^n=(y)\sha (y^{n-1})$, so that for
$f\in \ds$, $(f|y^n)=0$.  It follows immediately
that $\z^{\sha}(y^n)=0$ for all $n\geq 1$. Similarly, $\z^{\sha}(x^n)=0$
for all $n\ge 1$.

The last relation in $\ds$ is the defining formula (\ref{corr}).
Therefore the coefficients of
any word must be the same on both sides.  If $v$ is a word ending in $x$,
this coefficient is $0$ on both sides.  If $v$ is a word ending in $y$ but
not a power of $y$, the equality of the coefficients implies that
$\z^*(v)=\z^{\sha}(v)$, which is the first part of defining relation 4
of $\widetilde{\nfz}$.  Finally,  if $v$ is a power of $y$, the equality
of the coefficients shows that $$\z^*(y^n)=\z^{\sha}(y^n)+
\frac{(-1)^{n-1} }{n} \z^{\sha}(x^{n-1}y)$$ since $\z^{\sha}(y^n)=0$
as we showed above.  

We have shown that the set of relations of $\ds^{\vee}$ is equal to
the set of relations from
definition \ref{nfzdef}, thus we have an isomorphism 
$\ds^{\vee}\simeq \widetilde{\nfz}$.

\end{proof}

Now we can prove proposition \ref{newzetathm} from the previous section.

\vspace{.2cm}
{\it Sketch of proof of proposition \ref{newzetathm}}.
The proof of this proposition relies on proposition \ref{dsvee} proving that
$\ds^\vee$ is isomorphic to the Lie coalgebra $\widetilde{\nfz}$ defined in
\ref{nfzdef}.  The conclusion then follows from the regularization
formula given by Furusho \cite{Fu} (proposition 3.2.3), expressing non-convergent
symbols $\widetilde{\nfz}$ as explicit linear combinations of convergent
symbols, thus giving an obvious surjection from $\widetilde{\nfz}$ to $\nz$
by mapping convergent symbols to the corresponding zeta values.\hfill{$\square$}

Another proof of this proposition can be obtained by directly adapting
Drinfel'd's and Furusho's proof of theorem \ref{Dr} \cite{Fu}.

Now we are in a position to translate theorem \ref{dep12} into the
language of multizetas.  As explained above, 
$\widetilde{\nfz}$ surjects onto
$\nz$.  Theorem \ref{dep12} implies that every
depth 2 new zeta value in $\nz$ of odd weight, $\overline{\zeta}(a,b) $
where $a+b$ is odd,
is equal to a rational multiple of the depth 1 new zeta value,
$\overline{\zeta}(a+b)$.

The proof of theorem \ref{dep12} yields as a corollary the
following formula for the coefficient of $\overline{\zeta}(i,j)$ in terms of
$\overline{\zeta}(i+j)$ in $\nz$, which is actually the simplification of a result known to Euler, who gave the complete expression for $\zeta(i,j)$ in $\MZV$.

\begin{cor}\label{zetaab} 
Assume that $i+j$ is odd, $i,j\ge 2$.  Then,
$$\overline{\zeta}(i,j)= \frac{(-1)^{j-1}{i+j \choose j} -1}{2}\overline{\zeta}(i+j).$$
\end{cor}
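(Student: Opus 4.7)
The plan is to reduce the statement to a finite linear system using the double shuffle and Hoffman relations, verify that the proposed formula solves the system, and invoke the dimension count of Theorem \ref{dep12} for uniqueness.

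First, I would invoke Theorem \ref{dep12}: for $n=i+j$ odd the depth-graded piece $F^2_n\ds/F^3_n\ds$ vanishes, so via the surjection $\widetilde{\nfz} \simeq \ds^\vee \twoheadrightarrow \nz$ from Propositions \ref{dsvee} and \ref{newzetathm}, each depth-two new zeta $\overline{\zeta}(\alpha, n-\alpha)$ with $2 \leq \alpha \leq n-1$ is a rational multiple of $\overline{\zeta}(n)$. Writing $\overline{\zeta}(\alpha, n-\alpha) = c_\alpha \overline{\zeta}(n)$, the task becomes showing that $c_\alpha = \tfrac{1}{2}\bigl((-1)^{n-\alpha-1}\binom{n}{\alpha}-1\bigr)$.

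Next, I would extract three families of linear constraints on the $c_\alpha$ by applying each basic identity to $\zeta(i)\zeta(j)$ (respectively $\zeta(n-1)$) and projecting to $\nz$, where all products vanish. Stuffle (Proposition \ref{stuffle}) gives $c_i + c_j + 1 = 0$ for every valid pair. Shuffle (Proposition \ref{shuffle}), after expanding $x^{i-1}y \sha x^{j-1}y$ word by word, yields
$$\sum_{\alpha=2}^{n-1}\left[\binom{\alpha-1}{i-1} + \binom{\alpha-1}{j-1}\right] c_\alpha = 0.$$
Hoffman's relation (Proposition \ref{Hoffman}) applied to $\underline{k}=(n-1)$, after computing $(1) * (n-1)$ and $(y) \sha (x^{n-2}y)$ (the latter producing a coefficient $2$ at the word $x^{n-2}y^2$), collapses to the normalization $\sum_{\alpha=2}^{n-1} c_\alpha = 1$.

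The heart of the argument would be the verification that $c_\alpha^0 := \tfrac{1}{2}\bigl((-1)^{n-\alpha-1}\binom{n}{\alpha}-1\bigr)$ solves all three families. The stuffle check is immediate: the opposite parities of $i-1$ and $j-1$ together with $\binom{n}{i} = \binom{n}{j}$ make $c_i^0 + c_j^0 + 1$ collapse to $0$. The Hoffman check reduces to evaluating $\sum_{\alpha=2}^{n-1}(-1)^\alpha \binom{n}{\alpha} = n$ for $n$ odd, obtained by subtracting the $\alpha \in \{0,1,n\}$ boundary terms from $\sum_{\alpha=0}^n (-1)^\alpha \binom{n}{\alpha} = 0$. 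The shuffle check is the main technical obstacle; it rests on the identity
$$\sum_{\alpha=i}^{n}(-1)^\alpha \binom{\alpha-1}{i-1}\binom{n}{\alpha} = (-1)^i,$$
which I would derive via the trinomial revision $\binom{\alpha-1}{i-1}\binom{n}{\alpha} = \tfrac{i}{\alpha}\binom{n}{i}\binom{n-i}{\alpha-i}$ followed by the classical beta-integral identity $\sum_{\beta=0}^{m}\tfrac{(-1)^\beta\binom{m}{\beta}}{\beta+i} = \tfrac{1}{i\binom{m+i}{i}}$. Combined with the hockey-stick identity $\sum_{\alpha=i}^{n-1}\binom{\alpha-1}{i-1} = \binom{n-1}{i}$ and Pascal's rule, the shuffle sum evaluated at $c_\alpha^0$ telescopes to $0$.

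Finally, uniqueness of $c_\alpha$ follows because Theorem \ref{dep12}, combined with $\dim(F^1_n\ds/F^2_n\ds) = 1$ for $n$ odd, forces the solution space of the above system to be at most one-dimensional, and the Hoffman normalization selects the unique representative. Hence $c_\alpha = c_\alpha^0$, which is the claimed formula with $c_{i,j} = c_i$. The shuffle-sum binomial identity is the principal obstacle; every other step is bookkeeping.
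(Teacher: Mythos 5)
Your setup and your verifications are all correct: the reduction via Theorem \ref{dep12} to constants $c_\alpha$ with $\overline{\zeta}(\alpha,n-\alpha)=c_\alpha\overline{\zeta}(n)$, the three families of constraints (including the coefficient $2$ at $x^{n-2}y^2$ in Hoffman's relation, whence $\sum_\alpha c_\alpha=1$), and the check that $c^0_\alpha=\tfrac12\bigl((-1)^{n-\alpha-1}{n\choose \alpha}-1\bigr)$ satisfies all of them --- the identity $\sum_{\alpha=i}^{n}(-1)^\alpha{\alpha-1\choose i-1}{n\choose\alpha}=(-1)^i$ is true and your derivation of it is sound. (A minor repairable point: the constraints on the actual $c_\alpha$ should be extracted in $\widetilde{\nfz}\simeq\ds^\vee$ rather than in $\nz$, since $\overline{\zeta}(n)$ could a priori vanish in $\nz$; in $\ds^\vee$ the functional $\z^{\sha}(x^{n-1}y)$ is nonzero by Theorem \ref{dep12}(i), so one may divide by it.)

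The genuine gap is the uniqueness step. Knowing that both the true $c_\alpha$ and your candidate $c^0_\alpha$ satisfy your system gives $c_\alpha=c^0_\alpha$ only if the system is nonsingular, i.e.\ its homogeneous version has only the trivial solution --- and this does \emph{not} follow from the statement of Theorem \ref{dep12}. That theorem concerns the space cut out by \emph{all} double shuffle relations in every depth; your system imposes only a finite subfamily of depth-two relations, and a subsystem may have a strictly larger solution space. The implication runs the other way: in the proof of Theorem \ref{theoreme}(ii) the nullity of the depth-two system (the matrix $M$) is explicitly treated as only an \emph{upper} bound for $\dim F^2_n\ds/F^3_n\ds$, which is why the even case requires a separate lower-bound argument via Poisson brackets. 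The uniqueness you need is precisely the invertibility of $M$ for odd $n$, which is the hard combinatorial content of sections \ref{PQTDA}--\ref{partii} (the generating functions $P$, $Q$, the triangles $\Lambda_D$, $\Lambda_A$, and the matrix $N$); the paper's own proof likewise proposes an explicit solution $a_i=A\tfrac{(-1)^i}{2}C_{n-i-1}^{i+1}$ of $M\cdot a=-A(1,\dots,1)$ and then invokes that invertibility to conclude. Your argument closes once you either prove directly that your (dual) linear system is nonsingular for odd $n$, or cite the invertibility of $M$ itself rather than the dimension statement that it implies.
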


\section{The moduli space of genus 0 curves, $\Mn$}

Chapters 3 and 4 of this thesis are a study of multizeta values as
periods on moduli space via the top dimensional de Rham cohomology,
$H^{n-3}(\Mn)$\label{cohomnot} (we drop the subscript, $_{DR}$ for
``de Rham'', 
for the rest of the
text).  We begin this section by recalling the 
useful notations and properties of moduli space.

\vspace{.5cm}
\begin{defn}\label{Mndef}\index{$\Mn$} The moduli space of genus 0 curves over $\C$, $\Mn$,
  is the space whose points are isomorphism classes of Riemann spheres
  with $n$ distinct, ordered 
  marked points modulo the action of $\PSL_2(\C)$ on the points.
\end{defn}

The action of $\PSL_2$ is triply transitive, so we may denote a point
in $\Mn$ by $\overline{(z_1,...,z_n)}$ or by a well-chosen
representative in its equivalence class, $(0, t_1,...,t_\ell,
1,\infty)$, $\ell=n-3$\label{elldef}\index{$\ell$}.  In this way, we have the isomorphism,
\begin{equation}\label{MninP} \Mn \simeq (\Pro^1\setminus
  \{0,1,\infty\})^{\ell} \setminus \Delta,\end{equation}
where $\Delta$\label{fatdelta}\index{Fat diagonal, $\Delta$} denotes the ``fat'' diagonal, $\Delta =
\{ t_{i_1} = t_{i_j} ; \hbox{ for all distinct } i_k, 1\leq k\leq j\}$.

The moduli space, $\Mn$ is not compact.  A stable compactification,
$\M_{0,n}$\label{Mnbar}\index{$\M_{0,n}$} was
defined by Deligne and Mumford \cite{DM}.  Adding boundary components
to $\Mn$ corresponds to adding stable curves to $\Mn$.  These are genus 0 Riemann surfaces with nodes, such that each component has at least 3 marked or singular points.  A visual interpretation of a point on the boundary
$\M_{0,n} \setminus \Mn$ is given in figure 1, where the simple
closed loop on the left has been pinched to a geodesic of length 0 on
the right.

\begin{center}
 \scalebox{.8}{\input{pinchintro.pstex_t}}
\label{pinch}
\end{center}

The boundary divisors of $\M_{0,n}$ are closed, irreducible
codimension 1 subvarieties in $\M_{0,n}\setminus \Mn$.  (Many authors use the term boundary divisor to denote $\M_{0,n} \setminus \Mn$, whereas we use the term for the irreducible components of $\M_{0,n} \setminus \Mn$.)  In the
association given in \eqref{MninP}, they correspond to blowups of the
regions in $\Delta$.  We sometimes denote a boundary divisor by an
equation, $t_{i_1} =\cdots = t_{i_j}$, which is understood to be the
blowup in $\M_{0,n}$ of that region in $(\Pro^1)^{n-3}$.

The boundary divisors may be combinatorially enumerated
by specifying 
a partition of $S=\{0,t_1,...,t_\ell,1,\infty\}$ into two subsets, $A$
and $S\setminus A$, with
$2\leq |A| \leq n-2$. This is because any simple closed loop on the sphere
with $n$ marked points partitions the points of $S$ into two subsets
as in figure 1.  We may alternatively denote
by $d_A$\label{d_Adef}\index{$d_A$}, the boundary divisor in which the simple
closed loop pinches the subset 
$A\subset S$, hence $d_A=d_{S\setminus A}$.  During sections of this
thesis where no confusion may arise, we may simply denote the boundary
divisor by the set $A$.

\begin{defn}\label{Mnreal} We denote by $\Mn(\R)$ the space of points,
  $\{(0,t_1,...,t_\ell , 1,\infty) ; t_i \in \R\}$.
\end{defn}

While $\Mn$ is a connected manifold, $\Mn(\R)$ is not
connected.  Each connected component in $\Mn(\R)$ can be completely described by the real ordering of its marked points,
$t_{i_1}< \cdots <0< \cdots <1< \cdots <t_{i_{n-3}}$. 

\begin{defn}\label{celldef}\index{Cell, $(z_{i_1},...,z_{i_n})$} A connected component of $\Mn(\R)$ is called a
  {\bf cell}.  The cells in $\Mn(\R)$ are
  also called associahedra.
 We denote a cell by the cyclic ordering corresponding to the real ordering of its marked points, where $ (s_1,
 s_2,...,s_n)$ denotes to the cell
 $s_1 < s_2 <
  \cdots < s_n$ such that $\{s_i, 1\leq i \leq n\} = \{0,t_1,...,t_\ell,
  1, \infty\}$.
\end{defn} 

\begin{ex}
Figure \ref{m05} depicts $\Mod_{0,5}(\R)$, where the lines are
absent from the space, and the cells are the regions
between the lines.

\begin{figure}
\begin{center}
 \scalebox{.8}{\input{m05.pstex_t}}
\caption{$\Mod_{0,5}(\R)$}
\label{m05}
\end{center}
\end{figure}

\end{ex}

\section{Periods on $\Mn$ and the algebra, $\mathcal{C}$}

The inspiration for chapters 3 and 4 of this thesis is a recent
theorem of Francis Brown \cite{Br} in which he proves that every
period on $\Mn$ is a $\Q$ linear combination of multiple zeta values.
This led naturally to the question of whether the structure of the
multiple zeta value algebra might not be more transparent or more
symmetric by taking all periods as generators, and relations coming
from the geometry of moduli spaces.

\vspace{.5cm}
\begin{defn}\label{perioddefintro}\index{Period}
We define a {\bf period} on $\Mn$ to be a convergent integral,
$\int_\gamma \omega$, where $\gamma$ is a cell in $\Mn(\R)$ and 
$\omega$ is a differential $(n-3)$-form which is holomorphic on $\Mn$ and
which has at 
most simple poles along the boundary divisors.  We denote by $\mathcal{C}$\index{$\mathcal{C}$} the $\Q$ algebra
generated by periods on $\Mn$.
\end{defn}
Up to a variable change corresponding to permuting the marked points,
all periods may be written as integrals over the standard cell,
$\delta := 0<t_1<...<t_{n-3}<1$\label{standardcelldelta}\index{$\delta$, standard cell}.

One of the main points of chapter 3 is that the combinatorial properties of periods can be expressed by using polygons.  Let us now explain how polygons can be used to encode cells on $\Mn(\R)$, and also to encode certain differential forms on $\Mn$ called {\it cell forms}. 

We may identify an oriented $n$-gon, $\gamma$, to a cell in $\Mod_{0,n}(\R)$ by
labelling the sides of the $n$-gon with the marked points.  This
$n$-gon is
associated to the
cell given by the clockwise cyclic ordering of the
labelled edges of the polygon as in figure 3. Let
$Z=\{s_1,...,s_n\}= \{0,1,\infty, 
t_1,...,t_\ell\}$\label{Zdef}\index{$Z$} and let $\gamma$ be a polygon
decorated by $Z$, such that $s_i$ is followed by $s_{i+1}$ in
the clockwise labelling of the edges and where $i$ is taken modulo $n$.
Then we denote $\gamma$ by $(s_1,...,s_n)$ and we have that $\gamma =
\sigma(s_1,...,s_n)$ where $\sigma$ is any cyclic permutation in $\Sym_n$. 

Each component of the boundary of $\gamma$ lies in some boundary divisor $d_A
\subset \M_{0,n}\setminus \Mn$
such that $A=\{s_i, s_{i+1}, ..., s_{i+j}\}$ is a successive
block in the cyclically ordered tuple, $(s_1,...,s_n)$.

\begin{ex} A polygon cyclically labelled
$(t_1,0,t_3,1,t_2,\infty)=\gamma$ is identified with the cell
$t_1<0<t_3<1<t_2<\infty$ in $\Mod_{0,6}(\R)$ as in figure 3.
\begin{center}
 \scalebox{.8}{\input{cellgon.pstex_t}}
\label{cellgon}
\end{center}

\end{ex}

For each cell in $\Mn(\R)$, there exists a unique differential $\ell$-form
up to scalar multiple that is holomorphic on the interior and has
simple poles on all of the divisors on the boundary of that cell.  We
call such a form associated to the pole divisors of a cell a {\it cell form}.  
\begin{defn}\label{cellformdefintro}\index{Cell form, $[z_{i_1},...,z_{i_n}]$}
Let $\gamma$ be the cell, $\gamma= (s_1,s_2,...,s_n)$.
The {\bf cell
form}, $\omega_{\gamma}$, associated to $\gamma$ is defined as
$$\omega_{\gamma} = \frac{dt_1\wedge ... \wedge dt_{n-3}}{\Pi
(s_i-s_{i-1})},$$ where the $s_i$ are the cyclically labelled sides of
the polygon and where the side labelled $\infty$ is left out of the
product.  This form is holomorphic on $\Mn$ and has simple poles along 
exactly those boundary divisors bounding $\gamma$ and nowhere else on $\M_{0,n}$.  We denote a cell
form $\omega_\gamma$ by the cyclic ordering $[s_1,...,s_n]$.
\end{defn}

\begin{ex} The polygon cyclically
labelled $[0,1,t_1,t_3,\infty,t_2]$ corresponds to the cell form in figure
\ref{cellformintro}, 
$\frac{dt_1dt_2dt_3}
{(-t_2)(t_3-t_1)(t_1-1)}$.
\begin{figure}
 \scalebox{.8}{\input{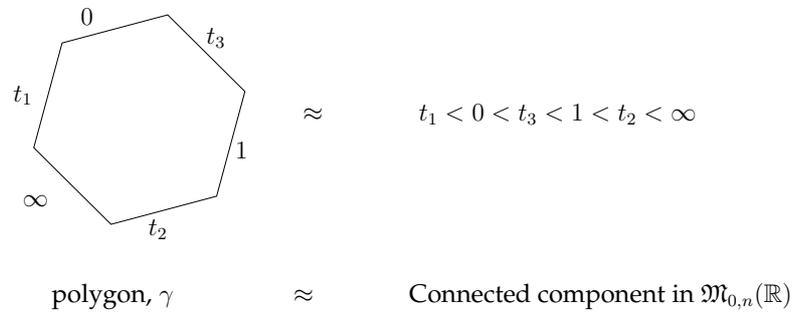}}
\caption{The polygon representation of a cell form}
\label{cellformintro}
\end{figure}
\end{ex}

We prove in chapter 3 that cell forms generate
the de Rham cohomology
group, $H^{\ell}(\Mod_{0,n})$, so that every differential $\ell$-form can be written
as a linear combination of these.  We also explicitly determine a basis for the subspace $H^{\ell}(\Mod_{0,n}^\delta)$  of differential $\ell$-forms converging on the
boundary divisors which bound the standard associahedron, $\delta$.
To do this, we associate the integral
$\int_{\gamma}\omega_\beta$ to the polygon pair $(\gamma, \beta)$ (even if this integral diverges).
Because some linear combinations of cell forms, which individually diverge on $\gamma$, may actually converge on $\gamma$,
the above results show that every convergent integral over $\gamma$ can be expressed as a
linear combination of pairs of polygons.

Using this association and Brown's theorem, we have defined (in a joint paper with F. Brown and
L. Schneps, included as chapter 3) a formal
algebra of periods, which is generated by polygon pairs, with
relations coming from geometric properties 
of moduli spaces.  The formal polygon pair algebra, $\mathcal{FC}$,
generalizes the formal multizeta algebra and 
allows us to prove some
results about periods and the cohomology of $\Mn$.  This gives a new approach to 
some conjectures about multizeta values and formal multizeta values.
To begin with, in the theorem below, we use polygons to give a new basis
for the top dimensional de Rham cohomology group, $H^{\ell}(\Mn)$, different
from Arnol'd's well-known basis and more useful for the study of periods.

Following a theorem of Arnol'd (which is more precise, see chapter 4), 
we have the following characterization of the top dimensional de Rham cohomology group, $H^{\ell}(\Mn)$ (top dimensional in the sense that $H^{m}(\Mn)=0$ for all $m> \ell$).
\begin{claim}\label{cohomclaim}\index{$H^{n-3}(\Mn)$}
$H^{\ell}(\Mn)$ is isomorphic
to the
vector space over $\Q$ of differential forms which are holomorphic on
$\Mn$ and which
have at most simple poles along the boundary divisors, $\M_{0,n}\setminus \Mn$.
\end{claim}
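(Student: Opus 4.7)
The plan is to deduce this statement from Arnol'd's theorem on the rational cohomology of hyperplane arrangement complements. First I would choose coordinates to identify $\Mn$ with the affine variety $\{(t_1,\ldots,t_\ell)\in\C^\ell : t_i\neq 0,1,\ t_i\neq t_j\ (i\neq j)\}$, namely the complement of the arrangement $\mathcal{A}$ of hyperplanes $t_i=0$, $t_i=1$, $t_i=t_j$. Arnol'd's theorem, together with its Brieskorn and Orlik--Solomon refinements, asserts that $H^{*}(\Mn,\Q)$ is generated as a graded $\Q$-algebra by the classes of the logarithmic forms $d\log L_H$ for $H\in\mathcal{A}$. In particular $H^{\ell}(\Mn,\Q)$ is spanned by $\ell$-fold wedge products of such $d\log$ forms, each of which is a holomorphic top form on $\Mn$ with at most simple poles along every boundary component of $\M_{0,n}$.

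Next I would identify the space of top forms on $\Mn$ that are holomorphic on $\Mn$ and have at most simple poles along $D=\M_{0,n}\setminus \Mn$ with the space of global sections $\Gamma(\M_{0,n},\Omega^{\ell}_{\M_{0,n}}(\log D))$. Since $D$ is a normal crossings divisor in $\M_{0,n}$ (by Knudsen--Deligne--Mumford), this reduces to a local verification: on a chart where $D=\{z_1\cdots z_k=0\}$, a top form $f\,dz_1\wedge\cdots\wedge dz_\ell$ with at most a simple pole along each $\{z_i=0\}$ may be rewritten as $(fz_1\cdots z_k)\,d\log z_1\wedge\cdots\wedge d\log z_k\wedge dz_{k+1}\wedge\cdots\wedge dz_\ell$, manifestly a section of $\Omega^{\ell}(\log D)$, and conversely every such section has only simple poles along $D$. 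Combined with Arnol'd, the class-of map $\Gamma(\M_{0,n},\Omega^{\ell}(\log D))\to H^{\ell}(\Mn)$ is well-defined (every top form is automatically closed) and surjective.

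Injectivity is the main obstacle. The point is that the Orlik--Solomon-type relations among top $d\log$-wedge products (such as the three-term relation $d\log L_1\wedge d\log L_2 - d\log L_1\wedge d\log L_3 + d\log L_2\wedge d\log L_3 = 0$ when $L_1,L_2,L_3$ are linearly dependent) must be shown to be pointwise identities of differential forms, rather than merely cohomological relations; only then does no nonzero top log form become cohomologous to zero. A slick high-powered route invokes Deligne's $E_1$-degeneration of the Hodge-to-de-Rham spectral sequence for the pair $(\M_{0,n},D)$ combined with the purity of the mixed Hodge structure on $H^{\ell}(\Mn)$ in Tate type $(\ell,\ell)$ (which holds because $\Mn$ is an arrangement complement); this forces $F^{\ell}H^{\ell}(\Mn)=H^{\ell}(\Mn)$, and on the other hand $F^{\ell}H^{\ell}(\Mn)=H^{0}(\M_{0,n},\Omega^{\ell}(\log D))$, yielding the isomorphism directly. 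A more elementary, combinatorial route is to exhibit an explicit basis of the space of top log forms whose images in $H^{\ell}(\Mn)$ are linearly independent; this is precisely the cell-form basis constructed in chapter 3, which confirms the claim and simultaneously computes $\dim H^{\ell}(\Mn)$.
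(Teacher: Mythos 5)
Your argument is correct, but the decisive step is handled differently from the thesis. Both you and the paper use Arnol'd's theorem to see that logarithmic top forms surject onto $H^{\ell}(\Mn)$, and your local normal-crossings computation identifying top forms with at most simple poles with global sections of $\Omega^{\ell}_{\M_{0,n}}(\log D)$ is fine (you should, however, also note why a wedge of $d\log$'s of the hyperplane equations has at most logarithmic poles along the \emph{exceptional} divisors of $\M_{0,n}$ and not just along the proper transforms of the hyperplanes; this is standard, since $d\log$ of a rational function is logarithmic on any normal crossings compactification and log forms are closed under wedge, but it is not automatic from the affine picture alone). Where you genuinely diverge is injectivity: you invoke Deligne's $E_1$-degeneration together with purity of the mixed Hodge structure of an arrangement complement, which yields $H^{0}(\M_{0,n},\Omega^{\ell}(\log D))=F^{\ell}H^{\ell}(\Mn)=H^{\ell}(\Mn)$ in one stroke. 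The thesis instead stays explicit: chapter 4 computes $\dim H^{\ell}(\Mn)=(n-2)!$ via the Leray spectral sequence of the fibration $\Mn\to\Mod_{0,n-1}$, appeals to Grothendieck's algebraic de Rham theorem so that every class is represented by an algebraic form, and matches the two spaces by exhibiting Arnol'd's basis (and, in chapter 3, the equivalent $01$ cell-form basis) inside the space of logarithmic forms. Your Hodge-theoretic route is shorter and explains conceptually why no nonzero logarithmic top form can be exact; the thesis's route is exactly the ``elementary, combinatorial route'' you mention at the end, and it buys the explicit bases that the rest of the thesis needs for the period computations.
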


\begin{defns}\label{PnIn}\index{$\mathcal{P}_Z$}\index{$I_Z$}
Let ${\cal P}_Z$ be the $\Q$ vector space generated by oriented $n$-gons
decorated by the marked points in $\Mod_{0,n}$.

Let $I_Z\subset {\cal
  P}_Z$ be the vector subspace generated by shuffle sums with respect
to $\infty$, in other words polygon sums of the form $$\sum_{W\in A\sha
  B} [W,\infty],$$
where $A$, $B$ is a partition of $\{ 0,t_1,...,t_{n-3},1\}$.
\end{defns}

\begin{defn}
Let a cell form corresponding to a polygon in which 0 appears just to the left of 1 be called a 01-cell form.
\end{defn}

\begin{thm}\label{pnmodin}
${\cal P}_Z/I_Z$ is isomorphic to 
  $H^{\ell}(\Mod_{0,n})$ and a
  basis for $H^{\ell}(\Mn)$ is given by the set of 01-cell forms,
  $\{[0,1, \sigma(\infty,t_1,...,t_{\ell})],\ \sigma \in \Sym_{n-2}\}$. 
\end{thm}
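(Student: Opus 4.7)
The plan is to construct the map $\Phi:\mathcal{P}_Z\twoheadrightarrow H^{\ell}(\Mn)$ by $[s_1,\dots,s_n]\mapsto\omega_{[s_1,\dots,s_n]}$, which is well-defined on cyclic equivalence classes by Definition \ref{cellformdefintro}, and then to verify three separate claims: (i) $I_Z\subseteq\ker\Phi$; (ii) the $01$-cell forms span $\mathcal{P}_Z/I_Z$; (iii) the number of $01$-cell forms equals $\dim H^{\ell}(\Mn)=(n-2)!$. Together these force both $\Phi$ and the induced map $\mathcal{P}_Z/I_Z\twoheadrightarrow H^{\ell}(\Mn)$ to be isomorphisms, and simultaneously force the $01$-cell forms to be a basis of $H^{\ell}(\Mn)$.

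Surjectivity of $\Phi$ would follow from Claim \ref{cohomclaim}: every class in $H^{\ell}(\Mn)$ is represented by a holomorphic $\ell$-form with at most simple poles along the boundary divisors of $\M_{0,n}$, and in the affine coordinates $t_1,\dots,t_\ell$ any such form can be expanded as a $\Q$-linear combination of cell forms by repeated use of the identity $\tfrac{1}{(x-a)(x-b)}=\tfrac{1}{b-a}\!\left(\tfrac{1}{x-b}-\tfrac{1}{x-a}\right)$ until each term carries exactly one pole along each active boundary component. For (i) I would prove the partial-fraction identity
\[
\sum_{W\in A\sha B}\prod_{i}\frac{1}{w_{i+1}-w_i}\;=\;0
\]
for any two nonempty sequences $A,B$ with disjoint supports, by induction on $|A|+|B|$ using the recursion defining $\sha$ from Definition \ref{shdefintro}; multiplying through by $dt_1\wedge\cdots\wedge dt_\ell$ this gives exactly $\sum_{W\in A\sha B}\omega_{[W,\infty]}=0$, so the generators of $I_Z$ lie in $\ker\Phi$.

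The heart of the argument, and the main obstacle I anticipate, is the spanning step (ii). After cyclic rotation every polygon has the shape $[W,\infty]$, and the aim is to reduce $[W,\infty]$ modulo $I_Z$ to the span of $01$-cell forms. The basic move is the shuffle relation for the splitting $A=(0)$, $B=W\setminus\{0\}$, which expresses $[W,\infty]$ as a signed sum of the $n-2$ polygons obtained by reinserting the letter $0$ into every other slot of $B$: exactly one of these is a $01$-cell form (the one placing $0$ immediately before $1$), while the remaining polygons have $0$ elsewhere. Inducting simultaneously on the cyclic distance between $0$ and $1$ and applying the symmetric singleton shuffle $(1)\sha(W\setminus\{1\})$ whenever the letter $1$ sits in the wrong orientation relative to $0$, one obtains a bounded induction whose output lies in the span of $01$-cell forms; the careful bookkeeping needed to show that the complexity strictly decreases at each stage is the delicate point. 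Finally, the count $(n-2)!$ of $01$-cell forms — fixing the adjacency $0\!\to\!1$ kills the cyclic rotation symmetry and leaves $(n-2)!$ orderings of the remaining labels — matches the known $\dim H^{\ell}(\Mn)=(n-2)!$, read off as the leading coefficient of the Poincar\'e polynomial $\prod_{j=2}^{n-2}(1+jt)$ of $\Mn$. This dimension match then forces both surjections to be isomorphisms and the $01$-cell forms to be a basis, completing the proof.
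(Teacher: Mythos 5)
Your overall architecture (build $\Phi$, check $I_Z\subseteq\ker\Phi$, show $01$-forms span, count, conclude) is sound, and steps (i) and (iii) are fine: (i) is exactly the corollary to proposition \ref{shufprod}, and the count $(n-2)!=\dim H^{\ell}(\Mn)$ is Arnol'd's theorem. The genuine gap is in step (ii), which you yourself flag as delicate. The singleton shuffle $(0)\sha(W\setminus\{0\})$ rewrites $[W,\infty]$ as minus the sum of the insertions of $0$ into \emph{every other} slot of $W\setminus\{0\}$; these insertions place $0$ at all cyclic distances from $1$, both smaller and larger than the original one, so the quantity you propose to induct on does not decrease and the rewriting procedure has no evident termination. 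Nor is there an obvious substitute invariant: the statement that is easy to prove combinatorially (Radford's theorem / the Lyndon basis) is that words beginning with a fixed \emph{minimal letter} span the word space modulo shuffles, and the condition ``$0$ adjacent to $1$'' is not of that form relative to shuffles taken with respect to $\infty$. (It is of that form for shuffles taken with respect to $0$, but the fact that shuffles with respect to $\infty$ and shuffles with respect to $0$ span the same subspace is itself a consequence of this theorem, not an available input.)

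The paper circumvents this and never proves directly that $01$-words span ${\cal P}_Z/I_Z$. It first proves that the $01$-cell forms span $H^{\ell}(\Mn)$ (theorem \ref{thm01cellsspan}): each Arnol'd basis form $\Omega(\underline{\varepsilon})$ is written as a product of elementary cell functions $F_i$, each a $01$-cell function on a small subset of the marked points, and the product of cell functions with common endpoints is converted into a \emph{sum} of cell functions by the shuffle-product identity of proposition \ref{shufprod}; every term of that sum inherits the prefix $0,1$ from the factors. Note that your ``repeated partial fractions'' alone will not accomplish even this weaker reduction: an Arnol'd form such as $dt_1dt_2/(t_1t_2)$ on $\Mod_{0,5}$ is not itself a cell form (a cell form in that case has three factors in its denominator, not two), and it is precisely proposition \ref{shufprod} that expresses it as $[0,1,\infty,t_1,t_2]+[0,1,\infty,t_2,t_1]$. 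The count $(n-2)!$ then makes the $01$-cell forms a basis of $H^{\ell}(\Mn)$, and the isomorphism ${\cal P}_Z/I_Z\simeq H^{\ell}(\Mn)$ follows from surjectivity together with the computation $\dim({\cal P}_Z/I_Z)=(n-2)!$ via the Lyndon basis (Lyndon words on the $n-1$ letters other than $\infty$ number $(n-2)!$, and shuffles of Lyndon words span $I_Z$). To salvage your route, replace the reduction in step (ii) by this Lyndon-word dimension count for the quotient, and prove the $01$-spanning statement in cohomology as the paper does.
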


Thus, each cohomology class contains a representative 01-cell form.

\begin{defn}\label{partcompdef}\index{Partial compactification, $\Mn^\gamma$}  Let $Z$ be the set denoting marked points on $\Mn$, $Z=\{z_1,...,z_n\}$.  Let $\rho$ be the set of partitions of $Z$, in which each set in the partition has cardinality greater than or equal to 2.  We denote by $D$ the disjoint union, $\sqcup_{i\in \rho} D_i$ where each $D_i$ is the (irreducible) boundary divisor in $\M_{0,n}\setminus \Mn$ defined by the partition $i$.  Likewise, if $\gamma\sqcup \gamma^c$ is a partition of $\rho$, we denote by $D_{\gamma^c}:= \sqcup_{i\in \gamma^c} D_i$.  We denote by $\Mn^\gamma:=\M_{0,n}\setminus D_{\gamma^c}$ and call $\Mn^\gamma$ a {\rm partial compactification} of $\Mn$.
\end{defn}

So we have $\Mn\subset \Mn^\gamma \subset \M_{0,n}$.  When no ambiguity can occur, to lighten the notation we may note $D_\delta$ by $\delta$.


Based on a theorem of Grothendieck \cite{Gr1}, we use in chapter 3 and
completely prove in chapter 4 that any period on $\Mod_{0,n}$ may be 
written as the integral of a linear combination of 01-forms which
converges on $D_\delta$, the set of
divisors each of which contains a face of the boundary of the standard associahedron, $\delta$, and such forms span the top dimensional
de Rham cohomology of the partially compactified moduli space,
$H^{\ell}(\Mn^\delta)$.  

Some 01-forms naturally converge on $D_\delta$. We define a {\it chord} on a
cell form, $\omega$, to be a set of marked points of a consecutive subsequence
on $\omega$ of the length between $2$ and $\lfloor
\frac{n}{2} \rfloor$.  The 01-forms which do not have any chords in
common with the polygon $\delta$ converge on the cell defined by the
cyclic ordering $\delta$.

However, there are also some linear combinations of nonconvergent 01-forms 
which converge on $D_\delta$; a basis for the space of these is the set of  
{\it insertion forms} defined in chapter 3.

With the above definitions, we can state one of the most important theorems
in the article contained in chapter 3, which is a key ingredient in the
definition of the algebra of periods (see next section).  It gives a 
combinatorial construction of an explicit basis of
$H^{\ell}(\Mn^\delta)$ and allows us to give a
recursive formula for the dimension of this cohomology group.

\vspace{3mm}
\begin{thm}\label{main}
The insertion forms and the convergent 01-cell forms form a basis for
$H^{\ell}(\Mod_{0,n}^\delta)$.\end{thm}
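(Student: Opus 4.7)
The plan is to combine three ingredients: the 01-cell form basis of $H^{\ell}(\Mn)$ given by Theorem \ref{pnmodin}, the characterization of $H^{\ell}(\Mn^\delta)$ as the subspace of classes in $H^{\ell}(\Mn)$ with no poles along the divisors of $D_\delta$, and the combinatorial rule that a cell form $\omega_\beta$ has a simple pole along a boundary divisor $d_A$ precisely when $A$ (or its complement) is a chord of $\beta$. The first step is to verify the easy containment: a 01-cell form which shares no chord with the polygon $\delta$ is holomorphic along every divisor in $D_\delta$, and each insertion form is, by its construction in chapter 3, a linear combination of non-convergent 01-cell forms that converges on $D_\delta$. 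Hence both families already sit inside $H^{\ell}(\Mn^\delta)$.

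Next I would prove linear independence. The convergent 01-cell forms are a subset of the basis from Theorem \ref{pnmodin}, and the insertion forms, being specific linear combinations of strictly non-convergent 01-cell forms, cannot lie in the span of the convergent 01-cell forms in $H^{\ell}(\Mn)$. Combined with the fact that distinct insertion forms are (by construction) linearly independent in $\mathcal{P}_Z/I_Z$, this lifts to linear independence inside the subspace $H^{\ell}(\Mn^\delta) \subset H^{\ell}(\Mn)$.

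The essential step is spanning. Take any class $\omega \in H^{\ell}(\Mn^\delta)$. By Theorem \ref{pnmodin}, I may write $\omega = \omega_c + \omega_{nc}$, where $\omega_c$ is a $\Q$-linear combination of convergent 01-cell forms and $\omega_{nc}$ is a $\Q$-linear combination of non-convergent ones. Since $\omega_c \in H^{\ell}(\Mn^\delta)$ automatically, the difference $\omega_{nc} = \omega - \omega_c$ is itself a linear combination of non-convergent 01-cell forms which nevertheless converges on every divisor of $D_\delta$. The proof thus reduces to showing that the space of such combinations is exactly spanned by insertion forms. For each $d_A \in D_\delta$, I would examine the residue map $\Res_{d_A}$ restricted to the $\Q$-span of non-convergent 01-cell forms; the combinatorics of chords tells me exactly which 01-cell forms contribute to $\Res_{d_A}$ and with which coefficients. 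The claim is that the insertion forms are precisely the common kernel $\bigcap_{d_A \in D_\delta} \ker \Res_{d_A}$ inside this span.

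The main obstacle is this last identification. Showing that insertion forms exhaust the common residue kernel requires tracking, for each divisor $d_A$, exactly which 01-cell forms have $A$ as a chord, and then verifying that the insertion forms defined combinatorially in chapter 3 are rich enough to produce every admissible cancellation pattern simultaneously across all $d_A \in D_\delta$. I would handle this by induction on the number of marked points $n$, peeling off one divisor at a time in a lexicographic order on the associahedron faces, reducing the combinatorial structure of insertion forms on $\Mn^\delta$ to that on smaller partial compactifications; this is also what will yield the recursive dimension formula mentioned just before the theorem statement.
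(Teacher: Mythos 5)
Your overall strategy is the paper's: reduce to the $01$-cell-form basis of $H^\ell(\Mn)$ (Theorem \ref{thm01cellsspan}), characterize $H^{\ell}(\Mn^\delta)$ inside it as the common kernel of the residue maps along the divisors of $D_\delta$, and show that this kernel is spanned by the insertion forms together with the convergent $01$-cell forms. The containment and linear-independence steps are sound as outlined, though the independence of the insertion forms is not automatic "by construction" — it is Theorem \ref{linind}, itself a nontrivial induction on fixed structures and composed residues.

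The gap is in the spanning step, exactly where you place "the main obstacle," and it is more than a bookkeeping issue. By Propositions \ref{prop41} and \ref{propresformula}, the residue of a polygon along a chord $T$ is a tensor product of two smaller polygons, so the vanishing of $\Res_{d_T}(\omega_{nc})$ means that, after grouping terms by their right-hand tensor factor, each left-hand factor lies in the shuffle subspace $I_T$ (Corollary \ref{neatcoro}). That alone does not make $\omega_{nc}$ a combination of insertion forms: you must further show each such left-hand factor lies in $J_T=\langle {\cal L}_T\rangle$, i.e.\ is itself a Lyndon insertion shuffle. This is forced by convergence along the sub-chords $T'\subset T$ (the divisors in $D_\delta$ are nested, not a flat list), and it requires a separate spanning theorem for \emph{convergent elements of the shuffle subspace} $I_T$ — Theorem \ref{convJS} — proved by its own induction and then fed into the argument for $1n$-words (Theorem \ref{convKS}). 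Your plan of "peeling off one divisor at a time" and "reducing to smaller partial compactifications" does not supply this: deleting a divisor from $D_\delta$ does not yield a smaller instance of the same problem, whereas taking a residue does, but it lands in $I_T$ rather than in $W_T$, so the induction must be run in two coupled versions. You also need Lemma \ref{lemleft} to rule out consecutive blocks meeting $\{0,1\}$, which is what guarantees that for $1n$-words the only chords to control are the interior ones.
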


\vspace{3mm}
The proof of this theorem is the heart of our recent work and is given
in chapter 3.  The goal is to attain an explicit combinatorial
description of an algebra generated by ``formal periods'' in analogy
with the formal multizeta value algebra, $\FZ$.

\section{The algebra of formal periods, $\mathcal{FC}$}

The period algebra, $\mathcal{C}$,
has three known sets of relations coming from the following three
important geometric properties of moduli spaces:
\begin{enumerate}
\item Invariance under the symmetric group action corresponding to a
  variable change,
\item Forms given by shuffles with respect to one point are
  identically 0,
\item Product map relations coming from the pullback of maps on
moduli spaces (these are outlined in \cite{BCS} and \cite{Br}).
\end{enumerate}

In the style of [KZ], who conjecture that only algebraic relations of
certain geometric types exist between periods, we conjecture that these 
are the only relations on the periods on $\Mod_{0,n}$.  This is why
our strategic approach to understanding the implications of this
conjecture is to
define a formal algebra on polygon pairs satisfying these and only
these relations.

\begin{defn}\label{FCDEF}\index{$\mathcal{FC}$} The formal cell number algebra, ${\cal FC}$,
  is defined as 
  the algebra generated by pairs of polygons, ${\cal P}_Z\otimes_{\Q} {\cal
  P}_Z$, decorated by the marked points in $Z$ with the following sets of
  relations: 
\begin{enumerate}
\item $(\gamma, \omega) = (\sigma(\gamma), \sigma(\omega))\ \forall
  \sigma \in \Sym_n$,
\item For any
  $e\in Z$ and for any partition $A, B$ of $Z\setminus \{e\}$,$$((e,
  A\sha B),\omega) = (\gamma, (e, A\sha B))=0,$$

\item For any partition, $A, B$ of $Z\setminus \{0,1,\infty\}$, and
  for any four polygons, $\gamma_1$ and $\omega_1$ decorated by 
  $A\cup \{0,1,\infty\}$, $\gamma_2$ and $\omega_2$ decorated by
  $B\cup \{0,1,\infty\}$, we have the product map relation,
  $$(\gamma_1, \omega_1)(\gamma_2, 
  \omega_2) = (\gamma_1\sha \gamma_2, \omega_1\sha \omega_2).$$ 
\end{enumerate}
\end{defn}

The first relation on $\mathcal FC$ comes from variable changes on periods,
the second relation from theorem \ref{pnmodin}, and the third from product
maps on moduli spaces.

Using the definition of the period algebra and Brown's theorem, one shows
easily that the algebra of periods, $\mathcal{C}$, is isomorphic to the
algebra of multizeta values, $\MZV$ \cite{Br}. This key remark is our main
motivation for the definition of $\mathcal{FC}$, and leads naturally
to the conjecture that ${\mathcal FC}$
is isomorphic to the formal multizeta value algebra, $\FZ$. 
This conjecture seems likely because the algebra of formal cell numbers has
shuffle multiplication and ${\mathcal FC}$ encodes multizeta values, so it
should also have stuffle. We have not yet been able to prove this,
but computer calculations do support the hypothesis that ${\mathcal
  FC}$ has the
stuffle relation.  Such calculations are given at the end of chapter
3.  The relation between the different algebras 
is depicted in the following commutative diagram, where $f$ is
conjecturally an isomorphism:
\[
\xymatrix{
 {{\mathcal FC}}\ar@{->>}[d] \ar@{<.>}[r]^f & \FZ \ar@{->>}[d] \\
{{\mathcal C}} \ar[r]^{\sim} & {\mathcal{Z}}.
}\]

\section{Cohomology of partially compactified moduli spaces}

Chapter 4 of this thesis extends the methods of chapter 3 to
calculating the cohomology of $\Mn^\gamma$ for certain sets of
divisors, $\gamma$, such that $\Mn^\gamma$ is
affine.  The search for criteria for affineness led to a new,
combinatorial description of the Picard group, $Pic(\Mn)$, with a
basis given by polygons.

In the first two sections of chapter 4, we recall, in a self-contained way,
the proof of the following proposition, using the Leray theorem
of spectral 
sequences and a theorem of Grothendieck on algebraic de Rham complexes.

\begin{prop}
If $\Mn^\gamma$ is an affine variety, then the top dimensional de Rham cohomology,
$H^{\ell}(\Mn^\gamma)$ is isomorphic
to the subspace of $H^{\ell}(\Mn)$ of the classes of differential forms which have a representative that is
holomorphic on $\Mn^\gamma$ and has at most logarithmic singularities along
$\M_{0,n}\setminus \Mn^\gamma$.
\end{prop}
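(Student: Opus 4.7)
The plan is to combine Grothendieck's algebraic de Rham theorem with Deligne's theorem on the logarithmic de Rham complex, via the Leray spectral sequence for the open immersion $j : \Mn^\gamma \hookrightarrow \M_{0,n}$. First I would observe that $\M_{0,n}$ is smooth and proper and that, by the Deligne-Mumford construction, the complement $D := \M_{0,n} \setminus \Mn^\gamma$, being a disjoint union of selected irreducible boundary divisors, is a simple normal crossings divisor in $\M_{0,n}$. This places us in the standard setting for the logarithmic de Rham complex $\Omega^\bullet_{\M_{0,n}}(\log D)$.

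Since $\Mn^\gamma$ is assumed smooth and affine, Grothendieck's theorem identifies its de Rham cohomology with the cohomology of the complex of global algebraic differential forms. Combining this with the Leray spectral sequence for $j$ and Deligne's quasi-isomorphism $\Omega^\bullet_{\M_{0,n}}(\log D) \xrightarrow{\sim} Rj_*\,\Omega^\bullet_{\Mn^\gamma}$, I obtain
$$H^*(\Mn^\gamma) \;\cong\; \mathbb{H}^*\bigl(\M_{0,n},\, \Omega^\bullet_{\M_{0,n}}(\log D)\bigr).$$
In the top degree $\ell = \dim \M_{0,n}$ there are no terms beyond $\Omega^\ell$, so this hypercohomology reduces to the global quotient
$$H^\ell(\Mn^\gamma) \;\cong\; \frac{\Gamma\bigl(\M_{0,n},\, \Omega^\ell_{\M_{0,n}}(\log D)\bigr)}{d\,\Gamma\bigl(\M_{0,n},\, \Omega^{\ell-1}_{\M_{0,n}}(\log D)\bigr)}.$$
By definition the numerator is precisely the space of global $\ell$-forms on $\M_{0,n}$ that are holomorphic on $\Mn^\gamma$ and have at most logarithmic poles along $D = \M_{0,n}\setminus\Mn^\gamma$.

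To realize this as a subspace of $H^\ell(\Mn)$, I apply the same analysis to the larger open $\Mn \subset \Mn^\gamma$, using the claim recalled in the previous section (Arnol'd's theorem) that $H^\ell(\Mn)$ is identified with the space of global $\ell$-forms with simple poles along the full boundary $D_{\mathrm{tot}} := \M_{0,n}\setminus\Mn$, modulo exact forms. The inclusion of complexes $\Omega^\bullet_{\M_{0,n}}(\log D) \hookrightarrow \Omega^\bullet_{\M_{0,n}}(\log D_{\mathrm{tot}})$ induces the restriction map $H^\ell(\Mn^\gamma) \to H^\ell(\Mn)$, whose image is tautologically the subspace of classes described in the proposition.

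The main obstacle is the injectivity of this restriction map: a class $[\omega]$ mapping to zero means $\omega = d\eta$ for some $\eta$ with log poles on all of $D_{\mathrm{tot}}$, and one must show that $\eta$ can be modified to have log poles only along $D$. The argument proceeds by a residue calculation along each component $D_i$ of $D_{\mathrm{tot}}\setminus D$: the vanishing of the pole of $d\eta = \omega$ along $D_i$ forces $\Res_{D_i}(\eta)$ to be closed, and an inductive exactness argument on $D_i$ allows the corresponding log pole of $\eta$ to be absorbed into a primitive already regular along $D_i$. This is essentially a weight-filtration argument in Deligne's mixed Hodge framework, and crucially requires the affineness of $\Mn^\gamma$ so that Grothendieck's theorem yields the desired algebraic comparison in the first place.
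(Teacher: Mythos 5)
Your route through the logarithmic de Rham complex is genuinely different from the paper's, which simply applies Grothendieck's algebraic de Rham theorem to both affine varieties $\Mn^\gamma$ and $\Mn$ and compares global algebraic forms directly; but as written your version has two real gaps. First, the reduction of $\mathbb{H}^\ell\bigl(\M_{0,n},\Omega^\bullet_{\M_{0,n}}(\log D)\bigr)$ to the quotient $\Gamma(\Omega^\ell(\log D))/d\,\Gamma(\Omega^{\ell-1}(\log D))$ does not follow from ``there are no terms beyond $\Omega^\ell$'': the Hodge-to-de Rham spectral sequence contributes $H^q\bigl(\M_{0,n},\Omega^{\ell-q}(\log D)\bigr)$ for every $0\le q\le \ell$, and you would need these to vanish for $q>0$. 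That vanishing amounts to saying $H^\ell(\Mn^\gamma)$ lies entirely in Hodge filtration level $\ell$, which in this setting one only knows \emph{after} embedding it into the pure Tate group $H^\ell(\Mn)$ --- i.e.\ after proving the proposition. (Worse, by Deligne's $E_1$-degeneration every global logarithmic form on the projective variety $\M_{0,n}$ is closed, so the denominator in your displayed formula is actually zero; the formula conflates the naive complex of global sections with the hypercohomology.)

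Second, and more importantly, the injectivity of the restriction map --- which is the entire content of the proposition --- is left as a sketch whose crucial step does not go through as stated. Knowing that $\Res_{D_i}(\eta)$ is closed is not enough to ``absorb'' the logarithmic pole of $\eta$ along $D_i$; for that you need the residue to be \emph{exact} on $D_i$, and a closed form on $D_i$ need not be exact. What you are implicitly asserting is the vanishing of a Gysin-type map $H^{\ell-2}(D_i)\to H^\ell(\Mn^\gamma)$, which is not automatic and is not supplied by affineness alone. The paper sidesteps all of this: by Grothendieck's theorem a class in the kernel is represented by an algebraic form $\omega$ on $\Mn^\gamma$ with $\omega=d\alpha$, where $\alpha$ is rational on $\M_{0,n}$ and holomorphic on $\Mn$, and a direct comparison of pole orders along each divisor of $\gamma$ (a pole of $\alpha$ of positive order there would force a pole of $d\alpha=\omega$, which is holomorphic on $\Mn^\gamma$ by hypothesis) shows $\alpha$ is already holomorphic on $\Mn^\gamma$, so $\omega$ is exact there. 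If you want to keep the logarithmic-complex framework you must either carry out the residue/weight argument in full or replace it by this elementary pole-order comparison.
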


The third section is dedicated to defining certain criteria for $\Mn^\gamma$
to be affine.  The key observation of this section
is that if $\gamma$ is a subset of divisors that bound an
associahedron, then $\Mn^\gamma$ is affine.  In particular,
using the geometry of $\Mn$, we obtain the following proposition
as a corollary to this observation. 

\begin{prop}\label{somediv} If $\gamma$ is a set containing
only one divisor $\{d_A\}$ , two divisors $\{d_A, d_B\}$, 
or the set of three divisors $\{d_A, d_B, d_{A\cup B}\}$,
then $\Mn^\gamma$ is affine.
\end{prop}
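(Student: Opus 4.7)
The plan is to reduce the proposition directly to the key observation already isolated in this section: if $\gamma$ is contained in the set of boundary divisors of some associahedron (equivalently, some cell of $\Mn(\R)$), then $\Mn^\gamma$ is affine. Recall from definition \ref{celldef} that a cell is specified by a cyclic ordering of $Z$, and that the boundary divisors of that cell are exactly the $d_C$ for which $C$ forms a consecutive block in the cyclic order. So in each of the three cases it suffices to exhibit a cyclic ordering of $Z$ in which every subset appearing as the index of a divisor in $\gamma$ is a consecutive block.

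For $\gamma=\{d_A\}$, list the elements of $A$ followed by the elements of $Z\setminus A$ in any order; then $A$ is a consecutive block of the resulting polygon, so $d_A$ bounds the corresponding associahedron, and the observation applies. For $\gamma=\{d_A,d_B\}$, the natural move is to partition $Z$ into the four (possibly empty) subsets
\[
A\cap B,\quad A\cap B^c,\quad A^c\cap B,\quad A^c\cap B^c,
\]
and to arrange them cyclically in the order $(A\cap B^c,\; A\cap B,\; A^c\cap B,\; A^c\cap B^c)$. Then
\[
A=(A\cap B^c)\cup(A\cap B) \quad\text{and}\quad B=(A\cap B)\cup(A^c\cap B)
\]
are both consecutive blocks, so both $d_A$ and $d_B$ bound the associated cell.

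For $\gamma=\{d_A,d_B,d_{A\cup B}\}$, the very same cyclic ordering as in case (ii) works, because
\[
A\cup B=(A\cap B^c)\cup(A\cap B)\cup(A^c\cap B)
\]
is again a consecutive block (the first three of the four slots). Hence $d_A$, $d_B$ and $d_{A\cup B}$ all bound that associahedron, and the observation finishes the argument.

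The main subtlety is in case (ii), in the so-called crossing situation where all four of the intersections $A\cap B$, $A\cap B^c$, $A^c\cap B$, $A^c\cap B^c$ are nonempty and neither of $A,B$ is nested in or disjoint from the other; naively one might worry that no cyclic ordering of $Z$ can make $A$ and $B$ simultaneously contiguous. The explicit arrangement above resolves this uniformly, and the same construction covers the nested and disjoint cases as limiting situations in which some of the four subsets are empty. Everything else in the proof is an immediate invocation of the key affineness observation applied to the cell produced.
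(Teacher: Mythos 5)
Your proof is correct and follows essentially the same route as the paper: both arguments exhibit a cyclic ordering of the marked points in which $A$, $B$ (and hence $A\cup B$) appear as consecutive blocks --- the paper's ordering $(P\setminus Q,\ P\cap Q,\ Q\setminus P,\ \mathrm{rest})$ is exactly your arrangement of the four intersection pieces --- and then invoke the observation that any subset of the boundary divisors of an associahedron yields an affine partial compactification (which the paper obtains from Brown's theorem together with the recursive removal of Cartier divisors).
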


In section 4 of chapter 4, we generalize the results of chapter 3 to find an
explicit basis of polygons for $H^{\ell}(\Mn^\gamma)$ in the cases
given in proposition \ref{somediv}. 
To do this, we exploit the residue
map on polygons and cell forms.

As before, we denote by ${\mathcal P}_{S_i}$ the $\Q$-vector space generated by
polygons decorated by the marked points in a set $S_i$, and by
$I_{S_i}\subset {\mathcal P}_{S_i}$ the subspace of shuffles with respect to
one element as in definition \ref{PnIn}.  We associate a divisor,
$d_{S_i}$ to a chord\label{chorddefCh1}\index{Chord} on a
polygon, $\omega^p= [s_{i_1}, ..., s_{i_n}] \in {\mathcal{P}}_Z$ to be
a partition of $\omega^p$ into consecutive blocks,
$[s_{i_j},...,s_{i_{j+k}}]$ and $[s_{i_{j+k+1}},...,s_{i_{j-1}}]$ such that $k\geq 1$ as
in the left-hand object in figure 5.

\begin{center}
 \scalebox{.8}{\input{residuechord.pstex_t}}
\label{residuechord}
\end{center}

Now, for every partition $Z$ given by $Z=S_1\cup S_2$,
we define a residue map on polygons with respect to the divisor,
$d_{S_1}=d_{S_2} = d$:
$$\Res^p_{d}:{\cal P}_Z \rightarrow {\cal P}_{S_1\cup\{d\}} \otimes_\Q {\cal
  P}_{S_2\cup\{d\}},$$
which is simply the tensor product of the two polygons formed by cutting along the divisor $d$.

\begin{defn}
Let $\omega^p$ be a polygon in ${\cal P}_Z$.  If the partition $S_1, S_2$
corresponds  to a chord of $\omega^p$, then it cuts $\omega^p$ into
two subpolygons 
$\omega^p_i$ ($i=1,2$) whose edges are indexed by the set $S_i$ and an edge
labelled $d$ corresponding to the chord $d$.  We set\label{polyresDEF}\index{Residue map, $\Res_d^p$}

\begin{equation}
\Res^p_d(\omega^p)=
\begin{cases}
\omega^p_1\otimes \omega^p_2&\hbox{if $d$ is a chord of
  $\omega^p$}\\
0&\hbox{if $d$ is not a chord of $\omega^p$}.
\end{cases}
\end{equation}

\end{defn}

Let $\pi:{\mathcal P}_Z \rightarrow H^\ell(\Mn)$ be the map from
polygons to cell forms as in definition \ref{cellformdefintro}.  In
chapter 4, the following 
theorem is proved.

\begin{thm}\label{allcohomsCh1} Let
$\gamma = \{\gamma_1,...,\gamma_k\}$ be a set of boundary divisors of
 $\M_{0,n}$ such
that $\Mn^\gamma$ is affine.  Then, the $\Q$ vector space, $H^\ell(\Mod_{0,n}^\gamma)$
  coincides with the differential forms in the intersection of vector
  spaces, $$ \bigcap_{i=1}^k \pi((\Res_{\gamma_i}^{p})^ {-1} (
I_{\gamma_i \cup \{d\}} \otimes {\cal P}_{Z\setminus \gamma_i \cup \{
  d\}})).$$

Furthermore, a basis for $H^\ell(\Mod_{0,n}^\gamma)$ can easily be
deduced from a Lyndon basis of the polygons in $I_{\gamma_i \cup
  \{d\}} \otimes {\cal P}_{Z\setminus \gamma_i \cup \{d\}}$ using
insertion forms.
\end{thm}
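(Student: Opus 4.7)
The plan is to combine the cohomological description from the preceding proposition with the polygon calculus of Chapter 3. By that proposition, affineness of $\Mn^\gamma$ identifies $H^\ell(\Mn^\gamma)$ with the subspace of $H^\ell(\Mn)$ consisting of classes admitting a representative holomorphic across every $\gamma_i\in\gamma$; equivalently, with the simultaneous kernel of the residue maps $\Res_{\gamma_i}:H^\ell(\Mn)\to H^{\ell-1}(\gamma_i\cap\Mn^\gamma)$, where the open part of each boundary divisor is a product of smaller open moduli spaces $\mathfrak{M}_{0,|\gamma_i|+1}\times \mathfrak{M}_{0,|Z\setminus\gamma_i|+1}$. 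Theorem \ref{pnmodin} identifies $H^\ell(\Mn)$ with ${\cal P}_Z/I_Z$ via $\pi$, and factorwise identifies the target of each residue with a quotient of ${\cal P}_{\gamma_i\cup\{d\}}\otimes{\cal P}_{Z\setminus\gamma_i\cup\{d\}}$.

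The first step is to establish the key compatibility: under these identifications, the geometric residue $\Res_{\gamma_i}$ on cell forms is induced by the combinatorial polygon residue $\Res^p_{\gamma_i}$ defined just above the theorem. This reduces to a Poincar\'e-residue calculation. A cell form $\omega_{\gamma^p}$ has a simple pole along $d_{\gamma_i}$ exactly when $\gamma_i$ is a chord of $\gamma^p$; in that case a local coordinate change straightening the chord expresses the residue as the wedge of the cell forms on the two subpolygons obtained by cutting $\gamma^p$ along $\gamma_i$, with the cut edge labelled $d$. This is precisely the definition of $\Res^p_{\gamma_i}$.

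Given this compatibility, the cohomological residue of $\pi(\omega^p)$ vanishes if and only if $\Res^p_{\gamma_i}(\omega^p)$ is killed by $\pi\otimes\pi$, i.e., lies in $I_{\gamma_i\cup\{d\}}\otimes{\cal P}_{Z\setminus\gamma_i\cup\{d\}}+{\cal P}_{\gamma_i\cup\{d\}}\otimes I_{Z\setminus\gamma_i\cup\{d\}}$. The asymmetric form appearing in the theorem is recovered by choosing 01-cell-form representatives on the $Z\setminus\gamma_i$ factor, which pushes all the shuffle content onto the $\gamma_i$ side; intersecting over $i=1,\ldots,k$ yields the stated formula. For the basis statement, I would pick a Lyndon basis of the polygons in each $I_{\gamma_i\cup\{d\}}\otimes{\cal P}_{Z\setminus\gamma_i\cup\{d\}}$ and use the insertion-form construction of Chapter 3 to lift each such tensor to an actual class in $H^\ell(\Mn)$ with the prescribed residues; linear independence follows from Theorem \ref{pnmodin} and spanning from the intersection description.

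The main obstacle is the transition from pointwise residue vanishing (one $\gamma_i$ at a time) to the existence of a single representative holomorphic along $\bigcup_i\gamma_i$, i.e., genuine descent to $H^\ell(\Mn^\gamma)$. This is exactly where affineness of $\Mn^\gamma$ is crucial: the algebraic de Rham theorem of Grothendieck recalled in the first sections of Chapter 4 guarantees that $H^\ell(\Mn^\gamma)$ is computed by global forms on $\M_{0,n}$ with at most logarithmic poles along $\gamma^c$ and no poles on $\gamma$, so that simultaneous vanishing of the individual polygon residues suffices. Verifying that this argument applies uniformly for each $\gamma$ satisfying the combinatorial affineness criteria of Proposition \ref{somediv} is the delicate technical point.
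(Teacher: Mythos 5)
Your proposal is correct and follows essentially the same route as the paper: affineness plus Grothendieck's algebraic de Rham theorem to identify $H^\ell(\Mn^\gamma)$ with the convergent subspace of $H^\ell(\Mn)$, the compatibility of the geometric residue with the polygon residue $\Res^p_{\gamma_i}$ (the paper's proposition \ref{propresformula}), the reduction of the symmetric kernel $I\otimes{\cal P}+{\cal P}\otimes I$ to the asymmetric condition by working with $01$-representatives (corollary \ref{neatcoro}), and the Lyndon/insertion-form construction for the basis. The descent issue you flag is exactly where the paper also invokes affineness, via propositions \ref{subspaceprop} and \ref{mngammabasis}.
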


As a corollory to this theorem we display explicit bases for
$H^{\ell}(\Mn^\gamma)$ for sets $\gamma = \{d_A\}, \{d_A, d_B\}$
    and $\{d_A, d_B, d_{A\cup B}\}$ and give a closed formula for the
    dimensions.

The search for criteria on $\gamma$ such that $\Mn^\gamma$ is affine
led us to investigate $Pic(\M_{0,n})$.  If a divisor $\gamma^c$ is ample in the
Picard group, then $\Mn^\gamma$ is an affine space.  Although we didn't
succeed in proving that $\gamma^c$ was affine, for certain $\gamma$ that we were
interested in (such as the pole divisors of a multizeta form), this
search led to a new presentation of $Pic(\M_{0,n})$ with a basis of polygons.
The final section of this thesis is dedicated to the statement and
proof of this result.

\chapter{Comparison and combinatorics of the Lie algebras,
  $\ds$,
  $\grt$ and $\nfz$}

In this chapter we prove a dimension result on $\ds$ which provides
evidence toward the conjecture 
stated in the introduction that $\ds\simeq \grt$.  Some theorems and
definitions given in the introduction and used in the chapter are
restated for easy reference for the reader.

A multizeta value is a real number defined by the iterated sum,
$$\zeta(k_1,...,k_d) =
\sum_{n_1> n_2>\cdots >n_d>0} \frac{1}{n_1^{k_1}n_2^{k_2}\cdots
  n_d^{k_d}},$$ where $(k_1,...,k_d)$ is a sequence
of positive integers such that $k_1\geq 2$.   We may call a multizeta
value a multiple zeta value, a multizeta, an $MZV$, a zeta value or
simply a zeta. 
The {\bf depth}\label{depdivCh2}\index{Weight of $\zeta(\underline{k})$}\index{Depth of $\zeta(\underline{k})$} of $\zeta(k_1,...,k_d)$ is $d$ and
  its {\bf weight} 
  is $\sum_{i=1}^d k_i$.  Let $\MZV$ denote the algebra over $\Q$ generated by multizeta values, and let $\MZV_n$ denote the 
vector space over $\Q$ generated by multizeta values of weight $n$.

Although $\MZV$ is simple to define, there remain many open questions about this algebra.  The motivation for the results in this chapter stem from the following open problem about $\MZV$.  It is believed that all linear relations over $\Q$ on multizeta
values are generated by the double shuffle relations and Hoffman's
relation, relations which preserve the weight of elements in $\MZV$.  This in turn would imply the well-known ``direct sum conjecture'': 
\begin{conj*} The algebra, $\MZV$, is graded by weight and hence
 $\MZV:= \bigoplus_{n=0}^\infty \MZV_n$.
\end{conj*}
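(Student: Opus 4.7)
The plan would be to reduce the direct sum conjecture to a completeness statement about relations. By Definition~\ref{FZ} the formal algebra $\FZ$ is, by construction, graded by weight, and there is a canonical surjection $\FZ \twoheadrightarrow \MZV$ sending $\zeta^F(\underline{k}) \mapsto \zeta(\underline{k})$. If this surjection were an isomorphism, the weight grading on $\FZ$ would descend to $\MZV$ and the conjecture would follow immediately. So the first move is to repackage the problem: prove that every $\Q$-linear relation $\sum_i c_i \zeta(\underline{k}_i)=0$ is a consequence of the double shuffle relations (Propositions~\ref{stuffle} and~\ref{shuffle}) together with Hoffman's relation (Proposition~\ref{Hoffman}). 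Since each of these generating relations is weight-homogeneous, the kernel of $\FZ \twoheadrightarrow \MZV$ would then be a homogeneous ideal, and the weight filtration on $\MZV$ would be a grading.

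To attack this reformulation I would work in two stages, controlling the dimension of $\MZV_n$ from above and from below. The upper bound is the more tractable half: the motivic lifts $\zeta^{\mathfrak{m}}(\underline{k})$ live in the algebra of mixed Tate motives over $\Z$, whose graded pieces have dimensions satisfying the Zagier recursion $d_n=d_{n-2}+d_{n-3}$, and the double shuffle plus Hoffman relations are known to cut $\FZ_n$ down to at most this dimension. So modulo the motivic formalism, one obtains $\dim_\Q \MZV_n \le d_n$ and, critically, the existence of a weight-graded model $\FZ^{\text{mot}}$ surjecting onto $\MZV$. The lower bound, showing $\dim_\Q \MZV_n \ge d_n$, is where the substance hides: it is equivalent to showing that no ``accidental'' numerical coincidences occur among multizetas, either within a single weight beyond the formal relations, or between different weights.

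The hard part, as the author stresses in the introduction, is precisely this transcendence input. Ruling out cross-weight relations like a hypothetical $\zeta(5)=q_1\zeta(3)\zeta(2)+q_2\zeta(2)^{5/2}$ requires algebraic independence results for multizetas that lie far beyond current reach; even the irrationality of a single $\zeta(2n+1)$ is almost entirely open after Ap\'ery and Rivoal. Consequently any serious proposal must be conditional: one would assume (a suitable form of) Grothendieck's period conjecture for mixed Tate motives over $\Z$, which identifies the $\Q$-algebra of MZV periods with the $\Q$-algebra of motivic multizetas. Under that assumption the upper bound becomes sharp, the comparison map $\FZ \to \MZV$ factors through the graded motivic algebra as an isomorphism, and the direct sum decomposition $\MZV = \bigoplus_{n\ge 0}\MZV_n$ follows. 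Absent such a transcendence statement, the conjecture remains open, and the constructions in Chapters~2--4 should be read as providing structural and combinatorial evidence for the expected picture rather than an attack on this analytic obstruction.
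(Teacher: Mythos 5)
The statement you were asked about is stated in the paper as a conjecture (the ``direct sum conjecture'') and the paper offers no proof of it; the author explicitly notes that it would imply the transcendence of every multizeta value and declares it out of reach by present analytic techniques. Your proposal correctly recognizes this: you do not claim a proof, you reduce the grading statement to the completeness of the double shuffle and Hoffman relations (which is exactly how the paper frames it, via the surjection $\FZ\twoheadrightarrow\MZV$ from the weight-graded formal algebra), and you correctly identify that the missing ingredient is a transcendence/period-conjecture input rather than anything combinatorial. This is consistent with the paper's own treatment, so there is nothing to correct; just be aware that what you have written is a conditional reduction and a statement of the obstruction, not a proof, and it should be presented as such.
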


Note that this ambitious conjecture would imply the transcendence of every multizeta value, since the minimal polynomial of an algebraic multizeta value would yield a linear relation in different weights.

  Of particular interest to us are the
depth 1 generators of $\MZV$, $\zeta(n)$.  The depth
1 generators in even weight are well understood and have long been
known to be
transcendental. 

\begin{thm}[Euler] \begin{align*}
\zeta(2) &= \frac{\pi^2}{6}\\
\zeta(2n)& = \frac{2^{2n-1}|B_n| \pi^{2n}}{(2n)!}, \end{align*}
where $B_r$ is the Bernoulli number that is obtained by expanding the series,
\begin{equation*} \frac{y}{e^y-1}= \sum_{r=0}^{\infty} B_r
  \frac{y^r}{r!} .\end{equation*}
\end{thm}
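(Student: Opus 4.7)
The plan is to carry out Euler's classical argument, comparing two power-series expansions of $\pi z \cot(\pi z)$: one coming from the partial fraction decomposition (which introduces $\zeta(2n)$) and one coming from the exponential form of cotangent (which introduces the Bernoulli numbers).

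First, I would start from the Weierstrass product expansion
\begin{equation*}
\sin(\pi z) = \pi z \prod_{n=1}^{\infty}\left(1 - \frac{z^2}{n^2}\right),
\end{equation*}
and take the logarithmic derivative to obtain the Mittag-Leffler expansion
\begin{equation*}
\pi \cot(\pi z) = \frac{1}{z} + \sum_{n=1}^{\infty} \frac{2z}{z^2 - n^2}.
\end{equation*}
Expanding each summand as a geometric series in $z^2/n^2$ (valid for $|z|<1$), exchanging the two summations, and multiplying by $z$ gives the identity
\begin{equation*}
\pi z \cot(\pi z) = 1 - 2 \sum_{k=1}^{\infty} \zeta(2k)\, z^{2k}.
\end{equation*}

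Next, I would compute the same quantity using the exponential representation
\begin{equation*}
\pi z \cot(\pi z) = i\pi z\,\frac{e^{i\pi z} + e^{-i\pi z}}{e^{i\pi z} - e^{-i\pi z}} = i\pi z + \frac{2 i \pi z}{e^{2 i \pi z} - 1}.
\end{equation*}
Substituting $y = 2i\pi z$ into the defining generating series of the Bernoulli numbers yields
\begin{equation*}
\pi z \cot(\pi z) = i\pi z + \sum_{r=0}^{\infty} B_r\, \frac{(2i\pi z)^r}{r!}.
\end{equation*}
The terms $r=0$ and $r=1$ together contribute $1$ (using $B_0 = 1$, $B_1 = -1/2$, and cancellation with $i\pi z$), and since $B_r = 0$ for all odd $r \ge 3$, only even indices survive.

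Finally, I would compare the coefficients of $z^{2n}$ in the two expansions. On the left we read off $-2\zeta(2n)$; on the right we obtain $B_{2n}(2i\pi)^{2n}/(2n)! = (-1)^n 2^{2n} B_{2n}\pi^{2n}/(2n)!$. Equating and using the standard fact that $(-1)^{n+1}B_{2n} = |B_{2n}|$ for $n\ge 1$ (the nonzero Bernoulli numbers alternate in sign), I would obtain
\begin{equation*}
\zeta(2n) = \frac{2^{2n-1}\,|B_{2n}|\,\pi^{2n}}{(2n)!},
\end{equation*}
which is the stated formula under the indexing convention of the theorem; the case $n=1$ specializes to $\zeta(2) = \pi^2/6$.

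The only substantive input is the Weierstrass product for $\sin(\pi z)$ (equivalently, the Mittag-Leffler expansion of $\cot$), which is the nontrivial analytic ingredient; everything afterwards is bookkeeping with power series. So the main obstacle, if one wanted a fully self-contained proof, is supplying the Weierstrass product — but this is entirely standard and I would simply cite it.
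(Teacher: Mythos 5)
Your argument is correct and complete: the paper states this theorem as classical background and gives no proof of its own, so there is nothing to compare against, and the cotangent argument you give (Mittag--Leffler expansion on one side, Bernoulli generating function on the other) is the standard and sound way to prove it. One small point worth flagging: with the generating-function definition $\frac{y}{e^y-1}=\sum_r B_r\frac{y^r}{r!}$ given in the statement, the coefficient you correctly derive is $|B_{2n}|$, so the ``$|B_n|$'' appearing in the theorem must be read in the older indexing convention (enumerating only the nonzero even-index Bernoulli numbers); as literally written with the stated definition, the formula would fail already at $n=1$.
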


However, the depth 1 generators in odd weight are less well
understood.  They are conjectured to be transcendental numbers.
R. Ap\'ery \cite{Ap} proved that $\zeta(3)$ was irrational and
T. Rivoal \cite{BR} recently proved that there are infinitely many
irrational $\zeta(2n+1)$.

This chapter is not an attempt to tackle the question of irrationality
of depth 1 zeta values, which seems very difficult
because of the analytic nature of the problem.  Yet, by working in the Lie
algebra, we obtain results relating to the conjecture that the double 
shuffle Lie algebra is isomorphic to the free Lie algebra with one
generator in each odd weight,\index{$\mathfrak{f}$}
$$\mathfrak{f}\label{freedef} = {\mathbb{L}}[x_{2n+1}:\ n\geq 1].$$

\section{The double shuffle Lie algebra, $\ds$}

In this chapter, we work in the two noncommutative power series algebras,
$\Qxy$ and $\Q\langle \langle y_i;
1\leq i <\infty \rangle \rangle$.  For $f$ a polynomial in one of these
algebras, we denote by $(f|w)$\label{f|wdef}\index{$(f \vert w)$} the
coefficient of the monomial $w$ in $f$.

\subsection{Shuffle on $\Lxy$}

The power series algebra, $\Qxy$, may be graded in two ways, by weight
and by depth according to the following definition.

\begin{defn}\label{depthQxy}
The algebra $\Qxy$ possesses a grading by the length of its
monomials, $\omega$, which we call the {\bf weight}\index{Weight in $\Qxy$} and we denote the
weight of $\omega$ by
$w(\omega)$.  Similarly,
we can define a grading on $\Qxy$ by the {\bf depth}\index{Depth in $\Qxy$} of the monomial,
which is the number of times $y$ appears and we denote the depth of
$\omega$ by $d(\omega)$.  The notation, $V_n$, where $V$ is any vector space of
polynomials, refers to its weight $n$ graded part.\index{$V_n$}
\end{defn}

The algebra, $\Qxy$, may be equipped with the
following coproduct to form a Hopf algebra,
\begin{align}
\Delta_{\sh}: \Qxy & \rightarrow \Qxy\otimes_{\Q} \Qxy \\
x& \mapsto x\otimes 1 + 1\otimes x \\
y& \mapsto y\otimes 1 + 1\otimes y.
\end{align}

\begin{defn}\label{primitive}
A element $f\in \Qxy$ is {\bf primitive} for the coproduct
$\Delta_{\sha}$, if $\Delta_{\sha}(f) = 1\otimes f
  + f\otimes 1$.\end{defn}

\begin{defn}
The Lie algebra, $\Lxy\subset \Qxy$\label{freeliedef}\index{$\Lxy$, bracket $[f,g]$}, is the subspace of
polynomials generated by successive bracketings of $x,y$ for the Lie
bracket, $[f,g]=fg-gf$. \end{defn}

The Lie algebra, $\Lxy$, possesses the grading by weight and depth
inherited from $\Qxy$.  We denote by $\mathbb{L}_n[x,y]$ the weight
$n$ graded part and by $\mathbb{L}_n^i[x,y]$\label{freeliegradeddef}\index{$\mathbb{L}_n^i[x,y]$}
the depth $i$, weight $n$ 
graded part, so that
\begin{align*}
\Lxy & = \oplus_n\mathbb{L}_n[x,y]\\
\mathbb{L}_n[x,y] &= \oplus_{1\leq i <n}
\mathbb{L}_n^i[x,y].\end{align*} 
The vector space, $\mathbb{L}_n[x,y]$, is of finite dimension for each $n$ (we will recall the dimension formula in section \ref{cdeds}).

Here we recall the definition of the shuffle product on monomials.

\begin{defn}\label{sh}  Let
  $\underline{\alpha}=(\alpha_1,\dots ,\alpha_k)$ and $\underline{\beta} = 
    (\beta_1,\dots, \beta_l)$ be two sequences.  The {\bf shuffle product}
    of $\underline{\alpha}$ 
    and $\underline{\beta}$, denoted by
    $sh(\underline{\alpha},\underline{\beta})$, or 
    $\underline{\alpha}\sh \underline{\beta}$, is the formal sum
    obtained by the recursive procedure:
\begin{enumerate}
\item $sh(\underline{\alpha},\emptyset) = sh(\emptyset,
  \underline{\alpha}) = \underline{\alpha}$,
\item $sh(a_0\cdot \alpha, b_0\cdot \beta) =a_0\cdot sh(\alpha,
  b_0\cdot \beta) + b_0\cdot sh(a_0\cdot \alpha, \beta)$.
\end{enumerate}
\end{defn}

The shuffle product on sequences
$\underline{\alpha}$ and $\underline{\beta}$ is the sum over all of
the permutations of $\underline{\alpha} \cdot \underline{\beta}$
that preserve the orders of both sequences.  For ease of notation, we
write $\underline{\gamma}\in sh(\underline{\alpha},
\underline{\beta})$ to mean that $\underline{\gamma}$ is a term in
the sum $ sh(\underline{\alpha},
\underline{\beta})$.

\begin{prop}\label{SeEc}\cite{Se}\cite{Re}
For $f\in \Qxy$ the following conditions are equivalent:
\begin{enumerate}
\item $f\in \Lxy$,
\item $\Delta_{\sha}(f)=f\otimes 1 + 1\otimes f$,  
\item For any $\underline{\omega}_1$, $\underline{\omega}_2$,
  non-empty sequences in $x$ and $y$,
\begin{equation*}
\sum_{\underline{\omega}\in
  sh(\underline{\omega}_1,\underline{\omega}_2)}
  (f|\underline{\omega}) = 0.
\end{equation*}
\end{enumerate}
\end{prop}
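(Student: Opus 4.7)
The plan is to prove the chain $(2) \Leftrightarrow (3)$ first (which is a direct duality computation), then establish $(1) \Leftrightarrow (2)$, for which the forward direction is an easy Lie algebra check but the reverse direction is the main obstacle and requires invoking the universal enveloping algebra structure.

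For $(2) \Leftrightarrow (3)$, the idea is to compute $\Delta_{\sha}$ explicitly on any word. Since $\Delta_{\sha}$ is a ring homomorphism (for the concatenation product) sending $x \mapsto x \otimes 1 + 1 \otimes x$ and $y \mapsto y \otimes 1 + 1 \otimes y$, expanding the product on a word $w = a_1 \cdots a_n$ gives
\[
\Delta_{\sha}(w) \;=\; \prod_{j=1}^{n}\bigl(a_j \otimes 1 + 1 \otimes a_j\bigr) \;=\; \sum_{I\subseteq\{1,\ldots,n\}} w_I \otimes w_{I^c},
\]
where $w_I$ is the subword indexed by $I$. Regrouping by the pair of outputs, the coefficient of $u \otimes v$ is precisely the multiplicity of $w$ in the shuffle $u \sha v$. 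Therefore
\[
\Delta_{\sha}(f) - f\otimes 1 - 1\otimes f \;=\; \sum_{\substack{u,v \neq \emptyset}} (f \mid u \sha v)\, u\otimes v,
\]
and vanishing of the left-hand side is equivalent to condition (3).

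For $(1) \Rightarrow (2)$, I would observe that the set $P \subset \Qxy$ of elements primitive for $\Delta_{\sha}$ forms a Lie subalgebra under the commutator bracket: if $\Delta_{\sha}(a) = a\otimes 1 + 1\otimes a$ and similarly for $b$, then a two-line computation using that $\Delta_{\sha}$ is an algebra map for concatenation yields $\Delta_{\sha}([a,b]) = [a,b]\otimes 1 + 1\otimes [a,b]$. Since the generators $x, y$ are manifestly in $P$, and $\Lxy$ is the smallest Lie subalgebra of $\Qxy$ containing $x$ and $y$, we obtain $\Lxy \subseteq P$.

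The reverse inclusion $P \subseteq \Lxy$ is the delicate direction, since a priori $P$ could contain non-Lie elements. The cleanest route is to invoke the fact that $\Qxy$, equipped with concatenation, is the completion of the universal enveloping algebra $U(\Lxy)$; then the Milnor--Moore theorem (valid in characteristic zero) identifies the primitive elements of $U(\mathfrak{g})$ with $\mathfrak{g}$ itself. A more hands-on alternative, which avoids heavy machinery, is to exhibit an explicit projection $\pi : \Qxy \to \Lxy$ such as the Dynkin idempotent, defined on a word $a_1 \cdots a_n$ by $\frac{1}{n}[a_1, [a_2, [\cdots, [a_{n-1}, a_n] \cdots ]]]$, and show by induction on weight that $\pi(f) = f$ for every primitive $f$; this uses that $\pi$ restricts to the identity on $\Lxy$ (a theorem of Dynkin), together with the PBW decomposition of $\Qxy$ into $\Lxy$ and higher symmetric tensors, on which $\pi$ vanishes. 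Either way, the main obstacle is this final step, which ultimately rests on PBW and the characteristic-zero hypothesis.
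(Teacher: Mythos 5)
Your proposal is correct in outline, but note that the paper does not actually prove this proposition: it is stated with citations to Serre and Reutenauer, and what you have written is essentially the standard argument from those references (Ree's shuffle criterion for $(2)\Leftrightarrow(3)$, Friedrichs' criterion for $(1)\Leftrightarrow(2)$). The duality computation for $(2)\Leftrightarrow(3)$ is right: the coefficient of $u\otimes v$ in $\Delta_{\sha}(w)$ is the multiplicity of $w$ in $u\sha v$, so the non-primitive part of $\Delta_{\sha}(f)$ is exactly $\sum_{u,v\neq\emptyset}(f\mid u\sha v)\,u\otimes v$. For $(1)\Rightarrow(2)$ the observation that primitives form a Lie subalgebra containing $x$ and $y$ suffices, and for the converse you correctly identify that the whole weight of the proof sits in Friedrichs/Milnor--Moore or, more concretely, the Dynkin--Specht--Wever idempotent together with PBW.

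Two small points worth tightening. First, condition $(3)$ as literally stated is satisfied by $f=1$, which is neither primitive nor in $\Lxy$; one must (as the counit axiom forces in $(2)$, and as the definition of $\Lxy$ forces in $(1)$) restrict to $f$ with $(f\mid 1)=0$. Second, in the Dynkin route the assertion that $\pi$ vanishes on the symmetric tensors of degree $\ge 2$ in the PBW decomposition is precisely the nontrivial content of the hard direction; it follows from the identity expressing $\pi$ applied to a product of primitives, or equivalently from the Eulerian idempotent decomposition, and deserves at least a reference since it is where the characteristic-zero hypothesis and the $1/n$ normalization actually enter. Neither point affects the soundness of your plan.
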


Let ${\mathcal{U}}(\Lxy)$ be the universal enveloping algebra of $\Lxy$ and we
denote by $\cdot$ its product. 
\begin{equation}{\mathcal{U}}(\Lxy) =\bigoplus_{n=o}^{\infty}
  T^{\otimes n}/<f\otimes g -
  g\otimes f - [f,g] \ | \ f,g \in \Lxy>,
\end{equation}
where $T^{\otimes n}=\bigotimes^n \Lxy$ is the $n$th tensor power of $\Lxy$.
The universal enveloping algebra naturally possesses a Hopf algebra structure
with coproduct, $\Delta_{\Lxy}$, which is the unique algebra morphism
which is primitive for the elements in $\Lxy$.  So we have the
following corollary:

\begin{cor}\cite{Se} We have the isomorphism of Hopf algebras,
\begin{align*}
(\Qxy,\Delta_{\sha},\cdot) & \simeq ({\mathcal{U}}(\Lxy),\Delta_{\Lxy},\cdot)\\
x&\mapsto x\\
y&\mapsto y.
\end{align*}
\end{cor}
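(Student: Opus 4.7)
The plan is to build the isomorphism from the universal property of $\mathcal{U}(\Lxy)$ and then check that the two Hopf structures coincide, separating the algebra statement from the coalgebra statement.

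First, I would construct the map. Since $\Lxy \subset \Qxy$ by definition and $\Qxy$ is an associative algebra, the universal property of the universal enveloping algebra produces a unique algebra morphism $\Phi \colon \mathcal{U}(\Lxy) \to \Qxy$ extending the inclusion $\Lxy \hookrightarrow \Qxy$. On generators $\Phi(x)=x$ and $\Phi(y)=y$. To see $\Phi$ is an isomorphism of associative algebras, I would invoke the Poincar\'e--Birkhoff--Witt theorem together with the fact that $\Lxy$ is the \emph{free} Lie algebra on $\{x,y\}$: the composition of the inclusion $\{x,y\}\hookrightarrow \mathcal{U}(\Lxy)$ with $\Phi$ is the canonical inclusion into the free associative algebra, so $\Phi$ must coincide with the canonical isomorphism between the free associative algebra on $\{x,y\}$ and $\mathcal{U}(\mathbb{L}[x,y])$. (In the power series setting one takes appropriate completions on both sides with respect to the weight grading; the argument passes through graded piece by graded piece, each of which is finite-dimensional.)

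Next I would verify that $\Phi$ is compatible with the coproducts. By definition $\Delta_{\Lxy}$ is the unique algebra morphism $\mathcal{U}(\Lxy) \to \mathcal{U}(\Lxy)\otimes \mathcal{U}(\Lxy)$ for which every element of $\Lxy$ is primitive; in particular $x$ and $y$ are primitive for $\Delta_{\Lxy}$. On the other hand $\Delta_{\sha}$ is defined on $\Qxy$ precisely by declaring $x$ and $y$ to be primitive and extending multiplicatively. So $\Delta_{\sha}\circ\Phi$ and $(\Phi\otimes\Phi)\circ\Delta_{\Lxy}$ are two algebra morphisms $\mathcal{U}(\Lxy)\to \Qxy\otimes\Qxy$ that agree on the generators $x,y$, hence they agree everywhere. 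Thus $\Phi$ is a morphism of bialgebras; since both coproducts are graded and the antipode is determined by the bialgebra structure, $\Phi$ is an isomorphism of Hopf algebras.

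The only subtle point, and the one I would treat most carefully, is that the statement involves the completed algebra of formal power series rather than polynomials. The argument above works first at the level of polynomials $\Q\langle x,y\rangle$, where PBW and the universal properties are directly applicable; then one completes both sides with respect to the weight filtration. Here it is convenient to invoke Proposition \ref{SeEc}, which guarantees that the primitive elements of $(\Qxy,\Delta_{\sha})$ are exactly $\Lxy$, so that no element of the completed associative algebra is ``missed'' by the image of the completed enveloping algebra. With this remark, passing to completions preserves both the algebra isomorphism and the compatibility of coproducts, completing the proof.
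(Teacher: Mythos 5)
Your proposal is correct and follows essentially the same route as the paper: the algebra isomorphism comes from identifying $\mathcal{U}$ of the free Lie algebra with the free associative algebra (PBW plus freeness), and the Hopf compatibility comes from the fact that $x$ and $y$ are primitive for both coproducts, with Proposition \ref{SeEc} guaranteeing the primitives of $\Delta_{\sha}$ are exactly $\Lxy$. Your treatment of the completion and the generator-by-generator check of the coproducts is simply a more careful spelling-out of the paper's terse argument.
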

\begin{proof} The universal enveloping algebra on the free Lie algebra
  on $n$ generators is isomorphic to the free polynomial algebra on
  $n$ variables.  By taking $n=2$ we have $(\Qxy,\cdot)\simeq
  ({\mathcal{U}}(\Lxy),\cdot)$.  By the theorem \ref{SeEc}, the
  primitive elements 
  for the coproduct, $\Delta_{\sha}$ are exactly those in the Lie
  algebra, $\Lxy$.
 \end{proof}

The weight grading that we give to $\Qxy$ is the same as one gives to
the grading in the universal enveloping algebra defined by the theorem
of Poincar{\'e}-Birkhoff-Witt.

\subsection{Stuffle on $\Lyi$}

Here, we make analogous definitions and statements for the algebra $\Qyi$.

The power series algebra, $\Qyi$\label{qyigradeddef}\index{Weight in $\Qyi$}\index{Depth in $\Qyi$}, possesses a
grading given by the sum of the 
indices of the monomial which we call the {\bf weight} of the
monomial, i.e. $w(y_{i_1}\cdots y_{i_r}) = \sum_{j=1}^r i_j$.
Similarly,
we can define a grading on $\Qyi$ by the {\bf depth} of the monomial,
which is the length of the monomial, i.e. $d(y_{i_1}\cdots y_{i_r}) =r$. 

The algebra, $\Qyi$, may be equipped with the
following coproduct to form a Hopf algebra,
\begin{align}
\Delta_{*}: \Qyi & \rightarrow \Qyi\otimes_{\Q} \Qyi \\
y_i& \mapsto \sum_{n+m=i} y_n\otimes y_m.
\end{align}

If $\Delta_{*}(f)=1\otimes f+f\otimes 1$, then $f$ is primitive for
$\Delta_{*}$ as in definition \ref{primitive}.

\begin{defn}\label{lyidef}\index{$\Lyi$}
The Lie algebra, $\Lyi \subset \Qyi$, is the subspace of
polynomials generated by successive bracketings of $y_i$ for the Lie
bracket, $[f,g]=fg-gf$. \end{defn}

The Lie algebra, $\Lyi$, possesses the grading by weight and depth
inherited from $\Qyi$.  

We define here the stuffle product of monomials in $\Qyi$, which is
analogous to the stuffle product on sequences of positive integers
given in the introduction.

\begin{defn} For any monomials in $\Qyi$,
      $\underline{a}$, $\underline{b}$ the {\bf stuffle product}
      of $\underline{a}$ and $\underline{b}$, denoted
      $st(\underline{a},\underline{b})$ or $\underline{a} *
      \underline{b}$,
      is the formal sum obtained by the recursion:
\begin{enumerate}
\item $st(\underline{a},\emptyset)= st(\emptyset,\underline{a})=
  \underline{a}$,
\item $st(y_i\cdot \underline{a},y_j\cdot \underline{b}) = y_i\cdot
  st(\underline{a}, y_j\cdot \underline{b}) + y_j \cdot st(y_i
  \cdot \underline{a},\underline{b}) + (y_{i+j}) \cdot
  st(\underline{a} , \underline{b})$.
\end{enumerate}
\end{defn}

The following proposition due to J. Ecalle, gives us an easy method of
determining whether $f\in\Qyi$ is in $\Lyi$ and also gives the link
between the stuffle relation and the coproduct, $\Delta_*$.

\begin{prop}\label{Ec}\cite{Ec}
For $f\in \Qyi$ the following conditions are equivalent:
\begin{enumerate}
\item $f\in \Lyi$.
\item $\Delta_{*}(f)=f\otimes 1 + 1\otimes f$.  
\item For all $\underline{\omega}_1$, $\underline{\omega}_2$,
  non-empty sequences in $\{y_i\}$,
\begin{equation*}
\sum_{\underline{\omega}\in
  st(\underline{\omega}_1,\underline{\omega}_2)}
  (f|\underline{\omega}) = 0.
\end{equation*}
\end{enumerate}
\end{prop}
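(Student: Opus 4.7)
The plan is to prove the three conditions are equivalent by establishing (2) $\Leftrightarrow$ (3) and (1) $\Leftrightarrow$ (2), closely mirroring the shuffle analogue (Proposition~\ref{SeEc}).

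For (2) $\Leftrightarrow$ (3), the strategy is to exploit the graded duality between the stuffle product $*$ and the coproduct $\Delta_*$. More precisely, I would prove the coefficient identity
$$(\Delta_*(f)\,|\,v\otimes w)\;=\;\sum_{u\in v*w}(f\,|\,u)$$
for all $f\in\Qyi$ and all monomials $v,w$ in the $y_i$'s. One first checks this on a single monomial $f=u$---where both sides then count the multiplicity of $u$ in the stuffle $v*w$, obtained by expanding $\Delta_*(u)$ multiplicatively and matching the recursion defining $*$---and then extends by linearity. Given this identity, condition (2) amounts to the vanishing of $(\Delta_*(f)\,|\,v\otimes w)$ for $v,w$ both non-empty (the cases where one of $v,w$ is empty account precisely for the terms $f\otimes 1+1\otimes f$), which is exactly condition (3).

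For (1) $\Leftrightarrow$ (2), the plan is to invoke the Cartier--Milnor--Moore theorem. The triple $(\Qyi,\cdot,\Delta_*)$ is a graded connected Hopf algebra for the weight grading; cocommutativity of $\Delta_*$ follows from the symmetry of $\Delta_*(y_i)=\sum_{n+m=i} y_n\otimes y_m$ under the flip $a\otimes b\mapsto b\otimes a$, together with the fact that $\Delta_*$ is an algebra morphism. Cartier--Milnor--Moore (in characteristic $0$) then identifies $\Qyi$ with the universal enveloping algebra of its primitive Lie algebra $P:=\{f:\Delta_*(f)=f\otimes 1+1\otimes f\}$. To conclude, one identifies $P$ with $\Lyi$ by a graded Poincar\'e--Birkhoff--Witt dimension comparison: $\Qyi$ is also the tensor algebra on $\{y_i\}$, hence equal to $U(\Lyi)$, so $U(P)$ and $U(\Lyi)$ must have matching graded Hilbert series.

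The main obstacle I anticipate is precisely this last identification $P=\Lyi$. Unlike the shuffle case, the generator $y_i$ is not primitive for $\Delta_*$ when $i\geq 2$, so one cannot argue naively on generators that $\Lyi\subset P$. The delicate observation is that the non-primitive contribution $\Delta_*(y_i)-y_i\otimes 1-1\otimes y_i=\sum_{n+m=i,\,n,m\geq 1}y_n\otimes y_m$ is a symmetric tensor, so the cocommutativity of $\Delta_*$ guarantees that the primitives form a Lie subalgebra of the same graded dimensions as $\Lyi$. One completes the argument either by a direct induction on bracket depth exploiting this symmetry cancellation, or by invoking Hoffman's explicit Hopf-algebra isomorphism intertwining $\Delta_*$ with the ``standard'' primitive-on-generators coproduct, which transports the free Lie algebra onto the primitives.
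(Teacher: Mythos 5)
Your treatment of (2) $\Leftrightarrow$ (3) is correct and is the standard duality argument: the identity $(\Delta_*(f)\mid v\otimes w)=\sum_{u\in v*w}(f\mid u)$ is checked on monomials against the recursion defining $*$ together with the multiplicativity of $\Delta_*$, and the terms with $v$ or $w$ empty account exactly for $f\otimes 1+1\otimes f$. Note that the paper offers no proof of this proposition at all — it is quoted from Ecalle — so the only question is whether your argument stands on its own.

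The equivalence (1) $\Leftrightarrow$ (2) cannot be established by the route you propose, and the obstacle you flag at the end is fatal rather than merely delicate. Both of your suggested repairs — the graded Hilbert-series comparison via Cartier--Milnor--Moore, and Hoffman's exp/log isomorphism — deliver only an \emph{abstract} isomorphism of graded Lie algebras between the space $P$ of primitives of $\Delta_*$ and $\Lyi$; neither yields the equality $P=\Lyi$ of subspaces of $\Qyi$ that the proposition asserts, and that equality is in fact false in both directions. Concretely, $y_2\in\Lyi$ by Definition \ref{lyidef}, yet $\Delta_*(y_2)=y_2\otimes 1+1\otimes y_2+y_1\otimes y_1$, so $y_2$ is not primitive — equivalently $\sum_{u\in y_1*y_1}(y_2\mid u)=(y_2\mid 2y_1y_1+y_2)=1\neq 0$, violating (3); conversely, $y_2-\tfrac12 y_1^2$ is primitive but does not lie in $\Lyi$, whose weight-$2$ component is just $\Q\,y_2$. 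So no completion of your argument can prove (1) $\Leftrightarrow$ (2) as literally stated. What is true, and what the thesis actually uses (e.g.\ in Definition \ref{dsdef}), is (2) $\Leftrightarrow$ (3); the Lie-theoretic condition (1) would have to be replaced by membership in the primitive Lie algebra $P$, which Hoffman's automorphism of the concatenation algebra identifies with a free Lie algebra on new primitive generators $p_i=y_i+(\hbox{corrections})$, a genuinely different subspace of $\Qyi$. The shuffle analogue, Proposition \ref{SeEc}, is unproblematic precisely because there the generators $x,y$ are themselves primitive.
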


We associate an element in 
$\Qxy$ to $\Qyi$ via the linear map, $\pi_{\mathcal{Y}}$,
the corrected projection onto $\Qyi$.  It is closely
linked to the alternative notation for a multizeta in the association,
\begin{align*}
 x^{k_1-1}y\cdots x^{k_d-1}y & \sim k_1 k_2\cdots k_d \\
  \zeta(x^{k_1-1}y\cdots x^{k_d-1}y) & = \zeta(k_1 k_2\cdots k_d). 
\end{align*}
\begin{defn}\label{piy}  Let $\piy$ be the $\Q$ linear map defined by:
\begin{align}
\pi_{\mathcal{Y}} :  \Qxy &\rightarrow \Qyi \\
\widetilde{\pi}_{\mathcal{Y}} (x^{k_1-1}yx^{k_2-1}y \cdots x^{k_n-1}y
x^{k_{n+1}}) & = \begin{cases} 0 & k_{n+1} \neq 0 \\
y_{k_1}y_{k_2}\cdots y_{k_n} & k_{n+1} = 0 \end{cases} \\
\pi_{\mathcal{Y}} (f) & = \widetilde{\pi}_{\mathcal{Y}}(f) +
\sum_{n=2} (f| x^{n-1}y) \frac{(-1)^{n-1}}{n} y_1^n.
\end{align}
\end{defn}

\begin{ex} Let $f=2 x^2y + x^3y +4 xy^2 -8yxy +4y^2x$.  Then,
\begin{align*} 
\widetilde{\piy}(f)&= 2y_3 +y_4 +4y_2 y_1 -8 y_1y_2,\\
\piy(f) &= 2y_3 +y_4 +4y_2 y_1 -8y_1y_2 +
  \frac{2}{3}y_1^3 - \frac{1}{4}y_1^4.
\end{align*} 
\end{ex}

Note that $\widetilde{\piy}$ preserves the depth and the weight, but $\piy$ only preserves the weight.

\subsection{The double shuffle Lie algebra, $\ds$}

\begin{defn}\label{dsdef}
The vector subspace, $\ds\subset \Lxy$, is generated by polynomials, $f$,
that satisfy the following sets of relations,
\begin{enumerate}
\item The weight of any term in $f$ is greater than or equal to 3,
\item $f$ is primitive for $\Delta_{\sha}$: $\Delta_{\sha}(f) =
f\otimes 1 + 1\otimes f$,
\item $\piy(f)$ is primitive for $\Delta_{*}$: $\Delta_*(\piy(f)) =
  \piy(f)\otimes 
  1 + 1\otimes \piy(f)$.
\end{enumerate}
\end{defn}

\begin{defn}\label{pbrakdef}
The Poisson bracket on elements of $\Lxy$ is the Lie bracket given by
$$\{f,g\} = [f,g] +D_f(g) - D_g(f)$$ where $[f,g]=fg-gf$ and the
$D_f$ are derivations
defined recursively by $D_f(x) = 0$, $D_f(y)=[y,f]$.
\end{defn}

\begin{thm}\cite{Ra}
The double shuffle elements, $\ds$, form a Lie algebra for the
Poisson bracket.
\end{thm}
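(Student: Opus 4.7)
The plan is to verify that the three conditions defining $\ds$ (weight $\geq 3$, shuffle-primitivity, stuffle-primitivity of the image under $\piy$) are preserved under $\{-,-\}$. The weight condition is immediate: both $[f,g]$ and $D_f(g), D_g(f)$ preserve the total degree in $x,y$, so $\{f,g\}$ has weight $\geq 3$ whenever $f,g$ do. The Lie-algebra property of the Poisson bracket on $\Lxy$ is standard and I would invoke it without re-deriving Jacobi; what needs checking is only that $\{f,g\}\in\Lxy$, i.e.\ condition~(2) of Definition~\ref{dsdef}. Since $[f,g]\in\Lxy$ by freeness, this reduces to showing that the derivation $D_f$ of $\Qxy$ restricts to a derivation of $\Lxy$. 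But $D_f(x)=0$ and $D_f(y)=[y,f]$ both lie in $\Lxy$ when $f\in\Lxy$; since a derivation of an associative algebra sending free generators into a Lie subalgebra automatically sends that subalgebra into itself (by induction on bracket length using $D_f([u,v])=[D_f(u),v]+[u,D_f(v)]$), we conclude $D_f(g)\in\Lxy$, and hence $\{f,g\}\in\Lxy$.

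The substantive step is stuffle-primitivity: $\piy(\{f,g\})\in\Lyi$. Using Proposition~\ref{Ec}, I would translate this into the equivalent coefficient identity
\begin{equation*}
\sum_{\underline{v}\in st(\underline{v}_1,\underline{v}_2)}\bigl(\piy(\{f,g\})\,\big|\,\underline{v}\bigr)=0
\end{equation*}
for all non-empty $y_i$-monomials $\underline{v}_1,\underline{v}_2$. The plan is to split $\piy=\widetilde\piy+(\text{correction on }y_1^n)$ and analyze each of the four contributions $\piy([f,g])$, $\piy(fg-gf)$, $\piy(D_f g)$, $\piy(D_g f)$ separately. For the bracket part, one uses that $\widetilde\piy$ is a multiplicative map on words ending in $y$, so $\widetilde\piy(fg)$ is a concatenation product of $\widetilde\piy$-images; the failure of stuffle-vanishing of concatenation will be exactly what is cancelled by the derivation part.

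The decisive computation is the interaction of $D_f$ with $\widetilde\piy$. The heuristic is: $D_f$ replaces an occurrence of $y$ in $g$ by $[y,f]=yf-fy$, which in the $y_i$-alphabet has the effect of inserting a "block" of $f$ adjacent to the letter $y_k$ produced from that $y$. Because stuffle differs from shuffle exactly by the operation of merging two adjacent letters $y_i,y_j$ into $y_{i+j}$, one expects that the $D_f g - D_g f$ terms produce precisely the correction needed to upgrade shuffle-vanishing (which we already have from $f,g\in\Lxy$) to stuffle-vanishing. I would make this precise by expanding both sides on monomials of the form $y_{k_1}\cdots y_{k_r}$ and matching the contributions of "non-shuffle" stuffle terms with the derivation contributions.

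The main obstacle will be the correction term $\sum_{n\geq 2}(f\,|\,x^{n-1}y)\,\tfrac{(-1)^{n-1}}{n}y_1^n$ in the definition of $\piy$. This correction is what makes $\piy$ (rather than $\widetilde\piy$) the correct regularized projection, but it interacts non-trivially with both $[f,g]$ and $D_f(g)-D_g(f)$. The bookkeeping is delicate because the Poisson bracket can create coefficients of $x^{n-1}y$ in $\{f,g\}$ coming from many different monomials of $f$ and $g$, and one must check that the induced $y_1^n$-contribution combines with $\widetilde\piy(\{f,g\})$ to give a genuinely $*$-primitive element. Once this identity is verified, closure of $\ds$ under $\{-,-\}$ follows and the theorem is proved.
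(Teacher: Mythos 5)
The paper does not actually prove this statement---it is quoted directly from Racinet's thesis \cite{Ra} with no argument given---so there is nothing in-paper to compare against, and your proposal must stand on its own. Your treatment of the easy conditions is correct: the weight bound is clearly preserved, and your observation that $D_f$ restricts to a derivation of $\Lxy$ (since $D_f(x)=0$ and $D_f(y)=[y,f]$ lie in $\Lxy$, and a derivation sending free generators into a Lie subalgebra preserves that subalgebra) correctly disposes of the shuffle-primitivity of $\{f,g\}$.

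The gap is the third condition, which is the entire content of the theorem. You reduce it to the identity $\sum_{\underline{v}\in st(\underline{v}_1,\underline{v}_2)}\bigl(\piy(\{f,g\})\,\big|\,\underline{v}\bigr)=0$ and then offer only a heuristic: that the stuffle defect of the concatenation part $fg-gf$ under $\widetilde{\piy}$ should be cancelled exactly by the contributions of $D_f(g)-D_g(f)$ together with the $y_1^n$ correction. You never carry out this matching, and you yourself flag the interaction with the correction term as an unresolved obstacle. This is not a minor omission to be filled in by routine bookkeeping: the posited cancellation is precisely what makes the theorem non-trivial, and it does not follow from any general principle you have invoked. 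Racinet's own proof is not a direct coefficient-matching of this kind; he works at the group level with the sets $DM_\lambda$ of power series satisfying both primitivity conditions (after correction by an explicit $\Gamma$-type series), proves that $DM_0$ is a group acting on each $DM_\lambda$, and only then deduces the Lie-algebra statement by linearization. The direct combinatorial verification you sketch is exactly the step that has resisted naive term-by-term treatment. As written, your argument shows that $\{f,g\}$ is a Lie element of the right weight, but not that it is a double shuffle element, so the theorem is not proved.
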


The double shuffle Lie algebra is graded by weight because the double
shuffle relations preserve the weight, and we denote each weight $n$
graded part by $\ds_n$.  However, $\ds$ is not graded by depth because the
stuffle forces relations between words of different depth, such as the
classical relation, $\zeta(2)*\zeta(2) = 2\zeta(2,2) + \zeta(4)$.  In
the proof of the main theorem \ref{theoreme}, the relations between
depth one and depth
two elements given by stuffle are fully explained.

A useful way to calculate the action of the derivation, $D_f(g)$, is
given in \cite{Sc}.  
\begin{prop}\cite{Sc}
Let $f,g\in \Lxy$, such that the depth of $g$ is $d$.  Then $D_f(g)$
is given by the sum over the Lie elements, $\sum_{i=1}^d
g_i(x,y,[y,f])$ where each $g_i$ is gotten by substituting one $y$ in
$g$ by $[y,f]$. 
\end{prop}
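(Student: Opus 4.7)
The plan is to reduce everything to the statement that $D_f$ is a graded derivation of the \emph{Lie} algebra $\Lxy$, and then apply this derivation structure term-by-term to a bracket expression for $g$.

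First, I would verify that $D_f$ restricts to a derivation of the Lie bracket on $\Lxy$. Since $D_f$ is defined as an associative derivation on all of $\Qxy$ (extending $D_f(x)=0$, $D_f(y)=[y,f]$), and since $[a,b]=ab-ba$ for $a,b\in\Lxy$, the Leibniz rule for the associative product gives
\[
D_f([a,b]) = D_f(a)\,b + a\,D_f(b) - D_f(b)\,a - b\,D_f(a) = [D_f(a),b] + [a,D_f(b)].
\]
This identity shows in particular that $D_f$ stabilizes $\Lxy$, since $[y,f]\in\Lxy$ when $f\in\Lxy$, and so iterated bracket generators are carried to iterated bracket generators.

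Next, I would proceed by induction on the bracket length of a Lie word representing $g$. Every element of $\Lxy$ is a linear combination of iterated brackets of $x$'s and $y$'s, and by linearity of $D_f$ it suffices to treat a single such iterated bracket $g = [\,\ell_1,[\,\ell_2,[\cdots,\ell_n]\cdots]\,]$ with each $\ell_j\in\{x,y\}$ and exactly $d$ of the $\ell_j$ equal to $y$. The base cases are $g=x$ (giving $D_f(x)=0$, consistent with the empty sum since $d=0$) and $g=y$ (giving $D_f(y)=[y,f]$, which is exactly the one substitution). The inductive step uses the Lie-derivation identity established in the previous paragraph: $D_f$ distributes over each bracket, and at each letter position it returns $0$ if the letter is $x$ and $[y,f]$ if the letter is $y$, thereby producing exactly the sum $\sum_{i=1}^d g_i(x,y,[y,f])$ in which the $i$-th $y$-letter of $g$ is replaced by $[y,f]$ while all other letters are left unchanged.

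The only nontrivial point is the first paragraph: one must be careful that the extension of $D_f$ from $\{x,y\}$ to $\Lxy$ via the associative Leibniz rule agrees with treating $D_f$ as a Lie derivation on bracket expressions, so that the ``substitute each $y$ in turn'' description is unambiguous and independent of the particular bracket presentation chosen for $g$. Once this compatibility is in hand, the claim is a one-line induction, and the notation $g_i(x,y,[y,f])$ is justified because replacing $y$ by the Lie element $[y,f]$ in a Lie word yields another element of $\Lxy$.
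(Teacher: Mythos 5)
Your proof is correct. The paper itself gives no proof of this proposition --- it is quoted from \cite{Sc} and immediately followed by an example --- and your argument (the associative Leibniz rule implies $D_f([a,b])=[D_f(a),b]+[a,D_f(b)]$, hence $D_f$ is a Lie derivation stabilizing $\Lxy$, and then induction over the bracket tree of $g$ replaces each letter by its image, $0$ for $x$ and $[y,f]$ for $y$) is exactly the standard one the citation points to; your care about independence of the bracket presentation is the right point to flag, and it is settled precisely because $D_f$ is already well defined on all of $\Qxy$.
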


\begin{ex}
Let $g=[[[x,y],[x,[x,y]]],[y,x]]$, so we have
\begin{equation*}\begin{split}
D_f(g)=[[[x,[y,f]],[x,[x,y]]],[y,x]] + [[[x,y],[x,[x,[y,f]]]],[y,x]] +
\\ 
[[[x,y],[x,[x,y]]],[[y,f],x]].\end{split}\end{equation*} \end{ex}

\section{Lyndon-Lie words}\label{cdeds}

\begin{defn}\label{Lyndonworddef}\index{Lyndon word} A Lyndon word is a monomial, $\omega\in
  \Qxy$, such that all of the right factors of $\omega$ are greater
  than $\omega$ for the
  lexicographic ordering.
In other words, if $\omega=a_1\cdots a_n,\ a_i\in
  \{x,y\}$, then $a_1\cdots a_n< a_i\cdots a_n\ \forall
  i>1$. \end{defn}
The simplest example of a Lyndon word is given in depth 1, where the
  only Lyndon word is $x^ny$.

Given a Lyndon word, $\omega$, we can construct an element of $\Lxy$, denoted
$[\omega]$,\index{$[\omega]$}
by recursively bracketing in the following manner.  
Let $\omega$ be written as
$\omega=u\cdot v$ such that $v$ is the smallest, non-trivial right
factor.  
Then we bracket $[u,v]$.  We can repeat this procedure recursively on
$u$ and $v$, since $u$ and $v$ are Lyndon words.
If $v$ is the smallest right factor, it is smaller than all of
its right factors.
Furthermore, $u$ is smaller than all of its right factors since
if $u=u_1\cdot u_2$ where $u_2<u$,
then $u_2\cdot v<u\cdot v=\omega$ which is impossible since we
supposed that $\omega< u_2\cdot v$.  So we can recursively bracket in
the same way as the base step until we obtain an element of $\Lxy$.

\begin{defn}\label{lyndonlieworddef}\index{Lyndon-Lie word} A {\bf Lyndon-Lie word} (or
  Lyndon-Lie monomial) is an 
  element of $\Lxy$ obtained by a
  bracketing a Lyndon word in the above recursive procedure.
\end{defn}

\begin{thm}\cite{Re}\label{baseLL}  Lyndon-Lie words form a basis for
  the $\Q$ vector space $\Lxy$.  We call this basis the Lyndon-Lie basis.
\end{thm}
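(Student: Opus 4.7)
The plan is to establish this basis result by the standard combinatorial route combining a triangularity property of the bracketings $[\ell]$ with either Poincar\'e--Birkhoff--Witt applied to the isomorphism $\mathcal{U}(\Lxy)\simeq\Qxy$ already recorded in the excerpt, or a dimension count via Witt's formula. The central technical step is a leading-term lemma: for every Lyndon word $\ell$, the expansion of $[\ell]$ as a polynomial in $\Qxy$ contains $\ell$ itself with coefficient $1$, while every other monomial appearing is a strict rearrangement of the letters of $\ell$ that is strictly greater than $\ell$ in the lexicographic order.

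I would prove the leading-term lemma by induction on $w(\ell)$. The base case $w(\ell)=1$ is immediate since $[x]=x$ and $[y]=y$. For the inductive step, take the factorization $\ell=u\cdot v$ from definition~\ref{lyndonlieworddef}, with $u,v$ Lyndon, $w(u),w(v)<w(\ell)$, and $u<v$ lex. Expanding $[\ell]=[u][v]-[v][u]$ and applying the induction hypothesis, each contribution to $[u][v]$ is a concatenation $u'v'$ with $u'$ a rearrangement of $u$ satisfying $u'\geq u$ and $v'$ a rearrangement of $v$ satisfying $v'\geq v$. A letterwise comparison shows $u'v'\geq uv=\ell$ with equality only when $u'=u$ and $v'=v$, yielding the coefficient-$1$ leading term $\ell$. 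For $-[v][u]$ one uses that $\ell$ is Lyndon to deduce $\ell<vu$ (strictly, comparing $\ell$ with its nontrivial cyclic rotation $vu$), and then the same letterwise argument gives $v'u'\geq vu>\ell$ for every surviving rearrangement, so these terms are strictly lex-larger than $\ell$ and cannot affect the leading coefficient.

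Linear independence of $\{[\ell]\}$ follows at once: in any finite relation $\sum_\ell c_\ell[\ell]=0$ the lex-smallest $\ell$ with $c_\ell\neq 0$ has a leading monomial $\ell$ which no other bracket can reach, forcing $c_\ell=0$, a contradiction. To obtain spanning I would use either of two equivalent arguments. The PBW route is to upgrade the leading-term lemma to PBW-ordered products $[\ell_1]\cdots[\ell_k]$ with $\ell_1\geq\cdots\geq\ell_k$: by the Chen--Fox--Lyndon theorem, every monomial $w\in\Qxy$ has a unique such factorization $w=\ell_1\cdots\ell_k$, and the same letterwise argument shows that each PBW product has leading term $\ell_1\cdots\ell_k$ with coefficient $1$; this gives independence and spanning of these products in $\Qxy$, and then PBW applied to $\mathcal{U}(\Lxy)\simeq\Qxy$ forces $\{[\ell]\}$ to span $\Lxy$. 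The dimension-counting route instead compares the number of Lyndon words of weight $n$ and depth $i$ with $\dim\mathbb{L}_n^i[x,y]$ given by Witt's formula; both are computed by the same M\"obius inversion, so the established independence suffices.

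The main obstacle is precisely the bookkeeping in the leading-term lemma: the lexicographic comparisons between concatenated rearrangements depend on a careful use of $u<v$ from the standard factorization and of the Lyndon property $\ell<vu$, and the strictness of these inequalities must be tracked attentively. Once this lemma is secured, the remainder of the argument is formal.
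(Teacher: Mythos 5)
The paper does not prove this theorem at all — it is stated with a citation to [Re] (Reutenauer, \emph{Free Lie Algebras}) and no argument — and your proposal is precisely the standard proof from that reference: the triangularity lemma $[\ell]=\ell+(\text{lex-greater rearrangements})$ proved by induction on the standard factorization, independence by picking the lex-smallest Lyndon word of a given multidegree occurring in a relation, and spanning via the Chen--Fox--Lyndon factorization together with PBW (or, equivalently, the Witt/M\"obius count). Your outline is correct, including the two delicate points (the positional comparison of concatenated rearrangements, and the strict inequality $uv<vu$ coming from minimality of a Lyndon word in its conjugacy class), so nothing further is needed.
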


\begin{thm}[Witt dimension formula]\cite{Se}  Let
  $\mathbb{L}[x_1,...,x_r]$ be the free Lie algebra on $r$
  generators.  The dimension of the $n$th graded piece is given by
\begin{equation*} dim(\mathbb{L}_n[x_1,...,x_r])=\frac{1}{n}
  \sum_{d|n} \mu(d)r^{n/d},
\end{equation*}
where the M\"obius function, $\mu$, is defined by 
\begin{equation*}\mu(d)\begin{cases} 1 & d=1\\ (-1)^k & d=p_1\dots
    p_k\ (p_i \text{ distinct primes})\\
0 & d\ {\text{has a square factor}}. \end{cases} \end{equation*}
In particular,\begin{equation*}
dim(\mathbb{L}_n[x,y])=\frac{1}{n}
  \sum_{d|n} \mu(d)2^{n/d}.\end{equation*}\end{thm}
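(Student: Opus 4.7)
The plan is to use the Poincar\'e--Birkhoff--Witt theorem together with a generating-function identity and M\"obius inversion. Let $a_n := \dim \mathbb{L}_n[x_1,\ldots,x_r]$ denote the quantity we want to compute.

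First, I would recall that the universal enveloping algebra $\mathcal{U}(\mathbb{L}[x_1,\ldots,x_r])$ is isomorphic to the free associative algebra on $r$ generators (this is mentioned in the excerpt and is a special case of the analog of the corollary to Proposition \ref{SeEc} for $r$ generators). Grading by total degree, the Hilbert series of this free associative algebra is $\sum_{n\geq 0} r^n t^n = 1/(1-rt)$. On the other hand, by PBW, once we fix a homogeneous basis of $\mathbb{L}[x_1,\ldots,x_r]$ consisting of $a_n$ elements in degree $n$ for each $n \geq 1$, the ordered monomials in these basis elements form a basis of the enveloping algebra. Counting weighted by degree yields
\begin{equation*}
\frac{1}{1-rt} \;=\; \prod_{n\geq 1} \frac{1}{(1-t^n)^{a_n}}.
\end{equation*}

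Next I would take logarithms of both sides and extract coefficients. Using $-\log(1-u) = \sum_{k\geq 1} u^k/k$, the left-hand side becomes $\sum_{m\geq 1} r^m t^m/m$, while the right-hand side becomes
\begin{equation*}
\sum_{n\geq 1} a_n \sum_{k\geq 1} \frac{t^{nk}}{k} \;=\; \sum_{m \geq 1} \frac{t^m}{m} \sum_{d \mid m} d\, a_d,
\end{equation*}
after the substitution $m = nk$, $d = n$. Equating coefficients of $t^m$ gives the identity $r^m = \sum_{d\mid m} d\, a_d$.

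Finally, applying M\"obius inversion to this identity produces
\begin{equation*}
n\, a_n \;=\; \sum_{d \mid n} \mu(d)\, r^{n/d},
\end{equation*}
which is the desired formula. The case $r=2$ is immediate by specialization. I expect no real obstacle: the only nontrivial input is PBW (which is standard and already implicitly used in the excerpt), and the rest is a routine manipulation of generating series together with M\"obius inversion. The one point where care is needed is to justify the interchange of product and sum when taking logarithms, but this is unproblematic at the level of formal power series in $t$, since each coefficient involves only finitely many terms.
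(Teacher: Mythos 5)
Your argument is correct and complete: the identity $\frac{1}{1-rt}=\prod_{n\ge 1}(1-t^n)^{-a_n}$ from PBW, followed by taking logarithms and M\"obius inversion, is exactly the classical proof. The paper itself gives no proof of this theorem — it is stated as a cited result from [Se] — and your argument is the standard one found in that reference, so there is nothing to compare beyond noting that you have supplied the proof the paper omits.
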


\begin{lem}\cite{Re}\label{bw} Let $f\in\mathbb{L}[x,y]$
  and let $\overleftarrow{\omega}$ be the word $\omega$ written
  backwards.
  Then, $(f|\omega)=(-1)^{n-1}(f|\overleftarrow{\omega})$.\end{lem}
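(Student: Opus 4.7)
The plan is to introduce the word-reversal anti-automorphism of $\Qxy$ and show that it acts on homogeneous Lie elements by multiplication by $(-1)^{n-1}$, where $n$ is the weight. Concretely, define the $\Q$-linear map $\tau : \Qxy \to \Qxy$ on monomials by $\tau(a_1 a_2 \cdots a_n) = a_n a_{n-1} \cdots a_1$, i.e.\ $\tau(\omega) = \overleftarrow{\omega}$. A direct check on monomials shows that $\tau$ is an algebra anti-homomorphism: $\tau(uv) = \tau(v)\tau(u)$ for any two words $u,v$, and hence for all $u,v \in \Qxy$ by bilinearity.

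Next I would verify the claim that for any homogeneous $f \in \mathbb{L}_n[x,y]$ one has $\tau(f) = (-1)^{n-1} f$. This goes by induction on the weight $n$, using the Lyndon-Lie construction (or equivalently any bracketing presentation). For the base case $n=1$, the only Lie elements are scalar multiples of $x$ and $y$, and $\tau$ fixes them, matching $(-1)^0 = 1$. For the inductive step, write a Lie element of weight $n$ as a bracket $[g,h]$ with $g \in \mathbb{L}_p[x,y]$, $h \in \mathbb{L}_q[x,y]$ and $p+q=n$; then, using that $\tau$ is an anti-homomorphism,
\begin{align*}
\tau([g,h]) &= \tau(gh) - \tau(hg) = \tau(h)\tau(g) - \tau(g)\tau(h) = -[\tau(g),\tau(h)] \\
&= -(-1)^{p-1}(-1)^{q-1}[g,h] = (-1)^{n-1}[g,h],
\end{align*}
where the inductive hypothesis was applied to $g$ and $h$.

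Finally, extracting coefficients, for any word $\omega$ of length $n$,
\begin{equation*}
(f \mid \overleftarrow{\omega}) = (f \mid \tau(\omega)) = (\tau(f) \mid \omega) = (-1)^{n-1}(f \mid \omega),
\end{equation*}
the middle equality being the obvious identity $(g \mid \tau(\omega)) = (\tau(g) \mid \omega)$ for the reversal map on the monomial basis. Multiplying both sides by $(-1)^{n-1}$ gives $(f \mid \omega) = (-1)^{n-1} (f \mid \overleftarrow{\omega})$, which is the claim. Since both sides are linear in $f$ and Lie elements decompose as sums of weight-homogeneous pieces, it suffices to assume $f$ homogeneous of weight equal to $|\omega|=n$; for other weights both coefficients vanish.

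The main conceptual point, and the only place that requires care, is verifying that $\tau$ truly anti-commutes with the Lie bracket and establishing the inductive step cleanly; no genuine obstacle arises since the property of being an anti-homomorphism is inherited from its definition on monomials. All other steps are formal.
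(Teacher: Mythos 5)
Your proof is correct. The paper states this lemma only as a citation to \cite{Re} and gives no proof of its own, and your argument --- showing the word-reversal anti-automorphism $\tau$ acts as $(-1)^{n-1}$ on weight-$n$ Lie elements by induction on brackets, then extracting coefficients --- is precisely the standard one (it is the antipode computation in Reutenauer); the only informality is writing a general weight-$n$ Lie element as a single bracket $[g,h]$ rather than a linear combination of such brackets, which is harmless since the identity $\tau(f)=(-1)^{n-1}f$ is linear in $f$.
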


From theorem \ref{baseLL}, we obtain the following corollary.
\begin{cor}
For any $n$, $\mathbb{L}_n^1[x,y]$ has dimension 1 and its Lyndon-Lie
basis is $\{[x^{n-1}y]\}$. \end{cor}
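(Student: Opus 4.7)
The plan is to deduce the corollary directly from Theorem~\ref{baseLL}, which states that the Lyndon-Lie monomials form a basis of $\mathbb{L}[x,y]$, together with a purely combinatorial enumeration of Lyndon words of weight $n$ and depth $1$.

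First I would note that the recursive bracketing $\omega\mapsto[\omega]$ in definition \ref{lyndonlieworddef} preserves the number of $y$'s, since it only rearranges the letters inside nested commutators. Consequently, the restriction of the Lyndon-Lie basis of $\mathbb{L}_n[x,y]$ to the depth-$1$ part $\mathbb{L}_n^1[x,y]$ is indexed exactly by the Lyndon words of weight $n$ containing a single occurrence of $y$. Every such word has the form $x^a y x^b$ with $a,b\geq 0$ and $a+b=n-1$.

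Next I would verify by the lexicographic criterion of definition \ref{Lyndonworddef} (with the convention $x<y$) that the only Lyndon word of this form is $x^{n-1}y$. For $x^{n-1}y$ itself, every proper right factor is either $y$ or of the form $x^{k}y$ with $0\le k\le n-2$; in the first case $y>x^{n-1}y$ because they start with different letters, and in the second case $x^{n-1}y$ and $x^{k}y$ agree on their first $k$ letters while the $(k+1)$-st letter of $x^{n-1}y$ is $x$ and that of $x^{k}y$ is $y$, so $x^{n-1}y<x^{k}y$. For any $x^a y x^b$ with $b\geq 1$, the proper right factor $x^b$ is either a prefix of $x^a y x^b$ (if $a\geq b$) or differs from it by having an $x$ where $x^a y x^b$ has a $y$ (if $a<b$); in both cases $x^b<x^a y x^b$, so $x^a y x^b$ fails the Lyndon condition.

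Combining these two observations, the Lyndon-Lie basis of $\mathbb{L}_n^1[x,y]$ consists of the single element $[x^{n-1}y]$, whose recursive bracketing unwinds to $(\mathrm{ad}\,x)^{n-1}(y)$. Hence $\dim\mathbb{L}_n^1[x,y]=1$. The argument is entirely elementary and I do not anticipate any serious obstacle; the only point requiring care is to make sure the lexicographic comparison is done with the correct convention $x<y$ and that one checks both the prefix case $a\ge b$ and the non-prefix case $a<b$ when ruling out $x^a y x^b$ with $b\geq 1$.
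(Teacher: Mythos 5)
Your proof is correct and follows essentially the same route as the paper, which simply observes that the corollary is immediate because $x^{n-1}y$ is the only depth-one Lyndon word; you have merely spelled out the lexicographic verification (with the convention $x<y$) and the fact that the bracketing $\omega\mapsto[\omega]$ preserves depth, both of which the paper leaves implicit.
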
 

This corollary is immediate, since $x^{n-1}y$ is the only Lyndon word
in depth 1.

In this thesis, we use the notation $C_a^b$\label{Cni}\index{$C_n^i$} to denote the binomial coefficient, ${a \choose b}$.

\begin{lem} We have the following expression for the
    depth one basis element as a polynomial in $\Qxy$,
  $$[x^{n-1}y]=\sum_{i=0}^{n-1}(-1)^i 
  C_{n-1}^i x^{n-1-i}y x^i.$$ 
\end{lem}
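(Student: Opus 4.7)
The plan is to reduce the statement to a single clean identity: that the Lyndon bracketing of $x^{n-1}y$ equals the iterated bracket $(\mathrm{ad}\, x)^{n-1}(y)$, and then to expand this using the binomial theorem for commuting operators.

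First I would unpack the recursive bracketing of the Lyndon word $x^{n-1}y$. Using the convention $x < y$, the non-trivial right factors of $x^{n-1}y$ are $y$ and the words $x^k y$ for $1 \le k \le n-2$. Comparing these lexicographically, $x^{n-2}y$ is the smallest, so the recursion splits $x^{n-1}y = x \cdot (x^{n-2}y)$ and yields $[x^{n-1}y] = [x, [x^{n-2}y]]$. Iterating, one obtains
\begin{equation*}
[x^{n-1}y] \;=\; [x,[x,\ldots [x,y]\ldots]] \;=\; (\mathrm{ad}\, x)^{n-1}(y),
\end{equation*}
where $(\mathrm{ad}\, x)(z) = xz - zx$.

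Next I would expand $(\mathrm{ad}\, x)^{n-1}(y)$ as a polynomial in $\Qxy$. Write $L_x(z) = xz$ and $R_x(z) = zx$ for the left and right multiplication operators on $\Qxy$. Since $(L_x R_x)(z) = xzx = (R_x L_x)(z)$, the operators $L_x$ and $R_x$ commute, so the binomial theorem applies to the difference $\mathrm{ad}\, x = L_x - R_x$:
\begin{equation*}
(\mathrm{ad}\, x)^{n-1} \;=\; (L_x - R_x)^{n-1} \;=\; \sum_{i=0}^{n-1} (-1)^i C_{n-1}^i \, L_x^{\,n-1-i} R_x^{\,i}.
\end{equation*}
Evaluating at $y$ and noting that $L_x^{\,n-1-i} R_x^{\,i}(y) = x^{n-1-i} y x^i$ gives the claimed formula.

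There is no real obstacle here; the only point to be careful about is justifying the iterated bracketing step, which rests solely on the lexicographic comparison of the right factors of $x^{n-1}y$. If one preferred to bypass the identification with $(\mathrm{ad}\, x)^{n-1}(y)$, an equivalent route is a direct induction on $n$: the base case $n=1$ is trivial, and the inductive step $[x^{n-1}y] = x\cdot[x^{n-2}y] - [x^{n-2}y]\cdot x$ reduces to the Pascal identity $C_{n-2}^{k} + C_{n-2}^{k-1} = C_{n-1}^{k}$ after reindexing. I would present the operator-theoretic argument, since it is more conceptual and makes transparent the appearance of the alternating binomial coefficients.
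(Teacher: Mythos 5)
Your proof is correct, and the computational half takes a genuinely different (and cleaner) route than the paper. Both arguments share the same first step: unwinding the Lyndon bracketing of $x^{n-1}y$ by repeatedly splitting off the smallest right factor $x^{k}y$ to get $[x^{n-1}y]=(\mathrm{ad}\,x)^{n-1}(y)$. From there the paper proceeds by induction on $n$, applying $\mathrm{ad}(x)$ to the sum $\sum_i(-1)^iC_{n-1}^ix^{n-1-i}yx^i$, splitting into two sums, reindexing, and invoking Pascal's identity $C_{n-1}^{i-1}+C_{n-1}^i=C_n^i$ — exactly the "equivalent route" you sketch at the end. Your main argument instead observes that left and right multiplication by $x$ commute, so $(\mathrm{ad}\,x)^{n-1}=(L_x-R_x)^{n-1}$ expands by the binomial theorem in the operator algebra; this eliminates the reindexing bookkeeping entirely and makes the alternating binomial coefficients appear for a structural reason rather than by verification. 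The only point requiring care in either version is the lexicographic comparison of right factors justifying $[x^{n-1}y]=[x,[x^{n-2}y]]$, which you handle explicitly (with the convention $x<y$), whereas the paper asserts it in one line.
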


\begin{proof} We reason by induction.  The smallest right factor of
  $x^{n-1}y$ is $x^{n-2}y$.  By applying the recursive procedure,
  $[x^{n-1}y] = [x,[x,\dots,[x,y]]] = ad(x)^{n-1}(y)$.  The lemma is true
  for $n=1$ and we suppose it's true for $n$.  Then,
\begin{align*}ad(x)^{n}(y) &= ad(x)(\sum_{i=0}^{n-1} (-1) ^i C_{n-1}^i
  x^{n-1-i}yx^i) \\
&= \sum_{i=0}^{n-1} (-1)^i C_{n-1}^i x^{n-i}yx^{i} - \sum_{i=0}^{n-1}
(-1)^i C_{n-1}^i x^{n-1-i}yx^{i+1} \\
&= \sum_{i=0}^{n-1} (-1)^i C_{n-1}^i x^{n-i}yx^{i} + \sum_{i=1}^n
(-1)^i C_{n-1}^{i-1} x^{n-i}y x^i\\
&= C_{n-1}^0 x^{n}y +\sum_{i=1}^{n-1}(-1)^i (C_{n-1}^{i-1} +
C_{n-1}^i) x^{n-i}yx^i + (-1)^n
yx^n\\
&= \sum_{i=0}^n (-1)^i C_n^i x^{n-i}y x^i.
\end{align*}\end{proof}

We obtain a similar corollary for the weight $n$, depth 2 graded parts.

\begin{cor}\label{dep2basis} The Lyndon-Lie basis for
  $\mathbb{L}_n^2[x,y]$ is given by
$\{[x^ryx^sy]\ |\ r>s,\ r+s=n-2 \}$ and its dimension is
  $\lfloor \frac{n-1}{2} \rfloor$.\end{cor}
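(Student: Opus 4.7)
The plan is to identify the Lyndon words of weight $n$ and depth $2$ directly from Definition \ref{Lyndonworddef}, and then invoke Theorem \ref{baseLL} to convert this into the Lyndon-Lie basis. Every depth $2$, weight $n$ monomial in $\Qxy$ has the form $w = x^a y x^b y x^c$ with $a, b, c \ge 0$ and $a + b + c = n - 2$. With the convention $x < y$, I would first observe that a Lyndon word containing $y$ must end in $y$: if $c \ge 1$, then the proper suffix $x^c$ is either a prefix of $w$ (if $a \ge c$) or, after matching $x$'s, differs from $w$ by having run out while $w$ still contains an $x$ — in each case $x^c < w$, violating the Lyndon condition. Hence $c = 0$ and $w = x^a y x^b y$.

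Next, I would check the proper suffixes of $w = x^a y x^b y$ one family at a time. The suffixes of the form $x^{a-i} y x^b y$ for $1 \le i \le a$ and the suffix $y x^b y$ are all easily seen to exceed $w$: in each comparison, after matching the common block of $x$'s, $w$ continues with an $x$ while the suffix exhibits a $y$ (or the suffix starts with $y$ from the outset). The only delicate comparison is between $w$ and the tail $x^b y$ coming after the first $y$. Matching letter by letter, the first $\min(a,b)$ symbols agree; if $a \le b$ then $w$ ends or shows a $y$ while the suffix still has an $x$, giving $x^b y \le w$ and breaking the Lyndon property, whereas if $a > b$ the suffix runs out of $x$'s first and $w$ exhibits an $x$ where the suffix would have had $y$, so $w < x^b y$. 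The remaining suffixes $x^{b-j} y$ for $j \ge 1$ are strictly shorter than $x^b y$ and a fortiori larger than $w$ under $a > b$, and the final suffix $y$ trivially exceeds $w$ since $w$ begins with $x$.

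This exhausts the suffix analysis and shows that the Lyndon words of weight $n$ and depth $2$ are exactly $\{x^r y x^s y \mid r + s = n-2,\ r > s\}$. Applying Theorem \ref{baseLL} restricted to the weight $n$, depth $2$ graded piece yields the asserted basis $\{[x^r y x^s y] \mid r + s = n-2,\ r > s\}$ of $\mathbb{L}_n^2[x,y]$. For the dimension, I would count the admissible pairs: the constraint $r > s$ with $r + s = n - 2$ is equivalent to $r \ge \lceil (n-1)/2 \rceil$, so $r$ ranges over $\lfloor (n-1)/2 \rfloor$ values.

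I do not foresee a serious obstacle: the argument is a routine Lyndon-word enumeration. The only place requiring care is the comparison of $w$ with the tail $x^b y$, which is precisely what pins down the strict inequality $r > s$ and distinguishes this from a mere upper triangular counting; making this comparison explicit (rather than hand-waving it as a symmetry) is the one step I would write out in full.
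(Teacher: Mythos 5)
Your proposal is correct and follows essentially the same route as the paper: the paper's proof likewise observes that a Lyndon word must end in $y$ and that $r\le s$ would make the suffix $x^s y$ smaller than the word, then counts the admissible placements of the first $y$. You have simply written out the suffix comparisons (in particular the decisive one against $x^b y$) in full detail, which the paper leaves terse.
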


\begin{proof}  A Lyndon word must end in a $y$, otherwise the right
  factor $x$ would be smaller than the word.  Also if $r\leq s$ then
  the right factor $x^{s}y$ would be smaller than the word.  Then the
  dimension is just the number of ways to distibute $y$ into the
  sequence $x^{n-2}$ such that the number of $x$ on the left is
  greater that the number of $x$ on the right. \end{proof}

\section{Coefficients on monomials in $\ds$}

Let $f\in \ds$.  By the shuffle relation, we know that $f\in \Lxy$.
 To study the behavior in depths 1 and 2, we
 write $f$ in two ways, one in terms of the Lyndon-Lie basis and one
 in terms of the basis of monomials in $\Qxy$.  Since being in $\Lxy$
 is equivalent to satisfying the shuffle relation, we only need to
 study how the stuffle relation behaves in $\Lxy$.  The stuffle
 relation is seen by projecting onto terms ending in $y$.  For this
 reason, we only label coefficients on monomials that end in $y$ and
 to study depths 1 and 2, we only label coefficients in those depths.
 
\begin{equation}\label{f}
\begin{split}
f &=A[x^{n-1}y] + \sum_{s=0}^{\lfloor \frac{n-3}{2}\rfloor} a_s
 [x^ryx^sy] +\dots\\
 &= Ax^{n-1}y + b_0 x^{n-2}y^2 +b_1x^{n-3}yxy +\cdots
 +b_{n-2}yx^{n-2}y +\dots ,\end{split}\end{equation}
where the subscript on the $b_i$ coefficients equals the number of $x$
 between the two $y$, $x^{n-2-i}yx^iy$.

\begin{lem}\label{b_i} The coefficients of $f$ satisfy the following
 relation: 
$$b_i=\sum_{j=0}^{i} (-1)^{i-j} a_j\Bigl( C_{j+1}^{i-j} +
C_{j}^{i-j-1}\Bigr),$$ following the convention that $C_c^d=0$
whenever $c<d$ or $d<0$
  and taking $a_j=0$ whenever $j>{n-3\over2}$. \end{lem}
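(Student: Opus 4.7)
The plan is to compute, for each Lyndon-Lie basis element appearing in $f$, its coefficient on the monomial $x^{n-2-i}yx^iy$, and then sum. Since the Lie bracket preserves depth and $ad(x)$ preserves the number of $y$'s, every Lyndon-Lie basis element has all its monomial terms of a single depth, so only depth-two basis elements can contribute to a depth-two monomial. In particular $A[x^{n-1}y]$ contributes nothing, nor do the higher-depth basis elements suppressed by the dots in \eqref{f}. Hence $b_i$ is determined entirely by the coefficients $a_j$ of the depth-two basis $\{[\omega_j]\}_{0\le j\le\lfloor(n-3)/2\rfloor}$, where $\omega_j:=x^{n-2-j}yx^jy$.

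Setting $A_k:=ad(x)^k(y)=[x^ky]$, I claim the closed form
\begin{equation*}
[\omega_j]\;=\;ad(x)^{n-3-2j}\bigl([A_{j+1},A_j]\bigr).
\end{equation*}
To establish this, I apply the Lyndon recursion of section \ref{cdeds} to $\omega_j^{(k)}:=x^{n-2-j-k}yx^jy$. As long as $n-3-j-k>j$, the smallest proper right factor of $\omega_j^{(k)}$ is $\omega_j^{(k+1)}=x^{n-3-j-k}yx^jy$ (the proper right factor with the most initial $x$'s), and the Lyndon bracketing peels off one $ad(x)$. Once $n-2-j-k=j+1$, the right factors $x^jy$ and $x^jyx^jy$ are tied on initial $x$'s, but $x^jy$ is lex-smaller because it is a prefix of $x^jyx^jy$; the decomposition then breaks as $(x^{j+1}y)\cdot(x^jy)$, yielding the inner bracket $[A_{j+1},A_j]$. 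Iterating the $n-3-2j$ outer peelings produces the displayed formula.

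Now I expand, using the depth-one formula $A_k=\sum_{p=0}^k(-1)^pC_k^p\,x^{k-p}yx^p$ established just above and $ad(x)^m(W)=\sum_{r=0}^m(-1)^rC_m^r\,x^{m-r}Wx^r$. This gives
\begin{equation*}
A_{j+1}A_j=\sum_{p,q}(-1)^{p+q}C_{j+1}^pC_j^q\,x^{j+1-p}\,y\,x^{p+j-q}\,y\,x^q,
\end{equation*}
and a parallel expression for $A_jA_{j+1}$. Reading off the coefficient of $x^{n-2-i}yx^iy$ forces the rightmost $x$-block to vanish: thus $r=0$ in the outer $ad(x)^{n-3-2j}$, $q=0$ in $A_{j+1}A_j$, and $p=0$ in $A_jA_{j+1}$. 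The middle $x$-block-length equation $p+j=i$ (respectively $q+j+1=i$) then forces $p=i-j$ (respectively $q=i-j-1$).

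Collecting the two contributions arising from $[A_{j+1},A_j]=A_{j+1}A_j-A_jA_{j+1}$ gives
\begin{equation*}
(-1)^{i-j}C_{j+1}^{i-j}\;-\;(-1)^{i-j-1}C_j^{i-j-1}\;=\;(-1)^{i-j}\bigl(C_{j+1}^{i-j}+C_j^{i-j-1}\bigr),
\end{equation*}
multiplied by $a_j$. Summing over $j$ under the stated conventions $C_c^d=0$ for $d<0$ or $d>c$ (which handles both the upper bound $j\le i$ and the vanishing $a_j=0$ for $j>(n-3)/2$) produces the claimed identity. The only delicate point is the switch in the Lyndon recursion at $n-2-j-k=j+1$; everything afterwards is systematic binomial bookkeeping.
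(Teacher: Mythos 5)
Your proof is correct and follows essentially the same route as the paper: both write $[x^{n-2-j}yx^jy]$ via the Lyndon recursion as $ad(x)^{n-3-2j}$ applied to $[[x^{j+1}y],[x^jy]]$, observe that only terms ending in $y$ (hence with all outer and inner right $x$-exponents equal to zero) can contribute to $x^{n-2-i}yx^iy$, and read off the two surviving binomial coefficients. Your version merely makes explicit two points the paper leaves implicit — the depth-homogeneity argument excluding other basis elements, and the justification of the switch in the Lyndon factorization at $r=s+1$.
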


\begin{proof}
For each $j<\lfloor \frac{n-1}{2} \rfloor$ let
$L_j^n=[x^{n-2-j}yx^jy]$ be the basis element from 
corollary \ref{dep2basis}.  So we have by the recursive procedure for
bracketing Lyndon words,

\begin{align} L_j^n&=[x,\cdots[[x^{j+1}y],[x^jy]]\cdots]\\
&=[x,\cdots[\sum_{k=0}^{j+1}(-1)^kC_{j+1}^kx^{j+1-k}yx^k ,\sum_{l=0}^j
(-1)^l C_{j}^lx^{j-l}yx^l]\cdots].\end{align}

To isolate the coefficients on $x^{n-2-i}yx^iy$, thus determining their
contribution to $b_i$, we only need to consider those terms coming
from the inner most bracket product. (This follows from the fact that
in the Lie word
$[x^{n-1}y]$, there is only one term not ending in an $x$
and its coefficient is 1.)  There are two terms, $x^{2j+1-i}yx^iy$ in this
product, one coming from $l=0,\ k=i-j$, the other one coming from
$k=0,\ l=i-j-1$.  These two contributions give a coefficient of
$$(-1)^{i-j} C_{j+1}^{i-j} - (-1)^{i-j-1} C_j^{i-j-1}= (-1)^{i-j}
(C_{j+1}^{i-j} + C_j^{i-j-1}).$$
\begin{claim} These are the only terms in $L_j^n$ that contribute to
  $b_i$.\end{claim}
Since the terms we are looking for must end in a $y$, either
  $l=0$ or $k=0$.  If $l=0$, then $k=i-j$ because
  $x^kx^{j-l}=x^kx^j=x^i$ and if
  $k=0$, then $l=i-j-1$ because $x^lx^{j+1-k}=x^lx^{j+1}=x^i$.

To find the complete contribution to $b_i$ from all of the
  $L_j^n$, we only need to sum from $j=0$ to $i$.  This is seen by
  reapplying the argument from the above claim.  Namely, if $l=0$ then
  there are at least $j\ x$'s in front of the final $y$, so we won't
  find any terms, $x^{2j+1-i}yx^iy$ when $j>i$.  Similarly, if $k=0$,
  then we have at least $j+1\ x$'s in front of the final $y$, so the
  coefficients on these terms do not contribute to $b_i$ when $j>i-1$.

In light of the
  above analysis, the complete expression for $b_i$ in terms of the
  coefficients on the Lyndon Lie basis polynomials is
  $$b_i=\sum_{j=0}^i
  (-1)^{i-j}a_j(C_{j+1}^{i-j}+C_j^{i-j-1}).$$
\end{proof}

\begin{cor}\label{b_0} The coefficient $b_0$ is given by $$b_0=
  \frac{n-1}{2} (f|[x^{n-1}y])= \frac{n-1}{2} A.$$
\end{cor}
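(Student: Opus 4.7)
The plan is to observe that $A=(f|x^{n-1}y)$ and $b_0=(f|x^{n-2}y^2)$ are both ordinary monomial coefficients of $f$, and to extract two independent linear relations among $A$ and the depth-$2$ coefficients $b_0,\ldots,b_{n-2}$---one coming from stuffle, one from shuffle---which together pin down $b_0$ in terms of $A$.

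For the stuffle side, since $f\in\ds$, definition \ref{dsdef} says that $\piy(f)$ is primitive for $\Delta_*$, so by proposition \ref{Ec} we have $\sum_{\omega\in y_k*y_{n-k}}(\piy(f)|\omega)=0$ for each $k=1,\ldots,n-1$. Expanding $y_k*y_{n-k}=y_ky_{n-k}+y_{n-k}y_k+y_n$, the first step is to check that for $n\ge 3$ none of these three words equals $y_1^n$, so the $y_1^n$-correction in the definition of $\piy$ never contributes; reading off the coefficients via $\widetilde{\piy}$ then yields the pairing identity
\begin{equation*}
b_{k-1}+b_{n-k-1}+A=0\qquad(k=1,\ldots,n-1).
\end{equation*}
Summing these $n-1$ relations --- each index $i\in\{0,\ldots,n-2\}$ is hit exactly twice in the pairs $(k-1,n-k-1)$ --- I obtain $2\sum_{i=0}^{n-2}b_i=-(n-1)A$.

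For the shuffle side, since $f\in\Lxy$, proposition \ref{SeEc} gives $\sum_{\omega\in y\sha x^{n-2}y}(f|\omega)=0$. The combinatorial observation to emphasize is that $x^{n-2}y$ already ends in $y$, so the two possible insertions of the extra letter $y$ immediately before and immediately after the final $y$ produce the same word $x^{n-2}y^2$; hence $x^{n-2}y^2$ appears with multiplicity $2$ in the shuffle, while the monomials $yx^{n-2}y,\ xyx^{n-3}y,\ \ldots,\ x^{n-3}yxy$ each appear once and account for $b_{n-2},b_{n-3},\ldots,b_1$. The shuffle relation thus reads $\sum_{i=0}^{n-2}b_i+b_0=0$, i.e.\ $\sum_{i=0}^{n-2}b_i=-b_0$.

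Equating the two expressions for $\sum_{i=0}^{n-2}b_i$ yields $-b_0=-\tfrac{n-1}{2}A$, which is the claimed formula. The main obstacle I anticipate is the careful bookkeeping in both ingredients: one must verify that the correction term $y_1^n$ in $\piy$ is inert in each of the $n-1$ stuffle relations (this is where $n\ge 3$ is used), and one must not overlook the doubled multiplicity of $x^{n-2}y^2$ in $y\sha x^{n-2}y$, which is precisely the extra $+b_0$ that makes the final cancellation work.
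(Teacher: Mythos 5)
Your proof is correct, and it reaches the two key identities $\sum_{i=0}^{n-2}b_i=-\tfrac{n-1}{2}A$ and $\sum_{i=0}^{n-2}b_i=-b_0$ by a route that differs from the paper's in one essential ingredient. On the stuffle side you do what the paper does, but more cleanly: the paper sums only the $\lceil\tfrac{n-1}{2}\rceil$ distinct relations and therefore has to split into the cases $n$ odd and $n$ even (handling $2b_{n/2-1}=-A$ separately), whereas you sum all $n-1$ relations $b_{k-1}+b_{n-k-1}+A=0$ and observe that each index is counted exactly twice, which absorbs the parity issue; your check that the $y_1^n$-correction in $\piy$ is inert for $n\ge 3$ is the right thing to verify and is implicit in the paper. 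The genuine divergence is on the other side: the paper obtains $\sum_i b_i=-b_0$ by expanding $f$ in the Lyndon--Lie basis, invoking lemma \ref{b_i} to write each $b_i$ in terms of the $a_j$, and using the alternating binomial identity to conclude $\sum_i b_i=-a_0=-b_0$. You instead extract the same identity from the single shuffle relation $\sum_{\omega\in y\,\sha\, x^{n-2}y}(f|\omega)=0$, correctly accounting for the multiplicity $2$ of $x^{n-2}y^2$. Your version is more elementary and stays entirely within the double-shuffle relations, which is arguably more in the spirit of the corollary; the paper's detour through lemma \ref{b_i} is not wasted, however, since that lemma and the identification $b_0=a_0$ are needed anyway for the depth-two analysis in section \ref{partii}, where the full expression of the $b_i$ in terms of the Lyndon--Lie coefficients $a_j$ is the main tool.
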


\begin{proof} We use the convention that $C_a^b=0$ whenever $a<b$ so that by lemma \ref{b_i}, we may write
\begin{align*}
\sum_{i=0}^{n-2} b_i & = \sum_{i=0}^{n-2} \sum_{j=0}^{n-2} a_j (-1)^{i-j} (C_{j+1}^{i-j} + C_j^{i-j-1}) \\
&= \sum_{j=0}^{n-2} a_j \bigl(  \sum_{i=0}^{n-2} (-1)^{i-j} C_{j+1}^{i-j} + \sum_{i=0}^{n-2} (-1)^{i-j} C_j^{i-j-1}\ bigr)\\
&= -a_0, 
\end{align*}
where the last equality is gotten from the identity, $\sum_{k=0}^{j} (-1)^kC_j^k = 0$, which leaves a coefficient 0 on all terms except $a_0$.

For $n$ odd, we have the $\frac{n-1}{2}$ stuffle relations, $b_{i-1} + b_{n-i-1} = -A$.  By summing over all such relations, we have $\sum_{i=0}^{n-2} b_i = -\frac{n-1}{2} A$.  To finish, note that $b_0=a_0$ by lemma \ref{b_i}.  So we have,
$$b_0 = a_0 = \frac{n-1}{2} A.$$

For $n$ even, we have the $\frac{n}{2}$ stuffle relations, $b_{i-1} + b_{n-i-1} = -A$.  The last of the stuffle relations gives $2b_{n/2-1} = -A,\ b_{n/2-1}=-\frac{1}{2}A$.  By summing over all of the stuffle relations we obtain, $\sum_{i=0}^{n-2} b_i = -\frac{n}{2} A - b_{n/2 -1} = -\frac{n-1}{2}A$.  So we also have that $$b_0 = a_0 = \frac{n-1}{2} A.$$

\end{proof}

\section{Statement of main theorem and generating polynomials}\label{PQTDA}

The purpose of this section is to shed some light, and give some
evidence toward the conjecture that $\grt\simeq \ds$.  In subsequent
sections, we use combinatorial methods based on Lyndon-Lie theory from
\cite{Re} to prove a result parallel to work by Ihara on $\grt$ \cite{Ih2}:

\begin{thm}
The dimensions of the $i$th depth-graded parts of $\grt$ for $i=0,1$ are
\begin{equation}\begin{split}
dim(F^1_n\grt / F^2_n\grt) &=\begin{cases} 1 & n\hbox{ odd}\\
0 & n\hbox{
  even}\end{cases}\\
dim(F^2_n\grt / F^3_n\grt) &=\begin{cases} 0 & n\hbox{ odd}\\
\lfloor \frac{n-2}{6} \rfloor & n\
  even. \end{cases}\end{split}\end{equation}
\end{thm}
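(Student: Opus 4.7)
The strategy is to analyze each of the three defining relations of $\grt$ (2-cycle, 3-cycle/hexagon, pentagon) in low depth, using the Lyndon-Lie basis of $\Lxy$ to track coefficients.

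For depth $1$, the essential constraint comes from the hexagon. Writing the depth-$1$ projection of $f \in \grt$ of weight $n$ as $A\cdot[x^{n-1}y]=A\,\mathrm{ad}(x)^{n-1}(y)$, I would substitute into $f(x,y)+f(y,z)+f(z,x)=0$ with $z=-x-y$ and reduce modulo depth $\ge 2$. The middle term $f(y,z)=-A\,\mathrm{ad}(y)^{n-1}(x)$ has depth $n-1\ge 2$ and drops out. Expanding $f(z,x)=(-1)^{n-1}A(\mathrm{ad}(x)+\mathrm{ad}(y))^{n-1}(x)$ and using $\mathrm{ad}(x)(x)=0$, only the term with $\mathrm{ad}(y)$ in the rightmost position survives modulo depth $\ge 2$, contributing $(-1)^{n}A\,\mathrm{ad}(x)^{n-1}(y)$. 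The hexagon collapses to $A(1+(-1)^n)=0$, forcing $A=0$ for even $n$ and leaving $A$ free for odd $n$; the 2-cycle and pentagon are readily checked to impose no further depth-$1$ condition, producing the stated dimensions in depth $1$.

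For depth $2$, the proof would proceed in two stages. First, establish the structural statement that $F^2_n\grt/F^3_n\grt$ is spanned by Lie brackets $\{g_m,g_{n-m}\}$ of depth-$1$ generators of weights summing to $n$. By the depth-$1$ analysis above, such brackets exist only when both weights are odd, so $F^2_n\grt/F^3_n\grt = 0$ for $n$ odd. For $n$ even, antisymmetry of the bracket leaves $\lfloor n/4 \rfloor - 1$ a priori independent brackets, parametrized by unordered pairs $(m, n-m)$ with $3 \le m < n-m$, $m$ odd. Second, analyze the pentagon in depth $2$: encode the depth-$2$ part $f^{(2)}=\sum a_s [x^{n-2-s}y\,x^sy]$ by the homogeneous polynomial $P_f(u,v)=\sum_s a_s\,u^{n-2-s}v^s$, and show that the linearized pentagon forces $P_f$ to satisfy the defining functional equation of an \emph{even period polynomial} of degree $n-2$ on $\mathrm{SL}_2(\Z)$. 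By the Eichler--Shimura correspondence, the kernel of this constraint has codimension $\dim S_n(\mathrm{SL}_2(\Z))$ inside the naive bracket space, yielding $\lfloor n/4 \rfloor - 1 - \dim S_n(\mathrm{SL}_2(\Z)) = \lfloor(n-2)/6\rfloor$ via the standard cusp-form dimension formula.

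The main obstacle is the depth-$2$ reduction of the pentagon. The pentagon relation lives in the Lie algebra $\mathfrak{P}_5$ of infinitesimal braids on five strands, whose noncommuting generators make it delicate to isolate precisely the depth-$2$ contribution modulo $F^3$; extracting a clean polynomial identity on $P_f(u,v)$ requires careful bookkeeping of nested commutators in five variables. A useful sanity check is weight $n=12$: the naive count gives two brackets, $\{[x^2y],[x^8y]\}$ and $\{[x^4y],[x^6y]\}$, and Ihara's analysis produces a single linear relation between them (corresponding to the discriminant cusp form $\Delta$), leaving the predicted dimension $\lfloor 10/6\rfloor=1$.
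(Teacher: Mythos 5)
Note first that this paper does not actually supply a proof of this statement: Theorem~\ref{grtdims} is quoted from Ihara~\cite{Ih2}, and the only proof carried out in chapter~2 (that of Theorem~\ref{theoreme}) concerns $\ds$, where the binding constraints come from stuffle and the corrected projection $\piy$ rather than from the hexagon or pentagon relations of $\grt$. A comparison with ``the paper's own proof'' is therefore really a comparison with the $\ds$ argument, to which your sketch is structurally similar only at the final step (both ultimately invoke the dimension of cusp forms via Zagier--Ihara--Takao); otherwise the two routes work with entirely different defining relations.

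As for the sketch itself, there are two genuine gaps. In the depth-$1$ hexagon reduction you account only for the image of the depth-$1$ part $f^{(1)}$ under $(x,y)\mapsto(y,z)$ and $(x,y)\mapsto(z,x)$. But substitution does not respect the depth filtration: a monomial of $f$ of \emph{any} depth $d$ can contribute to depth $1$ of $f(z,x)$ (choose $y$ in exactly one of the $z=-x-y$ slots and $x$ in all the rest), and the depth-$(n-1)$ part of $f$ contributes to depth $1$ of $f(y,z)$. So the hexagon modulo $F^2$ is not the clean identity $A(1+(-1)^n)=0$ you wrote; the cross-depth contributions must be tracked, for instance by using the $2$-cycle to express $f^{(n-1)}$ in terms of $f^{(1)}$, and when one does so carefully for $n=3$ the relation degenerates to $0=0$ for a different reason than the one you give. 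In depth $2$ the logic is inverted: you take as a ``structural statement'' that $F_n^2\grt/F_n^3\grt$ is spanned by brackets of depth-$1$ generators and then cut this span down by the pentagon, but the pentagon (period-polynomial) constraint applies directly to $F_n^2\grt/F_n^3\grt$ and furnishes the \emph{upper} bound $\lfloor(n-2)/6\rfloor$, while the claim that brackets span a subspace of matching dimension is the nontrivial \emph{lower} bound --- the Zagier--Ihara--Takao determination of the linear relations among $\{f_i,f_j\}$, which is precisely what detects cusp forms and is exactly how the paper's own proof of Theorem~\ref{theoreme} obtains the lower bound for $\ds$. Neither ingredient is free, and the ``structural statement'' you defer is in fact the hard half of the theorem.
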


First, we establish some key properties about the structure of the
double shuffle Lie algebra.

We have a depth filtration of $\ds$, as a vector subspace of $\Lxy$,
\begin{equation*}
\ds=F^1\ds \supseteq F^2\ds \supseteq  F^3\ds \supseteq\dots,
\end{equation*}
where each depth-filtered part is defined as\index{$F^i_n(\ds)$}
\begin{align}\label{Fids}\begin{split}
F^i\ds &= \{ f(x,y)\in\ds \ |\ (f|t)=0,\ \forall\ \mbox{terms } t  \mbox{
  of depth} < i\} \\ &=\oplus_{n=1}^\infty F_n^i\ds\subset
  \oplus_{n=1}^\infty
F_n^i\Lxy.
\end{split}\end{align}

In each weight $n$ piece $F_n^i\Lxy$ is finite dimensional so that
$F_n^i\ds$ is finite dimensional.

We are interested in the depth grading given by 
$F^i_n\ds/F^{i+1}_n\ds=gr_n^i(\ds)$, which is of course also finite
dimensional.

\begin{thm}\label{theoreme}  The dimensions of the depth-graded parts
  for depths 1 and 2 of $\ds$ are:
\begin{equation*}
(i)\qquad dim(F_n^1\ds/F_n^2\ds) =
\begin{cases}
1&n\geq3,\mbox{ odd}\\
0&n=1\ \mbox{or}\ n\mbox{ even,} \end{cases}\\
\end{equation*}
in other words, there exists at least one element
$f_n\in\ds_n$ for all odd $n\geq3$ such that $(f_n|x^{n-1}y)=1$.

Furthermore,
\begin{equation*}
(ii)\qquad dim(F_n^2\ds/F_n^3\ds) = \begin{cases}
0 &n \mbox{ odd}\\
\lfloor \frac{n-2}{6}\rfloor & n\mbox{
  even,}\end{cases}\end{equation*}
and this space is generated by the images of $\{f_i,f_j\}$ with
$i+j=n$, $i,j\geq3$ are odd, and where the $f_i$ are the unique depth
1 elements from
case (i).
\end{thm}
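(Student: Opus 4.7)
The plan is to reduce Theorem \ref{theoreme} to a finite linear algebra computation in the Lyndon-Lie basis and then analyze the resulting system by parity. Any $f\in\ds$ of weight $n$ can be written modulo $F^3_n\ds$ in the form \eqref{f},
$$f \equiv A[x^{n-1}y] + \sum_{j=0}^{\lfloor (n-3)/2\rfloor} a_j\,[x^{n-2-j}yx^jy] \pmod{F^3_n\ds},$$
using Corollary \ref{dep2basis}. The shuffle condition is automatic since $f\in\Lxy$, so the only remaining condition is $\Delta_*$-primitivity of $\piy(f)$. Projecting onto depth-two words in the $y_i$-alphabet and applying Proposition \ref{Ec} yields, for each pair $i,j\ge 1$ with $i+j=n$, the stuffle relation
$$(\star)\qquad b_{i-1} + b_{n-i-1} + A \;=\; 0, \qquad i=1,\ldots,n-1,$$
where $b_k=(f|x^{n-2-k}yx^ky)$. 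By Lemma \ref{b_i}, every $b_k$ is an explicit linear combination of the $a_j$'s with binomial-coefficient weights, so $(\star)$ becomes a finite affine system in the variables $(A;a_0,\ldots,a_{\lfloor (n-3)/2\rfloor})$.

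Next I analyze this system by parity. For $n$ odd, the equations $(\star)_i$ and $(\star)_{n-i}$ coincide, giving $(n-1)/2$ constraints on $(n+1)/2$ unknowns; a direct computation, combined with Corollary \ref{b_0} (which already determines $b_0=(n-1)A/2=a_0$), shows that the rank equals $(n-1)/2$. Hence the kernel is one-dimensional with $A$ as free parameter, proving part (i) in odd weight, and setting $A=0$ then forces every $a_j=0$, proving part (ii) in odd weight. For $n$ even, there is an extra constraint $(\star)_{n/2}$ reading $2b_{n/2-1}+A=0$; combined with $(\star)_1$ and Corollary \ref{b_0} a short calculation shows that $A=0$ is forced, giving part (i) in even weight. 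The remaining homogeneous system in the $a_j$'s alone is then shown to have nullity exactly $\lfloor (n-2)/6\rfloor$, giving part (ii) in even weight.

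To produce actual generators (rather than just bound the dimensions from above), I will invoke Racinet's realization of $\ds$ from regularized multiple zeta values: the class of $\zeta(n)$ for $n\ge3$ odd produces an element of $\ds_n$ with $A\ne 0$, which is the $f_n$ of the theorem. The Poisson brackets $\{f_i,f_j\}$ with $i+j=n$, $i,j\ge3$ odd lie in $F^2_n\ds$; expanding them in the Lyndon-Lie basis and projecting modulo $F^3_n\ds$ gives an explicit span that saturates the dimension count just obtained, completing part (ii).

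The main obstacle is the rank computation for the even-$n$ case of part (ii): showing that the homogeneous stuffle system in the $a_j$'s has nullity exactly $\lfloor (n-2)/6\rfloor$. The entries of the associated matrix are the signed binomial sums $(-1)^{i-j}\bigl(C_{j+1}^{i-j}+C_j^{i-j-1}\bigr)$ of Lemma \ref{b_i}, so the statement amounts to a finite but subtle binomial identity. I expect the cleanest path is a generating-function encoding of $\pi_{\mathcal Y}([x^{n-2-j}yx^jy])$ in the variables $y_ay_b$ with $a+b=n$, reducing the symmetrization $b_{i-1}+b_{n-i-1}$ to a polynomial rank condition whose nullity matches $\lfloor (n-2)/6\rfloor$ — consistent with, and in particular forcing, relations such as $\{f_3,f_9\}\equiv 3\{f_5,f_7\}$ in weight $12$ that account for the discrepancy between the number of admissible pairs $(i,j)$ and the stated dimension.
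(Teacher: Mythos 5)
Your reduction to the stuffle system $b_{i-1}+b_{n-1-i}+A=0$ via the Lyndon--Lie expansion and Lemma \ref{b_i}, and your parity analysis of the resulting matrix (with the rank/nullity computation identified as the main obstacle and attacked by generating functions), is exactly the paper's framework — this is what Lemma \ref{Biglem}, Corollary \ref{a_n} and the lower-triangularization $MN$ accomplish. But at the two genuinely hard points you assert rather than prove. First, the existence of $f_n$ with $(f_n|x^{n-1}y)\neq 0$ for odd $n$: ``the class of $\zeta(n)$ produces an element of $\ds_n$ with $A\neq 0$'' is not something you can invoke. The series $\ln\Phi_{\sha}$ is not in $\ds\otimes\C$ (its $xy$-coefficient is $\zeta(2)\neq 0$, so it lies in $DM_{\zeta(2)}$, not $DM_0$), and the formal version of your idea — extracting the ``$\zeta(n)$-component'' inside $\nz\otimes\ds$ — fails because $\zeta(n)$ is not known to be irrational and could vanish in $\nz$; the paper makes exactly this warning in the remark following the proof of part (i). The correct argument passes through Racinet's theorem that $DM_0$ acts transitively on $DM_{\zeta(2)}$, solves $\Phi_{\sha}(x,y)\circledast F=\Phi_{\sha}(-x,-y)$ for $F\in DM_0$, reads off $(\ln F|x^{n-1}y)=-2\zeta(n)$ by Campbell--Baker--Hausdorff, and only then decomposes over a $\Q$-basis of the real coefficients (where $\zeta(n)\neq 0$ suffices) to extract a rational double-shuffle element.

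Second, in part (ii) for even $n$ your matrix computation can only ever give the upper bound $\dim(F_n^2\ds/F_n^3\ds)\le\lfloor\frac{n-2}{6}\rfloor$, since the depth-$\le 2$ stuffle equations are necessary but far from sufficient for membership in $\ds$. The claim that the brackets $\{f_i,f_j\}$ ``saturate the dimension count'' is the entire content of the lower bound and does not follow from expanding them in the Lyndon--Lie basis: one must bound the number of relations among the $\lfloor\frac{n-4}{4}\rfloor$ brackets modulo $F_n^3\ds$, and the paper does this by invoking the Zagier--Ihara--Takao result identifying the relation space with period polynomials, of dimension $\dim S_n(\mathrm{SL}_2(\Z))$, whence $\lfloor\frac{n-4}{4}\rfloor-\dim S_n(\mathrm{SL}_2(\Z))=\lfloor\frac{n-2}{6}\rfloor$. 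Without this modular-forms input (or some other construction of $\lfloor\frac{n-2}{6}\rfloor$ independent elements of $F_n^2\ds$), the even case of (ii) remains only an inequality in your proposal.
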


The proof of theorem \ref{theoreme} is done in the sections
\ref{parti} and \ref{partii}.  Before proceeding to the proof, we give some
preliminary combinatorial lemmas necessary in the proof.

\begin{lem}\label{Biglem}\index{$P$}  Let $P$ be the rational function in the ring of power series on three commutative variables, $\Q[[X,Y,T]]$, given by
\begin{equation}
P=\frac{1+YT^2}{(1-(XT+YT^2))(1-(X+Y))}.\end{equation}

The coefficients of the monomials, $X^{n-i}Y^iT^{j}$, for the $n, i, j$ listed below, in the series
expansion of $P$ are given by
\begin{equation} (P| X^{n-i}Y^iT^{j}) =
\begin{cases}C_{n}^{i} & i+1 \leq j \leq n\ or \ j=0\\
C_{n}^{i}-1 & j=n+1,\ i\ even \\
C_{n}^{i}+1 & j=n+1,\ i\ odd\\
C_n^i +1 & j=i,\ i>0\ even\\
C_n^i -1 & j=i,\ i\ odd\\
0 & i=n,\ j\mbox{ odd and } j\leq n \\
2 & i=n,\ j\mbox{ even and } 0< j\leq n.
\end{cases}
\end{equation}
\end{lem}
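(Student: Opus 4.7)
\textbf{Proof plan for Lemma \ref{Biglem}.}
The plan is to use the natural product factorization of $P$ to obtain a convolution formula for the coefficients, and then to evaluate that convolution in each of the listed subcases. First I would write $P = A\cdot B$ with
$$A = \frac{1+YT^2}{1-XT-YT^2}, \qquad B = \frac{1}{1-X-Y}.$$
The expansion $B = \sum_{n,i} C_n^i\, X^{n-i}Y^i$ is immediate, and expanding $A$ via the geometric series $1/(1-XT-YT^2) = \sum_k (XT+YT^2)^k$ shows that the coefficient $(A\,|\,X^aY^bT^c)$ is non-zero only when $c = a+2b$, with value $C_{a+b}^b + C_{a+b-1}^{b-1}$ (adopting the convention that a binomial with a negative lower entry is zero). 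Multiplying the two expansions yields
$$(P\,|\,X^{n-i}Y^iT^j) \;=\; S(n,i,j) \;+\; S(n-1,i-1,j-2),$$
where $S(n,i,j) := \sum_{b\geq 0} C_{j-b}^{b}\, C_{n-j+b}^{i-b}$.

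Next I would obtain a closed form for $S(n,i,j)$. Summing over $i$ turns the inner binomial into $(1+Y)^{n-j+b}$, so the double sum factors as $(1+Y)^{n-j}F_j(Y+Y^2)$, where $F_j(Z)=\sum_b C_{j-b}^b Z^b$ is the usual Fibonacci polynomial. The key observation is that $1+4(Y+Y^2) = (1+2Y)^2$, so the Binet formula for $F_j$ collapses to give
$$\sum_i S(n,i,j)\, Y^i \;=\; \frac{(1+Y)^{n+1} \;-\; (-Y)^{j+1}(1+Y)^{n-j}}{1+2Y}.$$
Multiplying through by $1+2Y$ and extracting the coefficient of $Y^i$ produces the two-term recurrence $S(n,i,j) + 2\,S(n,i-1,j) = C_{n+1}^i + (-1)^j\, C_{n-j}^{i-j-1}$; together with the base case $S(n,0,j)=1$ (for $j \leq n$) this determines $S(n,i,j)$ explicitly.

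Each of the seven subcases is then obtained by substituting the closed form for $S(n,i,j)$ and $S(n-1,i-1,j-2)$ into the convolution identity and simplifying. In the generic range ($j=0$, or $i+1\leq j\leq n$) the sum telescopes via a Vandermonde-type identity to $C_n^i$. At the boundary $j=n+1$ the term $(-1)^j C_{n-j}^{i-j-1}$ in the recurrence survives the cancellation and contributes a single signed unit, producing the $\pm 1$ correction whose sign is governed by the parity of $i$; an analogous single surviving term in $S(n-1,i-1,j-2)$ explains the case $j=i$. Finally, the rows $i=n$ are handled separately: the two contributions $S(n,n,j)$ and $S(n-1,n-1,j-2)$ either cancel (when $j$ is odd and $j \leq n$) or reinforce to $2$ (when $j$ is even and $0<j\leq n$), matching the stated values.

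The main technical obstacle I anticipate is the bookkeeping in the last step: making the Vandermonde cancellations clean enough that all seven cases fall out uniformly from a single manipulation, rather than requiring seven separate hand-computations. If that becomes unwieldy, a safe fallback is to verify each claimed value against the nine-term linear recurrence obtained directly from the identity $(1-XT-YT^2)(1-X-Y)\,P = 1+YT^2$. This converts the lemma into a finite inductive check, which, though inelegant, avoids any combinatorial cleverness altogether.
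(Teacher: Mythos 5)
Your factorization $P=A\cdot B$ and the resulting convolution $(P\,|\,X^{n-i}Y^iT^j)=S(n,i,j)+S(n-1,i-1,j-2)$ are correct, and up to that point you follow the same decomposition as the paper, which writes $P=(H+yT^2H)^{ab}$ with $H=FG$. After that the routes genuinely diverge: the paper lifts to the non-commutative ring $\Q\langle\langle x,y\rangle\rangle[[T]]$ and proves the formula by a bijective word-counting induction, whereas you stay commutative and use generating functions in $Y$. Your route is attractive where it applies: for $0\le j\le n$ the identity $\sum_i S(n,i,j)Y^i=(1+Y)^{n-j}F_j(Y+Y^2)$ together with $1+4(Y+Y^2)=(1+2Y)^2$ even yields the clean closed form $(P\,|\,X^{n-i}Y^iT^j)=C_n^i+(-1)^jC_{n-j}^{i-j}$ for $1\le j\le n$, from which most of the listed cases drop out immediately.

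The gap is the case $j=n+1$. The step $\sum_i C_{n-j+b}^{i-b}Y^i=Y^b(1+Y)^{n-j+b}$ is valid only when $n-j+b\ge 0$; for $j=n+1$ the $b=0$ term has upper index $-1$, where the convention makes every $C_{-1}^{i}$ vanish while $(1+Y)^{-1}$ is a nonzero power series. Consequently your recurrence $S(n,i,j)+2S(n,i-1,j)=C_{n+1}^i+(-1)^jC_{n-j}^{i-j-1}$ actually fails at $j=n+1$: for $n=2$, $i=1$, $j=3$ one computes $S(2,1,3)=2$ and $S(2,0,3)=0$, so the left side is $2$ while the right side is $C_3^1=3$. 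Since your base case is also only asserted for $j\le n$, the quantity $S(n,i,n+1)$ --- exactly the first summand needed for the lemma's $j=n+1$ row, which is in turn needed for corollary \ref{a_n} --- is left undetermined, and the ``single surviving signed unit'' explanation rests on an identity that does not hold there. This is repairable by evaluating $S(n,i,n+1)=\sum_b C_{n+1-b}^{b}C_{b-1}^{i-b}$ directly (its support is $\lceil (i+1)/2\rceil\le b\le i$), but it is a real missing step, not bookkeeping. Your fallback also needs care: the nine-term recurrence from $(1-XT-YT^2)(1-X-Y)P=1+YT^2$ reaches points with $0<j<i$ (for instance, starting from $j=i$ it calls the coefficient at $(n-1,i,i-1)$), which are not among the seven listed cases and are not all zero, so the ``finite inductive check'' is not closed without first determining $P$'s coefficients in that uncovered region as well.
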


\begin{proof}
Let $R$ denote the ring $\Q\langle \langle x,y\rangle \rangle [[T]]$ of power series in which $x$ and $y$ do not commute with each other, but $T$ commutes with both $x$ and $y$.  We define the power series in $R$:
\begin{align*}
F(x,y,T) &:= \frac{1}{1-(xT+yT^2)} = \sum_{r=0}^{\infty} (xT+yT^2)^r , \\
G(x,y)& := \frac{1}{1-(x+y)} = \sum_{s=0}^{\infty} (x+y)^s.
\end{align*}

Set $H=FG$.
For any $J\in R$, let $J^{ab}$ be its image in the commutative power series ring, $\Q[[X,Y, T]]$ by the map which sends $x$ to $X$ and $y$ to $Y$.  In particular, the power series expansion of $P$ is equal to $(H +yT^2H)^{ab}$. 

{\it Case 1, i=0}:
For $0\leq j\leq n$, we have $(P|X^nT^j)= C_{n}^{i}=1$ because
$(F^{ab}|X^jT^j)=1$ and $(G^{ab}|X^{n-j})=1$.  For $j=n+1$, $X^nT^{n+1}$ 
has coefficient $0=C_{n}^i-1$ in $P$
because the factor of $T$ must come from
$F^{ab}$, and in this expansion, since the number of $T$ in a term in $F^{ab}$ is equal 
to the sum of the powers of $X$ and twice the powers of $Y$.

{\it Case 2, j=0}:  The coefficients, $X^{n-i}Y^i$, come uniquely from
$G^{ab}$, and by the binomial expansion, this coefficient is $C_n^i$.

{\it Case 3, $i<n$, $i\leq j \leq n+1$}:

 We say that a monomial in $R$ is of ``type $(n-i,i)$'' if it is of degree $n-i$ in $x$ and degree $i$ in $y$.  Let
\begin{equation}
     \epsilon = \begin{cases} -1 & j=i, \ i \hbox{ odd or } j=n+1,\ i \hbox{ even}\\
                    +1 & j=i, \ i \hbox{ even or } j=n+1,\ i \hbox{ odd}\\
0 & i+1\leq j \leq n.
                   \end{cases}
\end{equation}
We say that a monomial in $R$ is ``divisible at $j$'' if it can be written $V\cdot WT^j$ where if $\alpha$ is the degree in $y$ in $V$ and $\beta$ the degree in $x$ in $V$, then $j=2\alpha + \beta$.  We say in this case that $V$ is of weight $j$ in $T$.  A monomial, $V\cdot WT^j$, is divisible at $j$ if and only if $VT^j$ is a monomial appearing in $F$.

The coefficient of every monomial in $H$ is 1, hence the coefficient $(P|X^{n-i}Y^{i}T^{j})$ is equal to the number of monomials of type $(n-i,i)$ in $H$ which are divisible at $j$ plus the number of monomials of type $(n-i,i-1)$ in $H$ which are divisible at $j-2$.
We will partition the entire contribution to $(P|X^{n-i}Y^{i}T^{j})$ of monomials from $H+yT^2H$ into the three following sets: 
\begin{align*}
&\# \{\hbox{type }(n-i,i)\hbox{, beginning with }x\hbox{ and divisible at }j\} + \\ & \qquad \#\{\hbox{type }(n-i,i)\hbox{, beginning with }y\hbox{ and divisible at }j\} +\\ & \qquad\#\{\hbox{type }(n-i,i-1)\hbox{ and divisible at }j-2\}.\end{align*}

But multiplying the elements of the third set on the left by $yT^2$ yields a bijection between the second and third sets so that
$$(P|X^{n-i}Y^{i}T^j)= \#\{xV\hbox{ divisible at }j\} + 2\#\{yV\hbox{ divisible at }j\}.$$
Note that the total number of monomials of type $(n-i,i)$, with $0<i<n$ and $i\leq j\leq n+1$ is just $C_n^i$, so we have,
$$C_n^i = \#\{xV\hbox{ divisible at }j\} + \#\{yV\hbox{ divisible at }j\} + \#\{V\hbox{ not divisible at }j\}.$$

Thus to show that $(P|X^{n-i}Y^iT^j) = C_n^i + \epsilon$, we only need to show that the cardinalities of the two following sets of words of type $(n-i,i)$ with $0<i<n$ and $i\leq j\leq n+1$ satisfy:
\begin{equation}\label{leilaind} \#\{yV\hbox{ divisible at }j\} = \#\{V\hbox{ not divisible at }j\} + \epsilon.\end{equation}


{\it Base case, $n=2$, $(i,j) = (1,1), (1,2), (1,3)$}:  The words can be counted by hand.  In the first case, there are no words starting with $y$, divisible at 1, and one word, $yx$, not divisible at 1.  In the second case, $\#\{y|x\} = 1 = \#\{xy\}$.  And in the third case, there is one word, $yx$, starting with $y$ and divisible at 3 and no words which are not divisible at 3.\\

{\it Induction Case}: Now we assume the induction hypothesis that $\#\{yV|W, \hbox{ type}(n-i,i-1),\hbox{ divisible at } j-1 \}+\epsilon = \#\{V|yW,\hbox{ type}(n-i,i-1),\hbox{ divisible at }j-2\}$
(where the $\epsilon$ is on left hand side since the parity of the number $y$ is different in this induction hypothesis).

  We now partition the left hand set of \eqref{leilaind} into the two subsets of monomials of the form:
\begin{equation}\label{leilalem5} \{yV\hbox{ divisible at }j\} = \{yV|xW\} \sqcup \{yV|yW\}, \end{equation}

Furthermore, the set of words that is not divisible at $j$ is equal to the set of words divisible at $j-1$ of type $(n-i,i)$ of the form:
\begin{align}\label{leilalem3} \{V\hbox{ not divisible at }j\} & = \{V|yW, \hbox{ divisible at } j-1\}\notag \\
& = \{Vx|yW\} \sqcup \{Vy|yW\}.\end{align}

The first set of \eqref{leilalem5} is in bijection with the first set of \eqref{leilalem3} by permutation of the terms:
\begin{equation*}
\{yV|xW,\hbox{ divisible at }j\} \leftrightarrow \{Vx|yW,\hbox{ divisible at }j-1\}.
\end{equation*}

Furthermore, we have the following equalities relating the second sets of \eqref{leilalem5} and \eqref{leilalem3}:
\begin{align*}\#\{yV|yW\}&= \#\{V|yW,\hbox{ type }(n-i,i-1),\hbox{ divisible at }j-2\}\ \hbox{ (removal of leading }y)\\
& =\#\{yV|W,\hbox{ type }(n-i,i-1),\hbox{ divisible at }j-1\} + \epsilon\qquad \ \ \hbox{ (induction)}\\
&=\#\{Vy|yW\} +\epsilon \qquad \ \ \hbox{ (by permutation of the terms and removal of $y$)}.
\end{align*}

This proves the lemma for $i<n,\ i\leq j \leq n+1$.


{\it Case 4, $i=n$, $0<j\leq n$}:  If $j$ odd,
there is no way to cut a word, $y^\alpha|y^\beta T^{j}$ in such a way
that $2\alpha=j$.  So, $(P|Y^iT^{j})=0$.  Likewise, if $j$ is even, all words are divisible, therefore $P$ has a coefficient of 1 coming from $H^{ab}$ and a coefficient of 1 coming from $ (yT^2H)^{ab}$.
\end{proof}

\begin{rmk*}
I would like to thank the reporter D. Zagier, for suggesting an alternative proof of this lemma, which is shorter and provides all of the coefficients of $P$.
\end{rmk*}

\begin{cor}\label{a_n}\index{$Q$}
The coefficient of $X^{n-i}Y^{i}T^{j}$ in the rational function, 
\begin{equation}\label{Q}
Q(X,Y,T)=\frac{(1+YT^2)(1+X)}{(1-XT-YT^2)(1-X-Y)}
\end{equation} is equal to \begin{equation}
\begin{cases}C_{n}^{i} + C_{n-1}^{i} & i+1 \leq j\leq n-1 \\
C_{n}^{i} + C_{n-1}^{i} -1 & j=n,\ i\ even,\ i<n\\
C_{n}^{i} + C_{n-1}^{i} +1 & j=n,\ i\ odd,\ i<n\\
2 & i=n,\ j \mbox{ even and }\leq n\\
0 & i=n,\ j\mbox{ odd and }\leq n 
 \end {cases}
\end{equation}
\end{cor}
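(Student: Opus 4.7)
The plan is to reduce the corollary to Lemma~\ref{Biglem} via the factorization $Q = (1+X)P$, which is visible immediately from comparing the given expressions for $Q$ and $P$. This factorization yields the coefficient identity
\begin{equation*}
(Q\,|\,X^{n-i}Y^{i}T^{j}) \;=\; (P\,|\,X^{n-i}Y^{i}T^{j}) \;+\; (P\,|\,X^{n-i-1}Y^{i}T^{j}),
\end{equation*}
with the convention that the second term vanishes when $n-i=0$. The crucial observation is that in the second term, the monomial $X^{(n-1)-i}Y^{i}T^{j}$ is of total $(X,Y)$-degree $n-1$, so it is covered by Lemma~\ref{Biglem} with $n$ replaced by $n-1$.

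The next step is to split into the three ranges appearing in the statement of the corollary and, in each, read off the contribution of each summand from Lemma~\ref{Biglem}. For the range $i+1\le j\le n-1$, both contributions fall into the generic case $i+1\le j\le n$ of the lemma (applied at weights $n$ and $n-1$ respectively), giving $C_n^i+C_{n-1}^i$. For $j=n$ with $i<n$, the first summand still falls in the generic case and contributes $C_n^i$, while the second summand lands in the boundary case $j=n'+1$ (with $n'=n-1$), so it contributes $C_{n-1}^i\mp 1$ according to the parity of $i$; summing gives the asserted values. For $i=n$, the second summand vanishes (since $n-i-1<0$), and the first summand is exactly the $i=n$ case of the lemma, contributing $2$ when $j$ is even and $0$ when $j$ is odd.

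The arithmetic is routine; the main obstacle is keeping track of the numerous subcases of Lemma~\ref{Biglem} and their boundary conditions, in particular verifying that when applying the lemma at weight $n-1$ one never accidentally triggers the $j=i$ subcase (one checks that $j\ge i+1$ throughout the cases where the second summand is nonzero) and that the $i=n$ case of the corollary is not polluted by a spurious second summand. Once these bookkeeping checks are made, the three displayed formulas in the corollary follow immediately by addition.
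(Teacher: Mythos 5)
Your proposal is correct and is essentially the paper's own proof: the paper likewise writes $Q=P+XP$ and reads off $(Q\,|\,X^{n-i}Y^iT^j)=(P\,|\,X^{n-i}Y^iT^j)+(P\,|\,X^{n-i-1}Y^iT^j)$ from Lemma~\ref{Biglem}, only more tersely. Your explicit tracking of which subcase of the lemma applies to each summand (in particular that the second summand sits at weight $n-1$, hence hits the $j=n'+1$ boundary case when $j=n$) is exactly the bookkeeping the paper leaves implicit.
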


\begin{proof}
The rational function, $Q(X,Y,T)=P(X,Y,T)+XP(X,Y,T)$ where $P$ is the
same as in lemma \ref{Biglem}.  The coefficient, $(Q| X^{n-i}Y^iT^j)$
is equal to
$(P|X^{n-i}Y^iT^{i+1})+(P|X^{n-i-1}Y^iT^{i+1})$.\end{proof}

\begin{defn}\label{TDEF}\index{$\Lambda$, $\Lambda_D,\ \Lambda_A$}
Let $\Lambda$ be the Pascal triangle obtained by the recurrence relation, $\Lambda_0^0=1$,
$\Lambda_1^0=2$, $\Lambda_1^1 =1$, $\Lambda_n^k=0$ for $i<0$ or $k<0$, and
$\Lambda_n^k=\Lambda_{n-1}^{k-1} + \Lambda_{n-1}^k$. \end{defn}

We note here that $\Lambda$ is the sum of two Pascal triangles with binomial
coefficients interposed on top of one another, one with its tip on
the first column of the second row of the other.
We have then that $\Lambda_n^k=C_n^k +
C_{n-1}^k,\ 0\leq k \leq n $.  In this expression, $\Lambda_n^k$ is
$k$th column of the $n$th row of $\Lambda$:
\begin{center}
\makebox{\xymatrix @!0 @R=5mm @C=5mm{
&&&& 1&&&& \\
&&&2&&1&&& \\
&&2&&3&&1&& \\
&2&&5&&4&&1& \\
2&&7&&9&&5&&1. }}
\end{center}

Now we associate commutative monomials to terms of $\Lambda$ in two different ways.
Let $(\Lambda_D)_n^k = \Lambda_n^k X^kY^{n-k}T^{k+2(n-k)}$ and $(\Lambda_A)_n^k =
T_n^kX^{n-k}Y^k$: 
\begin{center}
\makebox{\xymatrix @!0 @R=5mm @C=6mm{
 & & & & & & & &1& & & & & & & & \\
\\
&&&&&&&2YT^2& &1XT&&&&&&& \\
&&&\ar[rrrrrrrrrrrrdddddd] &&&\ar[rrrrrrrrdddd] &&&&&&&&&&&&&&\\
\Lambda_D=& & & & &2Y^2T^4& & &3XYT^3& & &1X^2T^2&&&&& \\
&&&&&&&&&&&&&&&&&&\\
 & & &2Y^3T^6& & &5XY^2T^5& & &4X^2YT^4& & &1X^3T^3&& & && \\
&&&&&&&&&&&&&&D_3&&&&&\\
 & &2Y^4T^8&&&7XY^3T^7&&&9X^2Y^2T^6&&&5X^3YT^5&&&1X^4T^4&&\\
&&&&&&&&&&&&&&&D_4&&&\\
}}
\end{center}
The descending arrows, $D_i$ represent the vectors of monomials with $T^i$ formed from the corresponding descending diagonal of $\Lambda_A$.
\begin{center}
\makebox{\xymatrix @!0 @R=5mm @C=6mm{
 & & & & & & & &1& & & & & & & & \\
\\
&&&&&&&2X& &1Y&&&A_3&&&& \\
&&&&&& &&&&&&&A_4&&\\
\Lambda_D=& & & & &2X^2& & &3XY& & &1Y^2&&&&& \\
&&&&&&&&&&&&&&&&&&\\
 & & &2X^3& & &5X^2Y& & &4XY^2& & &1Y^3&& & && \\
&\ar[rrrrrrrrrrruuuuu] &&&&&&&&&&&&&&&&\\
 & &2X^4&&& 7X^3Y&&&9X^2Y^2&&&5XY^3&&&1Y^4&&\\
\ar[rrrrrrrrrrrrruuuuuu]
}}
\end{center}
The ascending arrows represent vectors, $A_i$, such that the power on $X$ plus twice the power on $Y$ equals the constant $i$.

We define the two rational functions, $Q_1$ and $Q_2$ such that $Q=Q_1Q_2$:
\begin{align}
Q_1&=\frac{1+YT^2}{1-(XT+YT^2)}=\sum_{i=0} (XT+YT^2)^i +
YT^2\sum_{i=0} (XT+YT^2)^i\\ Q_2 &= \frac{1+X}{1-(X+Y)} = \sum_{i=0} (X+Y)^i +
X\sum_{i=0} (X+Y)^i.
\end{align}

The expansion of $Q_1$
gives us $(Q_1 | X^{n-i}Y^iT^{n+i}) = C_{n}^{n-i}+ C_{n-1}^{n-i}=
\Lambda_n^{n-i}$, and so $Q_1$ is the infinite sum of all of the terms of $\Lambda_D$.
Likewise, $Q_2$ is the infinite sum of all terms in $\Lambda_A$.

The arrows in the triangles $\Lambda_D$ and $\Lambda_A$ represent descending and
ascending vectors, $D_j$ and $A_k$, such that their scalar product 
is a monomial in $Q_1Q_2$, by adding the extra condition 
that we must add some zeroes at the beginning of the vectors, $D_j$,
according to the following rule.

\begin{defn}\label{D_jz}\index{$D_j,\ D_{j,z}$}  Let $D_{j,z}$ be the vector of $z$ zeroes
  concatenated 
  by the descending diagonal vector $D_j$ in $\Lambda_D$ as in the above diagram.  A closed
  formula for $D_{j,z}$ is given by the formula:
\begin{equation}
\begin{split}
D_{j,z} &=T^j\Bigl( \overbrace{0,0,\cdots,0}^{z},\Lambda_{\lfloor \frac{j+1}{2}
  \rfloor}^{j\ \mathrm{mod} \ 2}
  Y^{\lfloor \frac{j}{2}
  \rfloor} X^{j\ \mathrm{mod}\ 2}, \cdots ,\Lambda_{j-2}^{j-4}Y^2X^{j-4},
  \Lambda_{j-1}^{j-2} YX^{j-2}
  , \Lambda_{j}^{j} X^j \Bigr) \\
&=T^j \Bigl( 0^{z} \cdot \Bigl( \Lambda_{\lfloor \frac{j+1}{2}
  \rfloor +l}^{ j-2\lfloor \frac{j}{2}
  \rfloor +2l} Y^{\lfloor \frac{j}{2} \rfloor -l} X^{j-2\lfloor
  \frac{j}{2}\rfloor +2l} \Bigr)_{l=0}^{\lfloor \frac{j}{2}\rfloor} \Bigr),
\end{split}\end{equation}
where $T^j$ is distributed to all of the terms in the vector.
\end{defn}

\begin{defn}\label{A_kdef}\index{$A_k$}  We define the vectors, $A_k$ associated to
  $\Lambda_A$ similarly:
\begin{equation}
\begin{split}
A_k &= \bigr( \Lambda_{k}^0 X^{k}, \Lambda_{k-1}^1 X^{k-2}Y,\cdots,
  \Lambda_{k-\lfloor\frac{k}{2} \rfloor}^{\lfloor\frac{k}{2} \rfloor} X^{k-2\lfloor
  \frac{k}{2} \rfloor} Y^{\lfloor\frac{k}{2} \rfloor}\bigl)\\
&= \Bigl( \Lambda_{k-m}^m Y^mX^{k-2m} \Bigr)_{m=0}^{\lfloor \frac{k}{2} \rfloor}
\end{split}
\end{equation}\end{defn}

\begin{defn}  The scalar product, $A_k\cdot D_{j,z}$,
is defined as 
\begin{multline}\label{right}
\sum _{m=z}^{\lfloor \frac{k}{2}\rfloor}
\Lambda_{k-m}^m Y^mX^{k-2m}\cdot \Lambda_{\lfloor\frac {j+1}{2} \rfloor + ( m-z)}^{j -
  2\lfloor \frac{j}{2} \rfloor +2(m-z) } Y^{\lfloor \frac{j}{2} \rfloor
  - (m-z)}
X^{j - 2\lfloor \frac{j}{2} \rfloor +2(m-z) }T^j = \\ \Bigl(  \sum _{m=z}^{
  \lfloor\frac{k}{2}\rfloor}
\Lambda_{k-m}^m \cdot \Lambda_{\lfloor\frac {j+1}{2} \rfloor + ( m-z)}^{j -
  2\lfloor \frac{j}{2} \rfloor +2(m-z) }   \Bigr) X^{k+j-2 \lfloor
  \frac{j}{2} \rfloor -2z}Y^{\lfloor \frac {j}{2} \rfloor +z}T^j.
\end{multline}
\end{defn}

\begin{lem}\label{AdotD}
The term $a_{n-i,i,j}X^{n-i}Y^iT^{j}$ ($i+1 \leq j \leq n$) in
$Q(X,Y,T)$ is equal to the scalar product $A_{n+i-j}\cdot
D_{j,i-\lfloor\frac{j}{2}\rfloor}$, where $a_{n-i,i,j}$ is given by corollary
\ref{a_n}.
\end{lem}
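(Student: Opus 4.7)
The plan is to compute both sides of the claimed equality by direct series expansion of $Q = Q_1 Q_2$ and then match the outcome with the scalar product on the right, which is essentially a bookkeeping exercise once the index dictionary is set up.

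First, I will extract the coefficients of $Q_1$ and $Q_2$ individually. Expanding $Q_1 = (1+YT^2)/(1-(XT+YT^2))$ geometrically and using $(XT+YT^2)^r = \sum_{a+b=r} \binom{r}{a} X^aY^bT^{a+2b}$, one sees that a monomial $X^aY^bT^j$ occurs in $Q_1$ only when $a+2b=j$, and in that case with coefficient $\binom{a+b}{a}+\binom{a+b-1}{a} = \Lambda_{a+b}^{a}$, the extra summand coming from the $YT^2$-factor in the numerator. The analogous expansion of $Q_2 = (1+X)/(1-(X+Y))$ yields coefficient $\Lambda_{c+d}^{d}$ for $X^cY^d$.

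Second, I extract the coefficient $a_{n-i,i,j}$ from the product $Q = Q_1 Q_2$. Since $T$ appears only in $Q_1$, the full factor $T^j$ must come from $Q_1$; writing $a+c=n-i$ and $b+d=i$ together with $a+2b=j$ forces $c+2d = n+i-j = k$, which is precisely the index appearing in $A_k$. Parametrizing by $m=d$ gives $b=i-m$, $a=j-2(i-m)$, $c=k-2m$, and the non-negativity constraints $a,b,c\ge 0$ translate into $m \ge i-\lfloor j/2\rfloor = z$ and $m\le \min(i,\lfloor k/2\rfloor)$. Therefore
\begin{equation*}
a_{n-i,i,j} \;=\; \sum_{m=z}^{\lfloor k/2\rfloor} \Lambda_{a+b}^{a}\,\Lambda_{c+d}^{d},
\end{equation*}
where $a,b,c,d$ are given by the above parametrization in terms of $m$.

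Third, I will match this sum term-by-term with the scalar product in \eqref{right}. The $m$-th entry of $A_k$ is $\Lambda_{k-m}^{m} Y^m X^{k-2m}$, which is precisely the monomial $X^cY^d$ carrying its $Q_2$-coefficient $\Lambda_{c+d}^{d}$. The $m$-th entry of $D_{j,z}$ (for $m\ge z$, setting $l=m-z$) is, using the identity $\lfloor(j+1)/2\rfloor+\lfloor j/2\rfloor=j$, the monomial $X^aY^bT^j$ carrying its $Q_1$-coefficient $\Lambda_{a+b}^{a}$. Componentwise multiplication therefore reproduces exactly the summands above, and the monomial parts multiply to $X^{n-i}Y^iT^j$ as required (one can verify directly that $\lfloor j/2\rfloor+z=i$ and $k+j-2\lfloor j/2\rfloor-2z=n-i$).

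The main obstacle is not conceptual but bookkeeping: one has to keep careful track of the floor functions and the parity of $j$ so that the lower bound $z=i-\lfloor j/2\rfloor$ of the summation in $A_k\cdot D_{j,z}$ really coincides with the integrality-adjusted constraint $a\ge 0$, and so that the $\Lambda$-indices in the definition of $D_{j,z}$ literally agree with $\Lambda_{a+b}^{a}$ under the substitution $l=m-z$. Once that dictionary is checked in both parities of $j$, the lemma reduces to summing the identified terms.
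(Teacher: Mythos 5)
Your proof is correct and takes essentially the same approach as the paper: the paper's own proof simply asserts that $A_{n+i-j}\cdot D_{j,i-\lfloor\frac{j}{2}\rfloor}$ is by construction the corresponding term of $Q_1Q_2$, and your explicit expansion of $Q_1$ and $Q_2$ together with the index dictionary $a+c=n-i$, $b+d=i$, $a+2b=j$ (hence $c+2d=n+i-j$) is precisely the verification of that assertion. The extra care you take with the lower bound $m\ge i-\lfloor j/2\rfloor$ and the identification of the $\Lambda$-indices is exactly what the paper leaves implicit.
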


\begin{proof}
By the expression \eqref{right}, we know that $A_{n+i-j}\cdot
D_{j,i-\lfloor\frac{j}{2}\rfloor}$ is
a monomial of the right form.  Now, $A_{n+i-j}\cdot
D_{j,i-\lfloor\frac{j}{2}\rfloor}$
is exactly the term, $a_{n-i,i}X^{n-i}Y^iT^{j}$ 
in the product
$Q_1Q_2$.  But $Q=Q_1Q_2$, by \eqref{Q} and this proves the lemma.
\end{proof}

\section{$\grt\simeq \ds$ in depth 1}\label{parti}

The goal of this section is to prove that $F_n^1\ds$ is
non-empty for odd $n$ and that $F_n^1\ds=0$ for even $n$.  The proof of this
part, for odd $n$, relies on
sophisticated machinery due to Racinet, Furusho and their inspirations which
include, among others, Drinfel'd, Ihara, Le and Murakami.  The style of this
proof is very different from the
combinatorial nature of the rest of this chapter.  One reason for this
is that this result follows almost immediately from theirs, therefore
a long combinatorial construction was unnecessary.  We
prove the existence, without explicit construction, of depth 1
elements of $\ds$ which are conjectured to be 
the generators of $\ds$ as a Lie algebra, and are therefore
important objects in furthering the study of multizeta values.

\begin{proof}[Proof of theorem \ref{theoreme} (i)] 
Let $f\in \ds\subset\Lxy$ and we write $f$ in a Lyndon-Lie basis as in
expression \eqref{f}.  Since $F_n^1\ds \subset \Lxy$ and since $\Lxy$
has only one depth 1 element in each weight $n$, namely $[x^{n-1}y]$,
any two elements in $F_n^1\ds$ are
equivalent modulo $F_n^2\ds$.  Therefore, we may assume that $dim\
F_n^1\ds/F_n^2\ds=0$ or 1.
Without loss of generality, we may assume that $(f|[x^{n-1}y])=0$ or 1.

{\it Case 1, $n$ is even}:
The stuffle relation, $(n-1)*(1) = (n-1,1)+(1,n-1) +(n)$, gives
$$0=(f|x^{n-2}y^2) + (f|yx^{n-2}y) + 
(f|x^{n-1}y).$$  From lemma \ref{bw}, $(f|yx^{n-2}y)=0$.  We have then that
$$(f|x^{n-2}y^2)=-(f|x^{n-1}y).$$
Let's assume that $(f|x^{n-1}y)\neq 0$.  But by \ref{b_0},
$$(f|x^{n-2}y^2)=\frac{n-1}{2}(f|x^{n-1}y),$$ 
which is a contradiction since
$n\neq -1$.  Therefore, $(f|x^{n-1}y)=0$, and so $F_n^1\ds/F_n^{2}\ds=0$. 

{\it Case 2, $n$ is odd}:

In order to treat this case, we introduce Drinfel'd's associator.
\begin{defn}\label{Phikz}\index{$\Phi_{KZ}$} The Drinfel'd associator, $\Phi_{KZ}$, is defined as 
\begin{equation}\Phi_{KZ}(x,y) =\sum (-1)^{d(w)}\zeta^{sh} (w)w \in
  \Cxy,\end{equation}  
where the sum is a power series over all of the
monomials, $w$, in
$x$ and $y$.
The coefficients, $\zeta^{sh}(w)$, are real numbers, called
regularized zeta values, which have the following 
properties: \begin{enumerate}
\item If $w$ is a convergent word, then $\zeta^{sh}(w) =\zeta(w)$,
\item For all non-convergent words, $w$, the $\zeta^{sh}(w)$ are 
  linear combinations of convergent multizeta values that satisfy the
  property that
  $\zeta^{sh}(w_1) \zeta^{sh}(w_2) = \zeta^{sh}(w_1\sha w_2)$.
\end{enumerate}
An explicit expression for $\zeta^{sh}(w)$  was calculated by Furusho \cite{Fu}
and is based on work of Le and
Murakami \cite{LM}.
We set $\Phi_{\sha}$\label{Phisha}\index{$\Phi_{\sh}$} to be $\Phi_{KZ}(x,-y)$.
\end{defn}

The group, $DM$\label{DMDEF}\index{$DM,\ DM_{\gamma}$}, or ``double m\'elange'', which was
defined by Racinet and 
is related to $\ds$, plays a key role in this proof.  The group,
$DM\subset \Cxy$,
is graded by 
weight and its weight $n$ graded piece is denoted $DM^n$.  The group
law on $DM$ is denoted by $\circledast$.  For an explicit description
of generators and relations on $DM$, see \cite{Ra}.  

\begin{defn}
Let $DM_\lambda$ be set of power series, $f\in DM$, such that
$(f|xy)=\lambda$.
\end{defn}

The series, $\Phi_{\sha}(x,y)$ and
$\Phi_{\sha}(-x,-y)$, are both elements of $DM_{\zeta(2)}$.
Racinet proved that for all $F\in DM_0$, the weight $n$ part of the
power series, $(\ln F)^n$, satisfies the
double shuffle relations given in \ref{dsdef}.

\begin{thm}\cite{Ra}
The set, $DM_0$, is a group that acts transitively on $DM_{\zeta(2)}$.
\end{thm}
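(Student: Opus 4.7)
The plan is to exhibit $DM_0$ as a subgroup of $DM$ which acts on each $DM_\lambda$ by left multiplication, and then verify transitivity on $DM_{\zeta(2)}$ by a direct construction. First I would recall the explicit group law $\circledast$ on $DM$ from \cite{Ra}: it is a twisted multiplication on grouplike power series in $\Cxy$ which preserves the defining double shuffle conditions. The essential preliminary fact is the additivity of the $xy$-coefficient: for all $f,g \in DM$,
\begin{equation*}
(f\circledast g\,|\,xy) \;=\; (f\,|\,xy)+(g\,|\,xy).
\end{equation*}
Since $\circledast$ is defined by concatenating $g$ on the right with a twist of $f$ by conjugation involving $g$, and since conjugation in $\Cxy$ only affects terms of weight $\geq 2$ through commutators (which vanish to leading order), the only contribution to the coefficient of the weight-$2$ monomial $xy$ is the naive sum of the linear terms of $f$ and $g$.

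Once this additivity is in hand, the subgroup property and the action follow immediately. The identity element of $DM$ has vanishing $xy$-coefficient and so lies in $DM_0$; closure under $\circledast$ follows from $0+0=0$, and closure under inverses follows from $(f^{-1}|xy) = -(f|xy) = 0$. The same additivity shows $DM_0 \circledast DM_\lambda \subseteq DM_\lambda$ for every $\lambda$, so $DM_0$ acts on $DM_{\zeta(2)}$ by left multiplication via $\circledast$.

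For transitivity, given $f_1,f_2 \in DM_{\zeta(2)}$, set $g := f_2 \circledast f_1^{-1}$. By the additivity above, $(g\,|\,xy) = \zeta(2)-\zeta(2) = 0$, so $g \in DM_0$; and by associativity in $DM$, $g \circledast f_1 = f_2$. Hence the action is transitive.

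The main obstacle is the verification of the additivity formula $(f\circledast g|xy) = (f|xy)+(g|xy)$, which is the one place where the explicit form of Racinet's twisted group law genuinely enters. Everything else in the proof is formal once this low-weight computation is secured; in particular, neither the double shuffle conditions nor the higher-depth structure of $DM$ play any role in the transitivity statement.
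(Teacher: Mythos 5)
This theorem is not proved in the thesis at all: it is quoted from Racinet's thesis \cite{Ra} and used as a black box, so there is no in-paper argument to compare against. Judged on its own terms, your proposal has a genuine gap, and it is located exactly at the step you declare to be irrelevant.

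The sets $DM_\lambda$ are cut out inside the grouplike power series by the \emph{double shuffle conditions} (grouplike for $\Delta_{\sha}$, and $\piy(f)$ grouplike for $\Delta_*$ after the correction), together with the normalization $(f|xy)=\lambda$. Your argument only ever tracks the coefficient of $xy$. The additivity $(f\circledast g|xy)=(f|xy)+(g|xy)$ is indeed correct (the weight-one coefficients vanish, so no cross terms appear in weight two), but it is far from sufficient. To show $DM_0$ is closed under $\circledast$ and under inversion, you must show that $f\circledast g$ and $f^{-1}$ again satisfy the stuffle condition when $f$ and $g$ do; this is the first main theorem of \cite{Ra} and is highly nontrivial, because $\circledast$ is the twisted (Ihara-type) multiplication, which interacts with $\Delta_{\sha}$ reasonably but has no evident compatibility with $\Delta_*$ on the $\piy$-image. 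Likewise, for transitivity you set $g=f_2\circledast f_1^{-1}$ in the ambient group and observe $(g|xy)=0$; but membership $g\in DM_0$ requires $g$ to satisfy double shuffle, which does not follow from $f_1,f_2\in DM_{\zeta(2)}$ by any formal manipulation. Your closing remark that ``neither the double shuffle conditions nor the higher-depth structure of $DM$ play any role in the transitivity statement'' is precisely backwards: if $DM_\lambda$ were merely the level set of the homomorphism $f\mapsto(f|xy)$ inside a group, the statement would be a triviality and would not be a theorem of a thesis.

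The actual proof in \cite{Ra} proceeds quite differently: one first establishes that $\mathfrak{dm}_0=\ln(DM_0)$ is a Lie algebra for the Poisson/Ihara bracket (this is where the preservation of the stuffle relations is fought for), and then proves transitivity by an order-by-order correction: if two elements of $DM_{\zeta(2)}$ agree up to weight $n$, their discrepancy in weight $n+1$ is shown to lie in $(\mathfrak{dm}_0)_{n+1}$, so it can be removed by acting by the exponential of that element; one concludes by pro-unipotent completeness. If you want to keep your formal skeleton, the lemma you must supply is exactly this: the weight-$(n+1)$ defect of $f_2\circledast f_1^{-1}$ from being grouplike for both coproducts vanishes given agreement in lower weights --- and that is where the double shuffle structure does all the work.
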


As a consequence, there exists
an element, $F\in DM_0$
such that
\begin{equation}\label{F}{\Phi}_{\sha}(x,y)\circledast F=
\Phi_{\sha}(-x,-y).\end{equation}  This $F$ provides us with a depth 1
double shuffle element according to the following construction. 

To see what happens in the Lie algebra, $(\ln DM_0)$, we take $\ln$ of both
sides of the equation to obtain,
\begin{equation}\label{ch}\ln{\Phi_{\sha}}(x,y)\underset{\!C\!-\!H\!}{\star}
    \ln F = \ln\Phi_{\sha}(-x,-y),
\end{equation}
where $\underset{\!C\!-\!H\!}{\star}$ is the multiplication law by the
Campbell-Baker-Hausdorff formula.  The goal is to show that
$(\ln F|x^{n-1}y )$ cannot be 0 for
odd $n$, in order to give a non-zero element in
$F^1_n\ds$ over $\Q$.  To do this, we write the expansion of
$\ln\Phi_{\sha}$ and
$\ln F$ as a power series in a Lyndon-Lie basis of
$\mathbb{L}_{\C}[x,y]$.  To lighten the notation, we will write $\ln F=f$.

It is known that for any
element in $F\in DM_0$ and for any $i$, $(F|x^i)=(F|y^i)=0$ and that
for any $i$, $(\Phi_{\sha}|x^i)=(\Phi_{\sha}|y^i) =0$ \cite{Ra}
\cite{Dr}. Let $\Phi_{\sha}(x,y)=1+\Phi_0(x,y)$.  By writing 
$$\ln\Phi_{\sha} = \ln(1+\Phi_0) = \Phi_0 - \frac{1}{2} \Phi_0^2
+\cdots$$
we see that the contribution to the monomials,
$x^{n-1}y$, in the Lie algebra
comes uniquely from the first term, $\Phi_0$.
And we have by a formula of Furusho \cite{Fu},
\begin{equation*}\begin{split}\ln \Phi_{\sha}(x,y) &=\zeta(2)[x,y]
    + \zeta(3) [x,[x,y]] + 
\zeta(4)[x,[x,[x,y]]] + \cdots \\
\ln \Phi_{\sha}(-x,-y)&=\zeta(2)[x,y] - \zeta(3) [x,[x,y]] +
\zeta(4)[x,[x,[x,y]]] + \cdots \end{split}\end{equation*}

By the formula of 
    Campbell-Baker-Hausdorff, \eqref{ch} becomes
\begin{multline}\label{verybeautifuleq}(\zeta(2)[x,y]+\zeta(3)[x,[x,y]]
    +\zeta(4)  
[x,[x,[x,y]]]+\cdots) + f +\frac{1}{2} \{\ln\Phi_{\sha}, f \} +\cdots=\\
\zeta(2)[x,y]-\zeta(3)[x,[x,y]] +\zeta(4)
[x,[x,[x,y]]]+\cdots.\end{multline}

We have the following important properties:
$(f|x)=(f|y)=0=(\ln\Phi_{\sha} |x) = (\ln\Phi_{\sha} | y)$.  Hence, both 
$\ln \Phi_{\sha}$ and $f$ are in
$[\mathbb{L}_{\C}[x,y],\mathbb{L}_{\C}[x,y]]$.  Because of this, the
terms of bracketings of $f$ and $\ln\Phi_{\sha}$ 
do not contribute to the coefficient of $[x^{n-1}y]$ since $[x^{n-1}y]\notin
[[\mathbb{L}_{\C}[x,y],\mathbb{L}_{\C}[x,y]],
[\mathbb{L}_{\C}[x,y],\mathbb{L}_{\C}[x,y]] ]$.  

By studying both sides of equation \eqref{verybeautifuleq}, we see that for even $n$,
$(f|x^{n-1}y)=0$ and for odd $n$, $(f|x^{n-1}y)=-2\zeta(n)$.  Let 
$ f_n$ be the homogeneous degree $n$ graded part of $f$.  In order to
find a non-zero element in $\ds$, we write $$f_n=\sum a_j L_n^j,\
a_i\in\C$$ where $\{L_n^j\}$ is a Lyndon basis of $\Lxy$.  Let $V$ be
the $\Q$-vector space generated by the complex numbers,
$\langle a_i \rangle$, so we may choose the basis for $V$ over $\Q$,
$\{\zeta_1=\zeta(n), \zeta_2,\dots, \zeta_r\}$.

We then write,
\begin{equation*} f_n = \sum_{i=1}^r p_i(x,y)\zeta_i
\end{equation*} where 
the $p_i(x,y)$ are homogeneous polynomials of degree $n$ with
coefficients in $\Q$.  Since $f_n$ satisfies double shuffle,
for any couple of words, $u,v$, we have
\begin{equation*}\begin{split}0 &= \sum_{w\in\ sh/st(u,v)}
    (f_n|w)
    \\ &=\sum_{w\in\ sh/st(u,v)} (\sum_{i=1}^r p_i(x,y)\zeta_i|w)\\
& =\sum_{i=1}^r\bigl(\sum_{w\in\ sh/st(u,v)}
(p_i(x,y)|w)\bigr)\zeta_i.
\end{split}\end{equation*}  The $\zeta_i$ are linearly independent by
    hypothesis, so we see that
each $p_i(x,y)$ satisfies double shuffle.
Therefore we have, $(p_1|x^{n-1}y)=-2$ and $p_1$ furnishes us with a
nonzero element in 
$F_n^1\ds$. 
\end{proof}

\noindent{\bf Remark.}
It is tempting to take the log of the element, $\Phi_{\sh}$, in order
to obtain a non-zero element of $F_n^1\ds$, because the
coefficients of $\Phi_{\sha}$ are multizeta values, and 
multizeta values satisfy the double shuffle relations.  There is still
some work to do in this construction, but I will not outline it, since
the proof fails at a certain point which I will explain here.  In
fact, $\ln \Phi_{\sha}\in \widehat{\bigoplus}_{n\geq 2} (\nz_n\otimes_{\Q}
\ds_n)$, where $\nz$ is the new 
zeta space from definition \ref{nznumbersdef}.  This proof then fails
in the last step of the correct proof above, because $\nz$ is modded
out by $\Q$, and since we do not know whether $\zeta(n)$ is
irrational, it might be identically 0 in $\nz$, and therefore we may
not take it as 
a basis element in $\nz$ over $\Q$.

\section{$\grt \simeq \ds$ in depth 2}\label{partii}

In this section, we study $F^2_n\ds / F^3_n\ds$ by looking at the
coefficients of polynomials in $\ds$ considered as a subspace of $\Lxy$.
In section \ref{PQTDA}, we showed some combinorial properties of
depth 2 words.  Now, we use these properties for, first of all,
showing that for $n$ odd, all of the coefficients on depth 2 monomials
only depend on the coefficient of $[x^{n-1}y]$ and deduce from that
that for $n$  odd, $F^2_n\ds/ F^3_n\ds=0 $.  Analogously, we prove
that the
dimension of $F^2_n\ds / F^3_n\ds$ for $n$ even is
$\lfloor \frac{n-2}{6}
\rfloor$, thereby making an important connection between the Lie algebras,
$\grt$ and $\ds$.

\begin{proof}[Proof of \ref{theoreme} (ii).] Recall that we may write
  $f\in \ds$ as in equation \eqref{f},
\begin{equation*}\begin{split}
f &=A[x^{n-1}y] + \sum_{s=0}^{\lfloor\frac{n-3}{2} \rfloor} a_s
 [x^ryx^sy] +\dots\\ 
 &= Ax^{n-1}y + b_0 x^{n-2}y^2 +b_1x^{n-3}yxy +\cdots
 +b_{n-2}yx^{n-2}y +\dots
\end{split}\end{equation*}
{\it Case 1, n is odd}: We have the following
$\frac{n-1}{2} $ relations on the coefficients given by
stuffle:
\begin{equation}\label{stuf2}
b_{i-1} + b_{n-1-i} + A = 0, \ 1\leq i \leq  \frac{n-1}{2}.
\end{equation}
By substituting the relation from lemma \ref{b_i} between the $b_i$
and the $a_i$
into \eqref{stuf2}, we have the following system of relations on the
coefficients, $a_i$, for each $i$, $0\leq i\leq
\frac{n-3}{2}$:  
\begin{multline}\label{ajsystem}
\sum_{j=0}^{i}
(-1)^{i-j}a_j\Bigl(C_{j+1}^{i-j}+C_{j}^{i-j-1}\Bigr) + \\
\sum_{k=0}^{n-2-i}
(-1)^{n-2-i-k} a_k\Bigl( C_{k+1}^{n-2-i-k} + C_{k}^{n-3-i-k} \Bigr) =
-A.\end{multline}  

The system given in \eqref{ajsystem} may be solved by finding a
solution to the matrix equation,
$M \cdot (a_0,a_1,\dots,a_{\frac{n-3}{2}}
)=-A(1,1,\dots,1)$, 
where the matrix $M$ is given by\index{$M$}
\begin{multline}\label{MDEF}
M(i,j) = (-1)^{i-j}\Bigl( C_{j}^{i-j} +
C_{j-1}^{i-j-1}\Bigr) + \\(-1)^{n-i-j}\Bigl( C_{j}^{n-i-j} +
C_{j-1}^{n-1-i-j} \Bigr),\ 1\leq i,j\leq \frac{n-1}{2}
,\end{multline} where a row $i$ of $M$ corresponds to the equation
$b_{i-1} + b_{n-1-i}=-A$, so the $j$th entry in that row is equal to
the coefficient of $a_{j-1}$ in $b_{i-1} + b_{n-i-1}$.

\vspace{.3cm}
\noindent{\bf Remark.} Let $M'$ be the block matrix in the upper
left-hand corner of $M$ for $i,j\leq 
\frac{n-1}{3} $.  We have in this block that
$M'(i,j)=(-1)^{i-j}\Lambda_j^{2j-i}$ and the
$i$th row of $M'$ 
is equal, up to sign, to the vector of coefficients of $D_i$, such
that the right-most term in the vector is
on the diagonal, and where the sign is given by $(-1)^{i-j}$.
For $i\leq \frac{n-1}{3},\ j>\frac{n-1}{3}$ we have
$M(i,j)= (-1)^{n-i-j}\Lambda_j^{2j-n+i}$.  The other terms of $M$
are the sum of these two types.  
This structure permits us to associate certain rows of $M$ to vectors
$D_{i,z}$ and certain columns to rows of $\Lambda$.

\begin{ex}
For $n=11$, the matrix, $M$, is given by
\[ \left( \begin{array}{rrrrr}
1& 0 & 0 & 0 & -2 \\
-2& 1& 0& 0& 9\\
0& -3& 1& 2 & -16\\
0 & 2& -4 & -6&14\\
0& 0 & 3 & 4 & -5 \end{array} \right) .\]
\end{ex}

Let $N$\label{NDEF}\index{$N$} be the $\frac{n-1}{2}$ square invertible matrix
such that the top of the $\bigl( \frac{n-1}{2} - k \bigr)$th column is equal to
$(\overbrace{0,...,0}^{2k})\cdot
A_{\frac{n-3}{2}-3k}, \ 0\leq k\leq \lfloor \frac{n-5}{6} \rfloor$ up
to sign, and let the sign be given by $sgn(N(i,j))=(-1)^{i+j}$. 
Let the other coefficients of $N$ be those of the identity matrix.

Now we show that $MN$ is a lower triangular matrix.  From the corollary 
\ref{AdotD}, above the diagonal, in the $k$th row and the $i$th
column, $i<\frac{n-1}{2}-k$, we have
\begin{align*} MN(i,\frac{n-1}{2} - k ) & = (-1)^{i+\frac{n-1}{2}-k}
  D_{i,\lfloor \frac{i-1}{2}\rfloor -2k} \cdot A_{\frac{n-3}{2}-3k} +
  M(i,\frac{n-1}{2}-k) \\
&=(-1)^{i+\frac{n-1}{2}-k} \bigl( Q|X^{\frac{n+1}{2}+k-i}Y^{i-1-2k}
  T^i \bigr)  \\
  & \qquad +(-1)^{\frac{n+1}{2}-i+k} \Bigl( C_{\frac{n-1}{2}-k}^{i-1-2k} 
  +C_{\frac{n-3}{2}-k}^{i-1-2k} \Bigr)\\
& = (-1)^{\frac{n-1}{2}-k+i} \Bigl( C_{\frac{n-1}{2}-k}^{i-1-2k}
  +C_{\frac{n-3}{2}-k}^{i-1-2k}\Bigr) \\ &
  \qquad+(-1)^{\frac{n+1}{2}-i+k} \Bigl(
  C_{\frac{n-1}{2}-k}^{i-1-2k} 
  +C_{\frac{n-3}{2}-k}^{i-1-2k} \Bigr) \\
& = 0.
\end{align*}
Therefore, we have that $MN$ is lower triangular.  We now need to show
that the terms on the diagonal of $MN$ are non-zero to show that $M$
is invertible.

If $k> \lfloor \frac{n-5}{6} \rfloor$, then the $\bigl(
\frac{n-1}{2}-k\bigr) $th
diagonal term of $MN$ is that of $M$, since $N$ is the identity in this
upper, right block.  These terms are -1 or 1.

If $0\leq k \leq \lfloor \frac{n-5}{6} \rfloor$ and
$\frac{n-3}{2}-3k$ is odd, the term on the diagonal,
$MN(\frac{n-1}{2}-k,\frac{n-1}{2}-k)$, for
$\frac{n-1}{2}-k \geq \lfloor \frac{n}{3} \rfloor$, is equal to 
\begin{equation*}\begin{split}
  MN(\frac{n-1}{2}-k,\frac{n-1}{2}-k)&=D_{\frac{n-1}{2}-k,\lfloor
  \frac{n-3-2k}{4}\rfloor -2k} \cdot A_{\frac{n-3}{2}-2k} +
  M(\frac{n-1}{2}-k, \frac{n-1}{2}-k)\\
&= (Q|X^{2k+1}Y^{\frac{n-3}{2}-3k} T^{\frac{n-1}{2}-k}) +
  C_{\frac{n-1}{2}-k}^{ 0} - C_{\frac{n-1}{2}-k}^{2k + 1}-
  C_{\frac{n-3}{2}-k}^{2k}\\
&= C_{\frac{n-1}{2}-k}^{\frac{n-3}{2}-3k}
  +C_{\frac{n-3}{2}-k}^{\frac{n-3}{2}-3k} +1 +1 -
  C_{\frac{n-1}{2}-k}^{\frac{n-3}{2}-3k} -
  C_{\frac{n-3}{2}-k}^{\frac{n-3}{2}-3k} \\
&=2.\end{split}\end{equation*}

When $\frac{n-3}{2}-3k$ is even, 
the length of the $\bigl( \frac{n-1}{2}-k\bigr)$th
column of $N$ ($0^{2k}A_{\frac{n-3}{2}-3k}$) is $\frac{n+1+2k}{4}$.  The
$\bigl( \frac{n-1}{2}-k\bigr) $th row of $M$ is equal to
$D_{\frac{n-1}{2}-k,\frac{ n-3-2k}{4}} -
  D_{\frac{n-1}{2}+k+1,\frac{n+1+2k}{4}}$. 
So the only term of
  $D_{\frac{n-1}{2}+k+1, \frac{n+1+2k}{4}}$ 
that plays a role in the scalar product is the first and it is equal
to -2 when
  $\frac{n-3}{2}-3k$ is even.  
Furthermore, this term contributes to the scalar product by
multiplication of the last term of $A_{\cdot}$ which is 1 in this case.

So we have,
\begin{equation*}\begin{split}
  MN(\frac{n-1}{2}-k,&\frac{n-1}{2}-k)\\ &=D_{\frac{n-1}{2}-k,\lfloor
  \frac{n-3-2k}{4}\rfloor -2k} \cdot A_{\frac{n-3}{2}-2k} +
  M(\frac{n-1}{2}-k, \frac{n-1}{2}-k)-2\\
&= (Q|X^{2k+1}Y^{\frac{n-3}{2}-3k} T^{\frac{n-1}{2}-k}) +
  C_{\frac{n-1}{2}-k}^{ 0} - C_{\frac{n-1}{2}-k}^{2k + 1}-
  C_{\frac{n-3}{2}-k}^{2k}-2\\
&= C_{\frac{n-1}{2}-k}^{\frac{n-3}{2}-3k}
  + C_{\frac{n-3}{2}-k}^{\frac{n-3}{2}-3k} -1 +1 -
  C_{\frac{n-1}{2}-k}^{\frac{n-3}{2}-3k} -
  C_{\frac{n-3}{2}-k}^{\frac{n-3}{2}-3k}-2. \\
&=-2.\end{split}\end{equation*}

So for $n$ odd, we have that the matrix $M$ is invertible, so that
there exists a unique solution to the matrix equation, namely $M^{-1}
\cdot
  (-A,-A,\dots,-A)=(a_0,a_1, \dots a_{\frac{n-3}{2}})$, 
and the Lyndon-Lie basis elements are $\Q$ linear combinations of
$A$, showing that
$$F_n^2\ds / F_n^3 \ds=0.$$
 
{\it Case 2, $n$ is even}:
We have the following
$\frac{n-2}{2} $ relations on the coefficients given by
stuffle:
\begin{equation}\label{stuf2even}
b_{i-1} + b_{n-1-i} + A = 0, \ 1\leq i \leq  \frac{n-2}{2},
\end{equation}
where we have now from part (i) that $A=0$.
By substituting the relation from lemma \ref{b_i} between the $b_i$
and the $a_i$
into \eqref{stuf2even}, we have the following system of relations on the
coefficients, $a_i$, for each $i$, $0\leq i\leq
\frac{n-4}{2}$:  
\begin{multline}\label{ajsystemeven}
\sum_{j=0}^{i}
(-1)^{i-j}a_j\Bigl(C_{j+1}^{i-j}+C_{j}^{i-j-1}\Bigr) + \\
\sum_{k=0}^{n-2-i}
(-1)^{n-2-i-k} a_k\Bigl( C_{k+1}^{n-2-i-k} + C_{k}^{n-3-i-k} \Bigr) =
0.\end{multline}  
         
The system given in \eqref{ajsystemeven} may be solved by finding 
solutions to the matrix equation,
$M \cdot (a_0,a_1,\dots,a_{\frac{n-4}{2}}
)=(0,0,...,0)$, in other words by finding the kernel of $M$.  We will
only find its dimension, the nullity of $M$.  

In the even case, the matrix $M$ is given by the same formula as in
the odd case,
\begin{multline}
M(i,j) = (-1)^{i-j}\Bigl( C_{j}^{i-j} +
C_{j-1}^{i-j-1}\Bigr) + \\(-1)^{n-i-j}\Bigl( C_{j}^{n-i-j} +
C_{j-1}^{n-1-i-j} \Bigr),\ 1\leq i,j\leq \frac{n-2}{2}
.\end{multline}

In the same way as the odd case, we construct a matrix $N$ such that
$MN$ is lower triangular.
The top of the
$\bigl( \frac{n-2}{2}-k \bigr)$th
column of $N$ is $(\overbrace{0,...,0}^{2k+1})\cdot
A_{\frac{n-6}{2}-3k}$ for
$0\leq k\leq \lfloor \frac{n-8}{6} \rfloor$ 
and equal to the identity matrix elsewhere up to sign, where
$sgn(N(i,j))=(-1)^{i+j-1}$, except on the diagonal, where the sign is
positive.  A similar calculation shows that $MN$ is a lower
triangular matrix.
 
To find the rank of this matrix, we calculate the terms on the
diagonal.  If $\frac{n-6}{2}-3k$ is odd, for $\frac{n-2}{2}-k >
\lfloor \frac{n}{3} \rfloor$, 
$MN(\frac{n-2}{2}-k,\frac{n-2}{2}-k)$ is equal to
\begin{equation*}\begin{split}
-D_{\frac{n-2}{2}-k,\lfloor \frac{n-4-2k}{4} \rfloor -2k-1}\cdot & 
A_{\frac{n-6}{2} - 3k} + M(\frac{n-2}{2}-k,\frac{n-2}{2}-k) \\
&= - (Q|X^{2+2k} Y^{\frac{n-6}{2}-3k} T^{\frac{n-2}{2}-k}) + 1 +
C_{\frac{n-2}{2}-k}^{\frac{n-6}{2}-3k} + C_{\frac{n-4}{2}-k}^{ \frac
  {n-6}{2} -3k}\\ &= - C_{\frac{n-2}{2}-k}^{\frac{n-6}{2}-3k} -
C_{\frac{n-4}{2}-k}^{ \frac{n-6}{2} -3k} -1 + 1 +
C_{\frac{n-2}{2}-k}^{\frac{n-6}{2}-3k} + C_{\frac{n-4}{2}-k}^{ \frac
  {n-6}{2} -3k}\\ &=0.
\end{split}\end{equation*}
Finally, for identical reasons as in the odd $n$ case,
if $\frac{n-6}{2}-3k$ is even
$MN(\frac{n-2}{2}-k,\frac{n-2}{2}-k)$ is equal to
\begin{equation*}\begin{split} 
-D_{\frac{n-2}{2}-k,\lfloor \frac{n-4-2k}{4} \rfloor -2k-1}\cdot & 
A_{\frac{n-6}{2} - 3k} -2 + M(\frac{n-2}{2}-k,\frac{n-2}{2}-k)\\
&= - (Q|X^{2+2k} Y^{\frac{n-6}{2}-3k} T^{\frac{n-2}{2}-k}) -2+ 1 +
C_{\frac{n-2}{2}-k}^{\frac{n-6}{2}-3k} + C_{\frac{n-4}{2}-k}^{ \frac
  {n-6}{2} -3k}\\ &= - C_{\frac{n-2}{2}-k}^{\frac{n-6}{2}-3k} -
C_{\frac{n-4}{2}-k}^{ \frac{n-6}{2} -3k} + 1 -2 + 1 +
C_{\frac{n-2}{2}-k}^{\frac{n-6}{2}-3k} + C_{\frac{n-4}{2}-k}^{ \frac
  {n-6}{2} -3k}\\ &=0.
\end{split}\end{equation*}

Now, because $0\leq k\leq \frac{n-8}{6}$ the nullity of $M$ is
equal to
$\lfloor \frac{n-2}{6}\rfloor$.  In other words, $F_n^2 \ds / F_n^3\ds
\leq \lfloor \frac{n-2}{6}\rfloor$, since there may be relations
between the generators that come from other systems of equations
besides the equations \ref{stuf2}.  In fact there are not any other
relations, and we use the combinatorial properties of the Poisson
bracket to justify this.

We will now verify that $\lfloor \frac{n-2}{6}\rfloor$ is a lower
bound for the dimension.
From theorem 
\ref{theoreme}
(i), let 
$S=\{ \{f_{2i+1},f_{n-2i-1}\},\ 1\leq i \leq \lfloor
\frac{n-4}{4} \rfloor\} $,
be the set of Poisson brackets of weight $n$ generators of 
$F_1^{2i+1}\ds$.   Let $D$ be the vector space generated by $S$. 

We consider the image of $D$, $\overline{D}$ in 
$F_2^n \ds / F_3^n\ds$.  By work of Zagier, Ihara and Takao (unpublished, see
\cite{Sc}), we know 
that the nullity of this system of equations is equal to a number
which turns out to be exactly the dimension
of the space of period polynomials, which is itself equal to the dimension of
the space of cusp forms of weight $n$ on ${\mathrm{SL}}_2(\Z)$
(denoted $S_n({\mathrm{SL}}_2(\Z))$) \cite{Sc}.  Therefore, we have that
\begin{equation*}\begin{split}
dim(\overline{D}) & = |S| -
dim(S_n(\mathrm{SL}_2(\Z)))\\  & = \lfloor \frac{n-4}{4} \rfloor -
\begin{cases}\lfloor n/12 \rfloor - 1 & n\equiv 2\ \mathrm{mod}\ 12\\
\lfloor n/12 \rfloor & \mathrm{otherwise}\end{cases}\\
&= \lfloor \frac{n-2}{6} \rfloor. \end{split}\end{equation*} 
Since $\overline{D} \subset F_2^n \ds / F_3^n \ds$, 
\begin{equation*} \lfloor \frac{n-2}{6} \rfloor \leq dim( F_2^n
  \ds / F_3^n \ds )\leq \lfloor \frac{n-2}{6} \rfloor,\end{equation*}
and hence the theorem is proved.
\end{proof}
 
Recall the definition \ref{nznumbersdef} of the new zeta value
algebra, $\nz$, which is the 
quotient of the algebra of 
multizeta values by  products.  We showed in chapter 1 that
$\ds^{\vee}$ surjects onto $\nz$.  The proof of theorem \ref{dep12}
yields the following corollary which gives the expression of
depth 2 multizeta value as a rational multiple of a depth 1 multizeta modulo products, thus
recovering a (weaker version of a) result well-known to Euler.

\begin{cor} Let $\overline{\zeta}(i,j)$ be a new zeta value of depth 2, and odd weight $n$ ($i+j=n$ is odd).\label{littlenzdef}
We have the following expression for $\overline{\zeta}(i,j)$ in terms of $\overline{\zeta}(i+j)$:
\begin{align*}
\overline{\zeta}(x^{i-1}yx^{j-1}y) &= \frac{(-1)^{j-1}C_n^j -1}{2}\overline{\zeta}(x^{n-1}y)\\
\overline{\zeta}(i,j) &= \frac{(-1)^{j-1}C_n^j -1}{2}\overline{\zeta}(n).
\end{align*}

\end{cor}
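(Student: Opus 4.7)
The plan is to extract the explicit formula directly from the proof of Theorem \ref{theoreme}(ii) in the odd-$n$ case. Let $f \in \ds_n$ with $n$ odd and write $A = (f|x^{n-1}y)$. The proof of that theorem shows that the matrix $M$ is invertible, so the $(n-1)/2$ depth-$2$ Lyndon--Lie coefficients $a_0, \dots, a_{(n-3)/2}$ of $f$ are uniquely determined rational multiples of $A$ by $Ma = -A\mathbf{1}$. Via Lemma \ref{b_i}, each depth-$2$ monomial coefficient $b_{j-1} = (f|x^{i-1}yx^{j-1}y)$ is then also a uniquely determined rational multiple of $A$; the depth-$\geq 3$ Lyndon--Lie components of $f$ contribute nothing to $b_{j-1}$ since bracketing preserves the number of $y$'s. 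Dualizing and passing to $\nz$ via the surjection $\widetilde{\nfz} \twoheadrightarrow \nz$ gives an identity $\overline{\zeta}(i,j) = c_j\,\overline{\zeta}(n)$ with $c_j \in \mathbb{Q}$ independent of the choice of $f$.

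It remains to compute $c_j$. The first sanity check is stuffle compatibility: for $i+j = n$ odd, exactly one of $i, j$ is even, so $(-1)^{i-1} + (-1)^{j-1} = 0$; together with $\binom{n}{i} = \binom{n}{j}$ this yields $c_i + c_j = -1$, matching the stuffle relation $b_{i-1} + b_{n-1-i} = -A$ after dividing by $A$. The second check is the boundary case $j=1$, where the candidate formula gives $c_1 = \tfrac{n-1}{2}$, agreeing with Corollary \ref{b_0} and thereby fixing the scale.

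To turn these consistency checks into a complete determination, I would substitute $b_i = c_{i+1}A$ into the triangular identity of Lemma \ref{b_i} and solve inductively for the $a_j$'s in closed form. A direct verification then shows that these $a_j$'s satisfy $Ma = -A\mathbf{1}$, and the uniqueness guaranteed by the invertibility of $M$ forces $b_{j-1} = c_j A$ for the stated $c_j$, completing the proof.

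The main obstacle is the closed-form verification in the final step, which reduces to an alternating-sign binomial identity of the same flavour as those used in Lemma \ref{Biglem} and Corollary \ref{a_n}. A convenient shortcut that avoids a fresh combinatorial argument is to invoke Euler's classical explicit formula for $\zeta(i,j)$, writing it as $\tfrac{1}{2}\bigl((-1)^{j-1}\binom{n}{j} - 1\bigr)\zeta(n)$ plus a sum of products of smaller-weight zeta values: projecting to $\nz$ kills the product terms, leaving exactly the coefficient claimed in the corollary.
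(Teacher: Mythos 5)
Your primary route is structurally the same as the paper's: both arguments rest on the invertibility of $M$ established in the proof of Theorem \ref{theoreme}(ii), which forces every depth-2 coefficient $b_{j-1}=(f|x^{i-1}yx^{j-1}y)$ to be a uniquely determined rational multiple of $A=(f|x^{n-1}y)$, after which one dualizes through $\ds^\vee\simeq\widetilde{\nfz}\twoheadrightarrow\nz$. Your consistency checks (stuffle gives $c_i+c_j=-1$; the case $j=1$ matches Corollary \ref{b_0}) are correct. However, the step you flag as "the main obstacle" — actually verifying that the candidate coefficients solve the system $Ma=-A(1,\dots,1)$ — is precisely the content of the paper's proof, and you do not carry it out. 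The paper resolves it by proposing $a_i=A\frac{(-1)^i}{2}C_{n-i-1}^{i+1}$ and recognizing the resulting alternating binomial sums in $M\cdot(a_0,\dots,a_{\frac{n-3}{2}})$ as the coefficients $(P|X^{n-i}Y^iT^i)$ and $(P|X^iY^{n-i}T^{n-i})$ of the generating function of Lemma \ref{Biglem}, which simultaneously shows the sum of the two terms is $-A$ and identifies the first term as $b_{i-1}=A\frac{(-1)^{i-1}C_n^i-1}{2}$. Without that identification (or an equivalent closed-form induction, which you only announce), the coefficient in the corollary is not established, so the primary route as written has a genuine gap at its decisive step.

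Your proposed shortcut — quoting Euler's explicit evaluation of $\zeta(i,j)$ and projecting to $\nz$, where the product terms and $\zeta(2)$ die — does prove the corollary as literally stated, since $\nz$ is by definition a quotient of the genuine multizeta algebra $\MZV$. But it is a different argument in character from the paper's: it imports an external analytic identity rather than deriving the relation from the double shuffle structure, whereas the paper's point (made explicit in the surrounding text) is that the Lie-algebra computation \emph{recovers} Euler's relation modulo products, and in fact establishes it at the level of $\widetilde{\nfz}\simeq\ds^\vee$, which is a priori stronger than the statement in $\nz$. So the shortcut closes the logical gap for the stated corollary at the cost of proving less and of assuming the very identity the paper is reconstructing.
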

\begin{proof}
For $n$ odd, since the matrix $M$ is invertible, there exists a
unique solution to the equation,
 $M\cdot
  (a_0,..., a_{\frac{n-3}{2}})=-A(1,\dots,1)$.  We propose $a_i =A
  \frac{(-1)^i}{2} C_{n-i-1}^{i+1} $ and show that this is the
  solution.  In this case we have,
\begin{align}\label{cutesoln} 
M\cdot (a_0,...,a_{\frac{n-3}{2}}) & =\Biggl(
A\sum_{j=\lfloor \frac{i+1}{2} \rfloor}^i \frac{(-1)^{n-j}}{2} C_{n-j}^j
(-1)^{i-j} \biggl( C_j^{i-j} + C_{j-1}^{i-j-1} \biggr)+\\
&\quad A\sum_{k=\frac{n-1}{2}-\lfloor\frac{i-1}{2}
  \rfloor}^{\frac{n-1}{2}} \frac{(-1)^{n-k}}{2} C_{n-k}^k (-1)^{n-i-k} \biggl(
C_k^{n-i-k} + C_{k-1}^{n-i-k-1} \biggr) \Biggr)_{i=1}^{\frac{n-1}{2}}.\notag
\end{align}

The power series, $P$, from lemma \ref{Biglem}, has the expression,
$$P= YT^2\Bigl(\sum_{j=0} (XT+YT^2)^j\Bigr) \Bigl(\sum_{k=0}
(X+Y)^k\Bigr) + \Bigl(\sum_{j=0} (XT+YT^2)^j\Bigr) \Bigl(\sum_{k=0}
(X+Y)^k\Bigr).$$
Hence, the first term in equation \eqref{cutesoln} gives exactly
$A\frac{(-1)^{i-1}}{2}$ times the
coefficient of $X^{n-i}Y^iT^i$ in the expansion of $P$.  By lemma
\ref{Biglem}, this term is
equal to
$A\frac{-C_n^i -1}{2}$ for even $i$ and equal to $A\frac{C_n^i -1}{2}$
for odd $i$.  Furthermore, this term is exactly the expression of $b_{i-1}$, by
the constructions \eqref{ajsystem} and \eqref{MDEF} of $M$.
The second term gives
$A\frac{(-1)^{i}}{2}$ 
times the coefficient of $X^i Y^{n-i} T^{n-i}$ 
in the expansion of $P$ and is equal to
$A\frac{-C_n^i -1}{2}$ for odd $i$ and
equal to $A\frac{C_n^i -1}{2}$ for even $i$.  This second term is
equal to $b_{n-i-1}$. 
In both even and odd $i$ cases, this sum is equal to 
$-A$, so the expression $a_i=A\frac{(-1)^i}{2} C_{n-i-1}^{i+1}$ is
indeed the unique solution to
the system.  

Since the first term in equation \eqref{cutesoln} is equal to
$b_{i-1}$, we have 
\begin{equation*}
b_{i-1}=(f|x^{n-i-1}yx^{i-1}y) =A\frac{(-1)^{i-1}C_n^i -1}{2} =
  \frac{(-1)^{i-1}C_n^i -1}{2}(f|x^{n-1}y). 
\end{equation*}
By the definition \ref{nfzdef} of the dual space,
$\ds^{\vee}=\widetilde{\nfz}$, this
equation is $\z^{\sha}(x^{n-i-1}yx^{i-1}y) = \frac{(-1)^{i-1}C_n^i
  -1}{2}\z^{\sha}(x^{n-1}y)$.  But $\widetilde{\nfz}$ surjects onto
$\nz$, by the map $\z^{\sha}(w)\mapsto\overline{\zeta}(w)$, so this relation is true
also in the new zeta space and we have the desired expression.

\end{proof}

\begin{rmk*}
 Note that the preceding corollary does not work when $i+j=n$ is even; a double zeta is not equal to a rational multiple of a single zeta in even weight in $\widetilde{\nfz}\simeq \ds^\vee$.  This follows from the fact that $F^1_n\ds/ F^2_n\ds= 0$ for even $n$ (theorem \ref{theoreme}).  In \cite{IKZ}, the authors prove in complete generality that $F^d_n\ds /F^{d+1}_n \ds=0$ whenever $d$ and $n$ have opposite parities.
\end{rmk*}

\chapter{The algebra of cell zeta values}

This chapter is an intact article entitled {\it The algebra of
  cell-zeta values}, [BCS],
which is joint work with Francis Brown and
Leila Schneps awaiting publication.  In [BCS], we give an explicit
basis of polygons for the de Rham cohomology 
space, $H^{n-3}(\Mn^\delta)$, and use this to present a new structure
  for the $\Q$  
algebra of multizeta values, $\MZV$, by considering the algebra generated
  by all periods on $\Mn$. 
Here, a {\it period} on $\Mn$ is considered to be the integral of a rational
function over a simplex in $\Mn(\R)$, the real part of moduli space.
In chapter 2, we presented Kontsevich's construction of multizeta
  values as integrals of rational 
functions over simplices, $\delta:=0<t_1<\cdots <t_{n-3}<1$, which are
simplices in $\Mn(\R)$, thus showing that multizetas are indeed periods. 

This work was inspired by the recent
theorem of F. Brown [Br] in which he proves that every period on
$\Mn$ is a $\Q$-linear combination
of multizeta values.    Here, I give a brief and
intuitive introduction to the development of the special periods that
we take as generators of the period algebra, which are called
cell-zeta values.  The 
definitions, structure of the paper and background are given in
the introduction of [BCS]. 

In [Br], product maps on moduli space are introduced,
$$f: \Mn\rightarrow \Mod_{0,s} \times \Mod_{0,r},\ r+s=n+3,$$
which are simply the products of two forgetful maps (see [BCS] section
\ref{prodmapsection}).  He defines two particular product
maps, the simplicial product map, which gives the shuffle relation on
multizetas, and the cubical product map, which gives the stuffle
relation on multizetas.  At the end of section of 7.5 [Br], Brown comments
that these two product maps are extreme cases of a range of
intermediate product formulae.  This paper is a study of the
intermediate product formulae.  These new product formulae yield
relations on cell-zeta values, analogous to, but more general than,
the double shuffle relations on multizeta values, and have the
advantage that they reflect the geometry and symmetry of the $\Mn$. 

Recall the definition of $\Mn(\R)$.  The connected components of
$\Mn(\R)$ are cells and are denoted by the real ordering of the marked
points inside them.   
To any such cell in $\Mn(\R)$ we
may associate a differential form, called a {\it cell
form}, which is the form that has a simple pole along each irreducible boundary divisor which contains a face of the boundary of the associahedron $\delta$.

The cells generate the top dimensional homology group and, by duality between
the homology and the cohomology, the cell forms generate
the top dimensional de Rham
cohomology, $H^{n-3}(\Mn)$.  Based on a theorem of Arnol'd [Ar], we
found that a basis for $H^{n-3}(\Mn)$ is given by 01-forms (proposition
\ref{prop41}).

Our paper answers the following three questions that arose naturally
from studying multizetas as periods.

\begin{ques*} What subspace of the
cohomology is the space of differential forms that give periods,
i.e. that converge on a cell?\end{ques*}  

It is not useful to look at the whole cohomology group since
the integral of any 01-form converges over some cell, but diverges on
others, while certain linear combinations of divergent cell forms will actually
converge on a cell.  
The periods on $\Mn$ are by
definition convergent
integrals of the forms, $\omega \in H^{n-3}(\Mn)$, over some cell
$\gamma$.  By a variable change,
any period can be written as an integral over the standard cell,
$\delta$.  Therefore, to study of periods
on $\Mn$ it is sufficient to study the forms in the cohomology that are
convergent on $\Mn$ and on its set of boundary components $\delta$
which bound the standard cell.  In chapter 4, we prove that this
subspace of convergent differential forms is isomorphic to
$H^{n-3}(\Mn^\delta)$, the 
cohomology of the partially compactified moduli space.  In section
\ref{sec43}
of chapter 3, we give an explicit basis 
for $H^{n-3}(\Mn^\delta)$ thereby answering this first question. 

\begin{ques*}How does one use periods to study
multizeta values? \end{ques*}

With the explicit basis given above, we now have a way to formally represent periods as linear combinations pairs of polygons of the form $(\gamma, \omega)$, where $\gamma$ is a cell and $\omega$ a cell form.
The product of periods which are integrals of a cell form over a cell is given by the pullback formula of a
product map, (proposition \ref{prodmaprel})
$$\int_{\delta_1} \omega_1 \int_{\delta_2} \omega_2 = \int_{\delta_1
  \sh \delta_2} \omega_1\sh \omega_2.$$
Therefore any period can be
represented as a linear combination of pairs of polygons and these polygons form an algebra for the
  shuffle product, 
$$(\delta_1, \omega_1)(\delta_2, \omega_2)=(\delta_1\sh \delta_2,
\omega_1\sh\omega_2).$$
We denote the algebra of
periods or {\it cell numbers} by ${\cal C}$.  By [Br], all the periods
on $\Mn$ are $\Q$ linear combinations of multizeta values.  Therefore,
we have answered the second question, ${\cal C}\simeq \MZV$, on the level
of real numbers.  However, on the combinatorial level of formal
multizetas and formal periods, there is still much work to be done.

We know how to
explicitly express all of the
multizetas as polygon sums by Kontsevich's identity, but we cannot as
of yet explicitly express all of the periods as multizetas (even
though by [Br], we know such an expression exists). 

This leads us to
the question of finding a set of generating relations over $\Q$ for $\cal C$.
We conjecture that the answer to this question is that the algebra of
cell numbers has only the relations coming from variable changes on
periods, algebraic identities on differential forms and product maps.
As usual, because conjectures of this analytic type seem very
difficult to prove, as they would imply important results, such
as the transcendence conjecture on multizeta values, we concentrate
our study on the formal situation in which the only relations are
decreed to be the known relations.  This is the same principle as in
chapter 1 where we defined the formal 
zeta value
algebra which satisfies only shuffle, stuffle and regularization
relations, and leads to the final
main question addressed in [BCS].

\begin{ques*} How can we use the three
known sets of relations on periods to study relations between multiple
zeta values?\end{ques*}

In order to study this, in section \ref{cellalg}, we define the {\it
  formal cell number 
algebra}, ${\cal FC}$, which satisfies exactly the three
sets of period relations outlined above.
Since ${\cal FC}$ surjects onto ${\cal C}$, any identities that we can find
on formal cell numbers are also true for multizeta values, hence the
structure of $\cal FC$ provides a new method for studying multizeta values.
  If the formal cell numbers provide an adequate structure
for multiple zeta values, then we should have the following
commutative diagram:
\begin{equation*}
\xymatrix{
    {\cal FC} \ar@{<.>}[r]^? \ar@{->>}[d]& \FZ \ar@{->>}[d] \\
    {\cal C} \ar@{<->}[r]^\sim & \MZV
  }
\end{equation*}
In the section \ref{explicitbasissection} of [BCS], we discuss the
implications of this 
hypothesis, and give examples of how and why it should be true.  


\begin{center}
{\Large{\bf THE ALGEBRA OF CELL-ZETA VALUES}}\\
\vspace{1cm}
{FRANCIS BROWN, SARAH CARR, LEILA SCHNEPS}\\
\end{center}

\vspace{1cm}

\begin{changemargin}{1cm}{1cm}
\noindent {\it Abstract}. 
\small{ Traditionally, multiple zeta values are viewed as convergent nested
series which can also be expressed as iterated integrals on the
projective line minus three points $\Pro^1\backslash\{0,1,\infty\}$.
They are known to satisfy two sets of quadratic relations known as
the double shuffle relations, which are conjectured to generate all
algebraic relations between them. They were subsequently interpreted
as the periods of the (motivic) fundamental group of
$\Pro^1\backslash \{0,1,\infty\}$.  Recently, Goncharov and Manin
introduced a new version of motivic multiple zeta values, in  which
they are interpreted as periods of mixed Tate motives attached to
the moduli spaces $\Mod_{0,n}$ of genus zero curves with $n$ marked
points.

In this paper, we introduce {\it cell-forms} on $\Mod_{0,\ell+3}$,
which are differential $\ell$-forms diverging along the boundary of
exactly one connected component (cell) of the real moduli space
$\Mod_{0,\ell+3}(\R)$. We give a basis for the space of all forms
convergent on a given cell $X$ in terms of cell-forms and define
{\it cell-zeta values} to be the real numbers obtained by
integrating these forms over $X$. The cell-zeta values satisfy
algebraic relations generalizing the double shuffle relations,
coming from simple geometric operations on the moduli spaces, and
the  algebra of cell-zeta values is in fact equal to the algebra of
multiple zeta values. We conjecture that this new  combinatorial
system of generators and relations gives a complete description of
the algebra of multiple zeta values.}
\end{changemargin}

\begin{center}
\section{Introduction}
\end{center}
Let $n_1,\ldots, n_r\in \N$ and suppose that $n_r\geq 2$. The
multiple zeta values (MZV's)
\begin{equation}\label{MZV}
\zeta(n_1,\ldots, n_r) = \sum_{0<k_1<\ldots<k_r} {1 \over k_1^{n_1}
\ldots k_r^{n_r}}\in \R\ ,\end{equation} were first defined by
Euler, and have recently acquired much importance in their relation
to mixed Tate motives. It is conjectured that the periods of all
mixed Tate motives over $\Z$ are expressible in terms of such
numbers. By a remark due to Kontsevich, every multiple zeta value
can be written as an iterated integral:
\begin{equation}\label{itint}
\int_{0\leq t_1\leq \ldots \leq t_\ell \leq 1} {dt_1 \ldots dt_\ell
\over (\varepsilon_{1}-t_1)\ldots(\varepsilon_\ell-t_\ell) }\ ,
\end{equation} where $\varepsilon_i \in \{0,1\}$, and
$\varepsilon_1=1$ and $\varepsilon_\ell=0$ to ensure convergence,
and $\ell=n_1+\cdots +n_r$. The iterated integral $(\ref{itint})$ is
a period of the motivic fundamental group of
$\Mod_{0,4}=\Pro^1\backslash\{0,1,\infty\}$, whose de Rham
cohomology $H^1(\Mod_{0,4})$ is spanned by the forms ${dt \over t}$
and ${dt \over 1-t}$ \cite{De1,DG}. One proves that the multiple
zeta values satisfy two sets of quadratic relations \cite{Ch1,Ho},
known as the regularised double shuffle relations, and it has been
conjectured that these generate all algebraic relations between
MZV's \cite{Ca2, Wa}.
 This is the  traditional point
of view on multiple zeta values.

On the other hand, by  a general construction due to Beilinson, one
can view the iterated integral $(\ref{itint})$ as a period integral
in the ordinary sense, but this time  of the $\ell$-dimensional
affine scheme
$$\Mod_{0,n} \simeq (\Mod_{0,4})^\ell \backslash \{\hbox{diagonals}\}
= \{(t_1,\ldots, t_\ell): t_i\neq 0,1\ , t_i\neq t_j\}\ ,$$ 
where $n=\ell+3$. This is the moduli space of curves of genus $0$
with $n$ ordered marked points. Indeed, the open domain of
integration $X=\{0< t_1< \ldots < t_\ell < 1\}$\label{stcellDEF}\index{Standard cell, $X=X_{\delta}$} is one of the
connected components of the set of real points $\Mod_{0,n}(\R)$, and
the integrand of $(\ref{itint})$ is a regular algebraic form in
$H^\ell(\Mod_{0,n})$ which converges on $X$.  Thus, the study of
multiple zeta values leads naturally to the study of all periods on
$\Mod_{0,n}$, which was initiated by Goncharov and Manin
\cite{Br,GM}. These periods can be written
\begin{equation}\label{introint}
\int_X \omega\ , \quad \hbox{ where } \omega \in H^\ell(\Mod_{0,n})
\hbox{ has no poles along } \overline{X}\ .\end{equation}
  The
general philosophy of motives and their periods \cite{KZ}
indicates that one should study relations between all such
integrals.  This leads to the following problems:
\begin{enumerate}
  \item Construct a good basis of all regular (logarithmic)
  $\ell$-forms $\omega$ in $H^\ell(\Mod_{0,n})$ whose integral over the cell
  $X$ converges.
  \item Find all relations between the integrals $\int_{X} \omega$
  which arise from natural geometric considerations on the moduli spaces
  $\Mod_{0,n}$.
\end{enumerate}
In this paper, we give an explicit solution to $(1)$, and a family
of relations which conjecturally answers $(2)$. Firstly, we give a
complete  description of the convergent part of the cohomology
$H^\ell(\Mod_{0,n})$ in terms of the combinatorics of polygons. The
corresponding integrals are  much more general  than
$(\ref{itint})$, and the numbers one obtains are called {\it
cell-zeta values}. For $(2)$, it turns out that there are
essentially two types of relations. The first arises from the
dihedral subgroup of automorphisms of $\Mod_{0,n}$ which stabilise
$X$, and the other is a quadratic relation, which we call the {\it
modular shuffle product}, arising from a product of forgetful maps
between moduli spaces. We conjecture that these two simple families
of relations generate the complete set of relations for the periods
of the moduli spaces $\Mod_{0,n}$.

\subsection{Main results}
We give a brief presentation of the main objects introduced in this
paper, and the results obtained using them. \vskip .2cm

There is a stable compactification $\overline\Mod_{0,n}$ of
$\Mod_{0,n}$, such that $\overline\Mod_{0,n}\setminus\Mod_{0,n}$ is
a smooth normal crossing divisor whose irreducible components
correspond bijectively to partitions of the set of $n$ marked points
into two subsets of cardinal $\ge 2$ \cite{DM,Kn}. The real part
$\Mod_{0,n}(\R)$ of $\Mod_{0,n}$ is not connected, but has $n!/2n$
connected components (open cells) corresponding to the different
cyclic orders of the real points $0,t_1,\ldots,t_\ell,1,\infty\in
{\mathbb P}^1(\R)$, up to dihedral permutation \cite{Dev1}. 
Thus, we can identify cells with $n$-sided polygons with edges
labeled by $\{0,t_1,\ldots, t_\ell, 1, \infty\}$.  In the
compactification $\overline{\Mod}_{0,n}(\R)$, the closed cells have
the structure of associahedra or Stasheff polytopes; the boundary of
a given cell is a union of irreducible divisors corresponding to
partitions given by the chords in the  associated polygon. The
standard cell is the cell corresponding to the standard order we
denote $\delta$, given by $0<t_1<\ldots<t_\ell<1$. We write
$\Mod_{0,n}^\delta$ for the union of $\Mod_{0,n}$ with the boundary
divisors of the standard cell. This is a smooth affine scheme
introduced in \cite{Br}.

\subsubsection{Cell-forms.}  A cell-form is a holomorphic differential
$\ell$-form on $\Mod_{0,n}$ with logarithmic singularities along the
boundary components of the stable compactification, having the
property that its singular locus forms the boundary of a single cell
in  the real moduli space $\Mod_{0,n}(\R)$.
\subsubsection{Polygons.} Since a cell of $\Mod_{0,n}(\R)$ is given by
an ordering of $\{0,t_1,\ldots,t_\ell,1,\infty\}$ up to dihedral
permutation, we can identify it as above with an \emph{unoriented}
$n$-sided polygon with edges indexed by the set
$\{0,t_1,\ldots,t_\ell,1,\infty\}$.  Up to sign, the cell-form
diverging on a given cell is obtained by taking the successive
differences of the edges of the polygon (ignoring $\infty$) as
factors in the denominator: \vskip 1.5cm
$\displaystyle{\qquad\qquad\qquad\qquad \qquad\qquad\qquad
\longleftrightarrow \qquad\qquad \pm\,{{dt_1dt_2dt_3}\over
{(t_1-1)(t_3-t_1)(-t_2)}}}$

\vspace{-2cm}
\ \ \ \ \epsfxsize=3cm\epsfbox{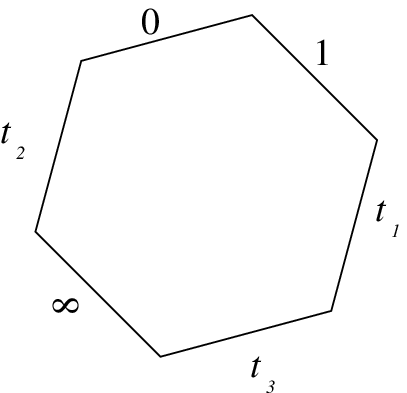}

\vspace{.1cm} \noindent Let ${\cal P}$ denote the $\Q$-vector space
of \emph{oriented} $n$-gons indexed by
$\{0,1,t_1,\ldots,t_\ell,1,\infty\}$. The orientation fixes the sign
of the  corresponding cell form, and this  gives a map
\begin{equation}\label{introrhodef}
\rho:{\cal P}\rightarrow H^\ell(\Mod_{0,n}).\end{equation} In
section \ref{sec41} we prove that this map is surjective and
identify its kernel.
\subsubsection{$01$-cell-forms.}\label{01cellformDEF}\index{01-cell form} These are the
cell-forms corresponding to
polygons in which $0$ appears adjacent to $1$.  In theorem
\ref{thm01cellsspan}, we show that they form a basis of the
cohomology $H^\ell(\Mod_{0,n})$.   In particular, the subspace of
${\cal P}$ of polygons having $0$ adjacent to $1$ is isomorphic to
$H^\ell(\Mod_{0,n})$ via $(\ref{introrhodef})$.
\subsubsection{Insertion forms.} These very particular linear combinations
of $01$ cell-forms are constructed in section \ref{Lyndins}.  We
prove in theorem \ref{convKS} and proposition \ref{insformsspan}
that they form a basis for the cohomology group
$H^\ell(\Mod_{0,n}^\delta)$ of forms with no poles along the
boundary of the standard cell of $\Mod_{0,n}(\R)$. These are
precisely the forms whose integral $(\ref{introint})$ converges.
\subsubsection{Cell-zeta values.}  These are real numbers obtained by
integrating
 insertion forms over the standard cell as in $(\ref{introint})$.
They are a generalization of multiple zeta values to a larger set of
periods on $\Mod_{0,n}$, such as
$$\int_{0<t_1<t_2<t_3<1} {{dt_1dt_2dt_3}\over{(1-t_1)(t_3-t_1)t_2}}.$$
Note that unlike the multiple zeta values, this is not an iterated integral
as in (\ref{itint}).
\subsubsection{Product maps.} Via the pullback, the maps
$f:\Mod_{0,n}\rightarrow \Mod_{0,r}\times \Mod_{0,s}$ obtained by
forgetting disjoint complementary subsets of the marked points
$t_1,\ldots,t_\ell$ yield expressions for products of cell-zeta
values on $\Mod_{0,r}$ and $\Mod_{0,s}$ as linear combinations of
cell-zeta values on $\Mod_{0,n}$:
\begin{equation}\label{introcellprodmap}
\int_{X_1} \omega_1\int_{X_2}\omega_2=\int_{f^{-1}(X_1\times X_2)}
f^*(\omega_1\wedge\omega_2).\end{equation} There is a simple
combinatorial algorithm to compute the multiplication law in terms
of cell-forms. This generalizes the double shuffle multiplication
laws for multiple zeta values, and is  explained in section
\ref{prodmaps}.
\subsubsection{Dihedral relations} These relations between
cell-zeta values are given by \begin{equation}
\label{introdihedralrel}\int_X\omega=\int_X \sigma^*(\omega)\ ,
\end{equation} where $\sigma$ is an automorphism of $\Mod_{0,n}$ which
maps the standard cell to itself: $\sigma(X)=X$, and thus $\sigma$
is a dihedral permutation of the marked points $\{0,1,t_1,\ldots,
t_\ell, \infty\}$.

\subsubsection{The cell-zeta value algebra ${\cal C}$.} The multiplication
laws associated to product maps $(\ref{introcellprodmap})$ make the
space of all cell-zeta values on $\Mod_{0,n}$, $n\ge 5$, into a
$\Q$-algebra which we denote by ${\cal C}$. By Brown's theorem
\cite{Br}, which states essentially that all periods on
$\Mod_{0,n}$ are linear combinations of multiple zeta values,
together with Kontsevitch's expression (\ref{itint}) of multiple
zeta values, we see that ${\cal C}$ is equal to the algebra of
multiple zeta values ${\cal Z}$.
\subsubsection{The formal cell-zeta value algebra ${\cal FC}$.}
By lifting the previous constructions to the level of polygons along
the map $(\ref{introrhodef})$, we define in section \ref{cellalg} an
algebra of  formal cell-zeta values which we denote by ${\cal FC}$.
It is   generated by the {\it insertion words}, which are formal
sums of polygons corresponding to the insertion forms introduced
above, subject to combinatorial versions of the product map
relations $(\ref{introcellprodmap})$ and the dihedral relations
$(\ref{introdihedralrel})$.
  This is analogous to the formal MZV algebra given by the double
shuffle and Hoffmann relations.

\vspace{.3cm} The paper is organised as follows. In $\S2$,  we
introduce cell forms, polygons and define the modular shuffle and
dihedral relations. In  $\S3$, we  define  insertion words
 of polygons which are constructed out of  Lyndon  words, which may be
 of independent combinatorial interest.
These are used to construct the insertion basis of convergent forms
in $\S4$. In $\S\ref{calculations}$, we give complete computations
of  this basis and the  corresponding modular shuffle relations for
$\Mod_{0,n}$, where $n=5,6,7$.

In the remainder of this introduction we sketch the connections
between the formal cell-zeta value algebra and standard results and
conjectures in the theory of multiple zeta values and mixed Tate
motives.

\vspace{0.2cm}
\subsection{Relation to mixed Tate motives and conjectures}

Let  $\MT(\Z)$\label{MTZ}\index{$\MT(\Z)$} denote the category of mixed Tate motives which are
unramified over $\Z$ \cite{DG}.  Let  $\delta$ denote the standard
cyclic structure on $S=\{1,\ldots,n\}$, and let $B_{\delta}$ denote
the divisor which bounds the standard cell $X_{\delta}$. Let
$A_\delta$ denote the set of all remaining divisors on
$\overline{\Mod}_{0,S}\backslash \Mod_{0,S}$, so that
$\Mod_{0,S}^{\delta}=\Mod_{0,S}\cup B_{\delta}$ (\cite{Br}), and
$A_\delta=\overline{\Mod}_{0,S}\backslash \Mod_{0,S}^\delta$. We
write:\index{$M_{\delta},\ B_{\delta},\ A_{\delta}$}
\begin{equation} \label{Mdelta}
M_\delta= H^\ell(\overline{\Mod}_{0,n}\backslash A_\delta, B_\delta
\backslash (B_\delta \cap A_\delta))\ .
\end{equation}
By a result due to Goncharov and Manin  \cite{GM}, $M_\delta$
defines an element in $\MT(\Z)$, and therefore  is equipped with an
increasing weight filtration $W$.  They show that $\gr^W_\ell
M_\delta$ is isomorphic to the de Rham cohomology
$H^\ell(\Mod^\delta_{0,n})$, and that $\gr^W_0 M_\delta$ is
isomorphic to the dual of the relative Betti homology
$H_\ell(\overline{\Mod}_{0,n}, B_\delta )$.

Let $M$ be any element in  $\MT(\Z)$. A framing for $M$ consists of
an integer $n$ and non-zero maps
\begin{equation}
v_0  \in  \Hom( \Q(0), \gr^W_0 M) \quad \hbox{ and } \quad f_n\in
 \Hom( \gr^W_{-2n} M, \Q(n)) \ .
\end{equation}
Two framed motives $(M,v_0,f_n)$ and $(M',v_0',f_n')$ are said to be
equivalent if there is a morphism $\phi:M\rightarrow M'$ such that
$\phi\circ v_0= v_0'$ and $f_n\circ\phi =f_n'$. This generates an
equivalence relation whose equivalence classes are denoted
$[M,v_0,f_n]$. Let $\FMT(\Z)$ denote the set of equivalence classes
of framed mixed Tate motives which are unramified over $\Z$, as
defined in \cite{Go1}. It is a commutative, graded Hopf algebra.

To every convergent cohomology class $\omega \in
H^\ell(\Mod^\delta_{0,n})$, we associate the following framed mixed
Tate motive:\index{Framed mixed Tate motive}\index{${\mathcal{M}}(\Z),\ m(\omega)$}
\begin{equation} \label{momegadef}
m(\omega) = \big[M_\delta, [X_\delta],\omega\big]\ ,
\end{equation}
where $[X_\delta]$ denotes the relative homology class of the
standard cell. This defines a map ${\cal FC} \rightarrow \FMT(\Z)$.
 The maximal period of $m(\omega)$ is exactly the
cell-zeta value
$$\int_{X_\delta} \omega\ .$$

\begin{prop} The dihedral symmetry relation and modular shuffle relations
are motivic. In other words,
\begin{eqnarray}
m(\sigma^*(\omega))&=& m(\omega) \nonumber \ ,\\
m(\omega_1\cdot\omega_2) & = & m(\omega_1)\otimes m(\omega_2)\ , \nonumber
\end{eqnarray}
for every dihedral symmetry $\sigma$ of $X_\delta$, and for every
modular shuffle product $\omega_1\cdot\omega_2$ of convergent forms
$\omega_1,\omega_2$ on $\Mod_{0,r}$, $\Mod_{0,s}$ respectively.
\end{prop}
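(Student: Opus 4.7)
The plan is to realise each identity by exhibiting a morphism of mixed Tate motives that intertwines the Betti and de Rham framings on the two sides. Recall from the definition recalled above that $[M,v_0,f_n]=[M',v_0',f_n']$ as soon as some morphism $\phi\colon M\to M'$ in $\MT(\Z)$ satisfies $\phi\circ v_0=v_0'$ and $f_n\circ\phi=f_n'$, so in each case I only need to produce such a $\phi$ of geometric origin and check the two framing compatibilities on the Betti and de Rham realisations.

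For the dihedral symmetry, let $\sigma$ be a dihedral permutation of the marked points stabilising the cyclic order $\delta$. Then $\sigma$ extends to an automorphism of $\overline{\Mod}_{0,n}$ that fixes $X_\delta$ setwise, hence permutes among themselves the irreducible boundary divisors of $B_\delta$ and, by complementation, those of $A_\delta$ as well. Consequently $\sigma$ descends to an automorphism $\phi$ of the pair $(\overline{\Mod}_{0,n}\setminus A_\delta,\; B_\delta\setminus (B_\delta\cap A_\delta))$, and so to an automorphism of the motive $M_\delta$ in \eqref{Mdelta}. The Betti framing $[X_\delta]$ is preserved because $\sigma(X_\delta)=X_\delta$, while the de Rham compatibility $f_n\circ\phi=f_n'$ reduces to the identity $\omega\circ\sigma^{-1}_*=\sigma^*\omega$, which is just the definition of pullback of forms. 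Equivalence of framed motives under $\phi$ then gives $m(\sigma^*\omega)=m(\omega)$.

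For the modular shuffle, I would use the forgetful map $f\colon\Mod_{0,n}\to\Mod_{0,r}\times\Mod_{0,s}$ obtained by contracting the two complementary subsets of marked points whose partition encodes the shuffle. After passing to the partial compactifications entering into the definitions of $M_\delta$, $M_{\delta_1}$ and $M_{\delta_2}$, this $f$ induces, via $f^*$ together with the K\"unneth isomorphism, a morphism of mixed Tate motives $\phi\colon M_{\delta_1}\otimes M_{\delta_2}\to M_\delta$. By the very definition of the modular shuffle product, built so that $\int_{X_\delta}\omega_1\cdot\omega_2$ equals $\bigl(\int_{X_1}\omega_1\bigr)\bigl(\int_{X_2}\omega_2\bigr)$, one has $f^*(\omega_1\wedge\omega_2)=\omega_1\cdot\omega_2$, so the de Rham framings match under $\phi$. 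On the Betti side, the standard cell $X_\delta$ is the unique component of $f^{-1}(X_1\times X_2)$ corresponding to the trivial shuffle of the orderings $\delta_1$ and $\delta_2$, and $f$ restricts to a homeomorphism $X_\delta\to X_1\times X_2$, so the relative-homology classes $[X_\delta]$ and $[X_1]\otimes[X_2]$ also correspond under $\phi$. Equivalence of framed motives under $\phi$ then yields $m(\omega_1\cdot\omega_2)=m(\omega_1)\otimes m(\omega_2)$.

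The hard part will be the second half of this plan: the forgetful maps between moduli spaces do not extend as regular morphisms between the stable compactifications $\overline{\Mod}_{0,n}$, so one must work with the explicit birational model of $f$ used by Brown and Goncharov--Manin and verify that it induces a morphism of pairs compatible with the partitions of boundary divisors into $A_\delta$ and $B_\delta$ on both source and target, in such a way that the resulting map is well-defined in $\MT(\Z)$. Once this geometric check is carried out, both the identification of de Rham framings (which is built into the definition of $\omega_1\cdot\omega_2$) and the identification of Betti framings (the cell-matching under the trivial shuffle) are essentially tautological. The dihedral case, by contrast, is formal from the outset.
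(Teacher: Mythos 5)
The paper itself offers no written proof here (it simply asserts that the motivic nature ``will be clear from the definitions''), so your proposal is an attempt to supply the argument the authors leave implicit. Your strategy --- exhibit a morphism in $\MT(\Z)$ of geometric origin and check the two framing compatibilities --- is the right one, and the dihedral half is correct: $\sigma$ preserves $X_\delta$, hence preserves the partition of the boundary into $B_\delta$ and $A_\delta$, hence induces an automorphism of $M_\delta$ matching the framings.

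There is, however, a genuine error in the modular shuffle half, precisely at the Betti-framing step. The restriction of $f$ to $X_\delta$ is \emph{not} a homeomorphism onto $X_1\times X_2$: as proposition \ref{prodmaprel} records, $f^{-1}(X_1\times X_2)$ decomposes as the disjoint union $\bigsqcup_{\gamma\in\gamma_1\sha\gamma_2}X_\gamma$, each cell mapping injectively but none surjectively (if $f|_{X_\delta}$ were onto, the product of two periods would be a single integral over $X_\delta$ with no shuffle sum, contradicting e.g.\ the $\zeta(2)^2$ computation in $\S\ref{calculations}$, where the preimage splits into two cells contributing $\tfrac{7}{10}\zeta(2)^2$ and $\tfrac{3}{10}\zeta(2)^2$). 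So the class corresponding to $[X_1]\otimes[X_2]$ under your $\phi$ is $\sum_\gamma[X_\gamma]$, not $[X_\delta]$, and correspondingly $f^*(\omega_1\wedge\omega_2)$ is not yet the form $\omega_1\cdot\omega_2$ of definition (\ref{pm}), which is $\sum_\gamma\tau_\gamma^{-1}(\tau_1(\omega_1)\sha\tau_2(\omega_2))$ framed by $[X_\delta]$ alone. To finish you must additionally show that the full $\Sym(n)$-action is motivic --- each $\tau_\gamma$ gives an isomorphism $M_\gamma\simeq M_\delta$ carrying $[X_\gamma]$ to $[X_\delta]$ and $\omega$ to $\tau_\gamma^{-1}(\omega)$ --- and then sum the resulting equivalences of framed motives; this is the same argument as your dihedral case except that $\tau_\gamma$ does not stabilise $\delta$, so it relates two different motives $M_\gamma$ and $M_\delta$ rather than giving an automorphism of one. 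A small correction to your closing remark: the forgetful maps do extend to regular morphisms $\overline{\Mod}_{0,n}\to\overline{\Mod}_{0,T_i}$ of the stable compactifications (Knudsen); the genuine check is only that $f$ respects the chosen decompositions of the boundary into $A$'s and $B$'s on source and target.
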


The motivic nature of our constructions will be clear from the
definitions. We therefore obtain a well-defined map $m$ from the
algebra of formal cell-zeta numbers ${\cal FC}$ to $\FMT(\Z)$.

\begin{conj}
The map $m:\mathcal{FC}\To \FMT(\Z)$  is an isomorphism.
\end{conj}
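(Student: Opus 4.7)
The plan is to attack the conjecture by breaking it into the three standard components: well-definedness, surjectivity, and injectivity of $m$, in that order, since the last will clearly be the decisive step.

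First, I would establish that $m$ descends to a well-defined algebra map on $\mathcal{FC}$. The preceding proposition already asserts that the dihedral symmetry and modular shuffle relations are motivic, so the work here is to verify this carefully. For dihedral relations, note that $\sigma\in \Sym_n$ restricting to a symmetry of the standard cell acts as an automorphism of the pair $(\overline{\Mod}_{0,n}\setminus A_\delta,\, B_\delta\setminus(B_\delta\cap A_\delta))$, so $\sigma^*$ fixes the framed class $[M_\delta,[X_\delta],\omega]$. For the modular shuffle, the forgetful product map $f:\Mod_{0,n}\to \Mod_{0,r}\times \Mod_{0,s}$ extends to a map of the compactified pairs (this uses the Knudsen–Keel description of $\overline{\Mod}_{0,n}$), and under $f^*$ the formula $(\ref{introcellprodmap})$ lifts to the tensor product of framed motives in $\FMT(\Z)$, giving $m(\omega_1\cdot\omega_2)=m(\omega_1)\otimes m(\omega_2)$. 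Once this is in place, $m:\mathcal{FC}\to \FMT(\Z)$ is a well-defined morphism of graded commutative Hopf algebras (the Hopf structure on $\mathcal{FC}$ being induced from the coaction compatible with the Goncharov coproduct on motivic periods).

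Second, for surjectivity, I would combine Brown's theorem (which identifies $\mathcal{C}$ with $\MZV$) with the Deligne–Goncharov result that $\FMT(\Z)$ is generated, as a graded Hopf algebra, by the motivic multiple zeta values $\zeta^{\mathfrak{m}}(n_1,\ldots,n_r)$. Since Kontsevich's integral representation $(\ref{itint})$ exhibits every motivic MZV as $m(\omega)$ for an explicit insertion form $\omega\in H^\ell(\Mod_{0,n}^\delta)$, and since insertion forms span the convergent cohomology by Theorem \ref{convKS}, the image of $m$ contains a generating set of $\FMT(\Z)$. Compatibility of $m$ with products (from the first step) then gives surjectivity.

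Third and hardest, injectivity: one must show that every relation in $\FMT(\Z)$ between the motives $m(\omega)$ is already a consequence of dihedral and modular shuffle relations. This is the main obstacle, because it is a geometric refinement of the statement that the double shuffle and Hoffman relations generate all relations between motivic MZVs, which is itself an open conjecture. My strategy would be to proceed by induction on the depth and weight using the action of the motivic Galois group $\Gal^{\MT}(\Z)$. Concretely, one would define a coaction $\Delta^{\mathcal{FC}}:\mathcal{FC}\to \mathcal{A}\otimes \mathcal{FC}$ (with $\mathcal{A}$ the Hopf algebra of the fundamental group of $\MT(\Z)$) modelled on Goncharov's formula on iterated integrals, check that $m$ intertwines this with the motivic coaction on $\FMT(\Z)$, and then reduce injectivity to the primitive (weight one) part, where it becomes the classical fact that $\zeta^{\mathfrak{m}}(2n+1)$ is a non-trivial class. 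The serious difficulty is defining $\Delta^{\mathcal{FC}}$ intrinsically on insertion words and verifying that it respects the modular shuffle ideal; this is the analogue, in the cell-zeta world, of showing that the motivic coproduct preserves the double shuffle relations, and is where I expect the proof to stand or fall.
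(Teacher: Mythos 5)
You are attempting to prove a statement that the paper itself presents only as a conjecture: the authors give no proof of the isomorphism $m:\mathcal{FC}\rightarrow \FMT(\Z)$, only the preceding proposition (that the dihedral and modular shuffle relations are motivic, which yields well-definedness of $m$) together with numerical evidence that $\dim_{\Q}\mathcal{FC}_{n-3}$ agrees with the graded dimension $d_{n-3}$ of $\FMT(\Z)$ for $n\le 9$. So there is no proof in the paper to compare yours against, and your proposal does not close the gap either.

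The decisive gap is exactly where you place it: injectivity. Proving that every relation among the framed motives $m(\omega)$ is a consequence of the dihedral and modular shuffle relations is not a technical step in a proof of the conjecture --- given the known structure of $\FMT(\Z)$ as the dual of $\mathcal{U}\F$, it \emph{is} the conjecture. Your proposed route, via a coaction $\Delta^{\mathcal{FC}}$ on $\mathcal{FC}$ intertwined by $m$ with the motivic coaction, is a sensible research programme, but you neither define this coaction on insertion words nor verify that it preserves the modular shuffle ideal; you explicitly concede that this is where the argument ``stands or falls.'' Deferring the essential step to an unverified construction means the proposal is a plan, not a proof. A secondary issue: your surjectivity argument assumes that $\FMT(\Z)$ is generated by motivic multiple zeta values, i.e.\ that the motivic fundamental group of $\Pro^1\setminus\{0,1,\infty\}$ generates $\MT(\Z)$; this is a deep external input (not available as a theorem at the level of this paper) and must be flagged as such rather than folded silently into the argument.
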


Since the structure of $\FMT(\Z)$ is known, we are led to more precise
conjectures on the structure of the formal cell-zeta algebra. To
motivate this, let $\LIE=\Q[e_3,e_5,\ldots, ]$\label{LIEDEF}\index{$\LIE$, $F$} denote
the free Lie
algebra generated by one element $e_{2n+1}$ in each odd degree. Set
$$\F = \Q[e_2] \oplus  \LIE.   $$
The underlying graded vector space is generated by, in increasing
weight:
$$e_2 \spc e_3 \spc e_5 \spc e_7 \spc [e_3,e_5] \spc e_9 \spc [e_3,
e_7] \spc [e_3,[e_5,e_3]]\,
, \, e_{11} \spc [e_3,e_9]\, , \, [e_5,e_7] \spc \ldots \ .$$
Let  ${\cal U}\F$  denote the universal enveloping algebra of the
Lie algebra ${\F}$. Then it is known that $\FMT(\Z)$ is dual to
${\cal U}\F$. From the explicit description of $\F$ given above, one
can deduce that the graded dimensions $d_k= \dim_\Q \gr^W_k
\FMT(\Z)$ satisfy Zagier's recurrence relation
\begin{equation} \label{ZagierRec}
d_k=d_{k-2}+d_{k-3}\ ,
\end{equation}
with the initial conditions $d_0=1$, $d_1=0$, $d_2=1$.

\begin{conj} The dimension of the $\Q$-vector space of formal cell-zeta
values on $\Mod_{0,n}$, modulo all linear relations obtained from
the dihedral and modular shuffle relations, is equal to $d_\ell$,
where $n=\ell+3$.  Equivalently, the dual Lie algebra to the co-Lie
algebra obtained by quotienting ${\cal FC}$ by products is
isomorphic to $\F$.
\end{conj}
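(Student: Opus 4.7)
The natural strategy is to lift the conjecture to the motivic level and attempt to show that the realization map $m\colon \mathcal{FC}\to\FMT(\Z)$ defined in \eqref{momegadef} is an isomorphism of graded Hopf algebras. Since $\FMT(\Z)$ is dual to $\mathcal{U}\F$ and has graded dimensions satisfying the Zagier recursion \eqref{ZagierRec}, such an isomorphism would at once yield both $\dim_\Q \mathcal{FC}_\ell = d_\ell$ and the identification of the co-Lie algebra obtained by quotienting $\mathcal{FC}$ by products with the graded dual of $\F$. The preceding proposition establishes that $m$ is well-defined, so the remaining task is to prove surjectivity and injectivity of $m$ in each graded weight.

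Surjectivity should follow by assembling existing results. The composite $\mathcal{FC}\twoheadrightarrow\mathcal{C}$, combined with the isomorphism $\mathcal{C}\simeq\MZV$ coming from Brown's theorem \cite{Br} and the Kontsevich integral representation \eqref{itint}, places every multiple zeta value in the image of the period map of $m$. Lifting this to the motivic side via the Deligne--Goncharov result \cite{DG} that motivic multiple zeta values generate $\FMT(\Z)$ then gives surjectivity of $m$ itself, and therefore $\dim_\Q \mathcal{FC}_\ell \geq d_\ell$.

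The hard half, and the principal obstacle, is the matching upper bound $\dim_\Q \mathcal{FC}_\ell \leq d_\ell$, which combined with surjectivity forces $m$ to be an isomorphism on each graded piece. My plan here is purely combinatorial: start from the insertion basis of $H^\ell(\Mod_{0,n}^\delta)$ constructed in theorem \ref{convKS}, lift it to a spanning set of formal insertion words in $\mathcal{FC}$, and then show that the dihedral relations together with the quadratic modular shuffle relations associated to the full family of forgetful product maps $\Mod_{0,n}\to\Mod_{0,r}\times\Mod_{0,s}$ suffice to reduce this spanning set to one of cardinality exactly $d_\ell$. The real difficulty is that these intermediate product relations interpolate between the classical shuffle and stuffle relations in ways that have no known algebraic normal form, and even the analogous upper bound for the formal MZV algebra $\FZ$ is at present unknown. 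A reasonable first target is therefore to verify the bound computationally for small $n$, as is done in section \ref{calculations} for $n\leq 7$, and to seek a recursive structure on $\mathcal{FC}$ compatible with the tower $\Mod_{0,n}\hookrightarrow\Mod_{0,n+1}$ that would propagate the bound inductively. Absent such a structural result, the conjecture appears to lie genuinely beyond current techniques, and the realistic expectation is that any complete proof will require substantial new motivic or combinatorial input rather than a further elaboration of the methods used in the body of this thesis.
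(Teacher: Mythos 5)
The statement you are addressing is a conjecture, and the paper offers no proof of it: the only evidence given is direct computational verification for $\Mod_{0,n}$ with $n\le 9$ (e.g.\ for $n=9$ the $1089$-dimensional space $H^6(\Mod_{0,9}^\delta)$ collapses to dimension $d_6=2$ after imposing the dihedral and modular shuffle relations). Your proposal correctly recognizes this status and does not claim a proof, so there is no gap in the sense of a failed argument; what you have written is a strategy outline, and it is broadly the same strategy the paper itself envisages (reduce the conjecture to the statement that $m\colon\mathcal{FC}\to\FMT(\Z)$ is an isomorphism, which is the paper's preceding conjecture).

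One substantive correction, however. You assert that surjectivity of $m$ "should follow by assembling existing results," citing Deligne--Goncharov for the claim that motivic multiple zeta values generate $\FMT(\Z)$. That is not what \cite{DG} proves: Deligne--Goncharov establish the \emph{upper} bound $\dim_\Q\MZV_n\le d_n$ by showing the relevant motives lie in $\MT(\Z)$, but the generation statement --- that the framed motives attached to $\Pro^1\setminus\{0,1,\infty\}$ (equivalently, the images $m(\omega)$ for MZV forms) exhaust $\FMT(\Z)$ --- was itself an open conjecture at the time of this thesis. So even the half of your plan you label as routine is not available by citation; both surjectivity and injectivity of $m$ are genuinely open here. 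Your assessment of the upper bound $\dim_\Q\mathcal{FC}_\ell\le d_\ell$ as the principal obstruction, and your proposal to attack it via the insertion basis of theorem \ref{convKS} together with computational verification in low weight, is exactly what the paper does in section \ref{calculations}; beyond that the paper, like your proposal, leaves the conjecture open.
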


We verified this  conjecture for $\Mod_{0,n}$ for
$n\leq 9$ by direct calculation (see $\S\ref{calculations}$).  When
$n=9$, the dimension of the convergent cohomology
$H^6(\Mod_{0,9}^\delta)$ is 1089, and after taking into account all
linear relations coming from dihedral and modular shuffle products,
this reduces to a vector space of dimension $d_6=2$.

To compare this picture with the classical picture of multiple zeta
values, let ${\cal FZ}$  denote the formal multi-zeta algebra. This
is the quotient of the free $\Q$-algebra generated by formal symbols
$(\ref{itint})$ modulo the regularised double shuffle relations. It
has been  conjectured that ${\cal FZ}$ is isomorphic to $\FMT(\Z)$,
which  leads to the second main conjecture.

\begin{conj} The formal algebras ${\cal FC}$ and ${\cal FZ}$ are
isomorphic.
\end{conj}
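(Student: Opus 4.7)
The plan is to construct maps in both directions and prove they are mutual inverses. To build a map $\phi:\mathcal{FZ}\to\mathcal{FC}$, I would send a generating symbol $\zeta^F(w)$ for a convergent word $w=x^{k_1-1}y\cdots x^{k_d-1}y$ to the formal polygon pair $([0,t_1,\ldots,t_\ell,1,\infty],\omega_w)$, where $\omega_w$ is the Kontsevich differential form corresponding to $w$ under the identification in proposition \ref{zetaper} (interpreted as a $01$-cell form or an insertion form depending on convergence). To show $\phi$ is well defined, one must verify that each defining relation of $\mathcal{FZ}$ (shuffle, stuffle, Hoffman) is already a consequence of the dihedral and modular shuffle relations defining $\mathcal{FC}$. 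The shuffle relation is the simplicial special case of the modular shuffle product map and should be essentially automatic from the definition of $\mathcal{FC}$. The stuffle relation should come, following Brown's observation recalled in the introduction, from the cubical product map viewed as another special case of modular shuffle; the task is to write this out purely on the polygon level, not on the level of the period integrals themselves. Hoffman's relation is more delicate: one would try to derive it by combining product map relations with the vanishing of shuffle-with-one-point sums in $I_Z$, possibly via a suitable regularization analogue on polygons.

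For the inverse $\psi:\mathcal{FC}\to\mathcal{FZ}$, the strategy is to lift Brown's theorem (that every period on $\Mod_{0,n}$ is a $\Q$-linear combination of MZVs) from the level of real numbers to the combinatorial level of formal generators. Concretely, one would give an algorithmic reduction that rewrites every insertion word in $\mathcal{FC}$ as a $\Q$-linear combination of the images $\phi(\zeta^F(w))$ of Kontsevich-type polygon pairs, using \emph{only} the dihedral and modular shuffle relations of $\mathcal{FC}$. The natural candidate is to mimic Brown's inductive procedure on $\Mod_{0,n}$: choose a marked point to forget, apply a product map decomposition, and iterate. At each step one must check that the combinatorial operations used are modular shuffle or dihedral moves, not numerical identities between periods.

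Once $\phi$ and $\psi$ are constructed, showing $\phi\circ\psi=\mathrm{id}_{\mathcal{FC}}$ and $\psi\circ\phi=\mathrm{id}_{\mathcal{FZ}}$ would reduce to checking the identities on generators. For $\psi\circ\phi$, this is nearly tautological since $\psi$ would be the identity on the image of $\phi$ by construction. For $\phi\circ\psi$, one must verify that Brown's algorithmic reduction, applied formally to a Kontsevich polygon pair, returns the original $\zeta^F(w)$ up to relations in $\mathcal{FZ}$; this amounts to showing the algorithm is compatible with the known regularized shuffle and stuffle relations, which is where the classical double-shuffle theory enters.

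The main obstacle is the construction of $\psi$, which requires a fully combinatorial refinement of Brown's theorem. Brown's proof ultimately rests on an explicit computation of the periods of the motivic fundamental group of $\Mod_{0,n}$, and transporting it to formal level means showing that no genuinely analytic identities are needed beyond the modular shuffle and dihedral symmetries. A related difficulty, implicit in the authors' remark that computer calculations support but do not yet prove that $\mathcal{FC}$ contains the stuffle relation, is that even the forward map $\phi$ is not known to be well defined: one does not yet have a polygon-level derivation of stuffle from modular shuffle. I would therefore attack this intermediate problem first, since producing such a derivation would simultaneously establish the well-definedness of $\phi$ and provide the combinatorial mechanism needed to build $\psi$.
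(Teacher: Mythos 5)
The statement you are addressing is stated in the paper as a conjecture, not a theorem, and the paper offers no proof of it; immediately after stating it, the authors note that while the natural linear map $\mathcal{FZ}\rightarrow\mathcal{FC}$ induced by Kontsevich's integral representation exists, they cannot show it is an algebra homomorphism, precisely because the regularised stuffle relations have not been derived from the dihedral and modular shuffle relations (only computer evidence in low weight supports this). Your proposal is therefore not a proof but a research programme, and to your credit you identify exactly the same two obstructions the authors do: the polygon-level derivation of stuffle needed for the well-definedness of $\phi$, and a fully combinatorial refinement of Brown's theorem needed for $\psi$.

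The genuine gap is that every step of your plan that goes beyond what the paper already establishes is left as an open problem rather than carried out. Deferring the stuffle derivation to ``write this out purely on the polygon level'' is deferring the central difficulty: Brown's cubical product map argument for stuffle is an identity of convergent integrals obtained after a change of variables, and transporting it to $\mathcal{FC}$ requires showing that the regularisation of the divergent terms which appear can be effected using only dihedral and modular shuffle moves --- this is not known. Similarly, Hoffman's relation involves a cancellation of divergences between the shuffle and stuffle regularisations, and no polygon-level regularisation formalism is defined in $\mathcal{FC}$. Finally, your map $\psi$ presupposes that Brown's reduction algorithm uses no identities beyond those imposed on $\mathcal{FC}$, which is essentially a restatement of the conjecture itself. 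As written, the proposal establishes nothing beyond the (correct) observation that the linear map $\phi$ exists on generators; the conjecture remains open.
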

Put more prosaically, this states that
the formal ring of periods of
$\Mod_{0,n}$ modulo dihedral and modular shuffle relations, is
isomorphic to the formal ring of periods of the motivic fundamental
group of $\Mod_{0,4}$ modulo the regularised double shuffle
relations.

By (\ref{itint}), we have a natural linear map ${\cal FZ}\rightarrow
{\cal FC}$. However, at present we cannot show that it is an algebra
homomorphism. Indeed, although it is easy to deduce the regularised
shuffle relation for the image of ${\cal FZ}$ in ${\cal FC}$ from
the dihedral and modular shuffle relations, we are unable to deduce
the regularised stuffle relations.
\begin{rem}
The motivic nature of the regularised double shuffle relations
proved to be somewhat difficult to establish \cite{Go1,Go2, T}. It
is interesting that the motivic nature of the dihedral and modular
shuffle relations we define here is immediate.
\end{rem}

\vspace{.3cm}
\section{The cell-zeta value algebra associated to moduli spaces of curves}

Let $\Mod_{0,n}$\label{Mndefch3}, $n\geq 4$ denote the moduli space of
genus zero 
curves (Riemann
spheres) with $n$ ordered marked points $(z_1,\ldots,z_n)$.  This
space is described by the set of $n$-tuples of distinct points
$(z_1,\ldots,z_n)$ modulo the equivalence relation given by the
action of ${\PSL}_2$.  Because this action is triply transitive,
there is a unique representative of each equivalence class such that
$z_1=0$, $z_{n-1}=1$, $z_n=\infty$.  We define simplicial coordinates
$t_1,\ldots,t_\ell$ on $\Mod_{0,n}$ by setting
\begin{equation} \label{zsimp}
t_1=z_2 \ ,\quad  t_2=z_3 \ , \quad \ldots\ ,\quad  t_\ell=z_{n-2},
\end{equation}
where $\ell=n-3$ is the dimension of $\Mod_{0,n}(\C)$.
This gives the familiar identification
\begin{equation}\label{simplicialisom}
\Mod_{0,n} \cong \{ (t_1,\ldots, t_\ell) \in
(\Pro^1-\{0,1,\infty\})^\ell\mid
t_i\neq t_j \hbox{ for all } i\neq j \}\
.\end{equation}

\vspace{.3cm}
\subsection{Cell forms}

\begin{defn}\label{Cycstrucdef} Let $S=\{1,\ldots,n\}$\index{$S$}.  A {\it cyclic
    structure}\index{Cyclic structure}\index{Dihedral structure} 
  $\gamma$ on $S$ is
a cyclic ordering  of the elements of $S$ or equivalently, an
identification of the elements of $S$ with the edges of an oriented
$n$-gon modulo rotations. A {\it dihedral structure} $\delta$ on $S$
is an identification with the edges of an unoriented $n$-gon modulo
dihedral symmetries.
\end{defn}
We can write a cyclic structure as an ordered $n$-tuple
$\gamma=(\gamma(1), \gamma(2),...,\gamma(n))$ considered up to
cyclic rotations.

\begin{defn}\label{cellform}\index{Cell form, $[s_1,...,s_n]$} Let
  $(z_1,\ldots,z_n)=(0,t_1,\ldots,t_\ell,1,\infty)$
be a representative of a point on $\Mod_{0,n}$ as above. Let
$\gamma$ be a cyclic structure on $S$, and let $\sigma$ be the
unique ordering of $z_1,\ldots, z_n$ compatible with $\gamma$ such
that $\sigma(n)=n$. The \emph{cell-form} corresponding to $\gamma$
is defined to be the differential $\ell$-form
\begin{equation}\label{omegadef}
\omega_{\gamma} = [z_{\sigma(1)},z_{\sigma(2)},\ldots,z_{\sigma(n)}]=
\frac{dt_1\cdots dt_\ell}
{(z_{\sigma(2)}-z_{\sigma(1)}) (z_{\sigma(3)}-z_{\sigma(2)})\cdots
  (z_{\sigma(n-1)} -z_{\sigma(n-2)})}.
\end{equation}
In other words, by writing the terms of $\omega_\gamma =
[z_{\sigma(1)}, ... ,z_{\sigma(n)}]$ clockwise around a polygon, the
denominator of a cell form is just
the product of
successive differences $(z_{\sigma(i)} -z_{\sigma(i-1)})$
 with the two
factors containing $\infty$ simply left out.
\end{defn}

\begin{rem}\label{grlem}
To every dihedral structure there correspond two opposite cyclic
structures. If these are given by $\gamma$ and $\tau$, then we have
\begin{equation} \omega_{\gamma} = (-1)^n
  \omega_{\tau}. \end{equation}
\end{rem}

\begin{example} Let $n=7$, and $S=\{1,\ldots, 7\}$. Consider the cyclic
structure $\gamma$ on $S$ given by the order $1635724$.  The unique
ordering $\sigma$ of $S$ compatible with $\gamma$ and having
$\sigma(n)=n$, is the ordering $2416357$, which can be depicted by
writing the elements $z_{\sigma(1)},\ldots,z_{\sigma(7)}$, or $0$,
$1$, $t_2$, $t_4$, $\infty$, $t_1$, $t_3$ clockwise around a circle:
$$\gamma = (z_{\sigma(1)},\ldots,z_{\sigma(7)})=
(t_1,t_3,0,1,t_2,t_4,\infty).$$
The corresponding cell-form on $\Mod_{0,7}$ is
$$\omega_{\gamma}=[t_1,t_3,0,1,t_2,t_4,\infty]= {dt_1 dt_2 dt_3 dt_4
  \over (t_3-t_1) (-t_3)(t_2-1)(t_4-t_2)}\ .$$
\end{example}

The symmetric group $\Sym(S)$ acts on $\Mod_{0,n}$ by permutation of
the marked points. It therefore acts both on the set of cyclic
structures $\gamma$, and also on the ring of differential forms on
$\Mod_{0,n}$. These actions coincide for cell forms.

\begin{lem} \label{lemsymaction}
For every cyclic structure $\gamma$ on $S$, we have the formula:
\begin{equation}\label{symaction}
\sigma^*(\omega_\gamma) = \omega_{\sigma(\gamma)}\qquad \hbox{ for
all } \sigma \in \Sym(S)\ .\end{equation}
\end{lem}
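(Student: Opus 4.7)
The plan is to establish Lemma \ref{lemsymaction} by lifting the identity to the ambient configuration space $\widetilde{U}\subset(\Pro^1)^n$ of ordered $n$-tuples $(z_1,\ldots,z_n)$ of distinct points, where the $\Sym(S)$-action is tautological and no marked point has been normalised to $0$, $1$, or $\infty$. For each cyclic structure $\gamma$ on $S$, I would define the meromorphic top-form
$$\widetilde{\omega}_\gamma \;=\; \frac{dz_1\wedge\cdots\wedge dz_n}{\prod_{i\in\Z/n}\bigl(z_{\gamma(i+1)}-z_{\gamma(i)}\bigr)}$$
on the open locus where every $z_i$ is finite. Because $\gamma$ is a cyclic order, each index $j\in S$ appears in exactly two denominator factors. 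Using the standard transformation laws $z_i-z_j\mapsto(ad-bc)(z_i-z_j)/[(cz_i+d)(cz_j+d)]$ and $dz_i\mapsto(ad-bc)\,dz_i/(cz_i+d)^2$ under $z\mapsto(az+b)/(cz+d)$, a direct check then shows that the factors $(cz_i+d)$ and the powers of $(ad-bc)$ all cancel, so $\widetilde{\omega}_\gamma$ is invariant under the diagonal $\PSL_2$-action.

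The second step is to identify the cell-form $\omega_\gamma$ of Definition \ref{cellform} with the restriction of $\widetilde{\omega}_\gamma$ to the standard slice $\{z_1=0,\ z_{n-1}=1,\ z_n=\infty\}$ of the $\PSL_2$-action. This requires care at $z_n=\infty$: introducing the local parameter $u=1/z_n$, one has $dz_n=-du/u^2$ and $z_{\gamma(k)}-z_n\sim -1/u$ for the two denominator factors adjacent to $z_n$ in $\gamma$. The three inverse powers of $u$ from these two factors combine with the $u^2$ coming from $dz_n$ to give a finite limit as $u\to0$, and that limit is exactly the formula of $(\ref{omegadef})$ in which the two factors containing $\infty$ are omitted.

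With the lift in hand, the equivariance becomes essentially formal on $\widetilde{U}$. The action of $\sigma\in\Sym(S)$ on $\widetilde{U}$ satisfies $\sigma^* z_j=z_{\sigma^{-1}(j)}$, so substituting into the numerator and denominator of $\widetilde{\omega}_\gamma$ merely permutes indices, yielding (up to a sign of $\mathrm{sgn}(\sigma)$ from reordering $dz_1\wedge\cdots\wedge dz_n$) exactly $\widetilde{\omega}_{\sigma(\gamma)}$. Since the $\Sym(S)$-action commutes with the diagonal $\PSL_2$-action on $\widetilde{U}$, the identity descends to $\Mod_{0,n}$, and restriction to the standard slice gives $\sigma^*\omega_\gamma=\omega_{\sigma(\gamma)}$ as stated.

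The main obstacle I foresee is tracking the signs correctly. The reordering of $dz_1\wedge\cdots\wedge dz_n$ under $\sigma$ produces a factor of $\mathrm{sgn}(\sigma)$, and the restriction to the slice depends on where the normalised labels $1$, $n-1$, $n$ sit inside the cyclic word $\gamma$; both contributions interact with the dihedral sign rule of Remark \ref{grlem} and with the convention for $\sigma(\gamma)$ as an \emph{oriented} polygon in $\mathcal{P}$. The point of lifting to $\widetilde{U}$ is to concentrate this sign bookkeeping into a single transparent calculation on full $n$-tuples rather than scattering it across the $\PSL_2$-renormalisations that arise when $\sigma$ moves the label $n$ off of $\infty$; the cancellation should then follow from the inherent orientation convention built into the assignment $\gamma\mapsto\omega_\gamma$.
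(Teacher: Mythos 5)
Your strategy is the same as the paper's: lift $\omega_\gamma$ to the regular $n$-form $\widetilde{\omega}_\gamma$ on the configuration space of ordered distinct points, where the $\Sym(S)$-equivariance is a matter of permuting indices, verify $\PSL_2$-invariance by the cross-ratio-type cancellation you describe (which is exactly the paper's "simple calculation"), and then descend to $\Mod_{0,n}$.

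The one step that does not work as written is the descent itself. You propose to recover $\omega_\gamma$ as "the restriction of $\widetilde{\omega}_\gamma$ to the standard slice $\{z_1=0,\,z_{n-1}=1,\,z_n=\infty\}$", but $\widetilde{\omega}_\gamma$ has top degree $n$ on an $n$-dimensional space, so its pullback to any $(n-3)$-dimensional submanifold is identically zero; your local computation at $u=1/z_n$ in fact leaves a residual $du$-factor (and it is two, not three, inverse powers of $u$ coming from the two denominator factors adjacent to $z_n$), which restricts to zero on the slice rather than producing the formula of $(\ref{omegadef})$. The paper replaces this by the correct descent mechanism: letting $\pi:(\Pro^1)^S_*\to\Mod_{0,S}$ be the quotient by the diagonal $\PSL_2$-action and $v$ the (unique up to rational scalar) invariant regular $3$-form on its fibres, one shows that $\omega_\gamma$ is the unique $\ell$-form on $\Mod_{0,S}$ satisfying $\omega_\gamma\wedge v=\widetilde{\omega}_\gamma$. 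The leftover differentials $dz_1$, $dz_{n-1}$, $du$ that your slice computation produces are precisely what is absorbed into $v$. Once the descent is phrased this way, your argument closes exactly as the paper's does: equivariance upstairs, invariance of $v$ (it is canonical, and the $\Sym(S)$- and $\PSL_2$-actions commute), and uniqueness of the factorization together give $\sigma^*(\omega_\gamma)=\omega_{\sigma(\gamma)}$. This formulation is also where your legitimate worry about the $\mathrm{sgn}(\sigma)$ from reordering $dz_1\wedge\cdots\wedge dz_n$, and about which labels sit at $0,1,\infty$, gets concentrated and cancelled, rather than in a slice-by-slice renormalisation.
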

\begin{proof}Consider the regular $n$-form on
$(\Pro^1)^S_*$ defined by the formula:
\begin{equation}\label{omegalift}
\widetilde{\omega}_\gamma = {dz_1 \wedge \ldots \wedge dz_n \over
(z_{\gamma(1)}-z_{\gamma(2)})\ldots (z_{\gamma(n)}- z_{\gamma(1)})}
\ .\end{equation}
 It is clearly satisfies
 $\sigma^*(\widetilde{\omega}_{\gamma})=
 \widetilde{\omega}_{\sigma(\gamma)}$ for all $\sigma \in D_{\gamma}$.
 A simple calculation shows that $\widetilde{\omega}_\gamma$
is invariant under the action of $\PSL_2$ by M\"obius
transformations. Let $\pi:(\Pro^1)^S_* \rightarrow \Mod_{0,S}$
denote the projection map with fibres isomorphic to  $\PSL_2$. There
is a unique (up to scalar multiple in $\Q^\times$) non-zero
invariant regular 3-form $v$ on $\PSL_2(\C)$ which is defined over
$\Q$. Then, by renormalising $v$ if necessary, we have
$\omega_{\gamma}\wedge v = \widetilde{\omega}_{\gamma}\ .$
 In fact,
 $\omega_{\gamma}$ is the unique
$\ell$-form on $\Mod_{0,S}$ satisfying  this equation. We deduce
that $\sigma^*(\omega_{\gamma})= \omega_{\sigma(\gamma)}$ for all
$\sigma \in D_{\gamma}$.
\end{proof}

Each dihedral structure $\eta$ on $S$ corresponds to a unique
connected component of the real locus $\Mod_{0,n}(\R)$, namely the
component associated to the set of Riemann spheres with real marked
points $(z_1,\ldots,z_n)$ whose real ordering is given by $\eta$. We
denote this component by $X_{S,\eta}$ or $X_{n,\eta}$.  It is an
algebraic manifold with corners with the combinatorial structure of
a Stasheff polytope, so we often refer to it as a
\emph{cell}\label{celldefCH3}\index{Cell, $X_{n,\eta}$}. A
cyclic structure compatible with $\eta$ corresponds to a choice of
orientation of this cell.
 Let $\delta$ once and for all denote the
cyclic order corresponding to the ordering $(1,2,\ldots,n)$. We call
$X_{S,\delta}=X_{n,\delta}$ the \emph{standard cell}\index{Standard cell, $X_{n,\delta}$}.  It is the set
of points on $\Mod_{0,n}$ given by real marked points
$(0,t_1,\ldots,t_\ell,1,\infty)$ in that cyclic order; in simplicial
coordinates it is given by  the standard real simplex
$0<t_1<\ldots<t_\ell<1$.

The distinguishing feature of cell-forms, from which they derive their name,
is given in the following proposition.

\vspace{.2cm}
\begin{prop}\label{CORomegapoles} Let $\eta$ be a dihedral structure
on $S$, and let $\gamma$ be either of the two cyclic substructures
of $\eta$.  Then the cell form $\omega_\gamma$  has  simple poles along the
boundary of the cell $X_{S,\eta}$ and no poles anywhere else.
\end{prop}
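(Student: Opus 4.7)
The plan is to identify the pole order of $\omega_\gamma$ along every boundary divisor $d_A$ of $\overline{\Mod}_{0,n}$, the divisors being indexed by partitions $\{A,A^c\}$ of $S$ with $|A|,|A^c|\geq 2$, and to match this with the (standard) combinatorial description of the boundary of the Stasheff cell $X_{S,\eta}$. Explicitly, the boundary of $X_{S,\eta}$ is the union of those $d_A$ for which $A$ (equivalently $A^c$) is a single consecutive block in the dihedral order $\eta$; since the two cyclic refinements of $\eta$ differ only by reversing orientation, a subset $A$ is a consecutive block in $\eta$ if and only if it is a consecutive block in $\gamma$. Because all factors $z_{\sigma(i+1)}-z_{\sigma(i)}$ in the denominator of $\omega_\gamma$ are nowhere vanishing on the open stratum $\Mod_{0,n}$, the form is already holomorphic there, so only the pole order along each $d_A$ has to be computed.

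I will work on the lift $\widetilde{\omega}_\gamma$ on $(\Pro^1)^S_\ast$ defined by \eqref{omegalift}. By the proof of Lemma \ref{lemsymaction}, $\omega_\gamma\wedge v = \widetilde{\omega}_\gamma$ with $v$ a nowhere-vanishing $\PSL_2$-invariant $3$-form, so the pole orders of $\omega_\gamma$ and $\widetilde{\omega}_\gamma$ along $d_A$ coincide. Using Lemma \ref{lemsymaction} and the identity $\sigma^\ast(d_A)=d_{\sigma^{-1}(A)}$, I may after a permutation assume that the three marked points used to kill the $\PSL_2$-freedom lie in $A^c$ (possible for $n\geq 5$ after replacing $A$ by $A^c$ if necessary, using $d_A=d_{A^c}$; the case $n=4$ is immediate by inspection). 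Near the stratum where the points of $A$ collide, I introduce local coordinates by fixing $j_0\in A$, setting $z_{j_0}=a$ and $z_j=a+\epsilon w_j$ for $j\in A\setminus\{j_0\}$, and keeping the $z_j$ for $j\in A^c$ as independent coordinates; the divisor $d_A$ is then cut out by $\{\epsilon=0\}$.

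A direct expansion of the numerator gives
\[
\bigwedge_{j=1}^{n}dz_j \;=\; \Bigl(\bigwedge_{j\in A^c}dz_j\Bigr)\wedge da\wedge \bigwedge_{j\in A\setminus\{j_0\}}(w_j\,d\epsilon+\epsilon\,dw_j),
\]
where the wedge on the right is nonzero only if $d\epsilon$ appears at most once, so its lowest-order $\epsilon$-contribution is $\epsilon^{|A|-2}$. In the denominator of $\widetilde{\omega}_\gamma$, a factor $z_{\gamma(i)}-z_{\gamma(i+1)}$ vanishes on $\{\epsilon=0\}$ precisely when both $\gamma(i),\gamma(i+1)\in A$, in which case it equals $\epsilon\bigl(w_{\gamma(i)}-w_{\gamma(i+1)}\bigr)$; writing $m_A(\gamma)$ for the number of maximal consecutive blocks into which $A$ decomposes in the cyclic order $\gamma$, there are exactly $|A|-m_A(\gamma)$ such pairs, while the remaining factors are units near $\{\epsilon=0\}$. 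Hence the order of $\widetilde{\omega}_\gamma$, and therefore of $\omega_\gamma$, along $d_A$ equals $(|A|-2)-(|A|-m_A(\gamma))=m_A(\gamma)-2$, which is $-1$ exactly when $m_A(\gamma)=1$ (i.e.\ $A$ is a consecutive block of $\gamma$) and is $\geq 0$ otherwise. This is precisely the content of the proposition.

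\textbf{Main obstacle.} The delicate point is to justify that the local coordinate $\epsilon$ introduced on $(\Pro^1)^S_\ast$ really pulls back from a local defining equation of $d_A$ on $\overline{\Mod}_{0,n}$ transverse to $d_A$, so that pole orders of $\widetilde{\omega}_\gamma$ upstairs translate into pole orders of $\omega_\gamma$ downstairs. This amounts to invoking Keel's iterated-blowup construction of $\overline{\Mod}_{0,n}$ to identify $d_A$ with the exceptional divisor above the collision locus of the points in $A$; once this identification is made, the $\epsilon$ above is, up to a unit, the normal coordinate of that blowup.
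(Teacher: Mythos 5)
Your proposal is correct, and it arrives at the same intermediate fact as the paper -- that $\ord_{d_A}\omega_\gamma=m_A(\gamma)-2$, where $m_A(\gamma)$ is the number of maximal consecutive blocks into which $A$ splits in the cyclic order $\gamma$ -- but by a genuinely different route. The paper does no local-coordinate computation: it factors $\omega_\gamma=f_\gamma\,\omega_0$, with $\omega_0$ the canonical volume form and $f_\gamma$ an explicit product of cross-ratio-type factors, and quotes Proposition 7.5 of Brown's thesis \cite{Br} to get the closed formula $2\,\ord_D(\omega_\gamma)=(\ell-1)-\sum_{i}\I_D(\gamma(i),\gamma(i+1))$; the endgame is then phrased in terms of how many indicators $\I_D(\gamma(i),\gamma(i+1))$ can equal $1$ (all but two exactly when $A$ is a single block) rather than in terms of $m_A(\gamma)$. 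The two are equivalent, since $\sum_i\I_D(\gamma(i),\gamma(i+1))=n-2m_A(\gamma)$, whence the paper's formula also yields $\ord_D=m_A(\gamma)-2$. What your blow-up computation buys is self-containedness -- you derive the order formula from scratch by weighing the $\epsilon$-order $|A|-2$ of the numerator against the $\epsilon$-order $|A|-m_A(\gamma)$ of the denominator -- at the price of the foundational point you flag yourself, namely that $\epsilon$ is, up to a unit, a local equation for $d_A$ on $\overline{\Mod}_{0,n}$. That identification is exactly what Keel's and Kapranov's blow-up descriptions provide, and the paper uses the same local model without comment in its proof of Proposition \ref{propresformula}, so invoking it is consistent with the paper's own standard of rigour. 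Two small points to tighten: your chart $(a,\epsilon,(w_j),(z_j)_{j\in A^c})$ carries one redundant parameter (the rescaling $(\epsilon,w)\mapsto(\lambda\epsilon,w/\lambda)$), so you should normalise, say $w_{j_1}=1$ for one $j_1\in A\setminus\{j_0\}$, before reading off orders of an $n$-form; and the comparison of $\widetilde\omega_\gamma$ with $\omega_\gamma$ via $\omega_\gamma\wedge v=\widetilde\omega_\gamma$ is cleanest if, as you suggest, you first use the $\Sym(S)$-equivariance of Lemma \ref{lemsymaction} to place the three normalised points in $A^c$, after which $\omega_\gamma$ is literally $\widetilde\omega_\gamma$ in simplicial coordinates with the $\infty$-factors deleted and the whole computation takes place on $\Mod_{0,n}$ itself.
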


\begin{proof}
Let $D\subset \overline{\Mod}_{0,S}\backslash \Mod_{0,S}$ be a
divisor given by a stable partition $S=S_1\coprod S_2$ ({\it i.e.},
such that $|S_i|\ne 1$ for $i=1,2$). In \cite{Br}, the following
notation was introduced:
$$\I_D(i,j) = \I(\{i,j\}\subset S_1) + \I(\{i,j\}\subset S_2)\
,$$\label{funnygonchI}\index{$\I_D(i,j)$}
where $\I(A\subset B)$ is the indicator function which takes the
value $1$ if $A$ is contained in $B$ and $0$ otherwise. Therefore
$\I_D(i,j)\in  \{0,1\}. $  Then we have
\begin{equation} \label{ordcell}
2\,\ord_D (\omega_\gamma) = (\ell-1)- \I_D(\gamma(1),\gamma(2)) -
\I_D(\gamma(2),\gamma(3)) - \ldots- \I_D(\gamma(n),\gamma(1))\ .
\end{equation}
To prove this, observe that $\omega_\gamma=f_\gamma \omega_0$, where
$$f_\gamma= \prod_{i\in \Z/n\Z} {(z_i-z_{i+2}) \over
  (z_{\gamma(i)}-z_{\gamma(i+1)})}\ ,$$ and
$$\omega_0 = {dt_1\ldots dt_\ell \over t_2(t_3-t_1)(t_4-t_2)\ldots
  (t_\ell-t_{\ell-2}) (1-t_{\ell})}$$
is the canonical volume form with no zeros or poles along the
standard cell  defined in \cite{Br}. The proof of (\ref{ordcell})
follows on applying proposition 7.5 from \cite{Br}.

Now, $(\ref{ordcell})$ shows that $\omega_\gamma$ has the worst singularities
when the most possible \\$\I_D(\gamma(i),\gamma(i+1))$ are equal to 1.  This
happens when only two of them are equal to zero, namely
$$S_1=\{\gamma(1),\gamma(2),\ldots,\gamma(k)\}\quad \hbox{ and } \quad
S_2= \{\gamma(k+1),\gamma(k+2),\ldots,\gamma(n)\},\ \ 2\le k\le n-2.$$
In this case, $(\ref{ordcell})$ yields $2\ord_D\omega_\gamma=(\ell-1)-(n-2)=
-2$, so $\ord_D \omega_{\gamma}=-1$. In all other cases we must therefore
have $\ord_D \omega_{\gamma} \geq 0$. Thus the  singular locus of
$\omega_{\gamma}$ is precisely given by the set of divisors bounding
the cell $X_{S,\eta}$.
\end{proof}

\vspace{.3cm}
\subsection{01 cell-forms and a basis of the cohomology of
  $\Mod_{0,n}$}
We first derive some useful identities between certain rational
functions. Let $S=\{1,\ldots, n\}$ and let  $v_1,\ldots,v_n$ denote
coordinates on $\A^n$.
For every cyclic
structure $\gamma$ on $S$, let $\cfl \gamma \cfr=\cfl
v_{\gamma(1)},\ldots,v_{\gamma(n)} \cfr$ denote the rational
function\index{Cell function, $\langle \gamma \rangle$}
\begin{equation}\label{cellfuncdef}
{{1}\over{(v_{\gamma(2)}-v_{\gamma(1)})\cdots
(v_{\gamma(n)}-v_{\gamma(n-1)})(v_{\gamma(1)}-v_{\gamma(n)})}}\in
\Z\Big[v_i, {1\over v_i-v_j}\Big]\ .\end{equation} We refer to such
a function as a cell-function.
 We can
extend its definition linearly to $\Q$-linear combinations of cyclic
structures. Let $X=\{x_1,\ldots, x_n\}$ denote any alphabet on $n$
symbols. Recall that the shuffle product \cite{Re} is defined on
linear combinations of words on $X$ by the inductive formula
\begin{equation}\label{shufflerec}
w \sha e = e\sha w \quad \hbox{ and }\quad   a w \sha a'w' = a(w\sha
a'w') + a'(aw\sha w')\ , \end{equation} where $w,w'$ are any words
in $X$ and $e$ denotes the empty or trivial word.

\begin{defn}\label{defshufprod}  Let $A, B\subset S$  and let
$A\cap B=C=\{c_1,\ldots,c_r\}$.  Let $\gamma_A$ be a cyclic order on
$A$ such that the elements $c_1,\ldots,c_r$ appear in their standard
cyclic order, and let $\gamma_B$ be a cyclic order on $B$ with the
same property.  We write
$\gamma_A=(c_1,A_{1,2},c_2,A_{2,3},\ldots,c_r,A_{r,1})$ and
$\gamma_B=(c_1,B_{1,2},c_2,B_{2,3},\ldots,c_r,B_{r,1})$, where the
$A_{i,i+1}$,  (resp. the $B_{i,i+1}$) together with $C$, form a
partition of $A$ (resp. $B$). We denote the {\it shuffle
  product} of
the two cell-functions $\cfl \gamma_A \cfr$ and $\cfl \gamma_B \cfr
$ with respect to $c_1,\ldots,c_r$ by
$$\cfl \gamma_A \cfr \sha_{c_1,\ldots,c_r}
\cfl\gamma_B\cfr$$
which is defined to be the sum of cell functions
 \begin{equation}\label{formalshufprod}\cfl c_1,A_{1,2}\sha
  B_{1,2},c_2,A_{2,3}\sha B_{2,3},
\ldots,c_r,A_{r,1}\sha B_{r,1}\cfr \ .\end{equation}
\end{defn}

The shuffle product of two cell-functions is related to their actual
product by the following lemma.

\begin{prop}\label{shufprod} Let $A,B \subset S$, such that $|A\cap
B|\geq 2$.  Let $\gamma_A$, $\gamma_B$ be cyclic structures on $A,B$
such that the  cyclic structures on $A\cap B$ induced by  $\gamma_A$
and $\gamma_B$ coincide. If $\gamma_{A\cap B}$ denotes the induced
cyclic structure on $A\cap B$, we have:
\begin{equation}\label{cellfunctionshufflerel}
{{\cfl\gamma_A\cfr \cdot \cfl \gamma_B\cfr }\over{ \cfl
\gamma_{A\cap B}\cfr }}= \cfl \gamma_A\cfr \sha_{\gamma_{A\cap B}}
\cfl \gamma_B\cfr \ .
\end{equation}
\end{prop}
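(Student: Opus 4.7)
The plan is to prove the identity by a two-step reduction: first decompose both sides arc-by-arc along the common cyclic skeleton of $C=A\cap B$, and then prove the resulting local identity on a single arc by induction via partial fractions.

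First I would observe that since $\gamma_A$ and $\gamma_B$ induce the same cyclic order $\gamma_C=(c_1,\ldots,c_r)$ on $C$, both sides of $(\ref{cellfunctionshufflerel})$ factor naturally over the $r$ arcs between consecutive $c_i$'s. Writing $\gamma_A=(c_1,A_{1,2},c_2,\ldots,c_r,A_{r,1})$ and $\gamma_B$ similarly, and defining $L_A^{(i)}$ to be the product of successive differences of the arc $(c_i,A_{i,i+1},c_{i+1})$ (and analogously for $L_B^{(i)}$, cyclic indices), we get
$$\frac{\langle\gamma_A\rangle\,\langle\gamma_B\rangle}{\langle\gamma_C\rangle} \;=\; \prod_{i=1}^{r}\frac{v_{c_{i+1}}-v_{c_i}}{L_A^{(i)}\,L_B^{(i)}}.$$
On the shuffle side, definition \ref{defshufprod} performs the shuffles $A_{i,i+1}\sha B_{i,i+1}$ independently in each arc, and the cell-function attached to each term in the sum factors arc-by-arc over $c_1,\ldots,c_r$. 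Hence
$$\langle\gamma_A\rangle \,\sha_{\gamma_C}\,\langle\gamma_B\rangle \;=\; \prod_{i=1}^{r}\Bigl(\sum_{w\in A_{i,i+1}\sha B_{i,i+1}}L_w^{(i)\,-1}\Bigr),$$
where $L_w^{(i)}$ is the product of successive differences on the arc $(c_i,w,c_{i+1})$. The proposition thus reduces to the following local identity, to be checked on each arc separately.

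The local identity states: for two disjoint ordered tuples $A=(a_1,\ldots,a_k)$ and $B=(b_1,\ldots,b_l)$ and endpoints $c,c'$,
$$\frac{v_{c'}-v_c}{L_A\,L_B} \;=\; \sum_{w\in A\sha B}L_w^{-1},$$
where $L_A=(v_{a_1}-v_c)(v_{a_2}-v_{a_1})\cdots(v_{c'}-v_{a_k})$, similarly $L_B$, and $L_w$ is the analogous product of successive differences in $(c,w,c')$. I would prove this by induction on $k+l$. The base case $k=l=0$ reads $(v_{c'}-v_c)/(v_{c'}-v_c)^2 = 1/(v_{c'}-v_c)$. For the inductive step with $k,l\ge 1$, both sides are rational functions of $v_c$ with simple poles only at $v_c=v_{a_1}$ and $v_c=v_{b_1}$, and both decay like $1/v_c$ as $v_c\to\infty$; hence it suffices to check that they have equal residues at the two poles. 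Computing the residue of the left-hand side at $v_c=v_{a_1}$ gives
$$-\,\frac{v_{c'}-v_{a_1}}{(v_{a_2}-v_{a_1})\cdots(v_{c'}-v_{a_k})\,(v_{b_1}-v_{a_1})(v_{b_2}-v_{b_1})\cdots(v_{c'}-v_{b_l})},$$
which is exactly the negative of the left-hand side of the local identity with $c$ replaced by $a_1$ and $A$ replaced by $(a_2,\ldots,a_k)$. On the right-hand side, only shuffles beginning with $a_1$ contribute a pole at $v_c=v_{a_1}$, and their combined residue is the negative of the right-hand side of the same smaller local identity. By the induction hypothesis the two residues agree, and the case of the pole at $v_c=v_{b_1}$ is symmetric. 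The degenerate cases $k=0$ or $l=0$ are handled by the same residue argument applied to the single pole present.

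The main potential obstacle is purely organisational rather than conceptual. On the decomposition side, one must be careful with the wrap-around arc $A_{r,1}$: the cyclic factor $(v_{c_1}-v_{c_r})$ appearing in $\langle\gamma_C\rangle^{-1}$ has to be correctly matched to the arc products $L_A^{(r)}$ and $L_B^{(r)}$, and signs arising from orienting the wrap-around arc must be tracked. On the inductive side, the cleanest framing is in terms of residues and behaviour at infinity; once this is set up, the fact that the residue of a shuffle sum at the pole $v_c=v_{a_1}$ collapses to shuffles starting with $a_1$ is precisely the recursive clause of definition $(\ref{shufflerec})$, so the induction closes automatically.
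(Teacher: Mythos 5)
Your proof is correct, and its overall architecture coincides with the paper's: both factor the two sides of $(\ref{cellfunctionshufflerel})$ over the arcs cut out by the common cyclic order on $A\cap B$ (the paper does this via the identity $\cfl a_{i_1} P_1 a_{i_2}\ldots a_{i_r}P_r\cfr=\prod_j\Delta_{a_{i_{j+1}}a_{i_j}}\cfl a_{i_j}P_ja_{i_{j+1}}\cfr$, you via the equivalent product formula for $\langle\gamma_A\rangle\langle\gamma_B\rangle/\langle\gamma_C\rangle$), reducing everything to a single-arc identity between two endpoints, which is then proved by induction on the total length of the two inserted tuples, with the shuffle recursion $(\ref{shufflerec})$ peeling off the first letter. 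The genuine difference is in how the inductive step is closed. The paper manipulates the expressions directly: after applying the recursion it rewrites each term using $\cfl p_1,\underline{p},b\cfr=\frac{\Delta_{ap_1}\Delta_{ba}}{\Delta_{bp_1}}\cfl a,p_1,\underline{p},b\cfr$ and collapses the two contributions via the Pl\"ucker relation $\Delta_{ar_1}\Delta_{bp_1}-\Delta_{br_1}\Delta_{ap_1}=\Delta_{p_1r_1}\Delta_{ba}$. You instead regard both sides as rational functions of the left endpoint $v_c$, observe that each vanishes at infinity and has simple poles only at $v_{a_1}$ and $v_{b_1}$, and match residues; the residue at $v_c=v_{a_1}$ reproduces the smaller identity exactly because only the shuffles beginning with $a_1$ contribute, which is the recursive clause of $(\ref{shufflerec})$. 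Your route buys a cleaner inductive step that dispenses with the Pl\"ucker relation and its bookkeeping, at the cost of invoking the (elementary) principle that a rational function vanishing at infinity is determined by its principal parts; the paper's route stays within purely formal polynomial identities. Both are complete proofs.
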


\begin{proof}
Write the cell functions $\cfl \gamma_A \cfr $ and $\cfl \gamma_B
\cfr$ as $\cfl a_{i_1}, P_1, a_{i_2}, P_2,\ldots ,a_{i_r}, P_{r}\cfr
$ and\break $\cfl a_{i_1}, R_1, a_{i_2}, R_2, \ldots, a_{i_r}
,R_{r}\cfr$, where $P_i, R_i$ for $1\leq i\leq r$ are tuples of
elements in $S$. Let $\Delta_{ab}= (b-a)$. We will first prove the
result for $r=2$ and $P_2,R_2=\emptyset$:
\begin{equation}\label{r=2}
\Delta_{ab}\Delta_{ba}\cfl a ,p_1,\ldots ,p_{k_1} ,b\cfr  \cfl a
,r_1,\ldots,
  r_{k_2},b\cfr   =\cfl a ,(p_1,\ldots ,p_{k_1})\sha (r_1,\ldots,
  r_{k_2}),b\cfr  . 
\end{equation}
We prove this case by induction on $k_1+k_2$.

Trivially, for $k_1+k_2=0$ we have
\begin{equation*}
\Delta_{ab}\Delta_{ba}\cfl a, b\cfr  \cfl a, b\cfr   = \cfl a, b\cfr .
\end{equation*}
Now assume the induction hypothesis that
\begin{align*} & \Delta_{ab}\Delta_{ba}\cfl a,
  p_2 ,\ldots, p_{k_1},
  b\cfr  \cfl a ,r_1,\ldots ,r_{k_2}, b\cfr   = \cfl a, \bigl(
  (p_2,\ldots, p_{k_1})\sha 
  (r_1, \ldots
  ,r_{k_2})\bigr),b\cfr
\hbox{ and } \\ & \Delta_{ab}\Delta_{ba}\cfl a ,p_1,\ldots, p_{k_1},
    b\cfr  \cfl a, r_2,\ldots
  ,r_{k_2},b\cfr   = \cfl a ,\bigl( (p_1,\ldots ,p_{k_1})\sha (r_2,\ldots
  ,r_{k_2})\bigr),b\cfr  .\end{align*}
To lighten the notation, let $p_2,\ldots ,p_{k_1}=\underline{p}$ and
$r_2,\ldots, r_{k_2}=\underline{r}$. By the  shuffle recurrence
formula $(\ref{shufflerec})$ and  the induction hypothesis:
\begin{align*}
\cfl a, \bigl( (p_1,\underline{p})\sha (r_1,\underline{r})\bigr) ,
b\cfr   & = \cfl a ,p_1, \bigl( (\underline{p} )\sha
(r_1,\underline{r})\bigr) , b\cfr   +
\cfl a ,r_1 ,\bigl( (p_1, \underline{p}) \sha (\underline{r})\bigr), b\cfr   \\
&= \frac{\Delta_{p_1b}\cfl p_1, \bigl( (\underline{p})
    \sha (r_1, \underline{r})\bigr), b\cfr  }{\Delta_{ab}\Delta_{ap_1}} +
  \frac{ \Delta_{r_1b}\cfl r_1, \bigl( (p_1,\underline{p})
    \sha \underline{r}) \bigr), b\cfr  } {\Delta_{ab}\Delta_{ar_1}}\\
&=\frac{\Delta_{p_1b}\Delta_{bp_1}\Delta_{p_1b}\cfl p_1
,\underline{p}, b\cfr  \cfl
  p_1 , r_1, \underline{r}, b\cfr  }{\Delta_{ab}\Delta_{ap_1} }\\ & \qquad +
\frac{\Delta_{r_1b}\Delta_{br_1}\Delta_{r _1b}\cfl r_1 ,p_1,
  \underline{p},
  b\cfr  \cfl  r_1 , \underline{r}, b\cfr  }{\Delta_{ab}\Delta_{ar_1} }
\end{align*}
Using identities such as $\cfl p_1 ,\underline{p},b\cfr
={\Delta_{ap_1}\Delta_{ba} \over \Delta_{bp_1}}\cfl a,
p_1,\underline{p}, b\cfr$, this is
$$\Big[{\Delta^2_{p_1b}\Delta_{bp_1} \over \Delta_{ab}
\Delta_{ap_1}}\, {\Delta_{ap_1}\Delta_{ba}\over \Delta_{bp_1}}
\,{\Delta_{ba}\Delta_{ar_1} \over \Delta_{bp_1}\Delta_{p_1r_1}} +
{\Delta^2_{r_1 b} \Delta_{br_1}  \over \Delta_{ab}\Delta_{ar_1}}
{\Delta_{ap_1}\Delta_{ba} \over \Delta_{r_1p_1}\Delta_{br_1} }
{\Delta_{ba}\Delta_{ar_1} \over \Delta_{br_1}}\Big]\cfl a , p_1,
\underline{p}, b\cfr \cfl a, r_1, \underline{r}, b\cfr  $$
$$=\Delta_{ab} \Big[{\Delta_{ar_1}\Delta_{bp_1} \over \Delta_{p_1r_1}}
  + {\Delta_{br_1}\Delta_{ap_1}\over \Delta_{r_1p_1}}\Big]\cfl a ,
p_1, \underline{p},  b\cfr  \cfl 
a, r_1, \underline{r}, b\cfr = \Delta_{ab}\Delta_{ba}\cfl a , p_1,
\underline{p},  b\cfr  \cfl a, r_1, \underline{r}, b\cfr .$$
 The last equality is the Pl\"ucker relation
  $\Delta_{ar_1}\Delta_{bp_1}-
  \Delta_{br_1}\Delta_{ap_1}=\Delta_{p_1r_1}\Delta_{ba}$. This proves
  the identity \eqref{r=2}. 
Now,  using the identity
\begin{align*}
\cfl a_{i_1} P_1 a_{i_2} P_2 \ldots a_{i_r} P_r \cfr  =
\Delta_{a_{i_2}a_{i_1}} \cfl a_{i_1} P_1 a_{i_2}\cfr   \times
\Delta_{a_{i_3}a_{i_2}} \cfl a_{i_2} P_2 a_{i_3}\cfr  \times \cdots
\times \Delta_{a_{i_r} a_{i_1}} \cfl a_{i_r} P_r a_{i_1}\cfr  ,
\end{align*}
the general case follows   from \eqref{r=2}.
\end{proof}

\begin{cor} Let $X$ and $Y$ be disjoint sequences of indeterminates and
let $e$ be an indeterminate not appearing in either $X$ or $Y$.  We
have the following identity on cell functions:
\begin{equation} \label{cellfunction1shuffsare0}
\cfl (X,e)\sha_e (Y,e) \cfr= \cfl X\sha Y ,e\cfr =0.
\end{equation}
\end{cor}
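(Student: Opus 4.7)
The first equality is immediate from Definition \ref{defshufprod} applied with $A = X\cup\{e\}$, $B = Y\cup\{e\}$, and $C = \{e\}$ (so $r=1$): one has $A_{1,1} = X$ and $B_{1,1} = Y$, so the shuffle product of cell functions reduces to $\langle e, X\sha Y\rangle = \langle X\sha Y, e\rangle$ by the cyclic invariance of the cell function $\langle \cdot \rangle$.

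For the vanishing $\langle X\sha Y, e\rangle = 0$, my plan is to bootstrap Proposition \ref{shufprod} using an auxiliary variable to convert the borderline case $|A\cap B|=1$ into the already-handled case $|A\cap B|=2$. Introduce a fresh indeterminate $f$ and take $A = X\cup\{e,f\}$, $B = Y\cup\{e,f\}$ with cyclic structures $\gamma_A = (X,e,f)$, $\gamma_B = (Y,e,f)$. Then $|A\cap B|=2$ with $\gamma_C = (e,f)$, and unwinding Definition \ref{defshufprod} identifies the shuffle cell function as $\langle X\sha Y, e, f\rangle$; Proposition \ref{shufprod} then yields
\begin{equation}\label{auxid}
\langle X,e,f\rangle\cdot\langle Y,e,f\rangle \;=\; \langle e,f\rangle\cdot\langle X\sha Y, e, f\rangle.
\end{equation}

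The heart of the argument is the Laurent expansion of \eqref{auxid} around $f = e$. Expanding $\tfrac{1}{w_1-f}$ as a geometric series in $(f-e)/(w_1-e)$, one checks that for any sequence $W$ with first entry $w_1$,
$$\langle W, e, f\rangle \;=\; \frac{\langle W, e\rangle}{f-e} \;+\; \frac{\langle W, e\rangle}{w_1-e} \;+\; O(f-e),$$
while $\langle e,f\rangle = -1/(f-e)^2$. Plugging into \eqref{auxid}, the LHS has leading pole $(f-e)^{-2}$ with coefficient $\langle X,e\rangle\langle Y,e\rangle$, whereas the RHS a priori has a pole of order $3$ with coefficient $-\sum_{W\in X\sha Y}\langle W, e\rangle$. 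Matching Laurent coefficients forces this sum to vanish, which is precisely $\langle X\sha Y, e\rangle = 0$.

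The only non-routine step is the trick of introducing $f$ to reduce $|A\cap B|=1$ to $|A\cap B|=2$; everything after that is mechanical bookkeeping. As a bonus, matching the next Laurent coefficient gives a refined product formula $\langle X,e\rangle\langle Y,e\rangle = -\sum_{W}\langle W,e\rangle/(w_1-e)$, but only the leading-order cancellation is needed here.
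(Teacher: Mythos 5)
Your proof is correct, but it takes a genuinely different route from the paper's. The paper proves the vanishing by one step of the shuffle recursion, splitting $\cfl X\sha Y,e\cfr$ into $\cfl x_1,(X'\sha Y),e\cfr+\cfl y_1,(X\sha Y'),e\cfr$, applying proposition \ref{shufprod} to each piece with the two-point intersections $\{x_1,e\}$ and $\{y_1,e\}$, and checking that the two resulting rational functions are $-1/D$ and $+1/D$ over a common denominator $D$ — a direct sign cancellation. You instead adjoin an auxiliary point $f$ so that proposition \ref{shufprod} applies with intersection $\{e,f\}$, giving $\cfl X,e,f\cfr\cfl Y,e,f\cfr=\cfl e,f\cfr\cfl X\sha Y,e,f\cfr$, and then read off the identity from the Laurent expansion at $f=e$: the left side has a pole of order exactly $2$, so the order-$3$ coefficient $-\sum_{W}\cfl W,e\cfr$ on the right must vanish. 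Both arguments rest on proposition \ref{shufprod} (hence on the Pl\"ucker computation), but yours replaces the recursion-plus-cancellation by a degeneration argument. The paper's version is shorter and purely algebraic; yours is conceptually appealing (the vanishing appears as the leading obstruction when the two marked points $e$ and $f$ collide), needs no case analysis on the first letter, and the subleading Laurent coefficients hand you further identities for free, such as $\cfl X,e\cfr\cfl Y,e\cfr=-\sum_W\cfl W,e\cfr/(w_1-e)$. One small point worth stating explicitly in your write-up: since \eqref{auxid} is an identity of rational functions and the coefficients of the Laurent expansion in $(f-e)$ are themselves rational functions of the remaining variables, matching coefficients is legitimate and yields the conclusion identically, not just generically.
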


\begin{proof} Write $X=x_1,x_2,...,x_n$ and $Y=y_1,y_2,...,y_m$.
By the recurrence formula for the shuffle product and proposition
\ref{shufprod}, we have
\begin{align*} \cfl X\sha Y ,e\cfr   & = \cfl x_1, (x_2,...,x_n\sha
  y_1,...,y_m) , e\cfr +
\cfl y_1, (x_1,...,x_n \sha y_2,...,y_m) , e\cfr   \\
&= \cfl X,e\cfr  \cfl x_1, Y, e\cfr  (e-x_1)(x_1-e) + \cfl y_1, X,
e\cfr   \cfl Y,e\cfr
(y_1-e)(e-y_1) \\
&= \frac{(e-x_1)(x_1-e)}{(x_2-x_1)\cdots (e-x_n)
   (x_1-e)\ (y_1-x_1)(y_2-y_1)\cdots (e-y_m)(x_1-e)} \\
&\qquad + \frac{(y_1-e)(e-y_1)} {
   (x_1-y_1)(x_2-x_1)\cdots (e-x_n) (y_1-e) \ (y_2-y_1)\cdots
   (e-y_m)(y_1-e)} \\
&= \frac{ (-1) + (-1)^2 }{ (x_2-x_1) \cdots (e-x_n)
   \ (y_1-x_1)(y_2-y_1)\cdots (e-y_m)} =0 \ .\\
\end{align*}
\end{proof}
\noindent By specialization, we  can formally extend the definition
of a cell function to the case where some of the terms $v_i$ are
constant, or one of the  $v_i$ is infinite, by setting
$$\cfl v_1,\ldots, v_{i-1}, \infty, v_{i+1},\ldots, v_n\cfr =
\lim_{x\rightarrow \infty} x^2 \cfl v_1,\ldots,
v_{i-1},x,v_{i+1},\ldots, v_n\cfr$$ $$= {1 \over (v_2-v_1)\ldots
(v_{i-1}-v_{i-2})(v_{i+2}-v_{i+1}) \ldots(v_{n}-v_{n-1})(v_1-v_n) }\
.$$ \noindent This is the rational function obtained by omitting all
terms containing $\infty$. By taking the appropriate limit, it is
clear that $(\ref{cellfunctionshufflerel})$ and
$(\ref{cellfunction1shuffsare0})$ are valid in this case too. In the
case where $\{v_1,\ldots, v_n\} = \{0,1,t_1,\ldots, t_\ell,
\infty\}$ we have the formula
\begin{equation}
[v_1,\ldots, v_n]= \cfl v_1,\ldots, v_n \cfr \, dt_1dt_2\ldots
dt_\ell\ .
\end{equation}

\vspace{.1cm}
\begin{defn}\label{01cellfuncDEF} A $01$ cyclic (or dihedral)
  structure is a cyclic (or dihedral) 
structure on $S$ in which the numbers $1$ and $n-1$ are consecutive.
Since $z_1=0$ and $z_{n-1}=1$, a $01$ cyclic (or dihedral) structure
is a set of orderings of the set
$\{z_1,\ldots,z_n\}=\{0,t_1,\ldots,t_\ell,1, \infty\}$, in which the
elements $0$ and $1$ are consecutive.  In these terms, each dihedral
structure can be written as an ordering $(0,1,\pi)$ where $\pi$ is
some ordering of $\{t_1,\ldots,t_\ell,\infty\}$.  To each such
ordering we associate a cell-function $\cfl 0,1,\pi \cfr$, which is
called a $01$ cell-function.
\end{defn}
Since $01$ cell-functions corresponding to different $\pi$ are
clearly different, it follows that there exist exactly $(n-2)!$
distinct $01$ cell-functions $\cfl 0,1,\pi \cfr$.  To these
correspond $(n-2)!$ distinct $01$ cell-forms
$\omega_{(0,1,\pi)}=\cfl 0,1,\pi\cfr \, dt_1\ldots dt_\ell$.

\begin{thm} \label{thm01cellsspan} The set of $01$ cell-forms
$\omega_{(0,1,\pi)}$, where $\pi$ denotes any ordering of $\{t_1,\ldots,t_\ell,
\infty\}$, has cardinal $(n-2)!$ and forms a basis of $H^\ell(\Mod_{0,n},\Q)$.
\end{thm}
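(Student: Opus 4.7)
The plan is three-step: first, identify $H^\ell(\Mod_{0,n},\Q)$ with the image of the map $\rho:\mathcal{P}\to H^\ell(\Mod_{0,n},\Q)$ sending an oriented polygon to its cell-form; second, use the shuffle identity $(\ref{cellfunction1shuffsare0})$ to reduce any cell-form modulo the shuffle relations to a $\Q$-combination of $01$-cell-forms; third, match the count $(n-2)!$ of $01$ dihedral structures against the classical dimension of $H^\ell(\Mod_{0,n})$ to conclude that this spanning family is a basis. The first step is straightforward: by Arnol'd's theorem (Claim \ref{cohomclaim} of the introduction), $H^\ell(\Mod_{0,n},\Q)$ coincides with the $\Q$-vector space of holomorphic $\ell$-forms on $\Mod_{0,n}$ having at most simple poles along $\overline{\Mod}_{0,n}\setminus\Mod_{0,n}$; Proposition \ref{CORomegapoles} shows every cell-form lies in this space, and partial-fraction expansion writes any such form as a $\Q$-combination of cell-forms, so $\rho$ is surjective. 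Multiplying $(\ref{cellfunction1shuffsare0})$ through by $dt_1\cdots dt_\ell$, the relations $\sum_{W\in X\sha Y}[W,\infty]=0$ hold in $H^\ell(\Mod_{0,n},\Q)$.

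The second step is the combinatorial heart of the argument. Given any cell-form $\omega_\gamma$, cyclically rotate $\gamma$ to place $\infty$ at the end, write $\gamma=(A,0,B,1,C,\infty)$, and apply the shuffle identity with the split $X=(A,0)$, $Y=(B,1,C)$: this rewrites $\omega_\gamma$ as minus the sum of the remaining shuffle terms. Among these, the terms in which $0$ ends up immediately adjacent to $1$ contribute $01$-cell-forms directly, while the rest must be treated by further applications of the shuffle identity, possibly after cyclic rotations that place elements other than $\infty$ in the end position $e$ of $(\ref{cellfunction1shuffsare0})$. Iterating this procedure eventually expresses $\omega_\gamma$ modulo $I_Z$ as a $\Q$-combination of $01$-cell-forms.

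For the third step, a dihedral $01$-structure is determined by an ordering $\pi$ of the $n-2$ labels $\{t_1,\ldots,t_\ell,\infty\}$, so there are exactly $(n-2)!$ such structures, each contributing one $01$-cell-form. The equality $\dim_\Q H^\ell(\Mod_{0,n})=(n-2)!$ is classical: from the Fadell--Neuwirth fibration $\Mod_{0,n+1}\to\Mod_{0,n}$ with fibre $\Pro^1\setminus\{n\text{ pts}\}$, whose Leray spectral sequence degenerates, one gets the recursion $P_{n+1}(t)=P_n(t)(1+(n-1)t)$ for the Poincar\'e polynomial, and hence $\dim H^\ell=(n-2)!$ by induction from $P_4(t)=1+2t$. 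A spanning family of this cardinality is automatically a basis.

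The main obstacle is step two: finding a combinatorial measure and sequence of shuffle applications that guarantees termination of the reduction. A naive induction on the number of marked points between $0$ and $1$ along the non-$\infty$-arc fails in isolation, since certain shuffle terms can enlarge this distance; for instance, taking $\gamma=(0,b,1,c_1,c_2,c_3,\infty)$ and shuffling with $X=(0)$ against $Y=(b,1,c_1,c_2,c_3)$ produces a term in which $0$ lands at the end of $W$ so that the non-$\infty$-arc between $0$ and $1$ runs backwards through $c_3,c_2,c_1$. The correct induction must use either a finer lexicographic invariant on the pair of arc lengths, or exploit the freedom to apply the shuffle identity after a cyclic rotation placing various elements in the end position, so that $0$ and $1$ can be systematically brought together while bounding the damage done by each intermediate step.
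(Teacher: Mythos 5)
Your step two is not a proof: you have described a reduction strategy (repeatedly apply the shuffle relation $\langle X\sha Y,e\rangle=0$ to move $0$ next to $1$) and then explicitly conceded that you have no invariant guaranteeing termination. That concession is the whole difficulty, and without it the argument that the $01$-cell-forms span is missing. There is also a soft spot in step one: the assertion that ``partial-fraction expansion'' writes an arbitrary logarithmic form as a combination of cell-forms is exactly the kind of statement that needs the spanning result you are trying to prove (in the paper, the surjectivity of the polygon-to-form map $\rho$ is deduced \emph{from} this theorem, not used to prove it), so as written your step one risks circularity.

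The paper avoids both problems by running the argument in the opposite direction. Instead of reducing an arbitrary cell-form, it starts from Arnol'd's explicit basis $\Omega(\underline{\varepsilon})=dt_1\cdots dt_\ell/\prod_i(t_i-\varepsilon_i)$ and factors each such form as a scalar times a product of \emph{elementary} cell-functions $F_i$, one per factor $(t_i-\varepsilon_i)$, each of the shape $\langle 0,1,t_i,\infty\rangle$, $\langle 0,1,\infty,t_i\rangle$, $\langle 0,1,\varepsilon_i,t_i,\infty\rangle$ or $\langle 0,1,\infty,t_i,\varepsilon_i\rangle$ (the choice governed by a recursively defined ``type'' of $t_i$). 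Proposition \ref{shufprod} converts the product $\prod_i F_i$ into a shuffle product relative to the common points, i.e.\ into a sum of cell-functions, and since every $F_i$ begins $0,1,\ldots$, every term of that sum does too. This produces the expression of each Arnol'd basis element as a combination of $01$-cell-forms in one step, with no iteration and hence no termination issue; the count $(n-2)!$ then upgrades spanning to a basis exactly as in your step three. If you want to salvage your approach, you need either to supply a well-founded measure for your rewriting procedure or to adopt the paper's device of building the $01$ structure into the factors from the start.
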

\begin{proof} The proof is based on the following well-known result due to
Arnol'd \cite{Ar}.
\begin{thm} 
A basis of $H^\ell(\Mod_{0,n},\Q)$ is given by the classes of the forms
\begin{equation}\label{Omegadefn}
\Omega(\underline{\varepsilon}) := {dt_1 \ldots dt_\ell \over
(t_1-\varepsilon_1)\ldots (t_\ell-\varepsilon_\ell)}\ ,\quad
\varepsilon_{i} \in E_i\ ,\end{equation} where $E_1=\{0,1\}$ and
$E_i =\{0,1,t_1,\ldots, t_{i-1}\}$ for $2\leq i\leq \ell$.
\end{thm}
It suffices to prove that each element
$\Omega(\underline{\varepsilon})$ in $(\ref{Omegadefn})$ can be
written as a linear combination of $01$ cell-forms.  We begin by
expressing a given rational function
${{1}\over{(t_1-\varepsilon_1)\cdots (t_\ell-\epsilon_\ell)}}$ as a
product of cell-functions and then apply proposition \ref{shufprod}.
 To every $t_i$, we associate its {\it type} $\tau(t_i)\in\{0,1\}$
 (which depends on $\varepsilon_1,\ldots, \varepsilon_\ell$)
as follows. If $\varepsilon_i=0$ then $\tau(t_i)=0$; if
$\varepsilon_i=1$, then $\tau(t_i)=1$, but if $\varepsilon_i\ne 0,1$
then $\varepsilon_i=t_j$ for some $j<i$, and the type of $t_i$ is
defined to be equal to the type of $t_j$. Since the indices
decrease,
 the type is
well-defined.

We associate a cell-function $F_i$ to each factor
$(t_i-\varepsilon_i)$ in the denominator of
$\Omega(\underline{\varepsilon})$ as follows:
\begin{equation} F_i=
\begin{cases}\ \ \cfl  0,1,t_i,\infty\cfr  &\hbox{if}\ \varepsilon_i=1\\
-\cfl  0,1,\infty,t_i\cfr  &\hbox{if}\ \varepsilon_i=0\\
\ \ \cfl  0,1,\varepsilon_i,t_i,\infty\cfr  &\hbox{if}\
\varepsilon_i\ne 1\ \hbox{and
the type}\ \tau(t_i)=1\\
-\cfl  0,1,\infty,t_i,\varepsilon_i\cfr  &\hbox{if }\varepsilon_i\ne
0\ \hbox{and the type}\ \tau(t_i)=0\ .
\end{cases}
\end{equation}
We have
$$\Omega(\underline{\varepsilon})=\Delta\prod_{i=1}^\ell F_i\ ,$$
where
$$\Delta=\prod_{j|\varepsilon_j\ne 0,1}(-1)^{\tau(\varepsilon_j)-1}
(\varepsilon_j-\tau(\varepsilon_j))$$
is exactly the factor occurring when multiplying cell-functions as in
proposition
\ref{shufprod}.  This product can be expressed as a shuffle
product, which is a sum of
cell-functions.  Furthermore each one corresponds to a cell beginning
$0,1,\ldots$ since this is the case for all of the $F_i$.
The $01$-cell forms thus span $H^\ell(\Mod_{0,n},\Q)$.
Since there are exactly $(n-2)!$ of them, and
since $\dim H^\ell(\Mod_{0,n},\Q)=(n-2)!$, they must form a basis.
\end{proof}

\vspace{.3cm}
\subsection{Pairs of polygons and multiplication}\label{prodmapsection}

Let $S=\{1,\ldots,n\}$, and let ${\cal P}_S$\index{${\cal P}_S$} denote the $\Q$-vector
space generated by the set of cyclic structures $\gamma$ on $S$,
modulo the relation $\gamma=(-1)^n\overleftarrow{\gamma}$, where
$\overleftarrow{\gamma}$  denotes the cyclic structure with the
opposite orientation to $\gamma$.

\subsubsection{Shuffles of polygons}
Let $T_1,T_2$ denote two subsets of $Z=\{z_1,\ldots,z_n\}$
satisfying:
\begin{eqnarray}\label{Tprodconds1}
T_1\cup T_2 & =& Z \\
\label{Tprodconds2} |T_1\cap T_2| & = & 3\ .
\end{eqnarray}
Let $E=\{z_{i_1},z_{i_2},z_{i_3}\}$ denote the set of three points common to
$T_1$ and $T_2$.  Given two cyclic structures $\gamma_1,\gamma_2$ on $T_1,T_2$
respectively, the restriction $\gamma_1|_E$ gives a cyclic order on $E$.
Let $\varepsilon=1$ if this order is compatible with the standard order
on $\{1,\ldots,n\}$, $\varepsilon=-1$ otherwise.
We define the {\it shuffle} $\gamma_1\sha\gamma_2$ of
$\gamma_1$ and $\gamma_2$ relative to the three points of intersection
of $T_1$ and $T_2$ by the formula
\begin{equation}
\label{Gf}
\varepsilon(\gamma_1\sha \gamma_2) =
\begin{cases}
\displaystyle{\sum_{{\gamma\big|_{T_1}=\gamma_1},\
{\gamma\big|_{T_2}  =
\gamma_2}}} \gamma&\hbox{if}\ \gamma_1\big|_E=\gamma_2\big|_E\\
\displaystyle{ \sum_{{\gamma\big|_{T_1}=\gamma_1},\
    {\gamma\big|_{T_2} =
\overleftarrow{\gamma_2}}}}\!\!\!\!\!\!\!\!\!\! \!\!
(-1)^{|T_2|} \gamma&\hbox{if}\
\gamma_1\big|_E=\overleftarrow{\gamma_2}\big|_E.
\end{cases}
\end{equation}
The formal sum of polygons $\gamma_1\sha \gamma_2$ is well-defined
and non-zero.

Assume $\{z_1,\ldots,z_n\}=\{0,1,\infty,t_1,\ldots,t_\ell\}$ with
$E=\{0,1,\infty\}$.  Assume also that $\gamma_1=(0, A_{1,2}, 1, A_{2,3},
\infty, A_{3,1})$ where $T_1$ is the disjoint union of
$A_{1,2}, A_{2,3}, A_{3,1}$, and $0,1,\infty$, and
$\gamma_2=(0, B_{1,2}, 1, B_{2,3}, \infty, B_{3,1})$, where $T_2$ is
the disjoint union of $B_{1,2}, B_{2,3}, B_{3,1}$, and $0,1,\infty$.
Then $\gamma_1\sha \gamma_2$ is the sum of cyclic structures
$$\gamma= (0, C_{1,2}, 1, C_{2,3}, \infty, C_{3,1})\ ,$$
where each $C_{i,j}$ is a shuffle of the ordered disjoint
sets $A_{i,j}$ and $B_{i,j}$.

\begin{example} \label{exsh1}  Let $T_1=\{0,1,\infty,t_1,t_3\}$ and
$T_2=\{0,1,\infty,t_2,t_4\}$.
If $\gamma_1$ and $\gamma_2$ denote the cyclic orders
$(0,t_1,1,t_3,\infty)$ and $(0,1,t_2,\infty,t_4)$,
then we have
$$\gamma_1\sha\gamma_2=(0,t_1,1,t_2,t_3,\infty,t_4)+
(0,t_1,1,t_3,t_2,\infty,t_4) \ .  $$ We will often write, for
example, $(0,t_1,1,t_2\sha t_3,\infty,t_4)$ for the right-hand side.
\end{example}

\vspace{.2cm}
\subsubsection{Multiplying pairs of polygons: the modular shuffle
  relation}
We will now consider pairs of polygons $(\gamma,\eta)\in {\cal P}_S
\times {\cal P}_S$\label{polypairdef}\index{Polygon pair, $(\gamma, \eta)$}. We can associate a geometric
meaning to a pair 
of polygons  as follows. The left-hand polygon $\gamma$, which we
will write using round parentheses, for example
$(0,t_1,\ldots,t_\ell,1,\infty)$, is associated to the real cell
$X_\gamma$ of the moduli space $\Mod_{0,n}$ associated to the cyclic
structure. The right-hand polygon $\eta$, which we will write using
square parentheses, for example $[0,t_1,\ldots,t_\ell,1,\infty]$, is
associated to the cell-form $\omega_\eta$ associated to the cyclic
structure.  The pair of polygons will be associated to the (possibly
divergent) integral $\int_{X_\gamma}\omega_\eta$. In the following
section we will investigate in detail the map from pairs of polygons
to integrals.

\begin{defn}  Given sets $T_1, T_2$ as above, the
{\it modular shuffle relation} on the vector space ${\cal P}_S\times
{\cal P}_S$ is defined by
\begin{equation}\label{multpairs}
(\gamma_1,\eta_1)\sha (\gamma_2,\eta_2)=(\gamma_1\sha \gamma_2,
\eta_1\sha\eta_2),
\end{equation}
for pairs of polygons $(\gamma_1,\eta_1)\sha
(\gamma_2,\eta_2)$, where $\gamma_i$ and $\eta_i$ are cyclic structures
on $T_i$ for $i=1,2$.

\end{defn}

\begin{example} The following product of two polygon pairs is given by
\begin{align*}
\bigl(
(0,t_1,1,\infty, t_4), [0,\infty,t_1, t_4,1] \bigr) & \bigl( (
0,t_2,1,t_3,\infty), [0,t_3,t_2,\infty,1] \bigr)\\
& = - \bigl( (0,t_1\sha t_2, 1, t_3, \infty,t_4), [0,t_3,t_2,\infty,
t_1,t_4,1] \bigr).
\end{align*}
\end{example}

\vspace{.3cm} Let us now give a geometric interpretation of
(\ref{multpairs}) in terms of integrals of forms on moduli space.
Recall that a \emph{product map}\label{prodmapDEF}\index{Product map} between moduli
spaces was defined 
in\cite{Br} as  follows. Let $T_1,T_2$ denote two subsets of
$Z=\{z_1,\ldots,z_n\}$ satisfying:
\begin{eqnarray}\label{Tprodconds3}
T_1\cup T_2 & =& Z \\
\label{Tprodconds4} |T_1\cap T_2| & = & 3\ .
\end{eqnarray}
Then we can consider the product of forgetful maps:
\begin{equation}
f=f_{T_1}\times f_{T_2} : \Mod_{0,n} \To \Mod_{0,T_1} \times
\Mod_{0,T_2}\ .
\end{equation}
The map $f$ is a birational embedding
because $$\dim \Mod_{0,S} = |S|-3= |T_1|-3+|T_2|-3= \dim
\Mod_{0,T_1}\times \Mod_{0,T_2}\ .$$

If $f$ is a product map as above and
$z_i,z_j,z_k$ are the three common points of $T_1$ and $T_2$,
use an element $\alpha\in \PSL_2$ to map $z_i$ to $0$, $z_j$ to $1$
and $z_k$ to $\infty$.  Let $t_1,\ldots,t_\ell$ denote the images of
$z_1,\ldots,z_n$ (excluding $z_i,z_j,z_k$) under $\alpha$.
Given the indices $i$, $j$ and $k$, the product map is then determined by
specifying a partition of $\{t_1,\ldots,t_\ell\}$ into $S_1$ and $S_2$.
We use the notation $T_i=\{0,1,\infty\}
\cup S_i$ for $i=1,2$.

The multiplication formula (\ref{multpairs}) on pairs of polygons
translates to a multiplication formula for integrals of
cell-forms.

\begin{prop}\label{prodmaprel} Let $S=\{1,\ldots,n\}$, and let $T_1$
  and $T_2$ be subsets 
of $S$ as above, of orders $r+3$ and $s+3$ respectively.
Let $\omega_1$ (resp. $\omega_2$) be a cell-form on
$\Mod_{0,r}$  (resp. on $\Mod_{0,s}$), and let $\gamma_1$ and $\gamma_2$
denote cyclic orderings on $T_1$ and $T_2$.  Then the product rule for
integrals is given by
\begin{equation}\label{temp}\int_{X_{\gamma_1}} \omega_1\int_{X_{\gamma_2}}
\omega_2=\int_{X_{\gamma_1\sha\gamma_2}} \omega_1\sha \omega_2,
\end{equation}
where $\omega_1\sha\omega_2$ converges on the cell $X_\gamma$ for
each term $\gamma$ in $\gamma_1\sha\gamma_2$.

\end{prop}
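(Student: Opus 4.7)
The plan is to deduce the identity geometrically from the product map $f = f_{T_1}\times f_{T_2}:\Mod_{0,n}\To\Mod_{0,T_1}\times\Mod_{0,T_2}$, matching its combinatorial content to the shuffle of polygons already developed in the proof of Proposition \ref{shufprod}. First I would normalize coordinates: using a $\PSL_2$ transformation send the three common points of $T_1\cap T_2$ to $0,1,\infty$, giving simplicial coordinates $(t_1,\ldots,t_r)$ on $\Mod_{0,T_1}$ and $(u_1,\ldots,u_s)$ on $\Mod_{0,T_2}$, with $S_1\sqcup S_2=\{t_1,\ldots,t_\ell\}$ a partition of the simplicial coordinates on $\Mod_{0,n}$. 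In these coordinates the product map $f$ is simply the inclusion of variables, so its pullback on a wedge of forms is literal substitution; in particular $f^*\bigl(\omega_1\wedge\omega_2\bigr)$ is a rational $\ell$-form on $\Mod_{0,n}$ whose cell-function is the product $\langle\gamma_1\rangle\cdot\langle\gamma_2\rangle$.

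Next I would apply the Fubini formula
\[
\int_{X_{\gamma_1}}\omega_1\int_{X_{\gamma_2}}\omega_2 \;=\; \int_{X_{\gamma_1}\times X_{\gamma_2}}\omega_1\wedge\omega_2 \;=\; \int_{f^{-1}(X_{\gamma_1}\times X_{\gamma_2})} f^*(\omega_1\wedge\omega_2),
\]
valid because $f$ is a birational embedding with Jacobian identically $1$ in these coordinates, and because the locus $f^{-1}(X_{\gamma_1}\times X_{\gamma_2})\setminus f^{-1}(\Mod_{0,T_1}\times\Mod_{0,T_2})$ has measure zero. The core combinatorial step is then to identify the preimage with $X_{\gamma_1\sha\gamma_2}$ and the pullback with the cell-form shuffle $\omega_1\sha\omega_2$. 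For the preimage: a real point $(z_1,\ldots,z_n)\in\Mod_{0,n}(\R)$ lies in $f^{-1}(X_{\gamma_1}\times X_{\gamma_2})$ exactly when its restriction to $T_i$ has cyclic order $\gamma_i$ (or $\overleftarrow{\gamma_i}$, if the induced orders on $T_1\cap T_2$ force a reversal), which is precisely the geometric content of the definition \eqref{Gf} of $\gamma_1\sha\gamma_2$; the sign $\varepsilon=\pm1$ and the $(-1)^{|T_2|}$ in \eqref{Gf} match the sign conventions for orienting cells and flipping cell-forms from Remark \ref{grlem}. For the pullback: Proposition \ref{shufprod} says exactly that the product of cell-functions $\langle\gamma_1\rangle\cdot\langle\gamma_2\rangle\cdot\langle\gamma_1|_{T_1\cap T_2}\rangle^{-1}$ equals $\langle\gamma_1\rangle\sha_{T_1\cap T_2}\langle\gamma_2\rangle$; but here $T_1\cap T_2=\{0,1,\infty\}$ so the extra factor $\langle 0,1,\infty\rangle$ is $1$, and the cell-function product computed above coincides termwise with the cell-function expression of $\omega_1\sha\omega_2$.

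Assembling these two identifications yields
\[
\int_{f^{-1}(X_{\gamma_1}\times X_{\gamma_2})} f^*(\omega_1\wedge\omega_2) \;=\; \sum_{\gamma \in \gamma_1\sha\gamma_2} \int_{X_\gamma} \omega_\gamma \;=\; \int_{X_{\gamma_1\sha\gamma_2}}\omega_1\sha\omega_2,
\]
which is the desired formula. It remains to check convergence of $\omega_1\sha\omega_2$ on each cell $X_\gamma$ in the sum. For this I would invoke Proposition \ref{CORomegapoles} applied to each summand: the singular locus of $\omega_\gamma$ is by construction the boundary of $X_\gamma$ itself, and the hypothesis that $\omega_i$ converges on $X_{\gamma_i}$ prevents the appearance of a divisor in the interior of $X_\gamma$; equivalently, writing out the shuffle in terms of the partition $S_1\sqcup S_2$ as in Example \ref{exsh1}, no two adjacent factors in the denominator cancel against boundary faces of $X_\gamma$.

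The main obstacle I anticipate is bookkeeping for signs and orientations: the two cases of \eqref{Gf} depending on whether $\gamma_1|_E$ and $\gamma_2|_E$ are compatible or opposite must be reconciled with the $(-1)^n$ factor in the cell-form/cyclic-structure identification of Remark \ref{grlem}, and with the orientation of $X_{\gamma_1}\times X_{\gamma_2}$ versus the pulled-back orientation on $\Mod_{0,n}$. Once this sign calculus is carried out carefully in the standard coordinate chart described above, the rest follows purely formally from Proposition \ref{shufprod} and Fubini.
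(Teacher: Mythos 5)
Your proposal is correct and follows essentially the same route as the paper's proof: product map, the pullback/Fubini identity, the identification of $f^{-1}(X_{\gamma_1}\times X_{\gamma_2})$ with the disjoint union of the cells $X_\gamma$ for $\gamma\in\gamma_1\sha\gamma_2$, and Proposition \ref{shufprod} to recognize $f^*(\omega_1\wedge\omega_2)$ as $\omega_1\sha\omega_2$. One small caveat on the last step: convergence should be argued for the whole linear combination $\omega_1\sha\omega_2=f^*(\omega_1\wedge\omega_2)$ at once --- it has no poles on the closure of the preimage because $\omega_1\wedge\omega_2$ has none on $\overline{X}_{\gamma_1}\times\overline{X}_{\gamma_2}$ --- rather than termwise via Proposition \ref{CORomegapoles}, since the individual cell-form summands of the shuffle generally do have poles along the boundary of $X_\gamma$ and only their sum converges.
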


\begin{proof} The subsets $T_1$ and $T_2$ correspond to a product  map
$$f:\Mod_{0,n}\rightarrow \Mod_{0,r}\times \Mod_{0,s}.$$
The pullback formula gives a multiplication law on the pair of integrals:
\begin{equation}\label{temp2}\int_{X_{\gamma_1}} \omega_1\int_{X_{\gamma_2}}
\omega_2= \int_{X_{\gamma_1}\times X_{\gamma_2}} \omega_1\wedge\omega_2=
\int_{f^{-1}(X_{\gamma_1}\times X_{\gamma_2})} f^*(\omega_1\wedge\omega_2).
\end{equation}
The preimage $f^{-1}(X_{\gamma_1}\times X_{\gamma_2})$ decomposes
into a disjoint union of cells of $\Mod_{0,n}$, which are precisely the cells
given by cyclic orders of $\gamma_1\sha \gamma_2$.
In other words,
$$f^{-1}(X_{\gamma_1}\times X_{\gamma_2})=\sum_{\gamma\in \gamma_1\sha\gamma_2}
X_{\gamma}\ ,$$ where the sum denotes a disjoint union. Now we can
assume without loss of generality that $T_1=\{0,1,\infty,
t_1,\ldots, t_k\}$, $T_2=\{0,1,\infty, t_{k+1},\ldots, t_\ell\}$ and
that $\delta_1,\delta_2$ are the cyclic structures on $T_1,T_2$
corresponding to $\omega_1,\omega_2$, respectively, where $\delta_1,
\delta_2$ restrict to the standard cyclic order  on $0,1,\infty$.
Then, in cell function notation,
$$f^*(\omega_{1}\wedge \omega_{2}) 
= \cfl \delta_1 \cfr \cfl \delta_2 \cfr \,dt_1\ldots dt_\ell= {\cfl
\delta_1 \sha_{\{0,1,\infty\}} \delta_2 \cfr \over \cfl
0,1,\infty\cfr} \,dt_1\ldots dt_\ell=\omega_1\sha \omega_2\ ,$$ by
proposition $\ref{shufprod}$. Since $\omega_1$ and $\omega_2$
 converge on the closed cells $\overline{X}_{\gamma_1}$ and
 $\overline{X}_{\gamma_2}$ respectively, $\omega_1\wedge \omega_2$
 has no poles on the contractible set $\overline{X}_{\gamma_1}\times \overline{X}_{\gamma_2},$
and therefore
 $\omega_1\sha\omega_2=f^*(\omega_1\wedge\omega_2)$ has no poles on the closure of
 $f^{-1}(X_{\gamma_1}\times X_{\gamma_2})$. But $\sum_{\gamma\in \gamma_1\sha\gamma_2}
X_{\gamma}$ is a cellular decomposition of
$f^{-1}(X_{\gamma_1}\times X_{\gamma_2})$, so, in particular,
$\omega_1 \sha \omega_2$ can have no poles along the closure of each
cell $ X_{\gamma},$ where $\gamma\in \gamma_1\sha\gamma_2$.
\end{proof}

\subsubsection{$\Sym(n)$ action on pairs of polygons}
The symmetric group $\Sym(n)$ acts on a pair of polygons by
permuting their labels in the obvious way, and this extends to the
vector space ${\cal P}_S\times {\cal P}_S$ by linearity. If
$\tau:\Mod_{0,n}\rightarrow \Mod_{0,n}$  is an element of $\Sym(n)$,
then the corresponding action on integrals is given by the pullback
formula:
\begin{equation}\label{permaction}
\int_{X_\gamma}\omega_\eta = \int_{\tau(X_\gamma)}
\tau^*(\omega_\eta) = \int_{X_{\tau(\gamma)}} \omega_{\tau(\eta)}\ .
\end{equation}
Note that, unlike for formal pairs of polygons, this formula only
holds for linear combinations of cell-forms which are convergent,
even if each individual cell-form is not convergent over the
integration domain.

Suppose that $\tau$ belongs to the dihedral group which preserves
the dihedral structure underlying a cyclic structure $\gamma$. Let
$\epsilon=1$ if $\tau$ preserves $\gamma$, and $\epsilon=-1$ if
$\tau$ reverses its orientation. We have the following {\it dihedral
relation} between convergent integrals:
\begin{equation}\label{dihrel}
\int_{X_\gamma} \omega_\eta = (-1)^\epsilon\int_{X_\gamma}
\tau^*(\omega_\eta)=(-1)^\epsilon\int_{X_\gamma} \omega_{\tau(\eta)}.
\end{equation}

As above, this formula extends to linear combinations of integrals of
cell-forms as long as the linear combination converges over the integration
domain.

\vspace{.2cm}
\begin{example}
The form corresponding to $\zeta(2,1)$ on $\Mod_{0,6}$ is
$$\frac{dt_1dt_2dt_3}{(1-t_1)(1-t_2)t_3} = [0,1,t_1,t_2,\infty,t_3]
+ [0,1,t_2,t_1,\infty,t_3],$$ which gives  $\zeta(2,1)$ after
integrating over the standard cell.  By applying the rotation
$(1\!,\!2\!,\!3\!,\!4\!,\!5\!,\!6)$, a dihedral rotation of the standard cell, to this
form, one obtains
\begin{align*}[t_1,\infty,t_2,t_3,0,1] +
  [t_1,\infty,t_3,t_2,0,1] & =
[0,1,t_1,\infty, t_2,t_3] +
[0,1,t_1,\infty,t_3,t_2]
\\ & ={{dt_1dt_2dt_3}\over{(1-t_1)t_2t_3}},\end{align*}
which gives $\zeta(3)$ after integrating over the standard cell.
Therefore, we have the following relation on linear combinations of
pairs of polygons:
\begin{equation}
\begin{split}
&\bigl( (0,t_1,t_2,t_3,1,\infty), [0,1,t_1,t_2,\infty,t_3] +
[0,1,t_2,t_1,\infty,t_3] \bigr)\\
&\qquad = \bigl( (0,t_1,t_2,t_3,1,\infty), [0,1,t_1,\infty, t_2,t_3] +
[0,1,t_1,\infty,t_3,t_2]\bigr)
\end{split}
\end{equation}
or
\begin{align*}
\zeta(2,1)=\int_{X_{3,\delta}}
\frac{dt_1dt_2dt_3}{t_3(1-t_2)(1-t_1)} &=
\int_{X_{3,\delta}} \frac{dt_1dt_2dt_3}{t_3t_2(1-t_1)}=\zeta(3). \\
\end{align*}
\end{example}

\vspace{.3cm}
\subsubsection{Standard pairs and the product map relations}\label{prodmaps}
A standard pair of polygons is a pair $(\delta,\eta)$ where the
left-hand polygon is the standard cyclic structure.  Let
$S=\{1,\ldots,n\}$, and $T_1\cup T_2=S$ with $T_1\cap
T_2=\{0,1,\infty\}$  be as above, and let $\gamma_1$ and $\gamma_2$
be cyclic orders on $T_1$ and $T_2$. In the present section we show
how for each such $\gamma_1,\gamma_2$, we can modify the modular
shuffle relation to construct a multiplication law on standard
pairs.

Let $\delta_1$ and $\delta_2$ denote the standard orders on $T_1$
and $T_2$.  Then there is a unique permutation $\tau_i$ mapping
$\delta_i$ to $\gamma_i$ such that $\tau_i(0)=0$, for $i=1,2$.  The
multiplication law, denoted by the symbol $\times$, and called the
{\it product map relation}, is defined by
\begin{equation}\label{pm}
\begin{split}
(\delta_1,\omega_1)\times(\delta_2,\omega_2)&=(\gamma_1,\tau_1(\omega_1))
\sha(\gamma_2,\tau_2(\omega_2))\\
&=(\gamma_1\sha\gamma_2,
\tau_1(\omega_1)\sha\tau_2(\omega_2))\\
&=\sum_{\gamma\in\gamma_1\sha\gamma_2}(\delta,\tau_\gamma^{-1}(
\tau_1(\omega_1)\sha\tau_2(\omega_2))),
\end{split}
\end{equation}
where for each $\gamma\in\gamma_1\sha\gamma_2$, $\tau_\gamma$ is
the unique permutation such that $\tau_\gamma(\delta)=\gamma$ and
$\tau_\gamma(0)=0$.

\begin{example} Let $S=\{0,1,\infty,t_1,t_2,t_3,t_4\}$, $T_1=\{0,1,\infty,
t_1,t_4\}$ and $T_2=\{0,1,\infty,t_2,t_3\}$.  Let the cyclic orders
on $T_1$ and $T_2$ be given by $\gamma_1=(0,t_1,1,\infty,t_4)$ and
$\gamma_2=(0,t_2,1,t_3,\infty)$.  Applying the product map relation
to the pairs of polygons below yields
\begin{equation}
\begin{split}
\bigl((0,t_1,t_4,1,\infty),&[0,1,t_1,\infty,t_4]\bigr)\times
\bigl((0,t_2,t_3,1,\infty),[0,1,t_2,\infty,t_3]\bigr)\\
&=\bigl( (0,t_1,1,\infty, t_4), [0,\infty,t_1, t_4,1] \bigr) \sha\bigl( (
0,t_2,1,t_3,\infty), [0,t_3,t_2,\infty,1] \bigr)\\
& = - \bigl( (0,t_1,t_2, 1, t_3, \infty,t_4), [0,t_3,t_2,\infty,
t_1,t_4,1] \bigr)\\
 &\qquad\qquad\qquad - \bigl( (0,t_2,t_1, 1, t_3, \infty,t_4), [0,t_3,t_2,
\infty, t_1,t_4,1] \bigr)\\
&=\bigl((0,t_1,t_2,t_3,t_4,1,\infty),[0,t_3,\infty,t_1,1,t_2,t_4]+
[0,t_3,\infty,t_2,1,t_1,t_4].
\end{split}
\end{equation}
\end{example}

In terms of integrals, this corresponds to the relation
\begin{equation}
\begin{split}
\zeta(2)^2&=\int_{X_{5,\delta}} {{dt_1dt_4}\over{(1-t_1)t_4}}
\int_{X_{5,\delta}} {{dt_2dt_3}\over{(1-t_2)t_3}}\\
&=\int_{X_{7,\delta}} {{dt_1dt_2dt_3dt_4}\over{t_4(t_4-t_2)(1-t_2)(1-t_1) t_3}}
+{{dt_1dt_2dt_3dt_4}\over{t_4(t_4-t_1)(1-t_1)(1-t_2) t_3}}\\
\end{split}
\end{equation}
We will show in $\S\ref{calculations}$ that the last  two integrals
evaluate to  ${7\over 10}\zeta(2)^2$ and ${3\over 10}\zeta(2)^2$
respectively. \vspace{.3cm}
\subsection{The algebra of cell-zeta values} \label{cellalg}

\begin{defn}\label{cellzetavalueDEFch3}\index{Cell zeta value}\index{${\mathcal C}$}
Let ${\cal C}$ denote the $\Q$-vector space generated by the
integrals $\int_{X_{n,\delta}} \omega$, where $X_{n,\delta}$ denotes
the standard cell of $\Mod_{0,n}$ for $n\ge 5$ and $\omega$ is a
holomorphic $\ell$-form on $\Mod_{0,n}$ with logarithmic
singularities at infinity (thus a linear combination of $01$
cell-forms) which converges on $X_{n,\delta}$.  We call these
numbers {\it cell-zeta values}.  The existence of product map
multiplication laws in proposition \ref{prodmaprel} imply that
${\cal C}$ is in fact a $\Q$-algebra.
\end{defn}

\begin{thm} The $\Q$-algebra ${\cal C}$ of cell-zeta values is isomorphic
to the $\Q$-algebra ${\cal Z}$ of multizeta values.
\end{thm}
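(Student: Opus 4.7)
The plan is to establish the two inclusions ${\mathcal Z}\subseteq {\mathcal C}$ and ${\mathcal C}\subseteq {\mathcal Z}$ separately, each of which follows from results already stated in the paper.

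For the inclusion ${\mathcal Z}\subseteq {\mathcal C}$, I would begin with Kontsevich's integral representation (\ref{itint}) of a multiple zeta value as an iterated integral
\[
\zeta(n_1,\ldots,n_r)=\int_{0<t_1<\cdots<t_\ell<1}\frac{dt_1\cdots dt_\ell}{(\varepsilon_1-t_1)\cdots(\varepsilon_\ell-t_\ell)},
\]
where $\varepsilon_i\in\{0,1\}$ with $\varepsilon_1=1$ and $\varepsilon_\ell=0$. The domain of integration is exactly the standard cell $X_{n,\delta}\subset \Mod_{0,n}(\R)$ in simplicial coordinates, and the integrand is an Arnol'd form $\Omega(\underline{\varepsilon})$ of the kind appearing in \eqref{Omegadefn}. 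The convergence conditions $\varepsilon_1=1$, $\varepsilon_\ell=0$ ensure precisely that this form has no poles along the boundary divisors of $X_{n,\delta}$. Hence by theorem \ref{thm01cellsspan} (or more directly by its proof, which writes any Arnol'd form as a linear combination of $01$ cell-forms via proposition \ref{shufprod}), $\Omega(\underline{\varepsilon})$ is a $\Q$-linear combination of convergent $01$ cell-forms on $X_{n,\delta}$. Therefore $\zeta(n_1,\ldots,n_r)\in\mathcal{C}$.

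For the reverse inclusion ${\mathcal C}\subseteq {\mathcal Z}$, I would invoke F. Brown's theorem from \cite{Br}, which is the central input repeatedly cited in the introduction: every convergent integral $\int_{X_{n,\delta}}\omega$, where $\omega$ is a holomorphic $\ell$-form on $\Mod_{0,n}$ with at most logarithmic singularities along $\overline{\Mod}_{0,n}\setminus\Mod_{0,n}$ and which converges on $\overline{X}_{n,\delta}$, is a $\Q$-linear combination of multiple zeta values. Since every generator of ${\mathcal C}$ is of exactly this form, we obtain ${\mathcal C}\subseteq {\mathcal Z}$.

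Combining the two inclusions gives the equality of $\Q$-vector spaces; to upgrade this to an isomorphism of $\Q$-algebras, I would then check compatibility of the multiplications. The product on ${\mathcal Z}$ coming from the termwise product of iterated integrals corresponds, on the moduli space side, to the simplicial product map described in $\S\ref{prodmaps}$: its pullback formula (proposition \ref{prodmaprel}) recovers exactly the shuffle relation of Chen/Kontsevich on MZV's. Hence the identification of generators respects products, and ${\mathcal C}\simeq {\mathcal Z}$ as $\Q$-algebras. The one point requiring care — and really the only place where there is anything to verify beyond quoting Brown and Kontsevich — is that the embedding of MZV's as cell-zeta values is independent of the specific $n$ chosen (since a fixed MZV can be realized in $\Mod_{0,n}$ for various $n$), which follows from the compatibility of forgetful maps with the pullback of convergent forms.
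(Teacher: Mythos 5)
Your proof is correct and follows essentially the same route as the paper: the inclusion ${\mathcal Z}\subseteq{\mathcal C}$ via Kontsevich's integral representation over the standard cell, and the reverse inclusion via Brown's theorem. The extra verifications you flag (compatibility of products, independence of $n$) are harmless but not needed, since both ${\mathcal C}$ and ${\mathcal Z}$ are subalgebras of $\R$ generated by specific real numbers, so mutual containment already gives the isomorphism of $\Q$-algebras.
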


\begin{proof} Multizeta values are real numbers which can all be
expressed as integrals $\int_{X_{n,\delta}} \omega$
where $\omega$ is an $\ell$-form of the form
\begin{equation}\label{Kont}
\omega=(-1)^d\prod_{i=1}^\ell {{d\underline{t}}\over{t_i-\epsilon_i}},
\end{equation}
where $\epsilon_1=0$, $\epsilon_i\in \{0,1\}$ for $2\le i\le
\ell-1$, $\epsilon_\ell=1$, and $d$ denotes the number of $i$ such
that $\epsilon_i=1$. Since each such form converges on
$X_{n,\delta}$, the multizeta algebra ${\cal Z}$ is a subalgebra of
${\cal C}$.  The converse is a consequence of the following theorem
due to F. Brown \cite{Br}.
\begin{thm} If $\omega$ is a holomorphic $\ell$-form on $\Mod_{0,n}$ with
logarithmic singularities at infinity and convergent on
$X_{n,\delta}$, then $\int_{X_{n,\delta}} \omega$ is $\Q$-linear
combination of multizeta values.
\end{thm}
Thus, ${\cal C}$ is also a subalgebra of ${\cal Z}$, proving the equality.
\end{proof}

The structure of the multizeta algebra, or rather, of the formal
version of it given by quotienting the algebra of symbols formally
representing integrals of the form (\ref{Kont}) by the main known
relations between these forms (shuffle and stuffle), has been much
studied of late.  The present article provides a different approach
to the study of this algebra, by turning instead to the study of a
formal version of ${\cal C}$.

\begin{defn}\label{formalcell} Let $|S|\geq 5$.
The {\it formal algebra of cell-zeta values}\index{${\mathcal{FC}}$}\index{Formal cell zeta value algebra} ${\cal FC}$ is defined as
follows.  Let ${\cal A}$ be the vector space of formal linear combinations
of standard pairs of polygons in ${\cal P}_S\times {\cal P}_S$
$$\sum_i a_i(\delta,\omega_i)$$
such that the associated $\ell$-form $\sum_i a_i\omega_i$ converges
on the standard cell $X_{n,\delta}$.  Let ${\cal FC}$ denote the
quotient of ${\cal A}$ by the following three families of relations.
\begin{itemize}
\item{\it Product map relations.\ \ }{These relations were defined in section
\ref{prodmapsection}. For every choice of subsets $T_1, T_2$ of
$S=\{1,\ldots,n\}$ such that  $T_1\cup T_2=S$ and $|T_1\cap T_2|=3$,
and every choice of cyclic orders $\gamma_1, \gamma_2$ on $T_1,
T_2$, formula (\ref{pm}) gives a multiplication law expressing the
product of any two standard pairs of polygons of sizes $|T_1|$ and
$|T_2|$ as a linear combination of standard pairs of polygons of
size $n$.}
\item{\it Dihedral relations.\ \ }{For $\sigma$ in the dihedral group
associated to $\delta$, i.e. $\sigma(\delta)=\pm \delta$, there is a
dihedral relation  $(\delta,\omega)=(\sigma(\delta),
\sigma(\omega))$.}
\item{\it Shuffles with respect to one element.\ \ }{The linear combinations
of pairs of polygons $$(\delta,(A,e)\sha (B,e))$$ where $A$ and $B$ are
disjoint of length $n-1$ are zero, as in (\ref{cellfunction1shuffsare0}).}
\end{itemize}

\end{defn}

With the goal of approaching the combinatorial conjectures given in the
introduction, the purpose of the next chapters is to give an explicit
combinatorial description of a set of generators for ${\cal FC}$.  We do
this in two steps.  First we define the notion of a linear
combination of polygons convergent with respect to a chord of the standard
polygon $\delta$, and thence, the notion of a linear combination of
polygon convergent with respect to the standard polygon.  We
exhibit an explicit basis, the basis of {\it Lyndon insertion words and
shuffles} for the subspace of such linear combinations.  In the subsequent
chapter, we deduce from this a set of generators for the formal cell-zeta
value algebra ${\cal FC}$ and also, as a corollary, a basis for the cohomology
$H^\ell(\Mod_{0,n}^\delta)$, where $\Mod_{0,n}^\delta$ denotes the
union of $\Mod_{0,n}$ with the boundary components containing the
boundary of the standard cell.

\vspace{.5cm}
\section{Polygons and convergence}
\vskip .5cm
In the present chapter, we define the notions of bad chord of a polygon
(a generalization of the notion of a divisor on the boundary of the standard
cell of $\Mod_{0,n}$ along which the differential form diverges),
residue of a polygon along a bad chord, convergence of linear combinations of
polygons along bad chords, and finally, convergence of linear combinations
of polygons with respect to the standard polygon $\delta$.  The main
theorem exhibits an explicit basis for the space of linear combinations
of polygons convergent with respect to the standard polygon, consisting
of linear combinations called {\it Lyndon insertion words and shuffles}.
\vspace{.3cm}
\subsection{Bad chords and polygon convergence}
For any finite set $R$, let ${\cal P}_R$ denote the $\Q$ vector space
of {\it polygons on $R$}, i.e. cyclic structures on $R$, identified with
planar polygons with edges indexed by $R$.

Let ${\cal V}$\label{VIDEF}\index{$V_S$} denote the free polynomial shuffle algebra
on the alphabet 
of positive integers, and let $V$ be the quotient of ${\cal V}$ by the
relations $w=0$ if $w$ is a word in which any letter appears more than once
(these relations imply that $w\sha w'=0$ if $w$ and $w'$ are not disjoint).
The Lyndon basis\label{Lyndonbasis}\index{Lyndon basis} for ${\cal V}$ is given by Lyndon
words and shuffles of
Lyndon words.  The elements of this basis which do not map to zero
remain linearly independent in $V$, whose basis consists of Lyndon
words with distinct letters -- such a word is Lyndon if and only if the
smallest character appears on the left -- and shuffles of disjoint
Lyndon words with distinct letters.  Throughout this chapter, we work
in $V$, so that when we refer to a `word',
we automatically mean a word with distinct letters, and shuffles of such
words are zero unless the words are disjoint.  Let $V_S$ be the
subspace of $V$ spanned by the $n!$ words of length $n$ with distinct letters
in the characters of $S=\{1,\ldots,n\}$.  Then the Lyndon basis for $V_S$
is given by Lyndon words of degree $n$ and shuffles of disjoint Lyndon words
the union of whose letters is equal to $S$.

The vector space ${\cal P}_S$ is generated by $n$-polygons with edges indexed
by $S$.  If we consider $(n+1)$-polygons with edges indexed by $S\cup \{d\}$,
we have a natural isomorphism
\begin{equation}\label{basiciso}
V_S \buildrel\sim\over\rightarrow {\cal P}_{S\cup \{d\}}
\end{equation}
given by writing each cyclic structure on $S\cup \{d\}$ as a word
on the letters of $S$ followed by the letter $d$.
Let $I_S\subset {\cal P}_{S\cup\{d\}}$\index{$I_S$} be the set of shuffles of polygons
$(A\sha B,d)$ where $A\cup B=S$ and $A\cap B=\emptyset$. Then under
the isomorphism above, $I_S$ is identified with the subspace of $V_S$
generated by the part of the Lyndon basis consisting of shuffles.  By a
slight abuse of notation, we use the same notation $I_S$ for both the
subspace of ${\cal P}_{S\cup\{d\}}$ and that of $V_S$.

\vspace{.2cm}
\begin{defn}\label{chordDEF}\index{Chord, $\chi(\gamma)$} Let $D=S_1\cup S_2$ denote a stable
  partition of $S$ (partition 
into two disjoint subsets of cardinal $\ge 2$).  Let $\gamma$ be a polygon on
$S$.  We say that the partition $D$ corresponds to a {\it chord of $\gamma$} if
the polygon $\gamma$ admits a chord which cuts $\gamma$ into two pieces
indexed by $S_1$ and $S_2$.  Let a {\it block} of $\gamma$ be a subsequence
of consecutive elements of $\gamma$ for the cyclic order, of length at least
two and at most $n-2$. Thus, a chord divides $\gamma$ into two blocks, and
$\chi(\gamma)$ indexes the set of stable partitions which are {\it compatible}
with $\gamma$, in the sense that they can be realized as chords of
$\gamma$, i.e. in the sense that the subsets $S_1$ and $S_2$ are blocks of
$\gamma$.
\end{defn}

\vspace{.2cm}
\begin{defn} \label{defnconvpoly}
Let $\gamma, \eta$ denote two polygons on $S$. We say
that $\eta$ is \emph{convergent relative to} $\gamma$ if there are no stable
partitions of $S$ compatible with both $\gamma$ and $\eta$:
\begin{equation}\label{convcond}
\chi(\gamma)\cap \chi(\eta) = \emptyset\ .
\end{equation}
In other words, there exists no block of $\gamma$ having the same underlying
set as a block of $\eta$.  If $\eta$ is a polygon on $S$,
then a block of $\eta$ is said to be a {\it consecutive block} if
its underlying set corresponds to a block of the polygon with the
standard cyclic order $\delta$.  The polygon $\eta$ is said to be
{\it convergent} if it has no consecutive blocks at all, i.e., if it
is convergent relative to $\delta$.  Similarly, a polygon $\eta\in
{\cal P}_{S\cup \{d\}}$ is said to be convergent if it has no
chords partitioning $S\cup \{d\}$ into disjoint subsets $S_1\cup S_2$
such that $S_1$ is a consecutive subset of $S=\{1,\ldots,n\}$.
\end{defn}

\vspace{.2cm}
\begin{defn} \label{defnconvword}  We now adapt the definition of convergence
for polygons in ${\cal P}_{S\cup\{d\}}$ to the corresponding words in $V_S$.
A {\it convergent word} in the alphabet $S$ is a word having no
subword which forms a consecutive block.  In other words, if
$w=a_{i_1}a_{i_2}\cdots a_{i_r}$, then $w$ is convergent if it
has no subword $a_{i_j}a_{i_{j+1}}\cdots a_{i_k}$ such that
the underlying set $\{a_{i_j},a_{i_{j+1}},\ldots,a_{i_k}\}=
\{i,i+1,\ldots,i+r\}\subset \{1,\ldots,n\}$.  A convergent word
is in fact the image in $V_S$ of a convergent polygon in
${\cal P}_{S\cup \{d\}}$ under the isomorphism (\ref{basiciso}).
\end{defn}

\begin{example}
When $1\leq n\leq 4$ there are no convergent polygons in ${\cal P}_S$. For
$n=5$, there is only one convergent polygon up to sign, given by
$\gamma=(13524)$.  The other convergent cyclic structure $(14253)$ is just
the cyclic structure $(13524)$ written backwards.
When $n=6$, there are three convergent polygons up to sign:
$$(135264) \ , \quad (152463) \ , \quad (142635) \ .$$
There are 23 convergent polygons for $n=7$. Note that when $n=8$, the
dihedral structure $\eta=(24136857)$ is not convergent even though
no neighbouring numbers are adjacent, because
$\{1,2,3,4\}$ forms a consecutive block for both $\eta$ and $\delta$.
\end{example}

\begin{rem}
The enumeration of permutations satisfying the single condition
that no two adjacent elements in $\gamma$ should be consecutive (the
case $k=2$) is known as the dinner table problem and is a classic
problem in enumerative combinatorics. The more general problem of
convergent words (arbitrary $k$)
seems not to have been studied previously. The
problems coincide for $n\leq 7$, but the counterexample for $n=8$
above shows that the problems are not equivalent for $n\geq 8$.
\end{rem}

\vskip .5cm
\subsection{Residues of polygons along chords}

\vspace{.3cm}
For every stable partition $D$ of $S$ given by $S=S_1\cup S_2$,
we define a residue map on polygons
$$\Res^p_D:{\cal P}_S \To {\cal P}_{S_1\cup\{d\}} \otimes_\Q {\cal
  P}_{S_2\cup\{d\}}$$
as follows.  Let $\eta$ be a polygon in ${\cal P}_S$.  If the partition
$D$ corresponds to a chord of $\eta$, then it cuts $\eta$ into two subpolygons
$\eta_i$ ($i=1,2$) whose edges are indexed by the set $S_i$ and an edge
labelled $d$ corresponding to the chord $D$.  We set\index{$\Res^p_D(\eta)$}
\begin{equation}\label{RespmapDEFch3}
\Res^p_D(\eta)=
\begin{cases}
\eta_1\otimes \eta_2&\hbox{if $D$ is a chord of
  $\eta$}\\
0&\hbox{if $D$ is not a chord of $\eta$}.
\end{cases}
\end{equation}

More generally, we can define the residue for several disjoint chords
simultaneously.
Let $S=S_1\cup \cdots \cup S_{r+1}$ be a partition of
$S$ into $r+1$ disjoint subsets with $r\ge 2$.  For $1\le i\le r$, let
$D_i$ be the partition of $S$ into the two subsets
$(S_1\cup \cdots S_i)\cup (S_{i+1}\cup
\cdots \cup S_{r+1})$.  For any polygon
$\eta\in {\cal P}_S$, we say that $\eta$ admits the chords $D_1,\ldots,D_r$
if there exist $r$ chords of $\eta$, disjoint except possibly for endpoints,
partitioning the edges of $\eta$
into the sets $S_1,\ldots,S_{r+1}$.  If $\eta$ admits the chords
$D_1,\ldots,D_r$, then these chords cut
$\eta$ into $r+1$ subpolygons $\eta_1,\ldots,\eta_{r+1}$.  Let
$T_i$ denote the set indexing the edges of $\eta_i$, so that each
$T_i$ is a union of $S_i$ and elements of the set
$\{d_1,\ldots,d_r\}$ of indices of the chords.  The {\it composed residue map}
$$\Res^p_{D_1,\ldots,D_r}:{\cal P}_S\rightarrow
{\cal P}_{T_1}\otimes \cdots \otimes {\cal P}_{T_r}$$
is defined as follows:
\begin{equation}
\Res^p_{D_1,\ldots,D_r}(\eta)=
\begin{cases}
\eta_1\otimes \cdots \otimes\eta_{r+1}&\mbox{if }\eta\mbox{ admits
}D_1,\ldots,D_r\mbox{ as chords}\\
0&\mbox{if }\eta\mbox{ does not admit }D_1,\ldots,D_r
\end{cases}
\end{equation}

\begin{ex}\label{firstchorddef}
In this example,  $n=12$ and the partition of $S$ given by $D_1$, $D_2$, $D_3$
and $D_4$ is $S_1=\{1,2,3\}$,
$S_2=\{4,10,11,12\}$, $S_3=\{5,9\}$, $S_4=\{6\}$, $S_5=\{7,8\}$.
\vskip .3cm
\epsfxsize=12cm
\centerline{\epsfbox{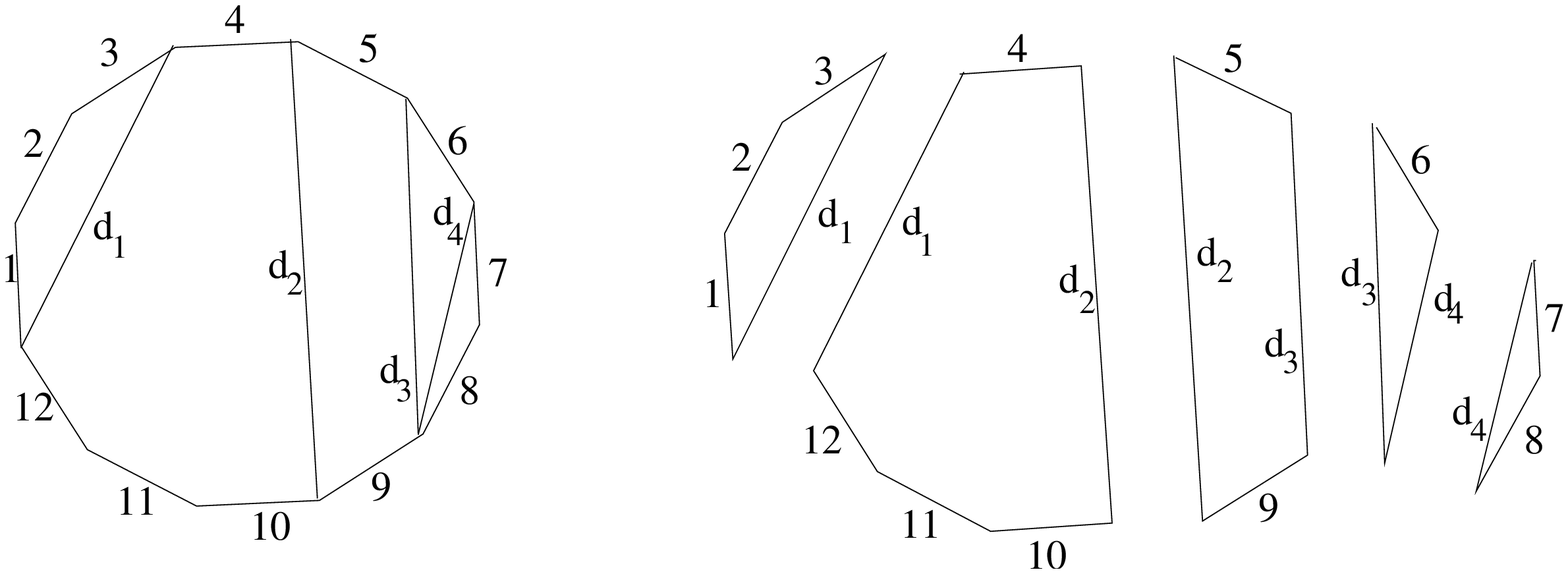}}
\vskip .3cm
We have $T_1=S_1\cup\{d_1\}$, $T_2=S_2\cup \{d_1,d_2\}$,
$T_3=S_3\cup \{d_2,d_3\}$, $T_4=S_4\cup \{d_3,d_4\}$, $T_5=S_5\cup \{d_4\}$.
The composed residue map $\Res^p_{D_1,D_2,D_3,D_4}$ maps the standard
polygon $\delta=(1,2,3,4,5,6,7,8,9,10,11,12)$ to the tensor product of the
five subpolygons shown in the figure.
\end{ex}

The definition of the residue allows us to extend the definition of
convergence of a polygon to linear combinations of polygons.
\vspace{.2cm}
\begin{defn} Let $E$ be a partition of $S\cup \{d\}$ into two subsets,
one of which, $T$, is a consecutive subset of $S$.   Let $\eta
=\sum_i a_i\eta_i$ be a linear combination of polygons. We say that $E$
is a bad chord for $\eta$ if it is a bad chord for any of the $\eta_i$.
The linear combination $\eta$ {\it converges} along $E$ (or along $T$) if the
residue
\begin{equation}\label{conv}
\Res^p_E(\eta)\in I_T\otimes {\cal P}_{S\setminus T\cup \{d\}\cup\{e\}},
\end{equation}
where we recall that $I_T\subset {\cal P}_{T\cup \{e\}}$ is the subspace
spanned by shuffles $(A\sha B,e)$ where $A$ and $B$ are disjoint words the
union of whose letters is equal to $T$.
A linear combination $\eta$ is {\it convergent} if it converges along
all of its bad chords.
\end{defn}

The goal of the following section is to define a set of particular
linear combinations of polygons, the Lyndon insertion words and shuffles,
which are convergent, and show that they are linearly independent.  In
the section after that, we will prove that this set forms a basis for the
convergent subspace of ${\cal P}_{S\cup \{d\}}$.
\vskip .5cm
\subsection{The Lyndon insertion subspace}\label{Lyndins}

\vspace{.3cm} Let a $1n$-word be a word of length $n$ in the distinct
letters of $S=\{1,\ldots,n\}$ in which the letter $1$ appears just to the
left of the letter $n$, and let $W_S\subset V_S\simeq {\cal P}_{S\cup\{d\}}$\index{$W_S$}
denote the subspace generated by these words.   The space $W_S$ is of dimension
$(n-1)!$.  The following lemma will show that $V_S=W_S\oplus I_S$, where
$I_S$ is the subspace of shuffles as before.

\begin{lem}\label{usefullemma}
Fix two elements $a_1$ and $a_2$ of $S=\{1,\ldots,n\}$.  Let
\begin{equation*}
\tau=\sum_i c_i\eta_i,
\end{equation*}
where the $\eta_i$ run over the words of length $n$ in $V_S$ such that
$a_1$ appears just to the left of $a_2$. Then $\tau\in I_S$ if and only if
$c_i=0$ for all $i$.
\end{lem}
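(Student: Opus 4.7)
The plan is to realize the lemma as a direct combinatorial consequence of theorem~\ref{thm01cellsspan}, transported through the basic isomorphism $V_S\cong \mathcal{P}_{S\cup\{d\}}$ of (\ref{basiciso}).

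First I would observe that under this isomorphism the $\eta_i$ correspond bijectively to the oriented $(n+1)$-gons on $S\cup\{d\}$ in which the edges $a_1$ and $a_2$ are consecutive with $a_1$ immediately preceding $a_2$ in the orientation; there are exactly $(n-1)!$ such polygons, matching the dimension of the subspace spanned by the $\eta_i$. Under the same isomorphism, $I_S$ corresponds to the span of shuffles with respect to $d$, so the linear independence to be proved is equivalent to the statement that the $(n-1)!$ polygons above are linearly independent modulo shuffles with respect to $d$.

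The key step is the cell-form map $\mathcal{P}_{S\cup\{d\}}\to H^{n-2}(\Mod_{0,n+1})$ sending $\gamma\mapsto\omega_\gamma$. By the corollary of proposition~\ref{shufprod} (namely equation \eqref{cellfunction1shuffsare0}), shuffles with respect to any single element of the alphabet produce cell functions identically equal to zero, so this map kills $I_S$. After the relabeling $a_1\mapsto 0$, $a_2\mapsto 1$, $d\mapsto\infty$, with the remaining $n-2$ letters of $S$ playing the role of the coordinates $t_1,\ldots,t_{n-2}$, the polygons corresponding to the $\eta_i$ are precisely the $01$-adjacent oriented polygons on $n+1$ labels, and their cell forms are exactly the $(n-1)!$ many $01$-cell forms occurring in theorem~\ref{thm01cellsspan} applied to $\Mod_{0,n+1}$. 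That theorem asserts that these $01$-cell forms form a basis of $H^{n-2}(\Mod_{0,n+1})$, and in particular are linearly independent in cohomology.

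It follows that if $\tau=\sum_i c_i\eta_i\in I_S$, then the cell-form map sends $\tau$ to $\sum_i c_i\omega_{\eta_i}=0$, and the linear independence of the $\omega_{\eta_i}$ then forces every $c_i$ to vanish. The one point requiring genuine attention (the hard part, such as it is) is justifying that the permutation of marked points underlying the relabeling legitimately transports the content of theorem~\ref{thm01cellsspan}, stated for the distinguished labels $0,1,\infty$, to our situation with labels $a_1,a_2,d$; but this is immediate from the fact that theorem~\ref{thm01cellsspan} rests on Arnol'd's basis, whose statement is manifestly symmetric under renaming of marked points. I do not anticipate any further technical obstacle in this approach.
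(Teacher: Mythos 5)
Your proposal is correct and follows essentially the same route as the paper's own proof: both define a linear map from $\mathcal{P}_{S\cup\{d\}}$ to $H^{n-2}(\Mod_{0,n+1})$ via a bijection sending $a_1\mapsto 0$ and $a_2\mapsto 1$, observe that $I_S$ is killed because shuffles with respect to one element give the zero cell function by \eqref{cellfunction1shuffsare0}, and conclude from the linear independence of the $01$ cell-forms in theorem \ref{thm01cellsspan}. The relabeling issue you flag is not really an obstacle even in principle, since the theorem is applied directly in the target after renumbering rather than being ``transported''.
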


\begin{proof} The assumption $\tau\in I_S$ means that we can write $\tau=
\sum_i c_i u_i\sha v_i$. Considering  this in the space ${\cal P}_{S\cup\{d\}}$
isomorphic to $V_S$, it is a sum of cyclic structures $\sum_i c_i(u_i,d)
\sha (v_i,d)$ shuffled with respect to the point $d$.  Choose any
bijection $$\rho:\{1,\ldots,n,d\}\rightarrow \{0,1,\infty,t_1,\ldots,t_{n-2}\}$$
that maps $a_1$ to $0$ and $a_2$ to $1$.  Define a linear map from
${\cal P}_{S\cup \{d\}}$ to $H^{n-2}(\Mod_{0,n+1})$ by first renumbering
the indices $(1,\ldots,n,d)$ of each polygon $\eta\in {\cal P}_{S\cup \{d\}}$
as $(0,1,\infty,t_1,\ldots,t_{n-2})$ via $\rho$, then mapping the
renumbered polygon to the corresponding cell-form (same cyclic order).
By hypothesis,
$\tau=\sum_i c_i \eta_i$ maps to a sum $\omega_\tau=\sum_i c_i\omega_{\eta_i}$
of $01$ cell forms.  Since $\tau$ is a shuffle with respect to one point,
we know by (\ref{cellfunction1shuffsare0}) that $\omega_\tau=0$.
But the $01$ cell-forms $\omega_{\eta_i}$ are linearly
independent by theorem \ref{thm01cellsspan}.  Therefore each $c_i=0$.
\end{proof}

Recall that the shuffles of disjoint Lyndon words form a basis for $I_S$;
we call them {\it Lyndon shuffles}.  A {\it convergent Lyndon shuffle} is a
shuffle of convergent Lyndon words.

\begin{defn}\label{lyndoninsertionshufflesdef}
We will recursively define the set $\cal{L}_S$\index{$\cal{L}_S$} of {\it Lyndon insertion
shuffles}in $I_S$.  If $S=\{1\}$, then ${\cal L}_S=0$.
If $S=\{1,2\}$ then ${\cal L}_S=
\{1\sha  2\}$. In general, if $D$ is any (lexicographically ordered)
alphabet on $m$ letters and $S=\{1,\ldots,m\}$, we define
${\cal L}_D$ to be the image of ${\cal L}_S$
under the bijection $S\rightarrow D$ corresponding to the ordering of $D$.

Assume now that $S=\{1,\ldots,n\}$ with $n>2$, and that we have
constructed all of the sets
${\cal L}_{\{1,\ldots, i\}}$ with $i<n$.  Let us construct
${\cal L}_S$.  The elements of these set are constructed
by taking convergent Lyndon shuffles on a smaller alphabet, and
making insertions into every letter except for the
leftmost letter of each Lyndon word in the shuffle,
according to the following explicit
procedure.  Let $T=\{a_1,\ldots,a_k\}$ be an alphabet with $3\le k\le n$,
with the lexicographical ordering $a_1<\cdots <a_k$,
and choose a convergent Lyndon shuffle $\gamma$ of length $k$ in the
letters of $T$.  Write $\gamma$ as a shuffle of $s>1$ convergent Lyndon words:
$$\gamma=(a_{i_1}\cdots a_{i_{k_1}})\sha  (a_{i_{k_1+1}}\cdots a_{i_{k_2}})
\sha \cdots\sha  (a_{i_{k_{s-1}+1}}\cdots a_{i_{k_s}})$$
where $k_1+\cdots+k_s=k$.  Choose integers
$v_1,\ldots,v_k\ge 1$ such that $\sum_i v_i=n$ and such that for each of the
indices $l=1,i_{k_1+1},\ldots,i_{k_{s-1}+1}$ of the leftmost characters of the
$s$ convergent Lyndon words in $\gamma$, we have $v_l=1$.
For $1\le i\le k$, let $D_i$ denote an alphabet $\{b^i_1,\ldots,b^i_{v_i}\}$.
When $v_i=1$, insert $b^i_1$ into the place of the letter $a_i$ in
$\gamma$; when $v_i>1$, choose any element $V_i$ from ${\cal L}_{D_i}$,
and insert this $V_i$ into the place of the letter $a_i$.

The result is a sum of words in the alphabet
$\cup D_i$.  Note that this alphabet is of cardinal $n$ and equipped
with a natural lexicographical ordering given by the ordering
$D_1,\ldots,D_k$ and the orderings within each alphabet $D_i$.  We
can therefore renumber this alphabet as $1,\ldots,n$.
Since it is a sum of shuffles, the renumbered
element lies in $I_S$, and we call it a {\it Lyndon insertion shuffle}\index{Lyndon insertion shuffle} on $S$.
The original convergent Lyndon shuffle $\gamma$ on $T$ is called the
framing\label{framingDEF}\index{Framing}; 
together with the integers $v_i$, we call this the fixed structure
of the insertion shuffle.
We define ${\cal L}_S$ to be the set of all Lyndon insertion shuffles on $S$.
Note in particular that when $k=n$, so $v_i=1$ for $1\le i\le k$, there are no
non-trivial insertions, and the corresponding elements of ${\cal L}_S$ are
the convergent Lyndon shuffles.
\end{defn}

\begin{example}
We have
$${\cal L}_{\{1,2\}}=\{1\sha  2\}$$
$${\cal L}_{\{1,2,3\}}=\{1\sha  2\sha  3, \ 2\sha  13\}$$
$${\cal L}_{\{1,2,3,4\}}=\{1\sha  2\sha  3\sha  4,\ 13\sha  2\sha  4,\
14\sha  2\sha  3,\ 24\sha  1\sha  3, $$
$$\qquad \qquad \qquad 3\sha  142,\ 13\sha  24,\ 1(3\sha  4)\sha  2\}$$
The last element of ${\cal L}_{\{1,2,3,4\}}$ is obtained by taking
$T=\{1,2,3\}$ and $\gamma=13\sha  2$.  We can only insert in the
place of the character 3 since 1 and 2 are leftmost letters of the
Lyndon words in $13\sha  2$.  As for what can be inserted in the
place of 3, the only possible choices are $k=1$, $v_1=2$,
$D_1=\{b_1,b_2\}$, and $V_1=b_1\sha  b_2$, the unique element of
${\cal L}_{D_1}$.  The natural ordering on the alphabet
$\{T\setminus 3\}\cup D_1$ is given by $(1,2,b_1,b_2)$ since
$b_1\sha  b_2$ is inserted in the place of 3, so we renumber $b_1$
as 3 and $b_2$ as 4, obtaining the new element $1(3\sha  4)\sha  2$.

For $n=5$, ${\cal L}_{\{1,2,3,4,5\}}$ has 34 elements.  Of these, 25
are convergent Lyndon shuffles which we do not list.  The remaining
nine elements are obtained by insertions into the smaller convergent
Lyndon shuffles: they are given by
$$\begin{cases}
2\sha  1(4\sha  35),\ 2\sha  1(3\sha  4\sha  5)&\mbox{insertions into
}2\sha  13\\ 
3\sha  1(4\sha  5)2,\ 4\sha  15(2\sha  3)&\mbox{insertions into }3\sha  142\\
13\sha  2(4\sha  5),\ 1(3\sha  4)\sha  25&\mbox{insertions into }13\sha  24\\
1(3\sha  4)\sha  2\sha  5&\mbox{insertion into }13\sha  2\sha  4\\
1(4\sha  5)\sha  2\sha  3&\mbox{insertion into }14\sha  2\sha  3\\
2(4\sha  5)\sha  1\sha  3&\mbox{insertion into }24\sha  1\sha  3.
\end{cases}$$

\end{example}

\begin{defn}\label{lyndoninsertionwordsdef}
We now define a complementary set, the set ${\cal W}_S$\index{${\mathcal{W}}_S$} of
{\it Lyndon insertion words}\index{Lyndon insertion words}.  Let a {\it special convergent word} $w\in V_S$\index{Special convergent words}\label{specconvw}
denote a convergent word of length $n$ in $S$ such that in the
lexicographical ordering $(1,\ldots,n,d)$, the polygon (cyclic structure)
$\eta=(w,d)$ satisfies $\chi(\delta)\cap \chi(\eta)=\emptyset$; in
other words, the polygon $\eta$ has no chords in common with the standard
polygon.  This condition is a little stronger than asking $w$
to be a convergent word (for instance, $13524$ is a convergent word but
not a special convergent word, since $13524d$ has a bad chord $\{2,3,4,5\}$).
The first elements of ${\cal W}_S$ are given by the special convergent
$1n$-words.  The remaining elements of ${\cal W}_S$ are the
{\it Lyndon insertion words} constructed as follows.  Take a special convergent
word $w'$ in a smaller alphabet $T=\{a_1,\ldots,a_k\}$ with $k<n$ such that
$a_1$ appears just to the left of $a_{k-1}$, and choose positive integers
$v_1,\ldots,v_k$ such that $v_1=v_k=1$ and $\sum_i v_i=n$.  As above, we let
$D_i=\{b^i_1,\ldots,b^i_{v_i}\}$ for $1\le i\le k$, and choose
an element $D_i$ of ${\cal L}_{D_i}$ for each $i$ such that $v_i>1$.  For $i$
such that $v_i=1$, insert $b^i_1$ in the place of $a_i$ in $w'$, and for $i$
such that $v_i>1$ insert $D_i$ in the place of $a_i$.  We obtain a sum of words
$w''$ in the letters $\cup D_i$.  This alphabet has a natural lexicographic
ordering $D_1,\ldots,D_k$ as above, so we can renumber its letters from $1$ to
$n$, which transforms $w''$ into a sum of words $w\in V_S$ called a
{\it Lyndon insertion word}.  Note that by construction, the result is still
a sum of $1n$-words.  The set ${\cal W}_S$ consists of the special convergent
words and the Lyndon insertion words.
\end{defn}

\begin{rem}It follows from lemma \ref{usefullemma} that the intersection
of the subspace $\langle {\cal W}_S\rangle$ in $V_S$ with the subspace
$I_S$ of shuffles is equal to zero.
\end{rem}

\begin{example} We have
$${\cal W}_{\{1,2\}}=\emptyset, \ \
{\cal W}_{\{1,2,3\}}=\emptyset,\ \
{\cal W}_{\{1,2,3,4\}}=\{3142\},$$
$${\cal W}_{\{1,2,3,4,5\}}=\{24153,31524,(3\sha 4)152,(415(2\sha 3)\}$$
The last two elements of ${\cal W}_{\{1,2,3,4,5\}}$ are obtained by
taking $v_1=1,v_2=1,v_3=2,v_4=1$ and $v_1=1,v_2=2,v_3=1,v_4=1$ and
creating the corresponding Lyndon insertion word with respect to $3142$.
\end{example}

\vspace{.2cm}
\begin{thm}\label{linind}
The set ${\cal W}_S\cup {\cal L}_S$ of Lyndon insertion words and shuffles
is linearly independent.
\end{thm}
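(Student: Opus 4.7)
\emph{Plan.} The proof should go by strong induction on $n=|S|$, with small cases $n\le 4$ handled by direct enumeration (in which $\mathcal{W}_S\cup\mathcal{L}_S$ sits inside a Lyndon basis of $V_S$ and independence is immediate).

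The first step is to decouple $\mathcal{W}_S$ from $\mathcal{L}_S$. I claim every element of $\mathcal{W}_S$ is a $\Q$-linear combination of $1n$-words: this holds for special convergent words by definition, and the insertion construction preserves it because the framing is a $1k$-word on $T=\{a_1,\ldots,a_k\}$ and the constraints $v_1=v_k=1$ ensure $D_1=\{1\}$ and $D_k=\{n\}$ after renumbering, so the adjacency of $1$ and $n$ is never broken. On the other hand $\mathcal{L}_S\subset I_S$ tautologically. Consequently any relation $\sum_j c_j w_j+\sum_k d_k\ell_k=0$ forces $\sum_j c_j w_j\in I_S$, and Lemma~\ref{usefullemma} (applied with $a_1=1$, $a_2=n$) then gives $\sum_j c_j w_j=0$ in $V_S$ and hence also $\sum_k d_k \ell_k=0$. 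So it suffices to prove $\mathcal{L}_S$ and $\mathcal{W}_S$ are each linearly independent.

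The idea for both is to recover the fixed structure of an element from itself using the composed residue map $\Res^p$. For $\ell\in\mathcal{L}_S$ with framing $\gamma$ on $T$, sizes $(v_1,\ldots,v_k)$, and choices $V_i\in\mathcal{L}_{D_i}$ for $v_i\ge 2$, the renumbered letters of $D_i$ form a consecutive block $B_i=\{v_1+\cdots+v_{i-1}+1,\ldots,v_1+\cdots+v_i\}\subset\{1,\ldots,n\}$, and the key identity to establish is that the composed residue of $(\ell,d)$ along the chords separating each $B_i$ (with $v_i\ge 2$) from its complement in $S\cup\{d\}$ equals, after the natural relabeling, the pure tensor
\begin{equation*}
\Bigl(\bigotimes_{i\,:\,v_i\ge 2} V_i\Bigr)\otimes \widetilde{\gamma},
\end{equation*}
where $\widetilde{\gamma}$ denotes the framing viewed as a polygon in $\mathcal{P}_{T\cup\{d\}}$. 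This is the residue analogue of Proposition~\ref{shufprod} on cell functions. Given this identity, a hypothetical null combination $\sum_k d_k\ell_k=0$ is analyzed by first grouping terms by insertion partition $(v_1,\ldots,v_k)$: elements of a wrong partition type contribute to a chosen composed residue only through the shuffle subspaces $I_{\ast}$ (by convergence of $\ell_k$ at any bad chord), while elements of the correct type yield honest tensors. Within a single partition type, the induction hypothesis applied to each $\mathcal{L}_{D_i}$ (with $|D_i|<n$) together with linear independence of convergent Lyndon shuffles in $I_T$ makes the tensor factorization injective on the indexing data $(\gamma,\{V_i\})$, forcing every $d_k=0$. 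The argument for $\mathcal{W}_S$ is formally identical, with ``convergent Lyndon shuffle on $T$'' replaced by ``special convergent $1k$-word on $T$'' at the framing level, invoking the now-known independence of the $\mathcal{L}_{D_i}$.

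The principal obstacle will be proving the tensor factorization of the composed residue. The outer shuffle in the framing scrambles the letters of distinct inserted blocks, so for many expanded terms of $\ell$ a given block $B_i$ is not a consecutive arc in the polygon and contributes nothing to the residue; one must verify that the surviving terms recombine precisely into the claimed pure tensor, and that elements of different insertion partitions cannot artificially cancel contributions of the correct type. Both points should reduce, via careful bookkeeping of signs, chord renumberings, and the interaction of nested residues, to the shuffle-multiplicativity of cell functions in Proposition~\ref{shufprod}, but managing the composed residue rigorously is where the real combinatorial work lies.
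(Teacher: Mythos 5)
Your proposal is essentially the paper's proof: the same decoupling of $\mathcal{W}_S$ from $\mathcal{L}_S$ via Lemma \ref{usefullemma}, the same induction on $|S|$, and the same use of the composed residue map along the chords of the inserted blocks to factor an insertion element as the tensor of its insertions $V_i$ with its framing, after which independence of the framings and the induction hypothesis on each $\mathcal{L}_{D_i}$ force all coefficients to vanish. The one point you flag as the principal obstacle --- keeping different insertion partitions from interfering in the residue --- is handled in the paper not by the residue map itself but by first noting that $V_S=\bigoplus L(\gamma,v_1,\ldots,v_k)$ is a direct sum over fixed structures (because Lyndon shuffles form a basis of $V_S$), so a null combination may be assumed from the outset to consist of terms with one common fixed structure, which removes the cross-partition cancellation worry entirely.
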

\begin{proof}
We will prove the result by induction on $n$.  Since ${\cal L}_S\subset
I_S$ and we saw by lemma \ref{usefullemma} that the space generated by
${\cal W}_S$ has zero intersection with $I_S$, we only have to show that
that both ${\cal W}_S$ and ${\cal L}_S$ are linearly independent sets.
We begin with ${\cal L}_S$.  Since
${\cal L}_{\{1,2\}}$ contains a single element, we may assume that $n>2$.

Let $W=A_1\sha \cdots\sha  A_r$ be a Lyndon shuffle, with $r>1$.
We define its {\it fixed
structure} as follows.  Replace every maximal consecutive block (not
contained in any larger consecutive block) in each $A_i$ by a single
letter.  Then $W$ becomes becomes a convergent Lyndon shuffle $W'$ in a
smaller alphabet $T'$ on $k$ letters, which is equipped with an inherited
lexicographical ordering.  If $T=\{1,\ldots,k\}$, then under the
order-respecting bijection $T'\rightarrow T$, $W'$ is mapped to a convergent
Lyndon shuffle $V$ in $T$, called the framing of $W$.
The fixed structure is given by the framing together with
the set of integers $\{v_i\mid 1\le i\le k\}$ defined by $v_i=1$ if that letter
in $T$ does not correspond to a maximal block, and $v_i$ is the length of the
maximal block if it does.  Thus we have $v_1+\cdots+v_k=n$.  We can extend
this definition to the fixed structure of a Lyndon insertion
shuffle, since by definition this is a linear combination of Lyndon
shuffles all having the same fixed structure, and we recover the
framing and fixed structure of the insertion shuffle given in the definition.

\begin{example} If $W$ is the Lyndon shuffle $1546\sha 237$, we
replace the consecutive blocks $23$ and $546$ by letters $b_1$ and $b_2$,
obtaining the convergent shuffle $W'=1b_2\sha b_17$ in the alphabet
$T'=\{1,b_1,b_2,7\}$; renumbering this as $1,2,3,4$ we obtain $V=13\sha 24
\in {\cal L}_{\{1,2,3,4\}}$.  The fixed structure is given by $13\sha 24$ and
integers $v_1=1,v_2=2,v_3=3, v_4=1$.

The Lyndon insertion shuffles $(1, (3 \sha  4 )) \sha  (2, 5 )$ and $(1,3) \sha
(2, (4 \sha  5))$ have the same framing
$13\sha  24$, but since
$(v_1,v_2,v_3,v_4)=(1,1,2,1)$ for the first one and $(1,1,1,2)$ for the
second, they do not have the same fixed structure.  The Lyndon insertion
shuffles $(1, (5) \sha  (3, 4, 6)) \sha  (2, 7) $ and $
(1, (3,5) \sha  (4,6)) \sha  (2,7)$ have the same associated framing
$13\sha  24$ and the same integers $(v_1,v_2,v_3,v_4)=(1,1,4,1)$.
so they have the same fixed structure.
\end{example}
\vskip .2cm
For any fixed structure, given by a convergent Lyndon shuffle $\gamma$
on an alphabet $T$ of length $k$ and associated integers $v_1,\ldots,v_k$
with $v_1+\cdots+v_k=n$, let
$L(\gamma, v_1, ..., v_k)$\label{lyndshwfxst}\index{$L(\gamma, v_1, ..., v_k)$} be the subspace of
$V_S$ spanned by Lyndon shuffles with that fixed structure.  Since
Lyndon shuffles are linearly independent, we have
\begin{align*}
V_{S}=\bigoplus L(\gamma, v_1, ..., v_k)
\end{align*}
Now, as we saw above, a Lyndon insertion shuffle is a linear combination
of Lyndon shuffles all having the same fixed structure, so every
element of ${\cal W}_S\cup{\cal L}_S$ lies in exactly one subspace
$L(\gamma,v_1,\ldots,v_k)$.
Thus, to prove that the elements of ${\cal L}_S$ are
linearly independent, it is only necessary to prove the linear
independence of Lyndon insertion shuffles with the same fixed structure.
If all of the $v_i=1$, then the fixed structure is just a single
convergent Lyndon shuffle on $S$, and these are linearly independent.
So let $(\gamma,v_1,\ldots,v_k)$ be a fixed structure with not all of the
$v_i$ equal to $1$, and let $\omega=\sum_q c_q \omega_q$ be a linear
combination of Lyndon insertion shuffles of fixed structure
$\gamma,v_1,\ldots,v_n$.

Break up the tuple $(1,\ldots,n)$ into $k$ successive tuples
$$B_1=(1,\ldots,v_1),\ B_2=(v_1+1,\ldots,v_1+v_2),\ldots,
B_k=(v_1+\cdots+v_{k-1}+1,\ldots, n).$$

Let $i_1,\ldots,i_m$ be the indices such that $B_{i_1},\ldots,B_{i_m}$ are
the tuples of length greater than $1$.  These tuples correspond to the
insertions in the Lyndon insertion shuffles of type $(\gamma,v_1,\ldots,
v_k)$.  For $1\le j\le m$, let $T_j=\{B_{i_j}\}\cup \{d_j\}$.  This element
$d_j$ is the index of the chord $D_j$ corresponding to the consecutive subset
$B_{i_j}$, which is a chord of the standard polygon and also of every term of
$\omega$.  The chords $D_1,\ldots,D_r$ are disjoint and cut each term of
$\omega$ into $m+1$ subpolygons, $m$ of which are indexed by $T_j$, and the
last one of which is indexed by $T'=
S\setminus \{B_{i_1}\cup \cdots\cup B_{i_m}\}\cup \{d_1,\ldots,
d_m\}$.  Thus we can take the composed residue map\label{cmpresmapDEF}\index{$\Res^p_{D_1,\ldots,D_m}(\omega)$}
$$\Res^p_{D_1,\ldots,D_m}(\omega)\in {\cal P}_{T_1}\otimes
\cdots \otimes {\cal P}_{T_m}\otimes {\cal P}_{T'}.$$
Let us compute this residue.

The alphabet $T'$ is of length $k$ and has a natural ordering corresponding
to a bijection $\{1,\ldots,k\}\rightarrow T'$.  Let $\gamma'$ be the
image of $\gamma$ under this bijection, i.e. the framing.
Let $P^q_1,\ldots,P^q_m$ be the insertions corresponding to the $m$ tuples
$B_{i_1},\ldots,B_{i_m}$ in each term of $\omega=\sum_qc_q\omega_q$. Each
$P^q_j$ lies in ${\cal L}_{B_{i_j}}$.
The image of the composed residue map is then
\begin{equation}\label{compres}
\Res^p_{D_1,\ldots,D_m}(\omega)=\sum_q c_q (P^q_1,d_1)\otimes \cdots\otimes
(P^q_m,d_m)\otimes \gamma'.
\end{equation}
Now assume that $\omega=\sum_q c_q\omega_q=0$. Then
$$\sum_q c_q (P^q_1,d_1)\otimes \cdots\otimes (P^q_m,d_m)\otimes \gamma'=0,$$
and since $\gamma'$ is fixed, we have
$$\sum_q c_q (P^q_1,d_1)\otimes \cdots\otimes (P^q_m,d_m)=0.$$
But for $1\le j\le m$, the $P^q_j$ lie in ${\cal L}_{B_{i_j}}$ and
thus, by the induction hypothesis, the distinct $P^q_j$ for fixed $j$
and varying $q$ are linearly independent.  Since $d_i$ is the largest
element in the lexicographic alphabet $T_i$, the sums $(P^q_j,d_j)$ are
also linearly independent for fixed $j$ and varying $q$, because if
$\sum_q d_q(P^q_j,d_j)=0$ then $\sum_q d_q P^q_j=0$ simply by erasing $d_j$.
The tensor products are therefore also linearly independent, so we must have
$c_q=0$ for all $q$.  This proves that ${\cal L}_S$ is a linearly independent
set.

We now prove that ${\cal W}_S$ is a linearly independent set.  For this,
we construct the framing and fixed structure of a of length $n$ in $V_S$
just as above, by replacing consecutive blocks with single letters, obtaining a word in a smaller alphabet $T'$ and a set of integers corresponding to the
lengths of the consecutive blocks.  For instance, replacing the consecutive
blocks 12 and 354 in the word 12735486 by letters $b_1$ and $b_2$ gives
a convergent word $b_17b_286$ in the alphabet $(b_1,b_2,6,7,8)$; renumbering
this as $(1,2,3,4,5)$ gives the framing as $14253$ and the associated integers
as $v_1=2,v_2=3,v_3=1,v_4=1,v_5=1$.  For every fixed structure of this type,
now given as a convergent word $\gamma$ of length $k<n$ together with integers
$v_1,\ldots,v_k$, we let $W(\gamma,v_1,\ldots,v_k)$ denote the subspace
of $V_S$ generated by words with the fixed structure $(\gamma,v_1,\ldots,
v_k)$.  Since the words of length $n$ form a basis for $V_S$, we again
have $V_S=\oplus W(\gamma,v_1,\ldots,v_k)$.   Therefore, to show that
${\cal W}_S$ is a linearly independent set, we only need to show that the set
of Lyndon insertion words with a given fixed structure is a linearly
independent set.  So assume that we have some linear combination
$\sum_q c_qw_q=0$, where the $w_q$ are all Lyndon insertion words of
given fixed structure $(\gamma,v_1,\ldots,v_k)$.  If $k=n$, then these
insertion words are just words, so they are linearly independent and
$c_q=0$ for all $q$.  So assume that at least one $v_i>1$.  We proceed
very much as above.  Breaking up the tuple $(1,\ldots,n)$ into tuples
$B_1,\ldots,B_k$ as above, and letting $D_1,\ldots,D_m$, $T_j$ and $T'$
denote the same things, we compute the composed residue of $\sum_q
c_qw_q$ and obtain (\ref{compres}).  Then because all of the insertions
$P^q_i$ lie in ${\cal L}_{B_{i_j}}$ and we know that these sets are
linearly independent, we find as above that $c_q=0$ for all $q$.
\end{proof}

\vspace{.3cm}
\subsection{Convergent linear combinations of polygons}

\begin{defn}\label{JSDEF} Let $S=\{1,\ldots,n\}$.\index{$J_S$}
Let $J_S$ be the subspace of ${\cal P}_{S\cup \{d\}}$ spanned
by ${\cal L}_S$ and let $K_S$\index{$K_S$} be the subspace of ${\cal P}_{S\cup \{d\}}$
spanned by  ${\cal W}_S$.
\end{defn}
\vskip .3cm
We prove the main convergence results in two separate theorems, concerning
the subspaces $I_S$ and $W_S$ of $V_S\simeq {\cal P}_{S\cup \{d\}}$
respectively.

\vspace{.2cm}
\begin{thm}\label{convJS}
If $\omega\in I_S\subset {\cal P}_{S\cup \{d\}}$ is convergent, then
$\omega\in J_S$.
\end{thm}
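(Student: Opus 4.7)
The plan is to induct on $n = |S|$, with trivial base cases for $n \le 2$. For the inductive step, fix a convergent $\omega \in I_S$ and exploit the direct sum decomposition $V_S = \bigoplus_{(\gamma, v_*)} L(\gamma, v_1, \ldots, v_k)$ from the proof of Theorem \ref{linind}. Since every Lyndon shuffle belongs to a single fixed-structure class, this decomposition respects $I_S$: write $\omega = \sum_{(\gamma, v_*)} \omega_{(\gamma, v_*)}$ with each component in $I_S$. I would then argue that each component is individually convergent: a bad chord $E = (T, S\setminus T)$ with $T$ a consecutive subset of $S$ can only occur as a chord of a Lyndon shuffle in $L(\gamma, v_*)$ when $T$ matches a union of consecutive blocks of the fixed structure, so residues of different fixed-structure components along a given bad chord land in disjoint subspaces and cannot cancel each other.

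It therefore suffices to handle a single component $\omega_{(\gamma, v_*)} \in L(\gamma, v_*) \cap I_S$. Let $B_{i_1}, \ldots, B_{i_m}$ denote the consecutive blocks of size $\ge 2$ in the fixed structure, and let $D_1, \ldots, D_m$ be the corresponding chords, so that $D_1, \ldots, D_m$ are chords of every Lyndon shuffle appearing in $\omega_{(\gamma, v_*)}$. By the formula \eqref{compres}, the composed residue $\Res^p_{D_1, \ldots, D_m}$ sends $\omega_{(\gamma, v_*)}$ to an element of the form $\sum_q c_q (P^q_1, d_1) \otimes \cdots \otimes (P^q_m, d_m) \otimes \gamma'$, where $\gamma'$ is the (fixed) framing polygon; furthermore the composed residue is injective on $L(\gamma, v_*)$, so reconstructing $\omega_{(\gamma, v_*)}$ from this image is automatic.

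The technical heart of the argument is to show that in this image: (a) for each $j$, the $T_j$-factor is itself a shuffle, i.e.\ $\sum_q c_q P^q_j \in I_{T_j}$ (with the indices $1, \ldots, n$ in $T_j$ relabelled lexicographically); and (b) each such $T_j$-factor is convergent in $\mathcal{P}_{T_j}$. For (a), since each shuffle $A\sha B$ appearing in $\omega$ decomposes under the chord $D_j$ into a shuffle of the portions inside $B_{i_j}$ (appended with $d_j$), the induced factor in slot $j$ is visibly a shuffle in the smaller alphabet. For (b), observe that every bad chord $E'$ of $T_j$ (a consecutive subset of $B_{i_j}$) is simultaneously a bad chord of $S$, so convergence of the original $\omega$ along $E'$ propagates to convergence of the $T_j$-factor along $E'$; this needs a short calculation showing that taking the residue of $\omega$ first along $D_j$ and then along $E'$ agrees with taking residues in the opposite order, which is routine for disjoint chords.

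With (a) and (b) in hand, the induction hypothesis applies to each $T_j$ (strictly smaller than $S$), giving that $\sum_q c_q P^q_j \in J_{T_j} = \langle \mathcal{L}_{T_j} \rangle$. Assembling: the composed residue of $\omega_{(\gamma, v_*)}$ is a sum of tensors of Lyndon insertion shuffles on the $T_j$'s tensored with $\gamma'$, and such tensors are precisely the images under composed residue of Lyndon insertion shuffles in $\mathcal{L}_S$ with framing $\gamma$, insertions from $\mathcal{L}_{T_j}$, and fixed structure $(\gamma, v_*)$ (see Definition \ref{lyndoninsertionshufflesdef}). Injectivity of the composed residue on $L(\gamma, v_*)$ forces $\omega_{(\gamma, v_*)}$ itself to be such a linear combination, i.e.\ $\omega_{(\gamma, v_*)} \in J_S$. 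The main obstacle will be item (b): verifying carefully that convergence of $\omega$ along all bad chords of $S$ translates, under the composed residue, into convergence of each inserted tensor factor $\sum_q c_q P^q_j$ along each of its own bad chords, without picking up spurious contributions from other components of different fixed structure.
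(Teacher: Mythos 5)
Your overall strategy --- induct on $|S|$, isolate the consecutive blocks, take residues along the corresponding chords, push convergence down onto the inserted factors, and apply the induction hypothesis --- is essentially the paper's, but two of the steps you treat as automatic are false as stated, and they are exactly where the proof has to do its work. First, your justification for splitting $\omega$ into individually convergent fixed-structure components rests on the claim that a bad chord $E=(T,S\setminus T)$ can only be a chord of a Lyndon shuffle in $L(\gamma,v_1,\ldots,v_k)$ when $T$ is a union of blocks of the fixed structure. This is wrong: for $S=\{1,2,3,4\}$ the shuffle $12\sha 34$ has fixed structure $1\sha 2$ with blocks $\{1,2\}$ and $\{3,4\}$, yet its term $1234$ admits the chord $T=\{2,3\}$, so $\Res^p_E(12\sha 34)\neq 0$ even though $T$ is not a union of fixed-structure blocks. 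The conclusion you want (that each fixed-structure component of a convergent $\omega$ is convergent) can be rescued, but only by the argument the paper actually gives: separate the terms in which $T$ sits inside a single shuffle factor from those in which it does not (the latter automatically land in $I_T\otimes{\cal P}$), and for the former group the residue by its right-hand tensor factors, which are linearly independent Lyndon shuffles; one then has to check that this grouping refines the grouping by fixed structure. None of this is "disjointness for free."

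Second, your item (a) is backwards. If $B_{i_j}$ is a block of the fixed structure, then by definition its letters occur as a consecutive block \emph{inside a single factor} of every Lyndon shuffle in the component, so each term contributes a single word --- not a shuffle --- to slot $j$ of the composed residue. That the sum over terms lies in $I_{T_j}$ is not "visible"; it is precisely the convergence condition along $D_j$, which you must first have established for the component (so it depends on repairing the first gap). Relatedly, your item (b) is dismissed as a routine commutation of residues, but commuting $\Res^p_{D_j}$ with $\Res^p_{E'}$ is not the difficulty. The problem is that convergence of $\omega$ along a subchord $E'$ of $T_j$ is a statement about the \emph{total} residue, to which terms of quite different shapes contribute (terms where $T'$ sits inside the block $T_j$, terms where $T'$ appears as a block but $T_j$ does not, terms where the letters of $T'$ only appear shuffled), and one must show these contributions cannot conspire to make the total land in $I_{T'}\otimes {\cal P}$ while the piece you care about does not. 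The paper's decomposition into the index sets $I_3,\ldots,I_6$, and the observation that the resulting right-hand residue factors are distinct Lyndon shuffles (so each left-hand factor is forced individually into $I_{T'}$), is the content your sketch is missing.
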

\begin{proof}
One direction of this theorem is easy.  We only need to show that any Lyndon
insertion shuffle is convergent.
If it is a shuffle of convergent Lyndon words,
then there are no consecutive blocks in any of the words.  Therefore
if the letters of any consecutive subset $T$ of $S$ appear as a block
in any term of $\omega$, it must be because they appeared in more than
one of the convergent words which are shuffled together.  So
these letters appear as a shuffle, so the residue lies in
$I_T\otimes {\cal P}_{S\setminus T\cup\{d\}}$.
Now, if we are dealing with a Lyndon
insertion shuffle with non-trivial insertions, then there are
two kinds of bad chords: those corresponding to these insertions, and
those corresponding to consecutive subsets of the insertion sets.  By
definition, the insertions themselves lie in ${\cal L}_T\subset I_T$, and
their expressions are equal to the $I_T$ factors of the residue, so $\omega$
converges along all the bad chords corresponding to insertions.  For the
subchords of these, their letters appear shuffled inside the insertions, so
the previous argument holds.

Write $\omega=\sum_i c_i\omega_i$ where each
$\omega_i=(A^i_1\sha \cdots \sha A^i_{r_i},d)$ is a Lyndon shuffle, $r_i>1$.
Assume that $\omega$ converges along all of
its bad chords.  As above, a consecutive block appearing in any $A^i_j$ is
maximal if the same block does not appear in any other factor
inside a bigger consecutive block.  Factors may appear which contain more than
one consecutive block, but the maximal blocks are disjoint.
\vskip .3cm
We prove the result by induction on the length of the alphabet
$S=\{1,\ldots,n\}$.  The smallest case is $n=3$, since for $n=2$,
the polygons are triangles and have no chords.  For $n=3$, let
$$\omega=a_1(12\sha 3,d)+a_2(13\sha 2,d)+
a_3(1\sha 2\sha 3,d)+a_4(23\sha 1,d).$$
The only non-trivial bad chords are $D=\{1,2\}$, $E=\{2,3\}$. We have
$$\Res^p_D(\omega)=a_1(1,2,e)\otimes (e\sha 3,d)
+a_2(1\sha 2,e)\otimes (e,3,d)$$
$$+a_3(1\sha 2,e)\otimes (e\sha 3,d)+a_4(1\sha 2,e)\otimes (e,3,d).$$
For this to converge means that the left-hand parts of the two right-hand
tensor factors $(e,3,d)$ and $(e\sha 3,d)$ must lie in $I_{\{1,2\}}$.
This implies that $a_1=0$.  For the other residue, we have
$$\Res^p_E(\omega)=a_1(2\sha 3,e)\otimes (1,e,d)
+a_2(2\sha 3,e)\otimes (1,e,d)$$
$$+a_3(2\sha 3,e)\otimes (1\sha e,d)+a_4(2,3,e)\otimes (1\sha e,d).$$
This gives $a_4=0$.
Therefore convergent $\omega$ is a linear combination of $13\sha 2$
and $1\sha 2\sha 3$, which are the basis elements of ${\cal L}_{\{1,2,3\}}$.

\vskip .3cm
The induction hypothesis is that for every alphabet
$S'=\{1,\ldots,i\}$ with $i<n$, if $\omega\in V_{S'}$
is convergent, then $\omega\in K_{S'}$.

Now let $S=\{1,\ldots,n\}$ and assume that $\omega\in V_S$ is convergent.
If no consecutive block appears in any $A^i_j$, then $\omega$ is a linear
combination of convergent Lyndon words, so it is in $J_S$.  Assume some
consecutive blocks do appear, and consider a maximal consecutive block $T$,
which corresponds to a bad chord $E$.  Exactly as in the proof of the
lemma, we decompose $\omega=\gamma_1+\gamma_2$ where $\gamma_k$ is the
sum $\sum_{i\in I_k} c_i\omega_i$, with $I_1$ the set of indices $i$
for which $T$ appears as a block in some $A^i_j$, which by reordering we may
assume to be $A^i_1$, and $I_2$ is the set of indices for which $T$ does not
appear as a block in any $A^i_j$.  As in the lemma, we see immediately that
$\Res^p_E(\gamma_2)\in I_T\otimes {\cal P}_{S\setminus T\cup\{e\}\cup \{d\}}$,
so $\gamma_2$ converges along $E$. Since we are assuming that $\omega$
is convergent, also $\gamma_1$ must converge, so we have
$$\Res^p_E(\gamma_1)\in I_T\otimes {\cal P}_{S\setminus T\cup\{d\}\cup \{e\}}.$$
For each $i\in I_1$, write $A^i_1=B^i_1Y^iC^i_1$, where $Y^i$ consists of the
letters of $T$ in some order, and $B^i_1$ is Lyndon and non-empty.
Then
\begin{equation}\label{small}
\Res^p_E(\gamma_1)=\sum_{i\in I_1} c_i(Y^i,e)\otimes (B^ieC^i\sha
A^i_2\sha\cdots\sha A^i_{r_i},d).
\end{equation}
Putting an equivalence relation on $I_1$ as in the proof of the lemma,
so that $i\sim i'$ if the right-hand factors of (\ref{small}) are
equal, and letting $[i]$ denote the equivalence classes for this
relation, we write the residue as
\begin{equation}\label{smallbis}
\Res^p_E(\gamma_1)=\sum_{[i]\subset I_1} \bigl(\sum_{i\in [i]} c_i(Y^i,e)\bigr)
\otimes (B^{[i]}eC^{[i]}\sha A^{[i]}_2\sha\cdots\sha A^{[i]}_{r_{[i]}},d).
\end{equation}
Since the right-hand factors in the sum over $[i]$ are distinct
Lyndon shuffles, they are linearly independent and therefore we find that
$$(S_{[i]},e)=\sum_{i\in [i]\subset I_1} c_i(Y^i,e)\in I_T$$
for each $[i]\subset I_1$.

We now show that $(S_{[i]},e)$ is not merely in $I_T$, but in $J_T$.
To see this, it is enough to show that $(S_{[i]},e)$ converges on every
subchord of $T$ (consecutive subset inside the set $T$), and apply the
induction hypothesis.  So let $E'$ be a subchord of $E$, corresponding to a
consecutive block $T'$ strictly contained in $T$.

We now decompose the set of indices $I_1$ into two subsets $I_3$ and
$I_4$, where $I_3$ contains the indices $i\in I_1$ such that $T'$ appears
as a consecutive block inside the block $T$ appearing in $A^i_1$,
and $I_4$ contains the indices $i\in I_1$ such that the letters of $T'$ do not
appear consecutively inside the block $T$. Similarly, we partition $I_2$,
the set of indices in the sum $\omega=\sum_i c_i\omega_i$ for
which $T$ does not appear as a block in $A^i_1$,
into two sets $I_5$ and $I_6$, where $I_5$ contains the indices $i\in I_2$
such that $T'$ appears as a block in some $A^i_j$ which we may assume
to be $A^i_1$, and $I_6$ contains the indices $i\in I_2$ of the terms in which
$T'$ does not appear as a block in any $A^i_j$.  We have corresponding
decompositions $\gamma_1=\gamma_3+\gamma_4$, $\gamma_2=\gamma_5+\gamma_6$.
As before, $T'$ must appear as a shuffle in $\gamma_6$, so $\gamma_6$
converges along $E'$.  As for $\gamma_4$, since $T'$ does not appear as a
block or a shuffle, the residue along $E'$ is $0$.  Since by assumption
$\omega$ converges along $E'$, we know that $\gamma_3+\gamma_5$ converges
along $E'$.  Let us show that in fact both $\gamma_3$ and $\gamma_5$
converge along $E'$.

Write $A^i_1=R^iZ^iS^i$ for every $i\in I_3\cup I_5$, where $Z^i$ is a word
in the letters of $T'$.  Note that $R^i$ is Lyndon, and non-empty because
$T'$ cannot appear as a block to the left of any $A^i_j$ by the lemma.
Then for $k=3,5$, we have
\begin{equation}\label{small2}
\Res^p_{E'}(\gamma_k)=\sum_{i\in I_k} c_i(Z^i,e')\otimes (R^ie'S^i\sha
A^i_2\sha\cdots\sha A^i_{r_i}).
\end{equation}
For $k=3,5$, put the equivalence relation on $I_k$ for which
$i\sim  i'$ if the right-hand factors of (\ref{smallbis}) are equal, and
let $\langle i\rangle$ denote the equivalence classes for this relation.
Note that because for $i\in I_3$, $T'$ appears as a block of $T$,
we have $B^i\subset R^i$ and $C^i\subset S^i$, in the sense that in
fact $B^i$ is the left-hand part of $R^i$ and $C^i$ is the right-hand
part of $S^i$.  Therefore in particular, the new equivalence relation
is strictly finer than the old, i.e. the equivalence class $[i]$ breaks
up into a finite union of equivalence classes $\langle i\rangle$.
The residue can now be written
\begin{equation}
\Res^p_{E'}(\gamma_k)=\sum_{\langle i\rangle\subset I_k}
\bigl(\sum_{i\in\langle i\rangle} c_i(Z^i,e')
\bigr) \otimes (R^{\langle i\rangle }e'S^{\langle i\rangle}\sha
A^{\langle i\rangle }_2 \sha\cdots\sha
A^{\langle i\rangle}_{r_{\langle i\rangle}}).
\end{equation}
Then since the right-hand factors for each $k$ are distinct Lyndon shuffles,
they are linearly independent, and furthermore, none of these factors
for $\gamma_3$ can ever occur in $\gamma_5$ for the following reason:
the Lyndon words $R^ie'S^i$ appearing for $k=3$ all have the letters of
$T\setminus T'$ grouped around $e'$, whereas none of the Lyndon words
$R^ie'S^i$ have this property.  Therefore all the right-hand
factors from the residues of $\gamma_3$ and $\gamma_5$ are linearly
independent, so we find that all the left-hand factors
\begin{equation}\label{small3}
\sum_{i\in \langle i\rangle \subset I_k} (Z^i,e')\in I_{T'},
\end{equation}
so that both $\gamma_3$ and $\gamma_5$ converge along $E'$.  In
particular, this means that both $\gamma_1$ and $\gamma_2$ converge
along $E'$.

Now, let us compute the composed residue map $\Res^p_{E,E'}(\gamma_1)$.  First,
for each $i\in I_3$, write $Y^i=U^iZ^iV^i$ where $Z^i$ is a word in the
letters of $T'$, so that $R^i=B^iU^i$, $S^i=U^iC^i$, and
$A^i_1=B^iU^iZ^iV^iC^i$.
Then by (\ref{small2}), we have
$$\Res^p_E(\gamma_1)=\sum_{[i]\in I_3}\bigl(\sum_{i\in [i]} c_i(U^iZ^iV^i,e)
\bigr)\otimes \bigl(B^{[i]}_1eC^{[i]}_1\sha A^{[i]}_2 \sha  \cdots \sha
A^{[i]}_{r_{[i]}},d\bigr)+$$
$$\sum_{[i]\in I_4}\bigl(\sum_{i\in [i]} c_i(Y^i,e)\bigr)\otimes
\bigl(B^{[i]}_1eC^{[i]}_1\sha A^{[i]}_2 \sha  \cdots \sha
A^{[i]}_{r_{[i]}},d\bigr).  $$
The terms for $i\in I_4$ converge along $T'$, so they vanish when taking
the composed residue, and we find
$$\Res^p_{E,E'}(\gamma_1)=
\sum_{[i]\in I_3} \bigl(\sum_{i\in [i]} c_i(Z^i,e')
\otimes (U^ie'V^i,e)\bigr)\otimes
\bigl(B^{[i]}_1eC^{[i]}_1\sha A^{[i]}_2 \sha  \cdots \sha
A^{[i]}_{r_{[i]}},d\bigr).$$
Since for each $[i]\subset I_3$, the right-hand factors are as usual distinct
and linearly independent, this means that for each $[i]\subset I_3$,
$$\Res^p_{E'}(S_{[i]},e)=\sum_{i\in [i]}
c_i(Z^i,e')\otimes (U^ie'V^i,e)\in {\cal P}_{T'\cup \{e'\}}\otimes
{\cal P}_{T\setminus T'\cup\{e'\}\cup \{e\}}.$$
Now, breaking $[i]$ up into separate equivalence classes $\langle i\rangle$,
we have that $U^i$ and $V^i$ are identical for all $i$ in one subclass
$\langle i\rangle$ since $B^i$ and $C^i$ are already identical for
all $i\in [i]$.  So for each $[i]\subset I_3$, we can write
$$\Res^p_{E'}(S_{[i]},e)=\sum_{\langle i\rangle\subset [i]}
\sum_{i\in \langle i\rangle}
c_i(Z^i,e')\otimes (U^{\langle i\rangle}e'V^{\langle i\rangle},e),$$
where the right-hand factors are all distinct words.
Then (\ref{small3}) shows that this sum lies in
$I_{T'}\otimes {\cal P}_{T\setminus T'\cup \{e'\}\cup \{e\}}$, so in fact
$(S_{[i]},e)$ converges along $E'$.  For $[i]\subset I_4$, we already
saw that $\Res^p_{E'}(S_{[i]},e)=0$, so $(S_{[i]},e)$ converges along $E'$
for all $[i]\subset I_1$.  Since this holds for all chords $E'$
corresponding to consecutive subblocks $T'$ of $T$, we see that each
$(S_{[i]},e)$ is convergent along all its bad chords,
and thus, by the induction hypothesis, $(S_{[i]},e)\in J_T$.
Now we can write $\omega=\gamma_1+\gamma_2$ with
$$\gamma_1=\sum_{[i]\in I_1}  c_{[i]} B^{[i]}(S_{[i]})C^{[i]}\sha
A^{[i]}_2\sha \cdots \sha A^{[i]}_{r_i}$$
with $S_{[i]}\in J_T$.  This means that the maximal block $T$, which appeared
only in $\gamma_1$, has been replaced by an insertion in the sense of the
definition of Lyndon insertion shuffles.

To conclude the proof of the theorem, we successively replace each of the
maximal blocks in $\omega$ by insertion terms in the same way.  Insertions
are by definition convergent and contain no blocks, so as we
proceed to substitute insertions for the maximal blocks one by one,
blocks which were previously not maximal may become maximal;
however the order in which the blocks are substituted by insertions
is of no importance as long as only maximal blocks are treated at each step.
The final result displays $\omega$ as a linear combination of convergent
Lyndon shuffles and Lyndon insertion shuffles, so $\omega\in J_S$.
\end{proof}

\begin{thm}\label{convKS}
Let $\eta\in W_S\subset {\cal P}_{S\cup \{d\}}$.
Then $\eta$ is convergent if and only if $\eta\in K_S=\langle
{\cal W}_S\rangle$.
\end{thm}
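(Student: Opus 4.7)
The proof will proceed along the same lines as that of Theorem \ref{convJS}, which handled the analogous statement for the shuffle subspace $I_S \subset V_S$. The structure mirrors the previous one: an easy direction verifying convergence of the basis elements, and an inductive hard direction dismantling an arbitrary convergent element block by block.

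For the easy direction, I would verify that every element of ${\cal W}_S$ is convergent. A special convergent word $w$ has, by construction, no common chord between $(w,d)$ and $\delta$, so it is trivially convergent. For a Lyndon insertion word built from a framing $w'$ (a special convergent $1k$-word on an alphabet $T$ of size $k<n$) with insertions $V_i \in {\cal L}_{D_i}$ placed in positions $i$ where $v_i>1$, the bad chords of the resulting sum of words fall into three types: (i) chords corresponding exactly to an insertion block $D_i$, on which the residue restricts on one side to $V_i$, which lies in ${\cal L}_{D_i}\subset I_{D_i}$, hence is convergent; (ii) chords corresponding to consecutive subsets strictly inside some $D_i$, which reduce to the convergence of $V_i$ itself on such sub-chords, inductively guaranteed by the easy direction of Theorem \ref{convJS} since $V_i$ is a Lyndon insertion shuffle; and (iii) no other bad chords occur, because any common chord of $(w,d)$ and $\delta$ cutting across two or more of the blocks $D_i$ would descend to a common chord between the framing polygon $(w',d)$ and the standard polygon on the reduced alphabet, contradicting the special convergence of $w'$.

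For the hard direction, I would induct on $n$. The base cases $n\leq 3$ are immediate since in those cases there are no proper consecutive subsets to produce bad chords. For the inductive step, suppose $\eta=\sum_q c_q w_q\in W_S$ is convergent. If no $w_q$ contains a consecutive subset of $\{1,\ldots,n\}$ as a contiguous substring of length between $2$ and $n-1$, then each $w_q$ is already a special convergent word and $\eta\in K_S$ directly. Otherwise, choose a maximal consecutive block $T\subset\{1,\ldots,n\}$ that appears as a contiguous substring in at least one $w_q$, and let $E$ denote the corresponding chord of $S\cup\{d\}$. Split $\eta=\eta_1+\eta_2$, where $\eta_1$ collects the terms in which $T$ appears as a substring and $\eta_2$ the remaining terms. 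Since no term of $\eta_2$ has the letters of $T$ appearing contiguously, $\Res^p_E(\eta_2)=0$, so $\eta_1$ must itself converge along $E$.

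The next step is the most delicate one and will be the principal obstacle; it mirrors the linear-independence argument used in the proof of Theorem \ref{convJS}. Writing each $w_q=B^q Y^q C^q$ with $Y^q$ a permutation of the letters of $T$, and grouping by the equivalence relation $q\sim q'$ whenever $B^q=B^{q'}$ and $C^q=C^{q'}$, I would compute
\begin{equation*}
\Res^p_E(\eta_1)=\sum_{[q]}\Bigl(\sum_{q\in[q]}c_q\,(Y^q,e)\Bigr)\otimes (B^{[q]}eC^{[q]},d);
\end{equation*}
the distinct outer words $B^{[q]}eC^{[q]}$ are linearly independent, so each inner sum $S_{[q]}=\sum_{q\in[q]}c_q(Y^q,e)$ must lie in $I_T$. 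The key observation is that each such $S_{[q]}$ is in fact fully convergent in $I_T$, so that Theorem \ref{convJS} applies to put $S_{[q]}\in J_T$: for any sub-chord $E'$ of $T$ corresponding to a consecutive block $T'\subsetneq T$, one partitions the terms of $\eta$ further according to whether $T'$ occurs as a sub-block of $T$, outside $T$ but as a block, or not as a block at all, and then uses the composed residue $\Res^p_{E,E'}$ together with the global convergence of $\eta$ along $E'$ to isolate the convergence of $S_{[q]}$ along $E'$. Once each $S_{[q]}\in J_T$ is substituted back into the outer word $B^{[q]}eC^{[q]}$, the maximal block $T$ is replaced by a Lyndon insertion of the required form. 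Iterating this substitution over all maximal blocks—the order is immaterial since distinct maximal blocks are disjoint, and insertions introduce no new common chords with $\delta$ outside the inserted positions—displays $\eta$ as a linear combination of Lyndon insertion words and special convergent words, proving $\eta\in K_S$.
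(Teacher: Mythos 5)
Your overall strategy is the paper's: verify convergence of the generators, then peel off maximal consecutive blocks of a convergent $\eta$, use the residue along the corresponding chord to force the inserted piece $S_{[q]}$ into $I_T$, upgrade it to $J_T$ via theorem \ref{convJS}, and iterate. But there is one genuine gap, and it is precisely the point at which this theorem differs from theorem \ref{convJS}: you never rule out maximal consecutive blocks $T$ that meet $\{1,n\}$. By definition, a Lyndon insertion word is built from a framing with $v_1=v_k=1$, so the letters $1$ and $n$ are never absorbed into an inserted shuffle. If a convergent $\eta\in W_S$ could contain a contiguous occurrence of a block $T=\{1,\ldots,m\}$ or $T=\{m,\ldots,n\}$, your substitution step would replace that block by an element of $J_T$ sitting at the position of $1$ or of $n$, and the resulting expression would \emph{not} lie in $K_S=\langle{\cal W}_S\rangle$. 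So the argument as written only places $\eta$ in a strictly larger space, and the conclusion $\eta\in K_S$ does not follow.

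The paper closes this with lemma \ref{lemleft}: no maximal consecutive block with non-trivial intersection with $\{1,n\}$ can occur in any term of a convergent $\eta$. The proof is not a formality. Since $1$ sits immediately to the left of $n$ in every $1n$-word, a block $T\ni 1$, $T\not\ni n$ must occur as $(\ldots,Z^i,1,n,\ldots)$, and convergence along $T$ forces a sum $\sum_i a_i(Z^i,1,e)$ of words all \emph{ending} in the letter $1$ to lie in $I_T$; using a backwards Lyndon basis (Lyndon words written right to left), such words are part of the degree-one piece and are linearly independent of the shuffles, so all $a_i=0$. The symmetric argument, with the reversed lexicographic order, handles blocks containing $n$ (words all \emph{beginning} with $n$). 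Only after this lemma may one assume $T\subset\{2,\ldots,n-1\}$ and run the substitution you describe. As a minor additional point, your base case is not quite "no bad chords": already for $n=3$ the words $132$ and $213$ each have a bad chord of length $2$, and one needs the same kind of independence-from-shuffles argument to see that only the zero combination converges, consistent with ${\cal W}_{\{1,2,3\}}=\emptyset$.
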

\begin{proof} The proof that $\omega\in K_S$ is convergent is exactly
as at the beginning of the proof of the previous theorem.  So
let $\omega\in W_S$, write
$$\omega=\sum_i a_i\eta_i$$
where each $\eta_i$ is a $1n$-polygon (a $1n$-word concatenated with $d$),
and assume $\omega$ is convergent.
The only possible bad chords for $\omega$ are the consecutive blocks
appearing in the $\eta_i$.  Let $T$ be a subset of $S$ corresponding
to a maximal consecutive block.
\vskip .3cm
\begin{lem}\label{lemleft}No maximal consecutive block
having non-trivial intersection with $\{1,n\}$ can appear in
any of the $1n$-words $\eta_i$ of $\omega$.
\end{lem}
\begin{proof}  If $T$ is a maximal block containing both $1$ and $n$,
then $T=\{1,\ldots,n\}$ which does not correspond to a chord.
Let $T$ be a maximal consecutive block appearing in $\omega$ which
contains $1$ but not $n$, say $T=\{1,\ldots,m\}$.
If $T$ appears as a consecutive block in some $\eta_i$,
we may write $\eta_i= (K^i,Z^i,1,n,H^i,d)$ where $Z^i$ is an ordering of
$\{2,\ldots,m\}$.
Then
$$\Res^p_E(\sum_i a_i\eta_i)=\sum_i a_i(Z^i,1,e)\otimes (K^i,e,n,H^i,d).$$
The assumption that $\omega$ converges along $E$ means that
this residue lies in $I_T\otimes {\cal P}_{S\setminus T\cup\{e,d\}}$.
So for constant words $K,H$ (i.e. constant right-hand tensor factor),
we must have
\begin{equation}
\sum_{i\mid K^i=K,H^i=H} a_i(Z^i,1,e)\in I_T,
\end{equation}
in other words, a sum of words $\sum_i a_i (Z^i1)$ must be a shuffle.
But this is impossible by a Lyndon basis argument. Using a backwards Lyndon
basis in which all Lyndon words are as usual but written right to left,
the words ending in 1 generate the degree 1 part of the algebra and
are linearly independent from the shuffles, which generate the part of
degree $\ge 2$.  So we must have $a_i=0$ for all $i$.

Now let $T=\{m,\ldots,n\}$.  We write $\eta_i=(K^i,1,n,Z^i,H^i,d)$
where $(n,Z^i)$ is an ordering of $T$, and we have
$$\Res^p_E(\sum_i a_i\eta_i)=\sum_i a_i(n,Z^i,e)\otimes (K^i,1,e,H^i,d).$$
Convergence implies that
\begin{equation}
\sum_{i\mid K^i=K,H^i=H} a_i(n,Z^i,e)\in I_T,
\end{equation}
Using a Lyndon basis in which the lexicographical ordering is the backwards
order $n<\cdots<1$, the $nZ^i$ are all Lyndon words, so as above,
they cannot sum to a shuffle.
\end{proof}

Now we can complete the proof of the theorem.  It runs almost exactly as
the proof of the previous theorem.  Let $\omega=\sum_i a_i\eta_i$ be
a sum of $1n$-words which converges and consider a maximal consecutive
block $T\subset \{2,\ldots,n-1\}$.  Let $I_1$ be the set of indices
$i$ such that $\eta_i$ contains the block $T$ and $I_2$ the other indices.
For $i\in I_1$, write $\eta_i=(K^i,Z^i,H^i,d)$ where $Z^i$ is an ordering of
$T$.  Then
$$\Res^p_T(\omega)=\sum_{i\in I_1} a_i(Z^i,e)\otimes (K^i,e,H^i,d).$$
Let $i\sim i'$ be the equivalence relation on $I_1$ given by
$K^i=K^{i'}$ and $H^i=H^{i'}$.  Then
$$\Res^p_T(\omega)=\sum_{[i]\in I_1} \bigl(\sum_{i\in [i]}
a_i(Z^i,e)\bigr)\otimes (K^{[i]},e,H^{[i]},d),$$
so by the convergence assumption, we have
$$S_{[i]}=\sum_{i\in [i]} a_i(Z^i,e)\in I_T$$
for each $[i]\subset I_1$.  Therefore we can write $\omega$ with the
insertion $S_{[i]}$ as
$$\omega=\sum_{[i]\subset I_1} a_i(K^{[i]},S_{[i]},H^{[i]},d)+
\sum_{i\in I_2} a_i\eta_i,$$
and the maximal block $T$ no longer appears in $\omega$.
We prove that $S_{[i]}\in J_T$  exactly as in the proof of the previous
theorem: considering a maximal consecutive block $T'\subset T$ occurring
in a factor of $S_{[i]}$, one shows that $S_{[i]}$ converges along $T'$
if and only if $\omega$ converges along $T'$.  Since $\omega$ does
converge by assumption, $S_{[i]}$ also converges, and since this holds
for all consecutive blocks $T'\subset T$, $S_{[i]}$ converges on all
its subdivisors and therefore $S_{[i]}\in J_S=\langle {\cal L}_S\rangle$.
Finally, one deals with the disjoint maximal blocks appearing in $\omega$ one
at a time until no blocks at all remain.

\end{proof}

\vspace{.3cm}
\section{Explicit generators for ${\cal FC}$ and
  $H^\ell(\Mod_{0,n}^\delta)$}\label{explicitbasissection}

In this chapter, we show that the map from polygons to cell-forms is
surjective, and compute its kernel.  From this and the previous
chapter, we will conclude that the pairs $(\delta,\omega)$, where
$\omega$ runs through the set ${\cal W}_S$ of Lyndon insertion words
for $n\ge 5$ form a generating set for the formal cell-zeta algebra
${\cal FC}$. In the final section, we show that the images of the
elements of ${\cal W}_S$ in the cohomology $H^\ell(\Mod_{0,n})$
yield an explicit basis for the convergent cohomology
$H^\ell(\Mod_{0,n}^\delta)$, and discuss its dimension.

\subsection{From polygons to cell-forms}\label{sec41}
Let $S=\{1,\ldots,n\}$.  The bijection $\rho:S\cup \{d\}\rightarrow
\{0,t_1,\ldots,t_{\ell+1},1,\infty\}$ given by associating the
elements $1,\ldots,n,d$ to $0,t_1,\ldots,t_{\ell+1}, 1,\infty$
respectively, induces a  map $f$ from polygons to cell-forms:
$$\eta=(\sigma(1),\ldots,\sigma(n),d)\buildrel{f}\over\rightarrow
\omega_\eta=[\rho(\sigma(1)),\ldots,\rho(\sigma(n)),\infty].$$ The
map $f$ extends by linearity to a map from ${\cal P}_{S\cup \{d\}}$
to the cohomology group \\$H^{n-2}(\Mod_{0,n+1})$.  The purpose of
this section is to prove that $f$ is a surjection, and to determine
its kernel.

Recall that $I_S\subset {\cal P}_{S\cup \{d\}}$ denotes the subvector space of
${\cal P}_{S\cup \{d\}}$ spanned by the {\it shuffles with respect to the
element $d$}, namely by the linear combinations of polygons
$$(S_1 \sha S_2, d)$$
for all partitions $S_1\coprod S_2$ of $S$.

\begin{prop}\label{prop41} Let $S=\{1,\ldots,n\}$.  Then the cell-form map
$$f:{\cal P}_{S\cup \{d\}} \To H^{n-2}(\Mod_{0,n+1})$$
is surjective with kernel equal to the subspace $I_S$.
\end{prop}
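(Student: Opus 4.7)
The plan is to prove the two inclusions $I_S\subseteq\ker f$ and $\ker f\subseteq I_S$, together with surjectivity of $f$. The first two statements are immediate. Under the bijection $\rho$, the element $d$ corresponds to $\infty$, so a generator $(A\sha B,d)$ of $I_S$ maps by $f$ to the differential form with cell function $\langle(\rho(A),\infty)\sha_{\infty}(\rho(B),\infty)\rangle$, which vanishes identically by \eqref{cellfunction1shuffsare0}, giving $I_S\subseteq\ker f$. Surjectivity follows from Theorem \ref{thm01cellsspan} applied to $\Mod_{0,n+1}$: the $(n-1)!$ 01 cell forms $\omega_{(0,1,\pi)}$ form a basis of $H^{n-2}(\Mod_{0,n+1})$, and each is the image under $f$ of the 01 polygon $\rho^{-1}(0,1,\pi,\infty)\in\mathcal{P}_{S\cup\{d\}}$.

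The reverse inclusion $\ker f\subseteq I_S$ is the crux. My strategy is to reduce it to a dimension count: the induced surjection $\bar f\colon\mathcal{P}_{S\cup\{d\}}/I_S\twoheadrightarrow H^{n-2}(\Mod_{0,n+1})$ will be forced to be an isomorphism as soon as we know that $\dim(\mathcal{P}_{S\cup\{d\}}/I_S)\leq(n-1)!$, whence $\ker f=I_S$. The reverse inequality is free: by Theorem \ref{thm01cellsspan} the 01 cell forms are linearly independent, hence so are the 01 polygons modulo $I_S$, and they number $(n-1)!$.

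The main obstacle is therefore to prove that the 01 polygons span $\mathcal{P}_{S\cup\{d\}}/I_S$. The cleanest route is to use the identification \eqref{basiciso} $V_S\simeq\mathcal{P}_{S\cup\{d\}}$, under which $I_S$ is exactly the span of non-trivial shuffles of disjoint non-empty words whose concatenation exhausts $S$, i.e.\ the space of decomposables in the multilinear part of the shuffle Hopf algebra. By Radford's theorem the indecomposable quotient is spanned by the multilinear Lyndon words in $\{1,\ldots,n\}$; such words are precisely the permutations of $\{1,\ldots,n\}$ beginning with the smallest letter $1$, of which there are $(n-1)!$, giving the required upper bound. A more combinatorial alternative, closer to the polygonal methods of the paper, is to reduce any polygon $(w,d)$ modulo $I_S$ to a linear combination of 01 polygons by iteratively exploiting the single-letter shuffle relations $(a\sha w',d)\equiv 0\pmod{I_S}$ (with $a\in\{1,n\}$) to bring $1$ and $n$ into adjacent positions, mimicking at the polygon level the reduction to 01 cell functions carried out in the proof of Theorem \ref{thm01cellsspan}.
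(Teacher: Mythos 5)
Your proposal is correct and follows essentially the same route as the paper: surjectivity from the $01$ cell-form basis of Theorem \ref{thm01cellsspan}, the inclusion $I_S\subseteq\ker f$ from \eqref{cellfunction1shuffsare0}, and the reverse inclusion by identifying ${\cal P}_{S\cup\{d\}}/I_S$ with the span of the $(n-1)!$ multilinear Lyndon words via the Lyndon basis of $V_S$ and matching dimensions. The only cosmetic difference is that you phrase the dimension count as an inequality forced to be an equality by the surjection, whereas the paper computes both dimensions outright.
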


\begin{proof} The surjectivity is an immediate consequence of the
fact that $01$ cell-forms form a basis of $H^{n-2}(\Mod_{0,n+1})$
(theorem \ref{thm01cellsspan}), since all such cell-forms are the
images under $f$ of polygons having the edge labelled $1$ next to
the one labelled $n$.

Now, $I_S$ lies in the kernel of $f$ by the corollary to proposition
\ref{shufprod}. So it only remains to show that the kernel of $f$ is
equal to $I_S$.  But this is a consequence of counting the
dimensions of both sides.  By theorem \ref{thm01cellsspan}, we know
that the dimension of $H^{n-2}(\Mod_{0,n+1})$ is equal to $(n-1)!$.
As for the dimension of ${\cal P}_{S\cup \{d\}}/I_S$, recall from
the beginning of chapter 3 that ${\cal P}_{S\cup \{d\}}\simeq V_S$,
which can be identified with the graded $n$ part of the quotient of
the polynomial algebra on $S$ by the relation $w=0$ for all words
$w$ containing repeated letters.  Thus $V_S$ is the vector space
spanned by words on $n$ distinct letters, so it is of dimension
$n!$. But instead of taking a basis of words, we can take the Lyndon
basis of Lyndon words (words with distinct characters whose smallest
character is on the left) and shuffles of Lyndon words.  The
subspace $I_S$ is exactly generated by the shuffles, so the
dimension of the quotient is given by the number of Lyndon words on
$S$, namely $(n-1)!$. Therefore ${\cal P}_{S\cup \{d\}}/I_S\simeq
H^{n-2}(\Mod_{0,n+1})$.
\end{proof}

\begin{rem}
The above proof has an interesting consequence.  Since the map from
polygons to differential forms does not depend on the role of $d$,
the kernel cannot depend on $d$, and any other element of $S\cup
\{d\}$ could play the same role.  Therefore $I_S$, which is defined
as the space generated by shuffles with respect to the element $d$,
is equal to the space generated by shuffles of elements of $S\cup
\{d\}$ with respect to any element of $S$; it is simply the subspace
generated by {\it shuffles with respect to one element} of $S\cup
\{d\}$.
\end{rem}

\begin{cor}
Let $W_S\subset {\cal P}_{S\cup \{d\}}$ be the subset of polygons
corresponding to $1n$-words (concatenated with $d$).  Then
$$f:W_S\simeq H^{n-2}(\Mod_{0,n+1}).$$
\end{cor}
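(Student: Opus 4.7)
The plan is to deduce this corollary directly from proposition \ref{prop41} together with lemma \ref{usefullemma}. Proposition \ref{prop41} gives us the short exact sequence
\[
0 \to I_S \to \mathcal{P}_{S\cup\{d\}} \xrightarrow{f} H^{n-2}(\Mod_{0,n+1}) \to 0,
\]
so to show that $f$ restricts to an isomorphism on $W_S$ it suffices to show (i) injectivity of $f|_{W_S}$, i.e.\ $W_S \cap I_S = 0$, and (ii) that $\dim W_S = \dim H^{n-2}(\Mod_{0,n+1})$.

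First I would address injectivity. Any element $\tau \in W_S$ is by definition a $\Q$-linear combination of $1n$-words concatenated with $d$, that is, of polygons in $\mathcal{P}_{S\cup\{d\}}$ in which the edge labelled $1$ sits immediately to the left of the edge labelled $n$. Setting $a_1 = 1$ and $a_2 = n$ in lemma \ref{usefullemma}, we conclude that if $\tau \in I_S$ then all coefficients of $\tau$ vanish, hence $\tau = 0$. Thus $W_S \cap I_S = 0$ and $f|_{W_S}$ is injective.

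For (ii), a $1n$-word of length $n$ is determined by the ordering of the $n-1$ units consisting of the block $(1,n)$ together with the remaining $n-2$ letters of $S$, giving exactly $(n-1)!$ such words. On the other hand, theorem \ref{thm01cellsspan} (applied with $n$ replaced by $n+1$, so $\ell = n-2$) provides a basis of $H^{n-2}(\Mod_{0,n+1})$ consisting of $01$ cell-forms indexed by orderings of the remaining $n-1$ marked points, and hence $\dim H^{n-2}(\Mod_{0,n+1}) = (n-1)!$. Combining the injectivity of $f|_{W_S}$ with the equality of dimensions forces $f|_{W_S}$ to be a bijection, giving the claimed isomorphism.

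I don't foresee a serious obstacle: the only subtlety is ensuring that lemma \ref{usefullemma} is invoked with the right choice of consecutive letters, but since $W_S$ is defined precisely so that $1$ precedes $n$, the hypotheses are satisfied verbatim. The rest is a dimension count.
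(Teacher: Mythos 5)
Your proposal is correct and matches the paper's argument: the paper's one-line proof simply cites the decomposition ${\cal P}_{S\cup\{d\}}=W_S\oplus I_S$, which itself rests on exactly the two facts you establish — $W_S\cap I_S=0$ via lemma \ref{usefullemma} applied with the pair $(1,n)$, and the count $\dim W_S=(n-1)!=\dim H^{n-2}(\Mod_{0,n+1})$. You have merely unpacked what the paper leaves implicit.
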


\begin{proof} The proof follows from the fact that ${\cal P}_{S\cup \{d\}}=
W_S\oplus I_S$.
\end{proof}

\subsection{Generators for ${\cal FC}$}

By definition, ${\cal FC}$ is generated by all linear combinations
of pairs of polygons $\sum_i a_i (\delta,\omega_i)$ whose associated
differential form converges on the standard cell, but modulo the
relation (among others) that shuffles are equal to zero.  In other
words, since ${\cal P}_{S\cup \{d\}}=W_S\oplus I_S$, we can redefine
${\cal FC}$ to be generated by linear combinations $\sum_i
a_i(\delta,\omega_i)$ such that $\sum_i a_i\omega_i\in W_S$ and such
that the associated differential form converges on the standard
cell.

The following proposition states that the notion of the residue of a
polygon and the residue of the corresponding cell-form coincide. In
order to state it, we must recall that one can define the map
$$\rho: \cal{P}_S \To \Omega^\ell(\Mod_{0,S})\ ,$$
from polygons labelled by $S$ to cell forms in a coordinate-free way
(one can do this directly from equation $(\ref{omegalift})$). In
$\S1$, this map was defined in explicit coordinates by fixing any
three marked points at $0,1$ and $\infty$. This essence of lemma
$\ref{lemsymaction}$ is that $\rho$ is independent of the choice of
three marked points, and is thus  coordinate-free.

\begin{prop} \label{propresformula}  Let $S=\{1,\ldots,n\}$ and let
$D$ be a stable partition $S_1\cup S_2$ of $S$ corresponding to a boundary
divisor of $\Mod_{0,n}$, with $|S_1|=r$ and $|S_2|=s$.  Let $\rho$ denote
the usual map from polygons to cell-forms.  Then the following
diagram is commutative:
$$\xymatrix{{\cal P}_S\ar[r]^\rho\ar[d]_{\Res^p_D} &H^\ell(\Mod_{0,n})
\ar[d]^{\Res_D}\\
{\cal P}_{S_1\cup \{d\}}\otimes  {\cal P}_{S_2\cup \{d\}}\ar[r]^{\!\!\!
\!\!\!\!\!\!\!\!\!\!\!\!\!\!\!\!\!\!\rho \otimes\rho}&\
H^{r-2}(\Mod_{0,r+1})\otimes H^{s-2}(\Mod_{0,s+1}).  }$$
In other words, the usual residue of differential forms
corresponds to the combinatorial residue of polygons.
\end{prop}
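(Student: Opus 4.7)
My plan is to verify the commutativity of the diagram polygon-by-polygon on a generating set. Let $\eta\in{\cal P}_S$ be a single polygon with corresponding cell form $\omega_\eta=\rho(\eta)$, and let $D$ be a stable partition of $S$ into $S_1\cup S_2$. There are two cases to treat, corresponding to whether or not $D$ defines a chord of $\eta$.

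First, if $D$ does not correspond to a chord of $\eta$, then $\Res^p_D(\eta)=0$ by definition. On the cohomology side, the order formula \eqref{ordcell} from the proof of proposition \ref{CORomegapoles} shows that $2\ord_D(\omega_\eta)\geq 0$ with equality only when $S_1$ and $S_2$ each form a single consecutive block of $\eta$, i.e. exactly when $D$ is a chord. Hence in this case $\omega_\eta$ has no pole along $D$ and $\Res_D(\omega_\eta)=0$ as well, matching $\rho(\Res^p_D(\eta))=0$.

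The substantive case is when $D$ is a chord of $\eta$. Then $\eta$ factors as two consecutive blocks $\eta_1'$ on $S_1$ and $\eta_2'$ on $S_2$, and $\Res^p_D(\eta)=\eta_1\otimes\eta_2$ with $\eta_i=(\eta_i',d)$. I would compute $\Res_D(\omega_\eta)$ in local coordinates adapted to $D$, working with the $\PSL_2$-equivariant lift $\tilde\omega_\eta$ from \eqref{omegalift}. Substitute $z_i=z^*+\epsilon u_i$ for $i\in S_1$, where $\epsilon$ is a transverse parameter to $D$ and $z^*$ is the limiting common value of the coalescing points, tracked by the $S_2$-component of the boundary stratum. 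Under this substitution the $r-1$ internal $S_1$-differences in the denominator of $\tilde\omega_\eta$ each acquire a factor of $\epsilon$, contributing $\epsilon^{r-1}$; the $s-1$ internal $S_2$-differences are unchanged; and the two crossing factors degenerate, to leading order in $\epsilon$, to $(z^*-z_{b_1})$ and $(z_{b_s}-z^*)$. Expanding $\bigwedge_{i\in S}dz_i$ in the new variables $(z^*,\epsilon,\{u_i\}_{i\in S_1},\{z_j\}_{j\in S_2\setminus\{b_1\}})$ and killing the two-dimensional redundancy via the translation--dilation action $u_i\mapsto\lambda u_i+c$ (which is absorbed by the $\PSL_2$-normalization on the $S_1$-component of $D$) produces a factor of $\epsilon^{r-1}\,d\epsilon$. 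The $\epsilon^{r-1}$ in the numerator cancels that in the denominator, and the coefficient of $d\epsilon/\epsilon$ factors precisely as $\tilde\omega_{\eta_1}\wedge\tilde\omega_{\eta_2}$ on $(\Pro^1)^{S_1\cup\{d\}}\times(\Pro^1)^{S_2\cup\{d\}}$. Dividing by the invariant $3$-forms on each $\PSL_2$ factor, as in the proof of lemma \ref{lemsymaction}, descends this to $\omega_{\eta_1}\wedge\omega_{\eta_2}=\rho(\eta_1)\otimes\rho(\eta_2)$, which is $(\rho\otimes\rho)\Res^p_D(\eta)$.

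The main obstacle will be the careful bookkeeping of signs, Jacobians, and the $\PSL_2$-normalizations on each side of the boundary: both components $\Mod_{0,S_1\cup\{d\}}$ and $\Mod_{0,S_2\cup\{d\}}$ carry independent $\PSL_2$-actions, and the transverse parameter $\epsilon$ must be properly scaled so that $d\epsilon/\epsilon$ yields a well-defined Poincar\'e residue independent of the $\PSL_2$-choices. To guard against sign errors I would sanity-check the calculation on the smallest nontrivial example, $n=5$ with $\eta=(1,2,3,4,5)$ and $D=\{1,2\}\sqcup\{3,4,5\}$, where both $\Res_D(\omega_\eta)$ and $\omega_{\eta_1}\wedge\omega_{\eta_2}$ can be written out explicitly in simplicial coordinates, pinning down the orientation convention used implicitly in definition \ref{RespmapDEFch3}.
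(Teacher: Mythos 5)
Your proposal is correct in outline but takes a genuinely different route from the paper. The paper reduces to the case where $\eta$ is the standard polygon and $D$ is the partition $\{1,\ldots,k+1\}\cup\{k+2,\ldots,n\}$, passes to cubical coordinates $t_i=x_i\cdots x_\ell$ so that $D$ becomes $\{x_k=0\}$ and $\omega_\eta$ becomes $\prod dx_i/x_i(1-x_i)$, reads off the residue by inspection, converts back to simplicial coordinates on each factor, and only then extends to arbitrary $\eta$ and $D$ by $\Sym(n)$-equivariance of both $\rho$ (lemma \ref{lemsymaction}) and $\Res$. You instead work directly with the equivariant lift $\tilde\omega_\eta$ on $(\Pro^1)^S$ and perform the local degeneration $z_i=z^*+\epsilon u_i$ for $i\in S_1$; this avoids both the reduction to the standard case and the final equivariance step, and makes the factorization of the residue into $\tilde\omega_{\eta_1}\wedge\tilde\omega_{\eta_2}$ visible before descending by $\PSL_2$ on each factor. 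The price is exactly the Jacobian bookkeeping you flag: note that as written your exponents are inconsistent --- if the wedge $\bigwedge_{i\in S_1}dz_{a_i}$ contributed $\epsilon^{r-1}\,d\epsilon$ it would cancel the $\epsilon^{r-1}$ from the $r-1$ internal $S_1$-differences completely and leave no pole. The correct count (each $dz_{a_i}=dz^*+u_i\,d\epsilon+\epsilon\,du_i$, and the surviving term of the wedge picks $dz^*$ once, $d\epsilon$ once, and $\epsilon\,du_i$ exactly $r-2$ times) gives $\epsilon^{r-2}$, so the ratio is $d\epsilon/\epsilon$ times a regular form, consistent with $\ord_D\omega_\eta=-1$ from proposition \ref{CORomegapoles}. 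With that correction, and the $n=5$ sanity check you propose to fix signs, your argument goes through.
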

\begin{proof}
 Let $\eta\in {\cal P}_S$ be a polygon, and let $\omega_\eta$
be the associated cell-form.  If $D$ is not compatible with
$\omega_\eta$, then $\omega_\eta$ has no pole on $D$ by proposition
\ref{CORomegapoles}, so $\Res_D(\omega)=0$.

  We shall
work in explicit coordinates, bearing in mind that this does not
affect the answer, by the remarks above.
%
 Therefore assume that $\eta$ is the polygon
numbered with the standard cyclic order on $\{1,\ldots,n\}$, and
that $D$ is compatible with $\eta$.  The corresponding cell-form is
given in simplicial coordinates by $[0,t_1,\ldots,t_\ell,1,\infty]$.
By applying a cyclic rotation, we can assume that $D$ corresponds to
the partition
$$S_1 = \{1,2,3,\ldots, k+1\} \ \hbox{ and } \ \
S_2=\{k+2,\ldots, n-1, n\}$$ for some $1\leq k\leq \ell$.
 In simplicial coordinates, $D$ corresponds to the  blow-up of the
cycle $0=t_1=\cdots=t_k$. We compute the residue of $\omega_\eta$
along $D$ by applying the variable change $t_1  = x_1\ldots x_\ell,
\ldots,  t_{\ell-1} = x_{\ell-1}x_\ell,  t_\ell = x_\ell$ to the
form $\omega_\eta=[0,t_1,\ldots,t_\ell,1,\infty]$. The standard cell
$X_\eta$ is given by $\{0<x_1,\ldots,x_\ell<1\}$. In these
coordinates, the divisor $D$ is given by $\{x_k=0\}$, and the form
$\omega_\eta$ becomes
\begin{equation} \label{omegacubicalform}
\omega_{\eta} = {dx_1\ldots dx_\ell \over x_1(1-x_1)\ldots
x_\ell(1-x_\ell)}.  \end{equation} The residue of $\omega_\eta$
along $x_k=0$ is given by
\begin{equation}\label{proofres}
{dx_1\ldots dx_{k-1} \over x_1(1-x_1)\ldots x_{k-1}(1-x_{k-1}) }
\otimes {dx_{k+1}\ldots dx_{\ell} \over x_{k+1}(1-x_{k+1})\ldots
x_{\ell}(1-x_{\ell})}\ .\end{equation} Changing back to simplicial
coordinates via
  $x_1=a_1/a_2,\ldots,x_{k-2}=a_{k-2}/a_{k-1}$,
$x_{k-1}=a_{k-1}$,  and $x_\ell=b_\ell$,
$x_{\ell-1}=b_{\ell-1}/b_\ell, \ldots,x_{k+1}=b_k/b_{k+1}$ defines
simplicial coordinates on $D\cong \Mod_{0,r+1}\times \Mod_{0,s+1}$.
The standard cells induced by $\eta$ are
$(0,a_1,\ldots,a_{k-1},1,\infty)$ on $\Mod_{0,r+1}$ and
$(0,b_k,\ldots, b_{\ell},1,\infty)$ on $\Mod_{0,s+1}$. If we compute
$(\ref{proofres})$ in these new coordinates, it gives precisely
$$ [0,a_1,\ldots,a_{k-1},1,\infty]\otimes [0,b_k,\ldots,b_\ell,1,\infty]\ ,$$
which is the tensor product of the cell forms corresponding to the
standard cyclic orders $\eta_1,\eta_2$ on $S_1\cup\{d\}$ and
$S_2\cup\{d\}$ induced by $\eta$. Therefore $\rho(\Res^p_D
\eta)=\Res_D \omega_\eta$.

To conclude the proof of the proposition, it is enough to notice
that applying $\sigma\in \Sym(n)$ to the formula
$\hbox{Res}_D\omega_\eta=\omega_{\eta_1}\otimes \omega_{\eta_2}$
yields
$$\hbox{Res}_{\sigma(D)}\sigma^*(\omega_\eta)=
\hbox{Res}_{\sigma(D)}\omega_{\sigma(\eta)}=\sigma^*(\omega_{\eta_1})
\otimes \sigma^*(\omega_{\eta_2})=\omega_{\sigma(\eta_1)}\otimes
\omega_{\sigma(\eta_2)}.$$
Here, 
$\sigma(\eta_i)$ is the  cyclic order induced by $\sigma(\eta)$ on
the set $\sigma(S_1)\cup \{\sigma(d)\}$, where $\sigma(d)$
corresponds to the partition $S=\sigma(S_1)\cup \sigma(S_2)$. Thus
$\rho(\Res^p_{\sigma(D)} \sigma(\eta))=\Res_{\sigma(D)}
\omega_{\sigma(\eta)}$ for all $\sigma \in \Sym(n)$, which proves
that $\rho(\Res^p_D \gamma)=\Res_D \omega_\gamma$ for all cyclic
structures $\gamma\in\cal{P}_S$, and all divisors $D$.
\end{proof}

\begin{cor}\label{neatcoro} A linear combination $\eta=\sum_i a_i\eta_i\in W_S
\subset {\cal P}_{S\cup \{d\}}$ converges with respect to the standard polygon
if and only if its associated form $\omega_\eta$ converges on the
standard cell.  
\end{cor}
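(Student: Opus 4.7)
The plan is to show that for the standard polygon $\delta$, the polygon-theoretic notion of convergence along a chord (from Definition of convergence) matches exactly the analytic notion of convergence of the associated form on the standard cell. The ``bad chords'' of $\delta$ are precisely the stable partitions $S\cup\{d\}=T\cup T'$ where $T$ is a consecutive block of $\delta$, and these correspond bijectively to the irreducible boundary divisors $D$ of $\overline{\Mod}_{0,n+1}$ that bound the standard cell $X_{\delta}$. So $\omega_\eta$ converges on $X_\delta$ if and only if $\Res_D(\omega_\eta)=0$ for every such $D$.

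First I would translate the polygon convergence condition into the vanishing of a cohomology residue. By definition, $\eta$ converges along a bad chord $D=T\cup T'$ exactly when $\Res^p_D(\eta)\in I_T\otimes \mathcal{P}_{T'\cup\{e\}}$. Applying the map $\rho\otimes\rho$ to polygons on each side, Proposition \ref{prop41} identifies $I_T$ as the kernel of $\rho$, and hence tells us that $(\rho\otimes\rho)(\Res^p_D(\eta))=0$ if and only if $\Res^p_D(\eta)\in I_T\otimes \mathcal{P}_{T'\cup\{e\}}$ (since $\rho$ is surjective and remains injective on the $W_T$-complement, so the kernel of $\rho\otimes\rho$ is exactly $I_T\otimes\mathcal{P}_{T'\cup\{e\}}+\mathcal{P}_{T\cup\{e\}}\otimes I_{T'}$; one uses that $\eta\in W_S$ has $1$ adjacent to $n$ and therefore the right-hand tensor factor always lies in the $W$-part, killing the second summand).

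Now I apply Proposition \ref{propresformula}, which states that $\rho\circ\Res^p_D=\Res_D\circ\rho$. Combining with the previous step, convergence of $\eta$ along all bad chords of $\delta$ is equivalent to $\Res_D(\omega_\eta)=0$ for every boundary divisor $D$ bounding $X_\delta$. Since $\omega_\eta$ a priori has at worst simple poles along these divisors, the vanishing of all such residues is exactly the statement that $\omega_\eta$ has no poles along the boundary of $X_\delta$, which is equivalent to convergence on $X_\delta$.

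The only delicate point, and what I expect to be the main obstacle, is the careful identification of the kernel of $\rho\otimes\rho$ restricted to the residue image: one must check that the assumption $\eta\in W_S$ (i.e.\ $1n$-words) forces the right-hand residue factor to have $1$ and $n$ on opposite sides of $e$, so it lies in $W_{T'}$ and hence injects under $\rho$. This is exactly the content of Lemma \ref{lemleft}, which guarantees that no bad chord of a $1n$-word separates $1$ from $n$, so the relevant tensor factor is automatically in the $W$-complement of $I_{T'}$. With this observation, the equivalence reduces cleanly to Proposition \ref{propresformula} and Proposition \ref{prop41}, and no further analytic input is needed.
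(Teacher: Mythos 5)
Your overall strategy is the one the paper itself uses: combine proposition \ref{propresformula} (the residue of forms commutes with $\rho$) with proposition \ref{prop41} (the kernel of $\rho$ is the shuffle subspace) to translate $\Res_D(\omega_\eta)=0$ into $\Res^p_D(\eta)\in I_{T}\otimes {\cal P}_{T'\cup\{e\}}+{\cal P}_{T\cup\{e\}}\otimes I_{T'}$, and then use $\eta\in W_S$ to collapse this two-sided condition to the one-sided condition $\Res^p_D(\eta)\in I_T\otimes{\cal P}_{T'\cup\{e\}}$ of definition (\ref{conv}). You have also correctly isolated where the work is. But your justification of that key step contains a genuine error. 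It is simply false that ``no bad chord of a $1n$-word separates $1$ from $n$'': the $1n$-word $(3,2,1,n,4,\ldots,n-1,d)$ has $\{1,2,3\}$ as a consecutive block, which is a bad chord with $1\in T$ and $n\in T'$. Nor is that the content of lemma \ref{lemleft}; that lemma asserts that in a linear combination of $1n$-words \emph{already assumed convergent}, the coefficients of terms containing such a block must vanish --- invoking it to establish the convergence equivalence would in any case be circular.

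Concretely, the case you are missing is $T\cap\{1,n\}\neq\emptyset$ (then necessarily $|T\cap\{1,n\}|=1$, since $T=S$ is not stable). Say $1\in T$ and $n\in T'$: because $1$ sits immediately to the left of $n$ in every $\eta_i$, the block $T$ must end at $1$, so each nonzero residue term has the form $(Z^i,1,e)\otimes(K^i,e,n,H^i,d)$, and the right-hand factors are not $1n$-words, so your ``right factor lies in the $W$-part'' argument does not apply as stated. Two repairs are available. First, these right-hand factors all have $e$ immediately to the left of $n$, so by lemma \ref{usefullemma} (applied to the pair $(e,n)$ instead of $(1,n)$) they still span a complement of $I_{T'}$, and your projection argument killing the ${\cal P}_{T\cup\{e\}}\otimes I_{T'}$ summand goes through verbatim. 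Alternatively, as in the paper, the backwards-Lyndon argument of lemma \ref{lemleft} shows that for such chords a sum $\sum_i c_i(Z^i,1,e)$ lies in $I_T$ only if all $c_i=0$, so both the two-sided kernel condition and the one-sided convergence condition force $\Res^p_D(\eta)=0$ and are trivially equivalent. With either repair your proof is complete; as written, every chord meeting $\{1,n\}$ is unaccounted for.
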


\begin{proof}
We first show that \begin{equation}\label{rescond}\Res^p_D(\eta)\in
I_{S_1}\otimes {\cal P}_{S_2\cup \{d\}}+ {\cal P}_{S_1\cup
\{d\}}\otimes I_{S_2}\ ,\end{equation} if and only if $\omega_\eta$
converges along the corresponding divisor $D$ in the boundary of the
standard cell. If $(\ref{rescond})$ holds,
then by proposition \ref{prop41} together with the previous
proposition, $\Res_D(\omega_\eta)=0$. Conversely, if
$\Res_D(\omega_\eta)=0$ for a divisor $D$ in the boundary of the
standard cell, then by the previous proposition, $\Res^p_D(\eta)\in$
Ker$(\rho\otimes\rho)$, which is exactly equal to $I_{S_1}\otimes
{\cal P}_{S_2\cup \{d\}}+ {\cal P}_{S_1\cup \{d\}}\otimes I_{S_2}$.

We now show that $(\ref{rescond})$ is equivalent to the convergence
of $\eta$. But since $\eta\in W_S$, the argument of lemma
$\ref{lemleft}$ implies that $(\ref{rescond})$ holds automatically
for any $D$ which intersects $\{1,n\}$ non-trivially. If $D$
intersects $\{1,n\}$ trivially, then we can assume that $\{1,n\}
\subset S_2$. In that case, the fact that $W_{S_2}\cap I_{S_2}=0$
(lemma \ref{usefullemma}) implies that $(\ref{rescond})$ is
equivalent to the apparently stronger condition
$$\Res^p_D(\eta)\in
I_{S_1}\otimes {\cal P}_{S_2\cup \{d\}}\ ,$$ and thus $\eta$
converges along $S_1$ in the sense of definition (\ref{conv}). This
holds for all divisors $D$ and thus completes the proof of the
corollary.
\end{proof}

\begin{cor} The Lyndon insertion words of ${\cal W}_S$ form a generating
set for ${\cal FC}$.  Furthermore, ${\cal FC}$ is defined by subjecting
this generating set to only two sets of relations:
\begin{itemize}
\item{dihedral relations}
\item{product map relations}
\end{itemize}
\end{cor}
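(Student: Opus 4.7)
The plan is to combine the direct-sum decomposition ${\cal P}_{S\cup\{d\}}=W_S\oplus I_S$ from Proposition \ref{prop41} with the convergence description in $W_S$ provided by Theorem \ref{convKS} and Corollary \ref{neatcoro}. Recall that, by definition, ${\cal FC}$ is the quotient of the space ${\cal A}$ of formal linear combinations $\sum_i a_i(\delta,\omega_i)$ (with $\sum_i a_i\omega_i$ convergent on $X_{n,\delta}$) by three families of relations: shuffles with respect to one element, dihedral relations, and product map relations.

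First I would use the relation that shuffles with respect to one element vanish in ${\cal FC}$ in order to reduce every generator to a canonical form. Since ${\cal P}_{S\cup\{d\}}=W_S\oplus I_S$ and $I_S$ is exactly the subspace of shuffles with respect to one element (interpreted, via the remark following Proposition \ref{prop41}, as shuffles with respect to $\infty$ on the cell-form side), each standard pair $(\delta,\omega)\in{\cal A}$ is equivalent modulo these shuffle relations to a pair $(\delta,\omega')$ with $\omega'\in W_S$. This shows that ${\cal FC}$ is spanned by standard pairs whose right-hand polygon lies in $W_S$.

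Next I would invoke Corollary \ref{neatcoro}, which says that for $\omega'\in W_S$ the associated differential form converges on $X_{n,\delta}$ if and only if $\omega'$ is convergent in the combinatorial sense (no bad chord survives after taking residues). Theorem \ref{convKS} then identifies the convergent part of $W_S$ exactly with $K_S=\langle{\cal W}_S\rangle$. Putting these together, ${\cal FC}$ is generated by the pairs $(\delta,\omega')$ with $\omega'\in{\cal W}_S$, which proves the first assertion. Moreover, the map $W_S\hookrightarrow{\cal P}_{S\cup\{d\}}\twoheadrightarrow{\cal P}_{S\cup\{d\}}/I_S$ is an isomorphism (Proposition \ref{prop41}), so within this family of canonical representatives no further one-point shuffle relation can occur; thus the only relations between elements of ${\cal W}_S$ that remain in ${\cal FC}$ are those coming from the other two listed families, namely dihedral relations and product map relations. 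This establishes the second assertion.

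The heart of the argument is already carried out in the previous chapter (Theorems \ref{linind}, \ref{convJS}, and \ref{convKS}); the only delicate point left for this statement is to verify that in passing to representatives in $W_S$ one does not inadvertently create or collapse product map or dihedral identities. This is clear because both operations (dihedral permutation of labels and the modular shuffle product $(\gamma_1,\eta_1)\sha(\gamma_2,\eta_2)$) are defined on all of ${\cal P}_S\times{\cal P}_S$ and descend to the quotient by $I_S$; so the restriction to ${\cal W}_S$-representatives is compatible with both relation types, and nothing more needs to be added.
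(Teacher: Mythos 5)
Your proposal is correct and follows essentially the same route as the paper: reduce to representatives in $W_S$ via the decomposition ${\cal P}_{S\cup\{d\}}=W_S\oplus I_S$, then apply Corollary \ref{neatcoro} and Theorem \ref{convKS} to identify the convergent part of $W_S$ with $\langle{\cal W}_S\rangle$, leaving only the dihedral and product map relations. The paper states this corollary without a separate proof precisely because it is this immediate assembly of the preceding results, which you have reproduced faithfully.
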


\vspace{.3cm}
\subsection{The insertion basis for $H^\ell(\Mod_{0,n}^\delta)$}\label{sec43}

\begin{defn} Let an {\it insertion form} be the sum of $01$-cell forms
obtained by renumbering the Lyndon insertion words of ${\cal W}_S$ via
$(1,\ldots,n,d)\rightarrow (0,t_1,\ldots,t_{\ell+1},1,\infty)$.
\end{defn}

\begin{prop}\label{insformsspan} The insertion forms form a basis for
$H^{n-2}(\Mod_{0,n+1}^\delta)$.
\end{prop}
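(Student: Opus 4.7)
The plan is to transfer the combinatorial results of the previous sections to the cohomology via the isomorphism $f|_{W_S}: W_S \xrightarrow{\sim} H^{n-2}(\Mod_{0,n+1})$ established in the corollary to Proposition \ref{prop41}. The insertion forms are by construction the images under $f$ of the Lyndon insertion words $\mathcal{W}_S \subset W_S$, so their linear independence in $H^{n-2}(\Mod_{0,n+1})$ — and hence in any subspace containing them — is an immediate consequence of Theorem \ref{linind}, which asserts the linear independence of $\mathcal{W}_S \cup \mathcal{L}_S$ in $V_S \simeq \mathcal{P}_{S \cup \{d\}}$.

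To see that each insertion form actually lies in $H^{n-2}(\Mod_{0,n+1}^\delta)$, I would chain Theorem \ref{convKS} with Corollary \ref{neatcoro}: for $\eta \in \mathcal{W}_S$ one has $\eta \in K_S$, which by Theorem \ref{convKS} means that $\eta$ is a convergent polygon, which in turn by Corollary \ref{neatcoro} means that the form $\omega_\eta = f(\eta)$ converges on the standard cell and hence has no poles along any divisor in $B_\delta$. Invoking the identification of $H^{n-2}(\Mod_{0,n+1}^\delta)$ with the subspace of $H^{n-2}(\Mod_{0,n+1})$ of classes admitting a representative with no poles on $B_\delta$ — an identification proved in Chapter 4 via Grothendieck's algebraic de Rham theorem applied to the affine scheme $\Mod_{0,n+1}^\delta$ — then places $[\omega_\eta]$ in $H^{n-2}(\Mod_{0,n+1}^\delta)$.

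For spanning, I would start with an arbitrary class $[\omega] \in H^{n-2}(\Mod_{0,n+1}^\delta)$ and choose a representative $\omega'$ with no poles on $B_\delta$, so that $\omega'$ converges on $X_\delta$. Writing $\omega' = f(\tau)$ for some $\tau \in \mathcal{P}_{S \cup \{d\}}$ and decomposing $\tau = \eta + \sigma$ along the direct sum $\mathcal{P}_{S \cup \{d\}} = W_S \oplus I_S$ used in the proof of Proposition \ref{prop41}, the key observation is that by the corollary to Proposition \ref{shufprod} (equation (\ref{cellfunction1shuffsare0})), $I_S$ is killed by $f$ already at the level of differential forms, not merely in cohomology. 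Therefore $\omega_\sigma = 0$ as a form and $\omega_\eta = \omega'$, so $\omega_\eta$ itself converges on the standard cell. The converse direction of Corollary \ref{neatcoro} then places $\eta \in K_S = \langle \mathcal{W}_S \rangle$, exhibiting $[\omega']$ as a $\Q$-linear combination of insertion forms.

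The main obstacle in this outline is not any of the steps above — each is a compact rearrangement of a previously established fact — but rather the identification of $H^{n-2}(\Mod_{0,n+1}^\delta)$ with the subspace of $H^{n-2}(\Mod_{0,n+1})$ of classes represented by forms with no poles on $B_\delta$. This is the nontrivial analytic/algebraic input, relying on Grothendieck's comparison theorem together with the affineness of $\Mod_{0,n+1}^\delta$ and standard log de Rham formalism; the proof of the proposition therefore has to assume, or refer explicitly to, the detailed treatment of this identification developed in Chapter 4.
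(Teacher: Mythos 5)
Your proposal is correct and follows essentially the same route as the paper, whose entire proof of this proposition is the single sentence ``This is an immediate corollary of all the preceding results.'' You have simply made that assembly explicit: linear independence from Theorem \ref{linind} via the isomorphism $f|_{W_S}$, convergence of each insertion form from Theorem \ref{convKS} combined with Corollary \ref{neatcoro}, and spanning via the decomposition $\mathcal{P}_{S\cup\{d\}}=W_S\oplus I_S$ together with the Chapter 4 identification of $H^{n-2}(\Mod_{0,n+1}^\delta)$ with the convergent subspace of $H^{n-2}(\Mod_{0,n+1})$, which is indeed the external input the paper also relies on.
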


This is an immediate corollary of all the preceding results.

It is interesting to attempt to determine the dimension of the
spaces $H^\ell(\Mod_{0,n}^\delta)$.  The most important numbers
needed to compute these are the numbers $c_0(n)$\index{$c_0(n)$}\label{csub0n} of special
convergent words (convergent 01 cell-forms) on $\Mod_{0,n}$.  We
have $c_0(4)=0$, $c_0(5)=1$, $c_0(6)=2$, $c_0(7)=11$, $c_0(8)=64$,
$c_0(9)=461$.

\begin{prop}
Set $I_1=1$, and let $I_r$ denote the cardinal of the set
${\cal L}_{\{1,\ldots,r\}}$ for $r\ge 2$.  The dimensions
dim$\,H^\ell(\Mod_{0,n}^\delta)$ are given by
\begin{equation}\label{diminsertionformula}
d_n = \sum^n_{r=5} \ \ \ \sum_{i_1+\cdots + i_{r-3}= n-3} I_{i_1}\ldots
I_{i_r} c_0(r)\ , \end{equation} where the inner sum is over all
partitions of $(n-3)$ into $(r-3)$ strictly positive integers.
\end{prop}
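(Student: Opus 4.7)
The plan is to interpret the formula as a direct enumeration. By Proposition \ref{insformsspan}, the insertion forms form a basis of $H^{n-3}(\Mod_{0,n}^\delta)$, so $d_n$ equals the cardinality of the set ${\cal W}_{\{1,\ldots,n-1\}}$ of Lyndon insertion words of length $n-1$. The entire proof is combinatorial: I would simply count this set by unwinding the recursive Definition \ref{lyndoninsertionwordsdef}.

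The main step is to establish a bijection between ${\cal W}_{\{1,\ldots,n-1\}}$ and the following set of data: a framing size $r$ with $5\le r\le n$; a special convergent word $w'$ of length $r-1$, of which there are $c_0(r)$ by definition; a composition of $n-1$ into $r-1$ positive integers $v_1+\cdots+v_{r-1}=n-1$ with the boundary constraint $v_1=v_{r-1}=1$; and for each position $j$ with $v_j>1$, an element of ${\cal L}_{D_j}$ with $|D_j|=v_j$. Setting $i_k = v_{k+1}$ for $k=1,\ldots,r-3$ converts the composition data (after removing the forced unit contributions at the endpoints) into a composition $i_1+\cdots+i_{r-3}=n-3$ with each $i_k\ge 1$. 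The insertion at the $(k+1)$st slot contributes a factor of $I_{i_k}$ if $i_k>1$ and, using the convention $I_1=1$, also a factor of $I_{i_k}=1$ if $i_k=1$, so that the total number of insertion choices for a fixed composition is uniformly $I_{i_1}\cdots I_{i_{r-3}}$. Multiplying by $c_0(r)$ and summing over $r$ and over compositions gives the formula.

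The substantive content is showing that this parameterization is a bijection. Surjectivity is immediate from the definition, which says that every Lyndon insertion word is constructed by exactly this procedure. Injectivity is the same argument that underlies the linear independence proof of Theorem \ref{linind}: from a Lyndon insertion word $w\in {\cal W}_{\{1,\ldots,n-1\}}$, the framing $w'$ and the sizes $v_j$ are recovered by identifying the maximal consecutive blocks of $w$, and the individual insertions $V_j\in {\cal L}_{D_j}$ are recovered by applying the composed residue map $\Res^p_{D_1,\ldots,D_m}$ along the chords separating these maximal blocks. Theorem \ref{convJS} ensures that each of these residues lands in the correct subspace $J_{D_j}=\langle{\cal L}_{D_j}\rangle$, so the decomposition is canonical.

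The main obstacle is nothing conceptual --- all the hard work has already been done in Theorem \ref{convJS}, Theorem \ref{linind}, and Proposition \ref{insformsspan} --- but rather the careful bookkeeping of alphabet sizes and offsets. In particular, one must keep in mind that an insertion word in ${\cal W}_{\{1,\ldots,n-1\}}$ has length $n-1$ rather than $n$, which is what produces the $n-3$ on the right-hand side of the composition equation after subtracting the two mandatory unit contributions at the framing endpoints, and that the lower summation bound $r=5$ reflects the fact that $c_0(r)=0$ for $r\le 4$.
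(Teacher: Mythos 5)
Your proof is correct and is essentially the argument the paper leaves implicit: the proposition is stated with no proof beyond the remark that it follows from the preceding results, and the intended justification is exactly your direct enumeration of ${\cal W}_{\{1,\ldots,n-1\}}$ by unwinding Definition \ref{lyndoninsertionwordsdef}, with injectivity of the parameterization secured by the fixed-structure and composed-residue arguments already established in Theorem \ref{linind}. Your bookkeeping (framing of size $r-1$ counted by $c_0(r)$, the two forced unit insertions at the endpoints giving a composition of $n-3$ into $r-3$ parts, and the convention $I_1=1$) reproduces the formula, including the tacit correction of the paper's typo $I_{i_r}$ for $I_{i_{r-3}}$, and is consistent with the worked values $d_5=1$, $d_6=4$, $d_7=22$.
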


We have $I_1=I_2=1$, $I_3=2$, $I_4=7$.  The formula gives
\begin{equation*}
\begin{cases}
d_5=I_1^2c_0(5)=1\ ,\\
d_6=I_1I_2c_0(5)+I_2I_1c_0(5)+I_1^3c_0(6)=1+1+2=4\ ,\\
d_7=I_1I_3c_0(5)+I_2^2c_0(5)+I_3I_1c_0(5)+I_1^2I_2c_0(6)+I_1I_2I_1c_0(6)
+I_2I_1^2c_0(6)+c_0(7)\\
\ \ =5c_0(5)+3c_0(6)+c_0(7)=5+6+11=22\ .
\end{cases}
\end{equation*}

These expressions give the dimensions as sums of positive terms. A
very different  formula for dim$\,H^\ell(\Mod_{0,n}^\delta)$ is
given in the appendix using point-counting methods.

\vspace{.3cm}
\subsection{The insertion basis for $\Mod_{0,n}$, $5\le n\le 9$}
 \label{calculations} In this section we list the insertion bases
in low weights.  In the case $\Mod_{0,5}$, there is a single
convergent cell form: \begin{equation}\label{insbasis5}
\omega=[0,1,t_1,\infty,t_2].
\end{equation}
The corresponding period integral is the cell-zeta value:
$$\zeta(\omega) = \int_{(0,t_1,t_2,1,\infty)} [0,1,t_1,\infty,t_2] =
\int_{0\leq t_1\leq t_2\leq 1}  {dt_1 dt_2 \over (1-t_1) t_2 } =
\zeta(2)\ .$$ Here we use the notation of round brackets for cells
in the moduli space $\Mod_{0,n}$ introduced in section
\ref{prodmaps}: the cell $(0,t_1,t_2,1,\infty)$ is the same as the
cell $X_{5,\delta}$ corresponding to the standard dihedral order on
the set $\{0,t_1,t_2,1,\infty\}$. Since $C_0(5)$ is 1-dimensional,
the space of periods in weight $2$, namely the weight 2 graded part
${\cal C}_2$ of the algebra of cell-zeta values ${\cal C}$ of
section \ref{cellalg}, is just the 1-dimensional space spanned by
$\int_{X_{5,\delta}} \omega=\zeta(2)$.

\subsubsection{The case $\Mod_{0,6}$}
The space $C(6)$ is four-dimensional, generated by two
$01$-convergent cell-forms (the first row in the table below) and
two forms (the second row in the table below) which come from
inserting $\cal{L}_{1,2}=\{1\sha2\}$ and $\cal{L}_{2,3}=\{2\sha 3\}$
into the unique convergent $01$ cell form on $\Mod_{0,5}$
(\ref{insbasis5}). The position of the point $\infty$ plays a
special role. It gives rise to another grading, corresponding to the
two columns in the table below, since $\infty$ can only occur in two
positions. \vspace{0.1in}
\begin{center}
\begin{tabular}{|c|c|c|}
  \hline
$C_0(6)$ & $\omega_{1,1}=[0,1,t_2,\infty,t_1, t_3]$ &
$\omega_{1,2}=[0,1,t_1,t_3, \infty, t_2]$ \\
\hline
$C_1(6)$ & $\omega_{2,1}= [0,1,t_1, \infty, t_2\sha t_3]$ &
$\omega_{2,2}=[0,1,t_1\sha t_2,\infty, t_3]$ \\
  \hline
\end{tabular}
\end{center}
\vspace{0.1in} We therefore have four generators in weight 3. There
are no product relations on $\Mod_{0,6}$, so in order to compute the space
of cell-zeta values, we need only compute the action of the dihedral
group on the four differential forms.  In particular, the order 6 cyclic
generator $0\mapsto t_1\mapsto t_2\mapsto t_3\mapsto 1\mapsto \infty\mapsto 0$
sends
$$\omega_{1,1}\mapsto -\omega_{2,1}-\omega_{2,2},\ \
\omega_{1,2}\mapsto \omega_{1,1},\ \ \omega_{2,1}\mapsto
-\omega_{1,2}-\omega_{2,1},\ \ \omega_{2,2}\mapsto \omega_{2,1}.$$
Thus, letting $X$ denote the standard cell $X_{6,\delta}=
(0,t_1,t_2,t_3,1,\infty)$, we have $\int_X \omega_{1,1}=\int_X
\omega_{1,2}$, $\int_X \omega_{2,1} =\int_X \omega_{2,2}$ and
$2\int_X \omega_{2,2}=\int_X \omega_{1,2}$, so in fact the periods
form a single orbit under the action of the cyclic group of order 6
on $H^\ell(\Modf_{0,S})$. We deduce that the space of periods of
weight 3 is of dimension $1$, generated for instance by $\int
\omega_{2,1}$. Since $\omega_{2,1}$ is the standard form for
$\zeta(3)$, we have
$$ \begin{array}{ccccc}
\zeta(0,1,t_2,\infty,t_1, t_3)& = & \displaystyle{\int_{X} {dt_1
dt_2 dt_3 \over (1-t_2)(t_1-t_3)t_3}} & = &2\, \zeta(3)\ , \nonumber
\vspace{0.04in} \\ \vspace{0.04in} \zeta(0,1,t_1,t_3,\infty, t_2) &=
&\displaystyle{\int_{X} {dt_1 dt_2 dt_3 \over
(1-t_1)(t_1-t_3)t_2}} &=& 2\, \zeta(3)\ , \nonumber \\
\zeta(0,1,t_1,\infty, t_2\sha t_3) &=& \displaystyle{\int_{X} {dt_1
dt_2 dt_3 \over
(1-t_1)t_2t_3}} &=  &\zeta(3)\ , \nonumber \vspace{0.04in}\\
\zeta(0,1,t_1\sha t_2,\infty,t_3) &=& \displaystyle{\int_{X} {dt_1
dt_2 dt_3 \over (1-t_1)(1-t_2)t_3}}& = & \zeta(3)\ ,\nonumber
\end{array}
$$
Note that $\omega_{2,2}$ is the  standard form usually associated to
$\zeta(2,1)$, so that we have recovered the well-known identity
$\zeta(2,1)=\zeta(3)$, which is normally obtained using stuffle,
shuffle and Hoffmann relations on multizetas.

\vspace{.3cm}
\subsubsection{The case $\Mod_{0,7}$} The insertion basis is listed in the
following table. It consists of 22 forms, eleven of which lie in
$C_0(7)$, six of which come from making one insertion into a
convergent $01$ cell-form from $C_0(6)$ (using
$\cal{L}_{1,2}=\{1\sha 2\}$ and $\cal{L}_{2,3}=\{2\sha 3\}$), and
five of which come from making two insertions into the unique
convergent $01$ cell-form from $C_0(5)$ (which also uses
$\cal{L}_{1,2,3}=\{1\sha 2\sha3, 2\sha13 \}$ and
$\cal{L}_{2,3,4}=\{2\sha 3\sha4, 3\sha24\}$).
\begin{center}
\begin{tabular}{|c|c|c|c|}
  \hline
$C_0(7)$ & $[0,1,t_2,\infty,t_3, t_1,t_4]$ & $[0,1,t_1,t_3, \infty,
t_2,t_4]$  & $[0,1,t_1,t_4,t_2,\infty,t_3]$ \\
 &$[0,1,t_2,\infty,t_4, t_1,t_3]$ &  $[0,1,t_1,t_3, \infty, t_4,t_2]$&
 $[0,1,t_2,t_4,t_1,\infty,t_3]$ \\
& $[0,1,t_3,\infty,t_1, t_4,t_2]$ &  $[0,1,t_2,t_4, \infty, t_1,t_3]$&
$[0,1,t_3,t_1,t_4,\infty,t_2]$ \\
& &  $[0,1,t_3,t_1, \infty, t_2,t_4]$ & \\
& &  $[0,1,t_3,t_1, \infty, t_4,t_2]$& \\
 \hline
$C_1(7)$ &  $[0,1,t_2, \infty,t_1, t_3\sha t_4]$ &
$[0,1,t_1,t_4,\infty, t_2\sha t_3]$  & $[0,1,t_1\sha t_2,t_4,\infty,t_3]$\\
 &$[0,1,t_3,\infty,t_1\sha t_2,t_4]$ & $[0,1,t_2\sha t_3, \infty,t_1,
 t_4]$ & $[0,1,t_1,t_3\sha t_4, \infty, t_2]$  \\
  \hline
$C_2(7)$ & $[0,1,t_1,\infty, t_3\sha(t_2,t_4)]$ &
$[0,1,t_1\sha t_2,\infty, t_3\sha t_4]$
& $[0,1,t_2\sha(t_1,t_3), \infty, t_4]$ \\
& $[0,1,t_1, \infty, t_2 \sha t_3 \sha t_4]$ & & $[0,1, t_1\sha
t_2\sha t_3,
\infty, t_4]$\\
 \hline
\end{tabular}
\end{center}

\vspace{0.1in} The standard multizeta forms can be decomposed into sums
of insertion forms as follows:
\begin{equation}
\begin{split}
&{{dt_1dt_2dt_3dt_4}\over{(1-t_1)t_2t_3t_4}}\qquad\
\,=[0,1,t_1,\infty,t_2\sha t_3\sha
t_4]\\
&{{dt_1dt_2dt_3dt_4}\over{(1-t_1)(1-t_2)t_3t_4}}=[0,1,t_1\sha t_2,\infty,
t_3\sha t_4]\\
&{{dt_1dt_2dt_3dt_4}\over{(1-t_1)t_2(1-t_3)t_4}}=[0,1,t_1,t_3,\infty,t_2,t_4]+
[0,1,t_1,t_3,\infty,t_4,t_2]+\\
&\qquad\qquad \qquad\qquad \qquad\qquad
[0,1,t_3,t_1,\infty,t_2,t_4]+ [0,1,t_3,t_1,\infty,t_4,t_2]\\
&{{dt_1dt_2dt_3dt_4}\over{(1-t_1)(1-t_2)(1-t_3)t_4}}=[0,1,t_1\sha
t_2\sha t_3, \infty,t_4]
\end{split}
\end{equation}
In general, the standard multizeta form having factors $(1-t_{i_1}),\ldots,
(1-t_{i_r})$ (with $i_1=1$) and $t_{j_1},\ldots,t_{j_s}$ (with $j_s=n$)
in the denominator is equal to the shuffle form:
$$[0,1,t_{i_1}\sha\cdots\sha t_{i_r},\infty,t_{j_1}\sha \cdots\sha
t_{j_s}],$$ so to decompose it into insertion forms it is simply
necessary to decompose the shuffles $t_{i_1}\sha\cdots\sha t_{i_r}$
and $t_{j_1}\sha \cdots\sha t_{j_s}$ into linear combinations of
Lyndon insertion shuffles.

Computer computation confirms that the space of periods on $\Mod_{0,7}$ is of
dimension $1$ and is generated by $\zeta(2)^2$. Indeed,
up to dihedral equivalence, there are six product maps on
$\Mod_{0,7}$, given by
\begin{equation}
\begin{cases}
(0,t_1,t_2,t_3,t_4,1,\infty)\mapsto (0,t_1,t_2,1,\infty)\times (0,t_3,t_4,1,
\infty)\\
(0,t_1,t_2,1,t_3,t_4,\infty)\mapsto
(0,t_1,t_2,1,\infty)\times(0,1,t_3,t_4,\infty)\\
(0,t_1,t_2,1,t_3,\infty,t_4)\mapsto
(0,t_1,t_2,1,\infty)\times(0,1,t_3,\infty,t_4)\\
(0,t_1,t_2,1,t_3,\infty,t_4)\mapsto
(0,t_1,1,t_3,\infty)\times(0,t_2,1,\infty,t_4)\\
(0,t_1,t_2,t_3,1,t_4,\infty)\mapsto
(0,t_1,t_2,1,\infty)\times(0,t_3,1,t_4,\infty)\\
(0,t_1,t_2,1,t_3,t_4,\infty)\mapsto
(0,t_1,1,t_3,\infty)\times(0,t_2,1,t_4,\infty)
\end{cases}
\end{equation}
Following the algorithm from section \ref{prodmaps}, we have six associated
relations between the integrals of the 22 cell-forms.   Then, explicitly
computing the dihedral action on the forms yields a further set of
linear equations, and it is a simple matter to solve the entire
system of equations to recover the 1-dimensional solution.  It also
provides the value of each integral of an insertion form as a rational
multiple of any given one; for instance all the values can be computed
as rational multiples of $\zeta(2)^2$.  In particular, we easily recover
the usual identities
$$\zeta(4)={{2}\over{5}}\zeta(2)^2,\ \
\zeta(3,1)={{1}\over{10}}\zeta(2)^2,\ \
\zeta(2,2)={{3}\over{10}}\zeta(2)^2,\ \
\zeta(2,1,1)={{2}\over{5}}\zeta(2)^2.$$

\vspace{.3cm}
\subsubsection{The cases $\Mod_{0,8}$ and $\Mod_{0,9}$}

There are 64 convergent $01$ cell-forms in on $\Mod_{0,8}$, and the
dimension of $H^5(\Mod^{\delta}_{0,8})$ is 144.  The remaining 80
forms are obtained by Lyndon insertion shuffles as follows:
\begin{itemize}
\item{44 forms obtained by making the four insertions:
$$(t_1\sha t_2,t_3,t_4,t_5),(t_1,t_2\sha t_3,t_4,t_5),
(t_1,t_2,t_3\sha t_4,t_5), (t_1,t_2,t_3,t_4\sha t_5)$$  into the
eleven $01$ cell-forms of $\Mod_{0,7}$}
\item{12 forms obtained by the six insertion possibilities:
\begin{align*}
&(t_1\sha t_2\sha t_3,t_4,t_5), (t_2\sha t_1t_3,t_4,t_5),
(t_1,t_2\sha t_3\sha t_4,t_5), (t_1,t_3\sha t_2t_4,t_5),\\&
(t_1,t_2,t_3\sha t_4\sha t_5), (t_1,t_2,t_4\sha t_3t_5) 
\end{align*} into the
two $01$ cell-forms of $\Mod_{0,6}$}
\item{6 forms obtained by the three insertion possibilities:
$$(t_1\sha t_2,t_3\sha t_4,t_5), (t_1\sha t_2,t_3,t_4\sha t_5),
(t_1,t_2\sha t_3,t_4\sha t_5)$$
into the two $01$ cell-forms of $\Mod_{0,6}$}
\item{4 forms obtained by the four insertions:
$$(t_1\sha t_2\sha t_3,t_4\sha t_5),
(t_2\sha t_1t_3,t_4\sha t_5),
(t_1\sha t_2, t_3\sha t_4\sha t_5),
(t_1\sha t_2, t_4\sha t_3 t_5)$$
into the single $01$ cell-form of $\Mod_{0,5}$}
\item{14 forms obtained by the fourteen insertions:
\begin{align*}
& (t_1t_3\sha t_2t_4,t_5),
(t_3\sha t_1t_4t_2,t_5),
(t_1t_3\sha t_2\sha t_4,t_5),
(t_1t_4\sha t_2\sha t_3,t_5),
(t_2t_4\sha t_1\sha t_3,t_5),\\
& (t_2\sha t_1(t_3\sha t_4),t_5),
(t_1\sha t_2\sha t_3\sha t_4,t_5),
(t_1,t_2t_4\sha t_3t_5),
(t_1,t_4\sha t_2t_5t_3),
(t_1,t_2t_4\sha t_3\sha t_5),\\
&(t_1,t_2t_5\sha t_3\sha t_4),
(t_1,t_3t_5\sha t_2\sha t_4),
(t_1,t_3\sha t_2(t_4\sha t_5)),
(t_1,t_2\sha t_3\sha t_4\sha t_5)\end{align*}
into the single $01$ cell-form of $\Mod_{0,5}$.}
\end{itemize}

The case of $\Mod_{0,9}$ is too large to give explicitly.  There are
461 convergent $01$ cell-forms, and
dim$\,H^6(\Mod^{\delta}_{0,9})=1089$. An interesting phenomenon
occurs first in the case $\Mod_{0,9}$; namely, this is the first
value of $n$ for which convergent (but not $01$) cell-forms do not
generate the cohomology.  The 1463 convergent cell-forms for
$\Mod_{0,9}$ generate a subspace of dimension 1088.

For $5\le n\le 9$, computer computations have confirmed the
main conjecture, namely:  {\it for $n\le 9$, the weight $n-3$ part
${\cal FC}_{n-3}$ of the formal cell-zeta algebra ${\cal FC}$ is
of dimension $d_{n-3}$, where $d_n$ is given by the Zagier formula
$d_n=d_{n-2}+d_{n-3}$ with $d_0=1$, $d_1=0$, $d_2=1$.}

\chapter{Cohomology of $(\Mod^\gamma_{0,n})$ }

\begin{defn}\label{derhamc}The $k$th de Rham cohomology group of a smooth
  manifold, $X$, is defined to be the group of closed
  differential $k$ forms  on $X$ (those whose exterior derivative is
  0) modulo exact ones (those which are the exterior derivative of a
  $k-1$ form).
\end{defn}

In the first section of this chapter, we use the theory of spectral
sequences of a fibration to review the proof of the following
well-known dimension result on cohomology groups of genus 0 moduli
space, a complete proof of which is difficult (or impossible) to find in the literature.

\begin{thm}\label{dimension} For $n\geq 3$, the dimension of
$H^{n-3}(\Mn,\Q)$ is $(n-2)!$ and the dimension of
  $H^k(\Mn,\Q)$ is 0 whenever $k>n-3$.\end{thm}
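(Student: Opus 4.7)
The plan is to proceed by induction on $n$, using the forgetful fibration
$$\pi : \Mod_{0,n} \To \Mod_{0,n-1}$$
which omits the last marked point $z_n$. The fiber over a configuration $[(z_1,\ldots,z_{n-1})]$ is $F = \Pro^1(\C) \setminus \{z_1,\ldots,z_{n-1}\}$, which is homotopy equivalent to a wedge of $n-2$ circles. Hence $H^0(F,\Q)=\Q$, $H^1(F,\Q)=\Q^{n-2}$, and $H^k(F,\Q)=0$ for $k\geq 2$. The base case $n=3$ is trivial since $\Mod_{0,3}$ is a point; equivalently one can check $n=4$, where $\Mod_{0,4}\simeq \Pro^1\setminus\{0,1,\infty\}$ and $H^1$ is $2$-dimensional, matching $(4-2)!=2$.

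For the inductive step, I would apply the Leray spectral sequence of $\pi$:
$$E_2^{p,q} = H^p\bigl(\Mod_{0,n-1}, R^q\pi_*\Q\bigr)\ \Longrightarrow\ H^{p+q}(\Mod_{0,n},\Q).$$
Since the fiber cohomology is concentrated in degrees $0$ and $1$, only the rows $q=0$ and $q=1$ are nonzero. The row $q=0$ contributes $H^p(\Mod_{0,n-1},\Q)$, which by induction has dimension $(n-3)!$ at $p=n-4$ and vanishes for $p>n-4$. The row $q=1$ contributes $H^p(\Mod_{0,n-1}, R^1\pi_*\Q)$, where $R^1\pi_*\Q$ is a local system of rank $n-2$.

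The crux of the argument — and the main obstacle — is to show both that $R^1\pi_*\Q$ is (cohomologically) trivial and that the spectral sequence degenerates at $E_2$. For this I would exhibit explicit global logarithmic $1$-forms on $\Mod_{0,n}$ that restrict on each fiber to a basis of $H^1(F,\Q)$: the Arnol'd forms
$$\omega_i = d\log(z_n - z_i), \qquad i=1,\ldots,n-1,$$
(modulo one relation) are regular on $\Mod_{0,n}$ and, fiberwise, span $H^1(F,\Q)$. Their existence gives a splitting of the edge map $E_2^{0,1} = H^0(\Mod_{0,n-1}, R^1\pi_*\Q) \to H^1(F,\Q)$ at every point, trivializing the local system, and it guarantees that no non-trivial differentials can survive (any class in $E_2^{p,1}$ is represented by a product of such a global $1$-form with a class pulled back from the base). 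Thus the spectral sequence collapses and one obtains
$$H^k(\Mod_{0,n},\Q)\ =\ H^k(\Mod_{0,n-1},\Q)\ \oplus\ H^{k-1}(\Mod_{0,n-1},\Q)^{\oplus (n-2)}.$$

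This multiplicative structure translates into an identity of Poincaré polynomials
$$P_{\Mod_{0,n}}(t) = P_{\Mod_{0,n-1}}(t)\cdot\bigl(1+(n-2)t\bigr),$$
which by induction yields
$$P_{\Mod_{0,n}}(t) = \prod_{k=2}^{n-2}(1+kt).$$
The right-hand side is a polynomial of degree exactly $n-3$, so $H^k(\Mod_{0,n},\Q)=0$ for $k>n-3$. Its leading coefficient is $2\cdot 3\cdots (n-2) = (n-2)!$, which gives $\dim H^{n-3}(\Mod_{0,n},\Q)=(n-2)!$, completing the induction.
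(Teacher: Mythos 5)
Your proof is correct, and it follows the same overall skeleton as the paper's: induction on $n$ via the Leray spectral sequence of the forgetful fibration $\Mn\to\Mod_{0,n-1}$, with the same computation of the fiber cohomology. The two arguments diverge at the key technical step. The paper handles the local system by an explicit monodromy computation: it shows the fibration is \emph{simple} by checking that the generators $x_{i,j}$ of the pure mapping class group $\Gamma_{0,n-1}$ act on $\pi_1(F)$ by sending each loop to a conjugate of itself, hence trivially on $H_1(F)$; and it never needs full degeneration of the spectral sequence — for the single corner $E_2^{n-4,1}$ that contributes to $H^{n-3}$, the incoming differential $d_2^{n-2,0}$ and outgoing differential $d_2^{n-4,1}$ both involve groups that vanish for positional reasons ($q=2>1$ or $p=n-2>n-4$), so $E_2^{n-4,1}=E_\infty^{n-4,1}$ for free. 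You instead invoke a Leray--Hirsch argument with the global logarithmic forms $d\log(z_n-z_i)$, which settles both the triviality of $R^1\pi_*\Q$ and the collapse of the entire $E_2$ page in one stroke, and delivers the full Poincar\'e polynomial $\prod_{k=2}^{n-2}(1+kt)$. This is strictly more than the theorem asks and essentially re-derives Arnol'd's theorem (stated separately in the paper as theorem \ref{arnoldring}), at the cost of needing the extra input that the Arnol'd forms are globally regular and restrict to a fiber basis; the paper's route is more economical for the stated claim but yields only the top Betti number and the vanishing range.
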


This result 
 was used and reproved (though not as explicitly as we do in the following sections) in a well-known theorem by Arnol'd.  For each cohomology
 group, $H^{k}(\Mn,\Q)$, Arnol'd
explicitly exhibits a set, $B^k_n$ of differential forms whose 
classes form a basis of $H^k(\Mn,\Q)$ and
which has the astonishing property that the ring, $A$, generated by $B^1_n$
 contains $B^k_n$ for all $k$ and $A$ is isomorphic to
 $H^*(\Mn,\Q)$.  For the remainder of the text, we will denote $H^{k}(\Mn,\Q)$ simply by
 $H^k(\Mn)$\index{$H^k(\Mn)$}.

\begin{defn}
We denote by $\omega_{t_i,t_j}$ the differential forms defined by :
\begin{align} \omega_{t_i,t_j} &= \frac{dt_j -dt_i}{t_i-t_j},\ 1\leq
  i < j\leq n-3 \\
\omega_{0,t_j} &= \frac{dt_j}{t_j} \\ 
\omega_{1,t_j} &= \frac{dt_j}{1-t_j}.\end{align}
We call the ring generated by these forms {\bf Arnol'd's Ring}, $A$.\index{Arnol'd's Ring, $A$}
\end{defn}
\begin{thm}[Arnol'd]\label{arnoldring}  Let $i_1,...,i_k$ be distinct integers in the interval $[1,n]$.
The elements of Arnol'd's ring of the form,
$$\bigwedge_{l=1}^k \omega_{z,t_{i_l}},\ z=t_j,\ 0\hbox{ or }1$$ (where $j<i_l$)
form a basis of $H^k(\Mn)$.  In particular,
a basis of
$H^{n-3}(\Mn)$ is given by
$${ \bigwedge_{j=1}^{n-3} \frac{dt_j}{t_j-z}\ :\ z=0,1\ \mathrm{or}\ t_i,
i <j }.$$ 
\end{thm}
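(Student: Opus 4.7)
The plan is to prove Arnol'd's theorem by induction on $n$, using the fibration structure of the moduli spaces. The base case $n=4$ gives $\Mod_{0,4} \cong \Pro^1\setminus\{0,1,\infty\}$, whose only nontrivial cohomology is $H^1$, spanned by $\{dt_1/t_1, dt_1/(t_1-1)\}$, which matches the claim. For the inductive step, I would consider the forgetful morphism $\pi:\Mn \to \Mod_{0,n-1}$ erasing the last marked point $z_{n-2}$ (equivalently, $t_{n-3}$). This map exhibits $\Mn$ as a Zariski-locally trivial fiber bundle whose fiber over $(t_1,\ldots,t_{n-4})$ is $F = \Pro^1\setminus\{0,1,\infty,t_1,\ldots,t_{n-4}\}$. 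Its cohomology vanishes outside degrees $0,1$, and $H^1(F)$ has the $n-2$ logarithmic forms $\omega_{z,t_{n-3}}$ with $z\in\{0,1,t_1,\ldots,t_{n-4}\}$ as a basis.

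Next I would apply the Leray spectral sequence
\[
E_2^{p,q} = H^p\bigl(\Mod_{0,n-1},R^q\pi_*\Q\bigr) \Rightarrow H^{p+q}(\Mn).
\]
Two facts are needed: the local system $R^1\pi_*\Q$ is constant, and the spectral sequence degenerates at $E_2$. Both are obtained simultaneously by noting that each of the forms $\omega_{z,t_{n-3}}$ (with $z\in\{0,1,t_1,\ldots,t_{n-4}\}$) is a closed holomorphic $1$-form defined \emph{globally} on $\Mn$, and its restriction to every fiber gives a basis of $H^1$ of that fiber. This provides an explicit splitting that identifies $R^q\pi_*\Q$ with the constant sheaf $H^q(F)\otimes\Q$ and simultaneously lifts each class in the associated graded of the Leray filtration, proving degeneration.

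Consequently $H^k(\Mn) \cong \bigoplus_{p+q=k} H^p(\Mod_{0,n-1})\otimes H^q(F)$. For the top degree $k=n-3$, only $(p,q)=(n-4,1)$ contributes, since $H^p(\Mod_{0,n-1})=0$ for $p>n-4$ by the inductive hypothesis (which also yields the vanishing statement of Theorem \ref{dimension} and therefore of the cohomology in degrees above $n-3$). Thus $H^{n-3}(\Mn) \cong H^{n-4}(\Mod_{0,n-1})\otimes H^1(F)$, and wedging the inductively given Arnol'd basis of $H^{n-4}(\Mod_{0,n-1})$ in the variables $t_1,\ldots,t_{n-4}$ with the above basis of $H^1(F)$ produces exactly the asserted family of $(n-2)!$ wedge products $\bigwedge_{j=1}^{n-3}\omega_{z_j,t_j}$ with $z_j\in\{0,1,t_1,\ldots,t_{j-1}\}$; the same argument applied degree by degree yields the general statement.

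The main obstacle is justifying the degeneration of the Leray sequence and the triviality of the local system. The cleanest route, as sketched, is the explicit one: exhibit global closed logarithmic $1$-forms on $\Mn$ whose fiberwise restrictions span $H^1(F)$. A more structural alternative is to invoke that $\Mn$ is an iterated fibration whose cohomology is pure Tate, so that the Leray sequence degenerates for weight reasons; either approach avoids the monodromy computation one would otherwise have to perform directly.
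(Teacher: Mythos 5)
Your argument is correct, and it in fact proves more than the paper does for this statement: the paper only gives a self-contained proof of the dimension and vanishing result (Theorem \ref{dimension}) and cites Arnol'd for the explicit basis, whereas your Leray--Hirsch argument delivers the basis itself. The paper uses the Leray spectral sequence of the same fibration $\Mn\to\Mod_{0,n-1}$, but it establishes triviality of the local system $R^1\pi_*\Q$ by an explicit monodromy computation (each generator of the pure mapping class group of the base sends each generator of $\pi_1(F)$ to a conjugate of itself, hence acts trivially on $H_1(F)$), and it obtains $E_2^{n-4,1}=E_\infty^{n-4,1}$ purely from degree reasons --- which suffices for the top cohomology group but does not give degeneration at the spots $(p,1)$ with $p<n-4$, where $d_2$ lands in the possibly nonzero group $H^{p+2}(\Mod_{0,n-1})$. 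Your device of exhibiting the global closed logarithmic forms $\omega_{z,t_{n-3}}$, $z\in\{0,1,t_1,\ldots,t_{n-4}\}$, whose fiberwise restrictions give a basis of $H^1(F)$, is exactly the hypothesis of the Leray--Hirsch theorem, so it forces the constancy of the local system and full degeneration simultaneously and hands you a multiplicative basis in every degree $k$, not just $k=n-3$; this is what makes the general statement of Theorem \ref{arnoldring} accessible, and the induction then closes exactly as you describe, with the count $2\cdot3\cdots(n-2)=(n-2)!$ in top degree. Two minor points of care: you should say explicitly that you are invoking Leray--Hirsch (or reprove it by comparing filtrations), rather than asserting that the global forms ``provide a splitting''; and the fibration is locally trivial in the topological sense (Fadell--Neuwirth) but not obviously Zariski-locally trivial --- topological local triviality is all that Leray--Hirsch requires, so this does not affect the proof.
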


Because Arnol'd's theorem is a key ingredient in our work, in section 4.1 of this chapter, we recall a self-contained proof of the theorem \ref{dimension}.
Here, we recall some definitions and properties of divisors on $\M_{0,n}$ used throughout the rest of the text.

\begin{defn}
 Let $Z$ be the set denoting marked points on $\Mn$, $Z=\{z_1,...,z_n\}$ and let $\rho$ be the set of all partitions of $Z$, in which each set in
 the partition has cardinality greater than or equal to 2.
\end{defn}

\begin{defn}\label{newd_adef}\index{$d_A$}
Let $\{z_{i_1},...,z_{i_k}\}=K\subset Z$ be a subset of the marked
points.  Then the divisor which is obtained as the exceptional divisor by blowing up along $z_{i_1}=\cdots =
z_{i_k}$ in $\Pro^{n-3}_1$ is denoted by $d_K$.
\end{defn}

Recall from page \pageref{d_Adef} that $d_A=d_{Z\setminus A}$ in $\M_{0,n}$.

\begin{defn}\label{m0ngamma}  We denote by $D$\index{$D$} the disjoint union, $\sqcup_{i\in \rho} \{d_i\}$ where each $\{d_i\}$ is a singleton whose single element is the (irreducible) boundary divisor in $\M_{0,n}\setminus \Mn$ defined by the partition $i$ as in the definition \ref{newd_adef}.  Likewise, if $\gamma\sqcup \gamma^c$ is a partition of $\rho$, we denote by $D_{\gamma^c}:= \sqcup_{i\in \gamma^c} \{d_i\}$.  We denote by $\Mn^\gamma:=\M_{0,n}\setminus D_{\gamma^c}$ and call $\Mn^\gamma$ a {\rm partial compactification}\index{Partial compactification}\index{$\Mn^\gamma$} of $\Mn$.
\end{defn}

So we have $\Mn\subset \Mn^\gamma \subset \M_{0,n}$.

We remark here that the results outlined in this chapter are combinatorial ones obtained by considering an irreducible boundary component as the set of its defining marked points, therefore by a slight abuse of notation, and when no ambiguity can arise, we will denote $D_\gamma$ simply by $\gamma$.


In the second section of this chapter we show an analog of Arnol'd's
theorem for partial compactifications.  In particular, we show that
the cohomology rings of the partial compactifications are the subrings of
Arnold's ring converging on the partial compactification.

In the third section, we will recall Brown's proof that  $\Mn^\delta$
is an affine variety when $D_\delta$ is the set of divisors each of which contains a face of the boundary of an
associahedron in $\Mn(\R)$ (as in chapter 3),
and deduce that any subset, $D_\gamma\subseteq D_\delta$  of boundary
divisors also has
the property that $\Mn^\gamma$ is affine (for example when
$|\gamma|=1, 2$).

In the fourth section, for some of the families $D_\gamma$ of boundary divisors
from section 4.3, we display explicit bases of the top dimensional
cohomology groups of $\Mn^\gamma$.
For $\gamma=\delta$, the union of the boundary divisors
of the standard cell, this computation was done in chapter 3 where we defined the basis of {\it insertion forms}.  In chapter 4, we
generalize the method of insertion forms.

In the last section, we study a combinatorial presentation of the
Picard group of 
divisors on $\M_{0,n}$ based on work of S. Keel
and A. Gibney.  
We extend their techniques of
calculating a basis to calculating an explicit expression of any 
boundary divisor in terms of these bases by using polygon techniques. 

\section{Spectral sequences of a fibration}
The goal of this section is to recall the proof theorem \ref{dimension} by induction, using only the Leray theorem of the
cohomology of a spectral sequence.

{\it Proof (of theorem \ref{dimension})}.
We begin the proof by justifying the base case, $n=3$, in which case
$\Mod_{0,3}$ is a point.  The dimension
of $H^{0}(\Mod_{0,3})$ is thus $1!=1$ and
$H^k(\Mod_{0,3})=0$ for all $k>0$.

So we have two induction hypotheses,
$\dim(H^{n-4}(\Mod_{0,n-1}))=(n-3)!$ and for $k>n-4$,
$H^k(\Mod_{0,n-1})=0$.  

Before detailing the proof, let us recall some notation and some
results on spectral sequences.

Let \begin{equation}\label{filt} \emptyset = X_{-1} \subset X_{0}
  \subset \cdots
  \subset X_{k} = X \end{equation}
be a filtration of the topological space $X$.  And let
$$C_{q}(X_{-1}) \subset \cdots \subset C_q(X_k)$$ be the formal
groups\label{CqXdef}\index{$C_q(X)$} of
$\Z$ linear combinations of $q$ dimensional oriented simplices in the subspaces
$(X_i)$.  Such filtrations exist for nice topological spaces, such as
CW complexes.  There exist linear maps, $\partial_q :C_q(X_i) \rightarrow
C_{q-1}(X_i)$ which send a simplex to its boundary in $X_i$.  Since
the simplices are oriented, $\partial_{q-1} \circ \partial_q =0$.

Let $$E_0^{i,q-i}  := C_q(X_i)/C_q(X_{i-1}).$$\label{Eblahdef}  Then
$\partial_q$ 
induces an exact sequence, $(E_0^{i, q-i}, d_0^{i,q-i})$.

Let $Z_r^{i,q-i}\subset E_0^{i,q-i}$\index{$Z_r^{i,q-i}$} be the subgroup of elements,
$\alpha$, such that the coset of $\alpha$ contains a representative
$a$ such that $\partial_q (a) \in C_{q-1}(X_{i-r}).$  From this
definition, we see that $Z_0^{i,q-i} = E_0^{i,q-i}$.  

By the filtration, we have $Z_k^{i,q-i} \subset Z_{k-1}^{i,q-1}$.
If $r$ is
sufficiently large, we obtain a stable group, $Z_\infty^{i,q-i}$ whose
elements $\alpha$ contain a coset $a$ such that $\partial_q (a) = 0$.

Let $B_r^{i,q-i}\subset E_0^{i,q-i}$\label{Bblahdef}\index{$B_r^{i,q-i}$} be the subset of elements
$\alpha$ whose coset contains an element $a$ such that there exists an
element $b\in C_{q+1}(X_{i+r-1})$ such that $a=\partial_{q+1}(b)$.

By the filtration, we have that $B_{k}^{i,q-i} \subset B_{k+1}^{i,q-i}$.
Furthermore, if $r$ is large enough, we obtain a stable group,
$B_\infty^{i,q-i}$ which contains all of the elements $\alpha =
\overline{a}$ such that $a$ is the boundary of some simplex in
$C_q(X)$.  Since $\partial_{q-1}\circ\partial_q=0$, we have that
$B_\infty^{i,q-i}\subset Z_\infty^{i,q-i}$.
  
Let $E_r^{i,q-i}:=Z_r^{i,q-i}/B_r^{i,q-1}$\index{$E_r^{i,q-i}, E_r$}.  The differential
$d_0^{i,q-i}$ induces a complex, $$d_r^{i,q-i}: E_r^{i,q-i}
\rightarrow E_{r}^{i-r,q-i+r-1}.$$\index{$d_r^{i,q-i},\ d_r$}  Let $$E_r:= \bigoplus_{i,q}
E_r^{i,q-i},$$ which is a complex for the differential $d_r:=
\bigoplus d_r^{i,q-i}:= E_r\rightarrow E_{r}$.

\begin{defn} The complex $(E_r, d_r)$ is a {\it spectral sequence}\index{Spectral sequence} for
  the given
  filtration of $X$.\end{defn}

\begin{thm}\label{Fomenko} $E_{r+1}$ is the homology group of $E_r$
  with respect to
  the differential $d_r$, in particular, $$E_{r+1}^{p,q} \simeq
  \ker(d_r^{p,q}) /
  \mathrm{Im}(d_{r}^{p-r,q+r-1}).$$ \end{thm}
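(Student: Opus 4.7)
The plan is to unpack the definitions carefully and show that, under the natural projection $Z_r^{p,q}\twoheadrightarrow E_r^{p,q}=Z_r^{p,q}/B_r^{p,q}$, the subgroup $Z_{r+1}^{p,q}$ maps onto $\ker(d_r^{p,q})$ and the subgroup $B_{r+1}^{p,q}$ maps onto $\mathrm{Im}(d_r^{p-r,q+r-1})$. Once these two identifications are established, the isomorphism
\[
E_{r+1}^{p,q}=\frac{Z_{r+1}^{p,q}}{B_{r+1}^{p,q}}\cong\frac{Z_{r+1}^{p,q}/B_r^{p,q}}{B_{r+1}^{p,q}/B_r^{p,q}}\cong\frac{\ker(d_r^{p,q})}{\mathrm{Im}(d_r^{p-r,q+r-1})}
\]
follows from the third isomorphism theorem, using $B_r^{p,q}\subset B_{r+1}^{p,q}\subset Z_{r+1}^{p,q}\subset Z_r^{p,q}$.

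First I would verify that $d_r^{p,q}$ is well-defined on $E_r^{p,q}$. For $\alpha\in Z_r^{p,q}$, choose a representative $a\in C_{p+q}(X_p)$ with $\partial a\in C_{p+q-1}(X_{p-r})$; then the class of $\partial a$ in $E_r^{p-r,q+r-1}=Z_r^{p-r,q+r-1}/B_r^{p-r,q+r-1}$ is independent of the choice of $a$, because two lifts differ by an element of $C_{p+q}(X_{p-1})$ plus a boundary, and $\partial$ of such a difference lies in $B_r^{p-r,q+r-1}$. This defines $d_r^{p,q}$ and yields $d_r^2=0$ since $\partial^2=0$.

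Next I would identify the kernel. An element $\alpha\in Z_r^{p,q}$ lies in $\ker(d_r^{p,q})$ precisely when some representative $a$ satisfies $\partial a\in C_{p+q-1}(X_{p-r-1})+\partial C_{p+q}(X_{p-1+r-1})$, i.e.\ after modifying $a$ by an element of $C_{p+q}(X_{p-1})$ and a boundary, we obtain a cycle mod $X_{p-r-1}$. This is exactly the defining condition for $Z_{r+1}^{p,q}$, so the preimage of $\ker(d_r^{p,q})$ in $Z_r^{p,q}$ is $Z_{r+1}^{p,q}+B_r^{p,q}=Z_{r+1}^{p,q}$ (since $B_r^{p,q}\subset Z_{r+1}^{p,q}$). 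For the image, an element of $E_r^{p,q}$ lies in $\mathrm{Im}(d_r^{p-r,q+r-1})$ iff it is represented by some $\partial b$ with $b\in C_{p+q+1}(X_{p+r-1})$ and $\partial b\in C_{p+q}(X_p)$; this is precisely the definition of $B_{r+1}^{p,q}$ modulo $B_r^{p,q}$.

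The main obstacle, and really the only delicate point, is the careful bookkeeping when showing that the conditions ``$\partial a$ can be pushed into $C_{p+q-1}(X_{p-r-1})$ after adjustment by a class in $B_r^{p-r,q+r-1}$'' match exactly the definition of $Z_{r+1}^{p,q}$ at the level of representatives, rather than only up to elements of $C_{p+q}(X_{p-1})$; one must check that adjusting $a$ by such a lower-filtration element does not affect whether $\partial a$ lies in $C_{p+q-1}(X_{p-r-1})$ modulo the relevant boundaries. Once this compatibility is verified, the two isomorphisms of numerator and denominator combine, via the third isomorphism theorem, to give the stated formula for $E_{r+1}^{p,q}$.
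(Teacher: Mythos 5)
The paper does not actually prove this theorem: it defers to the textbook reference [FFG]. Your proposal is the standard argument that such references give --- identify $Z_{r+1}^{p,q}$ with the preimage of $\ker(d_r^{p,q})$ and $B_{r+1}^{p,q}$ with the preimage of the incoming image, then quotient --- and it is sound in structure, including the verification that $d_r$ is well defined and the observation that adjusting a representative by an element of $C_{p+q}(X_{p-1})$ is harmless. Two index slips should be corrected if you write this out in full: for the image you need $b\in C_{p+q+1}(X_{p+r})$, not $X_{p+r-1}$ (the latter gives $B_r^{p,q}$, which is already zero in $E_r^{p,q}$, rather than $B_{r+1}^{p,q}$); and in the kernel computation the boundary adjustment should come from $\partial C_{p+q}(X_{(p-r)+r-1})=\partial C_{p+q}(X_{p-1})$, not $\partial C_{p+q}(X_{p-1+r-1})$ --- your subsequent prose uses the correct level $X_{p-1}$, so this is just a typo.
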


The proof of this theorem can be found in many textbooks, such as [FFG].

In our studies, we are concerned with the stabilizing groups,
$$E_\infty^{i,q-i} = Z_\infty^{i,q-i}/B_\infty^{i,q-i}.$$

Now let us consider the fibration 
\begin{align}\label{fibration} \Mn & \rightarrow \Mod_{0,n-1} \\
(0,t_1,...,t_{n-3},1,\infty)& \mapsto (0,t_1,...,t_{n-4}, 1,\infty),
\end{align} with
fiber equal to $\Pro^1(\C) \setminus\{0,t_1,...,t_{n-4},1,\infty\}$
over $(0,t_1,...,t_{n-4}, 1,\infty)$.

We write this fibration in the classical notation as\index{$F,\ E,\ B$}
\begin{equation}
F \hookrightarrow E \rightarrow B
\end{equation}
where $F$ is the fiber, the projective line minus $n-1$ points, $B$ the
base, $\Mod_{0,n-1}$, and $E$ is $\Mn$.

\begin{lem}\label{hoF} The homology groups of $F$ as $\R$ vector
  spaces are given
  by
\begin{equation} H_q(F,\R) \simeq \begin{cases} \R & q=0 \\ \R^{n-2} & q=1 \\
0 & q > 1 \end{cases}
\end{equation}
\end{lem}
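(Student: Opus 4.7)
The plan is to identify the homotopy type of $F$ and then read off its singular homology with real coefficients. Since $\mathbb{P}^1(\mathbb{C})$ is homeomorphic to the 2-sphere $S^2$, the fiber $F$ is homeomorphic to $S^2$ with $n-1$ points removed. Using stereographic projection centered at one of the removed points (say $\infty$), I would identify $F$ with the complex plane $\mathbb{C}$ with $n-2$ points removed, namely the points $\{0, 1, t_1, \ldots, t_{n-4}\}$.

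Next, I would establish the standard fact that $\mathbb{C}$ minus $k$ points is homotopy equivalent to a wedge of $k$ circles, $\bigvee_{i=1}^{k} S^1$. The cleanest way to see this is by an explicit strong deformation retraction onto a ``bouquet'' subspace: fix a base point $p$ far from all the removed points, draw $k$ disjoint simple paths from $p$ to small loops encircling each of the $k$ removed points, and retract the complement of this bouquet onto it. Alternatively, one can proceed by induction on $k$, starting from the fact that $\mathbb{C}$ is contractible, and applying the Mayer--Vietoris sequence with respect to a small disk around each newly removed point and its complement.

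Applied with $k = n-2$, this identification gives $F \simeq \bigvee_{i=1}^{n-2} S^1$. It is a standard computation (using either the cellular chain complex of the wedge or the Mayer--Vietoris sequence) that the real singular homology of a wedge of $m$ circles satisfies $H_0 = \mathbb{R}$, $H_1 = \mathbb{R}^m$, and $H_q = 0$ for $q \geq 2$. Substituting $m = n-2$ yields exactly the formulas in the lemma.

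The main obstacle is purely expository: verifying rigorously the deformation retraction onto the bouquet of $k$ circles. In practice, this is routine once one chooses disjoint non-self-intersecting paths from the base point to the small loops, but writing down the retraction explicitly requires some care. Everything else is a direct computation of the homology of a wedge of circles.
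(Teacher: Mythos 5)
Your proof is correct. The paper itself gives no real argument for this lemma -- it merely remarks that the statement ``may be deduced by using a long-exact Mayer--Vietoris sequence'' -- so your write-up supplies more than the original. Your primary route (stereographic projection to realize $F$ as $\C$ minus $n-2$ points, then a deformation retraction onto a wedge of $n-2$ circles, then the standard homology of a wedge of circles) is a completely standard alternative to the Mayer--Vietoris induction the paper gestures at, and you correctly note the inductive Mayer--Vietoris argument as a fallback; the two approaches are interchangeable in both length and rigor here. The point count is also right: removing the $n-1$ marked points from $\Pro^1(\C)\cong S^2$ and projecting away from one of them leaves $\C$ minus $n-2$ points, giving $H_1\simeq\R^{n-2}$ as claimed.
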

This simple lemma may be deduced by using a long-exact Mayer-Vietoris sequence.

Any
differentiable manifold has the homotopy type of a CW complex \cite{Mi}. 
In particular, $\Mn$ is a CW complex and so there exists a filtration
 as in \eqref{filt} on $\Mn$,
$$\emptyset = X_{-1}\subseteq X_0 \subseteq \cdots \subseteq X_k
=\Mn,$$ where $X_i$ denotes the $i$th skeleton of $\Mn$.

We have a bundle of groups over $\Mod_{0,n-1}$ given by the family,
$\{H_1(F_{b})\ : \ b\in B\}$,  and the
associated family
of homomorphisms, $$h[\lambda]: H_1(F_{b_0}) \rightarrow
H_1(F_{b_1}),$$\label{hlambdadef}\index{$h[\lambda]$} for all paths $\lambda$ from $b_0$
to $b_1$ on $B$. 
The homomorphism, $h[\lambda]$ comes from lifting the path $\lambda$ to
$\Mn$.  The choice of a lift of $b_0$ to $F_{b_0}\subset
\Mn$ determines the lift of $\lambda$ uniquely.  Therefore the
endpoint of this lift, a lift of $b_1$, is uniquely defined, giving
a map from $F_{b_0}$ to $F_{b_1}$.  This map induces a map on the
fundamental groups and hence passes to the $H_1$.  The $h[\lambda]$ satisfy
\begin{align} 
& h[\mathrm{Id}] = \mathrm{Id} \\
& h[\lambda] = h[\lambda']
\ \mathrm{whenever}\  \lambda\mathrm{\ is\ homotopic\ to\ }\lambda' \\
& \lambda: b_0\rightarrow b_1, \mu: b_1\rightarrow b_2,\ \mathrm{then}
\ h[\mu\circ
  \lambda] = h[\mu]\circ h[\lambda]. \end{align}

\begin{defn}\label{simple} A fibration is simple if for all
  $\lambda_1,\lambda_2
  :b_0\rightarrow 
  b_1$, $h[\lambda_1]=h[\lambda_2]$.  \end{defn}

\begin{claim}\label{claimsimple} The fibration \eqref{fibration} is
  simple. \end{claim}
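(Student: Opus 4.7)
The plan is to recast simplicity in terms of monodromy and then exploit the explicit description of $H_1(F)$. Recall that a fibration is simple precisely when the monodromy representation $\pi_1(B,b_0) \to \mathrm{Aut}(H_1(F_{b_0}))$ is trivial. So it suffices to show that any loop $\lambda$ in $B = \Mod_{0,n-1}$ based at $b_0$ acts as the identity on $H_1(F_{b_0}, \Z)$.

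First, I would make $H_1(F_{b_0})$ completely explicit. Since $F_{b_0} = \Pro^1(\C) \setminus \{0, t_1^0, \ldots, t_{n-4}^0, 1, \infty\}$ is a sphere with $n-1$ punctures, we have $H_1(F_{b_0}, \Z) \cong \Z^{n-2}$, generated by small positively-oriented loops $\gamma_1, \ldots, \gamma_{n-1}$ around each puncture, subject to the single relation $\sum_i \gamma_i = 0$ coming from the fact that the sum of these small loops bounds the complement of a small neighborhood of all the punctures in $\Pro^1(\C)$.

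Second, I would describe the monodromy concretely. A loop $\lambda$ in $\Mod_{0,n-1}$ corresponds to an isotopy of the configuration of marked points $(0, t_1, \ldots, t_{n-4}, 1, \infty)$ in $\Pro^1$ returning to its starting position; three of the points (namely $0,1,\infty$) stay fixed throughout. Lifting this isotopy gives an isotopy of $F_{b_0}$ through punctured spheres, realising the monodromy as an element of the pure mapping class group of the $(n-1)$-punctured sphere. The induced action on $\pi_1(F_{b_0})$ sends each generator loop $\gamma_i$ to a conjugate element (corresponding to the same small loop around the $i$-th puncture, but based at $b_0$ via a possibly twisted path).

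Third, since $H_1(F_{b_0}) = \pi_1(F_{b_0})^{ab}$, conjugation acts trivially, so each class $[\gamma_i]$ is fixed. Hence $h[\lambda] = \mathrm{Id}$ for every $\lambda$, which establishes the claim. The only step requiring a bit of care is the second one: making the identification of the monodromy with an element of the pure mapping class group rigorous, which is essentially the Fadell--Neuwirth fibration argument applied to the forgetful map $\Mod_{0,n} \to \Mod_{0,n-1}$. This is the main (but mild) obstacle, and it can be dispatched by noting that locally over $B$ the fibration is a trivial bundle of punctured spheres, and the transition data gluing these trivializations along a loop $\lambda$ is precisely the isotopy of punctures described above.
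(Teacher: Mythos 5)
Your proposal is correct and follows essentially the same route as the paper: reduce simplicity to the triviality of the monodromy action on $H_1(F)$, identify the monodromy with the pure mapping class group of the punctured sphere, observe that a pure mapping class sends each peripheral loop to a conjugate of itself, and conclude that the action on the abelianization $H_1(F)=\pi_1(F)^{ab}$ is trivial. The only difference is cosmetic: the paper verifies the conjugacy statement by an explicit computation with the Artin generators $\sigma_i$ and the pure braid generators $x_{i,j}$, whereas you invoke the geometric description of the monodromy as an isotopy of the punctures.
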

\begin{proof} Given two points $b_0\ne b_1$ on $B$, we may fix a path
$\lambda_0$ from $b_0$ to $b_1$, and every path on $B$
  from $b_0$ to $b_1$ is homotopic to a loop starting at $b_0$
  composed with $\lambda_0$. Thus, by definition \ref{simple}
  we only need to show simplicity for loops $\lambda$ on $B$ based at
  a point $b$, in 
  other words that $(h[\lambda] :H_1(F_b) \rightarrow H_1(F_b) ) =
  \mathrm{Id}$. 
  We saw above that loops on $B$, and in fact homotopy classes of
  loops on $B$, 
act on $\pi_1(F)$; in other words there is a group action of $\pi_1(B)$ on $\pi_1(F)$.  This action can be explicitly computed as follows.

Recall the definition of the Artin braid group, $ B_{n-1}$, generated by the fundamental braids $\sigma_{i}, i=1,...,n-2$, subject to the relations $\sigma_i\sigma_{i+1}\sigma_{i}=\sigma_{i+1}\sigma_{i}\sigma_{i+1}$ and $\sigma_{i}\sigma_j=\sigma_j\sigma_i$ for all $|i-j|\geq 2$.  The full mapping class group $\Gamma_{0,[n-1]}$ is defined to be the quotient of $B_{n-1}$ by the two relations, $(\sigma_1\cdots \sigma_{n-2})^{n-1}=1$ and $\sigma_1\cdots \sigma_{n-2}\cdot \sigma_{n-2} \cdots \sigma_1 =1$.  There is a surjection $B_{n-1}\twoheadrightarrow \Sym_{n-1}$ given by mapping $\sigma_i\mapsto (i,i+1)$,
which factors through $\Gamma_{0,[n-1]}$.  

Let the free group, $\pi_1(F)$, be generated by the loops, $x_1,...,x_{n-1}$ around the marked points of $F$, whose product equals 1.  The group $\Gamma_{0,[n-1]}$ acts on $\pi_1(F)$ via
\begin{equation}\label{action}\sigma_i(x_j) = \begin{cases} x_j & j<i,\ j>i+1\\
                   x_{j+1} & j=i\\
 x_{j}^{-1}x_{j-1} x_{j} & j=i+1.
                  \end{cases}\end{equation}

The fundamental group of $B=\Mod_{0,n-1}$, known as the pure mapping class group, $\Gamma_{0,n-1}$, is the kernel of the surjection, $\Gamma_{0,[n-1]}\twoheadrightarrow \Sym_{n-1}$.  It is generated by the elements $x_{i,j}=\sigma_{j-1}\cdots \sigma_{i+1}\sigma_{i}^2 \sigma_{i+1}^{-1}\cdots \sigma_{j-1}^{-1}$.  It acts on $\pi_1(F)$ by restriction of the action \eqref{action}, and it is easy to see that each generator, $x_{i,j}$ maps each $x_k$ to a conjugate of $x_k$.  Thus, $\pi_1(B)$ passes to the trivial action on
  $\pi_1(F)^{ab}=H_1(F)$.
  \end{proof}

\begin{thm}[Leray] Given a simple fibration,
$$F\hookrightarrow E \rightarrow B,$$
  there exists a cohomology spectral sequence, $\{E_r^{p,q}, d_r\}$ such that
  $$E_2^{p,q} \simeq H^p(B,H^q(F))$$ and converging to $H_*(E)$ in
  other words, $$\bigoplus_{r+s=n} E_\infty^{r,s} \simeq H^n(E).$$
\end{thm}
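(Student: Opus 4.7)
The plan is to construct the spectral sequence from a filtration of $E$ pulled back from a CW decomposition of $B$, and then identify the $E_2$ page and verify convergence. Concretely, since $B$ is (homotopy equivalent to) a CW complex, fix a CW structure $\emptyset = B_{-1}\subset B_0\subset \cdots \subset B_k = B$, and set $E_p = \pi^{-1}(B_p)$. This yields a filtration $\emptyset=E_{-1}\subset E_0\subset \cdots \subset E_k = E$ to which one applies the general construction of the spectral sequence associated to a filtration, dualized from the homological setup recalled in the excerpt. Namely, the cochain groups $C^q(E_p, E_{p-1})$ give a bigraded object $E_0^{p,q}$, with $d_0$ induced by the coboundary; the successive pages $E_r^{p,q}$ are then defined as in the excerpt, and theorem \ref{Fomenko} (applied to the dual complex) gives $E_{r+1}$ as the cohomology of $E_r$ with respect to $d_r$.

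The first key step is identifying the $E_1$ page. Over each open $p$-cell $\sigma \subset B_p\setminus B_{p-1}$, the restriction of $\pi$ is a fibration over a contractible base, so there is a homotopy equivalence $\pi^{-1}(\sigma) \simeq \sigma \times F$. Excision then gives an isomorphism
\[
E_1^{p,q} \;\cong\; \bigoplus_{\sigma \subset B_p\setminus B_{p-1}} H^q(F_\sigma),
\]
which is exactly the cellular cochain group $C^p_{\mathrm{cell}}(B; \mathcal{H}^q)$, where $\mathcal{H}^q$ is the local system $b\mapsto H^q(F_b)$. Under this identification, $d_1$ becomes the cellular coboundary. This uses only the local triviality of the fibration up to homotopy and the compatibility of the boundary maps in the triples $(E_p, E_{p-1}, E_{p-2})$ with the combinatorics of attaching maps in $B$.

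The second key step is where the simplicity hypothesis enters. The holonomy maps $h[\lambda]:H^q(F_{b_0})\to H^q(F_{b_1})$ are, by the definition \ref{simple} of a simple fibration, independent of the homotopy class of $\lambda$ and hence trivial on loops. Thus $\mathcal{H}^q$ is a constant local system with stalk $H^q(F)$, and one obtains
\[
E_2^{p,q} \;=\; H^p\bigl(C^*_{\mathrm{cell}}(B; H^q(F)), d_1\bigr) \;\cong\; H^p(B, H^q(F)),
\]
which gives the desired description of $E_2$.

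Finally, convergence follows because the filtration is bounded: $B$ is finite-dimensional, so the filtration of $E$ has finite length, which forces $E_r^{p,q}$ to stabilize to $E_\infty^{p,q}$ at a finite stage. The induced filtration on $H^n(E)$ by $F^pH^n(E) = \ker\bigl(H^n(E)\to H^n(E_{p-1})\bigr)$ then satisfies $\gr^p F^\bullet H^n(E) \cong E_\infty^{p,n-p}$, giving the convergence statement $\bigoplus_{p+q=n} E_\infty^{p,q}\cong H^n(E)$ (as $\Q$-vector spaces, which is all we need). The main obstacle is the identification $E_1^{p,q}\cong C^p_{\mathrm{cell}}(B;\mathcal{H}^q)$ with the cellular coboundary as $d_1$; this is the technical heart and requires either a careful analysis of the long exact sequences of the triples $(E_p,E_{p-1},E_{p-2})$ or, equivalently, passage to the Leray sheaf of $\pi$ and use of the fact that its stalks compute $H^*(F)$ since $\pi$ is a fibration. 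Once this is in place, the simplicity hypothesis is used precisely to kill the monodromy and replace local coefficients by the constant ones in $H^p(B,H^q(F))$.
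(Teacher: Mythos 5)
The paper does not actually prove this theorem: it states it as a classical result and refers the reader to [FFG] for an ``introductory proof,'' so there is no internal argument to compare yours against. Your proposal is a correct outline of exactly the standard proof that such references give (and that the paper implicitly relies on): pull back a CW filtration of $B$ to $E$, run the spectral sequence of a filtration recalled earlier in the chapter, identify $E_1^{p,q}$ with cellular cochains of $B$ with coefficients in the local system $b\mapsto H^q(F_b)$, and use the simplicity hypothesis to replace the local system by constant coefficients at the $E_2$ page; convergence is then automatic from the finite length of the filtration, which holds here since the base is a finite-dimensional manifold. You correctly flag the one genuinely technical point, namely the isomorphism $E_1^{p,q}\cong C^p_{\mathrm{cell}}(B;\mathcal{H}^q)$ with $d_1$ the cellular coboundary: making this precise requires, over each $p$-cell, the homotopy trivialization $\pi^{-1}(\sigma)\simeq \sigma\times F$ combined with excision and the long exact sequence of the triple $(E_p,E_{p-1},E_{p-2})$, and a check that the resulting boundary maps match the attaching-map combinatorics. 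As a proof sketch at the level of detail the thesis itself uses for this background material, your argument is sound and is the same route as the cited source.
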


An introductory proof of this famous theorem can be found for example in [FFG].
This theorem is the major ingredient in our proof of the dimension
result.  

\begin{lem}\label{tensor}
For the fibration \eqref{fibration} we have $E_2^{p,q} \simeq H^p(B)\otimes
H^q(F),$ and if $q>1$ then $E_2^{p,q}=0$. 
\end{lem}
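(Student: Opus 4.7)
The plan is to combine two ingredients that are already in place: the simplicity of the fibration (Claim \ref{claimsimple}) and the description of $H_*(F)$ (Lemma \ref{hoF}). By Leray's theorem, $E_2^{p,q} \simeq H^p(B, \mathcal{H}^q(F))$, where $\mathcal{H}^q(F)$ denotes the local coefficient system on $B$ whose stalk at $b$ is $H^q(F_b,\R)$. The first step is to observe that simplicity of the fibration, established in Claim \ref{claimsimple}, says precisely that the monodromy action of $\pi_1(B)$ on $H_1(F_b)$ is trivial; dualizing (we work with $\R$- or $\Q$-coefficients, so dualization is harmless), the action on $H^q(F_b)$ is also trivial for every $q$. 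Hence the local system $\mathcal{H}^q(F)$ is constant.

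Next I would use the universal coefficient theorem (over a field, so there are no $\mathrm{Tor}$ terms) applied to the constant coefficient system $\mathcal{H}^q(F) \simeq H^q(F)\otimes \underline{\R}$ on $B$. This yields
\[
E_2^{p,q} \;\simeq\; H^p(B,\,H^q(F)) \;\simeq\; H^p(B)\otimes_{\R} H^q(F),
\]
which is the first claim of the lemma.

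For the vanishing statement, by Lemma \ref{hoF} we have $H_q(F,\R)=0$ for $q>1$, and since $F$ is an open Riemann surface (hence a finite CW complex with free homology), the universal coefficient theorem again gives $H^q(F,\R)\simeq H_q(F,\R)^\vee=0$ for $q>1$. Combined with the tensor decomposition above, this forces $E_2^{p,q}=0$ whenever $q>1$, completing the proof.

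The only real subtlety, and the step I would write out most carefully, is the identification $H^p(B,\mathcal{H}^q(F))\simeq H^p(B)\otimes H^q(F)$ once the local system has been shown to be trivial; everything else is a direct appeal to Claim \ref{claimsimple}, Lemma \ref{hoF}, and the Leray theorem. This identification is just the standard universal coefficient isomorphism for singular cohomology with constant coefficients in a field, so no genuine obstacle arises.
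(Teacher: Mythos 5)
Your proof is correct and follows essentially the same route as the paper: invoke Claim \ref{claimsimple} to trivialize the monodromy on the coefficient system, identify $H^p(B,H^q(F))$ with $H^p(B)\otimes H^q(F)$ since the coefficients form a finite-dimensional vector space over a field, and conclude the vanishing for $q>1$ from Lemma \ref{hoF}. The extra care you take in spelling out the universal-coefficient step and in noting that triviality of the action on $H^q$ for $q\neq 1$ is automatic is a slight refinement of, not a departure from, the paper's argument.
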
 

\begin{proof} $H_q(F)$ is a finite dimensional real vector space, $\R^k$,
  so its dual, $H^q(F) \simeq \R^{k}$.
  By the Leray theorem,  
\begin{align}
E_2^{p,q}\simeq H^p(B,H^q(F)) \simeq H^p(B,\R^{k}) \simeq H^p(B)\otimes \R^{k}
 \simeq H^p(B)\otimes H^q(F). 
\end{align}
This holds because the action of $\pi_1(B)$ on $\R^k\simeq H_q(F)$ is
trivial as we saw
in the proof of claim \ref{claimsimple}.
If $q>1$ then $E_2^{p,q}=0$ by lemma \ref{hoF}.
\end{proof}

\begin{lem}\label{box} If $E_2^{p,q} \neq 0$, then $0\leq p \leq n-4$
  and $0\leq q \leq 1$.
Therefore, $E_2^{p,q} \neq 0$ implies that $p+ q\leq n-3$.
\end{lem}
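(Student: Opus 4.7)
The plan is straightforward: invoke the previous lemma to reduce the statement to separate bounds on $p$ and on $q$, then dispatch each bound using results already available in the excerpt.

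First I would use Lemma \ref{tensor} to write $E_2^{p,q} \simeq H^p(B) \otimes H^q(F)$ with $B = \Mod_{0,n-1}$ and $F = \Pro^1(\C)\setminus \{0,t_1,\ldots,t_{n-4},1,\infty\}$. Non-vanishing of the tensor product forces both factors to be non-zero, so it suffices to bound the ranges of $p$ and $q$ separately.

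For the bound on $q$, I would appeal directly to Lemma \ref{hoF}, which gives $H_q(F,\R) = 0$ for $q > 1$. Since $F$ is a finite-dimensional CW complex, de Rham cohomology is dual to singular homology (with real coefficients), so $H^q(F) = 0$ for $q > 1$, forcing $0 \le q \le 1$. For the bound on $p$, I would invoke the induction hypothesis of Theorem \ref{dimension}: for $B = \Mod_{0,n-1}$ we have $H^k(B) = 0$ whenever $k > (n-1)-3 = n-4$, hence $0 \le p \le n-4$.

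Adding the two bounds gives $p + q \le (n-4) + 1 = n-3$, which is the desired conclusion. There is no real obstacle here; the lemma is essentially a bookkeeping step combining Lemma \ref{tensor}, Lemma \ref{hoF}, and the inductive hypothesis, and its role in the larger proof is to guarantee that the spectral sequence is concentrated in a narrow enough strip that the differentials beyond $d_2$ cannot contribute to $H^{n-3}(\Mn)$ in the degrees of interest.
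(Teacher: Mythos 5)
Your proof is correct and follows exactly the paper's own argument: decompose $E_2^{p,q}\simeq H^p(B)\otimes H^q(F)$ via Lemma \ref{tensor}, kill the left factor for $p>n-4$ by the induction hypothesis, and kill the right factor for $q>1$ by Lemma \ref{hoF}. The only addition is your explicit remark about dualizing homology to cohomology, which the paper absorbs into the statement of Lemma \ref{tensor}.
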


\begin{proof}
By \ref{tensor}, $E_2^{p,q}=H^p(B)\otimes H^q(F)$; the left hand
factor is 0 whenever $p>\mathrm{dim}(B)=n-4$ by the induction
hypothesis and the right hand one is
0 whenever $q>1$ by lemma \ref{hoF}.
\end{proof}

We can now conclude the induction proof of the vanishing statement of
theorem \ref{dimension}.
\begin{cor}  $H^k(E)=0$ whenever $k>n-3$.  
\end{cor}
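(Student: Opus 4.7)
The plan is to combine lemma \ref{box} with the standard fact that each term $E_\infty^{p,q}$ is a subquotient of $E_2^{p,q}$, together with the convergence statement $H^k(E) \simeq \bigoplus_{p+q=k} E_\infty^{p,q}$ furnished by the Leray theorem.

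First, I would record the subquotient observation: by theorem \ref{Fomenko}, $E_{r+1}^{p,q}$ is a subquotient of $E_r^{p,q}$, so by induction on $r$ every $E_r^{p,q}$ (including $E_\infty^{p,q}$) is a subquotient of $E_2^{p,q}$. In particular, if $E_2^{p,q} = 0$ then $E_\infty^{p,q} = 0$.

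Next, fix $k > n-3$ and consider any bidegree $(p,q)$ with $p+q = k$. Then $p+q > n-3$, so by lemma \ref{box} we must have $E_2^{p,q} = 0$. Combining with the previous paragraph, $E_\infty^{p,q} = 0$ for every such $(p,q)$.

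Finally, applying the convergence
\begin{equation*}
H^k(E) \simeq \bigoplus_{p+q = k} E_\infty^{p,q}
\end{equation*}
given by the Leray theorem, we conclude $H^k(\Mn) = H^k(E) = 0$ for all $k > n-3$, completing the induction step for the vanishing part of theorem \ref{dimension}.

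There is essentially no obstacle here — all the work has been done in lemmas \ref{hoF}, \ref{tensor} and \ref{box}, and in establishing the simplicity of the fibration (claim \ref{claimsimple}) which allowed us to invoke Leray. The only point requiring a moment of care is the passage from ``$E_2^{p,q} = 0$'' to ``$E_\infty^{p,q} = 0$'', which is immediate from the subquotient structure of the spectral sequence pages recorded in theorem \ref{Fomenko}.
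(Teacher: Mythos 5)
Your proof is correct and follows exactly the same route as the paper: lemma \ref{box} gives $E_2^{p,q}=0$ for $p+q=k>n-3$, theorem \ref{Fomenko} shows each later page (hence $E_\infty^{p,q}$) is a subquotient of $E_2^{p,q}$ and so vanishes, and the Leray convergence $H^k(E)\simeq\bigoplus_{p+q=k}E_\infty^{p,q}$ finishes the argument. No gaps.
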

\begin{proof}  If $k>n-3$, then $E_2^{p,q}=0$ for  $p+q=k$.  Recall by
  theorem \ref{Fomenko}
  that $E_{r+1}^{p,q}$ is the homology group of $E_r^{p,q}$.
Therefore,   
if $E_2^{p,q}=0$, then $E_\infty^{p,q} =0$.
 So by
  Leray, $\bigoplus_{p+q=k}
  E_\infty^{p,q}=H^k(E)=0$.
\end{proof}

\begin{lem}\label{stable}  For the given fibration,
  $\Mn\rightarrow \Mod_{0,n-1}$, we have
$E_2^{n-4,1} = E_\infty^{n-4,1}$.
\end{lem}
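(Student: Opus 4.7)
The plan is to show that all differentials entering or leaving the position $(n-4,1)$ on every page $E_r$ for $r \geq 2$ vanish, so that the successive quotients stabilize immediately and $E_2^{n-4,1} = E_\infty^{n-4,1}$. Recall from theorem \ref{Fomenko} that $E_{r+1}^{p,q} \simeq \ker(d_r^{p,q})/\mathrm{Im}(d_r^{p-r,q+r-1})$, so it suffices to check that for every $r \geq 2$ both the outgoing differential $d_r^{n-4,1}$ and the incoming differential $d_r^{n-4-r,r}$ have zero source or target.

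First I would analyze the outgoing differential $d_r^{n-4,1}: E_r^{n-4,1} \to E_r^{n-4+r,\, 2-r}$. For $r \geq 2$ we have $p = n-4+r \geq n-2$, which by lemma \ref{box} forces $E_2^{n-4+r,2-r} = 0$ (since $p > n-4$). As every subsequent page is a subquotient of $E_2$, we get $E_r^{n-4+r,2-r} = 0$ for all $r \geq 2$, hence $d_r^{n-4,1} = 0$.

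Next I would analyze the incoming differential $d_r^{n-4-r,r}: E_r^{n-4-r,r} \to E_r^{n-4,1}$. For $r \geq 2$ we have $q = r \geq 2$, and by lemma \ref{box} (which uses $H^q(F) = 0$ for $q > 1$ from lemma \ref{hoF}) this implies $E_2^{n-4-r,r} = 0$, and therefore $E_r^{n-4-r,r} = 0$ as well. Thus $d_r^{n-4-r,r} = 0$ for every $r \geq 2$.

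Combining these two observations via theorem \ref{Fomenko} gives $E_{r+1}^{n-4,1} \simeq E_r^{n-4,1}$ for all $r \geq 2$, so the sequence stabilizes at the $E_2$ page and $E_2^{n-4,1} = E_\infty^{n-4,1}$. There is no real obstacle here; the argument is a direct dimension check against the vanishing box of lemma \ref{box}, and the only care needed is to verify the indices $(n-4+r,2-r)$ and $(n-4-r,r)$ fall outside the rectangle $\{0 \le p \le n-4,\ 0 \le q \le 1\}$ whenever $r \ge 2$.
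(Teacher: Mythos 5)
Your proof is correct and follows essentially the same route as the paper: both arguments reduce the claim to the vanishing of the entries adjacent to position $(n-4,1)$, which follows from the rectangle of lemma \ref{box}. The only differences are cosmetic — you orient the differentials with the opposite (cohomological) convention to the paper's $d_r^{p,q}\colon E_r^{p,q}\to E_r^{p-r,q+r-1}$, which merely swaps which of the two vanishing positions is the source and which the target, and you check all pages $r\geq 2$ explicitly where the paper only verifies $r=2$; your version is, if anything, slightly more complete on that point.
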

\begin{proof} To prove this, we show that $E_2^{n-4,1} =E_3^{n-4,1}$,
  therefore by theorem \ref{Fomenko}, the sequence stabilizes at
  $E_2^{n-4,1} =E_\infty^{n-4}.$  

We have
  $$E_3^{n-4,1}\simeq \ker(d_2^{n-4,1}) /
  \mathrm{Im}(d_2^{n-2,0}).$$  The kernel of $$d_2^{n-4,1}: E_2^{n-4,1}
  \rightarrow E_2^{n-6,2}$$ is all of $E_2^{n-4,1}$ since the image,
  $E_2^{n-6,2}=0$ by \ref{box} for $q=2>1$.

Likewise, the image
  of $d_2^{n-2,0}: E_2^{n-2,0}\rightarrow E_2^{n-4,1}$ is 0 since
  $E_2^{n-2,0}=0 $ by \ref{box} for $p=n-2>n-3$. 

This proves the lemma.
\end{proof}

The previous sequence of lemmas and claims allows us to deduce the
following key
proposition of this section. 

\begin{prop}\label{mainspec} The cohomology group, 
$$H^{n-3}(E) \simeq H^{n-4}(B) \otimes H^1(F).$$
\end{prop}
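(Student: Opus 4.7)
The plan is to assemble the lemmas already established in this section into a single convergence argument for the Leray spectral sequence of the fibration $F \hookrightarrow E \to B$. By the Leray theorem, $H^{n-3}(E) \simeq \bigoplus_{p+q=n-3} E_\infty^{p,q}$, so it suffices to show that only one term survives on the line $p+q=n-3$, and that this term is already stable at the $E_2$ page.

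First I would invoke lemma \ref{box} to restrict the support of $E_2^{p,q}$ to the rectangle $0\le p\le n-4$, $0\le q\le 1$. Since $E_r^{p,q}$ is a subquotient of $E_2^{p,q}$ for every $r\ge 2$ (and in particular $E_\infty^{p,q}$ is a subquotient of $E_2^{p,q}$), the same vanishing holds for $E_\infty^{p,q}$ outside that rectangle. Now on the antidiagonal $p+q=n-3$, the only lattice point inside the rectangle is $(p,q)=(n-4,1)$: the choice $q=0$ forces $p=n-3$, which lies outside the allowed range, and $q\ge 2$ is excluded by lemma \ref{hoF}. Consequently
\[
H^{n-3}(E) \;\simeq\; E_\infty^{n-4,1}.
\]

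Next I would apply lemma \ref{stable}, which tells us that $E_\infty^{n-4,1} = E_2^{n-4,1}$, and then lemma \ref{tensor}, which identifies this $E_2$ term with the tensor product $H^{n-4}(B)\otimes H^1(F)$. Chaining these isomorphisms gives
\[
H^{n-3}(E) \;\simeq\; E_2^{n-4,1} \;\simeq\; H^{n-4}(B)\otimes H^1(F),
\]
which is the desired statement.

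There is essentially no obstacle here: all the real content (the simplicity of the fibration giving the untwisted tensor product on the $E_2$ page, the vanishing outside the rectangle, and the stabilization at $E_2^{n-4,1}$) has been packaged into the preceding lemmas. The proof of the proposition itself is a one-line bookkeeping argument extracting the unique nonzero contribution on the top antidiagonal. The only point worth double-checking is that the stabilization lemma \ref{stable} was stated specifically at the position $(n-4,1)$ and not elsewhere, but that is exactly the position we need, so no additional vanishing arguments are required.
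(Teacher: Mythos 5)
Your proof is correct and follows exactly the paper's own argument: restrict the $E_2$ page to the rectangle $0\le p\le n-4$, $0\le q\le 1$ via lemma \ref{box}, conclude that $(n-4,1)$ is the only surviving position on the antidiagonal $p+q=n-3$, and then chain lemma \ref{stable} with lemma \ref{tensor}. No differences worth noting.
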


\begin{proof}  By the
  Leray theorem, we have that
\begin{align} H^{n-3}(E) & =\bigoplus_{p+q=n-3}
  E_\infty^{p,q}\\
&= E_\infty^{n-4,1} \oplus E_\infty^{n-2,0} \ \hbox{(by lemma \ref{box})}\\
&= E_\infty^{n-4,1}\  \hbox{(also by lemma
    \ref{box})} \\
&=  H^{n-4}(B)\otimes H^1(F). \end{align}
 
The last equality follows from the previous lemma \ref{stable} and
  lemma \ref{tensor}, since $$E_2^{n-4,1} =
  E_\infty^{n-4,1}=H^{n-4}(B)\otimes H^1(F).$$ 
\end{proof}

By the induction hypothesis, $\mathrm{dim}(H^{n-4}(\Mod_{0,n-1}))
  =(n-3)!$ and by 
  lemma \ref{hoF}, $\mathrm{dim}(H^1(F)) = n-2$.  Therefore as a corollary to
  proposition \ref{mainspec}, we obtain the claim made in the main
  theorem \ref{dimension},
$$\mathrm{dim}(H^{n-3}(\Mn)) = (n-2)(n-3)! = (n-2)!.$$ 

\section{Cohomology of partial compactifications, $\Mn^\gamma$}

In this section, we prove an analog of Arnol'd's theorem for
the cohomology of
the subspaces 
$\Mn^\gamma \subset \M_{0,n}$, as defined in definition
\eqref{m0ngamma}, in the case where $\Mn^\gamma$ is an
affine variety.

Firstly, it is shown that for certain sets of divisors,
$\gamma$, those such that $\Mn^\gamma$ is an affine variety, we have a
natural injection
$$H^{n-3}(\Mn^\gamma)\hookrightarrow
H^{n-3}(\Mn).$$  Then we give a theorem that
shows how to
explicitly calculate $H^{n-3}(\Mn^\gamma)$ as a subspace of
Arnol'd's ring of theorem \ref{arnoldring} of differential $n-3$ forms.

\begin{prop}\label{subspaceprop} Let $\gamma$ be such that
  $\Mn^\gamma$ is an affine variety.  Then the top dimensional cohomology
  $H^{n-3}(\Mn^\gamma)$ is isomorphic to a subspace of
  $H^{n-3}(\Mn)$.
\end{prop}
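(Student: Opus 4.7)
The plan is to combine Grothendieck's theorem on algebraic de Rham cohomology for smooth affine varieties with the Leray spectral sequence of the open inclusion $\iota:\Mn\hookrightarrow\Mn^\gamma$, in order to exhibit a natural restriction map and prove it is injective in top degree.

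First I would set up the map concretely at the level of global algebraic forms. Since $\Mn^\gamma$ is smooth affine by hypothesis and $\Mn$ is smooth affine via the identification \eqref{MninP}, Grothendieck's theorem identifies
$$H^{n-3}(\Mn^\gamma)\simeq \frac{\Omega^{n-3}(\Mn^\gamma)}{d\,\Omega^{n-4}(\Mn^\gamma)},\qquad H^{n-3}(\Mn)\simeq \frac{\Omega^{n-3}(\Mn)}{d\,\Omega^{n-4}(\Mn)}.$$
The restriction of global algebraic forms along $\iota$ is a morphism of complexes and descends to the natural $\Q$-linear map $\iota^*:H^{n-3}(\Mn^\gamma)\to H^{n-3}(\Mn)$ whose injectivity I must establish.

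The next step is to identify $\iota^*$ with the edge morphism of the Leray spectral sequence
$$E_2^{p,q}=H^p\bigl(\Mn^\gamma,\,R^q\iota_*\Q_{\Mn}\bigr)\;\Longrightarrow\;H^{p+q}(\Mn).$$
Since $\iota$ is an open immersion with normal crossing complement $D_\gamma$, one has $R^0\iota_*\Q=\Q_{\Mn^\gamma}$, and for $q\geq 1$ each $R^q\iota_*\Q$ is a constructible sheaf supported on $D_\gamma$ with stalks computed explicitly at each multi-stratum. Consequently $E_2^{n-3,0}=H^{n-3}(\Mn^\gamma)$ and $\iota^*$ factors as
$$H^{n-3}(\Mn^\gamma)=E_2^{n-3,0}\twoheadrightarrow E_\infty^{n-3,0}\hookrightarrow H^{n-3}(\Mn).$$
Thus $\iota^*$ is injective exactly when every incoming differential $d_r:E_r^{n-3-r,r-1}\to E_r^{n-3,0}$, $r\geq 2$, vanishes. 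Each such differential is, after the usual local computation of the sheaves $R^q\iota_*\Q$, a Poincar\'e-residue/Gysin pushforward from a cohomology group of a stratum $d_I\cap\Mn^\gamma$, $|I|=r-1$, into $H^{n-3}(\Mn^\gamma)$.

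The main obstacle will be the systematic vanishing of these residue-Gysin maps in top degree: this does not follow from dimension counting alone, since $H^{n-3-2(r-1)}(d_I\cap\Mn^\gamma)$ is generically nontrivial. I plan to handle this by induction on $|\gamma|$. The base case $|\gamma|=1$ reduces to the classical Gysin long exact sequence for the smooth divisor $D_\gamma\subset \Mn^\gamma$, where the essential input is the vanishing $H^{n-2}(\Mn^\gamma)=0$ (which holds because $\Mn^\gamma$ is smooth affine of complex dimension $n-3$, by Andreotti--Frankel). For the inductive step I would exploit that each stratum $d_I\cap\Mn^\gamma$ decomposes as a product of smaller partially compactified moduli spaces of the same form $\Mod_{0,k}^{\gamma'}$, so that the induction hypothesis supplies control over the cohomological contributions of the strata and forces the required cancellations at the $E_r$-pages. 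If the spectral-sequence bookkeeping proves unwieldy, I would fall back to the equivalent dual description via Deligne's logarithmic de Rham complex on $\Mn^\gamma$, in which injectivity becomes the statement that a regular form on $\Mn^\gamma$ exact as a log form is exact as a regular form, proved by successively killing Poincar\'e residues along components of $D_\gamma$.
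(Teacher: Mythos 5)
Your setup coincides with the paper's: both arguments invoke Grothendieck's algebraic de Rham theorem for the smooth affine varieties $\Mn^\gamma$ and $\Mn$, and both reduce the proposition to the injectivity of the restriction map on top-degree algebraic de Rham cohomology. The divergence, and the gap, is in how injectivity is established. You route it through the Leray spectral sequence of $\iota:\Mn\hookrightarrow\Mn^\gamma$ and correctly observe that injectivity of the edge map is equivalent to the vanishing of all incoming differentials $d_r:E_r^{n-3-r,r-1}\to E_r^{n-3,0}$, i.e.\ of the Gysin pushforwards $H^{n-5}(d_I\cap\Mn^\gamma)\to H^{n-3}(\Mn^\gamma)$ and their higher analogues. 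But the input you cite for the base case, $H^{n-2}(\Mn^\gamma)=0$, controls the \emph{other} differential, the one out of $E_2^{n-4,1}$ into $E_2^{n-2,0}$ (equivalently, the surjectivity of the residue map $H^{n-3}(\Mn)\to H^{n-4}(D_\gamma)$ in the Gysin sequence); it says nothing about the map $H^{n-5}(D_\gamma)\to H^{n-3}(\Mn^\gamma)$, whose source is in general nonzero because $D_\gamma$ is affine of dimension $n-4$ and $n-5$ sits below its top degree. The vanishing of that Gysin map \emph{is} the content of the proposition, so at the decisive point the induction is circular, and the inductive step (``the induction hypothesis \dots forces the required cancellations'') is asserted rather than proved.

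The paper avoids all of this machinery with a short computation at the level of forms, which is essentially your unexecuted ``fallback''. Suppose $\omega$ is an algebraic $(n-3)$-form regular on $\Mn^\gamma$ whose restriction to $\Mn$ equals $d\alpha$ for some algebraic $(n-4)$-form $\alpha$ regular on $\Mn$. Then $\alpha$ is a rational form on $\M_{0,n}$, and the paper argues that if $\alpha$ had a pole of order $m>0$ along some divisor $\gamma_i\in\gamma$, then $d\alpha$ would have a pole along $\gamma_i$ as well, contradicting the fact that $d\alpha=\omega$ is regular there by hypothesis. Hence the primitive $\alpha$ is automatically regular on $\Mn^\gamma$, so $\omega$ is already exact there; this is exactly the injectivity you need. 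To close your proof you must either carry out this extension-of-the-primitive argument (or, dually, actually prove the vanishing of the relevant Gysin classes); as written, the proposal does not establish injectivity.
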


\begin{proof}
The heart of justifying this proposition is the following
important result of Grothendieck.

\begin{thm}\cite{Gr1} Let $X$ be an affine algebraic scheme over $\C$, assume
  that $X$ is regular (i.e. ``non-singular'').
  Then the complex cohomology, $H^{\bullet}(X,\C)$ can be calculated as the
  cohomology of the algebraic de Rham complex, (i.e. the complex of
  differential forms on $X$ which are ``rational and everywhere defined'').
\end{thm}

The Deligne-Mumford compactification, $\M_{0,n}$, is a smooth
manifold and the set of divisors we remove, $D\setminus \gamma$, is a
closed subset of $\M_{0,n}$, so $\Mn^\gamma$ is a smooth manifold.

A $k$ form, $\omega$, on $X=\Mn^\gamma$ will be denoted {\it algebraic}\label{fungrtypedef}\index{Algebraic ($k$ form)} 
if it is rational and
 everywhere defined, in other 
  words, it is global and holomorphic on $X$ and there are
rational functions, 
$f_{i_i,...,i_k}(t_1,...,t_{n-3})$ such that \begin{equation}\label{rat} \omega=\sum f_{i_1,...,i_k}\
  dt_{i_1}\wedge \cdots \wedge dt_{i_k}.\end{equation} 
Such a form must be meromorphic on 
$\M_{0,n}$ because it will have poles of finite order on the boundary
of $X$ which is given by blowing up at coalescing marked points.

What Grothendieck's theorem says is that the cohomology group of
classes of algebraic forms, in which two
elements are in the same class if they differ by 
an exact form, $d\alpha$ where $\alpha$ is algebraic, is
isomorphic to the usual de Rham cohomology group.
Therefore, in the following arguments, we may assume that a cohomology
class in $H^{n-3}(X)$ is an equivalence class of algebraic $n-3$ forms.

Let $$\Phi: \Omega^{k}(\Mn^\gamma)\rightarrow
\Omega^{k}(\Mn)$$ denote the restriction map applied to an algebraic $k$ form.  Let $d\alpha$ denote an exact $k$ form on
$\Mn^\gamma$.  Then in particular $\alpha$ is algebraic,
and its restriction, $\Phi(\alpha)$ is of Grothendieck type on $\Mn$.
Thus $\Phi(d\alpha) = d\Phi(\alpha).$  Thus $\Phi$ descends to 
 a $\Q$-linear map on cohomology,
$$\phi:H^{n-3}(\Mn^\gamma) \rightarrow H^{n-3}(\Mn)$$ which sends a class
in the cohomology,
$\overline{\omega}$, to its restriction on $\Mn$.

We will now justify
that this map is injective.  Let $\omega$ be an $n-3$ form such
that $\overline{\omega}\neq 0$ is in the
kernel of $\phi$. 
Then the restriction of $\Phi(\omega)$ is an exact form, $d\alpha$,
on $\Mn$, for $\alpha$  
a meromorphic $n-4$ form on $\M_{0,n}$, holomorphic on $\Mn$.  We claim
$\alpha$ must also be holomorphic on $\Mn^\gamma$.  
For if it weren't,
we can suppose that $\alpha$ has a pole on $\Mn^\gamma$ (in particular on some
boundary divisor $\gamma_i$ in $\gamma$) of order $m>0$.  Then since
$\alpha$ has 
the form \eqref{rat}, $d\alpha=\Phi(\omega)$ would have a pole of order
greater than or equal to $m$ on $\gamma$.  But $\Phi(\omega)$ has the
exact expression, \eqref{rat}, as $\omega$, which by assumption is
holomorphic on $\Mn^\gamma$.  Thus $\alpha$ is holomorphic on
$\Mn^\gamma$ and $\omega$ is exact.
This proves injectivity.

Since $\phi$ is injective, we can consider the injection map as an
inclusion $$H^{n-3}(\Mn^\gamma)\hookrightarrow
H^{n-3} (\Mn).$$ 

\end{proof}

\begin{prop}\label{mngammabasis}
Assume that $\Mn^\gamma$ is an affine variety.
A basis for $H^{n-3}(\Mn^\gamma)$ is given by the classes of 
the $n-3$ forms in the basis of Arnol'd's ring from theorem \ref{arnoldring}
which do not have a pole on $D_\gamma$.  We call such forms ``convergent on $\gamma$'' or ``holomorphic on $\gamma$''.

\end{prop}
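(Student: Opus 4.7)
The plan is to combine proposition \ref{subspaceprop}, which embeds $H^{n-3}(\Mn^\gamma)$ as a subspace of $H^{n-3}(\Mn)$, with Arnol'd's theorem \ref{arnoldring}, in order to identify this subspace with the span of exactly those Arnol'd basis forms whose singular locus misses $D_\gamma$. Throughout, by Grothendieck's theorem invoked in the proof of \ref{subspaceprop}, I can represent any class in $H^{n-3}(\Mn^\gamma)$ by an algebraic $(n-3)$-form that is holomorphic on $\Mn \cup D_\gamma$.

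The easy direction is immediate: each Arnol'd basis form $\omega_i$ having no poles along any divisor in $D_\gamma$ is, by construction, holomorphic on $\Mn^\gamma$, so it defines a class in $H^{n-3}(\Mn^\gamma)$. These classes remain linearly independent under the injection of proposition \ref{subspaceprop} because, by Arnol'd's theorem, they are part of a basis of $H^{n-3}(\Mn)$.

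The substantive direction is to show that these classes span $H^{n-3}(\Mn^\gamma)$. I would proceed by induction on the cardinality of $\gamma$, with base case $\gamma = \emptyset$ given by Arnol'd's theorem itself. For the induction step, fix a divisor $d \in D_\gamma$ and apply the Poincar\'e residue map
$$\Res_d : H^{n-3}(\Mn \cup \{d\}) \longrightarrow H^{n-4}(d \cap (\Mn \cup \{d\})),$$
noting that $d$ is isomorphic to a product $\overline{\Mod}_{0,r+1} \times \overline{\Mod}_{0,s+1}$ with $r+s=n-1$, in which $d \cap (\Mn \cup \{d\})$ is a product of open moduli spaces of smaller dimension. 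A class $[\omega] \in H^{n-3}(\Mn^\gamma)$ satisfies $\Res_d([\omega]) = 0$ because $\omega$ has a representative holomorphic along $d$. Writing $[\omega] = \sum c_i [\omega_i]$ in Arnol'd's basis, I would compute that $\Res_d(\omega_i) = 0$ precisely when $\omega_i$ has no pole along $d$, while for those $\omega_i$ having a pole along $d$ the residues $\Res_d(\omega_i)$ yield (a subset of) an Arnol'd-type basis for $H^{n-4}$ of the two-factor product, hence are linearly independent. The vanishing $\sum c_i \Res_d(\omega_i) = 0$ then forces $c_i = 0$ for every $\omega_i$ with a pole along $d$. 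Iterating this over all $d \in D_\gamma$ (or equivalently invoking the induction hypothesis on $|\gamma|-1$) shows that $[\omega]$ is a linear combination of Arnol'd basis forms holomorphic on all of $D_\gamma$.

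The main obstacle is the residue computation: verifying that the residues of Arnol'd's basis along a divisor $d$ of a stable partition $S = S_1 \sqcup S_2$ actually land, in the natural simplicial coordinates compatible with the partition, on the corresponding Arnol'd-type basis of the product $H^{n-4}(d \cap (\Mn \cup \{d\}))$. This is analogous to the computation carried out for cell forms in proposition \ref{propresformula} of chapter 3, but must be done for the Arnol'd basis $\bigwedge_j dt_j/(t_j - z_j)$, keeping careful track of which of the possible choices of $z_j$ survive the residue and which contribute zero. Once this compatibility is established, the induction closes and the proposition follows.
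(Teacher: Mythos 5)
Your easy direction is fine, but the spanning argument rests on a step that is false, and it fails at exactly the point where your literal reading of the statement breaks down. You claim that for the Arnol'd basis forms $\omega_i$ having a pole along a divisor $d$, the residues $\Res_d(\omega_i)$ are linearly independent, so that $\sum_i c_i\Res_d(\omega_i)=0$ forces $c_i=0$ for every divergent $\omega_i$. Distinct Arnol'd basis forms can have \emph{equal} residues along $d$. Concretely, on $\Mod_{0,6}$ take $d=d_{\{t_1,t_2\}}$, the locus $t_1=t_2$. The basis forms
$$\omega_1=\frac{dt_1\,dt_2\,dt_3}{t_1(t_2-t_1)(t_3-t_1)},\qquad \omega_2=\frac{dt_1\,dt_2\,dt_3}{t_1(t_2-t_1)(t_3-t_2)}$$
both have a simple pole along $d$ with the same residue there, and indeed
$$\omega_1-\omega_2=-\frac{dt_1\,dt_2\,dt_3}{t_1(t_3-t_1)(t_3-t_2)}$$
is holomorphic along $d$, so $[\omega_1]-[\omega_2]$ defines a class of $H^{3}(\Mod_{0,6}^{\{d\}})$ that is not in the span of the individually convergent basis forms. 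The count confirms this is not a repairable detail: $8$ of the $24$ Arnol'd basis forms have a pole along $d$, leaving $16$ convergent ones, whereas the dimension of $H^{3}(\Mod_{0,6}^{\{d\}})$ computed later in this chapter (Case 1, $n=6$, $r=2$) is $18$. The proposition must therefore be understood the way its proof actually establishes it: $H^{n-3}(\Mn^\gamma)$ is isomorphic to the subspace $A^\gamma\subset A_{n-3}$ of \emph{all linear combinations} of Arnol'd forms that are convergent on $D_\gamma$, not merely the span of the individually convergent basis elements --- this discrepancy is precisely why the insertion forms of chapter 3 are needed.

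The paper's proof is also shorter and uses neither residues nor induction on $|\gamma|$. Injectivity of $A^\gamma\to H^{n-3}(\Mn^\gamma)$ holds because an element of $A^\gamma$ that is exact on $\Mn^\gamma$ is exact on $\Mn$ and hence zero in $A_{n-3}\simeq H^{n-3}(\Mn)$; surjectivity is immediate from Grothendieck's theorem, since every class has an algebraic representative holomorphic on $\Mn^\gamma$, which, modulo exact forms and using the injectivity of proposition \ref{subspaceprop}, lies in $A^\gamma$. Your residue set-up can be salvaged to reprove surjectivity onto $A^\gamma$: vanishing of the residue of the class along each $d\in\gamma$ forces the Arnol'd representative $\sum_i c_i\omega_i$, as a differential form, to be holomorphic along $d$. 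But the correct conclusion is membership of that linear combination in $A^\gamma$, not the vanishing of the coefficients on the divergent basis forms.
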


\begin{proof} 
Let $A_{i}$ be the sub-vector space of Arnol'd's ring $A$ generated by
$i$ forms.   By Arnol'd's theorem, $A_{n-3}\simeq H^{n-3}(\Mn)$.  
Let $A^\gamma$ be the subspace of $A_{n-3}$ of differential forms
  convergent on $\gamma$.

We have a map $$\rho: A^\gamma \rightarrow
H^{n-3}(\Mn^\gamma),$$ given by associating a form to its cohomology
class.  This map is injective because as we saw above, if
$\omega_1-\omega_2 = d\alpha$, an exact form on $\Mn^\gamma$, then
$d\alpha$ is an exact form on $\Mn$.  
This shows that $\rho$ is injective.

By Grothendieck's theorem, each cohomology class in
  $H^{n-3}(\Mn^\gamma)$ contains a representative which is algebraic, holomorphic on $\gamma$ and thus an element of $A^\gamma$.  We can therefore
  further conclude that 
$\rho$ is surjective.

Hence, $H^{n-3}(\Mn^\gamma) \simeq A^\gamma$ as vector spaces, so a
basis for $A^\gamma$ yields a basis for $H^{n-3}(\Mn^\gamma)$.

\end{proof}

\section{Some affine subvarietes of $\M_{0,n}$}

In this section, we prove that certain
partial compactifications of $\Mn$ contained in $\M_{0,n}$ are affine varieties, that is,
we justify that the addition of some subsets of
divisors to $\Mn$ gives an affine space.  The partial compactifications
we refer to are according to definition
\ref{m0ngamma}.   We first recall some important definitions and
properties of divisors.

\begin{defn}\index{Prime divisor} A prime divisor on $\M_{0,n}$ is an irreducible
  subvariety of $\M_{0,n}$ of codimension 1.
\end{defn}

\begin{defn}\label{defdiv}\index{Weil divisor} A Weil divisor on $\M_{0,n}$ is a formal
  finite linear 
  combination over $\Q$ 
of prime divisors. \end{defn}

In this thesis, we refer to irreducible boundary divisors in $\M_{0,n}\setminus \Mn$ simply as divisors\index{Divisor} as in definition \ref{m0ngamma}.

Every divisor, $d_K$ contains a face of the boundary of
some associahedron $(z_{i_1},...,z_{i_n})$ in $\M_{0,n}(\R)$ where the
elements of $K$ are in a consecutive block in any order.  We can
picture the divisor as a chord along that associahedron as in chapter
3, page \pageref{firstchorddef}.  For example, we can picture the divisor $d_K, K=\{t_1,t_3\}$ in
$\M_{0,6}$ as the chord in the polygon in figure 8.

\begin{defn}\label{intersectingdivisors}\index{Intersection-divisor}  Let $d_J$ and $d_K$ be two divisors satisfying the following three conditions:
 firstly $d_K$ and $d_J$ each contain a face of the boundary of a single associahedron, secondly as chords of the polygon representing that associahedron, $d_J$ and $d_K$ cross
 inside the polygon, and finally $2\leq |J\cup K| \leq n-2$.  We call $d_{J\cup K}$ an
  {\bf intersection-divisor} of $d_J$ and $d_K$.  \end{defn} An intersection-divisor corresponds to the
  chord adjoining adjacent
  endpoints as in figure 8.  An intersection-divisor is not the intersection of divisors, because the intersection of divisors is of codimension $\geq 2$.

\begin{center}
 \input{pillowgon3Penn.pstex_t}
\end{center}

Note that according to this definition, two equivalent divisors may
have up to four associated intersection-divisors, depending on whether one chooses $J$ or
$Z\setminus J$, $K$ or $Z\setminus K$ as the labelling for the divisors.

We have the following result by F. Brown allowing us to
deduce the cohomology of certain partially compactified moduli spaces.

\begin{thm}\cite{Br}\label{affinedelta}  Let $\delta$ be set of boundary divisors
  which contain faces on the boundary of the associahedron
  $(z_{i_1},...,z_{i_n})$, 
\begin{equation}\label{deltadivs} \delta = 
  \{d_K : K=\{z_{i_j},...,z_{i_{j+k}} \}\},\end{equation}
where $K$ is a consecutive
  set of marked points along the associahedron.

Then the partially compactified moduli space, $\Mn^\delta$, is an
affine variety.
\end{thm}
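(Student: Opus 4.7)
The plan is to exhibit explicit affine coordinates on $\Mn^\delta$ via Brown's system of dihedral coordinates attached to the cyclic structure $\delta$. Recall that to every chord $c$ of the standard $n$-gon (equivalently, to every divisor $d_c \in \delta$), one can associate a regular function $u_c$ on $\M_{0,n}$, built as a cross-ratio of four marked points chosen on either side of $c$. These functions have two key properties: first, $u_c$ vanishes to order $1$ along $d_c$ and is nonzero along every other divisor in $\delta$; second, for every chord $c \in \delta$ one has a polynomial identity of the form
\begin{equation*}
1 - u_c \;=\; \prod_{c'\,\times\,c} u_{c'}^{(\eta_{c,c'})},
\end{equation*}
where the product ranges over chords $c'$ of other dihedral structures that cross $c$, and $u_{c'}^{(\eta)}$ denotes the dihedral coordinate for the neighboring cyclic order $\eta$. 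These crossing chords correspond precisely to the boundary divisors not in $\delta$.

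First, I would carefully set up the $u_c$ for $c \in \delta$ and verify the identities above; this is a finite combinatorial check using the cross-ratio formulas, and it amounts to the fact that every non-$\delta$ boundary divisor is ``dual'' (through a crossing pair of chords) to some $\delta$-divisor in the sense captured by these monomial relations. Next, I would assemble the $u_c$ and their relations into a presentation
\begin{equation*}
Y \;=\; \Spec \,\Q[u_c \,:\, c \in \delta]\,\big/\,I,
\end{equation*}
where $I$ is the ideal generated by the displayed identities, and construct a morphism $\Mn^\delta \to Y$ by sending a point to its tuple of $u_c$-values. The morphism is well-defined since every $u_c$ extends across the divisors $d_c \in \delta$, and is regular on $\Mn^\delta$ by construction.

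The main step is to prove this morphism is an isomorphism. Injectivity and dominance follow from the classical fact that the $u_c$ separate points on $\Mn$. The crucial point is to show that the image is exactly $\Mn^\delta$: on the one hand, every $\delta$-divisor $d_c$ corresponds to the coordinate hyperplane $\{u_c = 0\} \subset Y$, so these are included; on the other hand, the defining relations $1 - u_c = \prod u_{c'}^{(\eta_{c,c'})}$, together with the fact that the right-hand sides never simultaneously vanish on $Y$, guarantee that no non-$\delta$ divisor lies in the image. This last point is the main obstacle: it is a combinatorial statement asserting that the chord-crossing pattern of the $n$-gon correctly distinguishes $\delta$-divisors from non-$\delta$ ones, and must be checked using the polygon/associahedron combinatorics of Stasheff's polytope.

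Once this identification is established, affineness of $\Mn^\delta$ follows immediately, because $Y$ is by construction the spectrum of a finitely generated $\Q$-algebra. As a bonus, this presentation will give explicit global coordinates on $\Mn^\delta$ that are well-suited to computing the cohomology basis in Proposition \ref{mngammabasis} and will feed directly into the residue computations required for the basis of insertion forms in Section 4.4.
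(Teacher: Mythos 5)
Your proposal is correct and follows essentially the same route as the paper's (and Brown's) proof: both exhibit $\Mn^\delta$ as the spectrum of the ring generated by the dihedral cross-ratio coordinates $u_c$ attached to the chords of the standard polygon, modulo the ideal of relations expressing $1-u_c$ (equivalently $u_A+u_B=1$) as monomials in the coordinates of crossing chords. The only difference is one of exposition — you spell out the verification that the resulting morphism to $\Spec$ of this ring is an isomorphism onto $\Mn^\delta$, which the paper's sketch leaves implicit by citing \cite{Br}.
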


\begin{proof}[Sketch of proof]

Without loss of generality, we may assume that $(i_1,...,i_n)=(1,...,n)$.
The proof for arbitrary dihedral orderings can be repeated by
replacing $j$ with $i_j$ everywhere.

In F. Brown's thesis, he considers the ring of
$\PSL_2$-invariant
regular functions on $\Mn$, $\{ u_{ij}, \frac{1}{u_{ij}}\ |\ i,j,i+1,
j+1\hbox{ distinct modulo }n \}$, defined by the cross ratio
$$u_{ij}: (z_1,...,z_n)\mapsto \frac{ (z_i-z_{j+1}) ( z_{i+1}-z_j)}{
    (z_i -z_j) (z_{i+1}- z_{j+1})} . $$

These functions can be labeled by chords on the polygon whose vertices are
labeled in the standard cyclic order.  The function $u_{ij}$
corresponds (naturally) to the chord between the vertices $i$ and
$j$.  This ring of functions has the following defining relation.

For any four
distinct vertices, $\{i,j,k,l\}$ with the imposed dihedral order of
the polygon, define the sets,
\begin{align}
&A = \{\{p,q\} : i\leq p< j, k\leq q <l\}\\
&B = \{\{p,q\} : j\leq p <k, l\leq q <i \}.
\end{align}
These are pairs of chords which ``cross completely'', namely every
chord in $A$ intersects every chord in $B$ and vice versa.  
Let $u_A=\Pi_{a\in A} u_a$ and $u_B=\Pi_{b\in B} u_b$.  Then,
\begin{equation}\label{K} u_A+u_B =1,\end{equation}
for any two sets of completely crossing chords.

Let $J$ be the ideal generated by the relations \eqref{K}.  It is then
shown that
$$\Mn^\delta = \Spec (\Z[u_{ij}]/J).$$

\end{proof}

In order to extend F. Brown's theorem to more general partial
compactifications, we use the following classical algebro-geometric
construction.
\begin{prop}\label{affinediv}
Let $X$ be an affine variety, $X= {\mathrm{Spec}} (R)$,  and let $D$
be a Cartier divisor on $X$. Then
$X\setminus D$ is an affine variety.
\end{prop}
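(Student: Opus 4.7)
The plan is to reduce to the case of an effective principal Cartier divisor on a local piece, then globalize via a graded section algebra. First, since $X \setminus D$ depends only on the support of $D$, I may assume $D$ is effective (writing a general Cartier divisor as $D_1 - D_2$ of effective ones and taking the union of supports). To $D$ I associate the invertible sheaf $L := \mathcal{O}_X(D)$ together with its canonical regular section $s \in H^0(X,L)$ whose scheme-theoretic vanishing locus equals $D$; then $X \setminus D$ is exactly the non-vanishing locus $X_s$ of $s$.

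Next I would argue locally. Cover $X$ by finitely many affine opens $U_i = \Spec(R_i)$ on which $L$ trivializes by a generator $e_i$, and write $s|_{U_i} = f_i\, e_i$ for some $f_i \in R_i$. Because $s$ is a regular section, each $f_i$ is a nonzero-divisor, and
\[
U_i \setminus D = \Spec\bigl(R_i[1/f_i]\bigr),
\]
which is affine. On overlaps $U_i \cap U_j$, the transition $e_i = u_{ij}\, e_j$ with $u_{ij}$ a unit gives $f_i = u_{ij}^{-1} f_j$, so the principal localizations $R_i[1/f_i]$ are compatible on overlaps in the usual way.

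To assemble these affine patches into a single affine scheme I would form the graded $R$-algebra $A := \bigoplus_{n \geq 0} H^0(X, L^{\otimes n})$, with $R = \Gamma(X,\mathcal{O}_X)$, and then set $B := (A[s^{-1}])_0$, the degree-zero part of the localization. The claim is that $X \setminus D \cong \Spec(B)$. Checking this on $U_i$ reduces precisely to the identity
\[
\bigl(R_i[e_i][s^{-1}]\bigr)_0 \;=\; R_i[1/f_i],
\]
and the unit transition functions $u_{ij}$ ensure the identifications on overlaps are the natural ones. The main obstacle is the bookkeeping to globalize this local computation — namely verifying that the natural map $\Spec(B) \to X$ is an open immersion with image $X \setminus D$; but in the Noetherian/finite-type setting of varieties this is routine.

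As an alternative (and arguably cleaner) route, I would invoke Serre's cohomological criterion for affineness: it suffices to show $H^j(X \setminus D, \mathcal{F}) = 0$ for every quasi-coherent sheaf $\mathcal{F}$ on $X \setminus D$ and every $j \geq 1$. Any such $\mathcal{F}$ is the colimit of its extensions $\mathcal{F}(nD)$ over $X$ via multiplication by $s^n$, and since $X$ is affine, Serre vanishing on $X$ kills the cohomology of each term; passing to the colimit yields the desired vanishing, hence $X \setminus D$ is affine.
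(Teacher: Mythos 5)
Your proof is correct and follows essentially the same route as the paper: both realize $X\setminus D$ as the non-vanishing locus of a section of the line bundle $\mathcal{O}_X(D)$ and conclude affineness by inverting that section. The paper is terser — it invokes ampleness to pass to a very ample power and then treats the resulting section as an element $f\in R$ with $X\setminus D=\Spec(R[\tfrac{1}{f}])$ — whereas your graded section algebra $(A[s^{-1}])_0$ (and the Serre-criterion alternative) carries out the same idea while correctly accounting for the possible non-triviality of the line bundle.
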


\begin{proof}

A Cartier divisor, $D$, is associated to a line bundle ${\cal L}(D)$
which is an invertible sheaf
over $X$.  Since $X$ is affine, every invertible sheaf is
ample, so there exists an $n>0$ such that ${\cal L}(nD) = ({\cal
  L}(D))^{\otimes n}$ is very ample.  
Then, $({\cal
  L}(D))^{\otimes n}$ is given by a section $f \in \Gamma(X, {\cal
  O}_X(nD))$ where $f$ vanishes exactly on $D$ and is non-zero on
  $X\setminus D$. 
We have then that $X\setminus 
  D={\mathrm{Spec}}(R[\frac{1}{f}])$ is affine.
\end{proof}

From definitions \ref{m0ngamma} and \ref{defdiv}, the boundary divisors, $D_\gamma$,
are Weil divisors.  Since $\M_{0,n}$ is smooth, Weil and Cartier divisors coincide, and hence we may consider $D_\gamma$ as a Cartier divisor.

\begin{cor}\label{affinepart}
The partial compactifications, $\Mn^\gamma$, 
are affine varieties for the following sets of boundary divisors $\gamma\subset
D$:
\begin{enumerate}

\item Any $\gamma$ containing only one boundary divisor, $|\gamma|=1$,
\item Any $\gamma$ containing any 2 boundary divisors,
\item Any $\gamma$ containing 3 boundary divisors such that if two
  divisors intersect (as chords), then the third divisor is an intersection-divisor of the two.
\end{enumerate}
\end{cor}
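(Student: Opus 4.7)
The plan is to reduce all three cases to Brown's theorem~\ref{affinedelta} combined with proposition~\ref{affinediv}. The key observation is that if I can exhibit a dihedral order on the marked points $Z$ whose associated set of associahedron-bounding divisors $\delta$ satisfies $\gamma\subseteq\delta$, then $\Mn^\delta$ is affine by Brown's theorem, and
\[
\Mn^\gamma \;=\; \Mn^\delta\setminus D_{\delta\setminus\gamma}.
\]
Since $\Mn^\delta$ is smooth, every irreducible component of $D_{\delta\setminus\gamma}$ is Cartier on it, so iterated application of proposition~\ref{affinediv} removes these finitely many components one at a time without losing affineness, giving $\Mn^\gamma$ affine.

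The whole problem therefore reduces to the combinatorial question: can the $n$ elements of $Z$ be arranged around a cycle so that every set $K$ with $d_K\in\gamma$ appears as a consecutive arc? Case 1 is immediate: any cyclic order grouping $A$ together works. For cases 2 and 3 I would use the template cyclic order
\[
\bigl[\,(A\cup B)^c\,\bigr]\;\bigl[\,A\setminus B\,\bigr]\;\bigl[\,A\cap B\,\bigr]\;\bigl[\,B\setminus A\,\bigr],
\]
whose four blocks partition $Z$ (elements inside each block may be placed in any order). Reading cyclically, $A$ is the concatenation of the second and third blocks, $B$ is the concatenation of the third and fourth blocks, and $A\cup B$ is the concatenation of the last three blocks; each is thus a single consecutive arc. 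Since the individual pieces $A\cap B$, $A\setminus B$, $B\setminus A$ are also single blocks, any intersection-divisor of $d_A$ and $d_B$ appears as a consecutive arc as well, handling case 3. In the subcase of case 3 in which no two of the three divisors cross, a standard non-crossing-chord argument (simultaneously nest or disjointly place the three sets) yields a compatible cyclic order instead.

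The only real technical point is bookkeeping the degenerate configurations in which one of the four template blocks is empty, such as $A\cap B=\emptyset$, $A\subseteq B$, or $A\cup B=Z$. In each such subcase the template collapses to three or two non-empty blocks, and one checks directly that all required arcs remain consecutive. The size constraints $|K|\in[2,n-2]$ for each $K\in\gamma$ are automatic from the hypothesis that $\gamma$ consists of genuine irreducible boundary divisors of $\M_{0,n}$, so no further verification is needed there. I do not foresee a serious obstacle beyond this case analysis; the geometric input is supplied entirely by Brown's theorem and proposition~\ref{affinediv}, which are already in place.
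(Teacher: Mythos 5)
Your proposal is correct and follows essentially the same route as the paper: both exhibit a dihedral order on $Z$ (your four-block template is, up to cyclic rotation, the paper's arrangement $(z_{i_1},\ldots,z_{i_p},z_{j_{p-k+1}},\ldots,z_{j_q},\ldots)$) so that every divisor of $\gamma$ bounds a single associahedron, then invoke theorem~\ref{affinedelta} and strip away the remaining boundary divisors one at a time via proposition~\ref{affinediv}. Your explicit attention to the degenerate block configurations and the pairwise non-crossing subcase of part (3) is slightly more careful than the paper's treatment, but the argument is the same.
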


\begin{proof}

All of these
sets, $\gamma$, contain divisors which contain a face on the boundary of an
associahedron in $\M_{0,n}$, so we apply proposition \ref{affinediv}.  

For part (1), let $\gamma = \{d_K \}$
where $K=\{z_{i_1},...,z_{i_k}\}$ for $k<n-1$.  Then $d_K$ contains the face of the
boundary of any associahedron enumerated by $\Delta=(z_{i_1},...,z_{i_k}, ... )$.  Let
$\delta$ denote the set of boundary divisors which contain a face of $\Delta$ as in
\eqref{deltadivs}, so 
$\Mn^\delta$ is affine.  By successive removal of all the divisors,
we recursively obtain affine varieties, the
final one being $\Mn^\gamma$.

For part
(2), let $\gamma = \{ d_P, d_Q\}$ where $P=\{z_{i_1},...,z_{i_p}\}$,
$Q=\{z_{j_1},...,z_{j_q}\}$ and without loss of generality $P\cap Q =
\{z_{i_{k+1}}, ..., z_{i_p} \} =\{z_{j_1},...,z_{j_{p-k}} \}$ ($P\cap
Q$ may be empty).  Then
$d_P, d_Q$ are divisors containing the face of the boundary of any
associahedron enumerated by $$\Delta =
(z_{i_1},...,z_{i_p},z_{j_{p-k+1}},...,z_{j_q},...).$$
As in part (1), by recursive removal of divisors from $\Mn^\delta$, we
have that $\Mn^\gamma$ is affine.

Finally for part (3),  assume first
that $\gamma$ contains any two divisors which
intersect (as chords) and a third which is an intersection-divisor of the two.  This means that
we can find sets, $P$ and $Q$ as in part (2) such that $\gamma =\{d_P,
d_Q, d_{P\cup Q}\}$.  Then all of these divisors are
on the boundary of the associahedron,
$(z_{i_1},...,z_{i_p},z_{j_{p-k+1}},...,z_{j_q},...)$.
By applying the proposition, we have that $\Mn^\gamma$ is affine. 
If $\gamma$ contains three divisors defined by $R$, $P$, $Q$ which are disjoint subjets of $Z$, we can construct $d_P,
d_Q$ and $d_R$ as in part (2).

All of the varieties, $\Mn^\gamma$ for $\gamma$ from the cases (1)-(3), are
therefore affine.

\end{proof}

The previous corollary can be extended to many other
partial compactifications, but we only treat these three in detail here.

\section{Explicit bases for $H^{n-3}(\Mn^\gamma)$}

In this section, I generalize a result of the previous chapter,
namely I use the methods introduced there to calculate the
top dimensional cohomology of the subspaces $\Mod_{0,n}^\gamma \subset
\M_{0,n}$ for certain small subsets $\gamma\subset \delta$, where $\delta$
denotes as usual the union of the divisors which contain the face of the boundary
of an associahedron.
We let $\ell=n-3$ and $Z=\{z_1,...,z_n\}=\{0,1,\infty,
t_1,...,t_\ell\}$\label{ellzdef}\index{$\ell$}\index{$Z$}.

Let us recall some definitions and results from chapter 3  
that will be useful
throughout this and subsequent sections.

\begin{defn}\label{Lynbas4}\cite{Re}\index{Lyndon basis}
Let $\{x_1,...x_n\}$ be a set of non-commutative variables with a
lexicographic ordering and
let $P_n$ be the $\Q$-vector space 
generated by
monomials of degree $n$ such that every variable appears exactly
once.  The
Lyndon basis for $P_n$ is given by the set $\{A_1\sha \cdots \sha A_k\}$ where
the $A_i$ form a partition of the variables and the first letter of every
$A_i$ is the smallest letter appearing in $A_i$ for the imposed
lexicographic ordering.  We say that $A_1\sha \cdots \sha A_k$ is
a Lyndon shuffle\index{Lyndon shuffle} of degree $k$.
\end{defn}

The Lyndon basis is an alternative basis to the standard basis of
permutations of the $n$ variables.  There are $(n-1)!$ degree 1 Lyndon
elements, since these are all monomials which start with the smallest
letter.  
Let $I_n\subset P_{n}$ denote vector subspace of Lyndon shuffles of degree $\geq 2$, whose dimension is
$n!-(n-1)!=(n-1)(n-1)!$.

\begin{defns}
Given a divisor, $d_K$, $K=\{t_{i_1},...,
t_{i_r}\}$, we define ${\cal{P}}_{d_K}$ to be $\Q$-vector space of polygons
with sides
decorated by the marked points in $K$.   The
subspace $I_{d_K}\subset {\cal{P}}_{d_K}$ is generated by shuffle sums with
respect to one point (as in definition \ref{PnIn}).
\end{defns}

We may often denote the vector spaces, ${\mathcal{P}}_{d_{K}}$ and $I_{d_{K}}$ simply by ${\mathcal{P}}_K$ and $I_K$.  We will often also note ${\mathcal{P}}_{d_{K\cup\{e\}}}$ and $I_{d_{K\cup \{e\}}}$ by ${\mathcal{P}}_{d_K \cup \{e\}}$ and by $I_{d_K\cup \{e\}}$ Note that $I_n$, where $n$ is an integer, is different from $I_K$, where $K$ is a set. 

Let $\pi$\label{polyformmap}\index{$\pi$} be the map that sends a polygon to its
associated cell form. 
The $\Res_d$ map sends a cell form to its residue along a divisor $d$
while the $\Res_d^p$ map sends a polygon to the tensor product of the
polygons cut the chord, $e$, as in definitions \eqref{proofres} and
\eqref{RespmapDEFch3}.  
The $\Res_d$ and $\Res_d^p$ maps are related by the identity,
$$\Res_d(\pi (\omega^p)) =\pi(\Res_d^p(\omega^p)),$$ for any polygon
$\omega^p$. 

Recall corollary \ref{neatcoro} that identifies the kernel of the
residue map on a divisor $d$,
$$ \mathrm{ker}(\Res_d) = \pi^{-1}(I_{d\cup \{e\} }\otimes
  {\mathcal{P}}_{Z\setminus d \cup \{e\}}).$$

\begin{thm}\label{allcohoms} Let
$\gamma =
\{\gamma_1,...,\gamma_k\}$ be a set of boundary divisors in $\M_{0,n}$ such
that $\Mn^\gamma$ is affine.  Then, the $\Q$-vector space $H^\ell(\Mod_{0,n}^\gamma)$
  coincides with the intersection of vector
  spaces, $$ \bigcap_{i=1}^k \pi((\Res_{\gamma_i}^{p})^ {-1} (
I_{\gamma_i \cup \{e\}} \otimes {\cal P}_{Z\setminus \gamma_i \cup \{
  e\}})).$$

Furthermore, a basis for $H^\ell(\Mod_{0,n}^\gamma)$ can easily be
deduced from a Lyndon basis of the polygons in $I_{\gamma_i \cup
  \{d\}} \otimes {\cal P}_{Z\setminus \gamma_i \cup \{d\}}$ using
insertion forms.
\end{thm}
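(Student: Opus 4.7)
The plan is to combine the identification of $H^\ell(\Mn^\gamma)$ as a subspace of $H^\ell(\Mn)$ cut out by vanishing of residues with the polygon/cell-form dictionary from chapter 3. First I would invoke proposition \ref{mngammabasis}: since $\Mn^\gamma$ is affine, its top de Rham cohomology embeds in $H^\ell(\Mn)$ as the subspace of classes of Arnol'd forms with no pole along any $\gamma_i$. Holomorphicity along $\gamma_i$ is equivalent to the vanishing of the residue $\Res_{\gamma_i}$. Using the surjection $\pi:\mathcal{P}_Z\twoheadrightarrow H^\ell(\Mn)$ of proposition \ref{prop41} together with the compatibility $\Res_{\gamma_i}\circ\pi=(\pi\otimes\pi)\circ\Res_{\gamma_i}^p$ proved in proposition \ref{propresformula}, this translates into a purely combinatorial condition on a polygon representative $\omega^p$, namely $(\pi\otimes\pi)\bigl(\Res_{\gamma_i}^p(\omega^p)\bigr)=0$ for each $i$. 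Thus $H^\ell(\Mn^\gamma)$ is precisely the $\pi$-image of $\bigcap_i\ker\bigl((\pi\otimes\pi)\circ\Res_{\gamma_i}^p\bigr)$.

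Next I would identify these kernels. The residue $\Res_{\gamma_i}^p(\omega^p)$ lives in $\mathcal{P}_{\gamma_i\cup\{e\}}\otimes\mathcal{P}_{Z\setminus\gamma_i\cup\{e\}}$, and by proposition \ref{prop41} the kernel of $\pi$ on each tensor factor is the shuffle subspace $I$. To recover the asymmetric form of the theorem, one chooses 01-polygon representatives in $W_Z$; by theorem \ref{thm01cellsspan} these span a complement to $I_Z$ and $\pi$ restricted to $W_Z$ is an isomorphism onto $H^\ell(\Mn)$. A direct adaptation of lemma \ref{lemleft}, which forbids any maximal consecutive block from straddling the pair $\{1,n\}$ inside a $01$-polygon, shows that the factor $\mathcal{P}_{Z\setminus\gamma_i\cup\{e\}}$, which retains the $01$-pattern, cannot absorb a nontrivial shuffle. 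Hence all of the ambiguity falls on the $\mathcal{P}_{\gamma_i\cup\{e\}}$ side, yielding the stated description
\[
H^\ell(\Mn^\gamma)=\bigcap_{i=1}^k\pi\bigl((\Res_{\gamma_i}^p)^{-1}(I_{\gamma_i\cup\{e\}}\otimes\mathcal{P}_{Z\setminus\gamma_i\cup\{e\}})\bigr).
\]

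For the second assertion I would import the Lyndon insertion machinery of chapter 3. For each chord $\gamma_i$, the admissible residues in fact lie in $J_{\gamma_i}\otimes\mathcal{P}_{Z\setminus\gamma_i\cup\{e\}}$, where $J_{\gamma_i}\subset I_{\gamma_i\cup\{e\}}$ is the convergent subspace spanned by Lyndon insertion shuffles; by theorem \ref{convJS} and the linear independence theorem \ref{linind}, $J_{\gamma_i}$ has an explicit Lyndon basis. An explicit basis of $H^\ell(\Mn^\gamma)$ is then obtained by taking $01$-polygons on $Z$ with appropriate Lyndon insertion shuffles grafted along each chord $\gamma_i$, in direct analogy with the $\delta$-insertion forms constructed in section \ref{Lyndins}, and then projecting via $\pi$.

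The main obstacle will be this last step when the divisors in $\gamma$ intersect non-trivially, as in the case $\gamma=\{d_A,d_B,d_{A\cup B}\}$ of proposition \ref{somediv}: the $k$ constraints are no longer independent, so one must verify that Lyndon insertions can be performed simultaneously at several chords that share endpoints without producing spurious linear relations or missing any class. The affineness hypothesis, which by corollary \ref{affinepart} restricts $\gamma$ to a small, well-behaved family, is precisely what makes this simultaneous insertion combinatorially tractable; once it is established, the explicit bases and closed dimension formulas for $\gamma=\{d_A\}$, $\{d_A,d_B\}$, and $\{d_A,d_B,d_{A\cup B}\}$ follow as corollaries.
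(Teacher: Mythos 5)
Your argument follows the paper's own proof essentially step for step: affineness gives the injection $H^\ell(\Mn^\gamma)\hookrightarrow H^\ell(\Mn)$ (propositions \ref{subspaceprop} and \ref{mngammabasis}), convergence along each $\gamma_i$ is the vanishing of $\Res_{\gamma_i}$, and corollary \ref{neatcoro} (whose proof via proposition \ref{propresformula} and lemma \ref{lemleft} you correctly reconstruct) converts this into the stated combinatorial condition on the unique $01$-polygon lift, intersected over all $i$; the basis assertion is likewise deferred to the insertion-form case analysis, exactly as in the paper. The only slight imprecision is your claim that residues must land in $J_{\gamma_i}$ rather than all of $I_{\gamma_i\cup\{e\}}$ — the extra convergence encoded by $J$ is only needed when $\gamma$ also contains subchords of $\gamma_i$, not for an isolated divisor — but this does not affect the correctness of the main statement.
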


\begin{proof}
From theorem \ref{subspaceprop}, we have an injection $$H^\ell(\Mn^\gamma)
\hookrightarrow H^\ell(\Mn).$$  By theorem \ref{thm01cellsspan} a basis for
$H^\ell(\Mn)$ is given by 01-forms.

By applying proposition \ref{mngammabasis}, we obtain a basis for
$H^\ell(\Mn^\gamma)$ by taking the subspace of 01-forms which converge
on $\gamma$.
A form is convergent on $\Mn^\gamma$ if and only if it is convergent
on all of the divisors, $\gamma_i\in \gamma$, since by the hypothesis,
it is convergent on the interior, $\Mn$.

A cell form, $\omega$, is convergent on $\gamma_i$ if and only if its
residue on 
$\gamma_i$ is 0, in other words if and only if
\begin{equation}\label{kerres} \omega \in \ker(\Res_{\gamma_i}).\end{equation}

We rely on two important combinatorial properties of 01-cyclic structures.
Not only do 01-forms form a basis for the cohomology, but also 01-polygons
form a basis for the $\Q$ vector space which is freely generated by
01-cyclic structures.  Therefore, each $\ell$ form,
$\omega$, has a unique lifting, $\omega^p$ to a linear combination of
01-polygons.  By
propostion \ref{neatcoro}, the condition 
\eqref{kerres} can be restated as
\begin{equation}\label{convomega} \omega^p \in
  (\Res_{\gamma_i}^p)^{-1}(I_{\gamma_i \cup \{e\}} \otimes 
      {\cal P}_{Z\setminus \gamma_i \cup \{e\}}).\end{equation}  If a form is
      convergent on all $\gamma_i$ it must be in the intersection of
      the spaces spanned by the spaces \eqref{convomega}.
\end{proof}

In the examples that follow, we exploit this theorem and the methods
of chapter 3 of insertion forms to calculate bases of cohomologies
for some natural $\Mn^\gamma$.  Recall from definition \ref{lyndoninsertionshufflesdef} that an insertion form is a
cell form coming from a linear combination of polygons such that the
polygon residue map maps them to $I_d\otimes {\cal P}_{d'}$ for some
divisors $d$ and $d'$. 

\subsubsection{Case 1: $|\gamma|=1$}

Firstly, we treat the smallest and most natural case of a partial
  compactification, namely that obtained by removing all boundary
  divisors except one
  from $\M_{0,n}$.  It was shown in corollary \ref{affinepart} that if
  $|\gamma|=1$, 
  $\Mn^\gamma$ is an affine space.

Let $\gamma=\{ d_R\}$
 for $R = \{z_{i_1},...,z_{i_r}\}$ and let $\omega$ be a differential
 $\ell$-form 
written in the 01-basis, where $\ell=n-3$ as in chapter 3.  In writing 01-cell forms, it is useful to
 choose an appropriate equivalence class representative modulo
 $\PSL_2$.  So without loss of generality, we may assume that
 $R=\{t_{i_1},..., t_{i_r}\}$, where one of the $t_{i_j}$ may be
 $\infty$. 

From theorem \ref{allcohoms}, $\omega$
converges if and only if $$\Res_\gamma^p(\omega^p) \in I_{\gamma \cup
  \{e\}}\otimes {\cal 
  P}_{Z\setminus \gamma \cup \{e\}}.$$  The 01-gons that have 0
residue along this divisor are those that don't contain
the block $ t_{i_1},...,t_{i_r}$; let this set of 01-gons be
denoted ${\cal W}^p_{\gamma_0}$\label{tiredofmakinglabels}\index{${\cal W}^p_{\gamma_0},\ {\mathcal{W}}^p_{\gamma_{\sh}}$}.

To calculate the dimension of the cohomology, we
count the number of fixed structures containing this block.  There are
$(n-1-r)!$ such fixed structures and $r!$ ways of ordering the
elements in the block. So the number of 01-cell forms that map
identically to 0 by $\Res_\gamma^p$ is $(n-2)!-(n-r-1)!r!$.  The
projection from these polygons to 01-forms are
in the basis of
$H^\ell(\Mod_{0,n}^\gamma)$ along with the insertion forms.  

The insertion forms for the divisor, $\gamma$ are linear combinations
of 01-forms which map to  $I_{\gamma \cup \{d\}}\otimes {\cal
  P}_{\gamma \cup \{d\}}$ and don't map identically to 0.  These forms
are the images of $\pi$ of formal sums of $n$-gons,
$$P=[0,1, Z_1, R_1\sh R_2, Z_2],$$
where $Z_1 \cup Z_2= \{t_1,...,t_\ell\}\setminus
\{t_{i_1},...,t_{i_r}\} $ and
$R_1\cup R_2 =\{t_{i_1},...,t_{i_r}\}$.  Let ${\cal
  W}^p_{\gamma_{\sh}}$ be the set of such polygons.  The image of $\pi({\cal
  W}^p_{\gamma_{\sh}})$ forms a linearly independent set of
$\ell$-forms in the cohomology by an argument used in chapter 3, theorem \ref{linind}.
The 01-forms form a basis, therefore, we only need to worry about
linear dependance for any fixed $Z_1$ and $Z_2$.  But the Lyndon
shuffles form a basis for the polynomial algebra, so for any fixed
$Z_1$ and $Z_2$ the forms $[0,1,Z_1,R_1\sh R_2,Z_2]$ are linearly
independent.
 
To count
the dimension of polygons
that map to
$  I_{\gamma \cup \{e\}}\otimes {\cal
  P}_{\gamma \cup \{e\}}$ is simple.
There are $(r-1)!$ degree 1
Lyndon generators in the shuffle algebra, therefore there are $r!-
(r-1)! = (r-1)(r-1)!$
shuffle generators in $I_{\gamma\cup \{e\}}$.  There are $(n-r-1)!$ fixed
structures, so
we conclude that \begin{align}\dim(H^\ell(\Mod_{0,n}^\gamma)) & =
  (n-2)!-(n-r-1)!r! +  
(n-r-1)!(r-1)!(r-1)\\ 
&= (n-2)!-(n-r-1)!(r-1)! .\end{align}

A basis for
  $H^{\ell}(\Mn^\gamma)$ is given by $\pi({\cal W}^p_{\gamma_0}
  \sqcup {\cal W}^p_{\gamma_{\sh}})$.

\begin{exes}  
(1) $n=6$, $\gamma$ contains the boundary divisor 
corresponding to $t_1=t_2$.

Then $r=2$ so we conclude that the dimension of
$H^\ell(\Mod_{0,n}^\gamma)$ is 18.  A basis for the cohomology is
given by the 12 $01$-forms, $[0,1,\{t_1,t_2,t_3,\infty\}]$
such that $t_1$ is not next to $t_2$, together with the 6 shuffle sums
\begin{align*}
&[0,1,t_1\sh t_2,t_3,\infty], [0,1,t_1\sh t_2,\infty,t_3],
[0,1,t_3,t_1\sh t_2,\infty],\\ &[0,1,\infty,t_1\sh t_2,t_3],
[0,1,t_3,\infty,t_1\sh t_2], [0,1,\infty,t_3,t_1\sh t_2].\end{align*}

(2) $n=6$, $r=3$, $\gamma$ consists of the boundary divisor
corresponding to $t_1=t_2=t_3$.  

The dimension is 20, and the basis
elements are given by the 6 forms,
$[0,1,t_i,\infty,t_j,t_k]$, the
6 forms $[0,1,t_i,t_j,\infty,t_k]$, the 4 Lyndon shuffles,
$$[0,1,t_1\sh (t_2,t_3),\infty], [0,1,(t_1,t_2)\sh t_3,\infty],
[0,1,(t_1,t_3)\sh t_2,\infty], [0,1,t_1\sh t_2\sh t_3,\infty],$$
and finally the 4 Lyndon shuffles,
$$[0,1,\infty,t_1\sh (t_2,t_3)], [0,1,\infty,(t_1,t_2)\sh t_3],
[0,1,\infty,(t_1,t_3)\sh t_2], [0,1,\infty,t_1\sh t_2\sh t_3].$$

\end{exes}

\subsubsection{Case 2: $|\gamma|=2$ and the divisors are disjoint}
In this case, we are considering two divisors that do not cross as chords of any polygon.  
Let these divisors be given by the equalities of the
marked points in the sets $R=\{z_{i_1},...,z_{i_r}\},
S=\{z_{j_s},...,z_{j_s}\}$, $R \cap S =\emptyset.$
(Recall that by corollary \ref{affinepart} we know that $\Mn^\gamma$
is affine.)

In this case a basis for the cohomology is given by sets of 01-forms
whose associated polygon either maps identically to zero or to
$I_{\gamma_i \cup \{e\}} \otimes {\cal P}_{Z\setminus \gamma_i \cup
  \{e\}}$ for the corresponding $\Res^p_{\gamma_i}$ maps, $i=1,2$.
As in case 1, the forms mapping identically to zero are all forms
whose associated polygon contains no consecutive block of $R$ or of
$S$.  The other forms are insertions of Lyndon shuffles of degree two
or higher of $R$ (resp. $S$, resp. both) into 01-forms on $Z\setminus
R$ (resp. $Z\setminus S$, resp. $Z\setminus (R\cup S))$.

Here, we count the
dimension and give a small example of an explicit basis.  In the
following formula we count the dimension of $\Mn^\gamma$ by 
methods similar to case 1.  The first line counts the 01-polygons
whose polygon residue is identically 0 for both
$d_R$ and $d_S$ (of lengths $r$ and $s$), where the last term
counts their overlap.  The
second (resp. third) line counts the insertions that land in $I_{R\cup
  \{d\}}$ (resp. $I_{S\cup \{d\}}$) for
$\Res_R^p$ (resp. $\Res_S^p$) and map to 0 for $\Res_S^p$
(resp. $\Res_R^p$). The last line counts the number
of terms that land in $I_{R\cup \{d\}}$ and $I_{S\cup \{d\}}$ for the
respective residue maps.
\begin{align}
&(n-2)! -(n-r-1)!r! -(n-s-1)!s! +(n-r-s)!r!s! \\ &+ (n-r-1)!(r-1)!(r-1)
-(n-r-s)!(r-1)!(r-1)s! \\ &+ (n-s-1)!(s-1)!(s-1)- (n-r-s)!(s-1)!(s-1)r!
\\ &+(n-r-s)!(r-1)!(r-1)(s-1)!(s-1).
\end{align}
\begin{ex} (1) Let $n=6$, $R=\{t_1,t_2\}$ and
  $S=\{t_3,\infty\}$. The dimension of the cohomology is 14 by the
  formula. There 
  are 8 01-cyclic structures such that $t_1$ is not next to $t_2$ and
  $t_3$ is not next to $\infty$,
\begin{align*}
& [0,1,t_1,t_3,t_2,\infty], [0,1,t_1,\infty,t_2,t_3],
  [0,1,t_2,t_3,t_1,\infty], [0,1,t_2,\infty,t_1,t_3], \\
& [0,1,t_3,t_1,\infty,t_2], [0,1,t_3,t_2,\infty,t_1],
  [0,1,\infty,t_1,t_3,t_3], [0,1,\infty,t_2,t_3,t_1].
\end{align*}
Then we add the 6 insertion elements to form the basis, 
\begin{align*}
&[0,1,\infty , t_1\sh
  t_2, t_3], [0,1,t_3, t_1\sh t_2,\infty], [0,1,t_1, t_3\sh
  \infty, t_2], [0,1,t_2, t_3\sh\infty,t_1] \\
&[0,1,t_3\sh \infty, t_1\sh t_2], [0,1,t_1\sh t_2, t_3\sh \infty].
\end{align*}

\end{ex}

\subsubsection{Case 3: $| \gamma| =3$ and contains two divisors that
  intersect as chords and their intersection-divisor}

Let $d_1$ and $d_2$ be any divisors that intersect (as chords) as in definition
\ref{intersectingdivisors} and consider them now as chords of a
polygon.  A
chord between adjacent enpoints of $d_1$ and $d_2$ 
may represent an intersection-divisor if it cuts the polygon
into two sections, each with at least two edges.  For $n\geq 5$ and
for any $d_1,d_2$ that intersect (as chords),  there exists
at least one well defined intersection-divisor, since the four
possible intersection chords form partitions of the edges of the
polygon.  Therefore we can can find sets $R, S\subset Z$
such that $d_1=d_R$, $d_2=d_S$, and $3\leq |R\cup S|\leq n-2$ so $d_{R\cup
  S}$ is a well defined intersection-divisor.  Let $R$ and $S$ be such
sets and let $\gamma =\{d_R, d_S, d_{R\cup S}\}$.

The ideas used in the description of the
cohomology of $\Mn^\gamma$ are similar to the ones used in chapter 3
for finding
the cohomology of $\Mod_{0,n}^\delta$.  In fact, they provide a
sort of base case for studying the origin of insertion forms, since
this cohomology space consists of forms which converge on many
divisors at the same time, some of which overlap.  The method of
constructing this space consists simply of finding elements of the
vector space of polygons decorated with the marked points in
$\Mod_{0,n}$ and categorizing the polygons according to their image by
the residue maps along the divisors in $\gamma$.  We construct vector
spaces of polygons that map to 0 or to
$I\otimes \cal{P}$ for the $\Res^p$ maps.  According to corollary
\ref{neatcoro}, this classification gives all of the differential
forms convergent on the divisors in $\gamma$ and on $\Mod_{0,n}$.

To construct the combinatorial polygon sets that describe the
cohomology, we consider the subsets of marked points that define the
boundary components we are looking for.

 Let $R\subset Z$ be the set $\{ z_{i_1},..., z_{i_r}\}$
and let $S\subset Z$ be $ \{z_{j_1}, ...,
z_{j_s} \}$. Since the intersection of $R$ and $S$ is supposed to be
non-empty, we may assume that $z_{i_1}=z_{j_1},\ ...,\ 
z_{i_k}=z_{j_k}$.

Let the lexicographic order on $R$ be $z_{i_{k+1}}< \cdots <
z_{i_r}< z_{i_1}<z_{i_2}<\cdots <z_{i_k}$, and on $S$ be
$z_{j_1}< \cdots < z_{j_s}$.  In this way, we have that the
elements of $R$ are less than those of $S$.

For any set, $P$, with a given lexicographic order, define
${\cal L}^i(P)$\label{lyndononepointDEF}\index{${\cal L}^i(P)$}
to be the set of Lyndon shuffles on $P$ of degree $i$
for the shuffle product.

In the following paragraphs, we construct the
sets, ${\cal W }_0, {\cal{W}}_{d_R}, {\cal{W}}_{d_S},
  {\cal{W}}_{d_{R\cup S}}, {\cal{W}}_{d_{R\cup S},S}$\label{defdiv3},\index{${\cal W }_0, {\cal{W}}_{d_R}, {\cal{W}}_{d_S},
  {\cal{W}}_{d_{R\cup S}}, {\cal{W}}_{d_{R\cup S},S}$} that describe the cohomology of $\Mn^\gamma$.

Let ${\cal W}_0^p$ be the set of 01 $n$-gons, $\{ \omega^p\}$, such that
$\Res_{d_R}^p(\omega^p)=0, \Res^p_{d_S}(\omega^p)=0$ and $\Res^p_{d_{R\cup
  S}}(\omega^p)=0$, in other words all generating 01 polygons with no
chord, $e$, 
that cuts $\omega$ into a polygon in ${\cal P}_{K\cup \{e\}} \otimes
{\cal P}_{\overline{K}\cup \{e\}}$
where $K=R$, $S$ or $R\cup S$ and $\overline{K}=\{z_1,...,z_n\} \setminus K$.

The number of elements in ${\cal W}_0^p$ is given by the following
 formula,
\begin{align}
|{\cal W}_0^p | & = (n-2)! - \Bigl( (n-1-s-r+k)!(r+s-k)! \\
& \qquad +r!\bigl( (n-1-r)! - (n-1-s-r+k)!(s-k+1)!\bigr) \\
& \qquad + s!\bigl( (n-1-s)! - (n-1-s-r+k)!(r-k+1)!\bigr) \Bigr) .
\end{align}
The first term subtracted off counts all of the elements which do not map to 0
for $\Res_{d_{R\cup S}}^p$, i.e. those polygons
which can be cut by a chord in such a way that one side contains only elements
labelled by $R\cup S$.  The second (resp. third) term counts the polygons which
do not map to 0 for the $\Res_{d_R}^p$ (resp. $\Res_{d_S}^p$) map and
subtracts off the 
intersection with the first term.

\begin{ex}\label{gamma3ex}  Let $n=6$, $R=\{t_1, t_2\},\
  S=\{t_2,t_3\}$.  Then ${\cal W}_0^p$ contains the 4 cell forms,
$$[0,1,t_1,t_3,\infty,t_2], [0,1,t_3,t_1, \infty,t_2],
  [0,1,t_2,\infty,t_1,t_3], [0,1,t_2,\infty,t_3,t_1].$$
\end{ex} 

Let ${\cal W}_{d_R}^p$ be the set of 01 $n$-gons,
$\{ \omega^p \}$, such that 
\begin{align*} \Res^p_{d_r}(\omega^p)& \in I_{R\cup\{e\}}
\otimes {\cal P}_{Z\setminus R\cup \{e\}} \\
& \neq 0,\end{align*}
and such that
$\Res^p_{d_S}(\omega^p)=0$ and $\Res^p_{d_{R\cup S}}(\omega^p)=0$.
This set contains all shuffle sums of 
elements of ${\cal L}^i(R),\ i\geq 2$, inserted into polygons decorated
by $\{z_1,...,z_n,e\}\setminus \{z_{i_1},...,z_{i_r}\}$ and such that
the elements of $S\cup\{e\} \setminus R$ are never consecutive.  The
  cardinality of ${\cal W}_{d_R}^p$ is
$(r-1)!(r-1)((n-r-1)!-(s-k+1)!(n-r-s+k-1)!)$.

Let ${\cal W}_{d_S}^p$
be defined similarly, so it has cardinality is
$(s-1)!(s-1)((n-s-1)!-(r-k+1)!(n-r-s+k-1)!)$.

\begin{ex}  Let $n$, $R$ and $S$ be as in example \ref{gamma3ex}.
  Then ${\cal W}_{d_R}^p$ is the set containing the two elements,
  $$[0,1,(t_1\sh t_2), \infty,t_3], [0,1,t_3,\infty, (t_1\sh t_2)],$$
and ${\cal W}_{d_S}^p$ contains the two elements,
$$[0,1,(t_3\sh t_2), \infty,t_1], [0,1,t_1,\infty, (t_3\sh t_2)].$$
\end{ex}

We define ${\cal W}_{d_{R\cup S}}^p$ 
to be the set of insertions of elements $ (A_1\sh \cdots \sh A_J)\in {\cal
  L}^i(R\cup S),\ i\geq
2$, such that the shuffle factors, $A_k$, do not contain blocks that
equal $R$ or $S$ into the fixed 01 structures on $Z\cup \{e\}\setminus
(R\cup S)$ in the place of $e$.  One could say
that these are the
standard type of shuffle elements that only have one level of
insertion.  These elements map to $I_{R\cup S\cup \{e\}} \otimes {\cal
  P}_{Z\cup \{e\}\setminus(R\cup S)}$ for $\Res_{d_{R\cup S}}^p$ and
can map either to 0 or to $I\otimes {\cal P}$ for $\Res_{d_R}^p,
\Res_{d_S}^p$. 
There are $(r+s-k-1)!(r+s-k-1)$ degree two or higher Lyndon shuffle
elements on $R\cup S$. Any shuffle term with a consecutive block of
$R$ begins with
$z_{i_{k+1}}$ by the lexicographic order we imposed on 
$R\cup S$, so the
number of shuffles that have such a consecutive block in a factor is
$(r-1)!(s-k)!(s-k)$.  The number of shuffles where $S$ is a
consecutive block in one of the factors is $((r-k)!(r-k)-(r-k)!)s! +
(r-k)!(s-1)!$.  The first term in this expression counts the
number of shuffle sums where $S$ is in a successive block and in which
there are other terms from $R\setminus S$ in that factor.  By the
lexicographic order, the elements of $S$ are never the first letters
of those factors, therefore for each fixed structure, we have $s!$ such
shuffles.  The second term counts the number of shuffles in which 
the letters in $S$ form their own word.  By the lexicographic
ordering, we have $(r-k)!(s-1)!$ such shuffles.  Then we may insert all such
elements into the $(n-2-(r+s-k)+1)!$ fixed 01-structures on $Z\cup
\{e\} \setminus (R\cup S)$.  The cardinality of 
${\cal W}_{d_{R\cup S}}^p$ is therefore
\begin{align*}& (n-1-r-s+k)!\bigl((r+s-k-1)!(r+s-k-1)-(r-1)!(s-k)!(s-k)
\\ & \qquad -(((r-k)!(r-k)-(r-k)!)s! +
(r-k)!(s-1)!)\bigr).\end{align*}

\begin{ex} Let $n$, $R$ and $S$ be as in example \ref{gamma3ex}.  Then
  there are the four elements in ${\cal W}_{d_{R\cup S}}^p$,
$$[0,1,\infty,(t_1\sh t_2\sh t_3)], [0,1,\infty, (t_1 t_3\sh t_2)],
  [0,1,(t_1\sh t_2\sh t_3),\infty], [0,1,(t_1 t_3\sh t_2),\infty].$$
\end{ex}

The final set in the combinatorial description of this cohomology
follows closely the methods of chapter 3, where we
construct a second level of insertion sets by inserting
elements in ${\cal L}^{\geq 2}(S)$ into the place of
$e$ in those elements of ${\cal L}^{\geq 2}(R\cup \{e\}\setminus S)$
in which a consecutive block of $e$ is not equal to a shuffle factor.  In order
to respect the given 
lexicographic order on $R\cup S$, $e$ is greater than all elements
in $R\setminus S$.  Then, to obtain an element in the cohomology,
insert these into the place of $e$ in 01-gons decorated by $Z\cup
\{e\} \setminus (R\cup S)$.
Let ${\cal W}^p_{d_{R\cup S}, S}$ be the
set obtained by this insertion process.  The cardinality of ${\cal
  W}^p_{d_{R\cup S}, S}$ is $(n-1-r-s+k)!(s-1)!(s-1)\bigl( (r-k)!(r-k)
- (r-k)!\bigr)$, where the first factor is the number of 01 fixed
structures on $Z\cup \{e\} \setminus (R\cup S)$, the second, the
number of degree two Lyndon shuffles on $S$ and the third, the number
of fixed structures which are Lyndon shuffles on $R\cup \{e\}
\setminus S$ such that $e$ is not its own shuffle factor.

\begin{exes}  In the extended example, there are no level two insertion
  elements in ${\cal W}^p_{d_{R\cup S}, S}$.

Take $n=7$, $R=\{t_1,t_2,t_3\}$, $S=\{t_3,t_4\}$.  Then there are four
level two insertion elements in ${\cal W}^p_{d_{R\cup S}, S}$ given by
\begin{align*} & [0,1,\infty, ((t_1,(t_3\sh t_4))\sh t_2)],
  [0,1,\infty, (t_1\sh 
(t_2,(t_3\sh t_4)))], \\ & [0,1,((t_1,(t_3\sh t_4))\sh t_2),\infty],
[0,1,(t_1\sh (t_2,(t_3\sh t_4))),\infty].\end{align*}  
\end{exes}

In the above examples, we constructed sets of polygons.  To pass to elements
of the cohomology, let ${\cal W}_i$\label{cellformstuff}\index{${\cal W }_0^p, {\cal{W}}_{d_R}^p, {\cal{W}}_{d_S}^p,
  {\cal{W}}_{d_{R\cup S}}^p, {\cal{W}}_{d_{R\cup S},S}^p$} be the image
of ${\cal W}_i^p$ for 
the map from
polygons to cell forms for $i=0, d_R, d_S, d_{R\cup S}$ and
$d_{R\cup S}, S$.

\begin{prop}
The cell forms in
the disjoint union,
\begin{equation}\label{Wgamma} {\cal W}_\gamma = {\cal W}_{0}\cup {\cal W}_{d_R} \cup {\cal W}_{d_S} \cup
{\cal W}_{d_{R\cup S}} \cup {\cal W}_{d_{R\cup S}, S}\end{equation}
form a basis for the $\Q$ vector space of differential $\ell$ forms,
$H^\ell(\Mod_{0,n}^\gamma)$ and its dimension is given by
\begin{align*}
&(n-2)! + (n-1-(r+s-k))! \bigl( (r-1)!(s-k)! + (s-1)!(r-k)!
-(r+s-k-1)! \bigr)\\
& \qquad -(n-1-r)!(r-1)! -(n-1-s)!(s-1)!.
\end{align*}
\end{prop}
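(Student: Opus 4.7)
The plan is to invoke Theorem \ref{allcohoms}, which identifies $H^\ell(\Mod_{0,n}^\gamma)$ with
$$\bigcap_{i=1}^{3}\pi\bigl((\Res^p_{\gamma_i})^{-1}(I_{\gamma_i\cup\{e\}}\otimes {\cal P}_{Z\setminus\gamma_i\cup\{e\}})\bigr),$$
and to exhibit the union ${\cal W}_\gamma$ as an explicit basis of this intersection. The argument decomposes into three parts: (i) verifying that every element of each of ${\cal W}_0,{\cal W}_{d_R},{\cal W}_{d_S},{\cal W}_{d_{R\cup S}},{\cal W}_{d_{R\cup S},S}$ lies in the triple intersection, (ii) proving the union is linearly independent, and (iii) showing it spans. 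The dimension formula will then drop out by adding the cardinalities computed just before the statement.

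For part (i), convergence on the divisors not labelling the subscript is automatic from the construction. By definition the polygons in ${\cal W}_0^p$ have neither $R$ nor $S$ nor $R\cup S$ appearing as a consecutive block, so all three polygon residues vanish. For ${\cal W}_{d_R}^p$ (and symmetrically ${\cal W}_{d_S}^p$), one inserts an element of ${\cal L}^{\ge 2}(R)$, hence the residue along $d_R$ lands in $I_{R\cup\{e\}}\otimes {\cal P}_{Z\setminus R\cup\{e\}}$, while the outer framing is chosen so that $S$ and $R\cup S$ never appear as consecutive blocks, killing the two other residues. For ${\cal W}_{d_{R\cup S}}^p$, the insertion lies in ${\cal L}^{\ge 2}(R\cup S)$ with no shuffle factor equal to $R$ or to $S$, which forces the $d_R$ and $d_S$ residues to vanish (no single factor realizes $R$ or $S$ as a block), and the residue along $d_{R\cup S}$ lives in $I_{(R\cup S)\cup\{e\}}\otimes {\cal P}_{Z\setminus(R\cup S)\cup\{e\}}$ by construction. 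The set ${\cal W}_{d_{R\cup S},S}^p$ is the two-level insertion case: here the outer insertion in ${\cal L}^{\ge 2}(R\cup\{e\}\setminus S)$ guarantees convergence along $d_{R\cup S}$, the inner ${\cal L}^{\ge 2}(S)$ insertion (placed at $e$) handles $d_S$, and the assumption that $e$ is not its own outer shuffle factor together with the lexicographic ordering forces $R$ never to appear as a consecutive block, so the residue along $d_R$ is also killed; this is exactly the ``nested insertion'' mechanism of Theorem \ref{convJS} transplanted to our triple.

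For part (ii), linear independence, I would run the same residue-layering argument as in Theorem \ref{linind}. Each of the five sets is built from $01$-polygons carrying a distinct ``fixed structure'' (framing plus insertion pattern), and within each fixed structure the insertions are drawn from the Lyndon basis of the appropriate shuffle algebra. Since $01$-cell forms are linearly independent in $H^\ell(\Mod_{0,n})$ by Theorem \ref{thm01cellsspan}, and Lyndon shuffles are linearly independent in ${\cal P}_{R\cup\{e\}}$, ${\cal P}_{S\cup\{e\}}$, ${\cal P}_{(R\cup S)\cup\{e\}}$ respectively, any relation among the elements of ${\cal W}_\gamma$ descends, via successive composed polygon residues along the disjoint chords defining $R$, $S$, $R\cup S$, to a relation among distinct Lyndon shuffles, hence is trivial.

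The main obstacle, and the heart of the proof, is part (iii): spanning. Given an arbitrary convergent class $[\omega]\in H^\ell(\Mod_{0,n}^\gamma)$, lifted to a linear combination $\omega^p$ of $01$-polygons, one must peel off the contributions in the following order. First consider $\Res^p_{d_{R\cup S}}(\omega^p)$; convergence forces it to lie in $I_{(R\cup S)\cup\{e\}}\otimes {\cal P}_{Z\setminus(R\cup S)\cup\{e\}}$, and by the proof of Theorem \ref{convJS} this insertion factor can be rewritten using Lyndon shuffles, producing a correction in ${\cal W}_{d_{R\cup S}}\cup{\cal W}_{d_{R\cup S},S}$ (the latter appearing precisely when an inner $S$-block survives inside the outer $(R\cup S)$-insertion). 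After subtracting, the remainder has vanishing $d_{R\cup S}$ residue, so no $R\cup S$ block occurs in any term. Next, using the same argument on $\Res^p_{d_R}$ and $\Res^p_{d_S}$ separately (these are now ``decoupled'', because no joint block remains), we peel off contributions in ${\cal W}_{d_R}$ and ${\cal W}_{d_S}$. The final remainder has all three residues zero and is thus a linear combination of $01$-polygons with no $R$, $S$, or $R\cup S$ consecutive block, i.e.\ an element of $\langle{\cal W}_0^p\rangle$. The subtle point, which requires the hypothesis that $d_{R\cup S}$ is an intersection-divisor of $d_R$ and $d_S$, is that the order in which we peel off chords is compatible with the polygon geometry: the chord for $R\cup S$ is the unique ``outer'' chord containing both $R$ and $S$ as sub-blocks, so the two-level insertions in ${\cal W}_{d_{R\cup S},S}$ exactly account for the mixed residues that appear when an $(R\cup S)$-block in $\omega^p$ happens to split internally along $S$.

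Finally, summing the five cardinalities computed immediately before the statement and simplifying via binomial identities yields
$$\dim H^\ell(\Mod_{0,n}^\gamma)=(n-2)!+(n-1-(r+s-k))!\bigl((r-1)!(s-k)!+(s-1)!(r-k)!-(r+s-k-1)!\bigr)$$
$$-(n-1-r)!(r-1)!-(n-1-s)!(s-1)!,$$
confirming the claimed formula; the simplification is bookkeeping once the spanning/independence argument is in place.
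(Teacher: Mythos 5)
Your overall strategy coincides with the paper's: reduce to the triple intersection of Theorem \ref{allcohoms}, prove linear independence by layering composed polygon residues over framings and invoking the independence of Lyndon shuffles, and prove spanning by analysing residues of an arbitrary convergent combination of $01$-polygons. Parts (i) and (ii) of your plan are essentially the arguments the paper gives (the paper also records, as a preliminary step, that the five sets are pairwise disjoint, which you use implicitly). Your spanning step is organised as a successive subtraction rather than the paper's completion of ${\cal W}_\gamma^p$ to an explicit alternative basis ${\cal B}$ of all $01$-polygons followed by a vanishing-of-complementary-coefficients argument; these are two presentations of the same idea.

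There is, however, a genuine gap in your step (iii), at the first peeling. You assert that since $\Res^p_{d_{R\cup S}}(\omega^p)$ lies in $I_{(R\cup S)\cup\{e\}}\otimes {\cal P}_{Z\setminus (R\cup S)\cup\{e\}}$, the corresponding correction can be taken in $\langle{\cal W}_{d_{R\cup S}}\cup{\cal W}_{d_{R\cup S},S}\rangle$, with ${\cal W}_{d_{R\cup S},S}$ accounting for "an inner $S$-block surviving inside the outer insertion." But an element of $I_{(R\cup S)\cup\{e\}}$ may a priori also contain $R$ as a consecutive block inside a shuffle factor, or contain $S$ as a block without any degree $\ge 2$ inner shuffle; neither case is covered by the two families you name, and the statement pointedly contains no family "${\cal W}_{d_{R\cup S},R}$." The paper closes this by (a) the lexicographic ordering on $R\cup S$, which forces any $R$-block inside a Lyndon factor to sit at the head of that factor, where insertions are forbidden, and (b) a decomposition of the residue map $\Res^p_{d_R}$ (resp. $\Res^p_{d_S}$) restricted to these "single-block" families as mapping onto ${\cal L}^1(R)\otimes{\cal P}_{\overline R}$, which is complementary to $I_R\otimes{\cal P}_{\overline R}$; convergence along $d_R$ and $d_S$ then kills the coefficients on ${\cal B}_{(R\cup S),R^1}$ and ${\cal B}_{(R\cup S),S^1}$. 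A second point your sketch passes over is the vanishing of the component (the paper's $\omega_{110}$) with nonzero residues along both $d_R$ and $d_S$ but zero residue along $d_{R\cup S}$: this uses $R\cap S\neq\emptyset$, so that $R$ and $S$ cannot both occur as consecutive blocks of a single polygon without $R\cup S$ also being one — this is the precise sense in which your "decoupling" after the first peeling is legitimate, and it needs to be said. Without (a), (b) and the $\omega_{110}$ argument, the peeling does not terminate inside ${\cal W}_\gamma$, and the spanning claim is unproved.
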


\begin{proof}
Because
01-polygons are in bijection with the 01-cell form basis for
$H^\ell(\Mn)$ by theorem \ref{thm01cellsspan}, we may and will prove
this theorem
combinatorially on the polygons ${\cal W}^p_\gamma$.  

 First, we justify that the given sets are indeed disjoint.  We can
write the elements of the sets as sums of 01 $n$-gons of the form,
$$\omega = \sum_\sigma [0,1,\sigma(z_1,...,z_{n-2})]$$
where $\sigma \in \Sym_n$.
The elements of ${\cal W}_0^p$ are sums of length one and contain no
permutations of the marked
points where the elements of $R$, $S$ or $R\cup S$ are consecutive.
The other sets contain sums of 01 $n$-gons whose terms
all contain consecutive elements from at least one of $R$, $S$ or
$R\cup S$, so ${\cal W}_0^p$ is disjoint from the union of the other
four sets. Similarly, by construction, ${\cal W}_{d_R}^p$
(resp. ${\cal W}_{d_S}^p$) contains only
sums of 01 $n$-gons whose terms contain consecutive elements of $R$
(resp. $S$),
but neither consecutive elements from $S$ (resp. $R$) nor $R\cup S$.
Therefore, ${\cal W}_{d_R}^p$ and ${\cal W}_{d_S}^p$ are disjoint from the
other sets.  Finally, the sets ${\cal W}_{d_{R\cup S}}^p$ and
${\cal W}_{d_{R\cup S}, S}^p$ are disjoint from each other, since they
are sums whose terms are
insertions of Lyndon shuffles on $r+s-k$ elements into 01
$n$-gons. Since Lyndon shuffles form a basis for the shuffle algebra
on $n$ distinct letters, they are distinct sums.  The Lyndon shuffles in
${\cal W}_{d_{R\cup S}}^p$ do not contain shuffle factors with
consecutive elements in either $R$ or $S$, but those in
${\cal W}_{d_{R\cup S}, S}^p$ contain only sums of Lyndon shuffles whose
shuffle factors contain consecutive sequences in $S$.  Therefore
${\cal W}_{d_{R\cup S}}^p$ and ${\cal W}_{d_{R\cup S}, S}^p$ are also
disjoint sets.  The map from 01-polygons to cell forms is injective,
so the corresponding sets of cell forms, ${\cal W}_i$, are
disjoint as well.

By theorem \ref{allcohoms}, we need to justify that ${\cal
  W}_\gamma^p$ is a basis for
\begin{equation}\label{intersect3}
\bigcap_{K= R, S, R\cup S} \pi\bigl( (\Res^p_{d_K})^{-1}(I_K \otimes
  {\cal P}_{Z\cup \{e\} \setminus K})\bigr). 
\end{equation}

Recall the definition of a framing for a divisor,
$d_K$ on page \pageref{framingDEF}.  If
$\Res_K^p(\omega^p) \in {\cal P}_K\otimes f$, such that $f$ is
a 01-polygon, we define the framing
of $\omega^p$ with respect to
$\overline{K}$ to be the right hand factor, $f$,  of its
image.  Depending on the context, the framing may also be an element of
a basis for ${\cal P}_{\overline{K}}$ which is the sum of 01-gons.  Since the
right hand factors are assumed to be basis elements for ${\cal
  P}_{\overline{K}}$,
$\Res_K^p(\omega^p)=0$ if and only if $\Res_K^p(\omega^p)=0$ for each
framing on the letters $Z\cup\{e\} \setminus K$.

Note: Polygons decorated by a set, $K\cup \{e\}$, are
  isomorphic to the noncommutative polynomial algebra in $K$, where
  $e$ can be just
  considered as an ``marker'', i.e. the side that follows it
in the clockwise ordering corresponds to the first letter of the
  corresponding monomial.

\begin{claim}\label{linind3}
The sets, \eqref{Wgamma}, are linearly independent.
\end{claim}
\begin{proof}
To prove linear independence, we recall theorem
\ref{thm01cellsspan}.  Since the 01-forms form a basis for $H^\ell(\Mn)$,
and ${\cal W}_0$ contains 01-forms, we only need to show linear
independance for the other four sets.  Let $\omega^p$
be a linear combination of forms from ${\cal W}^p_\gamma \setminus {\cal
  W}^p_0$.  The vector space of 01-gons can be written as the direct sum,
$V_0 \oplus V_{R\cup S} \oplus V_S \oplus V_R$, where $V_0$ is
generated by 01-gons with no consecutive blocks of $R, S$ or $R\cup
S$; $V_R$ (resp. $V_S$) is the vector space generated by 01-gons
with consecutive blocks of $R$ (resp. $S$), but no consecutive blocks
of $R\cup S$ and no consectutive blocks of $S$ (resp. $R$); $V_{R\cup
  S}$ is generated by 01-forms with consecutive blocks of $R\cup S$.

By the hypothesis, $\omega^p \in V_{R\cup S} \oplus V_S \oplus V_R$, so
write
$$ \omega^p = \sum a^R_i v^R_i + \sum a^S_i v^S_i + \sum a^{R,S}_i
v^{R,S}_i,$$
where the $v^K_i\in {\cal W}_K^p, K=d_R, d_S$ and the last sum has terms
from ${\cal W}_{d_{R\cup S}}^p$ and ${\cal W}^p_{d_{R\cup S},S}$.  We only need
to show linear independence for each individual sum, so we assume that
$\sum a^R_i v^R_i = \sum a^S_i v^S_i = \sum a^{R,S}_i
v^{R,S}_i=0$ and show that this implies that all of the coefficients,
$a_{\bullet}^{\bullet}$, must be 0.  

Each $v^R_i$ is a sum
of 01-gons with a framing (the same in each term) by $Z\cup \{e\}\setminus
R$.  Then we can rewrite the first sum separating out terms with the same
framing $f_i$, $$\sum_{f_i} \sum_j a^R_{f_i,j} v^R_{f_i, j}=0,$$
so that
\begin{align*}
\Res_{d_R}^p(\sum_{f_i} \sum_j a^R_{f_i,j} v^R_{f_i, j}) & =
\sum_{f_i} \sum_j a^R_{f_i,j} \Res_{d_R}^p(v^R_{f_i, j}) \\
& = \sum_{f_i} \sum_j a^R_{f_i,j} (P^R_{f_i,j} \otimes f_{i,j}) \\
& =0,
\end{align*}
where $P^R\in {\cal P}_{R\cup \{e\}}, f_{i,j}\in {\cal
  P}_{Z\cup \{e\} \setminus R}$.  The fixed structures, $f_{i,j}$, are
  01-polygons and are linearly independent, thus the sum only equals 0
  if $\sum_j a^R_{f_i,j} P^R_{f_i,j}=0$ for each $f_i$.  But for
  each $j$, $P^R_{f_i,j}$ is an element of the Lyndon basis for the
  polynomial algebra in $R$.
Hence these are also linearly 
  independent and
  $\sum a^R_i v^R_i =0$ if and only if all $a_i^R$ are zero, proving
  the claim for ${\cal W}_{d_R}^p$.

 The proof for ${\cal W}_{d_S}^p$ is identical.

Next, we look at the sum, $\sum a^{R,S}_i
v^{R,S}_i$ and as in the previous two cases, we can write this term as 
$\sum_{f_i} \sum_j a^{R,S}_{f_i, j} v^{R,S}_{f_i,j}$, where here the
$f_i$ are fixed structures on $Z\cup \{e\} \setminus (R\cup S)$.  Then
we only need to prove linear independence for each $f_i$, $\sum_j
a^{R,S}_{f_i, j} P^{R,S}_{f_i,j}$.  If the polynomials $P^{R,S}$ come
from ${\cal W}^p_{d_{R\cup S}}$, they are linearly independent for the
reasons above.  
So we only need to look at level two insertion elements,
$P^{R,S}_{f_i,j} \in {\cal W}_{d_{R\cup S}, S}^p$.  Without loss of
generality, assume that all of the terms, $P^{R,S}_{f_i,j}$ are in
${\cal W}_{d_{R\cup S}, S}^p$.  Now as above, we can break the sum of
the $P^{R,S}$ into fixed structures, $g_i$, on $R\setminus S$,
and apply the residue map,
\begin{align*}
\Res_{d_S}^p(\sum_{g_i}\sum_j
a^{R,S}_{f_i,g_i, j} P^{R,S}_{f_i,g_i,j}) & = \sum_{g_i}\sum_j
a^{R,S}_{f_i,g_i, j} \Res(P^{R,S}_{f_i,g_i,j})\\
&=\sum_{g_i}\sum_j
a^{R,S}_{f_i,g_i, j} P^{S}_{f_i,g_i,j} \otimes g_i\\
&=0.
\end{align*}
  By definition the $g_i$
are Lyndon shuffles in $R\setminus S$ and are therefore linearly
independent.  And we proceed as before: the $P^S$ are Lyndon shuffles
in $S$, so the sum is zero only if all $a^{R,S}_{f_i,j}$ are zero.
\end{proof}

\begin{claim}
The set ${\cal W}_\gamma$ spans the set of forms convergent on $\gamma$.
\end{claim}
\begin{proof}

From the previous claim, we may extend the set of linearly independent
elements in ${\cal W}_\gamma^p$ to a basis, ${\cal B}$, of 01-polygons.  
Then we show
that if an element written in this basis is in the intersection
\eqref{intersect3}, then the coefficient on the basis elements ${\cal
  B}\setminus {\cal W}_\gamma^p$ must be 0.  Therefore the set, $\pi({\cal
  W}_\gamma^p)={\cal W}_\gamma$, spans the space of convergent cell
forms on $\gamma$.

The standard basis of 01-gons is $B=\{[0,1, \sigma(Z\setminus
  \{0,1\})] ; \sigma \in \Sym_{n-2} \}$.  As in claim \ref{linind3},
  we may write the space of 
  01-gons as $$V=V_{R\cup S} \oplus V_0 \oplus V_S \oplus V_R.$$
  We now construct an alternative basis to
  $B$.  The polygons in ${\cal W}^p_0$ span $V_0$ since they are elements
  of $B$.  Furthermore, they are in the intersection of the preimage
  of the three residue maps by definition \ref{defdiv3}.  Let ${\cal
  W}^p_0={\cal B}_0$.

As
a basis for $V_S$, we take instead of permutations of $S$, the 
Lyndon basis for $S$ and insert into 
the framings
given by $Z\cup
\{e\}\setminus S$.  Let $V_S=V_{S^1} \oplus V_{S^{\geq 2}}$.  A basis
  for $V_{S^{\geq 2}}$ is given by ${\cal W}_{d_S}^p$ and that for
  $V_{S^1}$ is given by all insertions of degree 1 Lyndon elements in
  $S$.  As before, ${\cal W}^p_{d_S}$ is in the preimage of the three
  residue maps.  Let ${\cal
  W}^p_{d_S}={\cal B}_{S^{\geq 2}}$ and let the basis for
  $V_{S^1}={\cal B}_{S^1}$.

By the same argument, we construct ${\cal B}_{R^1}$ and ${\cal
    B}_{R^{\geq 2}}$ as bases for $V_{R^1}$ and $V_{R^{\geq 2}}$.

We take as a basis for $V_{R\cup S}$, insertions of Lyndon shuffles and
write $V_{R\cup S}$ as
the direct sum of the two vector spaces,
$V_{(R\cup S)^1} \oplus V_{(R\cup S)^{\geq 2}}$ with respective bases,
${\cal
  B}_{(R\cup S)^1}$ and ${\cal B}_{(R\cup S)^{\geq 2}}$ as previously.

We can write an alternative to basis for $V_{(R\cup S)^{\geq 2}}$ by
taking inserting Lyndon shuffles of $S$ into consecutive blocks of $S$
which appear in a shuffle factor, call this basis ${\cal B}_{R\cup S}'$.  

\begin{ex}\label{stupidex} Consider the subset of marked points in $\M_{0,9}$,
     $$Z=\{0,1,\infty, t_1, t_2, t_3, t_4, t_5,t_6\}$$ and let
     $R=\{t_1,t_2,t_3\},\ S= \{t_3,t_4,t_5\}$.  In the usual basis for
     ${\cal L}^{\geq 2}(R\cup S)$, we have the elements,
\begin{align*} B_1= \{ (t_1, &t_3,t_4,t_5\sh t_2), (t_1,t_3,t_5,t_4\sh
     t_2) (t_1,t_4,t_3,t_5\sh t_2), (t_1,t_4,t_5,t_3\sh
     t_2), \\ &(t_1,t_5,t_3,t_4\sh t_2), (t_1,t_5,t_3,t_4\sh
     t_2)\}.\end{align*}
We take the alternative basis in which the elements, 
\begin{align*} B_2= \{ (t_1, &t_3,t_4,t_5\sh t_2), (t_1,t_3,t_5,t_4\sh
     t_2) (t_1,(t_3\sh t_4,t_5)\sh t_2), (t_1,(t_3,t_4 \sh t_5)\sh
     t_2), \\ &(t_1, (t_3,t_5 \sh t_4) \sh t_2), (t_1,(t_3\sh t_4 \sh t_5)\sh
     t_2)\}\end{align*}
appear as insertions.  To construct an element of ${\cal B}_{R\cup
     S}'$, we insert
      elements of $B_2$ into $e$ in framings of 01-polygons on $\{0,1,\infty,
     t_6, e\}.$ 
\end{ex}

The subspace spanned by ${\cal B}_{R\cup S}'$ can be written as 
$$W_{R^1} \oplus W_{S^1} \oplus W_{S^{\geq 2}} \oplus W_{(R,S)^{\geq
    2}}.$$

The space $W_{R^1}$ is spanned by the elements which are
    insertions of shuffles in which $R$ appears as a block in one
    factor, likewise for $W_{S^1}$.  $W_{S^{\geq 2}}$ is spanned by
    insertions of shuffles in $S$ of degree $\geq 2$ into one factor
    as in example \ref{stupidex}.  And $W_{(R,S)^{\geq 2}}$ is spanned
    by those elements in which neither $S$ nor $R$ appears as a block
    in any shuffle factor of $R\cup S$.  The sets ${\cal W}_{d_{R\cup
    S},S}$ and ${\cal W}_{d_{R\cup
    S}}$ are subsets of $B_{R\cup S}'$ and are respectively bases for
    $W_{S^{\geq 2}}$ and $W_{(R,S)^{\geq 2}}$.  As before let ${\cal
    W}_{d_{R\cup
    S},S}= {\cal B}_{(R\cup S), S^{\geq 2}}$ and ${\cal W}_{d_{R\cup
    S}}= {\cal B}_{(R\cup S)^{\geq 2}}$.  We let ${\cal B}_{(R\cup
    S), R^1}$ and
    ${\cal B}_{(R\cup S), S^1}$ be the bases for $W_{R^1}$
    and $W_{S^1}$ 
    respectively.  Then ${\cal B}_{(R\cup S), S^{\geq 2}}$, ${\cal B}_{(R\cup
    S)^{\geq 2}}$, ${\cal B}_{(R\cup S), R^1}$ and
    ${\cal B}_{(R\cup S), S^1}$ form a partition of ${\cal B}_{R\cup
    S}'$

So now we have that \begin{align*}
{\cal B}= {\cal B}_0\cup & {\cal B}_{S^{\geq 2}} \cup {\cal
  B}_{S^1} \cup {\cal B}_{R^{\geq 2}} \cup {\cal
  B}_{R^1}\cup {\cal B}_{(R\cup
    S)^1} \cup {\cal B}_{(R\cup
    S)^{\geq 2}} \\ & \cup {\cal B}_{(R\cup S), S^{\geq 2}}  \cup {\cal
  B}_{(R\cup S), R^1} \cup {\cal B}_{(R\cup
  S), S^1}\end{align*} is a basis for the 01-gons where $${\cal W}_\gamma =
  {\cal B}_0\cup {\cal B}_{S^{\geq 2}} \cup {\cal B}_{R^{\geq 2}} \cup
  {\cal B}_{(R\cup S), S^{\geq 2}} \cup {\cal B}_{(R\cup
    S)^{\geq 2}}.$$  We can now justify that if $\omega^p$ is in
  the intersection, \eqref{intersect3}, then $\omega^p$ is in
  the space spanned by ${\cal W}_\gamma$.

The elements in the bases for the subspaces, $V_0, V_S, V_R$ and
$V_{R\cup S}$ all have unique framings for a well-chosen basis for
${\cal P}_{\overline K}$, $K=R, S, R\cup S$, namely the basis coming
from the construction of the ${\cal B}$ sets.
Let $\omega^p$ be in the intersection
\eqref{intersect3}, we can write $\omega^p$ in the basis ${\cal B}$ as
$$\omega^p = \omega_{000} + \omega_{001} +\omega_{010}+\omega_{011}+
\omega_{100}+ \omega_{101}+ \omega_{110}+\omega_{111},$$
where $\omega_{000}$ are terms that are in the kernel of all three
residue maps, $\omega_{001}$ is in the kernel of $\Res^p_{d_R}$ and
$\Res^p_{d_S}$, but $0\neq \Res^p_{d_{R\cup 
    S}} (\omega_{001}) \in I_{R\cup S}\otimes {\cal
  P}_{\overline{R\cup S}}$, and so on.
This decomposition is unique. 

First, we verify that the term $\omega_{000} \in {\cal W}_\gamma^p$.
Since $\Res^p_{D_{R\cup S}}(\omega_{000})=0$, the coefficient on the
elements ${\cal B}_K\ (K= (R\cup S)^\bullet,\bullet)$
must be 0, since for each framing, the elements of ${\cal B}_K$ form a
basis for their image in $I_{R\cup S}$.  Since
$\Res_{d_S}^p(\omega_{000})=0$ and  $\Res_{d_R}^p(\omega_{000})=0$,
then the coefficient on $\omega_{000}$ on ${\cal B}_K$, $K=R^\bullet,
S^\bullet$ is also 0 for the same reason.  Therefore $\omega_{000} \in
\langle {\cal B}_0\rangle$.

Similarly, $\omega_{010}\in \langle {\cal B}_{S^{\geq 2}}
\rangle$ and 
$\omega_{100}\in \langle {\cal B}_{R^{\geq 2}}\rangle$.

The term $\omega_{110}$ must be identically 0.  For the same reasons
as above, it must lie in the space, $\langle {\cal B}_{S^{\geq 2}} \cup {\cal
  B}_{S^1} \cup {\cal B}_{R^{\geq 2}} \cup {\cal
  B}_{R^1} \rangle$.  The framing for blocks of $R$ consists of
permutations of $Z\cup\{e\} \setminus R$ where the elements of
$S\cup \{e\} \setminus R$ are not in a consecutive block since these
elements are in the spaces generated by the ${\cal B}_{(R\cup
  S)^\bullet, \bullet}$ bases since $R\cap S$ is non-empty.  Therefore
if an element maps not to 0,
and to $I_{R}\cup {\cal P}_{\overline{R}}$ for $\Res_{d_{R}}^p$, it
must map to 0 for $\Res_{d_{S}}^p$ map.

Now, we look at the last four terms,
$\omega_{R,S}= \omega_{001}+\omega_{101}+\omega_{011}+ \omega_{111}$.
Since $$0\neq \Res_{d_{R\cup S}}^p(\omega_{R,S}) \in I_{R\cup
  S}\otimes {\cal P}_{\overline{R\cup S}},$$ then the coefficients on
$\omega_{R,S}$ on the elements of ${\cal B}_{(R\cup S)^1}$ are 0.

We have that $\omega_{111} + \omega_{101}$ maps by $\Res_{d_R}^p$ to 
${\cal P}_{R}\otimes {\cal P}_{\overline{R}}$ which we can write as
  $$({\cal L}^1(R) \otimes {\cal P}_{\overline{R}}) \oplus (I_R \otimes
  {\cal P}_{\overline{R}}).$$  The
  residue map,
\begin{align*}
\Res_{d_R}^p : \langle {\cal B}_{(R\cup S),R^1} \rangle & \oplus
    \langle {\cal B}_{(R\cup
    S)^{\geq 2}} \cup {\cal B}_{(R\cup S), S^{\geq 2}} \cup {\cal B}_{(R\cup
  S), S^1} \rangle \\ & \twoheadrightarrow ({\cal L}^1(R)  \otimes {\cal
    P}_{\overline{R}} )
    \oplus (I_R \otimes {\cal P}_{\overline{R}})
\end{align*} 
is equal to the direct sum of the residue maps,
\begin{align*} {\cal R}^1 \oplus {\cal R}^{\geq 2} : \bigl( \langle
    & {\cal B}_{(R\cup
    S),R^1}  \twoheadrightarrow  {\cal
    L}^1(R) \otimes 
     {\cal P}_{\overline{R}} \bigr) \oplus \\ & \bigl(\langle {\cal B}_{(R\cup
    S)^{\geq 2}} \cup {\cal B}_{(R\cup S), S^{\geq 2}} \cup {\cal B}_{(R\cup
  S), S^1}  \rangle \twoheadrightarrow I_R\otimes {\cal
    P}_{\overline{R}}\bigr). \end{align*}  We have such a
    decomposition of the
    residue map because the polygons in ${\cal B}_{(R\cup
    S),R^1}$ (for any framing) only have consecutive blocks in $R$ which are
    of degree 1, so they map to ${\cal L}^1(R)\otimes {\cal P}$.
    Furthermore, for any framing
    in which $R$ is not a consecutive block, it is a degree two or
    higher shuffle, and therefore maps to
    $I_R\otimes{\cal P}$.  Since we are searching for elements that
    map to $I_R$ as a left-hand factor, $\langle B_{(R\cup S), R^1}
    \rangle$ cannot be in the intersection \eqref{intersect3} and therefore 
   $\omega_{101} +\omega_{111} \in \langle {\cal B}_{(R\cup
    S)^{\geq 2}} \cup {\cal B}_{(R\cup S), S^{\geq 2}} \cup {\cal B}_{(R\cup
  S), S^1}  \rangle$.

We may repeat the same proof as above, substituting $S$ for $R$, which
    shows that $\langle B_{(R\cup S), S^1}
    \rangle$ cannot be in the intersection \eqref{intersect3} and therefore 
   $\omega_{011} +\omega_{111} \in \langle {\cal B}_{(R\cup
    S)^{\geq 2}} \cup {\cal B}_{(R\cup S), S^{\geq 2}}\rangle$.

We have now shown that if $\omega^p$ is in the intersection
\eqref{intersect3}, then $\omega^p$ is in the space spanned by ${\cal
  W}_\gamma^p$.  The map from 01-polygons to $H^\ell(\Mn)$ is
bijective, therefore by theorem \ref{allcohoms}, ${\cal
  W}_\gamma$ spans $H^\ell(\Mn^\gamma)$. 
\end{proof}

We have proven that ${\cal W}_\gamma$ is a set of 01-forms which are
linearly independent and span $H^\ell(\Mn^\gamma)$ and therefore form a basis.

\end{proof}

\section{The non-adjacent bases of $Pic(\M_{0,n})$}

The following result emerged from the search for
sets of divisors, $\gamma$,
that satisfy the criterion that
$\Mn^\gamma$ be affine.  If $D$ is an ample divisor, then $\M_{0,n}\setminus
D =\Mn^\gamma$ is an affine space.
Given some ``natural set'', $\gamma$ (we considered for example sets
$\gamma$ which are in the support of a multizeta form),
we searched for explicit
divisors having support equal to $\gamma^c$, the complement of
$\gamma$.  We then attempted to prove, using a methods of
A. Gibney and S. Keel, that these are ample in the Picard group.
As we will outline in this section, their methods our similar to ours in that
they describe the Picard group as generated by polygon divisors. 
We have not yet succeeded in proving ampleness for the desired sets,
$\gamma$, however the search led
to a new presentation of $Pic(\M_{0,n})$ with a very simple form which
we will prove in
this section.

This section may stand alone from the rest of the text, hence we recall some definitions for the ease of the reader.

\begin{defn}\label{Divxdef}\index{$Div(X)$} Let $X$ be a smooth manifold, and let
  $Div(X)$ be the
  group formally generated by Weil divisors on $X$.  The Picard group,
  $Pic(X)$, is the quotient of $Div(X)$ by the principal divisors.\end{defn}

We have the following
characterization/definition of the Picard group of Weil divisors on
$\M_{0,n}$.

\begin{thm}\cite{Ke}\label{Keelsbasis} The Picard group,
  $Pic(\M_{0,n})$\index{$Pic(\M_{0,n})$}, is isomorphic 
  to
  $Div(\M_{0,n})/\sim$, where $\sim$ denotes numerical equivalence of
  divisors.
\end{thm}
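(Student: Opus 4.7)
The plan is to follow Keel's original strategy, which exploits the very special fact that $\M_{0,n}$ admits an affine cellular decomposition, so that all reasonable equivalence relations on cycles collapse to one. I would first set up the inductive description of $\M_{0,n}$ as an iterated blowup. Recall that there is a forgetful morphism $\pi:\M_{0,n+1}\rightarrow \M_{0,n}$ which realizes $\M_{0,n+1}$ as the universal curve over $\M_{0,n}$, and which can be factored as a sequence of blowups along smooth codimension-2 subvarieties (namely the sections corresponding to the $n$ marked points, then the iterated proper transforms). Starting from $\M_{0,4}\simeq \Pro^1$, this gives a recipe producing $\M_{0,n}$ from $\Pro^1$ by a finite sequence of blowups along smooth centers, each of which is itself a moduli space of lower dimension.

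The next step is to upgrade this inductive construction to an affine paving. Since $\Pro^1$ admits an affine stratification (a point and its complement), and since blowing up a smooth center that is itself stratified by affines produces a new variety again stratified by affines (the exceptional divisor is a projective bundle over the center, which is also affinely paved), one obtains by induction an affine paving of $\M_{0,n}$. The closed strata are precisely the boundary strata indexed by stable trees, and their closures are therefore classes in $A_*(\M_{0,n})$.

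From the existence of an affine paving, standard results (see e.g. Fulton, \emph{Intersection Theory}, Example 19.1.11) give that the cycle class map $A_*(\M_{0,n})\rightarrow H_*(\M_{0,n},\Z)$ is an isomorphism, that both groups are free abelian with basis given by the classes of the closed strata, and that rational, algebraic, and numerical equivalence all coincide on $\M_{0,n}$. Specializing to divisors, this gives $\mathrm{Pic}(\M_{0,n}) = A^1(\M_{0,n}) = \mathrm{Div}(\M_{0,n})/\sim_{\mathrm{num}}$, which is exactly the stated isomorphism.

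The principal obstacle is the verification that the iterated blowup construction truly produces $\M_{0,n}$ (as opposed to some other stable compactification) and that the centers of the successive blowups are themselves affinely paved in a compatible way; this is the core geometric content of Keel's paper and requires a careful inductive bookkeeping of the boundary strata created at each stage. Once this is in place, the statement that numerical equivalence agrees with rational equivalence on a smooth projective variety with an affine paving is formal, so no transcendental or Hodge-theoretic input is needed beyond the combinatorial/birational description.
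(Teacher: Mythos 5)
The paper does not actually prove this statement: it is quoted directly from Keel's paper \cite{Ke}, so there is no internal proof to compare against. Your outline is a faithful reconstruction of Keel's own argument — an iterated-blowup description of $\M_{0,n}$ starting from $\Pro^1$, hence an affine stratification by boundary strata, hence the cycle class map $A_*(\M_{0,n})\to H_*(\M_{0,n},\Z)$ is an isomorphism onto a free abelian group and rational, homological and numerical equivalence all coincide (in particular for divisors, giving $Pic(\M_{0,n})\cong Div(\M_{0,n})/\!\sim$) — the only imprecision being that Keel factors the map $\M_{0,n+1}\to \M_{0,n}\times\M_{0,4}$ as a composition of blowups along smooth codimension-two centers rather than the universal-curve projection $\M_{0,n+1}\to\M_{0,n}$ itself, which does not affect the strategy.
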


Any simple closed loop on a stable curve in the Deligne-Mumford stable compactification of $\Mn$ partitions the points of $Z$ into two subsets
as in figure 9.
Pinching this loop to a point yields a nodal topological surface.  The stable curves of this type are obtained by
putting all possible complex structures on this topological surface.
A single boundary component parametrizes these stable curves for a
given pinched loop.
We
denote by $d_A$
the boundary divisor in which the loop pinches the subset
$A\subset Z$, hence $d_A=d_{Z\setminus A}$.  We denote the set of
irreducible boundary divisors on $\M_{0,n}$ by $D^n$.  This set has
cardinality $2^{n-1}-1-n$.

\begin{center}
 \scalebox{.8}{\input{pinchPenn.pstex_t}}
\end{center}
\begin{ex}

The set of boundary divisors, $D^4$, on $\M_{0,4}$ contains the
three divisors, $d_{z_1,z_2}, d_{z_1,z_3}$ and $d_{z_1,z_4}$.

The set of boundary divisors, $D^5$, on $\M_{0,5}$ contains the 10
divisors, $d_A$, where $A\subset Z$ has cardinality 2. 
\end{ex}

\begin{thm}\cite{Ke}
A presentation of 
the Picard group, $Pic(\M_{0,n})$, is given by taking 
the classes, $\delta_A$ of the boundary divisors, $d_A\in D^n$ as generators, subject to the
following relations:  for any four distinct elements,
$z_i,z_j,z_k,z_l$ in $Z$,
$$\sum_{\textstyle{{z_i,z_j \in A}\atop{z_k,z_l\notin A}}} \delta_A =
\sum_{\textstyle{{z_i,z_k \in A}\atop{z_j,z_l\notin A}}} \delta_A =
\sum_{\textstyle{{z_i,z_l \in A}\atop{z_j,z_k\notin A}}} \delta_A.$$
\end{thm}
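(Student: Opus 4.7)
The plan is to exploit the universal property of $\M_{0,n}$ via forgetful maps to $\M_{0,4}$. First I would observe that for each four-element subset $\{z_i, z_j, z_k, z_l\} \subset Z$ there is a natural forgetful morphism $\pi_{ijkl}: \M_{0,n} \to \M_{0,4} \cong \Pro^1$ that drops all other marked points and contracts the resulting unstable components. Since any two points on $\Pro^1$ are linearly equivalent, the three boundary points of $\M_{0,4}$, corresponding to the three partitions $\{ij|kl\}$, $\{ik|jl\}$, $\{il|jk\}$, define a common divisor class on $\Pro^1$. Pulling this equivalence back through $\pi_{ijkl}^*$ should yield the desired three-way equality once I identify precisely which boundary divisors of $\M_{0,n}$ contribute to each pullback.

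The next step is to carry out this identification. I would argue that the scheme-theoretic preimage $\pi_{ijkl}^{-1}(d_{ij|kl})$ is, as a Weil divisor, the reduced sum of all boundary divisors $d_A \subset \M_{0,n}$ with $\{z_i,z_j\} \subset A$ and $\{z_k,z_l\} \cap A = \emptyset$, because a stable $n$-pointed curve lies over $d_{ij|kl}$ iff its stabilization separates the pair $\{z_i,z_j\}$ from the pair $\{z_k,z_l\}$, which is exactly the combinatorial condition on $A$. Applying $\pi_{ijkl}^*$ to the three equal point classes then produces the three equal sums appearing in the theorem, establishing that the Keel relations hold in $Pic(\M_{0,n})$.

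The converse direction, that these relations generate all linear relations among the $\delta_A$, is the hard part of the plan. I would proceed by induction on $n$, using the universal curve fibration $\pi_n: \M_{0,n+1} \to \M_{0,n}$ obtained by forgetting the last marked point. The base case $n = 4$ is immediate: $Pic(\M_{0,4}) = Pic(\Pro^1) = \Z$, and the only nontrivial relations set the three boundary points equivalent, which is exactly Keel's relation for the unique quadruple. For the inductive step, one would track how $Pic(\M_{0,n+1})$ is assembled from $Pic(\M_{0,n})$, using the fact that the new generators are pullbacks of classes on $\M_{0,n}$ together with boundary divisors of the form $d_{A \cup \{z_{n+1}\}}$, and check that each new relation coming from a four-tuple containing $z_{n+1}$ reduces, modulo previously established ones, to a Keel relation.

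The main obstacle I anticipate is the combinatorial bookkeeping in the inductive step, in particular keeping track of which boundary classes one has already shown to be expressible in a chosen basis. The polygon-based approach hinted at in the chapter introduction suggests a cleaner combinatorial route: fix a cyclic order on $Z$, call $d_A$ \emph{non-adjacent} if $A$ does not form a consecutive block in that order, and use the Keel relations to rewrite every boundary class as a $\Z$-linear combination of non-adjacent ones. One would then verify that the number of non-adjacent divisors matches the known rank of $Pic(\M_{0,n})$, forcing linear independence and thereby completing the proof that Keel's relations are exhaustive. The real work will lie in the reduction algorithm that moves an adjacent divisor $d_A$ across a common boundary of the polygon: I expect this to be a finite, monovariant-controlled process analogous to the Lyndon-reduction procedures used for cohomology bases in the previous chapters.
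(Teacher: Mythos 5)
The paper does not prove this statement: it is quoted verbatim from Keel [Ke], so there is no in-paper argument to compare yours against. Judged on its own terms, your plan reproduces the shape of Keel's original proof. The forward direction is fine: the pullback under $\pi_{ijkl}$ of the three (linearly equivalent) boundary points of $\M_{0,4}\cong\Pro^1$ is exactly the three sums in the statement, and the components $d_A$ do occur with multiplicity one, so the relations hold in $Pic(\M_{0,n})$.

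The completeness direction is where all the content lives, and your sketch leaves it essentially open. Saying you will ``track how $Pic(\M_{0,n+1})$ is assembled from $Pic(\M_{0,n})$'' is not yet an argument: the input that makes the induction work in [Ke] is the structural theorem that $\M_{0,n+1}$ is an explicit iterated blowup of $\M_{0,n}\times\Pro^1$ along the proper transforms of the $n$ tautological sections; only then do the blowup formula for Picard groups and the identification of the exceptional divisors with the $d_{A\cup\{z_{n+1}\}}$, $|A|=2$, give you control of the new generators and of which relations are needed. Without that (or an equivalent substitute) the inductive step does not close. Your alternative route via the non-adjacent basis is viable but has two gaps you should make explicit. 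First, to avoid circularity you need the rank $\mathrm{rk}\,Pic(\M_{0,n})=2^{n-1}-1-\binom{n}{2}$ computed \emph{independently} of the presentation (e.g.\ as $\dim H^2(\M_{0,n})$ via the fibration/point-counting methods of this chapter); only then does ``spanning set of the right cardinality'' force both independence and exhaustiveness of the relations. Second, the claim that the Keel relations suffice to rewrite every adjacent $\delta_A$ in terms of non-adjacent ones is itself a nontrivial combinatorial theorem — it is precisely what the remainder of this section of the thesis establishes, with explicit coefficients, following [Gi] — so it cannot be waved through as routine bookkeeping.
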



The following theorem specifies a basis
 for $Pic(\M_{0,n})$ and also yields an expression for its dimension.

\begin{thm}\label{Keel} [Gi, 2008]
Let $[z_{i_1},...,z_{i_n}]$ denote a cyclic ordering of the marked
points, considered as labelling the consecutive edges of an $n$-gon.  Then a basis for $Pic(\M_{0,n})$ is given by the
divisors defined by
nonempty subsets of marked points on the $n$-gon which do not form an
adjacent set of vertices on the $n$-gon.  We call this set of divisors
the {\bf non-adjacent basis}.\end{thm}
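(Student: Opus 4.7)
The plan is to prove that the non-adjacent divisors form a basis by combining a dimension count with a spanning argument that uses the Keel relations. First, I would verify that the number of non-adjacent boundary divisors matches the known Picard rank. By Keel's theorem $\mathrm{rk}\, Pic(\M_{0,n}) = 2^{n-1}-1-\binom{n}{2}$. The total number of irreducible boundary divisors is $2^{n-1}-1-n$. For each $k$ with $2 \le k \le n-2$ there are exactly $n$ subsets of $Z$ of size $k$ that form a consecutive block on the $n$-gon; after accounting for the identification $d_A = d_{Z\setminus A}$, the total number of adjacent divisors is $n(n-3)/2$. The difference $(2^{n-1}-1-n) - n(n-3)/2 = 2^{n-1}-1-\binom{n}{2}$ equals the Picard rank, so the count of non-adjacent divisors matches exactly.

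Given this count, it is enough to prove spanning: every adjacent boundary divisor lies in the $\Q$-span of the non-adjacent ones. I would prove this by induction on $k=|A|$, where $A$ is the adjacent subset representing $d_A$; by the symmetry $d_A = d_{Z\setminus A}$ we may assume $k \le \lfloor n/2 \rfloor$. For the inductive step, given an adjacent $A = \{z_{i_1},\dots,z_{i_k}\}$ occupying $k$ consecutive positions on the $n$-gon, I would apply a Keel relation to a carefully chosen $4$-tuple, taking one vertex at each end of $A$ and two vertices outside $A$ (one immediately adjacent to $A$, one far away). Expanding the sums $\sum_{\{z_i,z_j\} \subset A'}\delta_{A'}$ on each side of the relation and cancelling terms that appear symmetrically should let me isolate $\delta_A$ as a $\Q$-combination of (i) non-adjacent divisors $\delta_B$, and (ii) adjacent divisors $\delta_{A'}$ with $|A'|<k$, which are handled by the induction hypothesis.

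The main obstacle is the combinatorial bookkeeping in this inductive step: for each adjacent $A$ one must identify a specific Keel $4$-tuple that yields a relation in which $\delta_A$ appears with nonzero coefficient, while every other same-size or larger adjacent divisor either cancels across the two sides or turns out to be non-adjacent. A natural candidate (after relabelling so that $A = \{z_1,\dots,z_k\}$ in the cyclic order) is the $4$-tuple $\{z_1, z_k, z_{k+1}, z_n\}$; among the three pairings only the crossing pairing $\{z_1 z_{k+1}\,|\, z_k z_n\}$ can force both ends of $A$ into the same part of the partition while keeping the cyclically adjacent ``outside'' pair separated, which sharply restricts which consecutive blocks can contribute. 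Verifying that this choice works for every $k$ and that the resulting adjacent divisors have size strictly less than $k$ is the delicate combinatorial lemma at the heart of the argument.

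Once spanning is established, the matching dimension count from the first step automatically upgrades the spanning set to a basis, completing the proof.
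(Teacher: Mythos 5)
Your proposal is essentially correct, and it is worth noting at the outset that the paper does not actually prove this statement: Theorem \ref{Keel} is quoted from Gibney--Maclagan [Gi], and only the $N=2$ coefficient formula (Theorem \ref{Gib}) and the subsequent recursion are used in the text. So your argument has to stand on its own, and it does. The dimension count is right: there are $n$ consecutive blocks of each size $k$ with $2\le k\le n-2$, each divisor is counted twice via $d_A=d_{Z\setminus A}$, so there are $n(n-3)/2$ adjacent divisors, and $(2^{n-1}-1-n)-n(n-3)/2=2^{n-1}-1-\binom{n}{2}$ matches Keel's rank. Given that, spanning suffices.

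The step you flag as the delicate combinatorial lemma in fact resolves more cleanly than you anticipate, and with your concrete candidate $\{z_1,z_k,z_{k+1},z_n\}$ no induction on $|A|$ is needed at all. For the pairing separating $\{z_1,z_k\}$ from $\{z_{k+1},z_n\}$: a consecutive arc containing $z_1$ and $z_k$ must run either through $z_2,\dots,z_{k-1}$ or through $z_{k+1},\dots,z_n$; the latter is forbidden, and the former cannot extend past $z_1$ or $z_k$ without hitting $z_n$ or $z_{k+1}$, so $A$ itself is the \emph{only} adjacent divisor in that sum (the same check on complements gives nothing new). For the pairing separating $\{z_1,z_{k+1}\}$ from $\{z_k,z_n\}$: any consecutive arc joining $z_1$ to $z_{k+1}$ passes through $z_k$ one way and through $z_n$ the other, so this sum contains \emph{no} adjacent divisors whatsoever. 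Equating the two sums therefore writes $\delta_A$ directly as a $\pm 1$ combination of non-adjacent divisors (e.g.\ for $n=5$, $\delta_{\{z_1,z_2\}}=\delta_{\{z_1,z_3\}}+\delta_{\{z_2,z_5\}}-\delta_{\{z_3,z_5\}}$). One caution: your prose description of the $4$-tuple ("one immediately adjacent to $A$, one far away") would break the argument --- if the second outside point $z_m$ is not the immediate cyclic neighbour $z_n$ of $z_1$, then the first sum acquires the larger adjacent blocks $\{z_j,\dots,z_n,z_1,\dots,z_k\}$ for $m<j\le n$, which your induction on $|A|$ cannot absorb. Both auxiliary points must be the two edges immediately flanking the block $A$; with that choice your proof is complete.
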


{\bf Remark}.
The following combinatorial formula for the dimension follows
immediately from counting the elements of the non-adjacent basis:
\begin{equation}
\mathrm{dim}(Pic(\M_{0,n}))=2^{n-1}-1-n- {n\choose
  2}+n=2^{n-1}-1-{n\choose 2}.
\end{equation}
This dimension was found by S. Keel in \cite{Ke} as the dimension of the first Chow group of $\M_{0,n}$.

%

\begin{ex}
Consider the standard ordering,
$(z_1,z_2,z_3,z_4,z_5)$.  Then the non-adjacent basis for
$Pic(\M_{0,5})$ for this ordering is
given by the five divisors $$d_{\{z_1,z_3\}}, d_{\{ z_1,z_4\}}, d_{\{z_2,z_4\}},
d_{\{z_2,z_5\}}, d_{\{z_3,z_5\}}.$$
\end{ex}

\section{A new presentation of $Pic(\M_{0,n})$}

In this section, we give a simple expression of each boundary divisor
in $Pic(\M_{0,n})$ in
terms of any non-adjacent basis.
This yields a new and very simple presentation for $Pic(\M_{0,n})$
with a minimal set of relations.

Before stating the theorem, we introduce some notation.
Given an $n$-gon decorated by marked points in the cyclic order, $(z_{i_1},...,z_{i_n})$, a
divisor in the basis of 
the Picard group can be described as an ordered list of disjoint subsets,
$B_1,...,B_N$, where each $B_i$ is a set of adjacent points on the
$n$-gon but no pair $(B_J,B_{J+1})$ is a set of adjacent points,
and the divisor is given by the blowup at the equality of the marked points in
$\cup_{1}^N B_i$.  Each pair, $B_{J}, B_{J+1}$ mod $N$, defines
a non-empty gap between them which we denote $G_J$.\label{adjacentsetdef}\index{$B_i,\ G_i$}  Specifically, let 
$B_J=\{z_{i_j},...,z_{i_{j+k}} \},
B_{J+1}=\{z_{i_p},...,z_{i_{p+q}}\}$.  Then
$G_J=\{z_{i_{j+k+1}},...,z_{i_{p-1}}\}$. In this way, we can write a
basis divisor as $(B_1,G_1,...,B_N,G_N)$. 

\begin{thm} Let $\delta$ denote a dihedral ordering $(z_{i_1},\ldots,
z_{i_n})$ on the points $z_1,\ldots,z_n$.  Then
$Pic(\M_{0,n})$ is generated by the set of boundary divisors of
$\M_{0,n}$ (denoted by subsets of $\{z_1,\ldots,z_n\}$ of cardinality
between $2$ and $n-2$), subject to the relations

\begin{equation}\label{sarahsthm}\delta_I=\sum_{J\in {\cal J}}
  \delta_J - \sum_{K\in {\cal K}} 
\delta_K,
\end{equation}
where $I$ denotes a consecutive subset of points for the ordering
$\delta$, ${\cal J}$ denotes the set of non-adjacent subsets 
$$J=B_1\cup \cdots \cup B_j$$ 
of $\{z_1,\ldots,z_n\}$ such that $I$ is equal to a ``segment'' of even
length, $$
B_i,G_i,\ldots,B_k,G_k\hbox{ or }G_i,B_{i+1},G_{i+1},\ldots,G_{k},B_{k+1}$$
of $(B_1,G_1,\ldots,B_N,G_N)$, and ${\cal K}$ denotes the set of
non-adjacent subsets $K=B_1\cup \cdots\cup B_j$ such that
$I$ is equal to a ``segment'' of odd length,
$$B_i,G_i,\ldots,B_k,G_k,B_{k+1}\hbox{ or }
G_i,B_{i+1},G_{i+1},\ldots,B_k,G_k$$
of $(B_1,G_1,\ldots,B_N,G_N)$.
\end{thm}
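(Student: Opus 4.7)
The plan is to reduce the theorem to verifying that the identity \eqref{sarahsthm} holds in $Pic(\M_{0,n})$, using Gibney's theorem \ref{Keel} as a pivot. Since the non-adjacent divisors relative to $\delta$ already form a basis, every class in $Pic(\M_{0,n})$ admits a unique expression in that basis. Hence, once \eqref{sarahsthm} is known to hold, the presentation is automatically complete: any other relation among boundary divisors can be recovered by expanding each side in the non-adjacent basis using \eqref{sarahsthm} and comparing coefficients, and conversely Keel's four-point relations are recovered the same way. The theorem therefore reduces to a verification of the identity, together with a combinatorial lemma showing that the right-hand side is well defined (no $J$ contributes twice with inconsistent signs).

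To verify the identity, I would proceed by induction on $|I|$. The base case $|I|=2$ matches Keel's four-point relation (theorem \ref{Keelsbasis}) applied to the two endpoints of $I$ together with a pair of strategically chosen points that are non-adjacent to $I$ on the $\delta$-polygon; after collecting terms one sees exactly the alternating expansion of \eqref{sarahsthm}. For the inductive step, I would use a Keel relation along a chord that splits the consecutive set $I$ into two shorter consecutive subsets $I_1$ and $I_2$, rewrite the resulting adjacent divisors via the inductive hypothesis, and then recombine. The heart of the argument is the claim that the iterated expansion precisely enumerates the ``segments'' of the alternating sequence $(B_1,G_1,\ldots,B_N,G_N)$ attached to each non-adjacent $J$, and that the contribution of the segment has sign $+1$ when the segment has even length (starts and ends with different symbols $B/G$) and $-1$ when it has odd length (starts and ends with the same symbol).

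The main obstacle will be the combinatorial bookkeeping of signs. Concretely, I would picture $I$ as a chord of the $\delta$-polygon cutting $\{z_1,\ldots,z_n\}$ into $I$ and $Z\setminus I$, and for each candidate non-adjacent $J=B_1\cup\cdots\cup B_N$ track how the $B_k$'s and the gaps $G_k$'s straddle this chord. The key lemma to prove is that $J$ contributes to the expansion of $\delta_I$ if and only if the boundary of ``$I$ viewed inside $J$'' falls at the predicted symbol-transitions, with parity matching the two cases in the theorem; this is an inclusion-exclusion argument on which pairs of successive $(B_k,G_k)$-boundaries coincide with the two endpoints of the chord for $I$. The cyclic structure of the polygon requires care, since a segment can wrap around the $n$-gon and the same $J$ can a priori realize $I$ in more than one way; the lemma must show that in fact each $J$ realizes $I$ as a segment in at most one way, so the sum is well defined.

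Finally, I would sanity-check the formula by computing small cases ($n=5$, where both the basis and the adjacent divisors are small enough to enumerate, and $n=6$), to confirm the sign conventions and the indexing of segments before committing to the general inductive combinatorial argument. Once the identity \eqref{sarahsthm} is established, the presentation statement follows immediately from Gibney's theorem as described above.
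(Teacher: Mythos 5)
Your reduction of the theorem to the single identity \eqref{sarahsthm} is fine and matches the paper's framework: once the non-adjacent divisors are known to form a basis (theorem \ref{Keel}), establishing the expansion of each consecutive $\delta_I$ with the stated $\pm 1$ coefficients yields the presentation. The gap is in how you propose to verify the identity. An induction on $|I|$ driven by Keel's four-point relations is not well-founded. A Keel relation is an equality of sums $\sum_{z_i,z_j\in A,\ z_k,z_l\notin A}\delta_A=\sum_{z_i,z_k\in A,\ z_j,z_l\notin A}\delta_A$ taken over \emph{all} stable $A$ with the prescribed containments, and however you choose the four points relative to the chord of $I$, the second sum unavoidably contains consecutive (adjacent) divisors of arbitrary cardinality --- for instance arcs wrapping around the complementary side of the polygon, whose complements are consecutive sets both larger and smaller than $I$. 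So the step "rewrite the resulting adjacent divisors via the inductive hypothesis" is circular: you would need the formula for consecutive sets you have not yet treated. Even your base case $|I|=2$ already produces consecutive divisors of every size on the other side of the relation, so it is not self-contained.

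The paper sidesteps this by inducting on a different parameter entirely: not on $|I|$, but on the number $N$ of blocks $B_1,\ldots,B_N$ of the non-adjacent basis element whose coefficient $C^I_{B_1,\ldots,B_N}$ is being computed. The base case $N=2$ is Gibney's explicit coefficient formula (theorem \ref{Gib}), valid for all consecutive $I$ at once. The inductive step is a change-of-basis trick: the merged divisor $(B_1\cdots B_{N-1},\,G_1\cdots G_{N-1},\,B_N,\,G_N)$ is a two-block element of a \emph{different} non-adjacent basis $\Delta_{\delta'}$, and comparing the coefficient of $\delta_I$ on it computed in the two bases yields the four-term recursion $C^I_{B_1,\ldots,B_N}=C^I_{B_1\cdots B_{N-1},B_N}+C^I_{B_1,\ldots,B_{N-1}}+C^I_{G_1,\ldots,G_{N-1}}-C^I_{G_NB_1,B_2,\ldots,B_{N-1}}$, whose terms all involve at most $N-1$ blocks or are covered by the $N=2$ formula. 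The sign bookkeeping is then a finite case check on where $I$ sits among the vertices $B_i,G_j$ of the polygon picture. If you want to salvage your route, you would need to replace the induction on $|I|$ by an induction on the complexity of the basis element, which is essentially what the paper does.
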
 

The beauty of the theorem is more easily seen by rephrasing it as: the coefficients of any divisor in the basis of the Picard group given by a cyclic ordering can be calculated by the parity of the defining blocks of the divisor.  
The precise statement of the theorem does not do justice to its simplicity, as illustrated in the following example.

\begin{ex}
We have the following expression for the divisor, $\delta_{1,2,3}$, in the basis of $Pic(\M_{0,6})$ given by the cyclic ordering $(1,2,3,4,5,6) =(z_1,z_2,z_3,z_4,z_5,z_6)$:
$$\delta_{1,2,3} = -\delta_{1,3} + \delta_{1,4} +\delta_{3,6}-\delta_{4,6}+\delta_{1,2,4}-\delta_{1,3,5}+\delta_{1,4,5}.$$
\end{ex}

\begin{proof}
We will do this proof by induction.  

Let $\Delta_\delta$ be set of divisors which is a basis for
$Pic(\M_{0,n})$ with respect to the cyclic order $\delta$ by theorem
\ref{Keel}.  We denote by 
$\delta_{B_1\cdots B_N} = (B_1,G_1,\ldots,B_N,G_N)$ an element of
$\Delta_\delta$.  Let $I$ be a consecutive subset for the cyclic order
$\delta$, and $\delta_I$ the corresponding boundary divisor in the
Picard group.  Then we
may restate the theorem as follows.  One can express $\delta_I$ as a
linear combination of elements of $\Delta_\delta$:

\begin{equation}\label{deltaI}
\delta_I = \sum C_{B_1,...,B_{J_k}}^I (B_1,G_1...,B_{J_k}, G_{J_k}),
\end{equation}\index{$C_{B_1,...,B_{J_k}}^I$} and the coefficients are given by
\begin{equation}\label{piccoeffs}
C^I_{B_1,...,B_N} = \begin{cases} 
1 & I=\bigcup_{p=1}^j B_{i+p} \cup
  G_{i+p}\\
-1 & I=\bigl(\bigcup_{p=1}^j B_{i+p} \cup
  G_{i+p} \bigr) \cup B_{i+j+1}\\
0 &\hbox{otherwise},
\end{cases}
\end{equation}
where for $1<p<j$, $i+p$ is taken modulo $n$. 

The following theorem gives the base case for the induction.

\begin{thm}\label{Gib}\cite{Gi}
The coefficient, $C_{B_1,B_2}^I$ in the Picard basis with respect to
$\gamma$ of the basis divisor $(B_1,G_1,B_2,G_2)$ is given by
\begin{equation}\label{angela}
C_{B_1,B_2}^I =
\begin{cases} 1 & I=B_i\cup G_j \hbox{ for any } i,j \\
-1 & I=B_i\hbox{ or }I=G_j\hbox{ for any } i,j\\
0 &\hbox{otherwise.}
\end{cases}
\end{equation}
Furthermore, by recursion on $N$, this formula allows one to calculate
$C^I_{B_1,...,B_N}$ for any basis element $(B_1,...,G_N)\in \Delta_\delta$.
\end{thm}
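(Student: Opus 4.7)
The plan is to proceed by induction on $N$, taking $N=2$ as the base case and reducing each $N\geq 3$ case to smaller values via Keel's four-point relations.

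For the base case, fix a basis element $(B_1,G_1,B_2,G_2)\in\Delta_\delta$, assume first that each of the four pieces is non-empty, and choose representatives $a\in B_1$, $b\in G_1$, $c\in B_2$, $d\in G_2$. The three Keel sums $S_{ab,cd}$, $S_{ac,bd}$, $S_{ad,bc}$, indexed by the three pairings of $\{a,b,c,d\}$, are all equal. A given divisor $\delta_K$ appears in exactly one of these sums, determined by how the four chosen points split across the partition $\{K,K^c\}$. In particular the basis divisor $\delta_{B_1\cup B_2}$ appears only in $S_{ac,bd}$; a direct inspection shows that the only other non-adjacent basis elements occurring in any sum are $\delta_{B_1\cup G_1}$, $\delta_{G_1\cup B_2}$, $\delta_{B_2\cup G_2}$, and $\delta_{G_2\cup B_1}$, each in exactly one sum. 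Setting $S_{ac,bd}=S_{ab,cd}=S_{ad,bc}$ and solving for $\delta_{B_1\cup B_2}$ isolates formula \eqref{angela}, with the four arcs $B_i\cup G_j$ contributing $+1$ and the four corners $B_i, G_j$ contributing $-1$; the degenerate sub-cases where some piece is empty are handled by extending the argument with the appropriate Keel specialization on $\M_{0,n'}$ for $n'<n$.

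For the induction step, assume \eqref{piccoeffs} for all basis elements with fewer than $N\geq 3$ blocks. Given $(B_1,G_1,\ldots,B_N,G_N)\in\Delta_\delta$, the idea is to apply the base-case identity to the four-piece arrangement $B_1,\,G_1,\,B_2,\,G_2\cup B_3\cup\cdots\cup B_N\cup G_N$, viewing the trailing union as a single super-gap. This expresses $\delta_{B_1\cup\cdots\cup B_N}$ as a signed combination of divisors whose defining subsets have strictly fewer blocks in their non-adjacent decomposition; invoking the induction hypothesis and collecting the coefficient of $\delta_I$ yields an expression that can be matched term-by-term to \eqref{piccoeffs}. The alternation between even-length segments (coefficient $+1$) and odd-length segments ($-1$) in the general formula arises as the composition of the $\pm 1$ signs of the arcs and corners from the base case.

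The main obstacle will be the cancellation argument in the base case: proving that when one forms $S_{ac,bd}-S_{ab,cd}$ and $S_{ac,bd}-S_{ad,bc}$, the only non-adjacent contributions that survive are $\delta_{B_1\cup B_2}$ and the four arcs listed above. This requires exhibiting, for each of the many non-adjacent subsets $K$ potentially contributing, a canonical partner $K'$ with $\delta_{K'}=\delta_K$ that cancels it, and verifying that the surviving consecutive subsets exhaust precisely the four corners and four arcs. A secondary concern is a consistency check in the induction step: different choices of where to place the Keel auxiliary points must give the same final coefficient, which one verifies directly from the combinatorics of the parity prescription in \eqref{piccoeffs}.
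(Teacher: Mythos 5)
Your base case does not work as described. The three Keel sums $S_{ab,cd}$, $S_{ac,bd}$, $S_{ad,bc}$ partition the boundary divisors $\delta_K$ for which $\{a,b,c,d\}$ is split two--two by $\{K,K^c\}$: no divisor class appears in more than one of the three sums, so in the differences $S_{ac,bd}-S_{ab,cd}$ and $S_{ac,bd}-S_{ad,bc}$ there is nothing to cancel, and each sum contains, for $n\geq 6$, many non-adjacent basis elements (namely every non-consecutive $K$ containing exactly two of the four chosen points and omitting the other two; already for $n=6$ with $a=z_1$, $b=z_2$, $c=z_3$, $d=z_4$ the single sum $S_{ac,bd}$ contains the basis elements $\delta_{\{z_1,z_3\}}$, $\delta_{\{z_1,z_3,z_5\}}$, $\delta_{\{z_1,z_3,z_6\}}$, $\delta_{\{z_1,z_3,z_5,z_6\}}$). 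Your claim that only five basis elements occur across the three sums is therefore false, and the four sets $B_i\cup G_j$ you list are consecutive arcs, hence not non-adjacent basis elements at all. A single Keel relation cannot isolate the coefficient of $\delta_{B_1\cup B_2}$; what is needed is a linear functional on $Pic(\M_{0,n})$ vanishing on every non-adjacent basis element except $(B_1,G_1,B_2,G_2)$. That functional is intersection with the F-curve attached to the four-block partition $B_1,G_1,B_2,G_2$, and formula \eqref{angela} is precisely the standard F-curve intersection formula ($+1$ on unions of two of the blocks, $-1$ on single blocks, $0$ otherwise) restricted to consecutive $I$. This is Gibney's argument; the paper does not reprove the $N=2$ case but cites it as \cite{Gi}.

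Your induction step also fails as stated. Applying the $N=2$ formula to the arrangement $B_1,\,G_1,\,B_2,\,G_2\cup B_3\cup\cdots\cup G_N$ merely recomputes $C^I_{B_1,B_2}$, the coefficient on the two-block basis element $\delta_{B_1\cup B_2}$ of the same basis $\Delta_\delta$; it gives no expression for $\delta_{B_1\cup\cdots\cup B_N}$ and no relation involving $C^I_{B_1,\ldots,B_N}$. The recursion that works, and that the paper carries out in proving \eqref{sarahsthm}, requires passing to a second dihedral order $\delta'=(B_1,\ldots,B_{N-1},G_1,\ldots,G_{N-1},B_N,G_N)$ in which $B_1\cup\cdots\cup B_{N-1}$ and $G_1\cup\cdots\cup G_{N-1}$ become consecutive. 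One computes the coefficient of $\delta_I$ on the merged element $d=(B_1\cdots B_{N-1},G_1\cdots G_{N-1},B_N,G_N)\in\Delta_{\delta'}$ in two ways: directly from the $N=2$ formula in the $\delta'$-basis, and by expanding $\delta_I$ in the $\delta$-basis and then re-expanding each $\delta$-basis element on $d$. Since by the $N=2$ formula only four elements of $\Delta_\delta$ have nonzero coefficient on $d$ (with signs $1,-1,-1,1$), this yields the linear equation \eqref{divcoeffs} expressing $C^I_{B_1,\ldots,B_N}$ through coefficients of basis elements with fewer blocks. Without the change of dihedral order there is no such reduction, so your sketch is missing the essential idea of the recursion.
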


To calculate the coefficient recursively for $N=3$, we use the following
artful technique due to A. Gibney.  Let $B_1B_2=B_1\cup B_2$,
$G_1G_2=G_1 \cup G_2$ and $G_3B_1=G_3 \cup B_1$.  Consider another basis
of the Picard group
containing $(B_1B_2, G_1G_2, B_3, G_3)$.  Then the coefficient of
$\delta_I$ on $(B_1B_2, G_1G_2, B_3, G_3)$, $C^I_{B_1B_2, B_3}$, 
is equal to the coefficient of 
\begin{equation}\label{bd}
\sum C_{B_1,...,B_{J_k}}^I
(B_1,G_1...,B_{J_k}, G_{J_k})\hbox{ on }(B_1B_2, G_1G_2, B_3, G_3),
\end{equation}
 by the
expression \eqref{deltaI} in this new basis.  By theorem \ref{Gib}, the
the only non-zero terms in the expression \eqref{bd} are
the four terms in which the basis element in the basis with respect to
$\delta$ can be written as a union of the sets, $B_1B_2, G_1G_2, B_3$
and $G_3$:
$$ C^I_{B_1,B_2,B_3}, C_{B_1,B_2}^I, C_{G_1, G_2}^I\hbox{ and }
C^I_{G_3B_1, B2}.$$  
Hence we have
\begin{align}\label{N3}
C^I_{B_1B_2,B_3} & = C^I_{B_1,B_2,B_3} C^{B_1,B_2,B_3}_{B_1B_2,B_3} +
C^I_{B_1,B_2} C^{B_1,B_2}_{B_1B_2,B_3} + C^I_{G_1,G_2}
C^{G_1,G_2}_{B_1B_2,B_3} \notag \\ & \qquad + C^I_{G_3B_1,B_2}
C^{G_3B_1,B_2}_{B_1B_2,B_3} \notag \\
&= C^I_{B_1,B_2,B_3} - C^I_{B_1,B_2} -C^I_{G_1,G_2} + C^I_{G_3B_1,B_2}
\notag \\
C^I_{B_1,B_2,B_3} & = C^I_{B_1B_2,B_3} + C^I_{B_1,B_2} + C^I_{G_1,G_2}
- C^I_{G_3B_1,B_2}.
\end{align}

\begin{ex}
In $\M_{0,6}$, take a basis for the Picard group defined by the
standard cyclic order, $(z_1,z_2,z_3,z_4,z_5,z_6)$.  In this example, we
write the divisor, $\delta_I$, $I=\{z_2,z_3,z_4\}$, in this basis.  By
theorem \ref{Gib}
we can calculate most 
of the coefficients directly, since all but one of the basis elements
can be written as the partition into four sets, $(B_1, G_1, B_2, G_2)$.
\begin{align}
\delta_I = \delta_{z_1,z_4} -\delta_{z_1,z_5} -\delta_{z_2,z_4}
+\delta_{z_2,z_5} + \delta_{z_1,z_3,z_4} 
+\delta_{z_1,z_4,z_6}  +C^I_{\{z_1\},
  \{z_3\}, \{z_5\}} \delta_{z_1,z_3,z_5}.
\end{align}
We can now apply the recursion step in \eqref{N3}. The first term,
$C^I_{\{z_1,z_3\}, \{z_4\}}=0$ by theorem \ref{Gib}.  Likewise,
$C^I_{\{z_1\},\{z_3\}} = 0$, $ C^I_{\{z_2 \},\{z_4\}} = -1$ and
$C^I_{\{z_1,z_6\},\{z_3\}} = 0$, so $C^I_{\{z_1\},
  \{z_3\}, \{z_5\}} = -1$.
\end{ex}

We can generalize this recursive procedure to prove
\eqref{piccoeffs},
which is equivalent to the formula \eqref{sarahsthm} in the statement
of the theorem.

It will be useful to consider a visual interpretation
of a basis divisor $(B_1,...,G_N)$, which pictures the divisor as an
$N$-gon with the sets $B_i, G_j$ on the vertices as in figure \ref{pillowgon2}.

\begin{figure}
\begin{center}
 \input{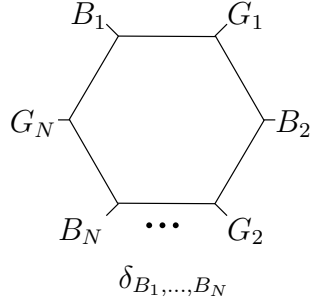}
\caption{Polygon representing a basis divisor}
\label{pillowgon2}
\end{center}
\end{figure}

We have an equivalent restatement of theorem using the pictorial representation
 in figure \ref{pillowgon2} of a basis divisor: The coefficient of
 $\delta_I$ on the 
basis divisor
$\delta_{B_1,...,B_{N}}$ is $(-1)^m$ if $I$ is the union of $m$ of
  the vertices on the polygon representation of the basis divisor
  $\delta_{B_1,...,B_{N}}$.

Using this pictorial interpretation, we prove \eqref{piccoeffs}
 by induction on $N$, the number of blocks of consecutive elements on
 $\delta$ that define the basis divisor.

Statement \ref{sarahsthm} is true for
$N=2$ by theorem \ref{Gib}.  

Assume statement \ref{sarahsthm} is true for $N-1$.  Let
$(B_1,..,G_N)$ be a basis element in $\Delta_\delta$.

We denote by $B_1\cdots B_{N-1} = B_1\cup \cdots \cup B_{N-1}$, $G_1\cdots
G_{N-1} = G_1\cup \cdots \cup G_{N-1}$, $G_NB_1=G_N\cup B_1$. Let
$\delta'$ denote the dihedral ordering of $\{z_1,\ldots,z_n\}$ given
by $$\delta'=(B_1,\ldots, B_{N-1},G_1,\ldots, G_{N-1},B_N,G_N),$$ where the
ordering on the points inside each set $B_i$, $G_i$ is that inherited
from $\delta$.  By theorem \ref{Keel}, the ordering $\delta'$ determines
a basis $\Delta_{\delta'}$ of
$Pic(\M_{0,n})$, and the divisor $\delta_{B_1\cdots B_{N_1},B_N}$,
which
we denote by $d=(B_1\cdots B_{N-1}, G_1\cdots G_{N-1}, B_N,
  G_N)$ is in the set $\Delta_{\delta'}$.
 
By expression \eqref{bd}, the coefficient of
$\delta_I$ on $d$ in the 
$\Delta'_{\delta'}$ basis is
equal to the coefficient of 
\begin{equation*}
\sum C_{B_1,...,B_{J_k}}^I
(B_1,G_1...,B_{J_k}, G_{J_k})\hbox{ on } d.
\end{equation*}

Just as in expression \eqref{N3} for $N=3$, the only divisors in 
  $\Delta_{\delta}$ which have a non-zero coefficient on $d$ 
  are 
\begin{equation}
\begin{cases}
\delta_{B_1\cup \cdots \cup B_N}=(B_1,G_1,\ldots,B_N,G_N)\\
\delta_{B_1\cup\cdots\cup B_{N-1}}=(B_1,G_1,\ldots,B_{N-1},G_{N-1}B_NG_N)\\ 
\delta_{G_1\cup \cdots \cup G_{N-1}}=(G_1,B_2,...,G_{N-1},B_{N}G_NB_1)\\
\delta_{G_N\cup B_1\cup B_2\cup\cdots\cup B_{N-1}}=(G_NB_1,G_1,
 B_2,...,G_{N-2},B_{N-1},G_{N-1}B_N).
\end{cases}
\end{equation}
By \eqref{angela}, these four coefficients are respectively
$1,-1, -1$ and $1$.  Thus, we obtain  
\begin{equation}\label{divcoeffs} C^I_{B_1,...,B_N}  = C^I_{B_1\cdots
    B_{N-1}, B_N} 
+C^I_{B_1,...,B_{N-1}} + C^I_{G_1,...,G_{N-1}} -C^I_{G_NB_1, B_2,...,
  B_{N-1}}
\end{equation}
which we rewrite more concisely as
\begin{equation}C =T_1+T_2 +T_3-T_4.\end{equation}
Each of the basis
elements, $\delta_{B_1\cdots B_{N-1}, B_N}, \delta_{B_1,...,B_{N-1}},
\delta_{G_1, ..., G_{N-1}}, \delta_{G_NB_1,...,B_N}$ is a divisor
defined by less than $N$ blocks, so by the induction hypothesis,
$T_1=(-1)^{m_1}$ if $I$ is the union of $m_1$ vertices in polygon 1
in figure \ref{pillowgon4}, $T_2=(-1)^{m_2}$ if $I$ is the union of
$m_2$ vertices in polygon 2.  Likewise, polygon 3 gives $T_3$ and
polygon 4 gives $T_4$.

\begin{figure}
\begin{center}
\scalebox{0.8}{
 \input{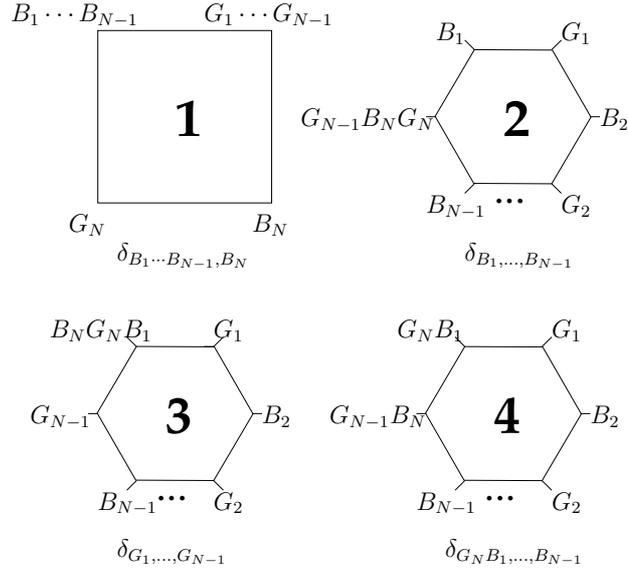}}
\caption{Polygons corresponding to $T_1, T_2, T_3, T_4$}
\label{pillowgon4}
\end{center}
\end{figure}

To prove \eqref{piccoeffs}, we need the induction step in each of the
following cases.

{\it Case 1.} $I$ is not the union of any collection of the sets
 $B_i, G_j$.  Then
by the expression \eqref{divcoeffs} and the induction hypothesis, the
 coefficient of $\delta_I$ on 
$(B_1,...,G_N)$ is 0.

{\it Case 2.} $I$ is the union of an even number of consecutive
subsets:    
$$I=\bigcup_{p=1}^j B_{i+p} \cup
  G_{i+p}\ {\mathrm{ or }}\ I = \bigcup_{p=1}^j G_{i+p} \cup
  B_{i+p+1}$$ and these 
  sets are not ``on the boundary'', in other words
  $i\geq 1$ and $i+j\leq N-2$.  By theorem \ref{Gib}, $T_1=0$.  By the
  induction
  hypothesis, $T_2=T_3=T_4=1$.  Then $\delta^I_{B_1,...,B_N}=
  1+1-1=1$ verifying statement \ref{piccoeffs} of the theorem.  This
  case covers all divisors $\delta_I$ such that $I$ is the union of an
  even number
  of subsets from $\{ B_2,G_2, ..., B_{N-2}, G_{N-2}\}$ or
  $\{G_2,B_3,..., G_{N-2}, B_{N-1}\}$.

{\it Case 3.} $I$ is the union of an odd number of non-boundary
consecutive subsets:
$$I=\bigl( \bigcup_{p=1}^j B_{i+p} \cup
  G_{i+p}\bigr) \cup B_{i+j+1} \ {\mathrm{ or }}\ I = \bigl(
  \bigcup_{p=1}^j G_{i+p} \cup 
  B_{i+p+1}\bigr) \cup G_{i+j+1},$$
$i\geq 1, i+j+1\leq N-2$.  Then by the same arguments as in case 2,
  $T_1=0, T_2=T_3=T_4 = -1$ so that $\delta^I_{B_1,...,B_N}=
  -1-1+1=-1$ verifying statement \eqref{piccoeffs} of the theorem.
  This case covers all $\delta_I$ where $I$ is the union of an odd
  number of subsets from $\{B_2,G_2, ...,G_{N-3},B_{N-2}\}$
  or $\{G_2,B_3,...,B_{N-2}, G_{N-2}\}$.

For the following 12 boundary cases, we calculate $T_1$ by theorem \ref{Gib}
and $T_2, T_3, T_4$ are gotten from the induction hypothesis.  The
results of the calculation are summarized in the following table and
can be deduced from the parity of loops around the vertices in
the polygons in
figure \ref{pillowgon4}.  Recall that the last
column, $C=T_1+T_2+T_3-T_4$ denotes $C^I_{B_1, ...,B_N}$.  To prove
the theorem, it 
suffices to calculate that $C=1$ when $I$ is the union of an even
number of subsets, $B_i, G_j$, and that $C=-1$ when $I$ is the
union of an odd number of such subsets.

 \begin{table}[ht]
\begin{tabular}{c | l |  c c c c | c}
Case & $I$ & $T_1$ & $T_2$ & $T_3$ & $T_4$ & $C$\\ [0.5ex]
\hline
\hline
4 & $G_{i} \cup B_{i+1}\cup \cdots \cup B_N \cup G_N\cup B_1, \ 2\leq
i\leq N-1$ & 0 & 1 & 1 & 1 & 1\\
\hline
5 & $B_{i}\cup G_{i} \cup \cdots \cup G_N \cup B_1, \ 2 \leq i\leq N-1$
& 0 & -1  & -1& -1 & -1 \\
\hline
6 & $G_N$ or $B_N$ & -1 & 0 & 0 & 0 & -1\\
\hline 
7 & $B_N\cup G_N$ & 1 & 0 & 0 & 0 & 1\\
\hline 
8 & $G_{i} \cup B_{i+1}\cup \cdots \cup B_N \cup G_N,\ 1\leq i \leq
N-1$ & 0 & -1 & 0 & 0 & -1 \\
\hline
9  & $B_{i}\cup G_{i} \cup \cdots \cup B_N \cup G_N,\ 2\leq i \leq
N-1$ & 0 & 1 & 0 & 0 & 1 \\
\hline
10 & $B_i\cup \cdots \cup B_N, \ 2\leq i \leq N-1 $& 0 & 0 & 0 & 1 & -1\\
\hline
11 & $ G_i\cup \cdots \cup B_N,\ 1\leq i\leq N-1 $&0 & 0 & 0 &
-1 & 1 \\ 
\hline
12 & $G_i\cup  \cdots \cup G_{N-1},\ 1\leq i \leq {N-1} $ & 0
& 0 & -1 &0& -1 \\
\hline
13 & $B_i \cup G_{i} \cup \cdots \cup G_{N-1},  \ 2\leq i \leq {N-1} $& 0
& 0 & 1 & 0 & 1 \\
\hline
14 & $ G_i\cup \cdots \cup B_{N-1}, \ 2 \leq i \leq {N-2} $& 0
& 1 & 1 & 1 & 1\\
\hline 
15 & $B_i \cup \cdots \cup B_{N-1}, 2\leq i \leq {N-1} $ & 0 & -1 & -1 & -1
& -1 \\

\hline\hline
\end{tabular}
\end{table}

The fifteen cases above cover all possible consecutive subsets $I$ and
all verify the statement of the theorem.

\end{proof}

\chapter{Index of Notations and Definitions by Chapter}\notag

In this index, the number following the definition indicates the page.
\begin{multicols}{2}{
\section*{Chapter 1}
$\zeta(k_1,...k_d)$: \pageref{zetadef}\\
$\MZV$: \pageref{zetadef}\\
Depth, weight of a multizeta value: \pageref{zetadef}\\
Convergent: \pageref{convergentdef}\\
$\cdot$\ : \pageref{cdot}\\
$*$, $st(\underline{a}, \underline{b})$: \pageref{stuffledefn}\\
$\sh$, $sh(\omega_1, \omega_2)$: \pageref{shdefintro}\\
Double shuffle: \pageref{doubleshufrels}\\
Hoffman's relation: \pageref{Hoffman}\\
$\FZ$, $\zeta^F(w)$: \pageref{FZ}\\
$\nfz$, $\mathfrak{z}(w)$: \pageref{NFZdef}\\
$\nz$, $\overline{\zeta}(w)$: \pageref{nznumbersdef}\\
$\Qxy,\ \Qyi$: \pageref{qxyqyi}\\
$\piy$: \pageref{piydefintro}\\
$\Delta_{\sh},\ \Delta_*$: \pageref{coprods}\\
$\ds$: \pageref{dsdefintro}\\
Primitive: \pageref{dsdefintro}\\
The Poisson bracket, $\{f,g\}$: \pageref{Pbdef}\\
$F_n^i(\ds)$: \pageref{Fndef}\\
$\grt$: \pageref{grtdef}\\
$\widetilde{\nfz}$, $\z^{\sha}(w)$, $\z^*(v)$: \pageref{nfzdef}\\
$H^{n-3}(\Mn)$: \pageref{cohomnot}, \pageref{cohomclaim}\\
$\Mn$: \pageref{Mndef}\\
$\ell$: \pageref{elldef}\\
The fat diagonal, $\Delta$: \pageref{fatdelta}\\
$\M_{0,n}$: \pageref{Mnbar}\\
$d_A$: \pageref{d_Adef}\\
Cell, $(z_{i_1},...,z_{i_n})$: \pageref{celldef}\\
Period: \pageref{perioddefintro}\\
$\mathcal{C}$: \pageref{perioddefintro}\\
$Z$: \pageref{Zdef}\\
$\delta$, the standard cell: \pageref{standardcelldelta}\\
Cell form, $[z_{i_1},..., z_{i_n}]$: \pageref{cellformdefintro}\\
$\mathcal{P}_Z$, $I_Z$: \pageref{PnIn}\\
Partial compactification, $\Mn^\delta$: \pageref{partcompdef}\\
$\mathcal{FC}$: \pageref{FCDEF}\\
Chord: \pageref{chorddefCh1}\\
Residue map, $\Res_{d}^p$: \pageref{polyresDEF}\\

\section*{Chapter 2}
Weight, depth of a multizeta: \pageref{depdivCh2}\\
$\mathfrak{f}$: \pageref{freedef}\\
$(f|w)$: \pageref{f|wdef}\\
Weight, $w(f)$ in $\Qxy$: \pageref{depthQxy}\\
Depth, $d(f)$ in $\Qxy$:\pageref{depthQxy}\\
$V_n$: \pageref{depthQxy}\\\
$\Lxy$, and bracket, $[f,g]$: \pageref{freeliedef}\\
$\mathbb{L}_n^i[x,y]$: \pageref{freeliegradeddef}\\
Weight, depth in $\Qyi$: \pageref{qyigradeddef}\\
$\Lyi$: \pageref{lyidef}\\
$\piy(f)$: \pageref{piy}\\
The Poisson bracket, $\{f,g\}$: \pageref{pbrakdef}\\
Lyndon word: \pageref{Lyndonworddef}\\
Lyndon-Lie word, $[\omega]$: \pageref{lyndonlieworddef}\\
$C_n^i$: \pageref{Cni}\\
$F^i\ds$: \pageref{Fids}\\
$P$, $\overline{P}$: \pageref{Biglem}\\
$Q$: \pageref{a_n}\\
$\Lambda$, $\Lambda_D$, $\Lambda_A$: \pageref{TDEF}\\
$D_j$, $D_{j,z}$: \pageref{D_jz}\\
$A_k$: \pageref{A_kdef}\\
$\Phi_{KZ}$: \pageref{Phikz}\\
$\Phi_{\sha}$: \pageref{Phisha}\\
$DM$, $DM_\gamma$: \pageref{DMDEF}\\
$M$: \pageref{MDEF}\\
$N$: \pageref{NDEF}\\

\section*{Chapter 3}

The standard cell, $X =X_{\delta}$: \pageref{stcellDEF}, \pageref{MTZ}\\
01-cell form: \pageref{01cellformDEF}\\
Mixed Tate motive, $\MT(\Z)$: \pageref{MTZ}\\
$X_{\delta}$, $M_{\delta}$, $A_{\delta}$ and $B_{\delta}$: \pageref{MTZ}\\
Framed mixed Tate motive, ${\mathcal{M}}(\Z)$: \pageref{momegadef}\\
$m(\omega)$: \pageref{momegadef}\\  
$\LIE$, $\F$: \pageref{LIEDEF}\\
$\Mod_{0,n}$: \pageref{Mndefch3}\\
$S$: \pageref{Cycstrucdef}\\
Cyclic structure, dihedral structure: \pageref{Cycstrucdef}\\
Cell-form, $[s_1,...,s_n]$: \pageref{cellform}\\
Cell, $(\gamma_1,..., \gamma_n)$, $X_{n,\gamma}=X_{\gamma}$:
\pageref{celldefCH3}\\
Standard cell, $X_{S,\delta}=X_{n,\delta}$: \pageref{celldefCH3}\\
$\I_D(i,j)$: \pageref{funnygonchI}\\
Cell-function, $\langle \gamma \rangle$: \pageref{cellfuncdef}\\
01 cell-function: \pageref{01cellfuncDEF}\\
${\mathcal{P}}_S$: \pageref{prodmapsection}\\
Pairs of polygons, $(\gamma,\eta)$: \pageref{polypairdef}\\
Product map: \pageref{prodmapDEF}\\
Cell-zeta value, $\mathcal{C}$: \pageref{cellzetavalueDEFch3}\\
Formal cell-zeta values, $\mathcal{FC}$: \pageref{formalcell} \\
$V_S$, $I_S$: \pageref{VIDEF}\\
Lyndon basis: \pageref{Lyndonbasis}\\
Chord, $\chi(\gamma)$: \pageref{chordDEF}\\
The polygon residue map, $\Res_p^D(\eta)$: \pageref{RespmapDEFch3}\\
$W_S$: \pageref{Lyndins}\\
Lyndon insertion shuffles, ${\mathcal{L}}_S$:
\pageref{lyndoninsertionshufflesdef}\\
Framing: \pageref{framingDEF}\\
Lyndon insertion words, ${\mathcal{W}}_S$: \pageref{lyndoninsertionwordsdef}\\
Special convergent words: \pageref{specconvw}\\
$L(\gamma, v_1, ..., v_k)$: \pageref{lyndshwfxst}\\
Composed residue map, $\Res^p_{D_1,\ldots,D_m}$:
\pageref{cmpresmapDEF}\\
$J_S$, $K_S$: \pageref{JSDEF}
$c_0(n)$: \pageref{csub0n}\\

\section*{Chapter 4}
Arnol'd's ring, $A$: \pageref{arnoldring}\\
$H^k(\Mn)$: \pageref{derhamc}\\
$d_K$: \pageref{newd_adef}\\
$D$: \pageref{m0ngamma}\\
$\Mn^\gamma$: \pageref{m0ngamma}\\
$C_q(X)$: \pageref{CqXdef}\\
$Z_r^{i,q-i}$: \pageref{Eblahdef}\\
$B_r^{i,q-i}$: \pageref{Bblahdef}\\
$E_r$, $E_r^{i,q-i}$: \pageref{Bblahdef}\\
$d_r$, $d_r^{i,q-i}$: \pageref{Bblahdef}\\
Spectral sequence: \pageref{Bblahdef}\\
$F$, $E$, $B$: \pageref{fibration}\\
$h[\lambda]$: \pageref{hlambdadef}\\
Algebraic ($k$ form): \pageref{fungrtypedef}\\
Prime divisor: \pageref{defdiv}\\
Weil divisor: \pageref{defdiv}\\
Divisor: \pageref{defdiv}\\
Intersection-divisor: \pageref{intersectingdivisors}\\
$\ell$, $Z$: \pageref{ellzdef}\\
Lyndon basis, Lyndon shuffle: \pageref{Lynbas4}\\
$\pi$: \pageref{polyformmap} \\
${\mathcal{W}}_{\gamma_0}^p$, ${\mathcal{W}}_{\gamma_{\sha}}^p$:
\pageref{tiredofmakinglabels}\\ 
${\cal L}^i(P)$: \pageref{lyndononepointDEF}\\
${\cal W }_0^p, {\cal{W}}_{d_R}^p, {\cal{W}}_{d_S}^p,
  {\cal{W}}_{d_{R\cup S}}^p, {\cal{W}}_{d_{R\cup S},S}^p$:
  \pageref{defdiv3}\\
${\cal W }_0, {\cal{W}}_{d_R}, {\cal{W}}_{d_S},
  {\cal{W}}_{d_{R\cup S}}, {\cal{W}}_{d_{R\cup S},S}$:
  \pageref{cellformstuff}\\
$Div(X)$: \pageref{Divxdef}\\
$Pic(\M_{0,n})$: \pageref{Keelsbasis}\\
$B_i$, $G_i$: \pageref{adjacentsetdef}\\
$C_{B_1,...,B_{J_k}}^I$: \pageref{deltaI}

}\end{multicols}
\addcontentsline{toc}{chapter}{Alphabetical Index}
\printindex

\addcontentsline{toc}{chapter}{Bibliography}

\end{document}